\documentclass[reqno]{amsart}

\usepackage{amsmath, amsthm, amssymb, amscd, mathtools}
\usepackage{enumitem}
\usepackage{tikz}
\usepackage{tikz-cd}
\usepackage{subcaption}
\usepackage{hyperref}
\usepackage[capitalise]{cleveref}
\hypersetup{colorlinks=true, linkcolor={blue!50!black}, citecolor={blue!50!black}, urlcolor={blue!60!black}}
\usepackage{comment}
\usepackage{marginnote}
\usepackage{a4wide}

\newtheorem{theorem}{Theorem}[section]
\newtheorem{lemma}[theorem]{Lemma}
\newtheorem{proposition}[theorem]{Proposition}
\newtheorem{corollary}[theorem]{Corollary}

\theoremstyle{definition}
\newtheorem{definition}[theorem]{Definition}
\newtheorem{remark}[theorem]{Remark}
\newtheorem{question}[theorem]{Question}
\newtheorem{example}[theorem]{Example}
\newtheorem{assumption}[theorem]{Assumption}

\AddToHook{env/lemma/begin}{\crefalias{theorem}{lemma}}
\AddToHook{env/proposition/begin}{\crefalias{theorem}{proposition}}
\AddToHook{env/corollary/begin}{\crefalias{theorem}{corollary}}
\AddToHook{env/conjecture/begin}{\crefalias{theorem}{conjecture}}
\AddToHook{env/definition/begin}{\crefalias{theorem}{definition}}
\AddToHook{env/remark/begin}{\crefalias{theorem}{remark}}
\AddToHook{env/question/begin}{\crefalias{theorem}{question}}
\AddToHook{env/example/begin}{\crefalias{theorem}{example}}
\AddToHook{env/assumption/begin}{\crefalias{theorem}{assumption}}

\crefname{theorem}{Theorem}{Theorems}
\crefname{lemma}{Lemma}{Lemmas}
\crefname{proposition}{Proposition}{Propositions}
\crefname{corollary}{Corollary}{Corollaries} 
\crefname{conjecture}{Conjecture}{Conjectures}
\crefname{definition}{Definition}{Definitions}
\crefname{remark}{Remark}{Remarks}
\crefname{question}{Question}{Questions}
\crefname{example}{Example}{Examples}
\crefname{assumption}{Assumption}{Assumptions}
\crefname{figure}{Figure}{Figures}
\crefformat{equation}{(#2#1#3)}
\Crefformat{equation}{Equation #2(#1)#3}

\newcommand\bfx{\mathbf{x}}
\newcommand\bfy{\mathbf{y}}
\newcommand\bft{\mathbf{t}}
\newcommand\bfz{\mathbf{z}}

\newcommand\cG{\mathcal{G}}
\newcommand\cH{\mathcal{H}}
\newcommand\cS{\mathcal{S}}

\newcommand\frG{\mathfrak{G}}
\newcommand\frT{\mathfrak{T}}

\newcommand\sfF{\mathsf{F}}
\newcommand\sfG{\mathsf{G}}
\newcommand\sfS{\mathsf{S}}
\newcommand\sfT{\mathsf{T}}

\newcommand\sfH{\mathsf{H}}
\newcommand\sfY{\mathsf{Y}}

\newcommand\sfb{\mathsf{b}}
\newcommand\sfc{\mathsf{c}}
\newcommand\sfe{\mathsf{e}}
\newcommand\sff{\mathsf{f}}
\newcommand\sfs{\mathsf{s}}
\newcommand\sfu{\mathsf{u}}
\newcommand\sfv{\mathsf{v}}
\newcommand\sfw{\mathsf{w}}

\newcommand{\PB}{\mathbb{P}}
\newcommand{\BG}{\mathbb{B}}
\newcommand{\im}{\operatorname{im}}
\newcommand{\Z}{\mathbb{Z}}
\newcommand{\R}{\mathbb{R}}
\newcommand{\RP}{\mathbb{RP}}
\newcommand{\co}[1]{\mathopen{\langle}#1\mathclose{\rangle}} %
\newcommand{\base}{\bfx^0}
\newcommand{\Lollipop}{\mathsf{L}}

\renewcommand{\bar}{\overline}
\renewcommand{\tilde}{\widetilde}

\DeclareMathOperator{\lk}{lk}
\DeclareMathOperator{\val}{val}
\DeclareMathOperator{\proj}{Proj}

\numberwithin{equation}{section}

\title[Extending Goldberg's Exact Sequence]{Extending Goldberg's Exact Sequence \\
to Braid Groups of Graphs and Simplicial Complexes}
\author{Byung Hee An}
\address{Department of Mathematics Education, Kyungpook National University, Daegu, South Korea 41566}
\email{anbyhee@knu.ac.kr}
\date{\today}
\subjclass[2020]{Primary 20F36; Secondary 55R80, 57M15, 20F65}
\keywords{braid group, configuration space, simplicial complex,
graph braid group, Goldberg's exact sequence, strand map}

\begin{document}

\begin{abstract}
\noindent For a finite connected simplicial complex \(X\), the
strand map \(\iota_\ast\), from \(\PB_n(X)\) to
\(\prod_{i=1}^n\pi_1(X,x_i^0)\), sends a pure braid to the
homotopy classes of its strands. A
theorem of Goldberg (1973) computes its kernel when \(X\) is a
closed surface other than \(S^2\) and \(\RP^2\): the kernel is
the normal closure of the pure braids supported in an embedded
disc. We extend this picture to arbitrary finite connected
simplicial complexes. Call \(X\) \emph{weakly Goldberg} if some
contractible subcomplex \(X_0\subseteq X\) realises Goldberg's
description,
\(\ker\iota_\ast=\co{\im(\PB_n(X_0)\to\PB_n(X))}\), and
\emph{Goldberg} if \(X_0\) can moreover be chosen so that
\(\PB_n(X_0)\to\PB_n(X)\) is injective. We prove that the
strand map is surjective if and only if \(X\not\cong S^1\);
that \(X\) is weakly Goldberg if and only if its free part is a
forest; and that \(X\) is Goldberg if and only if it admits an
\emph{admissible tree} -- a maximal tree of a scaffold,
compatible with the boundary and interior types of the
attachments of the free part to the thick components. We also
classify the complexes for which the kernel is trivial, settle
the exceptional surfaces \(S^2\) and \(\RP^2\), and obtain
complete answers for manifolds and for graphs. The main tools
are a graph-of-spaces decomposition of the configuration space
at a point of \(X\) and a resolution procedure reducing an
arbitrary complex to a simple model.
\end{abstract}

\maketitle

\tableofcontents

\section{Introduction}

Throughout this paper \(X\) denotes a finite connected simplicial
complex of dimension at least \(1\), and \(n\geq 1\) a fixed integer.
The \emph{ordered configuration space} of \(n\) distinct points in
\(X\) is the subset
\[
F_n (X) = \{\,(x_1,\dots,x_n)\in X^n : x_i\neq x_j
\text{ for all } i\neq j\,\} \subseteq X^n.
\]
Fix a basepoint
\(\base = (x_1^0,\dots,x_n^0)\in F_n (X)\), i.e.\ an \(n\)-tuple of
pairwise distinct points of \(X\). The \emph{pure \(n\)-braid group
of \(X\) (based at \(\base\))} is
\[
\PB_n(X,\base) \coloneqq \pi_1(F_n (X), \base).
\]
When there is no risk of ambiguity we suppress the basepoint and
write simply \(\PB_n(X)\).

The natural inclusion \(F_n (X)\hookrightarrow X^n\) induces
\[
\iota_\ast \colon \PB_n(X,\base)\longrightarrow
\prod_{i=1}^n \pi_1(X, x_i^0),
\]
which sends a pure braid to the \(n\)-tuple of loops traced out by
its individual strands; we call this the \emph{strand map}. The
central theme of this paper is a detailed description of
\(\ker\iota_\ast\) in terms of the topology of \(X\), generalising a
classical theorem of Goldberg on surfaces which we now recall.

\subsection{Goldberg's theorem}

In this subsection we specialise to the case
\(X = M\), a compact surface, and recall Goldberg's 1973
\cite{Goldberg1973} theorem, which is the model our kernel
description seeks to extend. Fix a basepoint
\(\base = (x_1^0,\dots,x_n^0)\in F_n (M)\) with all
\(x_i^0\) contained in a single open disc \(D^2\subset M\). The
inclusion of the disc \(D^2\hookrightarrow M\) induces
\[
j_\ast \colon \PB_n(D^2,\base)\longrightarrow\PB_n(M,\base).
\]
The group \(\PB_n(D^2,\base)\) is the classical Artin pure braid group on
\(n\) strands.

\begin{theorem}[Goldberg, \protect{\cite[Thm.~1]{Goldberg1973}}]
\label{thm:goldberg}
If \(M\) is a compact surface other than \(S^2\) and \(\RP^2\), then
\[
1 \longrightarrow \PB_n(D^2,\base) \xrightarrow{j_\ast}
\PB_n(M,\base) \xrightarrow{\iota_\ast}
\prod_{i=1}^n \pi_1(M,x^0_i) \longrightarrow 1
\]
is exact in the non-abelian sense, i.e.\ \(j_\ast\) is injective,
\(\iota_\ast\) is surjective, and \(\ker \iota_\ast\) equals the normal
closure \(\co{\im j_\ast}\) in \(\PB_n(M,\base)\).
\end{theorem}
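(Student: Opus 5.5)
Our plan follows the Fadell--Neuwirth fibration approach and inducts on \(n\). For \(n\ge 2\), forgetting the last point gives a locally trivial fibration
\[
F_{n-1}(M\setminus\{x_1^0,\dots,x_{n-1}^0\}) \longrightarrow F_n(M)\longrightarrow F_{n-1}(M)
\]
for \(M\) a surface, with the convention that boundary points are pushed off slightly, or that we work with the interior. Its fibre over the basepoint is \(M_{n-1}\coloneqq M\setminus\{x_1^0,\dots,x_{n-1}^0\}\).

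The first step is to show that \(\pi_2(F_{n-1}(M))=0\) when \(M\neq S^2,\RP^2\). We do this by induction using the same fibrations: punctured surfaces and surfaces other than \(S^2,\RP^2\) are aspherical. Granting this, the long exact sequence gives a short exact sequence
\[
1\to \pi_1(M_{n-1},x_n^0)\to \PB_n(M)\to \PB_{n-1}(M)\to 1 .
\]
Surjectivity of \(\iota_\ast\) is then straightforward. Each loop \(\gamma\) in \(M\) at \(x_i^0\) can be homotoped to avoid the finitely many other basepoints, since \(\dim M=2\). It then lifts to a pure braid in which only strand \(i\) moves, and its image under \(\iota_\ast\) is \((1,\dots,\gamma,\dots,1)\).

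For the kernel, we compare the short exact sequence above with the analogous ones for the disc and for the product. This gives a commutative ladder of rows
\[
\pi_1(D^2\setminus\{n-1\text{ pts}\})\to\PB_n(D^2)\to\PB_{n-1}(D^2),
\]
\[
\pi_1(M_{n-1})\to\PB_n(M)\to\PB_{n-1}(M),
\]
\[
\pi_1(M)\to\prod_{i=1}^{n}\pi_1(M)\to\prod_{i=1}^{n-1}\pi_1(M).
\]
The right column is exact by induction. The left column is
\[
\pi_1(D^2\setminus P)\to\pi_1(M\setminus P)\to\pi_1(M)\to 1 ,
\]
where the map \(\pi_1(M\setminus P)\to\pi_1(M)\) is induced by filling in the punctures \(P\). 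By van Kampen, the kernel of this filling map is normally generated by the small loops around the punctures. Those loops lie in the disc, so the left column is exact in the non-abelian sense.

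Next, a diagram chase gives \(\ker\iota_\ast=\co{\im j_\ast}\). Take \(\beta\in\ker\iota_\ast\). Its image in \(\PB_{n-1}(M)\) lies in the normal closure of the image of the disc group, by induction. Because the disc row surjects onto \(\PB_{n-1}(D^2)\), we can multiply \(\beta\) by a product of conjugates of disc braids so that it lies in the fibre \(\pi_1(M_{n-1})\) and still maps trivially to \(\pi_1(M)\). Exactness of the left column then finishes this step.

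The remaining step, and the main obstacle, is injectivity of \(j_\ast\). We compare the two short exact sequences via the five lemma, which reduces injectivity to injectivity of \(\pi_1(D^2\setminus P)\to\pi_1(M\setminus P)\). When \(M\neq D^2\) is not a sphere or projective plane, the complement \(M\setminus D^2\) is not a disc. Then \(M\setminus P\) is obtained from \(D^2\setminus P\) by gluing along the circle \(\partial D\) a surface with \(\chi\le 0\) whose fundamental group is free or a surface group. That circle is nontrivial there: it is a boundary curve, which is \(\pi_1\)-injective unless the piece is a disc, and we must rule out the Möbius-band-with-generator degenerate case carefully; even then the inclusion of a boundary circle is injective. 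Van Kampen's theorem for amalgamated free products along injective edge maps then yields injectivity. This is exactly where \(S^2\) and \(\RP^2\) fail: there the complementary piece is a disc or a Möbius band whose boundary is not primitive in a way that breaks the asphericity needed in the first step. So the care goes into verifying \(\pi_2=0\) and the \(\pi_1\)-injectivity of \(\partial D\).
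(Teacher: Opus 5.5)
Your proof is correct, but it does not follow the route the paper points to. The paper does not reprove \cref{thm:goldberg}. In \cref{rem:goldberg-compact} it attributes injectivity of $j_\ast$ to Goldberg's Lemma~3, and the containment $\ker\iota_\ast\subseteq\co{\im j_\ast}$ to his Lemma~5, a universal-cover argument which, per the remark, uses only that $\widetilde M$ embeds in $\R^2$.

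Your injectivity step is the standard inductive argument. It applies the injectivity half of the five lemma to the Fadell--Neuwirth ladder: asphericity of $F_{n-1}(M)$ makes the $M$-row short exact, and the amalgam $\pi_1(D^2\setminus P)\ast_{\langle\partial D\rangle}\pi_1(\overline{M\setminus D^2})$ makes the punctured disc $\pi_1$-injective.

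Your kernel step is genuinely different from the paper's. It is a diagram chase that uses only exactness of the fibration sequence at $\PB_n(M)$, surjectivity of the forgetful maps, and van Kampen for filling in punctures. It never uses $\pi_2=0$ or planarity of $\widetilde M$. So it also proves $\ker\iota_\ast=\co{\im j_\ast}$, and surjectivity, for $S^2$ and $\RP^2$. That is exactly what the paper asserts without proof in \cref{rem:s2-rp2} and relies on in \cref{thm:s2-rp2-exceptional}, and the universal-cover argument is unavailable there.

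Two small corrections.
\begin{enumerate}
\item The fibre of the forgetful map is $M\setminus\{x_1^0,\dots,x_{n-1}^0\}$ itself, not $F_{n-1}$ of it; you state this correctly in the next sentence.
\item Your diagnosis of $\RP^2$ is muddled. The complementary M\"obius band has boundary $c^2$, where $c$ is its core, and this is still $\pi_1$-injective. So $\pi_1(D^2\setminus P)\to\pi_1(\RP^2\setminus P)$ is injective, and the hypothesis $M\neq\RP^2$ enters only through asphericity. For example, $\pi_2(\RP^2)\cong\Z$ makes $\pi_1(\RP^2\setminus\{\mathrm{pt}\})\cong\Z\to\PB_2(\RP^2)$ non-injective, the target being a group of order $8$. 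Only for $S^2$ do both ingredients fail.
\end{enumerate}
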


\begin{remark}\label{rem:goldberg-compact}
Goldberg states \cref{thm:goldberg} for \emph{closed}
surfaces \cite[Thm.~1]{Goldberg1973}; the version above, for
compact surfaces possibly with boundary, is already contained in
his argument. The injectivity of \(j_\ast\) is proved in
\cite[Lemma~3]{Goldberg1973} for \emph{any compact surface}
other than \(S^2\) and \(\RP^2\), and the kernel containment
\(\ker\iota_\ast\subseteq\co{\im j_\ast}\) of
\cite[Lemma~5]{Goldberg1973} uses only that the universal cover
of \(M\) embeds in \(\R^2\) -- which holds for every surface
\(M\not\cong S^2,\RP^2\), closed or with boundary, since a simply
connected surface other than \(S^2\) is planar.
\end{remark}

Intuitively, the theorem decomposes a surface braid into
(a) ``classical Artin braiding'' visible inside a single disc, and
(b) ``wandering of strands'' detected only by the fundamental group
of \(M\).

\begin{remark}\label{rem:s2-rp2}
When \(M = S^2\) or \(M = \RP^2\), the kernel description of
\cref{thm:goldberg} still holds, namely
\[
\ker\iota_\ast = \co{\im j_\ast}^{\PB_n(M,\base)},
\]
and \(\iota_\ast\) is still surjective; what fails is the
\emph{injectivity} of \(j_\ast\): the pure braid group
\(\PB_n(S^2)\) (resp.\ \(\PB_n(\RP^2)\)) has additional torsion
relations not present in \(\PB_n(D^2)\), so
\(j_\ast\colon \PB_n(D^2)\to\PB_n(M)\) has a non-trivial kernel
and the leftmost arrow ``\(1\to\PB_n(D^2)\)'' of the exact
sequence is invalid.
\end{remark}

\begin{remark}\label{rem:birman-high-dim}
Goldberg's theorem is a two-dimensional phenomenon: in dimension
three and above there is no braiding to detect. By Birman's
theorem \cite[Thm.~1]{Birman1969}, for a manifold \(M\) with
\(\dim M\geq 3\) the inclusion \(F_n M\hookrightarrow M^n\) induces
an isomorphism
\[
\iota_\ast\colon\PB_n(M,\base)\xrightarrow{\ \cong\ }
\prod_{i=1}^n\pi_1(M,x_i^0).
\]
Birman states this for closed smooth manifolds, but -- exactly
as for \cref{thm:goldberg} -- closedness is inessential:
the fat diagonal removed from \(M^n\) to form \(F_n M\) has
codimension \(\dim M\geq 3\), so its removal leaves the fundamental
group unchanged whether or not \(M\) is closed. In particular
\(\iota_\ast\) is injective, \(\ker\iota_\ast = 1\), and so
\emph{every manifold of dimension at least three is
Goldberg-trivial} in the terminology of
\cref{ssec:kernel-question}.
\end{remark}

\subsection{Generalising Goldberg's theorem}
\label{ssec:kernel-question}

For a connected subspace \(X_0\subseteq X\) with \(\base\in F_n(X_0)\),
the inclusion induces
\(j_\ast\colon\PB_n(X_0,\base)\to\PB_n(X,\base)\).

Goldberg's theorem (\cref{thm:goldberg}) asserts that, for a
closed surface \(X = M\neq S^2,\RP^2\) and \(X_0 = D^2\), the sequence
\[
1 \longrightarrow \PB_n(X_0,\base) \xrightarrow{j_\ast}
\PB_n(X,\base) \xrightarrow{\iota_\ast}
\prod_{i=1}^n \pi_1(X,x^i_0) \longrightarrow 1
\]
is exact.

Braid groups of graphs and of higher-dimensional complexes have
been studied intensively since the work of
Abrams~\cite{Abrams2000} and Ghrist~\cite{Ghrist2001}, the
latter motivated by motion planning: discrete Morse-theoretic
presentations were developed by
Farley--Sabalka~\cite{FarleySabalka2005}, structural and
embedding results by Ko--Park~\cite{KoPark2012} and
Kim--Ko--Park~\cite{KimKoPark2014}, stability phenomena in
homology by
An--Drummond-Cole--Knudsen~\cite{ADK}, and the reduction theory
to simple complexes by An--Park~\cite{AnPark2017}, on which the
present paper relies throughout. The strand map and the kernel
description above, however, do not seem to have been studied
systematically beyond the surface case.

For a general pair \((X, X_0)\), exactness of this sequence
decouples into three independent questions, one for exactness at
each position:

\begin{question}[Goldberg condition]\label{q:goldberg-cond}
\label{q:main}%
For which spaces \(X\) does there exist a contractible subspace
\(X_0\subseteq X\) with \(\base\in F_n(X_0)\) such that
\[
\ker\iota_\ast = \co{\im j_\ast}^{\PB_n(X,\base)}?
\]
\end{question}

\begin{remark}\label{rem:Q1.3-split}
The condition of \cref{q:goldberg-cond} splits into two inclusions. The first,
\(\co{\im j_\ast}^{\PB_n(X,\base)}\subseteq\ker\iota_\ast\), is
equivalent to \(\im j_\ast\subseteq\ker\iota_\ast\) (since
\(\ker\iota_\ast\) is normal in \(\PB_n(X,\base)\)), and this in turn is
equivalent to the condition that
\(\pi_1(X_0, x_i^0)\to \pi_1(X, x_i^0)\) is the zero map for every
\(i\) -- in particular, it holds whenever \(X_0\hookrightarrow X\)
is \(\pi_1\)-trivial. Folding this first inclusion into a
hypothesis on \(X_0\), the substantive content of
\cref{q:goldberg-cond} reduces to the rephrased
question: for which spaces \(X\) does there exist a
\(\pi_1\)-trivial connected subspace \(X_0\subseteq X\) with
\(\base\in F_n(X_0)\) such that
\[
\ker\iota_\ast \subseteq \co{\,\im j_\ast\,}^{\PB_n(X,\base)}\,?
\]
\end{remark}

\begin{question}[Embedding condition]\label{q:embedding-cond}
For which spaces \(X\) can such an \(X_0\) be chosen so that, in
addition, \(j_\ast\colon\PB_n(X_0,\base)\to\PB_n(X,\base)\) is an embedding?
\end{question}

\begin{remark}\label{rem:embedding-standalone}
Independently of the Goldberg condition, one can also ask the
embedding question for an arbitrary pair \((X, X_0)\) with
\(X_0\subseteq X\) and \(\base\in F_n(X_0)\): for which such pairs is
\(j_\ast\colon \PB_n(X_0,\base)\to \PB_n(X,\base)\) injective? This
stand-alone embedding problem is interesting in its own right,
but we do not pursue it separately in this paper: the
injectivity of \(j_\ast\) enters only in tandem with the kernel
condition, through the notion of Goldbergness.
\end{remark}

\begin{question}[Surjectivity condition]\label{q:surj-cond}
For which spaces \(X\) is the strand map
\(\iota_\ast\colon \PB_n(X,\base)\to\prod_{i=1}^n\pi_1(X,x^0_i)\) surjective?
\end{question}

For brevity we record the following terminology, formalised in
\cref{def:goldberg-strengths} of
\cref{sec:isolate}: for a fixed \(n\ge 1\), a space \(X\) is \emph{weakly Goldberg}
when some contractible subspace \(X_0\subseteq X\) with \(\base\in F_n(X_0)\)
satisfies \(\co{\im j_\ast} = \ker \iota_\ast\), and \emph{Goldberg} when in addition \(j_\ast\) can be chosen to be an
embedding; and \(X\) is \emph{Goldberg-trivial} (or
\emph{trivially Goldberg}) when \(\ker\iota_\ast\) is trivial.

\begin{remark}\label{rem:gt-implies-goldberg}
Every Goldberg-trivial space is Goldberg; see
\cref{lem:gt-implies-goldberg} in \cref{sec:isolate}.
\end{remark}

In this language, Goldberg's \cref{thm:goldberg} says that for any \(n\ge 1\),
every closed surface \(M\neq S^2,\RP^2\) is Goldberg -- witnessed by any embedded disc \(D^2\subseteq M\) -- and its strand map is surjective.

\subsection{Trivial cases and the standing assumption}
\label{ssec:dichotomy}

\cref{q:main} concerns only the kernel of \(\iota_\ast\), not
its image, but it is still useful to clear away two degenerate
cases, \(n=1\) and \(X\cong S^1\), and to record separately the
special features of the simply connected case. Removing the two
degenerate cases leaves only the genuinely interesting situations
and (after a general proposition) ensures \(\iota_\ast\) is
automatically surjective.

\subsubsection{Trivial case~1: \(n = 1\).}\label{subsubsection:n-equals-one}

For \(n = 1\) we have \(F_1 (X) = X\) and
\(\PB_1(X,\base) = \pi_1(X,x_1^0)\); hence
\(X\) is Goldberg-trivial. We
therefore assume \(n\geq 2\) from now on.

\subsubsection{The simply connected case.}\label{subsubsection:simply-connected}

If \(X\) is simply connected, then \(\pi_1(X,x_i^0) = 1\) for every
\(i\), so the target of \(\iota_\ast\) is the trivial group; thus
\(\iota_\ast\) is the zero map, trivially surjective, and
\(\ker\iota_\ast = \PB_n(X,\base)\). Whether \(X\) is Goldberg then
amounts to whether \(\PB_n(X,\base)\) is realised by a
\emph{contractible} witness \(X_0\), in the sense of
\cref{def:goldberg-strengths}. This is immediate when
\(X\) is itself contractible -- one may take \(X_0 = X\) -- but it is
not a degenerate case in general: a simply connected complex that
is not contractible, such as \(X = S^2\), admits no contractible
witness equal to the whole space, and indeed need not be Goldberg
at all (\cref{thm:s2-rp2-exceptional}).

One simply connected complex, however, is degenerate enough to
be set aside once and for all: the interval \(\mathsf{I}\). Its
ordered configuration space \(F_n(\mathsf{I})\) consists of
\(n!\) contractible components, so \(\PB_n(\mathsf{I},\base)=1\)
for every \(n\geq1\) and \(\base\in F_n(\mathsf{I})\); in
particular \(\ker\iota_\ast=1\), and the interval is
Goldberg-trivial. We therefore exclude the interval -- but
retain all other simply connected complexes -- from the scope
of this paper.

\begin{remark}\label{rem:simpler-X0}
The contractibility requirement on the witness \(X_0\) in
\cref{def:goldberg-strengths} is what keeps the Goldberg
condition non-degenerate here. Were \(X_0\) allowed to be an
arbitrary connected -- or merely \(\pi_1\)-trivial -- subspace, every
simply connected \(X\) would be Goldberg via the degenerate
choice \(X_0 = X\), for which \(j_\ast\) is the identity and
\(\im j_\ast = \PB_n(X,\base) = \ker\iota_\ast\). Demanding that
\(X_0\) be contractible rules this out: a non-contractible simply
connected complex such as \(S^2\) has no such witness
(\cref{thm:s2-rp2-exceptional}). The classical witness of
Goldberg's theorem -- an embedded disc \(D^2\) in a surface -- is
contractible, so that theorem is unaffected by the requirement.
\end{remark}

\subsubsection{Trivial case~2: \(X \cong S^1\).}

For non-simply-connected \(X\) the kernel question becomes
non-trivial, with one further degenerate situation -- the case
\(X\cong S^1\), in which strands on a circle cannot move
independently.

\begin{example}\label{ex:s1-not-surj}
For \(X = S^1\) the pure braid group
\(\PB_n(S^1,\base)\cong\Z\) is infinite cyclic for every \(n\ge1\) and \(\base\in F_n(S^1)\), generated by the
element that simultaneously rotates all \(n\) points once around
the circle. Geometrically, the \(n\) strands wind around the circle
in lockstep, so the strand map \(\iota_\ast\) is \emph{not necessarily}
surjective: its image is the diagonal \(\Z\hookrightarrow\Z^n\)
(see \cref{prop:s1-case} below).
\end{example}

\begin{proposition}\label{prop:s1-case}
For \(X = S^1\), any \(n\geq 1\) and \(\base\in F_n(S^1)\),
\[
\iota_\ast \colon \PB_n(S^1,\base)\cong\Z\hookrightarrow \Z^{n} = \prod_{i=1}^n\pi_1(S^1,x_i^0)
\]
is the diagonal embedding \(k\mapsto (k,k,\dots,k)\). In particular
\(\iota_\ast\) is injective and \(\ker\iota_\ast = 1\). Choosing
\(\mathsf{I}\subseteq S^1\) to be any embedded arc with
\(\base\in F_n(\mathsf{I})\) gives \(\PB_n(\mathsf{I},\base)=1\),
as computed in \cref{subsubsection:simply-connected}, so
\(S^1\) is Goldberg-trivial.
\end{proposition}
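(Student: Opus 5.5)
The plan is to compute \(F_n(S^1)\) explicitly. Parametrise \(S^1=\R/\Z\). The cyclic order of \(n\) distinct points on the circle cannot change along a path, since two points would have to collide. Hence the path component of \(\base\) in \(F_n(S^1)\) consists of those configurations having the same cyclic order as \(\base\). After relabelling we may assume \(x_1^0<\dots<x_n^0\) in a lift to \([0,1)\). The component \(C\) of \(\base\) is then
\[
C=\{(x_1,\dots,x_n): x_1,\ x_{i+1}-x_i\in(0,1)\ \text{lifted consistently},\ \textstyle\sum \text{gaps}=1\},
\]
and we will show it is homeomorphic to \(S^1\times\Delta^{n-1}_{\circ}\), where \(\Delta^{n-1}_\circ\) is the open simplex of gaps \((t_1,\dots,t_n)\) with \(t_i>0\) and \(\sum t_i=1\).

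The homeomorphism is \((x_1,\dots,x_n)\mapsto (x_1;\,t_1,\dots,t_n)\), where \(t_i\) is the counterclockwise arc length from \(x_i\) to \(x_{i+1}\) (indices mod \(n\)). Its inverse is \((\theta;t)\mapsto(\theta,\theta+t_1,\dots,\theta+t_1+\dots+t_{n-1})\). Both maps are continuous, and they are inverse to each other because the gaps determine the remaining points once \(x_1\) is known. Since the open simplex is contractible, \(\PB_n(S^1,\base)\cong\pi_1(S^1)\cong\Z\). A generator is the loop \(\gamma(s)=(x_1^0+s,\dots,x_n^0+s)\), \(s\in[0,1]\), which keeps the gaps fixed and rotates every point once around the circle.

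Next we compute \(\iota_\ast\) on this generator. The \(i\)-th strand of \(\gamma\) is the loop \(s\mapsto x_i^0+s\), which is the generator of \(\pi_1(S^1,x_i^0)\cong\Z\). Identifying each factor with \(\Z\) through the standard orientation, we get \(\iota_\ast(\gamma^k)=(k,\dots,k)\), the diagonal embedding. This map is injective, so \(\ker\iota_\ast=1\).

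Finally, take an embedded arc \(\mathsf I\subseteq S^1\) that contains all the \(x_i^0\); such an arc exists because finitely many points miss some point of the circle. By \cref{subsubsection:simply-connected}, \(\PB_n(\mathsf I,\base)=1\), so \(\co{\im j_\ast}=1=\ker\iota_\ast\) and \(j_\ast\) is trivially injective. The arc is contractible, so this exhibits \(S^1\) as Goldberg-trivial, and in fact as Goldberg.

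The main obstacle is the bookkeeping in the homeomorphism step. One has to justify that cyclic order is invariant on path components and that the gap coordinates are continuous on \(C\). The latter follows because each gap is the unique lift of \(x_{i+1}-x_i\) lying in \((0,1)\), and it stays in that open interval since the points never collide. I would state the invariance of cyclic order as a short lemma: for each triple \(i,j,k\), the sign of the cyclic order of \((x_i,x_j,x_k)\) is a continuous map to \(\{\pm1\}\) on \(F_n(S^1)\), and is therefore locally constant.
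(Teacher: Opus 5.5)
Your proof is correct and takes the same approach as the paper, which gives no separate proof block: in \cref{ex:s1-not-surj} and the catalogue after \cref{lem:rho-surj} it simply asserts that \(\PB_n(S^1,\base)\cong\Z\) is generated by the simultaneous rotation, each of whose strands winds once around the circle, so the image of \(\iota_\ast\) is the diagonal. Your identification of the basepoint component of \(F_n(S^1)\) with \(S^1\) times an open \((n-1)\)-simplex, via the gap coordinates, supplies the justification the paper leaves implicit. One small point of logic: Goldberg-triviality already follows from \(\ker\iota_\ast=1\), whereas the arc witness is what shows that \(S^1\) is Goldberg.
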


\begin{assumption}[Standing assumption]\label{rem:standing}
Throughout the remainder of this paper we restrict
attention to the integer \(n\geq 2\) and finite connected
simplicial complexes \(X\) of dimension at least \(1\) with
\(X\) homeomorphic to none of the interval \(\mathsf{I}\),
\(S^1\), \(S^2\) and \(\RP^2\): the interval and the circle are
Goldberg-trivial (\cref{subsubsection:simply-connected},
\cref{prop:s1-case}), and the two exceptional surfaces are
settled completely by \cref{thm:s2-rp2-exceptional}. Simply
connected complexes other than the interval are \emph{not}
excluded: the contractibility requirement on the witness \(X_0\)
in \cref{def:goldberg-strengths} keeps the Goldberg
condition substantive for them
(cf.\ \cref{subsubsection:simply-connected}).
\end{assumption}

\subsection{Main results}
\label{ssec:main-results}

This paper answers
\cref{q:goldberg-cond,q:embedding-cond,q:surj-cond} completely.
The answer to the surjectivity question is a dichotomy
(\cref{prop:iota-surj}): for \(n\geq2\), the strand map
\(\iota_\ast\) is surjective if and only if
\(X\not\cong S^1\). The answers to the other two questions take
the form of three nested classifications,
\[
\{\text{Goldberg-trivial}\}\subseteq\{\text{Goldberg}\}
\subseteq\{\text{weakly Goldberg}\},
\]
each phrased in terms of elementary combinatorial invariants of
\(X\), which we now describe informally; the formal definitions
appear in \cref{ssec:free-parts}.

A \emph{free edge} of \(X\) is a maximal \(1\)-cell of \(X\)
that is not contained in the closure of any \(2\)-cell. The
\emph{free part} \(\mathsf{F}_X\subseteq X\) is the
\(1\)-dimensional subcomplex consisting of all free edges
together with all vertices of valency\footnote{The number of
connected components of the link \(\lk(\sfv)\).} at least \(2\)
whose link has positive dimension
(\cref{def:free-part}). The \emph{thick components}
\(\Sigma\) of \(X\) are the closures of the connected components
of \(X\setminus\mathsf{F}_X\); the free part is attached to them
at \emph{joints}. An attachment -- a pair \((\sfv,L)\) of a joint
and a positive-dimensional component of its link -- is of
\emph{type 1} if it is an attachment at a boundary point of a
surface piece, and of \emph{type 2} otherwise
(\cref{def:admissible-tree}). Finally, a
\emph{weakly admissible tree} is a maximal tree \(\sfT\) of a \emph{scaffold} -- the free part together with a system of arcs
connecting the joints through each thick component -- containing
all of \(\mathsf{F}_X\) (\cref{def:scaffold}); it is
\emph{admissible} if, on every thick component carrying
a type-2 attachment, \(\sfT\) connects each attachment of that
component to a type-2 one.

\begin{theorem}[Weak Goldbergness;
\cref{thm:weak-goldberg-char}]\label{thm:main-intro}
Let \(X\) be as in the standing assumption
(\cref{rem:standing}). Then \(X\) is weakly Goldberg if
and only if every connected component of the free part
\(\mathsf{F}_X\) is simply connected -- equivalently, if and
only if \(X\) admits a weakly admissible tree.
\end{theorem}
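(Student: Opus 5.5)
The equivalence has two parts, and I will handle them separately. The second equivalence, between simply connected components of \(\mathsf{F}_X\) and the existence of a weakly admissible tree, is combinatorial. A weakly admissible tree is a maximal tree \(\sfT\) of a scaffold that contains all of \(\mathsf{F}_X\). Such a \(\sfT\) exists exactly when \(\mathsf{F}_X\) contains no cycle, that is, when each component of \(\mathsf{F}_X\) is a tree.

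For the forward direction, if \(\mathsf{F}_X\) is a forest, I extend it to a spanning tree of the scaffold by adding arcs through the thick components. This is possible because the scaffold is connected, since \(X\) is connected and each thick component is connected. Conversely, a tree containing \(\mathsf{F}_X\) forces \(\mathsf{F}_X\) to be acyclic.

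The substantive equivalence is between weak Goldbergness and \(\mathsf{F}_X\) being a forest.

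\emph{Sufficiency.} Assume \(\mathsf{F}_X\) is a forest, and let \(\sfT\) be a weakly admissible tree. I take \(X_0\) to be a regular neighbourhood of \(\sfT\) in \(X\), which is contractible, with \(\base\) placed in \(X_0\). Then \(\im j_\ast\subseteq\ker\iota_\ast\) is automatic by \cref{rem:Q1.3-split}. For the reverse inclusion I use the two tools announced in the abstract.

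First, the resolution procedure of An--Park reduces \(X\) to a simple model. In that model each thick component is replaced by a standard piece (a surface, or a complex of dimension \(\ge 3\), or a "thick" \(2\)-complex), and the free part is unchanged. The procedure preserves \(\PB_n\), \(\pi_1\), and the class of the witness.

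Second, on the simple model I use the graph-of-spaces decomposition of \(F_n(X)\) at a point. Its vertex spaces are configuration spaces of the pieces, and its edge spaces come from strands crossing the joints. Van Kampen then expresses \(\PB_n(X)\) as built from the pure braid groups of the pieces. The key checks are then local:

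1. For each piece, the kernel of its strand map is normally generated by braids supported near an arc of \(\sfT\). Inside surface pieces this is Goldberg's theorem, using \cref{rem:goldberg-compact}. Inside pieces of dimension \(\ge 3\) the kernel is trivial by \cref{rem:birman-high-dim}.
2. Moving strands between pieces along \(\sfT\) contributes nothing new to the kernel. The part of \(\pi_1(X)\) not carried by the pieces comes from loops in the scaffold outside \(\sfT\), and these generate a free factor that maps injectively.

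Assembling these by induction on the number of thick components and free edges gives \(\ker\iota_\ast\subseteq\co{\im j_\ast}\).

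\emph{Necessity.} Suppose some component of \(\mathsf{F}_X\) contains a cycle \(C\). I produce an element of \(\ker\iota_\ast\) that does not lie in \(\co{\im j_\ast}\) for any contractible \(X_0\). Since \(C\) consists of free edges and valency \(\ge2\) vertices, and \(X\not\cong S^1\), some vertex of \(C\) has extra branching.

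I take the braid in which strand \(1\) goes around \(C\) while strand \(2\) moves away and back, and form its commutator with a braid where strand \(2\) traverses \(C\). The resulting element has trivial image under \(\iota_\ast\). To show it is not in \(\co{\im j_\ast}\), I construct a homomorphism \(\PB_n(X)\to G\) that kills every braid supported in a contractible subcomplex but not this commutator. The natural source is the linking/ordering data of strands along the \(1\)-dimensional cycle: abelianise and use the \(H_1\) class recording how often strand \(1\) passes strand \(2\) along \(C\). A contractible \(X_0\) meets \(C\) in arcs, so braids in \(X_0\) cannot realise a full relative winding. A careful version likely also needs the graph-of-spaces decomposition at a vertex of \(C\).

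I expect the main obstacle to be this necessity direction. The difficulty is showing the invariant vanishes on \(\im j_\ast\) for \emph{every} contractible \(X_0\), including ones reaching into thick components, and on all conjugates. If the abelian invariant is insufficient, I would first try to pass to a quotient map \(X\to\) (graph obtained by collapsing thick components to points). I would then use known computations for graph braid groups, where such cycle-commutators are detected by \(H_1\) of the configuration space.
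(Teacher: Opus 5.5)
The combinatorial equivalence (forest \(\Leftrightarrow\) existence of a weakly admissible tree) is fine. However, both halves of the substantive equivalence have genuine gaps.

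\textbf{Necessity.} Your candidate element \([\beta_1,\beta_2]\) is the right one. Here strands \(1\) and \(2\) each traverse \(C\), with the other strands parked on the stick of a lollipop (\cref{lem:lollipop-embed}). It lies in \(\ker\iota_\ast\) because distinct coordinates of \(\prod_i\pi_1(X,x_i^0)\) commute.

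The detector you propose, however, cannot work. \([\beta_1,\beta_2]\) is a commutator, so it vanishes in \(H_1(F_n(X))\) and in every abelian quotient of \(\PB_n(X)\). The same objection kills the graph-braid-group fallback. Moreover, collapsing thick components to points induces no map of configuration spaces at all, since the collapse is not injective.

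What is needed is a non-abelian homomorphism that kills every braid avoiding a given point. Take \(x\) interior to a free edge of \(C\).
\begin{itemize}
\item \(X_x\) is connected, because the rest of \(C\) joins the two germs, and it contains a tripod (\cref{rem:cor28-hypotheses}).
\item Hence the distribution graph at \(x\) is \(n\) bigons, and the collapse \(c_\ast\colon\PB_n(X)\to\mathbb{F}_n=\langle t_1,\dots,t_n\rangle\) of \cref{prop:pi1G-embedding} kills \(\co{\PB_n(X_x)}\) (\cref{cor:valency-quotient}).
\item Meanwhile \(c_\ast(\beta_k)=t_k^{\pm1}\), since strand \(k\) crosses \(x\) exactly once and no other strand does. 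So \(c_\ast([\beta_1,\beta_2])\neq1\).
\end{itemize}
A contractible \(X_0\) cannot contain \(C\): no simplex of dimension \(\geq2\) has a free edge as a face, so the \(1\)-cycle \(C\) is not a boundary in \(X_0\). Being closed, \(X_0\) therefore misses such an \(x\). Then \(\co{\im j_\ast}\subseteq\ker c_\ast\), so \([\beta_1,\beta_2]\notin\co{\im j_\ast}\). This is a streamlined form of the paper's route: \cref{prop:X0-contains-free-points} forces every point of \(C\) into the witness, and \cref{thm:F-cycle-obstruction} concludes.

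\textbf{Sufficiency.} Your outline does not prove \(\ker\iota_\ast\subseteq\co{\im j_\ast}\), and its step~2 is false as stated: transferring strands across a free edge does create new kernel elements.
\begin{itemize}
\item At a separating free edge, the distribution graph contributes a free factor of rank \(1+2^{n-1}(n-2)\), realised by shuffle braids lying in \(\ker\iota_\ast\) (\cref{cor:free-edge-disconnected,cor:free-edge-loop-rank,lem:H-shuffle}). A van Kampen argument over pieces never sees this factor; compare \(\PB_2(\sfH)\cong\Z\ast\Z\ast\Z\) in \cref{ex:two-tripods-shuffle}.
\item At a non-separating free edge, the bigon letters produce exactly the commutators \([\beta_i,\beta_j]\in\ker\iota_\ast\) from the necessity part.
\end{itemize}
The real content is that \(Q=\PB_n(X)/\co{\im j_\ast}\) collapses to \(\prod_\ell\pi_1(X)\). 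That is, generators attached to different strands must commute in \(Q\) (\cref{lem:coordinatewise-kernel}). This requires realising shuffles inside the witness:
\begin{itemize}
\item the distribution-graph section is chosen inside \(X_0\);
\item for a scaffold cycle closed by a thick edge \(\sff\subseteq\bar\Sigma\), one writes \(t_i=A_iD_i\), with disjoint return paths through the two-dimensional room of \(\bar\Sigma\). Then \([t_i,t_j]=[A_i,A_j]\) is an \(\sfH\)-shuffle supported in \(X_0\), together with the descent/induction on \(b_1\) of \cref{prop:witness-betti-step}.
\end{itemize}
This is the one place where the forest hypothesis is genuinely used. A cycle of \(\mathsf F_X\) offers no such room, which is exactly why \([\beta_i,\beta_j]\) survives there. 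Your sketch never uses the hypothesis beyond the existence of \(\sfT\).

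Two minor points:
\begin{itemize}
\item a resolution adds a whisker to the free part; it does not leave the free part unchanged;
\item non-manifold thick components need \cref{prop:branched-surface,cor:thick-with-pendants} rather than Birman's theorem.
\end{itemize}
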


\begin{theorem}[Goldbergness;
\cref{thm:goldberg-char-trees-general}]\label{thm:main-strong-intro}
Let \(X\) be as above. Then
\(X\) is Goldberg if and only if \(X\) admits an admissible tree.
\end{theorem}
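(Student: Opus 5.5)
Both directions will be proved by reducing to simple models via the resolution procedure and the graph-of-spaces decomposition of the configuration space at a point (in the spirit of An--Park~\cite{AnPark2017}). By \cref{thm:main-intro} we may assume throughout that every component of \(\mathsf{F}_X\) is simply connected, so a weakly admissible tree \(\sfT\) exists. The natural witness is a regular neighbourhood \(X_0=N(\sfT)\) of \(\sfT\) in \(X\). Inside each thick component \(\Sigma\), \(N(\sfT)\cap\Sigma\) is a thickened system of arcs, i.e.\ a union of discs or half-discs, and it meets the free part along its edges. Note that \(X_0\) is contractible, and it is \(\pi_1\)-trivial in \(X\) because it deformation retracts onto a tree.

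Weak Goldbergness already gives \(\ker\iota_\ast=\co{\im j_\ast}\) for \(X_0=N(\sfT)\) whenever \(\sfT\) is weakly admissible. The additional content is therefore the injectivity of \(j_\ast\colon\PB_n(X_0)\to\PB_n(X)\), together with the claim that no other contractible witness can do better.

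\textbf{Sufficiency.} I will cut \(X\) along the attachments and use the graph-of-spaces decomposition. This presents \(\PB_n(X)\) as the fundamental group of a graph of groups whose vertex groups are braid groups of the pieces (thick components, and graph pieces coming from \(\mathsf{F}_X\)), with parallel structure for \(X_0\). Injectivity of \(j_\ast\) then reduces, via the standard injectivity criterion for morphisms of graphs of groups (injective on vertex groups and compatible on edge groups), to two local statements.
\begin{enumerate}
\item[(a)] For a surface-like thick piece \(\Sigma\), the inclusion of the disc neighbourhood of the arcs induces an injection \(\PB_k(D)\to\PB_k(\Sigma)\). This follows from Goldberg's injectivity lemma (\cref{rem:goldberg-compact}); for higher-dimensional or singular thick pieces, Birman-type codimension arguments (\cref{rem:birman-high-dim}) give that both sides are essentially trivial.
\item[(b)] For a graph piece, the inclusion of a subtree neighbourhood injects on graph braid groups. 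This is the embedding theorem of Ko--Park/Kim--Ko--Park for subgraphs.
\end{enumerate}
Admissibility enters exactly at the gluing. A type-1 attachment sits on the boundary of a surface, and a half-disc there does not see a braid generator that the ambient surface kills; that braid is killed only if the strand can escape through a type-2 (interior or non-manifold) attachment. Requiring \(\sfT\) to connect every attachment of such a \(\Sigma\) to a type-2 one guarantees that the edge-group maps are injective. Otherwise, a star-type braid at a boundary joint, nontrivial in \(\PB_n(X_0)\), would become trivial in \(\PB_n(X)\) through a relation passing through the interior of \(\Sigma\).

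\textbf{Necessity.} Suppose \(X_0\) is a contractible witness with \(j_\ast\) injective.
\begin{enumerate}
\item[(i)] Replace \(X_0\) by a regular neighbourhood of a spine tree of \(X_0\), enlarged to contain \(\mathsf{F}_X\) (forced by the kernel condition, as in the proof of \cref{thm:main-intro}) and to meet each thick component in arcs. This turns the spine into a weakly admissible tree \(\sfT\), and the kernel equality and injectivity are preserved under this homotopy-theoretic replacement.
\item[(ii)] If \(\sfT\) failed admissibility on some \(\Sigma\) carrying a type-2 attachment, exhibit an explicit element of \(\ker j_\ast\). Take a joint \(\sfv\) of type 1 whose \(\sfT\)-component in \(\Sigma\) reaches only type-1 attachments. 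The commutator or star relation of two strands exchanging through a type-2 attachment yields a braid that is nontrivial in \(\PB_n(X_0)\), detected by abelianisation via a first-homology computation of the graph braid group, but trivial in \(\PB_n(X)\).
\end{enumerate}

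\textbf{Main obstacle.} I expect the hardest step to be (ii) together with the edge-group compatibility in (a). The difficulty is controlling exactly which braids at boundary joints die in \(X\), which requires a careful local computation in the resolved simple model for each attachment type. My first attempt would be to compute \(H_1\) of the local configuration spaces at each attachment type.
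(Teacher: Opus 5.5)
Your outline (thickened tree as witness, local injectivity, gluing) resembles the paper's, but both directions have a genuine gap exactly where the theorem has content.

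\textbf{Sufficiency.} The witness $X_0=N(\sfT)$ is not a Goldberg witness in general, and your claim in (a) that for singular thick pieces ``both sides are essentially trivial'' fails on the source side. Take $\bar X$ to be a book with three pages (three discs glued along a common boundary arc) with one pendant edge $\sfe$ attached at a point $w$ of the free boundary of a page. This complex is simple, contractible and Goldberg-trivial (\cref{thm:gt-classification}), so $\PB_n(\bar X)=1$. The tree $\sfT=\sfe$ is admissible, since $w$ is of type~2 because the book is not a $2$-manifold. But $N(\sfT)$ is a half-disc with a boundary pendant edge, so $\PB_n(N(\sfT))\cong\PB_n(D^2)\neq1$ and $j_\ast$ is not injective. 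The paper first augments $\sfT$ by pendant arcs from every component of $\sfT\cap\bar\Sigma$ into the branch locus of each non-manifold $\bar\Sigma$. Then every local piece is itself Goldberg-trivial and simply connected, hence has trivial pure braid group.

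This is also the actual role of admissibility, which your gluing paragraph misstates. When a surface $\bar\Sigma$ has an interior attachment, $N(\bar\Sigma)$ is Goldberg-trivial, so every disc braid of a witness piece dies there. Admissibility guarantees that each piece contains an interior pendant stub and so has $\PB_n=1$; it is not about edge-group maps being injective. Your gluing step is also too optimistic. Cutting at an interior point of an essential thin edge gives vertex groups that are products over strand distributions. When the cut does not disconnect $\bar X$ (it always disconnects the contractible witness), the witness's distribution graph only \emph{immerses} into that of $\bar X$. The paper needs the finite-cover unfolding of \cref{lem:unfolding}, the coset condition of \cref{lem:gog-injective-criterion} (checked via the retraction $\mathrm{st}_i\circ\mathrm{fgt}_i$), and an induction over products of braid groups of several pieces with partitioned strands (\cref{thm:local-to-global}).

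\textbf{Necessity.} Step~(i) is not legitimate. $\PB_n$ is not a homotopy invariant, so thickening or enlarging a witness cannot be assumed to preserve injectivity of $j_\ast$. For instance, thickening an arc piece at a boundary joint of a surface with an interior attachment creates disc braids that die in $\PB_n(N(\bar\Sigma))$. Step~(ii) also presupposes that the witness piece at the offending joint carries a non-trivial braid. If that piece were an arc, its braid group would be trivial and there would be nothing to exhibit. Excluding this is the hardest ingredient of the paper, \cref{thm:no-interval-pieces}: a weak witness meets no thick component in an interval. It is proved by splitting at separating free-edge points, descent along non-separating ones, and detour bands, and your proposal has no substitute for it.

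With it, the argument closes as follows. Injectivity of $j_\ast$ forces $\PB_n(\bar\Sigma_0^j)\to\PB_n(N(\bar\Sigma))$ to be injective (\cref{thm:witness-local-injectivity}). If $\bar\Sigma$ is a surface with an interior attachment, $N(\bar\Sigma)$ is Goldberg-trivial, so $\PB_n(\bar\Sigma_0^j)=1$. But a non-interval piece with only boundary joints contains a tripod whose braids surject onto $\PB_n(D^2)\neq1$ for a disc neighbourhood of its surface part, a contradiction. The admissible tree is then assembled from trees inside the pieces. Joints outside the witness, whose thin edges dead-end by \cref{prop:X0-meets-link-joint}, are attached by extra arcs.

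Finally, your reduction to the simple model is automatic only for weak Goldbergness. Transferring a (strong) Goldberg witness from the resolution $X'$ down to $X$ requires valency~$2$ at the new apex. The paper arranges this via \cref{lem:witness-apex-valency} and the tripod-to-disc modification of \cref{lem:tripod-to-disc}.
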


The gap between the two notions is genuine, and is witnessed
already by the simplest of examples: a disc \(D\) together with
a single free edge joining an interior point of \(D\) to a
boundary point is weakly Goldberg but not Goldberg
(\cref{ex:disc-chord}) -- weakly admissible trees exist, but
the cycle closed by the free edge and the disc prevents any of
them from being admissible.

\begin{theorem}[Goldberg-triviality;
\cref{thm:gt-classification}]\label{thm:main-gt-intro}
Let \(X\) be as above. Then
\(X\) is Goldberg-trivial if and only if \(X\) is
simple and: no point of \(X\) has valency at least \(3\); \(X\)
has exactly one thick component \(\Sigma\) and every free edge
is pendant; and \(\Sigma\) either is not a \(2\)-manifold, or
carries a pendant edge attached at an interior point.
\end{theorem}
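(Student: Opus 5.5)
Both directions will be reduced to the two classifications already stated, \cref{thm:main-intro} and \cref{thm:main-strong-intro}, together with a direct analysis of when the kernel of \(\iota_\ast\) vanishes. Since Goldberg-trivial spaces are Goldberg (\cref{rem:gt-implies-goldberg}), a Goldberg-trivial \(X\) admits an admissible tree; in particular its free part is a forest. The aim is then to sharpen this by exhibiting explicit nontrivial kernel elements whenever one of the listed conditions fails. Conversely, when all the conditions hold, I will compute \(\PB_n(X)\) directly and check that \(\iota_\ast\) is injective.

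\emph{Necessity.} I first reduce to simple complexes via the resolution procedure of An--Park. Resolution does not change the braid group, and the kernel statement transfers through it, so any non-simple phenomenon must be ruled out separately or absorbed. The remaining conditions I treat by contraposition, one at a time.

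If some point \(\sfv\) has valency \(\geq 3\), then inside a small cone neighbourhood of \(\sfv\), which is contractible, one obtains a star graph \(\sfS_3\). The braid group \(\PB_2(\sfS_3)\) is free of rank one, generated by the ``exchange cycle''. I will show it survives in \(\PB_n(X)\) by projecting via a retraction onto a neighbourhood and using the graph-of-spaces decomposition at \(\sfv\). Its strands are nullhomotopic because they stay in a contractible set, so it gives a nontrivial kernel element.

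If there are two thick components, or a non-pendant free edge, I will find either a point of valency \(\geq 2\) whose two sides both contain non-interval pieces, or a free edge meeting thick material at both ends. In either case I will build a braid in which one strand circles a loop while another is parked and swapped across the joint. The intended kernel element is a commutator of two ``strand travels past a joint'' moves, and it should be detected as nontrivial by the graph-of-spaces splitting at the joint, using normal forms in the amalgam or HNN extension.

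Finally, if \(\Sigma\) is a \(2\)-manifold with all pendant edges attached at boundary points, then \(X\) deformation retracts in \(F_n\) onto the surface alone. Sliding a pendant edge along the boundary collar reduces the pendant points to ones that can be absorbed into configurations of the surface. Since \(\Sigma\neq S^2,\RP^2\) by the standing assumption, Goldberg's theorem then yields a nontrivial Artin kernel \(\PB_2(D^2)\cong\Z\).

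\emph{Sufficiency.} Assume \(X\) is simple, has valencies \(\leq 2\), a unique thick \(\Sigma\), and only pendant free edges. First, if \(\Sigma\) is not a \(2\)-manifold, simplicity with valency \(\leq 2\) forces \(\Sigma\) to be locally a manifold of dimension \(\geq 3\) or to have codimension-\(\geq 3\)-style singular sets. In that case the fat diagonal has codimension high enough that \(\pi_1F_n(\Sigma)\to\pi_1\Sigma^n\) is an isomorphism, the Birman-type argument of \cref{rem:birman-high-dim}. Second, if \(\Sigma\) is a surface with a pendant edge at an interior point, the edge lets any two strands pass: a strand can step onto the edge, another crosses beneath, and it returns. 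This kills the Artin generators \(A_{ij}\), which generate \(\co{\im j_\ast}\). Combined with the weak Goldberg property (\cref{thm:main-intro}), which holds because the free part is a forest of pendant edges, this gives \(\ker\iota_\ast=\co{\im j_\ast}=1\).

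\emph{Main obstacle.} I expect the hardest step to be certifying \emph{nontriviality} in the necessity direction for mixed configurations, such as two thick pieces meeting through a short path, where the candidate commutator might a priori die. There I would first compute the relevant vertex group of the graph-of-spaces decomposition and check that the edge groups do not swallow the element, using reduced words.
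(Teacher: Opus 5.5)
\textbf{Main gap: sufficiency when \(\Sigma\) is not a \(2\)-manifold.} You claim that simplicity and valency \(\le 2\) force \(\Sigma\) to be locally a manifold of dimension \(\ge 3\), or to have a singular set of codimension \(\ge 3\). This is false. Three discs glued along a common boundary arc, or a disc with a triangular fin glued along an interior arc, are \(2\)-dimensional simple complexes in which every point has connected link, yet the branch locus has codimension one. There the fat diagonal of \(\Sigma^n\) has codimension only \(2\), and Birman's general-position argument gives nothing.

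The missing idea is the one in \cref{prop:branched-surface}. Thicken \(\Sigma\) to a regular neighbourhood in a PL manifold of dimension \(\ge 3\) by attaching cells; each attachment is a braid equivalence by \cref{prop:reduce-higher-cells,prop:reduce-2-cell}, and this is exactly where ``not a surface'' is used. Only then apply Birman, to the thickening.

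You also never pass from \(\Sigma\) to \(X\). Injectivity of \(\iota_\ast\) for \(\Sigma\) transfers to \(X\) only once \(\PB_n(\Sigma)\to\PB_n(X)\) is known to be surjective with kernel inside \(\ker\iota_\ast\). That is \cref{lem:pendant-edge}, which genuinely needs the attaching point to have connected link (\cref{rem:pendant-valency}); together these give \cref{cor:thick-with-pendants}.

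The same lemma settles the interior-pendant surface case directly: its kernel generators normally generate the disc braids, i.e.\ \(\ker\iota_\ast^\Sigma\), by Goldberg's theorem (or \cref{rem:s2-rp2}). Routing that case through \cref{thm:main-intro} instead forces you to name a witness whose braids die. Moreover, that theorem's proof in exactly this configuration (\cref{lem:witness-one-thick}) already rests on \cref{cor:thick-with-pendants}.

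\textbf{Necessity: simplicity is never proved.} It cannot be skipped, since the pinched sphere satisfies all the other conditions and is not Goldberg-trivial. Once no point has valency \(\ge 3\), a non-simple vertex has exactly two positive-dimensional link components. Resolving it creates a free edge whose endpoints are both joints. Your free-edge argument, run in the simple model (which is Goldberg-trivial along with \(X\), the collapses commuting with strand maps), then concludes.

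\textbf{Necessity: the valency-\(\ge 3\) case.} A retraction of \(X\) onto a neighbourhood induces no map of configuration spaces. The star's legs must also enter three distinct link components: legs sharing a link component can have all their braids killed, as at the attaching point of \(D^2\cup_p\mathsf{e}\) (\cref{ex:disc-interior}). What works is the immersion of distribution graphs in \cref{prop:star-embedding} and \cref{rem:star-link-components}.

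\textbf{Necessity: detecting the commutator.} Detection ``at the joint by normal forms'' fails as stated. At a joint the edge spaces \(F_{n-1}\times L\) have positive-dimensional \(L\), and the attaching maps need not be \(\pi_1\)-injective, so there is no graph-of-groups normal form. Cut instead at an interior point \(x\) of the free edge and use the collapse onto the free group \(\pi_1(\sfG_n)\) of the distribution graph (\cref{prop:pi1G-embedding}). The shuffle braid of \cref{lem:H-shuffle} maps to \(t_2t_1t_2^{-1}t_1^{-1}\neq 1\) if \(X_x\) is connected, and to an embedded octagon if it is not. This resolves your ``main obstacle'' uniformly.

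\textbf{A circularity to avoid.} Invoking \cref{thm:main-strong-intro} at the outset would be circular, because the proof of \cref{thm:goldberg-char-trees} uses this classification. Your argument does not actually need it, so simply drop that step.
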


Informally, \cref{thm:main-gt-intro} says that \(X\) is
Goldberg-trivial precisely when it is \emph{secretly
three-dimensional}: by a theorem of
Birman~\cite[Thm.~1]{Birman1969}, manifolds of dimension at
least \(3\) have \(\ker\iota_\ast=1\), and the conditions above
characterise the complexes whose braids enjoy the same freedom.

The bridge between the weak and the strong classification is a
local-to-global principle, which reduces the injectivity of
\(j_\ast\) -- a global condition -- to the geometry of the
witness inside the closed regular neighbourhoods \(N(\bar\Sigma)\) of
the thick components of the simple model \(\bar X\):

\begin{theorem}[Local-to-global;
\cref{thm:witness-local-injectivity} and
\cref{cor:goldberg-char-local}]\label{thm:main-local-intro}
The simple model \(\bar X\) is Goldberg if and only if it admits
a weak Goldberg witness \(\bar X_0\) such that
\(\PB_n(\bar\Sigma_0^i)\to\PB_n(N(\bar\Sigma))\) is injective
for every thick component \(\bar\Sigma\) and every connected
component \(\bar\Sigma_0^i\) of \(\bar X_0\cap N(\bar\Sigma)\).
\end{theorem}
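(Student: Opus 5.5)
The statement has two directions, and the nontrivial one is the passage from local injectivity to global injectivity of \(j_\ast\).

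\emph{Only if.} Suppose \(\bar X\) is Goldberg, witnessed by a contractible \(\bar X_0\) with \(j_\ast\) injective. I would show that each local map \(\PB_n(\bar\Sigma_0^i)\to\PB_n(N(\bar\Sigma))\) is injective by factoring the composite
\[
\PB_n(\bar\Sigma_0^i)\to\PB_n(\bar X_0)\xrightarrow{j_\ast}\PB_n(\bar X).
\]
This composite equals
\[
\PB_n(\bar\Sigma_0^i)\to\PB_n(N(\bar\Sigma))\to\PB_n(\bar X).
\]
The first arrow of the first factorisation should be injective. The reason is that \(\bar X_0\) is contractible, hence a tree-like union of pieces, and the graph-of-spaces decomposition of \(F_n(\bar X_0)\) at a point makes vertex groups inject. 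The composite is then injective, and hence so is its first factor \(\PB_n(\bar\Sigma_0^i)\to\PB_n(N(\bar\Sigma))\). Strictly, I only need injectivity of the full composite into \(\PB_n(\bar X)\), which follows from this factorisation.

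\emph{If.} I would organise the argument as a graph-of-groups computation. Cut \(\bar X\) along small points in the free part near each joint, which are the points where \(N(\bar\Sigma)\) meets \(\mathsf F_{\bar X}\). Iterating the paper's graph-of-spaces decomposition of configuration spaces at a point then expresses \(\PB_n(\bar X)\) as the fundamental group of a graph of groups. Its vertex groups are braid groups of pieces: configurations of sub-collections of strands in the \(N(\bar\Sigma)\) and in the free-part edges, which are \(1\)-dimensional. Its edge groups are braid groups of fewer strands, and the edge maps are the stabilisation maps adding strands along an arc.

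The witness \(\bar X_0\) meets this decomposition compatibly. Its pieces are the \(\bar\Sigma_0^i\) together with arcs of the free part, and these pieces form a tree because \(\bar X_0\) is contractible. So \(\PB_n(\bar X_0)\) is the fundamental group of a subgraph of groups, with vertex groups \(\PB_k(\bar\Sigma_0^i)\) mapping to \(\PB_k(N(\bar\Sigma))\). Here I must use that the hypothesis for \(n\) strands yields the corresponding statement for each \(k\le n\). I would argue this by a strand-forgetting/stabilisation retraction, pushing extra strands into a free arc of \(\bar\Sigma_0^i\).

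I would then invoke the standard fact (Bass--Serre) about a morphism of graphs of groups. Such a morphism induces an injection on fundamental groups provided three conditions hold: the vertex maps are injective; the underlying graph map is an immersion, here a subtree inclusion; and the double-coset condition holds at each vertex, namely that distinct edges at a vertex of the subgraph do not become identified modulo the vertex group. The double-coset compatibility is the main obstacle. My first attempt would exploit that the edge groups at a thick component are images of braid groups supported near distinct attachment points. A weak Goldberg witness meets each attachment in a single arc, so the relevant double cosets are separated by the strands' distribution among attachments. This distribution is a \(\pi_0\)-type invariant of \(F_n\) that is preserved by braids inside \(N(\bar\Sigma)\). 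Finally, I would apply \cref{thm:witness-local-injectivity} to assemble these local injections, and conclude that \(j_\ast\) is injective, so \(\bar X\) is Goldberg.
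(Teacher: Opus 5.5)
\paragraph{Assessment.} Your ``only if'' direction matches the paper's argument: the commutative square through \(\PB_n(\bar X_0)\), with the left vertical map injective by \cref{lem:x-component-embedding} at interior points of free edges. The ``if'' direction has a genuine gap, and it sits exactly where you build the morphism that Bass--Serre theory is supposed to handle.

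\paragraph{The witness side is not a subgraph of groups.} Its underlying map is not a subtree inclusion. Take a cut point \(x\) on an essential thin edge lying on a cycle of the scaffold. Then \(\bar X_x\) is connected, so \(F_n(\bar X_x)\) is connected. But \((\bar X_0)_x\) has two pieces, by contractibility. Hence all \(2^n\) distributions of the strands between the two witness pieces map to the single \(F_n\)-type vertex of \(\sfG\). Likewise, distinct components \(\bar\Sigma_0^i\neq\bar\Sigma_0^{i'}\) inside one \(N(\bar\Sigma)\) give distinct vertices with the same image. So the map of distribution graphs is only an immersion. The witness's distribution graph is not even a tree, since it carries the shuffle loops of \cref{lem:H-shuffle}. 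The vertex maps are products \(\prod_c\PB_{S_c}(Y_c)\to\PB_S(Z)\), not \(\PB_k(\bar\Sigma_0^i)\to\PB_k(N(\bar\Sigma))\).

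Cutting at all points simultaneously also does not give one graph of groups: configurations with strands at two cut points produce squares. The paper deals with all of this as follows.
\begin{enumerate}
\item It proves a stronger claim, for every strand set \(S\), every partition of \(S\), and every component \(Z\) of every iterated cut.
\item It reduces products to a single factor by forgetting strands.
\item It cuts one essential thin edge at a time and inducts.
\item It unfolds the immersion \(\sfG_0\to\sfG\) into a finite cover \(\widetilde{\sfG}\to\sfG\) (Stallings). The pull-back \(p^\ast\cG\) is a covering space of \(F_S(Z)\), so one works over a genuine embedding \(\sfG_0\hookrightarrow\widetilde{\sfG}\).
\end{enumerate}
Your remark about passing from \(n\) to \(k\le n\) strands corresponds to \cref{lem:stabilisation}, which is fine.

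\paragraph{The coset condition.} The condition the normal-form argument needs is not that distinct edges at a vertex stay distinct. It is, for each edge \(e\) at \(v\),
\[
\varphi_v^{-1}\bigl(\alpha_e(G_e)\bigr)=\alpha_e(H_e).
\]
Your proposed mechanism treats the distribution of strands among attachments as a \(\pi_0\)-invariant preserved by braids in \(N(\bar\Sigma)\). That is false: braids in the thick component carry strands freely from one attachment region to another.

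The paper's argument uses the retraction \(r=\mathrm{st}_i\circ\mathrm{fgt}_i\), which forgets the transferring strand and re-adjoins it parked on the germ. The edge subgroup is the image of \(r\). Since the germ lies in \(Y\), \(r\) is defined on both sides and commutes with \(\varphi_v\). Then \(\varphi_v(h)\in\alpha_e(G_e)\) gives \(\varphi_v(h)=r(\varphi_v(h))=\varphi_v(r(h))\), so \(h=r(h)\) by vertex injectivity.

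\paragraph{Two further problems.} First, invoking \cref{thm:witness-local-injectivity} at the end is circular, because that theorem presupposes a Goldberg witness. Second, you never use the weak-witness property where it is actually needed. Its role is to guarantee, via the containment results of \cref{sec:lower-bound}, that \(\bar X_0\) contains every essential thin edge and every point of valency \(\ge3\) together with all its germs. That is what makes the cuts apply to the witness. It also makes the free-part pieces of \(\bar X_0\) isotopic to those of \(\bar X\), which is how the paper's base case handles them.
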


The two closed surfaces excluded by the standing assumption
are settled separately, with one small surprise at \(n=2\):

\begin{theorem}[Exceptional surfaces;
\cref{thm:s2-rp2-exceptional}]\label{thm:main-s2rp2-intro}
For every \(n\geq2\), both \(S^2\) and \(\RP^2\) are weakly
Goldberg; they are Goldberg only in the single case
\(X\cong S^2\), \(n=2\), where \(\PB_2(S^2,\base)\) is trivial
and \(X\) is even Goldberg-trivial.
\end{theorem}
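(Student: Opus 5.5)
We treat \(S^2\) and \(\RP^2\) separately, using the classical descriptions of their pure braid groups together with Goldberg's kernel description recalled in \cref{rem:s2-rp2}.

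\textbf{Weak Goldbergness.} For both surfaces we take \(X_0=D^2\), an embedded disc containing \(\base\). This witness is contractible, and \(\ker\iota_\ast=\co{\im j_\ast}\) holds by \cref{rem:s2-rp2}. We would check that this is a genuine consequence of Goldberg's argument. The inclusion \(\ker\iota_\ast\supseteq\co{\im j_\ast}\) is immediate from \cref{rem:Q1.3-split}, since \(D^2\) is contractible. For the reverse inclusion in the case of \(S^2\), the target of \(\iota_\ast\) is trivial, so we must show \(\PB_n(S^2)\) is normally generated by \(\im j_\ast\). This follows from the Fadell--Van Buskirk presentation: \(\PB_n(S^2)\) is generated by the standard generators \(A_{ij}\), all of which are supported in a disc, so \(j_\ast\) is even surjective. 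For \(\RP^2\) we would use the Van Buskirk presentation. Its generators are the \(A_{ij}\) together with elements \(\rho_i\) in which strand \(i\) traverses the orientation-reversing loop. Let \(N=\co{\im j_\ast}\). The quotient by \(N\) kills the \(A_{ij}\), so it is generated by the images of the \(\rho_i\), subject to the relations \(\rho_i^2=(\text{product of }A\text{'s})\), which become \(\rho_i^2=1\), and the commutation relations modulo \(A\)'s. Hence the quotient maps onto \((\Z/2)^n\) and has order at most \(2^n\). Since \(\iota_\ast\) is surjective with \(N\subseteq\ker\iota_\ast\), we obtain \(\ker\iota_\ast=N\).

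\textbf{Failure of Goldbergness.} We must show that no contractible \(X_0\) makes \(j_\ast\) injective with \(\co{\im j_\ast}=\ker\iota_\ast\), except when \(X=S^2\) and \(n=2\). The key point is torsion.

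For \(S^2\) with \(n\ge3\), the group \(\PB_n(S^2)\cong \Z/2\times\PB_{n-3}(S^2\setminus 3\text{ pts})\) has nontrivial center containing the full twist of order \(2\).

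For \(\RP^2\) with \(n\ge2\), \(\PB_n(\RP^2)\) likewise contains torsion: the full twist has order \(2\), and \(\PB_2(\RP^2)\) is the quaternion group.

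On the other hand, \(\PB_n(X_0)\) is torsion-free for any contractible \(X_0\), since \(F_n(X_0)\) is aspherical. This step is the main obstacle: for a general contractible subcomplex of a surface, e.g.\ a tree together with discs, asphericity of the configuration space is delicate. We would argue instead by a reduction. A contractible subcomplex \(X_0\) of \(S^2\) or \(\RP^2\) has a regular neighbourhood that is a disc \(D\) (this uses the simple-model and resolution machinery of the paper), and the inclusion factors as \(X_0\subseteq D\subseteq X\). Hence \(\im j_\ast\subseteq \im(\PB_n(D)\to\PB_n(X))\). If \(j_\ast\) is injective, \(\im j_\ast\) is a subgroup isomorphic to \(\PB_n(X_0)\), which is torsion-free or a graph braid group; in either case it is torsion-free.

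A torsion-free image alone is not a contradiction, so we need more. Injectivity forces \(\PB_n(X_0)\to\PB_n(D)\) to be injective. The full twist \(\Delta^2\in\PB_n(D)\) maps to an element of order \(2\), hence is in the kernel of \(\PB_n(D)\to\PB_n(X)\). We then need some power or conjugate of \(\Delta^2\) to lie in \(\im(\PB_n(X_0)\to\PB_n(D))\). Since \(N=\co{\im j_\ast}\) must equal \(\ker\iota_\ast\), which contains the torsion element \(\tau\), write \(\tau\) as a product of conjugates of elements of \(\im j_\ast\). We would then take abelianisations, or use the \(\Z/2\)-valued invariant obtained by mapping \(\PB_n(X)\to\PB_n(X)/\co{\im j_\ast}\), to derive a contradiction with injectivity.

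A simpler route, which we would try first, compares centres and cohomological dimension. For \(S^2\) with \(n=3\), \(\PB_3(S^2)=\Z/2\), so any injective \(j_\ast\) from a torsion-free group has trivial image, which forces \(\ker\iota_\ast=1\), a contradiction. For \(n\ge4\) we would use a Fadell--Neuwirth fibration to reduce inductively to this case. For \(\RP^2\) with \(n=2\), \(\PB_2=Q_8\) is finite, so the image is trivial, again a contradiction; larger \(n\) are handled by projection to the first two strands.

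\textbf{The case \(S^2\), \(n=2\).} Here \(F_2(S^2)\simeq S^2\), so \(\PB_2(S^2)=1\), and \(X\) is Goldberg-trivial and hence Goldberg by \cref{rem:gt-implies-goldberg}.
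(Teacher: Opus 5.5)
The weak-Goldberg part is fine, and more explicit than the paper, which simply cites \cref{rem:s2-rp2}. The case \(S^2\), \(n=2\) is also fine. The gap is the non-Goldberg direction, which you establish only for \((S^2,n=3)\) and \((\RP^2,n=2)\). In those two cases \(\PB_n(X)\) is finite (\(\mathbb{Z}/2\), resp.\ \(Q_8\)), so a torsion-free \(\PB_n(X_0)\) injecting into it is trivial and \(\co{\im j_\ast}=1\neq\ker\iota_\ast\). For \(S^2\) with \(n\geq4\) and \(\RP^2\) with \(n\geq3\), ``reduce inductively via a Fadell--Neuwirth fibration'' and ``project to the first two strands'' are not arguments. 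The witness property (injectivity together with the normal-closure equality) does not obviously pass to a forgetful image, and after reaching \(k\) strands you would still have to return to \(n\) strands to contradict the witness property there. Your first route does isolate the right crux: some element of \(\PB_n(X_0)\) must map to a conjugate of the full twist. But you give no way of producing it, and the abelianisation/\(\mathbb{Z}/2\)-invariant idea is never developed. That route also contains a slip: \(\Delta^2\) maps to \(\delta\) of order two, so it is \(\Delta^4\), not \(\Delta^2\), that lies in \(\ker(\PB_n(D)\to\PB_n(X))\). The fact \(\delta\neq1\) is exactly what the contradiction uses.

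The missing step is a case analysis on the shape of the contractible witness, which is how the paper argues. Pick \(p\notin X_0\), so that \(X_0\subseteq\Sigma_p\coloneqq X\setminus\mathring N(p)\), a disc or a M\"obius band.
\begin{itemize}
\item If \(X_0\) contains a \(2\)-simplex \(D_0\), Goldberg's injectivity lemma for \(\Sigma_p\) (\cref{rem:goldberg-compact}) makes \(\PB_n(D_0)\to\PB_n(X_0)\) injective. An injective \(j_\ast\) would then carry the infinite-order full twist of \(D_0\) injectively to a conjugate of the finite-order \(\delta\), which is impossible.
\item If \(X_0\) is a tree with a vertex \(\sfv\) of valency \(m\geq3\), then \(\PB_n(\sfS_m)\hookrightarrow\PB_n(X_0)\) by \cref{prop:star-embedding}, and \(\PB_n(\sfS_m)\to\PB_n(N(\sfv))\) is onto by \cref{lem:star-to-disc}. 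A preimage of the full twist gives the same contradiction.
\item Otherwise \(X_0\) is an arc, so \(\PB_n(X_0)=1\), and the kernel equality forces \(\ker\iota_\ast=1\), contradicting \(\delta\neq1\).
\end{itemize}

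Alternatively, your finite-group route can be completed as follows.
\begin{itemize}
\item By \cref{lem:stabilisation}, injectivity of \(j_\ast\) on \(n\) strands gives injectivity on \(k=3\) strands (for \(S^2\)) or \(k=2\) strands (for \(\RP^2\)).
\item Then \(\PB_k(X_0)\) embeds both in the finite group \(\PB_k(X)\) and, via the factorisation, in the torsion-free \(\PB_k(X\setminus\{p\})\). Hence \(\PB_k(X_0)\) is trivial; this also makes your unproved asphericity claim unnecessary.
\item The (stabilised) hexagonal exchange of any tripod is non-trivial in \(\PB_k(X\setminus\{p\})\), so \(X_0\) contains no tripod and is therefore an arc. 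You then finish as in the arc case.
\end{itemize}
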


For the special case of manifolds the picture is complete:

\begin{theorem}[Manifold case]\label{thm:manifold-summary}
Let \(X = M\) be a connected manifold of dimension at least \(1\),
and fix \(n\geq 2\). Then:
\begin{enumerate}[label=(\roman*),leftmargin=2em]
    \item \(M\) is always weakly Goldberg;
    \item \(M\) is Goldberg if and only if
          \(M\not\cong S^2,\RP^2\), or \(M\cong S^2\) and
          \(n=2\);
    \item the strand map of \(M\) is surjective if and only if
          \(M\not\cong S^1\);
    \item \(M\) is Goldberg-trivial if and only if
          \(M\cong\mathsf{I}\) (an interval) or \(M\cong S^1\),
          or \(\dim M\geq 3\), or \(M\cong S^2\) and \(n=2\).
\end{enumerate}
\end{theorem}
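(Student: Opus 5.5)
The plan is to split by dimension of \(M\) and assemble the four items from results already stated, invoking the general characterisations (\cref{thm:main-intro}, \cref{thm:main-strong-intro}, \cref{thm:main-gt-intro}, \cref{thm:main-s2rp2-intro}) and the classical inputs \cref{thm:goldberg} and \cref{rem:birman-high-dim}.

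\emph{Dimension one.} A connected \(1\)-manifold that is a finite complex is \(\mathsf{I}\) or \(S^1\). Both are Goldberg-trivial by \cref{subsubsection:simply-connected} and \cref{prop:s1-case}, hence Goldberg (\cref{rem:gt-implies-goldberg}) and weakly Goldberg. For surjectivity, \(\mathsf{I}\) has trivial target. For \(S^1\), the image is the diagonal by \cref{prop:s1-case}, which is proper since \(n\ge2\).

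\emph{Dimension at least three.} By \cref{rem:birman-high-dim}, \(\iota_\ast\) is an isomorphism. So \(M\) is Goldberg-trivial, hence Goldberg and weakly Goldberg, and surjective.

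\emph{Dimension two.}
\begin{itemize}
\item If \(M\not\cong S^2,\RP^2\), \cref{thm:goldberg} together with \cref{rem:goldberg-compact} gives a disc witness \(D^2\) that is contractible, injective and realises the kernel. So \(M\) is Goldberg and \(\iota_\ast\) is surjective.
\item If \(M\cong S^2\) or \(\RP^2\), \cref{thm:main-s2rp2-intro} gives weak Goldbergness, Goldbergness exactly when \(M\cong S^2\) and \(n=2\), and Goldberg-triviality in that case. Surjectivity is immediate for \(S^2\) (trivial target) and follows from \cref{rem:s2-rp2} for \(\RP^2\).
\end{itemize}
This settles (i)--(iii).

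The remaining step, and the main obstacle, is the ``only if'' direction of (iv) for surfaces: a surface \(M\) other than \(S^2\) with \(n=2\) has \(\ker\iota_\ast\ne1\).
\begin{itemize}
\item For \(M\not\cong S^2,\RP^2\), \cref{thm:goldberg} identifies \(\ker\iota_\ast\) with an injective copy of \(\PB_n(D^2)\), which is nontrivial for \(n\ge2\).
\item For \(\RP^2\), Goldberg-triviality would imply Goldbergness (\cref{rem:gt-implies-goldberg}), contradicting \cref{thm:main-s2rp2-intro}.
\item For \(S^2\) with \(n\ge3\), the same contradiction applies. Alternatively, the kernel is all of \(\PB_n(S^2)\), which is nontrivial for \(n\ge3\).
\end{itemize}
As a consistency check, one can apply \cref{thm:main-gt-intro} to surfaces not excluded by the standing assumption. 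Such an \(X\) is a \(2\)-manifold thick component with no pendant edges, so the criterion fails, matching the conclusion above.
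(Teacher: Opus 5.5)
Your proof is correct and follows essentially the paper's dimension-by-dimension assembly. It differs in two places: you obtain (i) for surfaces other than \(S^2,\RP^2\) directly from Goldberg's theorem (Goldberg implies weakly Goldberg) rather than through the free-part characterisation \cref{thm:main-intro}, and you check (iii) case by case rather than citing \cref{prop:iota-surj}. One wording point: Goldberg's theorem makes \(\ker\iota_\ast\) the normal closure of the injective image of \(\PB_n(D^2)\), not that image itself, but containment is all your argument for (iv) needs.
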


\begin{proof}[Proof (assuming the results quoted)]
Statement~(iii) is the surjectivity dichotomy of
\cref{prop:iota-surj}. A compact connected \(1\)-manifold is an
interval or a circle, and both are Goldberg-trivial
(\cref{subsubsection:simply-connected,prop:s1-case}); a
manifold of dimension at least \(3\) is Goldberg-trivial by
Birman's Theorem~1~\cite[Thm.~1]{Birman1969}
(\cref{rem:birman-high-dim}). Since Goldberg-trivial complexes
are Goldberg, and Goldberg complexes are weakly Goldberg
(\cref{lem:gt-implies-goldberg}), this proves (i), (ii) and (iv) in
all dimensions other than \(2\).

Let \(\dim M=2\), so that \(M\) is a compact surface, possibly
with boundary. Then \(M\) has no free edges, so its free part is
empty, in particular a forest; when \(M\not\cong S^2,\RP^2\),
the standing assumption applies and \(M\) is weakly Goldberg by
\cref{thm:main-intro}, while \(S^2\) and \(\RP^2\) are weakly
Goldberg by \cref{thm:main-s2rp2-intro}. This proves (i). For
(ii), a compact surface other than \(S^2\) and \(\RP^2\) is
Goldberg by Goldberg's theorem (\cref{thm:goldberg}, in the
compact form of \cref{rem:goldberg-compact}), and among
\(S^2,\RP^2\) the only Goldberg case is \(M\cong S^2\) with
\(n=2\) (\cref{thm:main-s2rp2-intro}). For (iv), no surface is
Goldberg-trivial except \(S^2\) with \(n=2\): for
\(M\not\cong S^2,\RP^2\) the kernel of the strand map contains
the injectively embedded disc braids
\(\PB_n(D^2)\neq1\) (\cref{thm:goldberg,rem:goldberg-compact});
for \(M\cong\RP^2\) it contains the non-trivial image of the
full twist; and for \(M\cong S^2\) it is all of
\(\PB_n(S^2)\), which is non-trivial precisely when \(n\geq3\)
(see \cref{thm:s2-rp2-exceptional} and its proof).
\end{proof}

A particularly sharp consequence of \cref{thm:main-intro}
concerns the purely \(1\)-dimensional case:

\begin{theorem}[Graph case]\label{thm:graph-intro}
Let \(\Gamma\) be a finite connected graph (a simplicial complex
of dimension \(1\)), and fix \(n\ge 2\). Then the following are
equivalent:
\begin{enumerate}[label=\textup{(\roman*)},leftmargin=2em]
    \item \(\Gamma\) is weakly Goldberg;
    \item \(\Gamma\) is Goldberg;
    \item \(\Gamma\) is either a tree or homeomorphic to
          \(S^1\).
\end{enumerate}
\end{theorem}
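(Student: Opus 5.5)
The plan is to follow the proof of \cref{thm:manifold-summary}: read the graph case off \cref{thm:main-intro} and \cref{thm:main-strong-intro}, and treat the circle and the interval by hand. For a graph \(\Gamma\) there are no \(2\)-cells, so every maximal \(1\)-cell is a free edge. Hence there are no thick components, the free part \(\mathsf{F}_\Gamma\) is essentially all of \(\Gamma\) (up to deleting valence-one endpoints, which does not change the homotopy type), and \(\mathsf{F}_\Gamma\) is connected and homotopy equivalent to \(\Gamma\).

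The first step is to dispose of the degenerate graphs. If \(\Gamma\cong S^1\), then \(\Gamma\) is Goldberg-trivial by \cref{prop:s1-case}, hence Goldberg and weakly Goldberg by \cref{lem:gt-implies-goldberg}, so (i)--(iii) all hold. If \(\Gamma\cong\mathsf{I}\), it is a tree and Goldberg-trivial (\cref{subsubsection:simply-connected}), so again all three hold. In every remaining case \(\Gamma\) satisfies the standing assumption (\cref{rem:standing}); note that a graph can never be homeomorphic to \(S^2\) or \(\RP^2\).

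For (i)\(\Rightarrow\)(iii), apply \cref{thm:main-intro}. Weak Goldbergness forces every component of \(\mathsf{F}_\Gamma\) to be simply connected, so \(\Gamma\) is simply connected, that is, a tree. For (ii)\(\Rightarrow\)(i), use the inclusion Goldberg \(\subseteq\) weakly Goldberg, which is immediate from \cref{def:goldberg-strengths}.

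For (iii)\(\Rightarrow\)(ii) with \(\Gamma\) a tree, I would take the witness \(X_0=\Gamma\) itself. A tree is contractible, so \(X_0\) is admissible as a witness. Then \(j_\ast\) is the identity, hence injective, and \(\ker\iota_\ast=\PB_n(\Gamma)=\im j_\ast\) because the target of \(\iota_\ast\) is trivial. This is exactly the remark in \cref{subsubsection:simply-connected} that a contractible \(X\) is its own witness. Alternatively, one can check that the empty set of thick components makes the admissibility condition of \cref{thm:main-strong-intro} vacuous, so a maximal tree of the scaffold, namely \(\Gamma\), is admissible.

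The main point needing care is the identification of \(\mathsf{F}_\Gamma\) with \(\Gamma\) up to homotopy, because the definition excludes vertices of valency \(1\) and vertices whose link is zero-dimensional. For a graph every link is zero-dimensional, so I would check \cref{def:free-part} precisely. Whatever the vertex convention, the free edges cover \(\Gamma\), so the union of free edges together with their endpoints has the same fundamental group as \(\Gamma\). A cycle in \(\Gamma\) therefore yields a non-simply-connected component of the free part, which is all the argument uses.
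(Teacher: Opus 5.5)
Your proposal is correct and follows essentially the same route as the paper: (ii)\(\Rightarrow\)(i) by definition; (iii)\(\Rightarrow\)(ii) via the witness \(X_0=\Gamma\) for a tree and via Goldberg-triviality of \(S^1\) (\cref{prop:s1-case} with \cref{lem:gt-implies-goldberg}); and (i)\(\Rightarrow\)(iii) by applying \cref{thm:main-intro} once the standing assumption is checked. Your hedging about endpoints is unnecessary: free edges are closed \(1\)-cells, so \(\mathsf{F}_\Gamma=\Gamma\) exactly, which is what the paper uses.
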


\begin{proof}[Proof (assuming \cref{thm:main-intro})]
(ii)\(\Rightarrow\)(i) holds by definition. For
(iii)\(\Rightarrow\)(ii): if \(\Gamma\) is a tree, it is
contractible, so \(X_0=\Gamma\) is a Goldberg witness --
\(j_\ast\) is the identity, hence injective, and
\(\im j_\ast=\PB_n(\Gamma,\base)=\ker\iota_\ast\), since
\(\iota_\ast\) is the zero map
(\cref{subsubsection:simply-connected}); if
\(\Gamma\cong S^1\), then \(\Gamma\) is Goldberg-trivial, hence
Goldberg, by \cref{prop:s1-case}. For
(i)\(\Rightarrow\)(iii), suppose \(\Gamma\) is neither a tree
nor homeomorphic to \(S^1\); then \(\Gamma\) is not simply
connected and not an interval, so the standing assumption
applies to \(\Gamma\). Its free part is \(\Gamma\) itself
(every \(1\)-cell of \(\Gamma\) is free, since \(\Gamma\) has
no \(2\)-cells), so the unique connected component of
\(\mathsf{F}_\Gamma\) is \(\Gamma\), which is not simply
connected. By \cref{thm:main-intro}, \(\Gamma\) is not weakly
Goldberg.
\end{proof}

\subsection{Techniques and organisation}
\label{ssec:techniques}
The engine of the paper is a graph-of-spaces decomposition of
the configuration space at a point of \(X\)
(\cref{prop:valency-decomp}), whose collapse homomorphisms onto
the fundamental groups of the underlying \emph{distribution
graphs} serve as computable obstructions throughout; its first
fruits are free-product presentations of \(\PB_n(X)\) at free
edges and an explicit rank formula
(\cref{cor:free-edge-splitting,cor:free-edge-loop-rank}). The
basic element of \(\ker\iota_\ast\) is the \emph{shuffle braid}
supported in an embedded copy of the letter \(\sfH\)
(\cref{lem:H-shuffle}). Goldbergness in all its strengths is
invariant under An--Park resolutions
(\cref{thm:resolution-goldberg-equivalence}), which reduces
every question to the \emph{simple model} \(\bar X\). On the
necessity side, a battery of forcing results shows that a weak
Goldberg witness must contain every essential free edge, enter
every point of valency at least \(3\) along all its link
directions, and enter the thick part at every joint
(\cref{sec:lower-bound}); a cut-and-paste calculus for
witnesses -- splitting at separating points
(\cref{lem:witness-splitting}), descent along non-separating
ones (\cref{thm:witness-descent}), and stability under detour
bands (\cref{lem:detour}) -- then yields the theorem that a
witness meets the thick components in no interval pieces
(\cref{thm:no-interval-pieces}). On the sufficiency side,
weak witnesses are constructed explicitly from weakly admissible trees
(\cref{sec:simple-model-necessity}), and the local-to-global
principle is proved by unfolding distribution graphs into
finite covers via Stallings completions
(\cref{ssec:local-to-global}). \Cref{sec:goldberg-trivial}
classifies the Goldberg-trivial complexes, and
\cref{sec:goldberg-classification} assembles the
characterisation of Goldbergness by admissible trees.

\subsection*{Acknowledgements}
The author is grateful to Prof.\ Sunghwan Byun of the
Department of STEM Education at North Carolina State University
for his warm hospitality during the author's sabbatical year.
This work was supported by Samsung Science and Technology
Foundation under Project Number SSTF-BA2022-03.

\section{Preliminaries}

\subsection{Configuration spaces of cell complexes}

Let \(X\) be a finite connected simplicial complex of
dimension at least \(1\), and \(n\geq 1\) an integer. The
\emph{ordered configuration space} of \(n\) points on \(X\) is
\[
F_n (X) = \{\,(x_1, \dots, x_n) \in X^n : x_i \neq x_j
\text{ for } i \neq j\,\} \subseteq X^n.
\]
The symmetric group \(\mathbb{S}_n\) acts freely on \(F_n (X)\) by permutation
of entries, and the orbit space
\[
B_n (X) = F_n (X) / \mathbb{S}_n
\]
is called the \emph{unordered configuration space} of \(n\) points on
\(X\). Fix basepoints \(\base = (x_1^0,\dots,x_n^0)\in F_n (X)\) and
\([\base]\in B_n (X)\).

When \(X\) is connected, \(B_n(X)\) is always connected but \(F_n (X)\) need not be connected: for example, \(F_2(\mathsf{I})\)
consists of two components (distinguished by the order of the two
points on the interval), and the same phenomenon occurs more
generally whenever the induced permutation homomorphism \(\rho\)
below fails to be surjective. To work consistently with
fundamental groups, we therefore restrict attention to the
component of the basepoint: set
\[
F_n(X,\base) \subseteq F_n (X),
\qquad
B_n(X,[\base]) \subseteq B_n (X)
\]
to be the path components of \(F_n (X)\) and \(B_n (X)\) containing
\(\base\) and \([\base]\), respectively.

\begin{remark}[Standing convention on subspaces]\label{conv:subcomplex}
Every subspace of \(X\) considered in this paper -- in particular any
\emph{witness} \(X_0\subseteq X\) (\cref{def:goldberg-strengths}) -- is
a finite simplicial subcomplex of \(X\); equivalently, after passing to
a subdivision of \(X\) it is a genuine subcomplex. We subdivide \(X\)
freely and without further comment whenever convenient, for instance to
arrange that a prescribed finite set of points, such as the basepoints
\(\base\), lies in the subcomplex under consideration.

Since \(X_0\) is a subcomplex of \(X\), links are respected by the
simplicial inclusion: for every vertex \(x\in X_0\) one has
\(\lk_{X_0}(x)\subseteq\lk_X(x)\).
\end{remark}

The \emph{pure \(n\)-braid group} and the
(\emph{full}) \(n\)-braid group of \(X\) are, respectively,
\begin{align*}
\PB_n(X,\base) &= \pi_1(F_n(X,\base),\base)\cong \pi_1(F_n(X),\base),\\
\BG_n(X,[\base]) &= \pi_1(B_n(X,[\base]),[\base])\cong \pi_1(B_n(X),[\base]).
\end{align*}
The covering \(F_n (X)\to B_n(X)\) induces an exact sequence
\[
1 \longrightarrow \PB_n(X,\base)
\longrightarrow \BG_n(X,[\base])
\xrightarrow{\rho} \mathbb{S}_n,
\]
where the homomorphism \(\rho\) records the permutation of strands
induced by a braid; we call \(\rho\) the \emph{induced permutation
homomorphism}. The pure braid group \(\PB_n(X,\base)\) is therefore a
normal subgroup of \(\BG_n(X,[\base])\) with quotient
\(\im(\rho)\subseteq\mathbb{S}_n\). 
\begin{example}[Star graph \(\sfS_3\)]\label{ex:S3}
Let \(X = \sfS_3\) be the star graph with three leaves. For
each \(n\geq 1\) there is a short exact sequence
\[
1 \longrightarrow \PB_n(\sfS_3,\base)
\longrightarrow \BG_n(\sfS_3,[\base])
\xrightarrow{\rho} \mathbb{S}_n
\longrightarrow 1
\]
for any \(\base\in F_n(\sfS_3)\),
i.e.\ the induced permutation homomorphism \(\rho\) is surjective.
\end{example}

A connected simplicial complex \(X\) of dimension at least \(1\)
contains an embedded star graph \(\sfS_3\) if and only if
\(X\) does not embed in \(S^1\). Indeed, a connected \(1\)-dimensional
simplicial complex in which every vertex has valency \(\leq 2\) is
homeomorphic to an interval \(\mathsf{I}\) or to the circle \(S^1\),
both of which embed in \(S^1\); conversely, \(S^1\) contains no
embedded \(\sfS_3\).

In general \(\rho\) need
\emph{not} be surjective; the next lemma gives a clean geometric
condition that forces surjectivity.

\begin{lemma}\label{lem:rho-surj}
The induced permutation homomorphism
\(\rho\colon\BG_n(X,[\base])\to\mathbb{S}_n\) is surjective -- equivalently,
\(F_n(X)\) is connected -- if and only if at least one of the
following holds:
\begin{enumerate}[label=(\arabic*),leftmargin=2em]
    \item \(n = 1\);
    \item \(X\cong S^1\) and \(n = 2\);
    \item \(\sfS_3\) embeds in \(X\) and \(n\geq 1\).
\end{enumerate}
In each of these cases \(\PB_n(X,\base)\) has index \(n!\) in \(\BG_n(X,[\base])\).
\end{lemma}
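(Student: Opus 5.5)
We first record the equivalence of surjectivity of \(\rho\) with connectedness of \(F_n(X)\). The covering \(F_n(X)\to B_n(X)\) is a regular \(\mathbb{S}_n\)-covering, and \(B_n(X)\) is connected. Hence the path components of \(F_n(X)\) are permuted transitively by \(\mathbb{S}_n\). The stabiliser of the component \(F_n(X,\base)\) is exactly \(\im\rho\): a loop at \([\base]\) lifts to a path from \(\base\) to \(\sigma\cdot\base\), and conversely a path in \(F_n(X)\) from \(\base\) to \(\sigma\cdot\base\) projects to a loop. So the number of components is \([\mathbb{S}_n:\im\rho]\). It follows that \(\rho\) is onto if and only if \(F_n(X)\) is connected, and in that case the exact sequence gives \([\BG_n:\PB_n]=|\im\rho|=n!\). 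The rest of the proof decides, case by case, when every permutation of \(\base\) is realised by a loop in \(B_n(X)\).

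\emph{Sufficiency.} Case (1) is trivial, since \(\mathbb{S}_1=1\). In case (2), rotating \(S^1\) by a half turn carries \((x_1^0,x_2^0)\) to \((x_2^0,x_1^0)\) through configurations, so the transposition lies in \(\im\rho\). For case (3), I would first treat \(X=\sfS_3\), which is \cref{ex:S3}; I would nevertheless sketch it directly. Put all \(n\) points on one leg. Two adjacent points can be swapped by parking one of them in a second leg, moving the other past the centre into the third leg, and then bringing both back. Adjacent transpositions generate \(\mathbb{S}_n\), so \(\rho\) is onto for \(\sfS_3\).

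For general \(X\supseteq\sfS_3\), use connectedness of \(X\) to move the points of \(\base\) one by one into the embedded star along paths. This is possible because \(X\) has dimension \(\ge1\) and is not a point, so points can be pushed along arcs. One mild subtlety arises when \(X\) is a graph and a path passes through other points. In that case I would instead move the points in a suitable order, or equivalently use that \(B_n(X)\) is connected, which the paper already states. So \([\base]\) is joined to a configuration \([\bfy]\) inside \(\sfS_3\). Conjugating loops based at \([\bfy]\) by this path gives \(\im\rho\supseteq\rho(\BG_n(\sfS_3,[\bfy]))=\mathbb{S}_n\), using that the image of \(\rho\) is invariant under change of basepoint along a path. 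The identification of \(\rho\) under change of basepoint is the one point needing care here.

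\emph{Necessity.} Suppose \(n\ge2\) and \(\sfS_3\) does not embed in \(X\). By the remark preceding the lemma, \(X\cong\mathsf{I}\) or \(X\cong S^1\). For \(\mathsf{I}\), the linear order of the points is preserved along any path in \(F_n\), so \(\im\rho=1\ne\mathbb{S}_n\). For \(S^1\), the cyclic order is preserved, so \(\im\rho\) lies in the cyclic group \(\Z/n\). This has order \(n<n!\) when \(n\ge3\), so \(\rho\) is not onto, and this forces \(n=2\), which is case (2).

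The main obstacle is making the order-preservation claims rigorous. I would do this by observing that \(F_n(\mathsf{I})\) is the disjoint union of the \(n!\) open simplices \(\{x_{\sigma(1)}<\dots<x_{\sigma(n)}\}\). For \(S^1\), lift to \(\R\): the function recording cyclic order is locally constant on \(F_n(S^1)\), so its components are indexed by cyclic orders, of which there are \((n-1)!\ge2\) for \(n\ge3\).
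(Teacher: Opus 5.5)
Your proof is correct and follows essentially the same route as the paper. Sufficiency comes from realising all permutations inside an embedded \(\sfS_3\) (\cref{ex:S3}) and transporting the basepoint. Necessity reduces to \(X\cong\mathsf{I}\) or \(X\cong S^1\), where your linear and cyclic order-preservation arguments amount to the paper's observations that \(\BG_n(\mathsf{I})=1\) and that \(\im\rho\) is generated by the \(n\)-cycle. Your covering-space argument for the clause ``equivalently, \(F_n(X)\) is connected'' is a useful addition, since the paper leaves that equivalence implicit. Two small points need tidying: the half turn in case (2) swaps the basepoints only when they are antipodal, so you should instead slide each point along its arc to the other's position; and in your \(\sfS_3\) sketch, the points nearer the centre must first be parked in another leg before a non-innermost adjacent pair can be swapped.
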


\begin{proof}
Since \(\PB_n(X,\base)=\ker\rho\), the index statement is
immediate from the surjectivity: the index equals
\(|\im\rho|=n!\).

Case (1) is trivial, \(\mathbb{S}_1\) being the trivial group.
In case (2), \(\BG_2(S^1,[\base])\) contains the braid rotating
the two points simultaneously once around the circle, whose
underlying permutation is the transposition; hence \(\rho\) is
onto \(\mathbb{S}_2\). In case (3), fix an embedding
\(\sfS_3\subseteq X\) and a base configuration
\(\bfy^0\in F_n(\sfS_3)\). By \cref{ex:S3} the permutation
homomorphism of \(\sfS_3\) is surjective, and it factors as the
composition
\[
\BG_n(\sfS_3,[\bfy^0])\longrightarrow\BG_n(X,[\bfy^0])
\xrightarrow{\rho}\mathbb{S}_n
\]
of the inclusion-induced map with \(\rho\); transporting the
basepoint from \([\bfy^0]\) to \([\base]\) conjugates \(\rho\)
by a fixed permutation and does not affect surjectivity. Hence
\(\rho\) is surjective.

Conversely, suppose \(\rho\) is surjective and (3) fails. Since
\(\sfS_3\) does not embed in \(X\), the complex \(X\) embeds in
\(S^1\), as noted above, and is therefore homeomorphic to an
interval or to \(S^1\). If \(X\cong\mathsf{I}\), then
\(\BG_n(\mathsf{I},[\base])\) is trivial -- the unordered
configuration space of an interval is contractible -- so
\(\im\rho=1\) and surjectivity forces \(n=1\), which is
case~(1). If \(X\cong S^1\), then \(\BG_n(S^1,[\base])\) is
infinite cyclic, generated by the simultaneous rotation, whose
permutation is the \(n\)-cycle \((1\,2\,\cdots\,n)\); hence
\(\im\rho\) is the cyclic group generated by this \(n\)-cycle,
and surjectivity forces \(n!=n\), that is, \(n\leq2\) -- which
is case~(1) or case~(2).
\end{proof}

The cases excluded by \cref{lem:rho-surj} are easy to
catalogue: a connected simplicial complex of dimension \(\geq 1\)
that embeds in \(S^1\) is either an interval \(\mathsf{I}\) or the
circle \(S^1\) itself.

\begin{enumerate}
    \item If \(X = \mathsf{I}\), then
    \(\BG_n(\mathsf{I},[\base])\) is trivial for any
    \(\base\in F_n(\mathsf{I})\) -- the unordered configuration
    space of an interval is contractible -- and
    \(\PB_n(\mathsf{I},\base)=1\) as well
    (\cref{subsubsection:simply-connected}). The image of
    \(\rho\) is therefore trivial, and \(\rho\) is a surjection
    onto \(\mathbb{S}_n\) only when \(n = 1\).
    \item If \(X = S^1\), then both \(\PB_n(S^1,\base)\) and \(\BG_n(S^1,[\base])\)
    are infinite cyclic for any \(\base\in F_n(S^1)\); \(\PB_n(S^1,\base)\) sits in \(\BG_n(S^1,[\base])\) as a
    subgroup of index \(n\), and \(\im(\rho)\) is the cyclic
    subgroup of \(\mathbb{S}_n\) of order \(n\) -- consisting precisely
    of the cyclic permutations -- generated by the \(n\)-cycle
    \((1\,2\,\cdots\,n)\). Hence \(\rho\) is a surjection onto
    \(\mathbb{S}_n\) only when \(n\leq 2\) (since \(n! = n\) exactly in
    that range).
\end{enumerate}

\begin{example}[Classical braid group]\label{ex:disc}
For \(X = D^2\), the group \(\BG_n(D^2)\) is the classical \(n\)-strand
braid group of Artin \cite{Artin1947}. Attaching an edge to the
boundary of the disc does not change the braid group: writing
\(D^2\cup \mathsf{e}\) for \(D^2\) with an edge \(\mathsf{e}\) glued
to a boundary point (\cref{fig:disc-boundary-edge}), the inclusion
\(D^2\hookrightarrow D^2\cup \mathsf{e}\) induces an isomorphism
\[
\BG_n(D^2) \cong \BG_n(D^2\cup \mathsf{e}),
\]
since there is an embedding
\(D^2\cup\mathsf{e}\hookrightarrow D^2\), absorbing the edge
into a collar of the boundary, which is inverse to the
inclusion \(D^2\hookrightarrow D^2\cup\mathsf{e}\) up to
isotopy; isotopic embeddings induce the same homomorphism of
braid groups, so the two induced homomorphisms are mutually
inverse isomorphisms. (A deformation retraction
\(D^2\cup\mathsf{e}\to D^2\) alone would not suffice: not
being injective, it induces no map of configuration spaces.)
\end{example}

\begin{example}[Interior edge raises the effective
dimension]\label{ex:disc-interior}
Let \(D^2\cup_p \mathsf{e}\) denote \(D^2\) with a single edge
\(\mathsf{e}\) attached at an \emph{interior} point \(p\) of \(D^2\)
(\cref{fig:disc-interior-edge}). Then
\[
\BG_n(D^2\cup_p \mathsf{e}) \cong \mathbb{S}_n,
\]
and in particular the pure braid group
\(\PB_n(D^2\cup_p \mathsf{e})\) is trivial. Moreover, any inclusion \(D^2\cup_p \mathsf{e}\to D^3\) induces an isomorphism
\[
\BG_n(D^2\cup_p \mathsf{e}) \cong \BG_n(D^3)
\cong \mathbb{S}_n.
\]
Therefore attaching an interior edge effectively promotes the local
dimension to \(3\), which by Birman's Theorem~1~\cite[Thm.~1]{Birman1969}
gives the full braid group as \(\mathbb{S}_n\) and
trivialises the pure braid group.
\end{example}

\begin{figure}[ht]
\centering
\begin{subfigure}[b]{0.48\textwidth}
\centering
\begin{tikzpicture}[line width=0.8pt,scale=1]
  \draw[fill=gray!15] (0,0) ellipse (1.2 and 0.5);
  \node at (0,0) {\(D^2\)};
  \filldraw (1.2,0) circle (1.6pt);
  \draw (1.2,0) -- (2.4,0);
  \filldraw (2.4,0) circle (1.3pt);
  \node at (1.8,0.22) {\(\mathsf{e}\)};
  \path (0,1.35);
\end{tikzpicture}
\caption{The edge attached at a boundary point:
\(D^2\cup\mathsf{e}\).}
\label{fig:disc-boundary-edge}
\end{subfigure}
\hfill
\begin{subfigure}[b]{0.48\textwidth}
\centering
\begin{tikzpicture}[line width=0.8pt,scale=1]
  \draw[fill=gray!15] (0,0) ellipse (1.2 and 0.5);
  \node at (-0.55,-0.08) {\(D^2\)};
  \filldraw (0.35,-0.02) circle (1.6pt);
  \node[below] at (0.35,-0.10) {\(p\)};
  \draw (0.35,-0.02) -- (0.35,1.15);
  \filldraw (0.35,1.15) circle (1.3pt);
  \node at (0.62,0.62) {\(\mathsf{e}\)};
\end{tikzpicture}
\caption{The edge attached at an interior point:
\(D^2\cup_p\mathsf{e}\).}
\label{fig:disc-interior-edge}
\end{subfigure}
\caption{The complexes of \cref{ex:disc,ex:disc-interior}.}
\label{fig:disc-edges}
\end{figure}

\begin{example}[Graphs with one essential vertex]\label{ex:one-essential}
For integers \(\ell, m\geq 0\) with \(2\ell+m\ge 3\), let \(\Gamma_{\ell, m}\) be the graph
with a single essential vertex of valence \(2\ell + m\) carrying
\(\ell\) loops and \(m\) pendant leaves. Then \(\BG_n(\Gamma_{\ell, m})\)
is always a free group; by Ko--Park~\cite[Thm.~3.16]{KoPark2012}
and An--Kim~\cite[Prop.~3.8]{AnKim2026}, its rank is
\[
\mathrm{rank}\,\BG_n(\Gamma_{\ell, m})
=
(2\ell + m - 2)\binom{n + \ell + m - 2}{\ell + m - 1}
- \binom{n + \ell + m - 2}{\ell + m - 2}
+ 1.
\]

In the special case \(\ell = m = 1\) the graph \(\Gamma_{1,1}\) is called the
\emph{lollipop graph} \(\Lollipop\): a single circle \(\mathsf{C}\) with one additional edge
\(\mathsf{e}\) attached at a single vertex (\cref{fig:lollipop}). Substituting
\(\ell = m = 1\) above gives \(1\cdot\binom{n}{1} - \binom{n}{0} + 1= n\), so
\[
\BG_n(\Lollipop) \text{ is free of rank } n.
\]
Applying Schreier's formula to the index-\(n!\) surjection
\(\rho:\BG_n(\Lollipop)\rightarrow \mathbb{S}_n\)
(\cref{lem:rho-surj}) then gives
\[
\PB_n(\Lollipop) \text{ is free of rank }
(n-1)\,n! + 1.
\]
\end{example}

Recall the standing assumption. For a nonsimply connected \(X\), the boundary between \(X\cong S^1\) and \(X\not\cong S^1\) is captured
neatly by the lollipop graph \(\Lollipop\). The ``parking edge'' \(\mathsf{e}\)
already gives enough room for the strand-independence phenomenon
responsible for surjectivity of \(\iota_\ast\), and this is in fact
the local model for every \(X\not\cong S^1\).

\begin{figure}[ht]
\centering
\begin{tikzpicture}[line width=0.8pt,scale=1]
  \draw (0,0) circle (1);
  \filldraw (1,0) circle (1.6pt);
  \draw (1,0) -- (2.6,0);
  \filldraw (2.6,0) circle (1.6pt);
  \node at (-1.3,0) {\(\mathsf{C}\)};
  \node at (1.8,0.28) {\(\mathsf{e}\)};
\end{tikzpicture}
\caption{The lollipop graph
\(\Lollipop = \mathsf{C}\cup \mathsf{e}\): a circle \(\mathsf{C}\)
together with a single edge \(\mathsf{e}\) attached at one vertex.}
\label{fig:lollipop}
\end{figure}

\begin{lemma}[Lollipop embedding]\label{lem:lollipop-embed}
Let \(X\) be a finite connected non-simply-connected simplicial
complex of dimension at least \(1\). Then
\[
X \not\cong S^1 \quad\Longleftrightarrow\quad
\text{there exists an embedding } \Lollipop
\hookrightarrow X
\]
of the lollipop graph
\(\Lollipop = \mathsf{C}\cup \mathsf{e}\) (\cref{fig:lollipop})
into \(X\). Moreover, when these equivalent conditions hold, for
every embedded cycle \(\mathsf{c}\subseteq X\) there
exists an embedding \(\phi\colon \Lollipop \hookrightarrow X\) whose
cycle \(\mathsf{C}\subseteq \Lollipop\) satisfies
\(\phi(\mathsf{C}) = \mathsf{c}\).
\end{lemma}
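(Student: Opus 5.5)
The easy direction goes first. If \(X\cong S^1\), then \(X\) contains no embedded \(\Lollipop\). Indeed, \(\Lollipop\) contains a point of valency \(3\) (the vertex where \(\mathsf{e}\) meets \(\mathsf{C}\)), hence an embedded star graph \(\sfS_3\), and the remark preceding \cref{lem:rho-surj} shows that \(S^1\) contains no embedded \(\sfS_3\). So the existence of an embedding forces \(X\not\cong S^1\). The converse follows from the stronger ``moreover'' statement, since a non-simply-connected finite connected simplicial complex contains an embedded cycle. Concretely, \(\pi_1(X)=\pi_1(X^{(1)})/\langle\text{2-cell relations}\rangle\) is non-trivial, so the \(1\)-skeleton \(X^{(1)}\) is not a tree and therefore contains an embedded simplicial cycle. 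It therefore suffices to prove the ``moreover'' part.

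Fix an embedded cycle \(\mathsf{c}\subseteq X\), subdividing \(X\) if necessary so that \(\mathsf{c}\) is a subcomplex. The plan is to find a point \(p\in\mathsf{c}\) and an arc \(\mathsf{e}\) that starts at \(p\) and meets \(\mathsf{c}\) only at \(p\); the union \(\mathsf{c}\cup\mathsf{e}\) is then the required image of \(\Lollipop\). Since \(X\not\cong S^1\) and \(\mathsf{c}\cong S^1\), we have \(\mathsf{c}\neq X\). Because \(X\) is connected and \(\mathsf{c}\) is a closed proper subcomplex, some simplex \(\sigma\) of \(X\) not contained in \(\mathsf{c}\) has a face meeting \(\mathsf{c}\); in particular there is a vertex \(p\in\mathsf{c}\) and a simplex \(\sigma\ni p\) with \(\sigma\not\subseteq\mathsf{c}\).

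We now produce the arc in two cases.
\begin{enumerate}[label=(\alph*),leftmargin=2em]
\item If \(\sigma\) has a vertex \(q\notin\mathsf{c}\), the edge of \(\sigma\) from \(p\) to \(q\) is not contained in \(\mathsf{c}\). Its interior is disjoint from \(\mathsf{c}\), because \(\mathsf{c}\) is a subcomplex and this open edge is not one of its cells. Take a short initial segment of this edge as \(\mathsf{e}\).
\item If every vertex of \(\sigma\) lies in \(\mathsf{c}\), then \(\sigma\) is either an edge not in \(\mathsf{c}\) (a chord between two vertices of \(\mathsf{c}\)) or a simplex of dimension at least \(2\). In both cases the open simplex \(\mathrm{int}\,\sigma\) is disjoint from \(\mathsf{c}\). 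Choose \(\mathsf{e}\) to be a short straight segment from \(p\) into \(\mathrm{int}\,\sigma\), for example the initial part of the segment from \(p\) to the barycentre. It meets \(\mathsf{c}\) only at \(p\), since the half-open segment lies in \(\mathrm{int}\,\sigma\).
\end{enumerate}
After a further subdivision, \(\mathsf{c}\cup\mathsf{e}\) is a subcomplex homeomorphic to \(\Lollipop\) whose cycle is exactly \(\mathsf{c}\).

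The main point needing care is the existence of \(\sigma\), that is, that \(\mathsf{c}\neq X\) really yields a simplex incident to \(\mathsf{c}\) and not contained in it. Suppose no such simplex existed. Then every simplex meeting \(\mathsf{c}\) would lie in \(\mathsf{c}\), making \(\mathsf{c}\) open as well as closed in \(X\). Connectedness would then give \(\mathsf{c}=X\), so \(X\cong S^1\), a contradiction. The remaining work is routine bookkeeping of subdivisions, which the paper's standing convention (\cref{conv:subcomplex}) allows us to perform freely.
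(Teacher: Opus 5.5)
Your proof is correct and takes essentially the paper's route: in both, \(\mathsf{c}\neq X\) together with connectedness of \(X\) produces an arc that meets \(\mathsf{c}\) only in one endpoint, and \(\mathsf{c}\) plus this arc is the required lollipop. The differences are minor. The paper uses a simple path from a point outside \(\mathsf{c}\), stopped at its first contact with \(\mathsf{c}\), whereas you take a short segment into a simplex \(\sigma\) incident to \(\mathsf{c}\) with \(\sigma\not\subseteq\mathsf{c}\); your case split is unnecessary, since \(\mathrm{int}\,\sigma\) is disjoint from \(\mathsf{c}\) for any such \(\sigma\). You also justify, via the \(1\)-skeleton, the existence of an embedded cycle, which the paper leaves implicit.
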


\begin{proof}
(\(\Leftarrow\)) If \(\Lollipop\) embeds in \(X\) then \(X\) contains a
circle together with a non-loop edge attached to it; in particular
\(X\) properly contains an \(S^1\), so \(X\not\cong S^1\).

(\(\Rightarrow\)) Assume \(X\not\cong S^1\)
and
fix an embedded cycle \(\mathsf{c}\subseteq X\).
We exhibit an embedding \(\phi\colon \Lollipop \hookrightarrow X\)
with \(\phi(\mathsf{C}) = \mathsf{c}\). Since \(X\not\cong S^1\), \(X\)
is connected and \(X\setminus \mathsf{c}\neq\emptyset\); choose a
point \(p\in X\setminus \mathsf{c}\) and a simple path \(\gamma\) in
\(X\) from \(p\) to a vertex \(\sfv\in \mathsf{c}\), where \(\gamma\) meets
\(\mathsf{c}\) only at \(\sfv\). The union \(\mathsf{c}\cup\gamma\) is then
an embedded copy of \(\Lollipop\) with cycle \(\mathsf{c}\) and stick
\(\gamma\), providing the desired embedding \(\phi\).

The ``moreover'' clause is the construction itself: it produces
an embedding sending \(\mathsf{C}\) onto the prescribed \(\mathsf{c}\).
\end{proof}

\subsection{Simple complexes via An--Park}
\label{ssec:simple-an-park}

To determine whether \(X\) enjoys the Goldberg property (\cref{def:goldberg-strengths}), it is convenient to
first replace \(X\) by a simpler model. We record here a uniform
structural reduction due to An--Park: every finite simplicial
complex is braid-equivalent to a simple complex (of dimension at
most \(2\)), and so we may always assume \(X\) is such.

Before stating the An--Park notion of simplicity, we fix some
local terminology. For a vertex \(\sfv\) of a finite simplicial
complex \(X\), write \(\lk_X(\sfv)=\bigcup_{i=0}^k L_i\) for the connected
components \(L_i\) of the link, and call \(k\) the
\emph{valency} of \(\sfv\), denoted \(\val_X(\sfv)\). More
generally, for any point \(x\in X\) (not necessarily a vertex),
the link components and valency of \(x\) are defined by first
subdividing \(X\), if necessary, so that \(x\) becomes a vertex, and
then taking the link components and valency of \(x\) in the
resulting subdivision; the homeomorphism type of the link and of
its components does not depend on the chosen subdivision, so the
definition is unambiguous.

\begin{definition}[{An--Park~\cite[Def.~2.7]{AnPark2017}}]
\label{def:simple-complex}
A vertex \(\sfv\) of a finite simplicial complex \(X\) is \emph{simple}
if its link \(\lk_X(\sfv)\) is one of the following:
\begin{enumerate}[label=(\roman*), leftmargin=2em]
    \item connected;
    \item a disjoint union of a connected complex and a single
          point;
    \item \(0\)-dimensional.
\end{enumerate}
The complex \(X\) is \emph{simple} if every vertex of \(X\) is simple.
\end{definition}

\Cref{fig:simple-vertices} shows a simple vertex for each of
the three cases, together with a non-simple vertex.

\begin{figure}[ht]
\centering
\begin{subfigure}[b]{0.24\textwidth}
\centering
\begin{tikzpicture}[line width=0.8pt,scale=0.85]
  \fill[gray!10] (-1.2,0) .. controls (-0.7,-1.15) and (0.7,-1.15) .. (1.2,0) -- cycle;
  \draw[red] (-1.2,0) .. controls (-0.7,-1.15) and (0.7,-1.15) .. (1.2,0);
  \fill[gray!20] (-1.2,0) .. controls (-0.7,-0.55) and (0.7,-0.55) .. (1.2,0) -- cycle;
  \draw[red] (-1.2,0) .. controls (-0.7,-0.55) and (0.7,-0.55) .. (1.2,0);
  \fill[gray!15] (-1.2,0) .. controls (-0.7,1.1) and (0.7,1.1) .. (1.2,0) -- cycle;
  \draw[red] (-1.2,0) .. controls (-0.7,1.1) and (0.7,1.1) .. (1.2,0);
  \draw[line width=1pt] (-1.2,0) -- (1.2,0);
  \filldraw[red] (-1.2,0) circle (1.3pt);
  \filldraw[red] (1.2,0) circle (1.3pt);
  \filldraw (0,0) circle (1.6pt);
  \node[below right=-2pt] at (0.02,-0.02) {\(\sfv\)};
  \path (0,-1.35) (0,1.35);
\end{tikzpicture}
\caption{\(\lk_X(\sfv)\cong\mathsf{K}_{2,3}\)}
\label{fig:simple-theta}
\end{subfigure}
\hfill
\begin{subfigure}[b]{0.24\textwidth}
\centering
\begin{tikzpicture}[line width=0.8pt,scale=0.85]
  \draw[color=red,fill=gray!15] (0,0) ellipse (1.2 and 0.5);
  \filldraw (0,0) circle (1.6pt);
  \node[below right=-1pt] at (0.04,-0.04) {\(\sfv\)};
  \draw (0,0) -- (0,1.1);
  \filldraw[red] (0,1.1) circle (1.3pt);
  \path (0,-1.35) (0,1.35);
\end{tikzpicture}
\caption{\(\lk_X(\sfv)\cong S^1\sqcup\{\mathrm{pt}\}\)}
\label{fig:simple-pendant}
\end{subfigure}
\hfill
\begin{subfigure}[b]{0.24\textwidth}
\centering
\begin{tikzpicture}[line width=0.8pt,scale=0.85]
  \draw (0,0) -- (0,1.2);
  \draw (0,0) -- (-1.05,-0.6);
  \draw (0,0) -- (1.05,-0.6);
  \filldraw[red] (0,1.2) circle (1.3pt);
  \filldraw[red] (-1.05,-0.6) circle (1.3pt);
  \filldraw[red] (1.05,-0.6) circle (1.3pt);
  \filldraw (0,0) circle (1.6pt);
  \node[below=2pt] at (0,0) {\(\sfv\)};
  \path (0,-1.35) (0,1.35);
\end{tikzpicture}
\caption{\(\lk_X(\sfv)=\{\text{3pts}\}\)}
\label{fig:simple-graph}
\end{subfigure}
\hfill
\begin{subfigure}[b]{0.24\textwidth}
\centering
\begin{tikzpicture}[line width=0.8pt,scale=0.85]
  \draw (0,0) -- (0,-1.05);
  \filldraw[red] (0,-1.05) circle (1.3pt);
  \fill[fill=gray!30,opacity=0.5] (0,0) ellipse (1.2 and 0.5);
  \draw[red] (0,0) ellipse (1.2 and 0.5);
\fill[fill=gray!30,opacity=0.5] (0,0.6) circle (0.6);
\draw (0.6,0.6) arc (0:-180:0.6);
\draw[red] (0.6,0.6) arc (0:180:0.6);
  \filldraw (0,0) circle (1.6pt);
  \node[right=2pt] at (0.02,-0.12) {\(\sfv\)};
  \path (0,-1.35) (0,1.35);
\end{tikzpicture}
\caption{\(\lk_X(\sfv)\cong S^1\sqcup\mathsf{I}\sqcup\{\mathrm{pt}\}\).}
\label{fig:non-simple-vertex}
\end{subfigure}
\caption{(\subref{fig:simple-theta})--(\subref{fig:simple-graph})
Simple vertices, one for each case of
\cref{def:simple-complex}.
(\subref{fig:non-simple-vertex}) 
A non-simple vertex.
In each picture the link
\(\lk_X(\sfv)\) is drawn in red.}
\label{fig:simple-vertices}
\end{figure}

\begin{definition}[{An--Park~\cite[Def.~2.8]{AnPark2017}}]
\label{def:braid-equiv}
An embedding \(f\colon X\hookrightarrow Y\) is a \emph{braid
equivalence} if the induced map
\(f_\ast\colon\BG_n(X)\to\BG_n(Y)\) is an isomorphism for every
\(n\geq 1\). Two complexes \(X, Y\) are \emph{braid equivalent},
written \(X\equiv_B Y\), if they are connected by a zigzag of braid
equivalences.
\end{definition}

\begin{theorem}[{An--Park~\cite[Thm.~1.1]{AnPark2017}}]
\label{thm:an-park}
For any finite simplicial complex \(X\) there is a simple complex
\(X'\) with \(X\equiv_B X'\); in
particular \(\BG_n(X)\cong \BG_n(X')\) for every \(n\geq 1\).
\end{theorem}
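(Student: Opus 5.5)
The plan is an induction on the number of non-simple vertices of \(X\), removing one at a time by a local \emph{resolution} whose effect on braid groups can be controlled. Subdivide so that each non-simple vertex \(\sfv\) has a cone neighbourhood \(N(\sfv)\cong\mathrm{Cone}(\lk_X(\sfv))\). Write \(\lk_X(\sfv)=L_0\sqcup\dots\sqcup L_k\). Non-simplicity means \(k\geq1\), the link is not \(0\)-dimensional, and it is not of the form (connected)\(\sqcup\)(point). Order the components so that \(L_0\) has positive dimension.

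\textbf{Resolution of \(\sfv\).} Build \(Y\) from \(X\) by pulling the cones \(\mathrm{Cone}(L_i)\) apart at \(\sfv\). Their cone points \(\sfv_0,\dots,\sfv_k\) become distinct, and we join them by a short path \(\sfv_0-\sfv_1-\dots-\sfv_k\) of new edges, or by a star centred in \(\mathrm{Cone}(L_0)\). This is chosen so that every new vertex has link either connected or (connected)\(\sqcup\)(point). For instance, \(\sfv_i\) sees \(L_i\) together with at most two new edge directions, and we subdivide and insert further edges until each link has the allowed form. Check that all the new vertices are simple and that the other vertices of \(X\) are untouched. Then \(Y\) has strictly fewer non-simple vertices, and induction gives a simple \(X'\) with \(X\equiv_B X'\).

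\textbf{The zigzag.} \(X\) does not literally embed in \(Y\), because the cones share \(\sfv\). So we realise the equivalence as a zigzag \(X\hookrightarrow Z\hookleftarrow Y\). Here \(Z\) is \(X\) with the connecting tree attached along a collar inside \(\mathrm{Cone}(L_0)\). The map \(Y\hookrightarrow Z\) shrinks the connecting edges into a small neighbourhood of \(\sfv\), pushing the edge directions into the positive-dimensional cone \(\mathrm{Cone}(L_0)\). This is the same collar-absorption trick as in \cref{ex:disc}. We must then prove that both inclusions induce isomorphisms on \(\BG_n\) for all \(n\).

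\textbf{Proving the isomorphisms.} Decompose \(B_n(\cdot)\) according to how many points lie in the neighbourhood of \(\sfv\) (respectively of the connecting tree). This gives a graph-of-spaces / van Kampen description. The vertex spaces are products of configuration spaces of the complement with configuration spaces of the cones, and the edges record a point crossing the boundary of the neighbourhood. Configuration spaces of a cone on a connected positive-dimensional complex, with points distributed among the components, are controlled: points can be moved through \(\mathrm{Cone}(L_0)\) and reordered there. This is the ``parking'' phenomenon behind \cref{ex:disc-interior} and the lollipop.

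\textbf{Main obstacle.} The hardest step is showing that the local groupoids of \(Z\) and \(Y\) agree with that of \(X\), i.e.\ that adding the connecting tree creates no new braiding and kills none. I would first try to construct explicit homotopy equivalences between the local pieces relative to their boundary. These should slide points along the new edges into a parking region of \(\mathrm{Cone}(L_0)\), using connectedness of \(L_0\) to realise any reordering. The Seifert–van Kampen theorem on the decomposition then assembles these into global isomorphisms \(\BG_n(X)\cong\BG_n(Z)\cong\BG_n(Y)\), which completes the inductive step.
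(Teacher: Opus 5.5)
Your inductive framework matches the recipe the paper records. Note that the paper cites \cref{thm:an-park} rather than proving it: it separates one positive-dimensional link component at a time and takes the braid equivalence of each step from An--Park's Prop.~3.11. Your target complex \(Y\) is also fine, provided you join the apexes by a star centred at the old vertex \(\sfv\) rather than by a path. On a path, an interior apex with positive-dimensional \(L_i\) would have the non-simple link \(L_i\sqcup\{\mathrm{pt},\mathrm{pt}\}\).

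The genuine gap is the zigzag. As you describe it, \(Z\) is \(X\) with a \(1\)-dimensional tree added, and such a \(Z\) need not contain any copy of \(Y\). A simplicial embedding carries the link of a point injectively into the link of its image, and attaching \(1\)-dimensional material only adjoins isolated points to links. Take \(X=\mathrm{Cone}(K_5)\vee_{\sfv}\mathrm{Cone}(K_5)\). The only point of \(X\), and hence of \(Z\), whose link contains \(K_5\) is \(\sfv\); every other link is \(K_{2,4}\), a circle, an interval or a star, all planar. But \(Y\) has two distinct apexes \(\sfv_0\neq\sfv_1\) whose links contain \(K_5\), so no embedding \(Y\hookrightarrow Z\) exists. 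The intermediate complex must therefore contain new higher-dimensional material, a genuine thickening of the cones; this is what the paper's phrase ``a zigzag of thickenings of \(\mathrm{Cone}(L)\)'' refers to. Showing that \(X\hookrightarrow Z\) is then a braid equivalence is itself a substantive statement about attaching cells of dimension \(\geq2\). It is of the kind recorded in \cref{prop:reduce-higher-cells,prop:reduce-2-cell}, whose hypotheses would have to be checked.

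Even granting some zigzag, the step you call the main obstacle is the whole theorem, and your heuristic for it fails in the generality you need. Whether adding \(1\)-dimensional material to a positive-dimensional cone is braid-neutral is not controlled by connectedness of \(L_0\) or by parking. An edge attached at a boundary point of a disc leaves \(\BG_n\) unchanged (\cref{ex:disc}), while the same edge at an interior point collapses it to \(\mathbb{S}_n\) (\cref{ex:disc-interior}). The collar absorption of \cref{ex:disc} is specific to boundary points: a disc with an edge at an interior point does not even embed back into a disc, since \(S^1\sqcup\{\mathrm{pt}\}\) does not embed in \(S^1\). So it cannot justify ``pushing the edge directions into \(\mathrm{Cone}(L_0)\)''. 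Likewise, a decomposition as in \cref{prop:valency-decomp} only produces presentations. Matching the vertex groups of the decompositions for \(X\), \(Z\) and \(Y\) requires exactly the local braid-equivalence statements you have not established. What is missing is an actual proof that separating a single positive-dimensional cone from \(\sfv\) is a braid equivalence. The paper imports this from An--Park rather than building it from local homotopy equivalences, which you only hope to construct.
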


For later reference we record an explicit recipe that produces a
simple model \(X'\) from a given \(X\).

\subsubsection{Resolving non-simple vertices.}
For a non-simple vertex \(\sfv\) of \(X\), suppose its link
\(\lk_X(\sfv)\) has a connected component \(L\) of dimension at
least \(1\). Separate the cone \(\operatorname{Cone}(L)\) from \(\sfv\)
by a zigzag of thickenings of \(\operatorname{Cone}(L)\), which is
a braid equivalence by
An--Park~\cite[Proposition~3.11]{AnPark2017}; this introduces a
new simple vertex \(\sfv'\) at the apex of the separated
\(\operatorname{Cone}(L)\) together with a new edge \(\sfe\)
joining \(\sfv\) and \(\sfv'\), and reduces the link complexity at \(\sfv\).
Iterating over all non-simple vertices yields a simple complex
\(X'\) braid-equivalent to \(X\) (uniqueness up to
homeomorphism is the content of
\cref{prop:simple-model-unique} below).

In particular, \(X\) is recovered from \(X'\) by collapsing each
newly created edge \(\sfe\) to a point: \(X = X'/\sfe\). See
\cref{fig:resolve-nonsimple} for the complete resolution of the
non-simple vertex of \cref{fig:non-simple-vertex}, which
requires two steps, one for each positive-dimensional link
component.

\begin{figure}[ht]
\centering
\[
\begin{tikzcd}[row sep=-2pc]
& 
\begin{tikzpicture}[baseline=-.5ex,line width=0.8pt,scale=0.85]
  \draw (0,0) -- (0,-1.05);
  \filldraw[red] (0,-1.05) circle (1.3pt);
  \draw (0,0) -- node[pos=0.4, below, sloped] {\(\sfe_2\)} (0:1.2);
\begin{scope}[rotate=0]
\begin{scope}[yshift=0cm]
\fill[fill=gray!30,opacity=0.5] (0,0.6) circle (0.6);
\draw (0.6,0.6) arc (0:-180:0.6);
\draw[red] (0.6,0.6) arc (0:180:0.6);
\filldraw (0,0) circle (1.6pt);
  \filldraw[red] (0.6,0.6) circle (1.6pt) (-0.6,0.6) circle (1.6pt);
\end{scope}
\end{scope}
\begin{scope}[rotate=-90]
\begin{scope}[yshift=1.2cm]
  \fill[fill=gray!30,opacity=0.5] (0,0) ellipse (1.2 and 0.5);
  \draw[red] (0,0) ellipse (1.2 and 0.5);
\filldraw (0,0) circle (1.6pt) node[above] {\(\sfv_2'\)};
\end{scope}
\end{scope}
  \filldraw (0,0) circle (1.6pt);
  \node[left=2pt] at (0.02,-0.12) {\(\sfv\)};
  \path (0,-1.35) (0,1.35);
\end{tikzpicture}
\ar[rd,"q_{\sfe_2}"]\\
\bar X=\begin{tikzpicture}[baseline=-.5ex,line width=0.8pt,scale=0.85]
  \draw (0,0) -- (0,-1.05);
  \filldraw[red] (0,-1.05) circle (1.3pt);
  \draw (0,0) -- node[midway, above, sloped] {\(\sfe_2\)} (30:1.05) (0,0) -- node[midway, above, sloped] {\(\sfe_1\)} (150:1.05);
\begin{scope}[rotate=60]
\begin{scope}[yshift=1.05cm]
\fill[fill=gray!30,opacity=0.5] (0,0.6) circle (0.6);
\draw (0.6,0.6) arc (0:-180:0.6);
\draw[red] (0.6,0.6) arc (0:180:0.6);
\filldraw (0,0) circle (1.6pt) node[left] {\(\sfv_1'\)};
  \filldraw[red] (0.6,0.6) circle (1.6pt) (-0.6,0.6) circle (1.6pt);
\end{scope}
\end{scope}
\begin{scope}[rotate=-60]
\begin{scope}[yshift=1.05cm]
  \fill[fill=gray!30,opacity=0.5] (0,0) ellipse (1.2 and 0.5);
  \draw[red] (0,0) ellipse (1.2 and 0.5);
\filldraw (0,0) circle (1.6pt) node[above] {\(\sfv_2'\)};
\end{scope}
\end{scope}
  \filldraw (0,0) circle (1.6pt);
  \node[right=2pt] at (0.02,-0.12) {\(\sfv\)};
  \path (0,-1.35) (0,1.35);
\end{tikzpicture}
\ar[ru,"q_{\sfe_1}"] \ar[rd, "q_{\sfe_2}"]
& & 
\begin{tikzpicture}[baseline=-.5ex,line width=0.8pt,scale=0.85]
  \draw (0,0) -- (0,-1.05);
  \filldraw[red] (0,-1.05) circle (1.3pt);
  \fill[fill=gray!30,opacity=0.5] (0,0) ellipse (1.2 and 0.5);
  \draw[red] (0,0) ellipse (1.2 and 0.5);
\fill[fill=gray!30,opacity=0.5] (0,0.6) circle (0.6);
\draw (0.6,0.6) arc (0:-180:0.6);
\draw[red] (0.6,0.6) arc (0:180:0.6);
  \filldraw[red] (0.6,0.6) circle (1.6pt) (-0.6,0.6) circle (1.6pt);
  \filldraw (0,0) circle (1.6pt);
  \node[right=2pt] at (0.02,-0.12) {\(\sfv\)};
  \path (0,-1.35) (0,1.35);
\end{tikzpicture}=X\\
& 
\begin{tikzpicture}[baseline=-.5ex,line width=0.8pt,scale=0.85]
  \draw (0,0) -- (0,-1.05);
  \filldraw[red] (0,-1.05) circle (1.3pt);
\begin{scope}[rotate=0]
\begin{scope}[yshift=0cm]
  \fill[fill=gray!30,opacity=0.5] (0,0) ellipse (1.2 and 0.5);
  \draw[red] (0,0) ellipse (1.2 and 0.5);
\filldraw (0,0) circle (1.6pt);
\end{scope}
\end{scope}
  \draw (0,0) -- node[near end, above, sloped] {\(\sfe_1\)} (90:1.05);
\begin{scope}[rotate=0]
\begin{scope}[yshift=1.05cm]
\fill[fill=gray!30,opacity=0.5] (0,0.6) circle (0.6);
\draw (0.6,0.6) arc (0:-180:0.6);
\draw[red] (0.6,0.6) arc (0:180:0.6);
  \filldraw[red] (0.6,0.6) circle (1.6pt) (-0.6,0.6) circle (1.6pt);
\filldraw (0,0) circle (1.6pt) node[above] {\(\sfv_1'\)};
\end{scope}
\end{scope}
  \filldraw (0,0) circle (1.6pt);
  \node[right=2pt] at (0.02,-0.12) {\(\sfv\)};
  \path (0,-1.35) (0,1.35);
\end{tikzpicture}
\ar[ru,"q_{\sfe_1}"]
\end{tikzcd}
\]
\caption{Resolving the non-simple vertex \(\sfv\) of the complex
\(X\) of \cref{fig:non-simple-vertex}.
}
\label{fig:resolve-nonsimple}
\end{figure}

\begin{proposition}\label{prop:simple-model-unique}
For any finite connected simplicial complex \(X\), resolving all
non-simple vertices as above yields a simple complex, denoted
\(\bar X\), that is unique up to homeomorphism.
\end{proposition}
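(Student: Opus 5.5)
The plan is to make the resolution procedure canonical by describing its output directly in terms of local data of \(X\), independent of the order in which non-simple vertices are treated. The first step is termination and simplicity. A single resolution step at \(\sfv\) detaches one positive-dimensional link component \(L\). At the new apex \(\sfv'\) the link becomes \(L\sqcup\{\mathrm{pt}\}\), where the point is the new edge \(\sfe\); this is of type (ii) in \cref{def:simple-complex}, so \(\sfv'\) is simple. At \(\sfv\) the component \(L\) is replaced by a point. So the number of positive-dimensional link components at \(\sfv\) drops by one, and no other vertex's link changes, up to subdivision. By induction on the total number of positive-dimensional link components at non-simple vertices, the process terminates. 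At termination every old vertex is simple.

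Next I describe the output explicitly, as follows. Call a vertex \(\sfv\) non-simple, and let \(L_1,\dots,L_r\) be its positive-dimensional link components. If \(r\ge2\), or if \(r=1\) and there are at least two \(0\)-dimensional components, then \(\sfv\) is resolved. There is one subtlety: when \(r\ge 2\), the last component may or may not be separated, since the remaining vertex already becomes simple once one positive-dimensional component remains with only points elsewhere, and then case (ii) is not guaranteed. I would fix a convention matching the recipe, namely iterate until \(\sfv\) itself is simple. The claim is then that, up to homeomorphism, the final complex is obtained by a single global construction. For each non-simple \(\sfv\) and each positive-dimensional component \(L_i\) that ends up separated, replace \(\operatorname{Cone}(L_i)\) at \(\sfv\) by \(\operatorname{Cone}(L_i)\) with apex \(\sfv_i'\), joined to \(\sfv\) by an edge \(\sfe_i\). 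The remaining star of \(\sfv\) is kept.

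To show the result does not depend on choices, I argue as follows. The thickening zigzag of \cite[Prop.~3.11]{AnPark2017} is local to a regular neighbourhood of \(\sfv\). Its output there, up to homeomorphism rel the boundary of the neighbourhood, is exactly the cone-plus-edge described above. Steps at distinct vertices have disjoint supports after subdivision, so they commute. Steps at the same vertex for different components \(L_i, L_j\) also commute, because each only re-embeds its own cone. Hence any two orders produce homeomorphic complexes, via homeomorphisms that are the identity away from small neighbourhoods of the non-simple vertices. Independence of the chosen subdivision follows because links, and hence the data \(\{L_i\}\), are subdivision-invariant up to homeomorphism, as noted before \cref{def:simple-complex}.

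The main obstacle is deciding which components get separated when several choices leave \(\sfv\) simple. For example, with link \(L_1\sqcup L_2\) only, one may separate either \(L_1\) or \(L_2\), and the outputs might differ. I would try to show these are homeomorphic by comparing the two resulting complexes. Separating \(L_1\) gives the space \(\operatorname{Cone}(L_2)\cup_{\sfv}\sfe_1\cup\operatorname{Cone}(L_1)\) at \(\sfv'_1\). Separating \(L_2\) gives the symmetric space, and in both the edge joins the two cone points. So each is the union of the two cones with an arc joining their apexes, and they are homeomorphic by sliding along the edge. In general, whatever maximal set of components is separated, the result is the union of the separated cones and the retained star, joined by edges to a central vertex. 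Arcs can be slid, so any two choices differ by relabelling which cone sits at the centre. I would make this sliding homeomorphism explicit, and that verification is where the care is needed.
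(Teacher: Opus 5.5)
Your proof is correct. Its skeleton matches the paper's: termination by a strictly decreasing count, and uniqueness because single resolutions are local and commute. It is, however, more careful than the paper's proof at exactly the two places where that proof is loose.

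\emph{Termination.} The paper measures progress by \(c(\sfv)=\max\{0,k(\sfv)-1\}\), where \(k(\sfv)\) is the number of positive-dimensional link components, and asserts that \(k(\sfv)\le 1\) implies simplicity. That assertion fails. A vertex with link \(L\sqcup\{p\}\sqcup\{q\}\), for instance two free edges attached at one interior point of a disc, has \(c=0\), yet it is not simple under \cref{def:simple-complex}. Resolving it does not lower \(c\). Your count, the number of positive-dimensional components at non-simple vertices, has no such defect.

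\emph{Uniqueness.} The paper claims that resolutions along two components at one vertex commute. This fails when the link is exactly \(L_1\sqcup L_2\). After separating one component the vertex is already simple, so the other is never separated. The two outcomes must therefore be compared directly, and that is what your sliding homeomorphism does.

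You can remove the vagueness in your last paragraph. Write \((r,s)\) for the numbers of positive-dimensional and of point components of \(\lk(\sfv)\). One resolution step sends \((r,s)\mapsto(r-1,s+1)\), and the vertex is simple if and only if \(r=0\), or \(r=1\) and \(s\le 1\).

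Hence every positive-dimensional component is eventually separated, with one exception: when \((r,s)=(2,0)\), exactly one of the two components is separated. This is the only ambiguous case, so there is no general ``retained star'' to analyse.

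In that case the homeomorphism is explicit.
\begin{itemize}
\item Send the cone over \(L_2\) at \(\sfv\) to the cone over \(L_2\) at the new apex.
\item Reverse the new edge.
\item Send the cone over \(L_1\) at \(\sfv_1'\) to the cone over \(L_1\) at \(\sfv\).
\end{itemize}
This map is the identity on \(L_1\sqcup L_2\), so it extends by the identity outside a (second-derived) star of \(\sfv\).

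Finally, your appeal to the An--Park thickening zigzag is unnecessary here. The output is defined directly by the cone-plus-edge recipe; the zigzag only certifies that each step is a braid equivalence.
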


\begin{proof}
For a vertex \(\sfv\) of \(X\), let \(k(\sfv)\) be the number of link components of dimension \(1\) or higher,
and define the \emph{complexity} of \(\sfv\) to be
\[
c(\sfv) \coloneqq \max\bigl\{0, k(\sfv) - 1\bigr\}.
\]
The \emph{complexity of \(X\)} is the sum
\[
c(X) \coloneqq \sum_{\sfv \in X^{(0)}} c(\sfv).
\]
By the definition of a simple vertex
(\cref{def:simple-complex}), every simple vertex has
\(k(\sfv)\le 1\), and conversely every vertex with \(k(\sfv) \leq 1\) is
simple; hence \(c(X) = 0\) if and only if \(X\) is simple.

A single resolution at a non-simple vertex \(\sfv\) separates one
\(1\)-dimensional component \(L\) of \(\lk_X(\sfv)\) from the
others, replacing \(\sfv\) by two simpler vertices \(\sfv\) and \(\sfv'\)
joined by an edge \(\sfe\): at \(\sfv\) the count \(k\) drops by exactly
one, and at the newly created apex \(\sfv'\) we have \(k(\sfv') = 1\), so
\(c(\sfv') = 0\). All other vertices of \(X\) are unchanged. Thus the
resolution decreases \(c(X)\) by exactly \(1\). Since \(c(X)\) is a
non-negative integer, the iterative resolution procedure
terminates in \(c(X)\) steps with a simple complex \(\bar X\).

Finally, the result is independent of the order in which
non-simple vertices are resolved: whenever two distinct
non-simple vertices (or two distinct \(1\)-dimensional components
at a single non-simple vertex) admit a resolution, the two
resolutions act on disjoint local data and therefore commute up
to homeomorphism. Hence any two complete sequences of
resolutions can be interleaved, by repeated application of this
local commutativity, to produce homeomorphic outcomes. The
resulting simple complex \(\bar X\) is therefore unique up to
homeomorphism.
\end{proof}

\subsubsection{Reducing higher-dimensional cells.}
The resolution above lowers the link complexity of \(X\) but not
its dimension. A second, independent reduction lowers the
dimension; it rests on the following result of An--Park, to the
effect that capping off a sphere by a cell of dimension at least
three leaves the braid groups unchanged.

\begin{proposition}[{An--Park~\cite[Prop.~3.6]{AnPark2017}}]
\label{prop:reduce-higher-cells}
Let \(k\geq 3\), and let \(Y\) be a finite simplicial complex with
\(Y\not\cong S^2\). If
\[
X = Y\cup_\varphi D^k
\]
is obtained from \(Y\) by attaching a \(k\)-cell along an embedding
\(\varphi\colon\partial D^k = S^{k-1}\hookrightarrow Y\), then the
inclusion \(Y\hookrightarrow X\) induces an isomorphism \(\BG_n(Y)\cong\BG_n(X)\) for every \(n\geq 1\).
\end{proposition}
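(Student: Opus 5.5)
The plan is to compare the unordered configuration spaces of \(Y\) and \(X=Y\cup_\varphi D^k\) by a general-position argument on the extra cell. Write \(S=\varphi(S^{k-1})\subseteq Y\). Then \(B_n(X)\) is the union of the open subset \(B_n(Y)\) and the configurations having at least one point in the open cell \(\mathrm{int}\,D^k\). Since \(k\geq3\), a configuration may freely contain points of \(\mathrm{int}\,D^k\): this open cell is a \(k\)-manifold, so by Birman's codimension count (\cref{rem:birman-high-dim}) collisions inside it have codimension \(k\geq3\) and do not obstruct anything at the level of \(\pi_0\) and \(\pi_1\).

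\textbf{Surjectivity.} Take a loop in \(B_n(X)\) based at a configuration lying in \(Y\), and make it transverse to the cell. The homotopy then pushes every strand out of \(\mathrm{int}\,D^k\), radially from a point \(c\) of the cell that the strands avoid. This uses a one-parameter family of radial pushes \(D^k\setminus\{c\}\to S\) composed with a small collar of \(S\) in \(Y\), together with general position in the \(k\)-dimensional cell to choose a point \(c\) missing all strands. The pushes must not create collisions: two strands in the cell that go to the same point of \(S\) would lie on a common ray. I will avoid this by first perturbing the loop so that no two strands are ever collinear with \(c\). That is a codimension-\((k-1)\geq2\) condition for one-parameter families, so it can be arranged for loops. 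The collar then carries the pushed configuration into \(Y\) without collisions.

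\textbf{Injectivity.} Run the same argument on a null-homotopy, a \(2\)-parameter family. Here the collinearity-with-\(c\) condition has codimension \(k-1\), which is \(\geq2\) but only borderline for \(2\)-parameter families when \(k=3\). I expect this to be the main obstacle. For \(k=3\) I would avoid a single global radial projection. Instead I would slice the \(2\)-parameter family into small pieces, choose the centre \(c\) separately on each piece (possible since the bad set is codimension \(2\) and has isolated points in a \(2\)-parameter family), and interpolate between the choices using the connectedness of the space of admissible centres. Alternatively, and more cleanly, I would deformation retract a neighbourhood of \(S\) and treat \(X\) locally near \(S\) as \(Y\cup\) a collar \(S\times[0,1]\) capped by a cone. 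I would then appeal to the corresponding local braid-equivalence statement, An--Park's thickening moves of Proposition 3.11, applied to the cone on \(S\).

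\textbf{The hypothesis \(Y\not\cong S^2\).} This is exactly what makes the collar step legitimate. If \(Y=S^2=S\) and \(k=3\), there is no room outside \(S\) to park strands pushed off the ball, and indeed \(\BG_n(S^2)\neq\BG_n(D^3)=\mathbb{S}_n\). So in the collar step I will use a point of \(Y\setminus S\), or a \(2\)-cell of \(Y\) meeting \(S\) non-manifoldly, to absorb the pushed strands. When \(S\) is a proper subcomplex of \(Y\), configurations near \(S\) can slide into \(Y\setminus S\) along an arc, as in the lollipop construction of \cref{lem:lollipop-embed}. When \(k-1\geq3\), the sphere \(S\) itself has enough room by Birman's theorem. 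Finally, functoriality of the inclusion-induced map under isotopy, as in \cref{ex:disc}, shows that the two constructions are mutually inverse on \(\BG_n\).
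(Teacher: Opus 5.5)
The paper gives no proof of this proposition; it is quoted from An--Park, so your argument has to stand on its own. Two parts of it work. The radial push from an interior point \(c\) of the cell is injective at every time before the last, and at the last time it lands in \(S\subseteq Y\). So no collar of \(S\) in \(Y\) is needed, which matters because such a collar does not exist in general (e.g. for \(Y=S\cup_q\sfe\)). With general position this proves surjectivity for every \(k\geq3\). Applied to null-homotopies, it also proves injectivity for \(k\geq4\); there the hypothesis \(Y\not\cong S^2\) is automatic, since \(Y\) contains an embedded \(S^{k-1}\).

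The genuine gap is injectivity for \(k=3\), the only case in which \(Y\not\cong S^2\) has content, and your argument never uses that hypothesis there. The interpolation of centres uses only the geometry inside \(D^3\). It would therefore apply verbatim to \(Y=S^2\), \(X=D^3\) and prove \(\BG_n(S^2)\cong\BG_n(D^3)\cong\mathbb{S}_n\), which is false for \(n\geq3\). Connectedness of the space of admissible centres lets you match choices across one-dimensional interfaces. Where pieces meet, however, the obstruction is a class in \(\pi_1\) of that space, which in general is not simply connected: two bad rays issuing from one strand already form a proper arc through the ball. This class does not vanish. Your fallback to An--Park's Prop.~3.11 also misfires. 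That result separates a cone from a \emph{non-simple} vertex, whereas the centre of the cell has connected link \(S\); deleting a cone over a sphere is exactly what the proposition asserts. Finally, your account of the hypothesis (``no room to park strands'') is off: surjectivity holds even for \(Y=S^2\), and the failure there occurs only at the level of null-homotopies.

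The missing idea is to accept the isolated singularities of the projected null-homotopy and kill them in \(\BG_n(Y)\). Near a parameter at which strands \(i,j\) share a ray from \(c\), the projected family misses one point. A small loop around that point is, up to conjugacy and sign, the full twist of the two projected strands inside a small disc of \(S\). The boundary loop is therefore a product of conjugates of such two-strand twists, and you must show these twists are trivial in \(\BG_n(Y)\). This is where \(Y\neq S\) enters. Since \(Y\) is connected and \(Y\neq S\), some \(q\in S\) has \(\lk_Y(q)\supsetneq\lk_S(q)\cong S^1\). Slide the twist along \(S\) to \(q\), park one strand at \(q\), lift it off \(S\) into the extra direction, and contract the other strand's loop inside \(S\) (compare \cref{ex:disc-interior}). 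For \(Y=S^2\) no such \(q\) exists, and indeed the two-strand twist is non-trivial in \(\PB_n(S^2)\) for \(n\geq3\).
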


The hypothesis \(Y\not\cong S^2\) excludes the ordinary capping-off
of a \(2\)-sphere by a \(3\)-ball, which is genuinely exceptional:
there the braid groups do change, \(\BG_n(S^2)\) being infinite for
\(n\geq 4\) while \(\BG_n(D^3)\) is finite.

Read in the reverse direction,
\cref{prop:reduce-higher-cells} says that a cell of
dimension \(\geq 3\) whose attaching sphere is embedded in the rest
of the complex may be deleted without affecting the braid groups.
Deleting such cells one at a time, from the top dimension
downwards, collapses \(X\) onto its \(2\)-skeleton.

\begin{remark}\label{rem:reduce-to-dim-2}
If we additionally want a simple complex of dimension at most
\(2\) with the same braid groups as the given \(X\), we may subdivide
\(X\) if necessary so that its \(2\)-skeleton is not homeomorphic to
\(S^2\), and then remove every cell of dimension \(\geq 3\). By
iterating \cref{prop:reduce-higher-cells}, this
induces an isomorphism \(\BG_n(X)\cong\BG_n(X^{(2)})\) for every
\(n\), replacing \(X\) by its \(2\)-skeleton \(X^{(2)}\)
(An--Park~\cite[Cor.~3.8]{AnPark2017}).

Note, however, that taking the \(2\)-skeleton depends on the
chosen simplicial structure on \(X\): even if we start from two
homeomorphic complexes \(X_1\) and \(X_2\), their respective simple
models of dimension at most \(2\), \(X_1'\) and \(X_2'\), need not be
homeomorphic to each other. The uniqueness statement of
\cref{prop:simple-model-unique} therefore breaks down
once one further restricts to simple models of dimension
\(\leq 2\).
\end{remark}

\subsubsection{Reducing \(2\)-cells.}
A \(2\)-cell can sometimes be removed as well, under a condition on
the \emph{branch set} \(\operatorname{br}(Y)\) of \(Y\) -- the set of
points \(y\in Y\) whose link is homeomorphic to neither a sphere
\(S^k\) nor a disc \(D^k\), equivalently the points at which \(Y\)
fails to be locally a manifold (with or without boundary); thus
\(\operatorname{br}(Y)=\emptyset\) exactly when \(Y\) is a manifold.

\begin{proposition}[{An--Park~\cite[Cor.~3.9]{AnPark2017}}]
\label{prop:reduce-2-cell}
Let \(X=Y\cup_\varphi D^2\) be obtained from a finite simplicial
complex \(Y\) by attaching a \(2\)-cell along
\(\varphi\colon\partial D^2\to Y\). Suppose \(\varphi(\partial D^2)\)
bounds a disc \(D\subseteq Y\) whose interior meets the
branch set, \(\mathring D\cap\operatorname{br}(Y)\neq\emptyset\). Then
the inclusion \(Y\hookrightarrow X\) induces an isomorphism \(\BG_n(Y)\cong\BG_n(X)\) for every \(n\geq 1\).
\end{proposition}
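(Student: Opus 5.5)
The plan is to pass through an auxiliary complex obtained by filling in the sphere \(S=D\cup_\varphi D^2\subseteq X\). Put
\[
Z \coloneqq Y\cup_D B^3,
\]
the complex obtained by gluing a \(3\)-ball to \(Y\) along the disc \(D\), identified with the lower hemisphere of \(\partial B^3\). The upper hemisphere of \(\partial B^3\) has boundary \(\partial D=\varphi(\partial D^2)\), so it can be identified with the attached \(2\)-cell. This gives inclusions
\[
Y\hookrightarrow X\hookrightarrow Z .
\]
The composite \(Y\hookrightarrow Z\) is the inclusion of \(Y\) as a subcomplex. After subdividing, \(X\hookrightarrow Z\) realises \(Z\) as \(X\cup_\psi D^3\), where \(\psi\colon S^2\to S\subseteq X\) is an embedding of the sphere \(S\).

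The first step is to show that \(X\hookrightarrow Z\) induces an isomorphism on braid groups, using \cref{prop:reduce-higher-cells} with \(k=3\). Its hypothesis asks that \(X\not\cong S^2\). This is where \(\mathring D\cap\operatorname{br}(Y)\neq\emptyset\) enters. A branch point \(b\in\mathring D\) has a link in \(Y\) that is neither a sphere nor a disc. Adding the \(2\)-cell along \(\partial D\) only changes links along \(\partial D\), so \(b\) remains a non-manifold point of \(X\), and therefore \(X\) is not a \(2\)-sphere. Hence \(\BG_n(X)\cong\BG_n(Z)\) for all \(n\).

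The second step, which I expect to be the main obstacle, is to show that \(Y\hookrightarrow Z\) also induces isomorphisms. Together with the first step this forces \(Y\hookrightarrow X\) to be an isomorphism: the composite is an isomorphism and \(X\to Z\) is one, so \(Y\to X\) is one too. Filling a disc with a \(3\)-ball is not a braid equivalence in general; \(D^2\subseteq D^3\) is already a counterexample. The extra structure of \(Y\) at \(b\) must be used, and I would argue locally there. By \cref{ex:disc-interior}, a disc carrying an extra cell at an interior point already behaves braid-theoretically like a \(3\)-dimensional region. Concretely, I would try to build an embedding \(Z\hookrightarrow Y'\), for a thickening \(Y'\) of \(Y\) near \(b\) in the sense of An--Park~\cite[Proposition~3.11]{AnPark2017}. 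This embedding should push the ball \(B^3\) into a neighbourhood of \(\operatorname{Cone}(L)\), where \(L\) is the link component at \(b\) not lying in \(D\), thickened over \(D\). It should also be isotopic to the identity on \(Y\), so that the argument of \cref{ex:disc} (isotopic embeddings induce equal maps) gives mutually inverse homomorphisms. The zigzag \(Y\hookrightarrow Y'\hookleftarrow Z\) of braid equivalences would then give \(\BG_n(Y)\cong\BG_n(Z)\), compatibly with the inclusion.

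If constructing the isotopy proves awkward, my fallback is to reduce to the local model directly. I would deform \(D\) rel \(\partial D\) to a small disc around \(b\); since \(B^3\) is glued only along \(D\), \(Z\) is unchanged up to homeomorphism. Then I would compare the configuration spaces via the graph-of-spaces decomposition at the point \(b\) announced in \cref{ssec:techniques}. The aim is to check that the local vertex groups for \(Z\) and for \(Y\) agree, both being the symmetric-group-type groups of \cref{ex:disc-interior}. The rest of the decomposition is identical for \(Y\) and \(Z\), so the inclusion would induce an isomorphism of fundamental groups of the graphs of spaces.
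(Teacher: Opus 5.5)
\textbf{Step~2 is the whole proposition, and it is not proved.}

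Your Step~1 is correct. A branch point \(b\in\mathring D\) keeps its link when the \(2\)-cell is attached along \(\partial D\), so \(X\not\cong S^2\), and \cref{prop:reduce-higher-cells} makes \(X\hookrightarrow Z=X\cup_S B^3\) a braid equivalence.

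But since \(X\hookrightarrow Z\) is an isomorphism, the statement that \(Y\hookrightarrow Z=Y\cup_D B^3\) is a braid equivalence is \emph{equivalent} to the proposition. The reduction therefore moves the difficulty without reducing it, and Step~2, which you leave as a plan, carries all of the content. The paper only quotes the result from An--Park, so there is no argument in it to fall back on. What is missing is an argument that uses the branch point to push every braid, and every homotopy of braids, out of the open ball into \(Y\). That is, you must show \(\BG_n(Y)\to\BG_n(Y\cup_D B^3)\) is both surjective and injective. Your own example \(D^2\subseteq D^3\) shows this is exactly where the hypothesis on \(b\) must be used.

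\textbf{Your first route does not supply this.}

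(i) There need not be a link component \(L\) at \(b\) lying outside \(D\). Glue a half-disc to \(D\) along a chord with endpoints on \(\partial D\). Every branch point in \(\mathring D\) then has connected link, a theta graph. An--Park's separation of the cone over a link component (their Prop.~3.11) then has nothing to act on.

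(ii) The ball is glued along all of \(D\), not just near \(b\). Nothing in your sketch explains how a thickening near \(b\) makes room for it while fixing \(Y\) up to isotopy.

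(iii) Suppose an embedding \(f\colon Z\hookrightarrow Y'\) exists with \(f|_Y\) isotopic to \(Y\subseteq Y'\), and write \(i\colon Y\hookrightarrow Z\). Then you only get \(f_\ast\circ i_\ast=(Y\hookrightarrow Y')_\ast\). This gives \(i_\ast\) injective and \(f_\ast\) surjective, nothing more. The argument of \cref{ex:disc} needs an embedding in the reverse direction with both composites isotopic to identities, which you do not have. So surjectivity of \(i_\ast\) is never addressed.

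\textbf{Your fallback does not supply it either.}

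Shrinking the gluing disc changes \(Z\). For \(Y=D^2\cup_p\mathsf{e}\), the space \(Y\cup_D B^3\) is a ball with a whisker. Gluing instead along a small disc \(D_b\ni b\) leaves the interior points of the annulus \(D\setminus\mathring D_b\) with two-dimensional neighbourhoods, so the two spaces are not homeomorphic. In any case, a deformation rel \(\partial D\) cannot shrink \(D\).

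The decomposition of \cref{prop:valency-decomp} at \(b\) is also not local. Its vertex spaces are \(F_n(Y\setminus\{b\})\) and \(F_{n-1}(Y\setminus\{b\})\), configuration spaces of the whole punctured complex. By contrast, \(Z\setminus\{b\}\) still contains the entire ball. So there are no ``local vertex groups'' to compare with \cref{ex:disc-interior}, which in any case treats only an edge at an interior point. The remaining pieces of the decomposition also differ between \(Y\) and \(Z\).

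Step~2, and with it the proposition, therefore remains unproved.
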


\cref{fig:reduce-2-cell} illustrates
\cref{prop:reduce-2-cell}: capping the disc of
\(Y=D^2\cup_p\mathsf{e}\) along its boundary circle produces
\(X=S^2\cup_p\mathsf{e}\). The disc bounded by the attaching
circle contains the branch point \(p\) in its interior, so the
inclusion is a braid equivalence; indeed
\(\BG_n(S^2\cup_p\mathsf{e})\cong
\BG_n(D^2\cup_p\mathsf{e})\cong\mathbb{S}_n\) by
\cref{ex:disc-interior}.

\begin{figure}[ht]
\centering
\[
\begin{tikzcd}[column sep=4pc]
Y=\begin{tikzpicture}[baseline=-.5ex,line width=0.8pt]
  \fill[gray!30,opacity=0.5] (0,0) ellipse (1.2 and 0.5);
  \draw[red] (0,0) ellipse (1.2 and 0.5);
  \draw (0,0) -- (0,1.1);
  \filldraw (0,1.1) circle (1.3pt);
  \node[right=1pt] at (0.03,0.65) {\(\mathsf{e}\)};
  \filldraw (0,0) circle (1.6pt);
  \node[below right=-1pt] at (0.04,-0.06) {\(p\)};
\end{tikzpicture}\ar[r,"\cup_\varphi D^2"]&
X=\begin{tikzpicture}[baseline=-.5ex,line width=0.8pt]
  \fill[gray!30,opacity=0.5] (0,0) circle (1.0);
  \draw (0,0) circle (1.0);
  \draw[red] (-1,0) arc (180:360:1 and 0.3);
  \draw[red, dashed] (-1,0) arc (180:0:1 and 0.3);
  \draw (0,1.0) -- (0,1.75);
  \filldraw (0,1.75) circle (1.3pt);
  \node[right=1pt] at (0.03,1.42) {\(\mathsf{e}\)};
  \filldraw (0,1.0) circle (1.6pt);
  \node[below right=-1pt] at (0.05,0.98) {\(p\)};
\end{tikzpicture}
\end{tikzcd}
\]
\caption{An instance of \cref{prop:reduce-2-cell}: attaching a
\(2\)-cell to \(Y=D^2\cup_p\mathsf{e}\) along the boundary
circle of the disc (in red; on the right it becomes the equator
of the sphere) yields \(X=S^2\cup_p\mathsf{e}\). The disc
bounded by the attaching circle contains the branch point \(p\)
in its interior, so \(Y\hookrightarrow X\) is a braid
equivalence.}
\label{fig:reduce-2-cell}
\end{figure}

\cref{prop:reduce-higher-cells,prop:reduce-2-cell} together allow a complex to be
thickened to a manifold without changing its braid groups. We
use this to compute the braid groups of a natural class of
non-manifold complexes.

\begin{proposition}\label{prop:branched-surface}
Let \(X\) be a simple complex in which every vertex has valency
\(1\). If \(X\) is not a \(2\)-manifold, then for every \(n\geq 1\) the
inclusion \(F_n X\hookrightarrow X^n\) induces an isomorphism
\[
\PB_n(X,\base)\xrightarrow{\ \cong\ }
\prod_{i=1}^n\pi_1(X,x_i^0).
\]
\end{proposition}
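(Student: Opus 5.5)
The plan is to thicken \(X\) to a manifold of dimension at least \(3\) without changing the braid groups, and then invoke Birman's theorem (\cref{rem:birman-high-dim}).

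\emph{Local structure.} Since every vertex has valency \(1\), every link is connected. Since \(X\) is not a \(2\)-manifold, either \(X\) has dimension at least \(3\), or some point of \(X\) has a connected link that is neither a circle nor an arc. Examples of the second kind are a branch edge with three or more incident sheets, a free edge ending at a thick part, or an interior cone point. In the \(1\)-dimensional case, connected links mean every vertex has valency one and degree one, so \(X\) is an interval. That case gives \(\PB_n=1\) and \(\pi_1=1\), so the claim holds trivially. Hence we may assume \(\operatorname{br}(X)\neq\emptyset\) or \(\dim X\geq 3\).

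\emph{Embedding into a thickened manifold.} Choose a regular neighbourhood \(N\) of \(X\) in a high-dimensional Euclidean space, say \(\R^{m}\) with \(m\geq 5\). Then \(N\) is a compact \(m\)-manifold with boundary, and it deformation retracts onto \(X\), so \(X\hookrightarrow N\) is a \(\pi_1\)-isomorphism. Build \(N\) from \(X\) by successively attaching cells in a handle-by-handle manner, where each attaching map is an embedding. Each \(2\)-cell is attached along a circle bounding a disc whose interior meets the branch set; this is where we use that \(X\) is not a \(2\)-manifold and that the link at every vertex is connected, so a branch point can be reached inside each relevant disc. Each higher cell is attached along an embedded sphere in a complex not homeomorphic to \(S^2\). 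By \cref{prop:reduce-2-cell,prop:reduce-higher-cells}, every attachment induces an isomorphism on \(\BG_n\). Hence \(X\hookrightarrow N\) induces an isomorphism on \(\BG_n\). Because the isomorphism is compatible with \(\rho\), it also restricts to an isomorphism \(\PB_n(X)\cong\PB_n(N)\).

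\emph{Conclusion.} The inclusions fit into a commutative square. Its rows are the strand maps \(\PB_n(X)\to\prod\pi_1(X)\) and \(\PB_n(N)\to\prod\pi_1(N)\). Both vertical maps are isomorphisms. By Birman's theorem, valid for compact manifolds with boundary in dimension at least \(3\) (\cref{rem:birman-high-dim}), the bottom row is an isomorphism. Therefore the top row is an isomorphism as well.

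\emph{Main obstacle.} The key step is the middle one: arranging the thickening so that every \(2\)-cell attachment satisfies the branch-set hypothesis of \cref{prop:reduce-2-cell}. My first attempt would be to proceed inductively. First thicken a neighbourhood of one branch point to a \(3\)-ball; by \cref{ex:disc-interior}, which shows that an interior attachment behaves like \(D^3\), this is a braid equivalence. Then grow this \(3\)-dimensional region along \(X\), simplex by simplex, through connected links. Each new \(2\)-cell then caps a disc that already contains a branch point, namely a point where the growing \(3\)-dimensional region meets a \(2\)-dimensional sheet. The valency-\(1\) hypothesis is what guarantees that this growth can reach every simplex of \(X\) without disconnecting at any vertex.
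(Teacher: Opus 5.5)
Your proposal follows the paper's proof: thicken \(X\) to a regular neighbourhood \(N(X)\) of dimension at least \(3\), regard it as built by attaching cells along embedded spheres so that \cref{prop:reduce-higher-cells,prop:reduce-2-cell} make \(X\hookrightarrow N(X)\) a braid equivalence (hence an isomorphism on \(\PB_n\), compatibly with \(\rho\)), and conclude via Birman's theorem and the naturality square. The paper asserts the branch-set condition on each \(2\)-cell without further argument, so the step you isolate as the main obstacle is exactly the one it leaves implicit; your separate treatment of the interval also improves on its ``embeds in no surface'' remark, which fails for the interval.

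One correction to your sketch: a free edge ending on a thick part produces a link \(L\sqcup\{\mathrm{pt}\}\) of valency \(2\). Such points are therefore excluded by the hypothesis, and in fact \(X\) has no free edges at all unless it is an interval. For the same reason the model \(D^2\cup_p\mathsf{e}\) of \cref{ex:disc-interior} cannot justify your first thickening at a branch point. That step should come directly from \cref{prop:reduce-2-cell,prop:reduce-higher-cells}, e.g.\ by capping the cone over a cycle of the branch link, which is a disc with the branch point in its interior.
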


\begin{proof}
Every vertex of \(X\) has valency \(1\), so every vertex link is
connected; in particular each branch vertex of \(X\) -- one whose
link is neither a sphere nor a disc -- has connected link. A
simple complex with this property that is not a \(2\)-manifold is
a \emph{branched surface} in the sense of
An--Park~\cite{AnPark2017}; in particular it embeds in no
surface.

Embed \(X\) piecewise-linearly in a PL manifold \(M\) of least
possible dimension and let \(N(X)\) be a closed regular
neighbourhood of \(X\) in \(M\). Then \(X\) is a deformation retract
of \(N(X)\), and \(N(X)\) is obtained from \(X\) by attaching cells of
dimension \(\geq 2\). Since \(X\) embeds in no surface,
\(\dim N(X)\geq 3\), so \(N(X)\) is a PL manifold of dimension at
least \(3\).

Each cell attached in forming \(N(X)\) from \(X\) is glued along an
embedded sphere. By \cref{prop:reduce-higher-cells}
the attachment of a cell of dimension \(\geq 3\), and by
\cref{prop:reduce-2-cell} the attachment of a
\(2\)-cell whose attaching circle bounds a disc meeting the
branch set in its interior, each induces an isomorphism of braid
groups. Hence the inclusion \(X\hookrightarrow N(X)\) induces an
isomorphism \(\PB_n(X)\cong\PB_n(N(X))\).

Finally \(N(X)\) is a manifold of dimension \(\geq 3\). For a
manifold of dimension at least \(3\) the inclusion of the
configuration space into the Cartesian power induces an
isomorphism on fundamental groups -- this is Birman's
Theorem~1~\cite[Thm.~1]{Birman1969} -- because the fat diagonal
removed from \(N(X)^n\) to form \(F_n(N(X))\) has codimension
\(\geq 3\), so its removal leaves the fundamental group unchanged.
Hence \(F_n(N(X))\hookrightarrow N(X)^n\) induces an isomorphism
\(\PB_n(N(X))\cong\prod_{i=1}^n\pi_1(N(X))\). As \(X\) is a
deformation retract of \(N(X)\), \(\pi_1(N(X))\cong\pi_1(X)\).
Composing the three isomorphisms,
\[
\PB_n(X)\cong\PB_n(N(X))\cong\prod_{i=1}^n\pi_1(N(X))
\cong\prod_{i=1}^n\pi_1(X),
\]
and the result is the isomorphism induced by
\(F_n X\hookrightarrow X^n\).
\end{proof}

\begin{remark}\label{rem:branched-surface-pendant}
The conclusion of \cref{prop:branched-surface} persists when
pendant free edges are suitably attached to \(X\) -- the
thickening \(N(X')\) of the enlarged complex \(X'\) is then
still a manifold of dimension at least \(3\). See
\cref{cor:thick-with-pendants} for a precise statement, with a
proof via \cref{lem:pendant-edge}.
\end{remark}

\subsection{Free parts and (weakly) admissible trees}
\label{ssec:free-parts}

Recall the informal description from
\cref{ssec:main-results}; we now record the formal
definition we will use throughout the remainder of the paper.

\begin{definition}[Free edge and free part]
\label{def:free-part}
Let \(X\) be a finite simplicial complex.
\begin{itemize}[leftmargin=2em]
    \item A \emph{free edge} of \(X\) is a maximal \(1\)-cell
          \(\sfe\subseteq X\) that is not contained in the closure of
          any \(2\)-cell of \(X\).
    \item A \emph{joint} of \(X\) is a vertex \(\sfv\in X\) of valency at least \(2\), whose link is of dimension at least \(1\).
\end{itemize}
The \emph{free part} of \(X\) is the subspace
\[
\mathsf{F}_X \coloneqq
\Bigl(\,\bigcup_{\sfe\text{ free edge of } X} \sfe\,\Bigr)
\cup
\Bigl(\,\bigcup_{\sfv\text{ joint of }X} \{\sfv\}\,\Bigr)
\subseteq X,
\]
viewed as a \(1\)-dimensional subcomplex of \(X\). Its connected
components are themselves finite graphs.
\end{definition}

\begin{remark}\label{rem:joint-redundant-simple}
If \(X\) is simple (\cref{def:simple-complex}), then
every joint of \(X\) is automatically the endpoint of some free
edge of \(X\), so the joint contribution to \(\mathsf{F}_X\) is
redundant: in this case
\[
\mathsf{F}_X = \bigcup_{\sfe\text{ free edge of } X} \sfe.
\]
The general definition of \(\mathsf{F}_X\) is therefore needed only
for non-simple \(X\).
\end{remark}

\begin{definition}[Thick components and unwrapped thick components]
\label{def:free-components}
The set \(\{\Sigma_1,\dots,\Sigma_m\}\) of \emph{thick components} of \(X\)
consists of the subcomplexes \(\Sigma_i\) of \(X\) obtained by taking
the closure (in \(X\)) of each connected component of the
complement \(X\setminus\mathsf{F}_X\). Each thick component is a
subcomplex of dimension at least \(2\) containing no free edges.

For a thick component \(\Sigma\) and a joint \(\sfv\in\Sigma\), write
\(\val_\Sigma(\sfv)\) for the number of connected components
\(L\subseteq\lk_X(\sfv)\cap\Sigma\) of the link of \(\sfv\) lying in
\(\Sigma\). The \emph{unwrapped thick component}
\(\bar{\Sigma}\) is the simplicial complex obtained from
\(\Sigma\) by separating, at every joint \(\sfv\in\Sigma\), the
\(\val_\Sigma(\sfv)\) link components meeting at \(\sfv\): one introduces
a distinct copy \(\sfv^{(L)}\) of \(\sfv\) for each component
\(L\subseteq\lk_X(\sfv)\cap\Sigma\), and attaches the local cone over
\(L\) to \(\sfv^{(L)}\) rather than to \(\sfv\). The original \(\Sigma\)
is recovered by identifying, for each joint \(\sfv\), all the copies
\(\sfv^{(L)}\) back to \(\sfv\); this identification is a canonical
quotient map
\[
q_\Sigma\colon\bar{\Sigma}\twoheadrightarrow\Sigma,
\]
which is a homeomorphism away from the joints of \(\Sigma\) and is
\(\val_\Sigma(\sfv)\)-to-one over each joint \(\sfv\in\Sigma\). When \(X\)
is simple, every joint \(\sfv\in\Sigma\) has \(\val_\Sigma(\sfv)=1\) --
the link of a simple joint is of the form ``connected
\(\sqcup\) point'' (\cref{def:simple-complex}(ii)), and
only the connected piece lies in \(\Sigma\) -- so \(q_\Sigma\) is a
homeomorphism and \(\bar{\Sigma}\cong\Sigma\).
\end{definition}

\begin{example}\label{ex:free-part-unwrapped}
Consider the complex \(X\) shown in the centre of
\cref{fig:free-part-example}. It has three thick components:
\(\Sigma_1\), a sphere with two points identified at \(\sfw\) --
so that \(\lk_X(\sfw)\) consists of two circles and \(\sfw\) is a
joint -- carrying one further joint \(\sfs\); a disc \(\Sigma_2\)
with an interior joint \(u\) and boundary joints \(\sfb_1,\sfb_2\);
and a disc \(\Sigma_3\) with boundary joints \(\sfc_1,\sfc_2\). The
free part \(\mathsf{F}_X\) -- drawn in blue, with the joints in
red -- consists of five free edges: \(\sff_1\), joining \(\sfs\) to
\(\sfb_1\); \(\sff_2\), joining \(\sfu\) to \(\sfc_1\); and \(f_3\),
\(\sff_4\), \(\sff_5\), joining \(\sfc_2\), \(\sfb_2\) and \(\sfw\),
respectively, to a common trivalent vertex. This last vertex
belongs to \(\mathsf{F}_X\) but is \emph{not} a joint: its link
in \(X\) is \(0\)-dimensional, consisting of three points.
Around \(X\) are its three unwrapped thick components:
\(\bar{\Sigma}_1\) is a genuine sphere with the three
leaves \(\sfw^{(1)},\sfw^{(2)},s\), and \(q_{\Sigma_1}\) identifies
\(\sfw^{(1)}\) and \(\sfw^{(2)}\) to the joint \(\sfw\), of valency
\(\val_{\Sigma_1}(\sfw)=2\), while \(q_{\Sigma_2}\) and
\(q_{\Sigma_3}\) are homeomorphisms.
\end{example}

\begin{figure}[ht]
\centering
\begin{tikzcd}[column sep=1.5pc, row sep=-3pc]
\bar{\Sigma}_2=\begin{tikzpicture}[line width=0.8pt, scale=0.62,
                    baseline=(current bounding box.center)]
  \fill[gray!30,opacity=0.5] (0,0) ellipse (1.5 and 0.65);
  \draw (0,0) ellipse (1.5 and 0.65);
  \filldraw[red] (0,0) circle (1.8pt);
  \node[above] at (0,0.05) {\(\sfu\)};
  \filldraw[red] (-1.41,-0.22) circle (1.8pt);
  \node[below] at (-1.43,-0.25) {\(\sfb_1\)};
  \filldraw[red] (1.41,-0.22) circle (1.8pt);
  \node[below] at (1.43,-0.25) {\(\sfb_2\)};
\end{tikzpicture}
\arrow[ddr, yshift=.7cm, "q_{\Sigma_2}"]
\\[1pc]&&
\begin{tikzpicture}[line width=0.8pt, scale=0.62,
                    baseline=-.5ex]
  \fill[gray!30,opacity=0.5] (0,0) ellipse (1.5 and 0.65);
  \draw (0,0) ellipse (1.5 and 0.65);
  \filldraw[red] (-1.3,0.33) circle (1.8pt);
  \node[above] at (-1.32,0.36) {\(\sfc_1\)};
  \filldraw[red] (-1.36,-0.27) circle (1.8pt);
  \node[below] at (-1.38,-0.3) {\(\sfc_2\)};
\end{tikzpicture}=\bar{\Sigma}_3
\arrow[ld, "q_{\Sigma_3}"', yshift=.3cm] \\
& X=\begin{tikzpicture}[line width=0.8pt, scale=0.75,
                      baseline=(current bounding box.center)]
  \fill[gray!30,opacity=0.5,even odd rule]
    (0,0) .. controls (-0.9,1.9) and (-3.4,1.4) .. (-3.4,0)
          .. controls (-3.4,-1.4) and (-0.9,-1.9) .. (0,0) -- cycle
    (0,0) .. controls (-1.2,0.85) and (-2.0,0.35) .. (-2.0,0)
          .. controls (-2.0,-0.35) and (-1.2,-0.85) .. (0,0) -- cycle;
  \draw (0,0) .. controls (-0.9,1.9) and (-3.4,1.4) .. (-3.4,0)
              .. controls (-3.4,-1.4) and (-0.9,-1.9) .. (0,0);
  \draw (0,0) .. controls (-1.2,0.85) and (-2.0,0.35) .. (-2.0,0)
              .. controls (-2.0,-0.35) and (-1.2,-0.85) .. (0,0);
\draw (-2,0) arc (0:-180:0.7 and 0.35);
\draw[dashed] (-2,0) arc (0:180:0.7 and 0.35);
  \node at (-2.7,-1.55) {\(\Sigma_1\)};
  \fill[gray!30,opacity=0.5] (-1.8,3.9) ellipse (1.1 and 0.5);
  \draw (-1.8,3.9) ellipse (1.1 and 0.5);
  \node at (-2.45,4.55) {\(\Sigma_2\)};
  \fill[gray!30,opacity=0.5] (2.9,2.6) ellipse (1.1 and 0.5);
  \draw (2.9,2.6) ellipse (1.1 and 0.5);
  \node at (3.35,3.15) {\(\Sigma_3\)};
  \draw[blue] (-1.59,1.22) -- node[black, midway,left] {\(\sff_1\)} (-2.43,3.49);
  \draw[blue] (-1.8,3.9) to[out=60,in=120] node[black, midway, above] {\(\sff_2\)} (1.95,2.85);
  \draw[fill,blue] (0,0) -- node[black, midway, right] {\(\sff_5\)} (0.5,2) circle (1.8pt);
  \draw[blue] (0.5,2) -- node[black, midway, below] {\(\sff_3\)} (1.9,2.39);
  \draw[blue] (-1.17,3.49) -- node[black, midway, below] {\(\sff_4\)} (0.5,2);
  \filldraw[red] (0,0) circle (1.8pt) node[black, right] {\(\sfw\)};
  \filldraw[red] (-1.59,1.22) circle (1.8pt) node[black, above right] {\(\sfs\)};
  \filldraw[red] (-1.8,3.9) circle (1.8pt) node[black, left] {\(\sfu\)};
  \filldraw[red] (-2.43,3.49) circle (1.8pt) node[black, below left] {\(\sfb_1\)};
  \filldraw[red] (-1.17,3.49) circle (1.8pt) node[black,below] {\(\sfb_2\)};
  \filldraw[red] (1.95,2.85) circle (1.8pt) node[black, left] {\(\sfc_1\)};
  \filldraw[red] (1.9,2.39) circle (1.8pt) node[black, below] {\(\sfc_2\)};
\end{tikzpicture} &\\
\bar{\Sigma}_1=\begin{tikzpicture}[line width=0.8pt, scale=0.62,
                    baseline=-.5ex]
  \fill[gray!30,opacity=0.5] (0,0) circle (1.1);
  \draw (0,0) circle (1.1);
  \draw (-1.1,0) arc (180:360:1.1 and 0.33);
  \draw[dashed] (-1.1,0) arc (180:0:1.1 and 0.33);
  \filldraw[red] (0.95,0.55) circle (1.8pt);
  \node[right] at (1.0,0.62) {\(\sfw^{(1)}\)};
  \filldraw[red] (0.95,-0.55) circle (1.8pt);
  \node[right] at (1.0,-0.62) {\(\sfw^{(2)}\)};
  \filldraw[red] (-0.4,1.02) circle (1.8pt);
  \node[above] at (-0.42,1.05) {\(\sfs\)};
\end{tikzpicture}
\arrow[ru, "q_{\Sigma_1}",yshift=-.5cm]
\end{tikzcd}
\caption{The complex \(X\) of \cref{ex:free-part-unwrapped}
(free edges in blue, joints in red) and its unwrapped
thick components.}
\label{fig:free-part-example}
\end{figure}

For a thick component \(\Sigma\), write \(J(\Sigma)\subseteq\Sigma\)
for its set of joints and set
\[
J(\bar{\Sigma}) \coloneqq q_\Sigma^{-1}\bigl(J(\Sigma)\bigr)
\subseteq\bar{\Sigma};
\]
thus \(J(\bar{\Sigma})\) consists of the copies \(\sfv^{(L)}\)
of the joints \(\sfv\in J(\Sigma)\), one for each component
\(L\subseteq\lk_X(\sfv)\cap\Sigma\).

For each thick component \(\Sigma\), choose a tree
\(\Phi_{\bar{\Sigma}}\subseteq\bar{\Sigma}\) that
contains \(J(\bar{\Sigma})\) and all of whose leaves lie in
\(J(\bar{\Sigma})\) -- such a tree exists, since
\(\bar{\Sigma}\) is connected, but is in general not
unique -- and let
\[
\Phi_\Sigma \coloneqq q_\Sigma\bigl(\Phi_{\bar{\Sigma}}\bigr)
\subseteq\Sigma
\]
be its image, a connected graph in which each joint
\(\sfv\in J(\Sigma)\) has degree at least \(\val_\Sigma(\sfv)\) within
\(\Phi_\Sigma\) (the \(\val_\Sigma(\sfv)\) copies \(\sfv^{(L)}\) of
\(\sfv\) in \(\Phi_{\bar{\Sigma}}\) are identified by
\(q_\Sigma\)). When
\(J(\Sigma)=\emptyset\) -- so \(\Sigma\) meets no joint -- take
\(\Phi_\Sigma\) to be a single interior vertex
\(\sfv_\Sigma\in\Sigma\).

\begin{definition}[Scaffolds and weakly admissible trees]
\label{def:scaffold}
A \emph{scaffold} of \(X\) is any graph
\[
\Phi_X \coloneqq \mathsf{F}_X \cup\bigcup_{\Sigma}\Phi_\Sigma\subset X,
\]
the union of the free part with one choice of \(\Phi_\Sigma\), as
constructed above, for each thick component \(\Sigma\) of \(X\). Its edges are of two kinds:
the \emph{thin edges}, inherited from \(\mathsf{F}_X\), and the
\emph{thick edges}, those lying in some \(\Phi_\Sigma\). We write
\(\mathfrak{G}_X\) for the \emph{set of scaffolds of \(X\)}.

A \emph{weakly admissible tree} of \(X\) is a maximal tree
\(\sfT_X\subseteq\Phi_X\) of some scaffold \(\Phi_X\in\mathfrak{G}_X\)
that contains the free part \(\sfF_X\); we write
\(\mathfrak{T}_X\) for the \emph{set of weakly admissible trees of \(X\)}.
\end{definition}

Every \(\Phi_X\in\mathfrak{G}_X\) contains the free part
\(\mathsf{F}_X\), and -- being a union of subcomplexes
\(\mathsf{F}_X,\Phi_\Sigma\subseteq X\) -- is, after a subdivision
of \(X\) if necessary, a subgraph of the \(1\)-skeleton of \(X\).

Since the thin edges of \(\Phi_X\) are exactly the edges of
\(\mathsf{F}_X\), the edges of \(\Phi_X\) omitted from such a
\(\sfT_X\) are precisely thick edges. The set \(\mathfrak{T}_X\) is
non-empty whenever \(\mathsf{F}_X\) is a forest -- any forest in a
connected graph extends to a maximal tree -- and may be empty when
\(\mathsf{F}_X\) is not a forest, since a tree cannot contain a
cycle of \(\mathsf{F}_X\).

\begin{example}\label{ex:scaffold-example}
We continue with the complex \(X\) of
\cref{ex:free-part-unwrapped} and enumerate the possible trees
\(\Phi_{\bar{\Sigma}}\) on its three unwrapped thick
components: such a tree contains \(J(\bar{\Sigma})\), and
all of its leaves lie in \(J(\bar{\Sigma})\). On
\(\bar{\Sigma}_1\), with
\(J(\bar{\Sigma}_1)=\{\sfw^{(1)},\sfw^{(2)},\sfs\}\), the tree is
either a \emph{tripod} -- three arcs meeting at a trivalent
centre -- or a path in which one of the three points is an
interior vertex; the first row of
\cref{fig:scaffold-example-trees} shows the images in
\(\Sigma_1\) of the path with \(\sfs\) interior (the outer loop),
of the two paths with \(\sfw^{(1)}\), respectively \(\sfw^{(2)}\),
interior, and of a tripod, among others. Likewise on
\(\bar{\Sigma}_2\), with
\(J(\bar{\Sigma}_2)=\{\sfu,\sfb_1,\sfb_2\}\): the second row shows
the three paths, with \(\sfu\), \(\sfb_1\), respectively \(\sfb_2\)
interior, and two (ambient isotopic) tripods. On
\(\bar{\Sigma}_3\), a tree with the two leaves
\(\sfc_1,\sfc_2\) is an embedded arc, unique up to isotopy. A
scaffold of \(X\) combines the free part with one choice of
\(\Phi_\Sigma\) for each thick component; one such scaffold is
shown in \cref{fig:scaffold-example-scaffold}.
\end{example}

\begin{figure}[ht]
\centering
\begin{subfigure}[b]{\textwidth}
\centering
\begin{align*}
\Phi_{\Sigma_1} &=
\vcenter{\hbox{
\begin{tikzpicture}[line width=0.8pt, scale=0.75, baseline=-.5ex, yshift=-2cm]
  \fill[gray!30,opacity=0.5,even odd rule]
    (0,0) .. controls (-0.9,1.9) and (-3.4,1.4) .. (-3.4,0)
          .. controls (-3.4,-1.4) and (-0.9,-1.9) .. (0,0) -- cycle
    (0,0) .. controls (-1.2,0.85) and (-2.0,0.35) .. (-2.0,0)
          .. controls (-2.0,-0.35) and (-1.2,-0.85) .. (0,0) -- cycle;
  \draw (0,0) .. controls (-0.9,1.9) and (-3.4,1.4) .. (-3.4,0)
              .. controls (-3.4,-1.4) and (-0.9,-1.9) .. (0,0);
  \draw (0,0) .. controls (-1.2,0.85) and (-2.0,0.35) .. (-2.0,0)
              .. controls (-2.0,-0.35) and (-1.2,-0.85) .. (0,0);
  \draw (-2,0) arc (0:-180:0.7 and 0.35);
  \draw[dashed] (-2,0) arc (0:180:0.7 and 0.35);
  \draw[line width=2,green!55!black] (0,0) .. controls (-0.9,1.9) and (-3.4,1.4) .. (-3.4,0)
              .. controls (-3.4,-1.4) and (-0.9,-1.9) .. (0,0);
  \filldraw[red] (0,0) circle (1.8pt);
  \filldraw[red] (-1.59,1.22) circle (1.8pt);
\end{tikzpicture}
}}
\vcenter{\hbox{
\begin{tikzpicture}[line width=0.8pt, scale=0.75, baseline=-.5ex, yshift=-2cm]
  \fill[gray!30,opacity=0.5,even odd rule]
    (0,0) .. controls (-0.9,1.9) and (-3.4,1.4) .. (-3.4,0)
          .. controls (-3.4,-1.4) and (-0.9,-1.9) .. (0,0) -- cycle
    (0,0) .. controls (-1.2,0.85) and (-2.0,0.35) .. (-2.0,0)
          .. controls (-2.0,-0.35) and (-1.2,-0.85) .. (0,0) -- cycle;
  \draw (0,0) .. controls (-0.9,1.9) and (-3.4,1.4) .. (-3.4,0)
              .. controls (-3.4,-1.4) and (-0.9,-1.9) .. (0,0);
  \draw (0,0) .. controls (-1.2,0.85) and (-2.0,0.35) .. (-2.0,0)
              .. controls (-2.0,-0.35) and (-1.2,-0.85) .. (0,0);
  \draw (-2,0) arc (0:-180:0.7 and 0.35);
  \draw[dashed] (-2,0) arc (0:180:0.7 and 0.35);
  \draw[line width=2,green!55!black] (-1.59,1.22) to[out=-10,in=115] (0,0);
  \draw[line width=2,green!55!black] (-1.9,0.9) .. controls (-1.0,1.05) and (-0.45,0.6) .. (0,0);
  \draw[line width=2,green!55!black] (-1.9,0.9) .. controls (-3.15,0.75) and (-3.15,-0.75) .. (-1.9,-0.85)
                        .. controls (-0.95,-0.95) and (-0.4,-0.55) .. (0,0);
  \filldraw[red] (0,0) circle (1.8pt);
  \filldraw[red] (-1.59,1.22) circle (1.8pt);
\end{tikzpicture}
}}
\vcenter{\hbox{
\begin{tikzpicture}[line width=0.8pt, scale=0.75, baseline=-.5ex, yshift=-2cm]
  \fill[gray!30,opacity=0.5,even odd rule]
    (0,0) .. controls (-0.9,1.9) and (-3.4,1.4) .. (-3.4,0)
          .. controls (-3.4,-1.4) and (-0.9,-1.9) .. (0,0) -- cycle
    (0,0) .. controls (-1.2,0.85) and (-2.0,0.35) .. (-2.0,0)
          .. controls (-2.0,-0.35) and (-1.2,-0.85) .. (0,0) -- cycle;
  \draw (0,0) .. controls (-0.9,1.9) and (-3.4,1.4) .. (-3.4,0)
              .. controls (-3.4,-1.4) and (-0.9,-1.9) .. (0,0);
  \draw (0,0) .. controls (-1.2,0.85) and (-2.0,0.35) .. (-2.0,0)
              .. controls (-2.0,-0.35) and (-1.2,-0.85) .. (0,0);
  \draw (-2,0) arc (0:-180:0.7 and 0.35);
  \draw[dashed] (-2,0) arc (0:180:0.7 and 0.35);
  \draw[line width=2,green!55!black] (-1.59,1.22) .. controls (-1.35,1.3) and (-3.4,1.4) .. (-3.4,0) .. controls (-3.4,-1.4) and (-0.9,-1.9) .. (0,0);
  \draw[line width=2,green!55!black] (-1.9,0.9) .. controls (-1.0,1.05) and (-0.45,0.6) .. (0,0);
  \draw[line width=2,green!55!black] (-1.9,0.9) .. controls (-3.15,0.75) and (-3.15,-0.75) .. (-1.9,-0.85)
                        .. controls (-0.95,-0.95) and (-0.4,-0.55) .. (0,0);
  \filldraw[red] (0,0) circle (1.8pt);
  \filldraw[red] (-1.59,1.22) circle (1.8pt);
\end{tikzpicture}
}}
\vcenter{\hbox{
\begin{tikzpicture}[line width=0.8pt, scale=0.75, baseline=-.5ex, yshift=-2cm]
  \fill[gray!30,opacity=0.5,even odd rule]
    (0,0) .. controls (-0.9,1.9) and (-3.4,1.4) .. (-3.4,0)
          .. controls (-3.4,-1.4) and (-0.9,-1.9) .. (0,0) -- cycle
    (0,0) .. controls (-1.2,0.85) and (-2.0,0.35) .. (-2.0,0)
          .. controls (-2.0,-0.35) and (-1.2,-0.85) .. (0,0) -- cycle;
  \draw (0,0) .. controls (-0.9,1.9) and (-3.4,1.4) .. (-3.4,0)
              .. controls (-3.4,-1.4) and (-0.9,-1.9) .. (0,0);
  \draw (0,0) .. controls (-1.2,0.85) and (-2.0,0.35) .. (-2.0,0)
              .. controls (-2.0,-0.35) and (-1.2,-0.85) .. (0,0);
  \draw (-2,0) arc (0:-180:0.7 and 0.35);
  \draw[dashed] (-2,0) arc (0:180:0.7 and 0.35);
  \draw[line width=2,green!55!black] (-1.9,0.9) .. controls (-1.75,1.05) .. (-1.59,1.22);
  \draw[line width=2,green!55!black] (-1.9,0.9) .. controls (-1.0,1.05) and (-0.45,0.6) .. (0,0);
  \draw[line width=2,green!55!black] (-1.9,0.9) .. controls (-3.15,0.75) and (-3.15,-0.75) .. (-1.9,-0.85)
                        .. controls (-0.95,-0.95) and (-0.4,-0.55) .. (0,0);
  \filldraw[line width=2,green!55!black] (-1.9,0.9) circle (1.4pt);
  \filldraw[red] (0,0) circle (1.8pt);
  \filldraw[red] (-1.59,1.22) circle (1.8pt);
\end{tikzpicture}
}}
\cdots
\\
\Phi_{\Sigma_2} &=
\vcenter{\hbox{
\begin{tikzpicture}[baseline=-.5ex,line width=0.8pt, scale=0.75, yshift=-3.9cm]
  \fill[gray!30,opacity=0.5] (-1.8,3.9) ellipse (1.1 and 0.5);
  \draw (-1.8,3.9) ellipse (1.1 and 0.5);
\draw[line width=2,green!55!black] (-1.17,3.49) -- (-1.8,3.9) -- (-2.43, 3.49);
  \filldraw[red] (-1.8,3.9) circle (1.8pt);
  \filldraw[red] (-2.43,3.49) circle (1.8pt);
  \filldraw[red] (-1.17,3.49) circle (1.8pt);
\end{tikzpicture}
}}\quad
\vcenter{\hbox{
\begin{tikzpicture}[baseline=-.5ex,line width=0.8pt, scale=0.75, yshift=-3.9cm]
  \fill[gray!30,opacity=0.5] (-1.8,3.9) ellipse (1.1 and 0.5);
  \draw (-1.8,3.9) ellipse (1.1 and 0.5);
\draw[line width=2,green!55!black] (-1.17,3.49) -- (-2.43, 3.49) +(0,0) -- (-1.8,3.9);
  \filldraw[red] (-1.8,3.9) circle (1.8pt);
  \filldraw[red] (-2.43,3.49) circle (1.8pt);
  \filldraw[red] (-1.17,3.49) circle (1.8pt);
\end{tikzpicture}
}}\quad
\vcenter{\hbox{
\begin{tikzpicture}[baseline=-.5ex,line width=0.8pt, scale=0.75, yshift=-3.9cm]
  \fill[gray!30,opacity=0.5] (-1.8,3.9) ellipse (1.1 and 0.5);
  \draw (-1.8,3.9) ellipse (1.1 and 0.5);
\draw[line width=2,green!55!black] (-2.43, 3.49) -- (-1.17,3.49) +(0,0) -- (-1.8,3.9);
  \filldraw[red] (-1.8,3.9) circle (1.8pt);
  \filldraw[red] (-2.43,3.49) circle (1.8pt);
  \filldraw[red] (-1.17,3.49) circle (1.8pt);
\end{tikzpicture}
}}\quad
\vcenter{\hbox{
\begin{tikzpicture}[baseline=-.5ex,line width=0.8pt, scale=0.75, yshift=-3.9cm]
  \fill[gray!30,opacity=0.5] (-1.8,3.9) ellipse (1.1 and 0.5);
  \draw (-1.8,3.9) ellipse (1.1 and 0.5);
  \draw[line width=2,green!55!black] (-1.8,3.6) -- (-1.8,3.9);
  \draw[line width=2,green!55!black] (-1.8,3.6) .. controls (-2.1,3.5) .. (-2.43,3.49);
  \draw[line width=2,green!55!black] (-1.8,3.6) .. controls (-1.5,3.5) .. (-1.17,3.49);
  \filldraw[line width=2,green!55!black] (-1.8,3.6) circle (1.4pt);
  \filldraw[red] (-1.8,3.9) circle (1.8pt);
  \filldraw[red] (-2.43,3.49) circle (1.8pt);
  \filldraw[red] (-1.17,3.49) circle (1.8pt);
\end{tikzpicture}
}}\quad
\vcenter{\hbox{
\begin{tikzpicture}[baseline=-.5ex,line width=0.8pt, scale=0.75, yshift=-3.9cm]
  \fill[gray!30,opacity=0.5] (-1.8,3.9) ellipse (1.1 and 0.5);
  \draw (-1.8,3.9) ellipse (1.1 and 0.5);
  \draw[line width=2,green!55!black] (-1.8,4.2) -- (-1.8,3.9);
  \draw[line width=2,green!55!black] (-1.8,4.2) to[out=180,in=90] (-2.43,3.49);
  \draw[line width=2,green!55!black] (-1.8,4.2) to[out=0,in=90] (-1.17,3.49);
  \filldraw[line width=2,green!55!black] (-1.8,4.2) circle (1.4pt);
  \filldraw[red] (-1.8,3.9) circle (1.8pt);
  \filldraw[red] (-2.43,3.49) circle (1.8pt);
  \filldraw[red] (-1.17,3.49) circle (1.8pt);
\end{tikzpicture}
}}\\
\Phi_{\Sigma_3} &=
\begin{tikzpicture}[baseline=-.5ex,line width=0.8pt, scale=0.75, yshift=-2.6cm]
  \fill[gray!30,opacity=0.5] (2.9,2.6) ellipse (1.1 and 0.5);
  \draw (2.9,2.6) ellipse (1.1 and 0.5);
  \draw[line width=2,green!55!black] (1.95,2.85) to[out=0,in=90] (2.4,2.6) to[out=-90,in=0] (1.9,2.39);
  \filldraw[red] (1.95,2.85) circle (1.8pt);
  \filldraw[red] (1.9,2.39) circle (1.8pt);
\end{tikzpicture}
\end{align*}
\caption{Some possible \(\Phi_{\Sigma_i}\) on the three thick
components, drawn in green.}
\label{fig:scaffold-example-trees}
\end{subfigure}

\medskip

\begin{subfigure}[b]{\textwidth}
\[
\Phi_X=
\vcenter{\hbox{\begin{tikzpicture}[line width=0.8pt, scale=0.75]
  \fill[gray!30,opacity=0.5,even odd rule]
    (0,0) .. controls (-0.9,1.9) and (-3.4,1.4) .. (-3.4,0)
          .. controls (-3.4,-1.4) and (-0.9,-1.9) .. (0,0) -- cycle
    (0,0) .. controls (-1.2,0.85) and (-2.0,0.35) .. (-2.0,0)
          .. controls (-2.0,-0.35) and (-1.2,-0.85) .. (0,0) -- cycle;
  \draw (0,0) .. controls (-0.9,1.9) and (-3.4,1.4) .. (-3.4,0)
              .. controls (-3.4,-1.4) and (-0.9,-1.9) .. (0,0);
  \draw (0,0) .. controls (-1.2,0.85) and (-2.0,0.35) .. (-2.0,0)
              .. controls (-2.0,-0.35) and (-1.2,-0.85) .. (0,0);
  \draw (-2,0) arc (0:-180:0.7 and 0.35);
  \draw[dashed] (-2,0) arc (0:180:0.7 and 0.35);
  \fill[gray!30,opacity=0.5] (-1.8,3.9) ellipse (1.1 and 0.5);
  \draw (-1.8,3.9) ellipse (1.1 and 0.5);
  \fill[gray!30,opacity=0.5] (2.9,2.6) ellipse (1.1 and 0.5);
  \draw (2.9,2.6) ellipse (1.1 and 0.5);
  \draw[green!55!black] (-1.9,0.9) .. controls (-1.75,1.05) .. (-1.59,1.22);
  \draw[green!55!black] (-1.9,0.9) .. controls (-1.0,1.05) and (-0.45,0.6) .. (0,0);
  \draw[green!55!black] (-1.9,0.9) .. controls (-3.15,0.75) and (-3.15,-0.75) .. (-1.9,-0.85)
                        .. controls (-0.95,-0.95) and (-0.4,-0.55) .. (0,0);
  \filldraw[green!55!black] (-1.9,0.9) circle (1.4pt);
  \draw[green!55!black] (-1.8,3.6) -- (-1.8,3.9);
  \draw[green!55!black] (-1.8,3.6) .. controls (-2.1,3.5) .. (-2.43,3.49);
  \draw[green!55!black] (-1.8,3.6) .. controls (-1.5,3.5) .. (-1.17,3.49);
  \filldraw[green!55!black] (-1.8,3.6) circle (1.4pt);
  \draw[green!55!black] (1.95,2.85) to[out=0,in=90] (2.4,2.6) to[out=-90,in=0] (1.9,2.39);
  \draw[blue] (-1.59,1.22) -- (-2.43,3.49);
  \draw[blue] (-1.8,3.9) to[out=60,in=120] (1.95,2.85);
  \draw[fill,blue] (0,0) -- (0.5,2) circle (1.8pt);
  \draw[blue] (0.5,2) -- (1.9,2.39);
  \draw[blue] (-1.17,3.49) -- (0.5,2);
  \filldraw[red] (0,0) circle (1.8pt);
  \filldraw[red] (-1.59,1.22) circle (1.8pt);
  \filldraw[red] (-1.8,3.9) circle (1.8pt);
  \filldraw[red] (-2.43,3.49) circle (1.8pt);
  \filldraw[red] (-1.17,3.49) circle (1.8pt);
  \filldraw[red] (1.95,2.85) circle (1.8pt);
  \filldraw[red] (1.9,2.39) circle (1.8pt);
\end{tikzpicture}
}}
\]
\caption{A scaffold \(\Phi_X\), combining one choice for each
thick component with the free part.}
\label{fig:scaffold-example-scaffold}
\end{subfigure}
\caption{Scaffolds of the complex \(X\) of
\cref{ex:free-part-unwrapped}: thin edges in blue, thick edges
in green, joints in red.}
\label{fig:scaffold-example}
\end{figure}

\begin{remark}
The terminology reflects regular neighbourhoods inside \(X\): a
thin edge is a \(1\)-cell of \(X\) and so has a \(1\)-dimensional
regular neighbourhood, whereas a thick edge lies in a thick
component, of dimension at least \(2\), and so admits a regular
neighbourhood of dimension at least \(2\).
\end{remark}

The scaffold is stable under the resolution of non-simple
vertices, up to contraction of the newly created free edge; we
record this at the level of weakly admissible trees. Resolving a
non-simple vertex \(v\) along a \(1\)-dimensional link component
\(L\) of \(\lk_X(\sfv)\) (\cref{ssec:simple-an-park}) separates
the cone over \(L\) from \(\sfv\), creating a new apex \(v'\) and a
new free edge \(\sfe\in\mathsf{F}_{X'}\) of the resulting complex
\(X'\) with two endpoints \(\sfv,\sfv'\); collapsing \(\sfe\) recovers \(X\). Write
\[
q_\sfe\colon X'\longrightarrow X'/\sfe = X
\]
for this collapse map; it contracts \(e\) to the vertex \(v\) and
restricts to a homeomorphism
\(X'\setminus \sfe\xrightarrow\cong X\setminus\{\sfv\}\). The effect
on the free part is only the adjunction of the single edge \(\sfe\),
so that contracting \(e\) carries a scaffold of \(X'\) back to
one of \(X\); the following lemma makes this precise and records
the resulting correspondence of weakly admissible trees.

\begin{lemma}\label{lem:max-tree-resolution}
Let \(\sfv\) be a non-simple vertex of \(X\), with \(X'\), \(\sfe\), and
\(q_\sfe\colon X'\to X'/\sfe=X\) as above. Then collapsing the thin edge
\(e\) induces a bijection
\[
\frG_{X'}\xrightarrow\cong\frG_X,
\qquad
\Phi_{X'}\longmapsto q_\sfe(\Phi_{X'})=\Phi_{X'}/\sfe,
\]
between the scaffolds of \(X'\) and those of \(X\); writing
\(\Phi_X=\Phi_{X'}/\sfe\) for corresponding scaffolds, it restricts
to a bijection
\[
\frT_{X'}\xrightarrow\cong\frT_X,
\qquad
\sfT_{X'}\longmapsto q_\sfe(\sfT_{X'})=\sfT_{X'}/\sfe,
\]
with inverse \(\sfT_X\mapsto q_\sfe^{-1}(\sfT_X)\). In particular, every
\(\sfT_{X'}\in\frT_{X'}\) contains \(\sfe\).
\end{lemma}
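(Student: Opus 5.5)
We compare the ingredients of the two scaffolds: free parts, thick components, joints and the trees \(\Phi_\Sigma\). The bijection on weakly admissible trees then follows formally from the bijection on scaffolds.

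\emph{Step 1: free parts and thick components.} We first check that \(q_\sfe\) maps \(\mathsf{F}_{X'}\) onto \(\mathsf{F}_X\), with \(\mathsf{F}_{X'}=q_\sfe^{-1}(\mathsf{F}_X)\).

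Since \(q_\sfe\) is a homeomorphism \(X'\setminus\sfe\to X\setminus\{\sfv\}\) and simplicial structure away from \(\sfv\) is untouched, the free edges of \(X'\) other than \(\sfe\) correspond bijectively to the free edges of \(X\). The edge \(\sfe\) itself is free by construction. Both \(\sfv\) and \(\sfv'\) are joints of \(X'\), because each has a positive-dimensional link component and valency \(\geq2\). Here we use \(\lk_{X'}(\sfv')=L\sqcup\{\mathrm{pt}\}\) and \(\lk_{X'}(\sfv)=(\lk_X(\sfv)\setminus L)\sqcup\{\mathrm{pt}\}\), and the latter still contains a positive-dimensional component since \(\sfv\) was non-simple. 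Meanwhile \(\sfv\) is a joint of \(X\).

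All other joints match. Hence \(\mathsf{F}_{X'}=q_\sfe^{-1}(\mathsf{F}_X)\), and \(q_\sfe\) restricts to the quotient \(\mathsf{F}_{X'}\to\mathsf{F}_{X'}/\sfe=\mathsf{F}_X\). Since \(\sfe\) is contracted, the components of \(X'\setminus\mathsf{F}_{X'}\) map homeomorphically onto those of \(X\setminus\mathsf{F}_X\). This gives a bijection \(\Sigma'\leftrightarrow\Sigma\) of thick components with \(q_\sfe(\Sigma')=\Sigma\).

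\emph{Step 2: unwrapped components and joint copies.} This is the main point. The link component \(L\) lies in exactly one thick component \(\Sigma\). At \(\sfv\) in \(X\) it contributes one copy \(\sfv^{(L)}\in J(\bar\Sigma)\); in \(X'\) it is instead the unique link component of the joint \(\sfv'\). The other components \(L_i\) at \(\sfv\) stay attached to \(\sfv\) in \(X'\).

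Unwrapping separates exactly these link components, so \(q_\sfe\) induces a homeomorphism \(\bar\Sigma'\cong\bar\Sigma\) for every thick component. Under it \(J(\bar\Sigma')\) corresponds to \(J(\bar\Sigma)\), with \(\sfv'^{(L)}\mapsto\sfv^{(L)}\) and \(\sfv^{(L_i)}\mapsto\sfv^{(L_i)}\). Moreover \(q_{\Sigma}\circ\bar q=q_\sfe\circ q_{\Sigma'}\), where \(\bar q\colon\bar\Sigma'\to\bar\Sigma\) is this homeomorphism. The care needed here is that the identification is compatible with the quotient maps \(q_\Sigma\).

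Consequently the admissible choices of \(\Phi_{\bar\Sigma'}\) and \(\Phi_{\bar\Sigma}\) correspond bijectively: trees containing the joint copies with all leaves at joint copies. Their images satisfy \(q_\sfe(\Phi_{\Sigma'})=\Phi_\Sigma\). Taking unions with the free parts gives \(q_\sfe(\Phi_{X'})=\Phi_{X'}/\sfe\), a scaffold of \(X\).

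Conversely, each scaffold of \(X\) lifts uniquely as \(q_\sfe^{-1}(\mathsf{F}_X)\cup\bigcup q_\sfe^{-1}\)-lifts of the \(\Phi_\Sigma\). The lift is made via \(\bar q^{-1}\), not by naive preimage: naive preimage would wrongly attach the \(L\)-branch at \(\sfv\) rather than \(\sfv'\). This yields the bijection \(\frG_{X'}\cong\frG_X\).

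\emph{Step 3: trees.} For corresponding scaffolds, \(\Phi_{X'}\to\Phi_{X'}/\sfe\) contracts the non-loop edge \(\sfe\) in a connected graph. Contracting a non-loop edge gives a bijection between the maximal trees containing that edge and the maximal trees of the quotient, with inverse \(\sfT\mapsto q_\sfe^{-1}(\sfT)\). Every weakly admissible tree of \(X'\) contains \(\mathsf{F}_{X'}\ni\sfe\). A maximal tree \(\sfT_{X'}\supseteq\mathsf{F}_{X'}\) maps to a maximal tree containing \(q_\sfe(\mathsf{F}_{X'})=\mathsf{F}_X\). Conversely, \(q_\sfe^{-1}(\sfT_X)\supseteq q_\sfe^{-1}(\mathsf{F}_X)=\mathsf{F}_{X'}\).

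This gives the bijection \(\frT_{X'}\cong\frT_X\), and the final assertion that every \(\sfT_{X'}\) contains \(\sfe\) is immediate.
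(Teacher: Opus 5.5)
Your proposal is correct and follows essentially the paper's argument. As in the paper: \(\sfe\) is free, so \(q_\sfe(\mathsf F_{X'})=\mathsf F_X\); the unwrapped thick components agree compatibly with the quotient maps (the paper's \(q_\sfe\circ q_{\Sigma'}=q_\Sigma\)), so the trees \(\Phi_{\bar\Sigma}\) and hence the scaffolds correspond; and weakly admissible trees correspond by contracting \(\sfe\), which lies in all of them.

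Two side remarks are slightly off, but neither affects the argument. First, \(\sfv\) need not remain a joint of \(X'\), e.g.\ when \(\lk_X(\sfv)=L\sqcup\{p\}\sqcup\{q\}\); it still lies in \(\sfe\subseteq\mathsf F_{X'}\), which is all you need. Second, the plain preimage \(q_\sfe^{-1}(\Phi_X)\) already attaches the \(L\)-branches at \(\sfv'\), because in \(X'\) the cone over \(L\) sits at \(\sfv'\); so it gives the correct lift, contrary to what you say.
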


\begin{proof}
The edge \(\sfe\) is a \(1\)-cell of \(X'\) contained in the closure
of no \(2\)-cell, hence a free edge; so \(\sfe\subseteq\sfF_{X'}\)
and \(\sfe\) is a thin edge of every scaffold of \(X'\), and
\(q_\sfe(\sfF_{X'})=\sfF_X\).

We first match scaffolds. Resolving \(\sfv\) along \(L\) separates
only the single copy \(\sfv^{(L)}\) of \(\sfv\) in the thick component
\(\Sigma\) meeting \(L\), turning it into the apex \(\sfv'\); the
unwrapped thick components of \(X\) and \(X'\) therefore coincide,
and we may use the same trees \(\Phi_{\bar{\Sigma}}\) for
both. With this choice the quotient maps satisfy
\(q_\sfe\circ q_{\Sigma'}=q_\Sigma\) -- collapsing \(\sfe\) re-identifies
\(\sfv'=\sfv^{(L)}\) with \(\sfv\) -- so
\(q_\sfe(\Phi_{\Sigma'})=\Phi_\Sigma\) for every thick component, and
\(q_\sfe\) carries the scaffold
\(\Phi_{X'}=\sfF_{X'}\cup\bigcup\Phi_{\Sigma'}\) onto
\(\Phi_X=\sfF_X\cup\bigcup\Phi_\Sigma\), with
\(\Phi_X=\Phi_{X'}/\sfe\). This correspondence
\(\Phi_{X'}\leftrightarrow\Phi_X\) is a bijection between the scaffolds of \(X'\) and those of \(X\).

Now let \(\sfT_{X'}\in\frT_{X'}\) be a weakly admissible tree of such
a \(\Phi_{X'}\). Then \(e\subseteq\sfF_{X'}\subseteq \sfT_{X'}\),
and contracting the spanning-tree edge \(\sfe\) yields a spanning tree
\(q_\sfe(\sfT_{X'})=\sfT_{X'}/\sfe\) of \(\Phi_X\) containing
\(\sfF_X=q_\sfe(\sfF_{X'})\); hence
\(q_\sfe(\sfT_{X'})\in\frT_X\). Conversely, for a weakly admissible
tree \(\sfT_X\in\frT_X\) of \(\Phi_X\), the preimage
\(q_\sfe^{-1}(\sfT_X)\) contains \(\sfe\) -- since \(q_\sfe(\sfe)=\sfv\in \sfT_X\) -- and
is a spanning tree of \(\Phi_{X'}\) containing
\(\sfF_{X'}=q_\sfe^{-1}(\sfF_X)\), hence lies in
\(\frT_{X'}\). The two operations are mutually inverse,
giving the asserted bijection.
\end{proof}

\begin{definition}[Pull-back and push-forward of weakly admissible
trees]\label{def:tree-pullback-pushforward}
In the setting of \cref{lem:max-tree-resolution}, for
\(\sfT_{X'}\in\frT_{X'}\) and \(\sfT_X\in\frT_X\) we call
\[
q_\sfe(\sfT_{X'}) = \sfT_{X'}/\sfe \in\frT_X
\quad\text{the \emph{push-forward} of \(\sfT_{X'}\) along \(q_\sfe\),}
\]
and
\[
q_\sfe^{-1}(\sfT_X) \in\frT_{X'}
\quad\text{the \emph{pull-back} of \(\sfT_X\) along \(q_\sfe\).}
\]
Iterating along the chain of resolutions
\(\bar X\twoheadrightarrow X\) of
\cref{prop:simple-model-unique}, the same terminology
extends to weakly admissible trees on the two ends of the chain: the
\emph{pull-back} of \(\sfT_X\in\frT_X\) is the unique
\(\sfT_{\bar X}\in\frT_{\bar X}\) obtained by iterated
\(q_{\sfe_i}^{-1}\), and the \emph{push-forward} of
\(\sfT_{\bar X}\in\frT_{\bar X}\) is the corresponding
\(\sfT_X\in\frT_X\) obtained by iterated \(q_{\sfe_i}\).
\end{definition}

\begin{definition}[Types of joints; admissible trees]
\label{def:admissible-tree}
Let \(\sfv\) be a joint of \(X\) and let \(L\subseteq\lk_X(\sfv)\) be
a link component of positive dimension, lying in the thick
component \(\Sigma\), so that the pair \((\sfv,L)\) corresponds to
the point \(\sfv^{(L)}\in J(\bar{\Sigma})\) of
\(\Phi_{\bar{\Sigma}}\). The pair \((\sfv,L)\) is of
\emph{type 1} if the unwrapped component \(\bar{\Sigma}\)
is a \(2\)-manifold and \(L\) is an interval -- equivalently,
\(\sfv^{(L)}\) lies on the boundary of the surface
\(\bar{\Sigma}\) -- and of \emph{type 2} otherwise.

A weakly admissible tree \(\sfT\in\frT_X\) is \emph{admissible} if the following holds for every thick component
\(\Sigma\) possessing a type-2 pair: lifting the thick edges of
\(\sfT\) lying in \(\Phi_\Sigma\) along \(q_\Sigma\) to a
subforest
\(\sfT_{\bar{\Sigma}}\subseteq\Phi_{\bar{\Sigma}}\),
every pair \((v,L)\) on \(\Sigma\) is connected within
\(\sfT_{\bar{\Sigma}}\) to a type-2 pair, the pairs being
identified with the points of \(J(\bar{\Sigma})\). On
thick components all of whose pairs are of type 1, no condition
is imposed.
\end{definition}

\begin{figure}[ht]
\centering
\begin{subfigure}[b]{0.48\textwidth}
\centering
\begin{tikzpicture}[line width=0.8pt, scale=1]
  \fill[gray!30,opacity=0.5]
    (-1.5,1.0) -- (0,0) -- (-1.5,-1.0) arc (-90:90:0.3 and 1.0) -- cycle;
  \draw (-1.5,1.0) -- (0,0) -- (-1.5,-1.0);
  \draw (-1.5,0) ellipse (0.3 and 1.0);
  \node at (-1.5,0) {\(D_2\)};
  \fill[gray!30,opacity=0.5]
    (1.5,1.0) -- (0,0) -- (1.5,-1.0) arc (270:90:0.3 and 1.0) -- cycle;
  \draw (1.5,1.0) -- (0,0) -- (1.5,-1.0);
  \draw (1.5,-1.0) arc (-90:90:0.3 and 1.0);
  \draw[dashed] (1.5,-1.0) arc (270:90:0.3 and 1.0);
  \node at (1.5,0) {\(D_1\)};
  \draw[green!55!black] (-1.5,1.0) .. controls (-0.72,0.52) .. (0,0);
  \draw[green!55!black] (1.5,1.0) .. controls (0.72,0.52) .. (0,0);
  \draw[blue] (-1.5,1.0) .. controls (-0.95,2.05) and (0.95,2.05) .. (1.5,1.0);
  \node[above=1pt] at (0,1.82) {\(\sfe\)};
  \filldraw[red] (0,0) circle (1.8pt);
  \node[below=2pt] at (0,-0.05) {\(\sfv\)};
  \filldraw[red] (-1.5,1.0) circle (1.8pt);
  \node[above left=-1pt] at (-1.52,1.02) {\(\sfw_2\)};
  \filldraw[red] (1.5,1.0) circle (1.8pt);
  \node[above right=-1pt] at (1.52,1.02) {\(\sfw_1\)};
\end{tikzpicture}
\caption{The wedge of two discs with a boundary chord:
\((D_1\vee_\sfv D_2)\cup \sfe\).}
\label{fig:wedge-discs-pic}
\end{subfigure}
\hfill
\begin{subfigure}[b]{0.48\textwidth}
\centering
\begin{tikzpicture}[line width=0.8pt, scale=1.25]
  \fill[gray!30,opacity=0.5] (0,0) ellipse (1.2 and 0.5);
  \draw (0,0) ellipse (1.2 and 0.5);
  \node at (-1,0) {\(D\)};
  \draw[green!55!black] (1.2,0) -- (0,0);
  \draw[blue] (0,0) arc (180:0:0.6 and 1) node[midway,above] {\(\sfe\)};
  \filldraw[red] (0,0) circle (1.8pt);
  \node[below left=-2pt] at (-0.02,-0.04) {\(\sfv\)};
  \filldraw[red] (1.2,0) circle (1.8pt);
  \node[right=2pt] at (1.22,-0.02) {\(\sfw\)};
  \path (0,-0.85) (0,1.3);
\end{tikzpicture}
\caption{The disc with a chord: \(D\cup \sfe\).}
\label{fig:disc-chord-pic}
\end{subfigure}
\caption{Two complexes admitting weakly admissible trees but no
admissible tree (\cref{ex:no-admissible-tree}): the thin edge
(blue) and the forced thick edges (green) close a cycle in
every scaffold.}
\label{fig:goldberg-counterexamples}
\end{figure}

\begin{remark}\label{rem:admissible-tree}
If \(\bar{\Sigma}\) is not a \(2\)-manifold, every pair on
\(\Sigma\) is of type \(2\) and the condition on \(\Sigma\)
holds vacuously; the definition genuinely constrains only
surface components carrying both boundary and interior
attachments. If \(X\) is simple, then \(\val_\Sigma(v)=1\) for
every joint, the pairs may be identified with the joints, and
\(q_\Sigma\) is a homeomorphism: a joint is then of type \(1\)
exactly when its thick component is a \(2\)-manifold and its
link is the disjoint union of a point and an interval. Finally,
since a resolution leaves the unwrapped thick components and
the trees \(\Phi_{\bar{\Sigma}}\) unchanged
(\cref{lem:max-tree-resolution}), the pairs, their types and
the forests \(\sfT_{\bar{\Sigma}}\) are all preserved by the
pull-back and push-forward of
\cref{def:tree-pullback-pushforward}: a weakly admissible tree is
admissible if and only if its pull-back to the simple
model is.
\end{remark}

\begin{example}[Weakly admissible but not admissible]
\label{ex:no-admissible-tree}
\cref{fig:goldberg-counterexamples} shows two complexes that
admit weakly admissible trees but no admissible tree.

In \cref{fig:wedge-discs-pic}, \(X=(D_1\vee_\sfv D_2)\cup\sfe\)
consists of two discs glued at interior points -- the non-simple
wedge point \(\sfv\) -- together with a free edge \(\sfe\)
joining boundary points \(\sfw_1\in\partial D_1\) and
\(\sfw_2\in\partial D_2\). Here
\(\sfF_X=\sfe\cup\{\sfv\}\) is a forest, so weakly admissible
trees exist, and the thick components are the two discs
\(D_1,D_2\). The two pairs at \(\sfv\) -- one for each circle of
\(\lk_X(\sfv)=S^1\sqcup S^1\) -- are interior attachments, of
type \(2\), while the pairs at \(\sfw_1,\sfw_2\) are boundary
attachments, of type \(1\). Admissibility would therefore
require thick edges connecting \(\sfw_i\) to \(\sfv\) inside
\(D_i\) for both \(i\); together with the mandatory thin edge
\(\sfe\), these close a cycle of the scaffold, so no weakly
admissible tree is admissible. Note the role of the pairs: the
single point \(\sfv\) participates once for each disc.

In \cref{fig:disc-chord-pic}, \(X=D\cup\sfe\) is a disc together
with a free edge \(\sfe\) joining an interior point \(\sfv\) of
\(D\) to a boundary point \(\sfw\in\partial D\). This complex is
simple, with a type-\(2\) joint \(\sfv\) -- an interior
attachment -- and a type-\(1\) joint \(\sfw\) -- a boundary
attachment -- and \(\sfF_X=\sfe\) is a forest. Every scaffold
consists of \(\sfe\) together with an arc in \(D\) joining
\(\sfw\) to \(\sfv\), and the two close a cycle; a weakly
admissible tree contains the thin edge \(\sfe\), hence cannot
contain the whole arc, and the type-\(1\) joint \(\sfw\) is then
not connected to the type-\(2\) joint \(\sfv\) within
\(\sfT\cap D\). Again no admissible tree exists.
\end{example}

\section{Graph-of-spaces decomposition}
\label{sec:config-decomposition}

Throughout this section, fix \(n\ge 2\) and a point \(x\in X\) of
valency \(k\), and write
\[
\lk_X(x) = \coprod_{j=1}^{k} L^j
\]
for the decomposition of the link of \(x\) into its connected
components.

We call the closure (in \(X\)) of each connected component of
\(X_x\coloneqq X\setminus\{x\}\) an \emph{\(x\)-component} of \(X\), and
denote the \(x\)-components by \(X^1,\dots,X^m\). The number \(m\) of
\(x\)-components is at most \(\val_X(x)=k\): indeed, each link
component \(L^j\) is contained in exactly one \(x\)-component, while
every \(x\)-component approaches \(x\) along at least one link
component.

Fix a basepoint \(\base=(x_1^0,\dots,x_n^0)\in F_n(X_x)\), and recall
that \(F_n(X,\base)\) denotes the connected component of \(F_n(X)\)
containing \(\base\). For each \(\bfx=(x_1,\dots,x_n)\in F_n(X)\)
and each index \(i\in\{1,\dots,n\}\), write
\[
\hat{\bfx}_i
\coloneqq
(x_1,\dots,\hat{x}_i,\dots,x_n)\in F_{n-1}(X)
\]
for the \((n-1)\)-tuple obtained by dropping the \(i\)-th coordinate.
We abbreviate \(\hat{\bfx}^0_i\coloneqq\widehat{(\base)}_i
=(x_1^0,\dots,\hat{x}_i^0,\dots,x_n^0)\).

\subsection{Decomposition at a point and \(x\)-component
splittings}
\label{ssec:decomposition-splittings}

Let \(U_x\subseteq X\) be the open star of \(x\), and let
\(V_x\subseteq U_x\) be the smaller open star obtained by halving
the radial parameter. Concretely, with respect to the cone
parametrisation \(\lk_X(x)\times[0,1)/\lk_X(x)\times\{0\}\)
of \(U_x\) (in which the apex \([\lk_X(x)\times\{0\}]\)
corresponds to the vertex \(x\)), we have
\begin{align*}
U_x&\cong \lk_X(x)\times[0,1)\big/\lk_X(x)\times\{0\},
&
V_x&\cong \lk_X(x)\times[0,1/2)\big/\lk_X(x)\times\{0\},
\end{align*}
so that the difference \(U_x\setminus \overline{V_x}\) is the disjoint
union
\[
U_x\setminus \overline{V_x}
\cong \lk_X(x)\times(1/2,1)=
\coprod_{j=1}^k L^j\times(1/2,1).
\]

For each \(i\in\{1,\dots,n\}\)
define the \emph{\(i\)-th insertion map}
\[
f_i\colon F_{n-1}(X_x)\longrightarrow F_n(X), 
\]
as
\[
f_i(x_1,\dots,x_{i-1},\hat x_i,x_{i+1},\dots,x_n)
=
(x_1,\dots,x_{i-1},x,x_{i+1},\dots,x_n),
\]
which inserts \(x\) as the \(i\)-th coordinate.

Partitioning configurations by which (if any) coordinate equals
\(x\) then gives the disjoint decomposition
\[
F_n(X)
=
F_n(X_x) \sqcup
\coprod_{i=1}^n f_i\bigl(F_{n-1}(X_x)\bigr),
\]
where the first piece consists of the \(n\)-tuples in \(F_n(X)\)
avoiding \(x\), and the \(i\)-th piece \(f_i(F_{n-1}(X_x))\) consists
of those whose \(i\)-th coordinate equals \(x\). (The pieces are
pairwise disjoint because the coordinates of an element of
\(F_n(X)\) are distinct, so at most one of them can equal \(x\).)

\begin{proposition}\label{prop:valency-decomp}
The ordered configuration space \(F_n(X)\) is homotopy equivalent to the \emph{graph-of-spaces}
\(\cG_n(X,x)\) over the underlying \emph{distribution graph} \(\sfG_n(X,x)\) described as follows:
\begin{enumerate}[label=(\roman*),leftmargin=2em]
    \item the vertex spaces of \(\cG_n(X,x)\) are the connected
          components of \(F_n(X_x)\), together with the connected
          components of \(n\) copies
          \(\{F_{n-1}^i(X_x):i=1,\dots,n\}\) of \(F_{n-1}(X_x)\);
    \item the edge spaces of \(\cG_n(X,x)\) are the connected
          components of \(F_{n-1}^i(X_x)\times L^j\) for each \(1\le i\le n, 1\le j\le k\); each such edge space
          \(C\times L^j\) (with \(C\in\pi_0(F_{n-1}^i(X_x))\)) is
          attached by the two maps
          \[
            \begin{tikzcd}[column sep=large]
              F_n(X_x) &
              C\times L^j
                \arrow[l, "\phi_{ij}"']
                \arrow[r, "\proj_1"]
              & F_{n-1}^i(X_x),
            \end{tikzcd}
          \]
          where \(\proj_1\) is the projection onto the
          first factor and \(\phi_{ij}\) is induced (up to homotopy)
          by inserting each point of \(L_j\) as the \(i\)-th
          coordinate.
\end{enumerate}
In particular the total numbers of vertices and edges of
\(\cG_n(X,x)\) are
\[
\bigl|\pi_0(F_n(X_x))\bigr|+n\cdot\bigl|\pi_0(F_{n-1}(X_x))\bigr|
\quad\text{ and }\quad
nk\cdot \bigl|\pi_0(F_{n-1}(X_x))\bigr|.
\]
\end{proposition}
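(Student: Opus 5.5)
The plan is to cover \(F_n(X)\) by two open sets and apply the standard homotopy-pushout (Mayer--Vietoris / van Kampen for spaces) description of a union. Let \(A\coloneqq F_n(X_x)\), the configurations avoiding \(x\), which is open in \(F_n(X)\). Let \(B\) be the set of configurations in which exactly one coordinate lies in \(V_x\), and every other coordinate lies outside \(\overline{V_x}\); more precisely, \(B=\coprod_i B_i\) with \(B_i\) the set where \(x_i\in V_x\) and \(x_j\notin\overline{V_x}\) for \(j\ne i\). To make \(A\cup B=F_n(X)\), I would first deform \(F_n(X)\) into the subspace where at most one point lies in \(\overline{V_x}\). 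This uses a radial push in the cone coordinates of \(U_x\): all points other than the one closest to \(x\) are moved outward, past radius \(1/2\). The push is done continuously, preserving distinctness, by a standard stratified radial flow. Alternatively, one can shrink \(V_x\) depending on the configuration. I expect this deformation to be the main technical obstacle, since it has to be continuous where two points are equidistant from \(x\). I would first try defining the push by a function of the radii that is monotone and injective in the radius, so that it never makes two points collide.

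Next I identify the pieces up to homotopy. The radial contraction of \(V_x\) to \(x\) gives \(B_i\simeq f_i(F_{n-1}(X_x))\cong F_{n-1}^i(X_x)\). Here one uses that the other points stay in \(X\setminus\overline{V_x}\), and that \(X\setminus\overline{V_x}\simeq X_x\) by an outward radial deformation. For the intersection, \(A\cap B_i\) consists of configurations where \(x_i\in V_x\setminus\{x\}\cong \lk_X(x)\times(0,1/2)\) and the others lie outside \(\overline{V_x}\). This is \(\bigl(X\setminus\overline{V_x}\bigr)^{\,\text{conf}}\times \coprod_j L^j\times(0,1/2)\simeq \coprod_j F_{n-1}^i(X_x)\times L^j\). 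The inclusion \(A\cap B_i\to B_i\) becomes \(\proj_1\) after contracting the radial coordinate to \(x\). The inclusion into \(A\) is, up to homotopy, the map \(\phi_{ij}\) that inserts a point of \(L^j\) as the \(i\)-th coordinate.

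Then I assemble the result. Since \(A\), \(B\) is an open cover, \(F_n(X)\) is homotopy equivalent to the double mapping cylinder (homotopy pushout) of \(A\leftarrow A\cap B\to B\). I would cite excisive triads, or the fact that an open cover of a paracompact space gives a homotopy pushout. Decomposing \(A\), \(B\) and \(A\cap B\) into path components, this homotopy pushout is exactly the total space of the graph of spaces \(\cG_n(X,x)\) described in (i)--(ii). The vertex spaces are the components of \(A\) and of the \(F^i_{n-1}(X_x)\), and the edge spaces are the components \(C\times L^j\), each \(L^j\) being connected. The counts follow directly. The vertex count is \(|\pi_0(F_n(X_x))|+n|\pi_0(F_{n-1}(X_x))|\). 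For the edges there is one edge for each triple \((i,j,C)\), giving \(nk|\pi_0(F_{n-1}(X_x))|\) edges.
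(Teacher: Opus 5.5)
Your proposal is essentially the paper's proof. You retract onto configurations with at most one point near \(x\), and cover by the piece avoiding \(x\) (the paper uses the piece avoiding \(\overline{V_x}\)) together with the \(n\) pieces in which one designated strand sits near \(x\); you then identify pieces and overlaps with \(F_n(X_x)\), \(F^i_{n-1}(X_x)\) and \(F^i_{n-1}(X_x)\times L^j\), and glue as a homotopy pushout.

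The step you leave open closes as you suspect. Let \(t\) be the radial cone coordinate at \(x\), set to \(1\) off \(U_x\), and let \(\tau\) be the second-smallest value of \(t\) on the configuration, which is continuous and positive since at most one point is at \(x\). Move all points along their cone rays by one common strictly increasing reparametrisation of \(t\), depending continuously on \(\tau\), pushing \(\tau\) past \(1/2\) and trivial once \(\tau\) is already large. This creates no collisions and only moves points outward, so it also restricts to \(F_n(X_x)\); that is what justifies using \(F_n(X_x)\) rather than its intersection with your target subspace. It does in one step what the paper does by iterated retractions away from the coordinate faces.

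Alternatively, your configuration-dependent radius removes the deformation altogether: \(F_n(X_x)\) together with the pairwise disjoint open sets \(\{t(x_i)<\tau/2\}\), \(1\le i\le n\), already covers \(F_n(X)\).
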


\begin{proof}
Define
\[
F_n'(X) \subseteq F_n(X)
\]
to be the subspace consisting of those configurations
\((x_1,\dots,x_n)\in F_n(X)\) with \emph{at most one} coordinate
lying in \(U_x\).

\smallskip
\noindent\emph{Step 1: \(F_n(X)\simeq F_n'(X)\).}
For \(0\le m\le n\) let
\[
F_n(X)^{(m)}\coloneqq
\bigl\{\bfx\in F_n(X) : \#\{i : x_i\in U_x\}\le m\bigr\},
\]
a closed subspace of \(F_n(X)\) -- in a limit, the number of
coordinates lying in the open set \(U_x\) cannot increase -- so
that \(F_n(X)^{(n)}=F_n(X)\) and \(F_n(X)^{(1)}=F_n'(X)\). We
produce a chain of deformation retractions
\[
F_n(X)=F_n(X)^{(n)}\simeq F_n(X)^{(n-1)}\simeq\cdots
\simeq F_n(X)^{(1)}=F_n'(X).
\]

After a further subdivision, the closed star \(\overline{U_x}\)
is collared in \(X\), so there is an open neighbourhood
\(W_x\supseteq\overline{U_x}\) with
\[
W_x\cong\lk_X(x)\times[0,2)\big/\lk_X(x)\times\{0\},
\]
whose radial coordinate extends the cone parametrisation of
\(U_x=\{t<1\}\). Define \(t\colon X\to[0,2]\) by letting
\(t(y)\) be the radial coordinate of \(y\) for \(y\in W_x\) and
\(t(y)\coloneqq2\) otherwise; then \(t\) is continuous, and
\(\bft\coloneqq t\times\cdots\times t\colon X^n\to[0,2]^n\) is
well defined. In a configuration at most one coordinate equals
\(x\), that is, at most one \(t\)-coordinate vanishes; hence
\(\bft(F_n(X))\) misses every face of \([0,2]^n\) through the
origin \(\mathbf0\) of codimension at least \(2\), and lands in
\[
K_n\coloneqq[0,2]^n\setminus
\bigcup_{|I|\le n-2}F_I,
\qquad
F_I\coloneqq\bigl\{\bft : t_j=0\text{ for all }j\notin
I\bigr\}.
\]

Since \(\mathbf0\notin K_n\), the target retracts radially away
from the origin: for \(\bft\neq\mathbf0\) put
\(s\coloneqq\|\bft\|_\infty\in(0,2]\) and
\[
r_n(\bft)\coloneqq\frac{1}{\min\{s,1\}}\bft,
\]
which collapses the portion \(s\in(0,1]\) of each ray issuing
from \(\mathbf0\) onto its endpoint \(s=1\) and leaves the
region \(\{s\ge1\}\) pointwise fixed. Each coordinate hyperplane
\(\{t_i=0\}\) is invariant under scaling, so the excluded faces
are preserved and \(r_n\) restricts to \(K_n\), together with
its straight-line homotopy from the identity.

The retraction lifts to
\(\tilde r_n\colon X^n\setminus\{(x,\dots,x)\}\to
X^n\setminus\{(x,\dots,x)\}\), simply by changing the
\(t\)-coordinates: each coordinate \(x_i\in W_x\) moves radially
outward in its cone fibre until its \(t\)-value becomes
\(\lambda\,t(x_i)\), where \(\lambda=1/\min\{s,1\}\), and the
coordinates outside \(W_x\) stay put (consistently so: if some
coordinate lies outside \(W_x\), then \(s=2\), \(\lambda=1\) and
nothing moves). On each fibre \(\{\theta\}\times[0,2)\) the map
\(t\mapsto\lambda t\) is strictly increasing, so distinct
coordinates remain distinct and \(\tilde r_n\), together with
its straight-line homotopy, restricts to \(F_n(X)\). Along this
homotopy every coordinate moves radially outward, so the number
of coordinates in \(U_x\) never increases and each
\(F_n(X)^{(m)}\) is invariant; at its end
\(\|\bft\|_\infty\ge1\), so not all \(n\) coordinates lie in
\(U_x\) and the image of \(F_n(X)^{(n)}\) is contained in
\(F_n(X)^{(n-1)}\). Moreover a configuration of
\(F_n(X)^{(n-1)}\) has some coordinate outside \(U_x\), hence
\(s\ge1\), \(\lambda=1\), and is fixed pointwise throughout the
homotopy: \(\tilde r_n\) is a deformation retraction of
\(F_n(X)^{(n)}\) onto \(F_n(X)^{(n-1)}\).

Now iterate on the faces through the origin of successive
dimensions. Write
\(K_{n-1}\coloneqq r_n(K_n)=\{\bft\in K_n:\|\bft\|_\infty\ge1\}\),
and fix \(1\le \ell\le n-2\). For a subset
\(I\subseteq\{1,\dots,n\}\) with \(|I|=\ell\), the face \(F_I\)
is excluded from \(K_n\), and the transverse radial parameter
\(s_I\coloneqq\max_{j\notin I}t_j\) is positive throughout
\(K_n\) -- its vanishing would put \(n-\ell\ge2\) coordinates at
\(0\). Let \(\rho_I\) rescale the coordinates \(t_j\),
\(j\notin I\), by \(1/\min\{s_I,1\}\), keeping \(t_i\), \(i\in
I\), fixed: this is the radial retraction away from \(F_I\),
again collapsing the transverse rays onto the fixed region
\(\{s_I\ge1\}\), and \(r_{n-\ell}\) is defined
as the composition of the \(\rho_I\) over all \(I\) with
\(|I|=\ell\), in any fixed order (say lexicographic). As before,
\(r_{n-\ell}\) preserves the excluded faces and lifts to a
self-homotopy \(\tilde r_{n-\ell}\) of the identity of
\(F_n(X)^{(n-\ell)}\), moving the coordinates \(x_j\),
\(j\notin I\), radially outward, one factor \(\rho_I\) after the
other. No stage ever decreases a \(t\)-value, so each
\(F_n(X)^{(m)}\) is invariant and the exit property of each
factor -- \emph{some \(t_j\ge1\) with \(j\notin I\)} -- persists
through the later factors. At the end this property holds for
\emph{every} \(I\) of size \(\ell\). Consequently no
configuration in the image has \(n-\ell\) or more coordinates in
\(U_x\): the set \(I_0\) of indices \emph{outside} \(U_x\) would
have \(|I_0|\le\ell\), and enlarging it to any \(I\supseteq I_0\)
with \(|I|=\ell\) would leave \(t_j<1\) for all \(j\notin I\), a
contradiction. Hence the image of \(F_n(X)^{(n-\ell)}\) lies in
\(F_n(X)^{(n-\ell-1)}\). Moreover a configuration of
\(F_n(X)^{(n-\ell-1)}\) has at least \(\ell+1\) coordinates
outside \(U_x\), so for every \(I\) with \(|I|=\ell\) some
\(j\notin I\) has \(t_j\ge1\), every factor \(\rho_I\) acts as
the identity on it, and it is fixed pointwise: as before,
\(\tilde r_{n-\ell}\) is a deformation retraction of
\(F_n(X)^{(n-\ell)}\) onto \(F_n(X)^{(n-\ell-1)}\).

Running \(\ell=1,\dots,n-2\) completes the chain and yields
\(F_n(X)\simeq F_n'(X)\). (In the target, the final image
\(K_1\) meets the unit cube \([0,1]^n\) in the \(1\)-dimensional
union of the segments with at least \(n-1\) coordinates equal to
\(1\), matching the fact that at most one configuration point
survives inside \(U_x\).) \cref{fig:K1-cube} illustrates the
case \(n=3\).

\begin{figure}[ht]
\centering
\begin{tikzpicture}[line width=0.8pt, scale=0.9]

\begin{scope}
  \fill[gray!15] (0,0) -- (2,0) -- (2,2) -- (0,2) -- cycle;
  \fill[gray!30] (0,2) -- (2,2) -- (2.9,2.64) -- (0.9,2.64) -- cycle;
  \fill[gray!45] (2,0) -- (2,2) -- (2.9,2.64) -- (2.9,0.64) -- cycle;
  \draw[dashed] (0,0) -- (2,0);
  \draw[dashed] (0,0) -- (0,2);
  \draw[dashed] (0,0) -- (0.9,0.64);
  \draw (2,0) -- (2,2) -- (0,2);
  \draw (0,2) -- (0.9,2.64) -- (2.9,2.64) -- (2,2);
  \draw (2,0) -- (2.9,0.64) -- (2.9,2.64);
  \draw[->, red!75!black, thick] (0.36,0.33) -- (1.16,1.06);
  \filldraw[fill=white] (0,0) circle (2.4pt);
  \filldraw[fill=white] (2,0) circle (2.0pt);
  \filldraw[fill=white] (0,2) circle (2.0pt);
  \filldraw[fill=white] (0.9,0.64) circle (2.0pt);
  \node[below left=-1pt] at (-0.02,-0.02) {\(\mathbf0\)};
  \node at (1.6,3.0) {\(K_3\)};
\end{scope}

\draw[->, very thick] (3.3,1.3) -- (4.15,1.3)
  node[midway, above=1pt] {\(r_3\)};

\begin{scope}[xshift=4.55cm]
  \fill[gray!15] (1,0) -- (2,0) -- (2,2) -- (0,2) -- (0,1) -- (1,1) -- cycle;
  \fill[gray!20] (0.45,0.32) -- (1.45,0.32) -- (1.45,1.32) -- (0.45,1.32) -- cycle;
  \fill[gray!30] (0,2) -- (2,2) -- (2.9,2.64) -- (0.9,2.64) -- cycle;
  \fill[gray!45] (2,0) -- (2,2) -- (2.9,2.64) -- (2.9,0.64) -- cycle;
  \draw (0.45,0.32) -- (1.45,0.32) -- (1.45,1.32) -- (0.45,1.32) -- cycle;
  \draw (1,0) -- (1.45,0.32);
  \draw (1,1) -- (1.45,1.32);
  \draw (0,1) -- (0.45,1.32);
  \draw (1,0) -- (1,1) -- (0,1);
  \draw[dashed] (1,0) -- (2,0);
  \draw[dashed] (0,1) -- (0,2);
  \draw[dashed] (0.45,0.32) -- (0.9,0.64);
  \draw (2,0) -- (2,2) -- (0,2);
  \draw (0,2) -- (0.9,2.64) -- (2.9,2.64) -- (2,2);
  \draw (2,0) -- (2.9,0.64) -- (2.9,2.64);
  \draw[->, red!75!black, thick] (1.53,0.08) -- (1.77,0.79);
  \draw[->, red!75!black, thick] (0.09,1.52) -- (0.87,1.69);
  \draw[->, red!75!black, thick, opacity=0.5] (0.74,0.54) -- (1.28,1.08);
  \filldraw[fill=white] (1,0) circle (2.0pt);
  \filldraw[fill=white] (2,0) circle (2.0pt);
  \filldraw[fill=white] (0,1) circle (2.0pt);
  \filldraw[fill=white] (0,2) circle (2.0pt);
  \filldraw[fill=white] (0.45,0.32) circle (2.0pt);
  \filldraw[fill=white] (0.9,0.64) circle (2.0pt);
  \node at (1.6,3.0) {\(K_2\)};
\end{scope}

\draw[->, very thick] (7.85,1.3) -- (8.7,1.3)
  node[midway, above=1pt] {\(r_2\)};

\begin{scope}[xshift=9.1cm]
  \fill[gray!15] (1,1) -- (2,1) -- (2,2) -- (1,2) -- cycle;
  \fill[gray!15] (0.45,1.32) -- (1.45,1.32) -- (1.45,2.32) -- (0.45,2.32) -- cycle;
  \fill[gray!15] (1.45,0.32) -- (2.45,0.32) -- (2.45,1.32) -- (1.45,1.32) -- cycle;
  \fill[gray!30] (0.45,2.32) -- (0.9,2.64) -- (2.9,2.64) -- (2,2) -- (1,2) -- (1.45,2.32) -- cycle;
  \fill[gray!45] (2,1) -- (2,2) -- (2.9,2.64) -- (2.9,0.64) -- (2.45,0.32) -- (2.45,1.32) -- cycle;
  \draw (1,1) -- (2,1) -- (2,2) -- (1,2) -- cycle;
  \draw (0.45,1.32) -- (1.45,1.32) -- (1.45,2.32) -- (0.45,2.32) -- cycle;
  \draw (1.45,0.32) -- (2.45,0.32) -- (2.45,1.32) -- (1.45,1.32) -- cycle;
  \draw (0.45,2.32) -- (0.9,2.64) -- (2.9,2.64) -- (2,2);
  \draw (1,2) -- (1.45,2.32);
  \draw (2,2) -- (2.9,2.64);
  \draw (2.9,2.64) -- (2.9,0.64) -- (2.45,0.32);
  \draw (2.45,1.32) -- (2,1);
  \draw[line width=0.5pt] (1,1) -- (1.45,1.32);
  \node at (1.6,3.0) {\(K_1\)};
\end{scope}

\end{tikzpicture}
\caption{Step~1 in the case \(n=3\). In \(K_3\), the origin
\(\mathbf0\) and the three coordinate edges through it are
removed (open circles and dashed lines). The radial deformation
\(r_3\) away from \(\mathbf0\) deletes the corner unit cube,
producing \(K_2\); the transverse deformations \(r_2\) away
from the three removed edges delete three further unit cubes,
leaving \(K_1\), a union of four unit cubes.}
\label{fig:K1-cube}
\end{figure}

\smallskip
\noindent\emph{Step 2: decomposition of \(F_n'(X)\).}
For each \(i\in\{1,\dots,n\}\) define the \(i\)-th
\emph{thickened insertion map}
\[
g_i\colon F_{n-1}\bigl(X\setminus U_x\bigr)
\times U_x \longrightarrow F_n'(X)
\]
by
\[
g_i\bigl((x_1,\dots,\hat{x}_i,\dots,x_n),\,y\bigr)
=
(x_1,\dots,x_{i-1},\,y,\,x_{i+1},\dots,x_n);
\]
this is analogous to \(f_i\) but allows the inserted point \(y\) to
range over the entire neighbourhood \(U_x\). Each \(g_i\) is an open
map, and the images \(g_i(F_{n-1}(X\setminus U_x)\times U_x)\) are
pairwise disjoint as \(i\) varies.

Set
\[
F_n'(X\setminus \overline{V_x})
=
F_n(X\setminus \overline{V_x})\cap F_n'(X)
\subseteq F_n(X),
\]
an open subset of \(F_n'(X)\). It admits the disjoint decomposition
\begin{align*}
F_n'(X\setminus \overline{V_x})
&=
F_n(X\setminus U_x)
\coprod
\left(\coprod_{i=1}^n g_i\bigl(F_{n-1}(X\setminus U_x)
\times (U_x\setminus \overline{V_x})\bigr)\right) \\
&=
F_n(X\setminus U_x)
\coprod
\left(\coprod_{i=1}^n\coprod_{j=1}^k
g_i\bigl(F_{n-1}(X\setminus U_x)\times L^j\times(1/2,1)\bigr)\right).
\end{align*}

On the other hand, \(F_n'(X)\) itself decomposes as a union of
open subsets
\[
F_n'(X)
=
F_n'\bigl(X\setminus \overline{V_x}\bigr)
\cup
\bigcup_{i=1}^n
g_i\bigl(F_{n-1}(X\setminus U_x)\times U_x\bigr),
\]
where the first piece collects the configurations with no
coordinate in \(\overline{V_x}\), and the \(i\)-th \(g_i\)-piece those
whose \(i\)-th coordinate lies in \(U_x\) and whose other coordinates
lie outside \(U_x\). The intersection of the main piece with the
\(i\)-th \(g_i\)-piece is
\begin{align*}
F_n'(X\setminus \overline{V_x})
\cap
g_i\bigl(F_{n-1}(X\setminus U_x)\times U_x\bigr)
&=
g_i\bigl(F_{n-1}(X\setminus U_x)\times (U_x\setminus \overline{V_x})\bigr)\\
&=
\coprod_{j=1}^k g_i\bigl(F_{n-1}(X\setminus U_x)\times L^j\times(1/2,1)\bigr).
\end{align*}

\smallskip
\noindent\emph{Step 3: identifying the pieces up to homotopy.}
It follows directly
that for each \(*=n-1, n\),
\[
F_*'\bigl(X\setminus \overline{V_x}\bigr)
\simeq
F_*\bigl(X\setminus U_x\bigr)
\simeq
F_*(X_x),
\]
and so 
each \(g_i\)-piece \(g_i\bigl(F_{n-1}(X\setminus U_x)\times U_x\bigr)\) is homotopy
equivalent to the image \(f_i(F_{n-1}(X_x))\) of the insertion map
\(f_i\) defined above, and hence to \(F_{n-1}(X_x)\) itself. 
To track the dependence on \(i\), let us
write \(F_{n-1}^i(X_x)\) for the copy of \(F_{n-1}(X_x)\) associated
with the subspace \(f_i(F_{n-1}(X_x))\subseteq F_n(X)\). We then have
\[
g_i\bigl(F_{n-1}(X\setminus U_x)\times U_x\bigr)
\simeq
f_i\bigl(F_{n-1}(X_x)\bigr)
\simeq
F_{n-1}^i(X_x).
\]

Finally, for each \(j\in\{1,\dots,k\}\), we likewise have
\[
g_i\bigl(F_{n-1}(X\setminus U_x)\times L^j\times(1/2,1)\bigr)
\simeq
f_i\bigl(F_{n-1}(X_x)\bigr)\times L^j
\simeq
F_{n-1}^i(X_x)\times L^j.
\]

\smallskip
\noindent\emph{Step 4: assembling the graph-of-spaces.}
We first describe the underlying distribution graph
\(\sfG=\sfG_n(X,x)\). Its vertex set is
\[
V(\sfG) = \pi_0\bigl(F_n(X_x)\bigr)\sqcup
\coprod_{i=1}^{n}\pi_0\bigl(F^i_{n-1}(X_x)\bigr),
\]
with one vertex of \emph{\(F_n\)-type} for each connected
component of \(F_n(X_x)\), and one vertex of
\emph{\(F_{n-1}\)-type}, written \((i,C)\), for each strand
index \(i\) and each component
\(C\in\pi_0\bigl(F^i_{n-1}(X_x)\bigr)\). Its edge set is
\[
E(\sfG) = \bigl\{(i,C,j) :
1\le i\le n,\ C\in\pi_0\bigl(F^i_{n-1}(X_x)\bigr),\
1\le j\le k\bigr\},
\]
with one edge for each connected component \(C\times L^j\) of
the spaces \(F^i_{n-1}(X_x)\times L^j\) -- each link component
\(L^j\) being connected. The edge \((i,C,j)\) joins the
\(F_{n-1}\)-type vertex \((i,C)\) to the \(F_n\)-type vertex
determined by the component of \(F_n(X_x)\) containing the
configurations obtained from those of \(C\) by inserting a
point of \(L_j\times\{3/4\}\) as the \(i\)-th coordinate.

Combining Steps 2 and 3, we see that for each
\((i,j)\in\{1,\dots,n\}\times\{1,\dots,k\}\) there are two natural
maps
\[
\begin{tikzcd}
F_n(X_x) &
F_{n-1}^i(X_x)\times L^j
  \arrow[l, "\phi_{ij}"']
  \arrow[r, "\proj_1"]
& F_{n-1}^i(X_x),
\end{tikzcd}
\]
where \(\proj_1\) is the projection onto the first
factor and \(\phi_{ij}\) is (up to homotopy) the composition
\[
\phi_{ij} \simeq
\Bigl(
F_{n-1}^i(X_x)\times L^j
\xrightarrow{\cong}
F_{n-1}\bigl(X\setminus \overline{U_x}\bigr)\times \bigl(L_j\times\{3/4\}\bigr)
\xrightarrow{g_i}
F_n(X_x)
\Bigr).
\]
The two pieces of Step 2 together with these gluing maps exhibit
\(F_n(X)\) as a graph of spaces with the vertex spaces and edge
spaces described in the proposition.

The total numbers of vertices and edges are therefore
\[
\bigl|\pi_0(F_n(X_x))\bigr|+n\cdot \bigl|\pi_0(F_{n-1}(X_x))\bigr|\quad\text{and}\quad
n\,k \cdot \bigl|\pi_0(F_{n-1}(X_x))\bigr|.\qedhere
\]
\end{proof}

\begin{remark}\label{rem:forgetful_map}
It is worth noting that the forgetful map
\(q_i\colon F_n(X_x)\to F_{n-1}^i(X_x)\) that drops the \(i\)-th
coordinate satisfies
\[
q_i\circ \phi_{ij} \simeq \proj_1:F^i_{n-1}(X_x)\times L_j\to F_{n-1}(X_x),
\]
and in particular, since \((\proj_1)_\ast\) is a
surjection on \(\pi_1\), so is \((q_i)_\ast\).
\end{remark}

\cref{prop:valency-decomp} is not restricted to the
case where \(F_n(X)\) is connected: the graph-of-spaces
\(\cG_n(X,x)\) it produces is itself allowed to be disconnected,
with its set of connected components in bijection with
\(\pi_0(F_n(X))\). When \(F_n(X)\) is disconnected and one wishes to
work with a single basepoint, one applies
\cref{prop:valency-decomp} to the basepoint component
\(F_n(X,\base)\): this yields a connected graph-of-spaces, which is precisely the connected
subgraph-of-spaces \(\cG_n((X,x),\base)\) of \(\cG_n(X,x)\) over the subgraph \(\sfG_n((X,x),\base)\) of \(\sfG_n(X,x)\) corresponding to the
component of \(\base\) in \(F_n(X)\).

\begin{proposition}\label{prop:pi1G-embedding}
Let \(x\) be as above, and fix \(\base\in F_n(X)\). For each
vertex \(\sfv\) (resp.\ edge \(\sfe\)) of \(\sfG=\sfG_n((X,x),\base)\), let
\(\cG_\sfv\) (resp.\ \(\cG_\sfe\)) denote the corresponding vertex space
(resp.\ edge space) of \(\cG=\cG_n((X,x),\base)\). Then there is
an embedding
\[
\pi_1(\sfG) \hookrightarrow
\pi_1(\cG)
\cong\PB_n(X,\base),
\]
which remains injective after passing to the quotient of
\(\PB_n(X,\base)\) by the normal closure of all the vertex and
edge groups. In other words, the composition
\[
\pi_1(\sfG)\hookrightarrow \PB_n(X,\base)
\twoheadrightarrow
\PB_n(X,\base)\Big/\co{\pi_1(\cG_\sfv),\,\pi_1(\cG_\sfe)
\,:\,\sfv\in V(\sfG),\,\sfe\in E(\sfG)}^{\PB_n(X,\base)}
\]
is still injective, with image isomorphic to
\(\pi_1(\sfG)\).
\end{proposition}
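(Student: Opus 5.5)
The plan is to use the standard collapse map of a graph of spaces onto its underlying graph. By \cref{prop:valency-decomp} (applied to the basepoint component), \(F_n(X,\base)\) is homotopy equivalent to the total space of the connected graph of spaces \(\cG=\cG_n((X,x),\base)\) over \(\sfG=\sfG_n((X,x),\base)\). This total space can be realised as a quotient of the disjoint union of the vertex spaces \(\cG_\sfv\) and the cylinders \(\cG_\sfe\times[0,1]\), glued along \(\phi_{ij}\) and \(\proj_1\). Collapsing each vertex space to a point and each cylinder \(\cG_\sfe\times[0,1]\) onto \([0,1]\) gives a continuous map
\[
c\colon \cG\longrightarrow \sfG,
\]
which is well defined because the gluing maps send \(\cG_\sfe\times\{0,1\}\) into vertex spaces, and these are collapsed to the endpoints of the corresponding edge.

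The first step is to show that \(c_\ast\colon\pi_1(\cG)\to\pi_1(\sfG)\) is surjective and admits a section. For this, choose a basepoint \(p_\sfv\in\cG_\sfv\) in each vertex space and, for each edge \(\sfe\), a point \(q_\sfe\in\cG_\sfe\). Join \(\phi(q_\sfe)\) and \(\proj_1(q_\sfe)\) to the chosen vertex basepoints by paths inside the (path-connected) vertex spaces. This defines an embedding-like map \(s\colon\sfG\to\cG\): send each vertex to \(p_\sfv\), and each edge to the concatenation of the connecting path in one vertex space, the arc \(\{q_\sfe\}\times[0,1]\), and the connecting path in the other vertex space. Then \(c\circ s\) maps each edge of \(\sfG\) to itself up to reparametrisation, because the connecting paths collapse to the endpoint vertices. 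Hence \(c\circ s\simeq\mathrm{id}_\sfG\), so \(s_\ast\colon\pi_1(\sfG)\to\pi_1(\cG)\cong\PB_n(X,\base)\) is injective. This \(s_\ast\) is the asserted embedding.

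For the strengthened statement, note that each vertex group \(\pi_1(\cG_\sfv)\) and each edge group \(\pi_1(\cG_\sfe)\), transported to the basepoint along any path, maps under \(c_\ast\) to a conjugate of the image of a loop contained in a point or in a single edge cylinder. Such a loop is null-homotopic in \(\sfG\), since \(c\) sends \(\cG_\sfv\) to a vertex and \(\cG_\sfe\times\{t\}\) to a point. Therefore the normal closure \(N\) of all vertex and edge groups lies in \(\ker c_\ast\), and \(c_\ast\) factors through \(\PB_n(X,\base)/N\to\pi_1(\sfG)\). The composite \(\pi_1(\sfG)\xrightarrow{s_\ast}\PB_n/N\to\pi_1(\sfG)\) equals \((c\circ s)_\ast=\mathrm{id}\), so \(\pi_1(\sfG)\to\PB_n/N\) is injective with image isomorphic to \(\pi_1(\sfG)\). (In fact the standard Bass--Serre description shows \(\PB_n/N\cong\pi_1(\sfG)\), but injectivity is all that is claimed.)

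The main point requiring care is the first step: making sure that the homotopy equivalence of \cref{prop:valency-decomp} really produces a graph of spaces in the strict sense, that is, a homotopy colimit with cylinders, so that \(c\) is defined. That proposition identifies the pieces only up to homotopy, with \(\phi_{ij}\) given up to homotopy. I would therefore replace \(F_n(X,\base)\) by the double mapping cylinder (homotopy pushout) of the open cover from Step~2 of that proof. This is legitimate because the cover by open sets whose pairwise intersections are the edge pieces, with the \(g_i\)-pieces pairwise disjoint, yields a homotopy equivalence between the union and the homotopy colimit. One must also check basepoint transport: the isomorphism \(\pi_1(\cG)\cong\PB_n(X,\base)\) depends on a choice of path, but this only changes the embedding by an inner automorphism, and the normal closure \(N\) is invariant under it.
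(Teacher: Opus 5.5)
Your argument is correct and is essentially the paper's. The paper builds the retraction \(r\) algebraically from the graph-of-groups presentation, killing the vertex and edge groups and keeping the stable letters of the non-tree edges; that retraction is exactly your collapse map \(c_\ast\), and its stable-letter section is your \(s_\ast\). Your topological phrasing has two small advantages: it never needs the attaching maps to be \(\pi_1\)-injective, and your remark on realising \cref{prop:valency-decomp} as a genuine homotopy colimit makes explicit a point the paper leaves implicit.
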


\begin{proof}
Pick a spanning tree \(\sfT\subseteq \sfG\). The graph-of-spaces
description of \cref{prop:valency-decomp} presents
\(\PB_n(X,\base)\cong\pi_1(\cG)\) as the graph-of-groups
along \(\sfG\), namely as the iterated amalgamated product and HNN
extension of the vertex groups \(\pi_1(\cG_\sfv)\) over the edge groups
\(\pi_1(E_\sfe)\), with one stable letter \(t_\sfe\) for each non-tree edge
\(\sfe\in E(\sfG)\setminus E(\sfT)\).

Sending each vertex- and edge-group generator to the identity and
each stable letter \(t_\sfe\) to itself extends to a well-defined
retraction
\[
r\colon \PB_n(X,\base) \twoheadrightarrow
\mathbb{F}\bigl(\{t_\sfe : \sfe\in E(\sfG)\setminus E(\sfT)\}\bigr)
\cong \pi_1(\sfG),
\]
whose kernel is precisely the normal closure of the vertex and
edge groups of \(\cG\), that is,
\[
\ker(r) =
\co{\pi_1(\cG_\sfv),\,\pi_1(\cG_\sfe)\,:\,\sfv\in V(\sfG),\,\sfe\in E(\sfG)}^{\PB_n(X,\base)}.
\]
The section \(\pi_1(\sfG)\hookrightarrow\PB_n(X,\base)\) given by the
stable letters supplies the desired embedding, and the identity
above shows that this embedding remains injective after passing to
the announced quotient.
\end{proof}

\begin{corollary}\label{cor:valency-quotient}
Fix a basepoint \(\base\in F_n(X_x)\), and suppose that both
\(F_n(X_x)\) and \(F_{n-1}(X_x)\) are connected. Then
\[
\PB_n(X,\base)\big/\co{\PB_n(X_x,\base)}^{\PB_n(X,\base)}
\cong
\mathbb{F}_{n(k-1)},
\]
the free group of rank \(n(k-1)\).
\end{corollary}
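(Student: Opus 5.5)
Under the connectedness hypotheses the distribution graph \(\sfG=\sfG_n((X,x),\base)\) of \cref{prop:valency-decomp} is explicit. It has a single \(F_n\)-type vertex \(\sfv_0\), corresponding to \(F_n(X_x)\). It has \(n\) vertices of \(F_{n-1}\)-type, written \((i,C_i)\), one for each strand index, since each \(F^i_{n-1}(X_x)\) has exactly one component. It has \(nk\) edges \((i,C_i,j)\), and each of them joins \((i,C_i)\) to \(\sfv_0\). So \(\sfG\) is connected, with \(n+1\) vertices and \(nk\) edges. Its fundamental group is therefore free of rank
\[
nk-(n+1)+1=n(k-1).
\]

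The plan is to identify the normal closure \(\co{\PB_n(X_x,\base)}\) with the normal closure \(N\) of all vertex and edge groups of \(\cG\). Once this is done, the retraction \(r\colon\PB_n(X,\base)\to\pi_1(\sfG)\) built in the proof of \cref{prop:pi1G-embedding} has kernel exactly \(N\), which gives \(\PB_n(X,\base)/N\cong\pi_1(\sfG)\cong\mathbb{F}_{n(k-1)}\).

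For the inclusion \(\co{\PB_n(X_x,\base)}\subseteq N\), observe that \(\PB_n(X_x,\base)\) is the vertex group at \(\sfv_0\), up to the choice of basepoint path. Its normal closure is independent of that choice, so it lies in \(N\).

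For the reverse inclusion, \(N\subseteq\co{\PB_n(X_x,\base)}\), I treat the other groups in turn.
\begin{itemize}
\item Each edge group \(\pi_1(C_i\times L^j)\) maps into the vertex group at \(\sfv_0\) via \(\phi_{ij}\). Hence it lies in the conjugates of \(\PB_n(X_x)\).
\item For each vertex group \(\pi_1(F^i_{n-1}(X_x))\), the projection \(\proj_1\colon\pi_1(C_i\times L^j)\to\pi_1(F^i_{n-1}(X_x))\) is surjective. So inside the graph of groups, every element of that vertex group equals the image of an edge-group element. The edge identification then conjugates it, along the tree path through the edge \((i,C_i,j)\), into the image of \(\phi_{ij}\) in the vertex group at \(\sfv_0\). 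This uses \cref{rem:forgetful_map}, which gives compatibility up to homotopy.
\end{itemize}
Consequently every generator of \(N\) lies in \(\co{\PB_n(X_x,\base)}\), and the two normal subgroups coincide.

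The main obstacle is bookkeeping of basepoints. The statement's \(\PB_n(X_x,\base)\to\PB_n(X,\base)\) must be shown to be the vertex-group inclusion coming from the homotopy equivalence of \cref{prop:valency-decomp}. Concretely, the inclusion \(F_n(X_x)\hookrightarrow F_n(X)\) must correspond to the inclusion of the vertex space at \(\sfv_0\) into \(\cG\). For this I would use Step 3 of that proof, in which \(F_n'(X\setminus\overline{V_x})\simeq F_n(X_x)\) sits as the main piece. Normal closures are insensitive to the conjugations that basepoint changes introduce, so the argument goes through.
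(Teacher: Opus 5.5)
Your proposal is correct and follows the paper's own proof. Both count the distribution graph to get rank \(n(k-1)\), identify \(\co{\PB_n(X_x,\base)}\) with the normal closure of all vertex and edge groups, and conclude with the retraction of \cref{prop:pi1G-embedding}. Your argument that the \(F_{n-1}\)-type vertex groups die, via surjectivity of \((\proj_1)_\ast\) combined with the edge relations, is a slightly more direct version of the step the paper phrases through the surjectivity of \((q_i)_\ast\) in \cref{rem:forgetful_map}.
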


\begin{proof}
The connectedness hypotheses imply that the graph-of-spaces
\(\cG=\cG_n(X,x)\) over \(\sfG=\sfG_n(X,x)\) of
\cref{prop:valency-decomp} has exactly one
\(F_n(X_x)\)-vertex and one \(F_{n-1}^i(X_x)\)-vertex for each
\(i\in\{1,\dots,n\}\), for a total of \(n+1\) vertices; each of the
\(nk\) edge spaces \(F_{n-1}^i(X_x)\times L^j\) (with
\((i,j)\in\{1,\dots,n\}\times\{1,\dots,k\}\)) contributes a single
edge joining the \(F_n(X_x)\)-vertex to the \(F_{n-1}^i(X_x)\)-vertex.
Hence the distribution graph \(\sfG\) is connected with
Euler characteristic
\[
\chi(\sfG) = (n+1)-nk = 1-n(k-1),
\]
so \(\pi_1(\sfG)\) is free of rank \(n(k-1)\).

By \cref{rem:forgetful_map}, each \(F_{n-1}^i(X_x)\)-vertex
group \(\PB_{n-1}(X_x)\) is the image of the \(F_n(X_x)\)-vertex group
\(\PB_n(X_x)\) under the surjection \((q_i)_\ast\). Therefore
quotienting \(\PB_n(X)\) by the normal closure of
\(\PB_n(X_x)\) kills every vertex group, and a fortiori every
edge group (each of which embeds into a vertex group via the
attaching maps). In other words,
\[
\co{\PB_n(X_x)}^{\PB_n(X)}=
\co{\pi_1(\cG_\sfv),\,\pi_1(\cG_\sfe)\,:\,\sfv\in V(\sfG),\,
\sfe\in E(\sfG)}^{\PB_n(X)},
\]
and \cref{prop:pi1G-embedding} identifies the quotient
with \(\pi_1(\sfG)\cong\mathbb{F}_{n(k-1)}\).
\end{proof}

\begin{remark}\label{rem:cor28-hypotheses}
The hypotheses of \cref{cor:valency-quotient} are
satisfied whenever \(X\not\cong S^1\) and \(X_x\) is path-connected.
Indeed, these two assumptions together force the star graph
\(\sfS_3\) to embed in \(X_x\); \cref{lem:rho-surj}
(applied to \(X_x\) in place of \(X\)) then implies that \(F_m(X_x)\) is
connected for every \(m\geq 1\). In particular \(F_n(X_x)\) and
\(F_{n-1}(X_x)\) are both connected, which forces the graph-of-spaces
\(\cG_n(X,x)\) of \cref{prop:valency-decomp} to
be connected, and hence \(F_n(X)\simeq\cG_n(X,x)\) is
connected as well.
\end{remark}

When the chosen point \(x\) is an \emph{interior point of a free
edge}, the valency is \(k=2\), and the presentation underlying
\cref{cor:valency-quotient} exhibits \(\PB_n(X)\) as a
quotient of a free product.

\begin{corollary}\label{cor:free-edge-splitting}
Suppose \(x\) is an interior point of a free edge \(\sfe\) of \(X\), and
that both \(F_n(X_x)\) and \(F_{n-1}(X_x)\) are connected. Then there
is a surjection
\[
\PB_n(X_x,\base)\,\ast\,\mathbb{F}_n
\twoheadrightarrow
\PB_n(X,\base)
\]
of \(\PB_n(X_x,\base)\ast\mathbb{F}_n\) -- the free product of
\(\PB_n(X_x,\base)\) with a free group of rank \(n\) -- onto
\(\PB_n(X,\base)\). Moreover this surjection is injective on each free
factor: both \(\PB_n(X_x,\base)\) and \(\mathbb{F}_n\) embed into
\(\PB_n(X,\base)\).
\end{corollary}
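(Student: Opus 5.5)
We specialise the graph-of-groups presentation of \(\PB_n(X,\base)\cong\pi_1(\cG)\) from \cref{prop:valency-decomp} to \(k=2\). Since \(x\) is interior to the free edge \(\sfe\), its link is two points, \(\lk_X(x)=L^1\sqcup L^2\), so every edge space is \(F^i_{n-1}(X_x)\times\{\mathrm{pt}\}\simeq F^i_{n-1}(X_x)\). By the connectedness hypotheses, the distribution graph \(\sfG\) has one \(F_n\)-type vertex, with group \(\PB_n(X_x,\base)\), and \(n\) vertices of \(F_{n-1}\)-type, \((i,C_i)\), with groups \(\PB_{n-1}(X_x)\). Each \((i,C_i)\) is joined to the central vertex by exactly two edges, \(j=1,2\). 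For each \(i\) the edge group embeds isomorphically onto the vertex group \(\pi_1(F^i_{n-1}(X_x))\) via \(\proj_1\), since \(L^j\) is a point. We choose the spanning tree \(\sfT\) to consist of the \(n\) edges \((i,C_i,1)\). Every amalgamation along a tree edge then has the form \(A\ast_B B=A\), so collapsing \(\sfT\) absorbs all the \(F_{n-1}\)-type vertex groups into \(\PB_n(X_x,\base)\).

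After this collapse, \(\pi_1(\cG)\) is an iterated HNN extension of \(\PB_n(X_x,\base)\) with \(n\) stable letters \(t_1,\dots,t_n\), one for each non-tree edge \((i,C_i,2)\). The associated subgroups are the images of \(\PB_{n-1}(X_x)\) under \((\phi_{i1})_\ast\) and \((\phi_{i2})_\ast\). The resulting presentation
\[
\PB_n(X,\base)\cong\bigl(\PB_n(X_x,\base)\ast\langle t_1,\dots,t_n\rangle\bigr)\big/\co{\,t_i(\phi_{i1})_\ast(g)t_i^{-1}=(\phi_{i2})_\ast(g)\,}
\]
supplies the surjection \(\PB_n(X_x,\base)\ast\mathbb{F}_n\twoheadrightarrow\PB_n(X,\base)\), where \(\mathbb{F}_n=\langle t_1,\dots,t_n\rangle\). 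This is the same description that underlies \cref{cor:valency-quotient} with \(n(k-1)=n\).

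Injectivity on the factors is then standard. The vertex group \(\PB_n(X_x,\base)\) embeds in the fundamental group of a graph of groups by Bass--Serre theory, or by Britton's lemma applied to an HNN extension. For the free factor, the retraction \(r\) in the proof of \cref{prop:pi1G-embedding} kills all vertex and edge groups and sends \(t_i\mapsto t_i\), so its composite with \(\mathbb{F}_n\to\PB_n(X,\base)\) is the identity of \(\pi_1(\sfG)\cong\mathbb{F}_n\). Hence \(\mathbb{F}_n\) embeds.

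The main obstacle is the first step: justifying that the homotopy-theoretic graph of spaces of \cref{prop:valency-decomp} yields a genuine graph of groups. This requires each edge-space inclusion to be \(\pi_1\)-injective, or else passing to the quotient so that the surjection still holds. The \(\proj_1\) side is an isomorphism on \(\pi_1\), so no issue arises there. On the \(\phi_{ij}\) side, injectivity would follow from \cref{rem:forgetful_map}: we have \(q_i\circ\phi_{ij}\simeq\proj_1\), which is an isomorphism on \(\pi_1\), so \((\phi_{ij})_\ast\) has a left inverse and is injective. With injectivity on both sides, the graph-of-groups presentation is valid, and the surjection and both embeddings follow as above.
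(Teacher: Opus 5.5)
Your proposal is correct and matches the paper's proof step for step. Both use the graph of \(n\) bigons, the spanning tree of the edges \((i,C_i,1)\), the multiple HNN presentation with stable letters \(t_1,\dots,t_n\), Britton's lemma for the base group, and the retraction sending \(\PB_n(X_x,\base)\) to \(1\) for the free factor. Your added check that each \((\phi_{ij})_\ast\) is split injective by \cref{rem:forgetful_map} is left implicit in the paper at this point (it appears only later, in \cref{lem:x-component-embedding}), and it is exactly the hypothesis Britton's lemma requires.
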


\begin{proof}
Since \(x\) lies in the interior of an edge \(e\) it has valency
\(\val_X(x)=2\), and \(\lk_X(x)=L^1\coprod L^2\) consists of the two
directions of \(\sfe\) at \(x\). As \(F_n(X_x)\) and \(F_{n-1}(X_x)\) are
connected, the graph-of-spaces \(\cG_n(X,x)\) of
\cref{prop:valency-decomp} has a single
\(F_n(X_x)\)-vertex \(\sfv_0\), a single vertex \(\sfv_i\) modelled on
\(F_{n-1}^i(X_x)\) for each \(i\in\{1,\dots,n\}\), and, for each \(i\),
exactly two edges \(\sfe_{i,1},\sfe_{i,2}\) joining \(\sfv_0\) to \(\sfv_i\) -- one for
each link point \(L^1,L^2\). The distribution graph is therefore
\(n\) bigons sharing the central vertex \(\sfv_0\), with first Betti number
\(n\). In the associated graph of groups the vertex group at \(\sfv_0\) is
\(\PB_n(X_x,\base)\), the vertex group at each \(\sfv_i\) is
\(\PB_{n-1}(X_x)\), and each edge group is \(\PB_{n-1}(X_x)\), included
into \(\sfv_i\) by the identity (the projection, \(L^j\) being a single
point) and into \(\sfv_0\) by the insertion-induced map
\((\phi_{ij})_\ast\).

Collapsing the spanning tree \(\sfT=\bigcup_i \sfe_{i,1}\) absorbs each
\(\sfv_i\)-group into its image \((\phi_{i1})_\ast(\PB_{n-1}(X_x))\) inside
\(\PB_n(X_x,\base)\), and the \(n\) off-tree edges \(\sfe_{i,2}\) supply
stable letters \(t_1,\dots,t_n\); Bass--Serre theory then gives the
presentation
\[
\PB_n(X,\base)=\bigl\langle\,\PB_n(X_x,\base),\,
t_1,\dots,t_n
\bigm|
t_i\,(\phi_{i1})_\ast(h)\,t_i^{-1}=(\phi_{i2})_\ast(h),
h\in\PB_{n-1}(X_x)\,\bigr\rangle .
\]
The free product \(\PB_n(X_x,\base)\ast\mathbb{F}_n\), with
\(\mathbb{F}_n=\langle t_1,\dots,t_n\rangle\), has exactly the same
generators and only the obvious relations of its two free factors.
Mapping each generator to the like-named generator of the
presentation above therefore defines a homomorphism
\(\PB_n(X_x,\base)\ast\mathbb{F}_n\to\PB_n(X,\base)\); it hits every
generator of \(\PB_n(X,\base)\) and is consequently surjective. 

Finally, each free factor embeds. The presentation above exhibits
\(\PB_n(X,\base)\) as a multiple HNN extension with base group
\(\PB_n(X_x,\base)\) and stable letters \(t_1,\dots,t_n\); by Britton's
lemma~\cite{Britton1963} the base group \(\PB_n(X_x,\base)\) injects. For the other
factor, sending \(\PB_n(X_x,\base)\) to \(1\) and each \(t_i\) to itself
respects every defining relation
\(t_i\,(\phi_{i1})_\ast(h)\,t_i^{-1}=(\phi_{i2})_\ast(h)\) -- both sides
become trivial -- and hence defines a retraction
\(\PB_n(X,\base)\twoheadrightarrow\mathbb{F}_n\) that is the identity on
\(\langle t_1,\dots,t_n\rangle\); therefore \(\mathbb{F}_n\) injects as
well. Thus the surjection of the corollary restricts to an embedding on
each free factor.
\end{proof}

We now drop the assumption that \(X_x\) is connected, in which case
\(F_n(X)\) itself may be disconnected. Let \(X'\) be an \(x\)-component
of \(X\) (so \(X'=X\) when \(X_x\) is connected), write
\(X'_x\coloneqq X'\setminus\{x\}\) for the corresponding open
component of \(X_x\), and fix a basepoint \(\base\in F_n(X'_x)\).

\begin{proposition}\label{prop:disconnected-graph-of-spaces}
Let \(X'\) be an \(x\)-component of \(X\), fix a basepoint
\(\base\in F_n(X'_x)\), and let
\(\lk_{X'}(x)=\coprod_{j=1}^{k'} L^j\) be the decomposition of the
link of \(x\) in \(X'\) into its connected components. Then:
\begin{enumerate}[label=(\roman*),leftmargin=2em]
    \item \cref{prop:valency-decomp}, applied to \(X'\)
          at the vertex \(x\), exhibits \(F_n(X',\base)\) as a
          \emph{connected} graph-of-spaces
          \(\cG'=\cG_n((X',x),\base)\) with underlying distribution graph
          \(\sfG'=\sfG_n((X',x),\base)\). Its vertex spaces are the
          connected components of \(F_n(X'_x,\base)\) together with
          the connected components of
          \(F_{n-1}^i(X'_x,\hat{\bfx}^0_i)\) for \(i=1,\dots,n\), and
          its edge spaces are the connected components of
          \(F_{n-1}^i(X'_x,\hat{\bfx}^0_i)\times L^j\) for
          \((i,j)\in\{1,\dots,n\}\times\{1,\dots,k'\}\). The natural
          inclusion \(X'\hookrightarrow X\) realises
          \(\cG'\) as a \emph{subgraph-of-spaces}
          of \(\cG=\cG_n((X,x),\base)\), and correspondingly
          \(\sfG'\) as a subgraph of \(\sfG=\sfG_n((X,x),\base)\).
    \item There is an isomorphism
          \[
          \PB_n(X,\base)\big/\co{\PB_n(X',\base)}^{\PB_n(X,\base)}
          \cong
          \pi_1\bigl(\cG/\cG'\bigr),
          \]
          where
          \(\cG/\cG'\)
          denotes the graph-of-spaces obtained from
          \(\cG\) by collapsing the
          subgraph-of-spaces \(\cG'\) to a
          single point.
    \item In particular, the fundamental group
          \(\pi_1\bigl(\sfG/\sfG'\bigr)\) of the
          quotient distribution graph embeds into the quotient
          \(\PB_n(X,\base)\big/\co{\PB_n(X',\base)}^{\PB_n(X,\base)}\).
\end{enumerate}
\end{proposition}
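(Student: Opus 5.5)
For (i) we apply \cref{prop:valency-decomp} directly to the \(x\)-component \(X'\), which is itself a finite connected simplicial complex with vertex \(x\) and link \(\lk_{X'}(x)=\coprod_{j=1}^{k'}L^j\). Since \(X'_x\) is connected by construction, restricting to the basepoint component \(F_n(X',\base)\) gives a connected graph-of-spaces \(\cG'\), as in the remark following \cref{prop:valency-decomp}. Its vertex spaces are the components of \(F_n(X'_x,\base)\) and of \(F^i_{n-1}(X'_x,\hat\bfx^0_i)\), and its edge spaces are the corresponding products with the \(L^j\).

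To realise \(\cG'\) inside \(\cG\), note that the decomposition of \cref{prop:valency-decomp} is natural for the inclusion \(X'\hookrightarrow X\). We choose the open stars \(U_x,V_x\) for \(X\) so that their intersections with \(X'\) are the corresponding stars for \(X'\); the cone on \(\lk_{X'}(x)\) is a union of cone fibres of \(U_x\). Then the retraction of Step~1 and the thickened insertion maps \(g_i\) of Step~2 restrict from \(X\) to \(X'\). We must check that the basepoint-component pieces for \(X'\) land in distinct components of the pieces for \(X\). For vertex spaces this holds because a path in \(F_n(X_x)\) cannot leave \(F_n(X'_x)\): the point \(x\) is removed and \(X'_x\) is a component of \(X_x\). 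So \(F_n(X'_x,\base)\) is a whole component of \(F_n(X_x)\), and likewise for \(F_{n-1}\). Each edge space \(C\times L^j\) of \(\cG'\) is then literally an edge space of \(\cG\). This gives an injective map of distribution graphs \(\sfG'\to\sfG\), together with identity maps on the corresponding vertex and edge spaces.

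For (ii), we write \(\PB_n(X,\base)=\pi_1(\cG)\) via \cref{prop:valency-decomp} and form the graph of groups. The key step is to choose a spanning tree \(\sfT\) of \(\sfG\) extending a spanning tree \(\sfT'\) of the connected subgraph \(\sfG'\). With this choice, \(\pi_1(\cG')=\PB_n(X',\base)\) is the subgroup generated by the vertex groups of \(\sfG'\) and the stable letters of the edges in \(\sfG'\setminus\sfT'\). Van Kampen applied to the pushout that collapses \(\cG'\) to a point — this is a cofibration, since \(\cG'\) is a subcomplex of the total space of \(\cG\) — shows that \(\pi_1(\cG/\cG')\) is \(\pi_1(\cG)\) modulo the normal closure of the image of \(\pi_1(\cG')\). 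The only point needing care is that collapsing a connected subspace kills exactly the normal closure of its \(\pi_1\); this holds because the collapse is homotopy equivalent to attaching the cone on \(\cG'\).

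For (iii), we apply \cref{prop:pi1G-embedding} to the graph of spaces \(\cG/\cG'\). Its underlying graph is \(\sfG/\sfG'\), with a single contractible vertex replacing \(\sfG'\) and all other vertex and edge spaces unchanged. The retraction onto the free group of stable letters of the edges outside \(\sfT\) factors through the quotient of (ii), because the generators of \(\PB_n(X',\base)\) are vertex-group elements or stable letters of \(\sfG'\), all of which are sent to \(1\) once \(\sfG'\) is collapsed. Therefore \(\pi_1(\sfG/\sfG')\), which is free on the edges of \(\sfG\setminus\sfG'\) outside \(\sfT\), embeds.

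The main obstacle is the naturality claim in (i): the homotopy equivalences of \cref{prop:valency-decomp} for \(X'\) and for \(X\) must be compatible, so that \(\cG'\) is a genuine subgraph-of-spaces. We would handle this by carrying out all constructions (the function \(t\), the retractions, and the maps \(g_i\)) for \(X\) first and then restricting them to \(X'\).
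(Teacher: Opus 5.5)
Your proposal is correct and follows the paper's own argument. For (i) you restrict the construction of \cref{prop:valency-decomp} (the radial retraction and the maps \(g_i\)) from \(X\) to \(X'\); for (ii) you use van Kampen to identify collapsing \(\cG'\) with killing the normal closure of the image of \(\pi_1(\cG')\); and for (iii) you run the retraction of \cref{prop:pi1G-embedding}, factored through that quotient. Your choice of a spanning tree \(\sfT\supseteq\sfT'\) and your cone/cofibration justification of the collapse only make explicit details that the paper leaves implicit.
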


\begin{proof}
(i) The thickened insertion maps \(g_i\) from the proof of
\cref{prop:valency-decomp} restrict to \(X'\):
configurations in
\(F_n'(X',\base)\coloneqq F_n'(X)\cap F_n(X',\base)\) are exactly
those whose coordinates all lie in \(X'\), with at most one of
them in \(U_x\). Repeating Steps~1--4 of the proof of
\cref{prop:valency-decomp} with \(X'\) in place of \(X\)
therefore produces the graph-of-spaces \(\cG'\) with the
listed vertex and edge spaces, and the inclusion
\(F_n(X',\base)\hookrightarrow F_n(X,\base)\) realises
\(\cG'\) as a sub-graph-of-spaces of \(\cG\).
Connectedness of \(\cG'\) follows from the homotopy
equivalence \(F_n(X',\base)\simeq\cG'\) together with the
fact that \(F_n(X',\base)\) is path connected by definition.

(ii) Since \(\pi_1(\cG')=\PB_n(X',\base)\), collapsing
\(\cG'\) to a point inside \(\cG\) kills exactly the
normal closure of \(\PB_n(X',\base)\) inside
\(\pi_1(\cG)=\PB_n(X,\base)\). By van Kampen, the
fundamental group of the resulting quotient graph-of-spaces is
\(\pi_1(\cG/\cG')\), which yields the announced
isomorphism.

(iii) The retraction argument of
\cref{prop:pi1G-embedding}, applied to the quotient
graph-of-spaces \(\cG/\cG'\) (whose distribution graph
is \(\sfG/\sfG'\)), exhibits \(\pi_1(G/G')\) as a retract of
\(\pi_1(\cG/\cG')\). Combined with the isomorphism
in~(ii), this embeds \(\pi_1(\sfG/\sfG')\) into
\(\PB_n(X,\base)/\co{\PB_n(X',\base)}^{\PB_n(X,\base)}\).
\end{proof}

When the link of \(x\) is simple enough, the subgraphs-of-spaces of
\cref{prop:disconnected-graph-of-spaces}(i) embed on the level of
fundamental groups as well.

\begin{lemma}\label{lem:x-component-embedding}
Suppose that every connected component of \(\lk_X(x)\) is simply
connected -- for instance, \(X\) is a graph, or \(x\) is an
interior point of a free edge, so that the components are single
points. Then every attaching map of \(\cG=\cG_n((X,x),\base)\) is
\(\pi_1\)-injective, and \(\PB_n(X,\base)\) is the fundamental
group of the associated graph of groups. Consequently every
connected subgraph-of-spaces of \(\cG\) is \(\pi_1\)-embedded; in
particular, for every \(x\)-component \(X'\) of \(X\) with
\(\base\in F_n(X'_x)\), the inclusion induces an embedding
\[
\PB_n(X',\base)\hookrightarrow\PB_n(X,\base).
\]
\end{lemma}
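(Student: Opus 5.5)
We first show that each attaching map of \(\cG\) is \(\pi_1\)-injective; the embedding statements then follow from Bass--Serre theory. The edge spaces are the components \(C\times L^j\) with \(C\in\pi_0(F_{n-1}^i(X_x))\). Since \(L^j\) is connected and simply connected, \(\pi_1(C\times L^j)\cong\pi_1(C)\).

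Consider first the projection \(\proj_1\colon C\times L^j\to C\). It is a homotopy equivalence onto the vertex space \(C\), so it is \(\pi_1\)-injective, indeed an isomorphism.

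Now consider the other attaching map, \(\phi_{ij}\colon C\times L^j\to F_n(X_x)\). By \cref{rem:forgetful_map}, we have \(q_i\circ\phi_{ij}\simeq\proj_1\). On \(\pi_1\) this gives \((q_i)_\ast\circ(\phi_{ij})_\ast=(\proj_1)_\ast\), and the right-hand side is an isomorphism. Hence \((\phi_{ij})_\ast\) has a left inverse and is injective. One must take care with basepoints and components: the homotopy in \cref{rem:forgetful_map} is realised by sliding the inserted point radially from \(L^j\times\{3/4\}\) within the cone. This lands \(q_i\circ\phi_{ij}\) in the same component \(C\), so the relation holds on the relevant components.

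With all attaching maps \(\pi_1\)-injective, the standard theory of graphs of spaces (Scott--Wall) identifies \(\pi_1(\cG)\) with the fundamental group of the associated graph of groups. Via \cref{prop:valency-decomp} this is \(\PB_n(X,\base)\). Bass--Serre theory also shows that, in a graph of groups with injective edge maps, the fundamental group of any connected subgraph of groups injects into the whole. One way to see this is to build the full group from the subgraph by iterated amalgamations and HNN extensions along injective edge maps; each step is injective on the previous group by the normal form theorem and Britton's lemma. A connected subgraph-of-spaces is exactly the graph of spaces of such a subgraph of groups, so its fundamental group is \(\pi_1\)-embedded.

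For the final claim, apply \cref{prop:disconnected-graph-of-spaces}(i). It realises \(\cG'=\cG_n((X',x),\base)\) as a connected subgraph-of-spaces of \(\cG\) with \(\pi_1(\cG')\cong\PB_n(X',\base)\). The previous paragraph then gives \(\PB_n(X',\base)\hookrightarrow\PB_n(X,\base)\).

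The main obstacle is to check that \(\cG'\) is a full sub-graph-of-spaces: its vertex spaces must be whole vertex spaces of \(\cG\), not merely subspaces of them. The vertex spaces of \(\cG'\) are components of \(F_n(X'_x)\) and \(F^i_{n-1}(X'_x)\), whereas those of \(\cG\) are components of \(F_n(X_x)\). Since \(X'_x\) is a union of components of \(X_x\), a configuration lying entirely in \(X'_x\) stays in \(X'_x\) along any path in \(F_n(X_x)\), because points cannot jump between components of \(X_x\). Hence components of \(F_n(X'_x)\) are genuinely components of \(F_n(X_x)\), and likewise for \(F_{n-1}\). The edges of \(\cG'\) are the corresponding \(C\times L^j\) with \(L^j\subseteq X'\), so \(\cG'\) is indeed a subgraph of groups with the same vertex and edge groups.
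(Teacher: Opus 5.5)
Your proof is correct and follows the paper's argument. \(\proj_1\) is a \(\pi_1\)-isomorphism, the relation \(q_i\circ\phi_{ij}\simeq\proj_1\) of \cref{rem:forgetful_map} makes \((\phi_{ij})_\ast\) split injective, Bass--Serre normal forms (equivalently, your iterated amalgams and HNN extensions) embed connected subgraphs of groups, and this is applied to \(\cG'\) from \cref{prop:disconnected-graph-of-spaces}(i); your check that the vertex and edge spaces of \(\cG'\) are whole vertex and edge spaces of \(\cG\) is a detail the paper leaves implicit.

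One small slip: \(\proj_1\colon C\times L^j\to C\) is not a homotopy equivalence when \(L^j\) is simply connected but not contractible (e.g.\ a \(2\)-sphere link component). It is still a \(\pi_1\)-isomorphism, which is all you use.
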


\begin{proof}
An edge space of \(\cG\) is a component \(C\times L^j\) of some
\(F^i_{n-1}(X_x)\times L^j\). As \(L^j\) is simply connected,
\(\pi_1(C\times L^j)\cong\pi_1(C)\) and the attaching map
\(\proj_1\) induces an isomorphism on fundamental groups. For the
other attaching map \(\phi_{ij}\), \cref{rem:forgetful_map} gives
\(q_i\circ\phi_{ij}\simeq\proj_1\), so \((\phi_{ij})_\ast\) is
split injective. Hence every edge group injects into its two
adjacent vertex groups, and \(\cG\) carries the structure of a
graph of groups with \(\pi_1(\cG)=\PB_n(X,\base)\). By the
Bass--Serre normal form, every reduced loop of a connected
subgraph-of-spaces remains reduced in \(\cG\), so its fundamental
group embeds into \(\PB_n(X,\base)\); applied to
\(\cG'=\cG_n((X',x),\base)\) from
\cref{prop:disconnected-graph-of-spaces}(i), this yields the
displayed embedding.
\end{proof}

We now record two consequences for a free edge whose removed
interior point \emph{separates} \(X\). Here the edge \(\sfe\) is a
bridge, so \(X_x\) falls into two \(x\)-components meeting only at
\(x\): one side embeds in the quotient by the other
(\cref{cor:other-x-component-embeds}), and the two sides generate
-- not by themselves, but together with the free group of loops of
the distribution graph, realised by \emph{shuffle} braids in place of
the wrapping generators of \cref{cor:free-edge-splitting} -- with
each side and the loop group embedding
(\cref{cor:free-edge-disconnected}).

\begin{corollary}\label{cor:other-x-component-embeds}
Suppose \(x\) is an interior point of a free edge \(\sfe\) of \(X\)
with \(X_x\) disconnected, so that \(\val_X(x)=2\) and \(X\) has
exactly two \(x\)-components \(X^1,X^2\), meeting only at \(x\).
Choose basepoints \(\bfx^1\in F_n(X^1_x)\) and
\(\bfx^2\in F_n(X^2_x)\), and fix a path \(\gamma\) from
\(\bfx^1\) to \(\bfx^2\) in \(F_n(X)\). Then the inclusion
\(X^2\hookrightarrow X\), combined with the change of basepoint
along \(\gamma\), induces an embedding
\[
\PB_n(X^2,\bfx^2) \hookrightarrow
\PB_n(X,\bfx^1)\big/\co{\PB_n(X^1,\bfx^1)}^{\PB_n(X,\bfx^1)}.
\]
\end{corollary}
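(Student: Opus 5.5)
The plan is to realise \(\PB_n(X^2,\bfx^2)\) as the fundamental group of a connected subgraph-of-spaces of \(\cG=\cG_n((X,x),\bfx^1)\) that is disjoint from \(\cG^1=\cG_n((X^1,x),\bfx^1)\), and then to retract the quotient \(\cG/\cG^1\) back onto it. First apply \cref{prop:disconnected-graph-of-spaces}(i) to both \(x\)-components \(X^1\) and \(X^2\). This gives connected subgraphs-of-spaces \(\cG^1\) and \(\cG^2=\cG_n((X^2,x),\bfx^2)\) of the graph-of-spaces modelling \(F_n(X)\).

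These two subgraphs are disjoint. Indeed, a configuration with all \(n\) points in \(X^1_x\) and one with all \(n\) points in \(X^2_x\) lie in different vertex spaces. Likewise an \(F_{n-1}\)-type vertex \((i,C)\) of \(\cG^1\) has \(C\) a component of configurations in \(X^1_x\), whereas those of \(\cG^2\) live in \(X^2_x\). Since \(X_x=X^1_x\sqcup X^2_x\), every component of \(F_m(X_x)\) records how many and which points lie on each side, so these components are distinct vertices of \(\sfG\). The path \(\gamma\) guarantees that both subgraphs lie in the single component \(\cG\) of the basepoint.

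By \cref{prop:disconnected-graph-of-spaces}(ii), the target quotient is \(\pi_1(\cG/\cG^1)\), where \(\cG^1\) has been collapsed to a point \(\ast\). By \cref{lem:x-component-embedding} (the link components of \(x\) are points), \(\cG\) is an honest graph of groups, and so is \(\cG/\cG^1\). Its vertex group at \(\ast\) is trivial and the edges adjacent to \(\ast\) have their groups killed. \(\cG^2\) survives unchanged as a connected subgraph-of-spaces of \(\cG/\cG^1\), because it is disjoint from \(\cG^1\).

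It remains to show that a connected subgraph of groups of a graph of groups whose edge monomorphisms are injective on the \(\cG^2\) side is \(\pi_1\)-embedded. This is the Bass--Serre normal form argument already used in \cref{lem:x-component-embedding}: a reduced word in \(\cG^2\) stays reduced in \(\cG/\cG^1\).

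The main obstacle is the injectivity of the edge maps after collapsing. An edge joining \(\ast\) to a vertex outside \(\cG^1\) now has trivial image at \(\ast\), so it is no longer an injective edge group on that end. I would handle this by restricting the collapse only to the edges inside \(\cG^1\) and treating the remaining edges incident to \(\cG^1\) as edges to a vertex with trivial group. Their edge groups are then quotiented, so the result is a graph of groups with possibly non-injective edge maps, but only at edges not in \(\cG^2\).

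To avoid this subtlety, I would instead build a retraction directly. I would define a map \(\cG/\cG^1\to\cG^2\)-like object by sending every vertex and edge space not in \(\cG^2\) to a point, which is legitimate if no edge joins \(\cG^2\) to its complement. That fails in general, since points can shuffle across \(x\). So the fallback is the Britton/normal-form argument. Words alternate between \(\cG^2\) pieces and excursions, and a reduction can only occur inside \(\cG^2\) edges, where the edge groups still inject because they come from \(F_{n-1}(X^2_x)\). This shows that any nontrivial element of \(\PB_n(X^2)\) has a nontrivial reduced normal form in the quotient.
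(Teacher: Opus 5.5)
\textbf{There is a genuine gap in the final step, which carries the whole content of the corollary.} The reduction to \(\pi_1(\cG/\cG^1)\) via \cref{prop:disconnected-graph-of-spaces}(ii), and the disjointness of \(\cG^1\) and \(\cG^2\), are fine.

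The principle you invoke is that a connected subgraph of groups whose edge maps are injective on its own side is \(\pi_1\)-embedded. This is false once non-injective edge maps occur elsewhere. For example, take a vertex with group \(\Z\), joined by an edge with group \(\Z\) (mapping isomorphically onto it) to a vertex with trivial group. The whole graph has trivial \(\pi_1\), although the one-vertex subgraph involves no non-injective map.

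\(\cG/\cG^1\) is of this kind. Each edge out of \(\ast\) (from an \(F^i_{n-1}\)-vertex with all other strands in \(X^1_x\), along \(L^2\)) becomes a cone. That cone kills the factor \(\PB_{n-1}(X^1_x)\) of the adjacent mixed vertex group \(\PB_{n-1}(X^1_x)\times\pi_1(X^2_x)\). The transfer edges at that vertex then acquire non-injective attaching maps, which kill \(X^1\)-side factors at the next vertices, and so on through the mixed vertices.

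Britton's lemma and the Bass--Serre normal form need injectivity of all edge maps. So ``a reduction can only occur inside \(\cG^2\) edges'' is an unsupported assertion. An excursion outside \(\cG^2\) might die in the quotient without being a pinch, letting neighbouring \(\cG^2\)-letters merge. Excluding this is exactly what the corollary claims.

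\textbf{The missing idea is a modified version of the retraction you abandoned.} Rather than crushing the complement of \(\cG^2\), forget the strands on the \(X^1\)-side and land in a larger auxiliary graph of spaces; this is what the paper does.
\begin{itemize}
\item Let \(\cS\) have vertex spaces the components of \(F_S(X^2_x)\) for all \(S\subseteq\{1,\dots,n\}\).
\item For \(i\notin S\), give \(\cS\) an edge attached by the identity and by the leaf stabilisation \(\sigma_i\colon F_S(X^2_x)\to F_{S\cup\{i\}}(X^2_x)\), which inserts strand \(i\) on the half-edge of \(\sfe\) near \(x\).
\item Since \(\sigma_i\) is split by forgetting \(i\), every attaching map of \(\cS\) is \(\pi_1\)-injective, and normal forms hold in \(\cS\).
\end{itemize}
Forgetting the \(X^1\)-side strands defines a map \(\Psi\colon\cG\to\cS\). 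Edges over \(L^1\) have ends with equal images and collapse; edges over \(L^2\) go to \(\sigma_i\)-edges.

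\(\Psi\) sends \(\cG^1\) to the point \(F_\varnothing(X^2_x)\), so \(\Psi_\ast\) kills \(\co{\PB_n(X^1)}\). On \(\cG^2\), \(\Psi\) is an isomorphism onto a subgraph-of-spaces of \(\cS\); this uses that \(L^2\) is a single point. Writing \(\iota^2_\ast\colon\PB_n(X^2)\to\PB_n(X)\) for the inclusion-induced map, \(\Psi_\ast\circ\iota^2_\ast\) is therefore injective. Since \(\Psi_\ast\) factors through the quotient by \(\co{\PB_n(X^1)}\), the map \(\PB_n(X^2)\to\PB_n(X)/\co{\PB_n(X^1)}\) is injective.
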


\begin{proof}
Since \(x\) is interior to an edge, \(\val_X(x)=2\) and
\(\lk_X(x)=L^1\sqcup L^2\) is a pair of points; as \(X_x\) is
disconnected, the two \(x\)-components are \(X^1\supseteq L^1\) and
\(X^2\supseteq L^2\), meeting only at \(x\).

Write \(\cG=\cG_n((X,x),\bfx^2)\) for the graph-of-spaces of
\cref{prop:valency-decomp}, based for convenience at \(\bfx^2\); the
change of basepoint along \(\gamma\) identifies
\(\PB_n(X,\bfx^1)\cong\PB_n(X,\bfx^2)\) and carries
\(\co{\PB_n(X^1,\bfx^1)}^{\PB_n(X,\bfx^1)}\) onto the corresponding
normal closure
\(N\coloneqq\co{\PB_n(X^1,\bfx^1)}^{\PB_n(X,\bfx^2)}\), so it
suffices to prove the statement at \(\bfx^2\). By
\cref{lem:x-component-embedding} -- the link components
\(L^1,L^2\) are single points -- applied to the \(x\)-component
\(X^2\), the inclusion-induced map
\(\iota^2_\ast\colon\PB_n(X^2,\bfx^2)\to\PB_n(X,\bfx^2)\) is
injective; it therefore remains to show
\(\im\iota^2_\ast\cap N=1\). We do so by producing a homomorphism
\(\varphi\) out of \(\PB_n(X,\bfx^2)\) with
\(N\subseteq\ker\varphi\) and \(\varphi\circ\iota^2_\ast\)
injective: given these, any
\(g=\iota^2_\ast(h)\in\im\iota^2_\ast\cap N\) satisfies
\((\varphi\circ\iota^2_\ast)(h)=\varphi(g)=1\), whence \(h=1\) and
\(g=1\).

\smallskip
\noindent\emph{The stabilisation telescope.}
For a subset \(S\subseteq\{1,\dots,n\}\) let \(F_S(X^2_x)\) denote
the configuration space of the strands labelled by \(S\) in
\(X^2_x\), so that \(F_{\{1,\dots,n\}}(X^2_x)=F_n(X^2_x)\) and
\(F_\varnothing(X^2_x)\) is a single point. Let
\(\varepsilon=e\cap X^2_x\) be the half-open free arc of \(X^2_x\)
ending at \(x\), and fix a point \(p\in\varepsilon\) close to
\(x\). For \(i\notin S\) define the \emph{leaf stabilisation}
\[
\sigma_i\colon F_S(X^2_x)\longrightarrow F_{S\cup\{i\}}(X^2_x)
\]
by pushing a configuration off the initial segment of
\(\varepsilon\) between \(x\) and \(p\) (sliding the finitely many
strands on \(\varepsilon\) outward, as in Step~1 of the proof of
\cref{prop:valency-decomp}) and then inserting the strand \(i\) at
\(p\). Dropping the \(i\)-th coordinate defines a forgetful map
\(q_i\colon F_{S\cup\{i\}}(X^2_x)\to F_S(X^2_x)\) with
\(q_i\circ\sigma_i\simeq\mathrm{id}\)
(cf.\ \cref{rem:forgetful_map}); in particular each
\((\sigma_i)_\ast\) is split injective on fundamental groups.
Assemble these maps into a graph-of-spaces \(\cS\): the vertex
spaces are the connected components of \(F_S(X^2_x)\) for all
\(S\subseteq\{1,\dots,n\}\), and for each \(i\notin S\) and each
component \(C\) of \(F_S(X^2_x)\) there is an edge space \(C\),
attached to \(C\) by the identity and to the component of
\(F_{S\cup\{i\}}(X^2_x)\) containing \(\sigma_i(C)\) by
\(\sigma_i\). Both attaching maps of every edge are
\(\pi_1\)-injective, so each component of \(\cS\) has the
fundamental group of a graph of groups, and connected
subgraphs-of-spaces of \(\cS\) are \(\pi_1\)-embedded by the same
Bass--Serre normal-form argument as in
\cref{lem:x-component-embedding}.

\smallskip
\noindent\emph{The forgetting map \(\Psi\colon\cG\to\cS\).}
Since \(X_x=X^1_x\sqcup X^2_x\), every configuration in
\(F_n(X_x)\) or \(F^i_{n-1}(X_x)\) splits its strands into the two
sides, with locally constant label sets. Map a component of
\(F_n(X_x)\) whose \(X^2\)-side labels form \(S\) to the
corresponding component of \(F_S(X^2_x)\) by forgetting the
\(X^1\)-side strands, and a component of \(F^i_{n-1}(X_x)\) with
\(X^2\)-side labels \(S\) to \(F_S(X^2_x)\) by forgetting the
\(X^1\)-side strands together with the strand \(i\). For an edge of
\(\cG\) with \(j=1\) the two attaching maps agree after forgetting
(the strand \(i\) is forgotten on both ends), so \(\Psi\) collapses
its cylinder into the common target vertex space. An edge with
\(j=2\), attached by \(\proj_1\) and \(\phi_{i2}\), is carried to
the corresponding \(\sigma_i\)-cylinder of \(\cS\): on the
\(F^i_{n-1}\)-end the two prescriptions agree outright, while on
the \(F_n\)-end forgetting the \(X^1\)-side strands of
\(\phi_{i2}(-)\) leaves the strand \(i\) on \(\varepsilon\) near
\(x\), which agrees with \(\sigma_i\circ(\text{forget})\) up to the
homotopy sliding \(i\) outward to \(p\). These squares commute up
to the indicated homotopies, so the prescriptions assemble into a
map of graphs-of-spaces \(\Psi\colon\cG\to\cS\), inducing
\[
\varphi\coloneqq\Psi_\ast\colon\PB_n(X,\bfx^2)\longrightarrow
\pi_1\bigl(\cS,\Psi(\bfx^2)\bigr).
\]

\smallskip
\noindent\emph{\(\varphi\) kills \(N\).}
The connected subgraph-of-spaces
\(\cG^1=\cG_n((X^1,x),\bfx^1)\subseteq\cG\) provided by
\cref{prop:disconnected-graph-of-spaces}(i) has all its vertex
spaces with \(X^2\)-side label set \(\varnothing\), and only
\(j=1\) edges; hence \(\Psi(\cG^1)\) is the single point
\(F_\varnothing(X^2_x)\), and \(\varphi\) vanishes on the image of
\(\PB_n(X^1,\bfx^1)=\pi_1(\cG^1)\) in \(\PB_n(X,\bfx^2)\)
(transported into the basepoint along \(\gamma\), which only
changes the image by a conjugation). As \(\ker\varphi\) is normal,
\(N\subseteq\ker\varphi\).

\smallskip
\noindent\emph{\(\varphi\circ\iota^2_\ast\) is injective.}
On \(\cG^2=\cG_n((X^2,x),\bfx^2)\subseteq\cG\) the map \(\Psi\)
restricts to an isomorphism onto a subgraph-of-spaces of \(\cS\):
the vertex spaces of \(\cG^2\) are components of \(F_n(X^2_x)\) and
of the \(F^i_{n-1}(X^2_x)\), which \(\Phi\) maps by the identity
(there are no \(X^1\)-side strands to forget), and the \(j=2\)
edges of \(\cG^2\) are carried bijectively to the corresponding
\(\sigma_i\)-edges, with the same attaching maps up to homotopy.
Here the free-edge hypothesis is essential: the link component
\(L^2\) is a single point, so each edge space of \(\cG^2\) is a
copy of an \(F^i_{n-1}(X^2_x)\)-component, exactly as in \(\cS\).
By the Bass--Serre normal form for subgraphs-of-groups,
\(\varphi\circ\iota^2_\ast=\pi_1\bigl(\Psi|_{\cG^2}\bigr)\) is
injective. (For a general vertex \(x\) the link components need not
be points; the edge spaces of \(\cG^2\) then carry \(L^2\)-factors
that \(\Psi\) collapses, the comparison with \(\cS\) breaks down,
and the embedding can fail.)

This completes the proof.
\end{proof}

\cref{cor:other-x-component-embeds} shows that \(\PB_n(X^2)\)
survives -- embedded -- in the quotient of \(\PB_n(X,\base)\) by the
normal closure of \(\PB_n(X^1)\). It is tempting to conclude that
the two sides together account for all of \(\PB_n(X,\base)\); they
do not. In general even the quotient by both sides,
\[
\PB_n(X,\base)\big/
\co{\PB_n(X^1),\,\PB_n(X^2)}^{\PB_n(X,\base)},
\]
is non-trivial: braids that \emph{shuffle} strands between the two
sides survive. In fact this quotient is computed exactly -- it is
the free group of loops of the distribution graph
(\cref{cor:free-edge-both-quotient}).

The simplest such surviving shuffle lives on the letter
\(\sfH\) itself:

\begin{example}[The letter \(\sfH\)]
\label{ex:two-tripods-shuffle}
Let \(\sfH\) be the tree with two trivalent vertices \(\sfv\)
and \(\sfv'\), joined by a middle bar \(\sfe\), and carrying
two further legs \(\sfe_1,\sfe_2\) at \(\sfv\) and
\(\sfe_3,\sfe_4\) at \(\sfv'\) (\cref{fig:H-shuffle}). Let
\(x\) be an interior point of \(\sfe\); then the two
\(x\)-components \(X^1,X^2\) of \(\sfH\) are each a tripod with
one prolonged leg, and \(X^i_x\) is homotopy equivalent to a
tripod. Take \(n=2\).

Each \(F_2(X^i_x)\) is connected, with
\(\PB_2(X^i_x)\cong\Z\) generated by the square of the hexagonal
exchange move of two strands on a tripod. In the decomposition
\(\cG_2((\sfH,x),\base)\), the two \emph{corner} vertices -- both
strands on one side -- carry the groups \(\Z\); the two
\emph{mixed} vertices -- one strand on each side -- carry
\(\pi_1(X^1_x\times X^2_x)=1\); the four \(F^i_1\)-type vertices
and all eight edge spaces have contractible factors and carry
trivial groups. The distribution graph
\(\sfG=\sfG_2((\sfH,x),\base)\) is an octagon, alternating
\(F_2\)- and \(F^i_1\)-type vertices, so
\[
\PB_2(\sfH,\base)\cong\Z\ast\Z\ast\Z ,
\]
free of rank three: the two corner copies of \(\Z\), and one
stable letter \(t\) for the octagon. In particular the images of
\(\PB_2(X^1)\) and \(\PB_2(X^2)\) generate only a free factor of
rank two, and
\[
\PB_2(\sfH,\base)\big/\co{\PB_2(X^1),\,\PB_2(X^2)}\cong\Z\neq1 .
\]
The surviving generator \(t\) is realised by the \emph{shuffle
braid} \(\gamma_\sfH\) of \cref{fig:H-shuffle}. Place the base
configuration \(\base=(x^0_1,x^0_2)\) on the middle bar, with
\(x^0_1\) closer to \(\sfv\), and perform the six numbered
moves: first the second strand travels from \(x^0_2\) onto
\(\sfe_4\), and then the first strand from \(x^0_1\) onto
\(\sfe_3\); next the second strand travels from \(\sfe_4\)
across the bar onto \(\sfe_2\), and then the first strand from
\(\sfe_3\) across the bar onto \(\sfe_1\); finally the second
strand returns from \(\sfe_2\) to \(x^0_2\), and then the first
strand from \(\sfe_1\) to \(x^0_1\). The two strands cross
\(\sfe\) twice, in opposite orders, and the braid traverses the
octagon once; the resulting pure braid is a well-defined element
of \(\PB_2(\sfH,\base)\) lying in neither side's image. (The rank
\(1\) of \(\pi_1(\sfG)\) here agrees with the general formula
\(1+2^{n-1}(n-2)\) of \cref{cor:free-edge-loop-rank} below.)
\end{example}

\begin{figure}[ht]
\centering
\begin{tikzpicture}[line width=0.8pt, scale=1.05]
\draw (0,0) -- (5,0);
\node[below=2pt] at (0.8,-0.02) {\(\sfe\)};
\draw (0,0) -- (-1.1,0.75)  node[left]  {\(\sfe_1\)};
\draw (0,0) -- (-1.1,-0.75) node[left]  {\(\sfe_2\)};
\draw (5,0) -- (6.1,0.75)   node[right] {\(\sfe_3\)};
\draw (5,0) -- (6.1,-0.75)  node[right] {\(\sfe_4\)};
\filldraw (0,0) circle (1.6pt) node[below=3pt] {\(\sfv\)};
\filldraw (5,0) circle (1.6pt) node[below=3pt] {\(\sfv'\)};
\filldraw (1.7,0) circle (1.4pt) node[above=1.5pt] {\(x^0_1\)};
\filldraw (3.1,0) circle (1.4pt) node[above=1.5pt] {\(x^0_2\)};
\draw[->, red!80!black, thick]
  (3.1,-0.07) to[bend right=15]
  node[pos=0.5, circle, draw, fill=white, inner sep=0.7pt]{\tiny 1}
  (5.28,-0.26);
\draw[->, red!80!black, thick]
  (5.38,-0.36) to[bend left=22]
  node[pos=0.5, circle, draw, fill=white, inner sep=0.7pt]{\tiny 3}
  (-0.36,-0.32);
\draw[->, red!80!black, thick]
  (-0.26,-0.22) to[bend right=12]
  node[pos=0.45, circle, draw, fill=white, inner sep=0.7pt]{\tiny 5}
  (3.1,-0.09);
\draw[->, blue!70!black, thick]
  (1.7,0.07) to[bend left=15]
  node[pos=0.62, circle, draw, fill=white, inner sep=0.7pt]{\tiny 2}
  (5.28,0.26);
\draw[->, blue!70!black, thick]
  (5.38,0.36) to[bend right=22]
  node[pos=0.5, circle, draw, fill=white, inner sep=0.7pt]{\tiny 4}
  (-0.36,0.32);
\draw[->, blue!70!black, thick]
  (-0.26,0.22) to[bend left=12]
  node[pos=0.45, circle, draw, fill=white, inner sep=0.7pt]{\tiny 6}
  (1.7,0.09);
\end{tikzpicture}
\caption{The tree \(\sfH\) and the shuffle braid
\(\gamma_\sfH\) for \(n=2\). The second strand (moves
\(1,3,5\)) travels
\(x^0_2\to \sfe_4\to \sfe_2\to x^0_2\), and the first strand
(moves \(2,4,6\)) travels
\(x^0_1\to \sfe_3\to \sfe_1\to x^0_1\), the six moves performed
in the numbered order.}
\label{fig:H-shuffle}
\end{figure}

Such shuffles exist in great generality: all that is needed is
an embedded copy of \(\sfH\) inside \(X\) whose middle bar
contains a free edge. They will be the basic tool of
\cref{sec:goldberg-trivial}.

Let \(\alpha\subseteq X\) be an embedded path containing a
free edge \(\sfe\) of \(X\), with endpoints \(\sfv\) and \(\sfv'\),
and suppose that each of \(\sfv\) and \(\sfv'\) either has valency at
least \(3\) in \(X\) or lies in a thick component of
\(X\). Then one can choose two embedded arcs
\(\sfe_1,\sfe_2\subseteq X\) issuing from \(v\), meeting only at
\(v\) and avoiding \(\alpha\): along two link components other
than that of \(\alpha\), if \(\val_{X}(\sfv)\geq3\); and
inside the cone over a link component of positive dimension --
which offers infinitely many directions -- if \(\sfv\) lies in a
thick component. Similarly there are two such arcs \(\sfe_3,\sfe_4\)
at \(\sfv'\). Taking all four arcs short, the union
\[
\sfe_1\cup \sfe_2\cup\alpha\cup \sfe_3\cup \sfe_4
\subseteq X
\]
is an embedded copy of the tree \(\sfH\) of
\cref{ex:two-tripods-shuffle}, with essential vertices \(\sfv\)
and \(\sfv'\) and with \(\alpha\) in place of the middle bar.

Place a base configuration with \(x^0_1,x^0_2\) on
\(\mathring{\sfe}\), the point \(x^0_1\) closer to \(\sfv\)
along \(\alpha\), and with \(x^0_3,\dots,x^0_n\) parked on
\(\sfe_4\). Let \(\gamma_\sfH\in\PB_n(\sfH,\base)\) be the
braid which keeps the strands \(3,\dots,n\) parked on
\(\sfe_4\) and moves the first two through the six numbered
moves of \cref{ex:two-tripods-shuffle}
(\cref{fig:H-shuffle}), with \(\alpha\) in place of the middle
bar. The braid \(\gamma_\sfH\) is pure and supported in
\(\sfH\).

\begin{lemma}\label{lem:H-shuffle}
The braid \(\gamma_\sfH\) is non-trivial in \(\PB_n(\sfH,\base)\), and
its image in \(\PB_n(X,\base)\) is non-trivial as well;
moreover the image lies in \(\ker\iota_\ast\).
\end{lemma}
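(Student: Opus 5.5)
The plan is to prove the three assertions in the order: kernel membership, non-triviality in \(X\), then non-triviality in \(\sfH\) as a consequence.

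\emph{Kernel membership.} Each strand of \(\gamma_\sfH\) traces a loop inside the tree \(\sfH\). Strands \(3,\dots,n\) are constant. Strand \(1\) runs \(x^0_1\to\sfe_3\to\sfe_1\to x^0_1\), and strand \(2\) runs \(x^0_2\to\sfe_4\to\sfe_2\to x^0_2\). Since \(\sfH\) is contractible, every such loop is null-homotopic in \(\sfH\), hence in \(X\). So \(\iota_\ast(\gamma_\sfH)=1\); equivalently, \(\iota_\ast\) factors through \(\prod_i\pi_1(\sfH)=1\).

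\emph{Non-triviality in \(X\).} Apply the graph-of-spaces decomposition at an interior point \(x\in\mathring\sfe\) lying between \(x^0_1\) and \(x^0_2\). Because \(\sfe\) is a free edge, \(\lk_X(x)\) consists of two points. The decomposition \(\cG_n((X,x),\base)\) of \cref{prop:valency-decomp} then exists, and \cref{prop:pi1G-embedding} gives a retraction
\[
r\colon\PB_n(X,\base)\twoheadrightarrow\pi_1(\sfG_n((X,x),\base)).
\]
It suffices to show that \(r(\gamma_\sfH)\neq1\). We follow \(\gamma_\sfH\) through the pieces of the decomposition. Each time a strand crosses \(x\), the path passes from an \(F_n(X_x)\)-vertex through an \(F^i_{n-1}\)-vertex, using the two edges labelled by the two link points. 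In between, the configuration moves within one component of \(F_n(X_x)\).

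\begin{itemize}
\item If \(X_x\) is disconnected (\(\sfe\) is a bridge), components of \(F_n(X_x)\) are indexed by the set of strand labels on each side. The loop then visits the vertices of types \(\{1\}|\{2,\dots\}\), \(\varnothing|\{1,2,\dots\}\), \(\{2\}|\{1,\dots\}\) and \(\{1,2\}|\{\dots\}\) in cyclic order, crossing through \(F^2_{n-1}\), \(F^1_{n-1}\), \(F^2_{n-1}\), \(F^1_{n-1}\).
\item If \(X_x\) is connected, there is a single \(F_n\)-vertex and \(n\) bigons. The loop is then the word \(t_2t_1t_2^{-1}t_1^{-1}\) in the stable letters (with a fixed orientation convention). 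This commutator is nontrivial in the free group.
\end{itemize}

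In the bridge case, we read off the sequence of edges traversed as an edge-path in \(\sfG\). It has no backtracking: consecutive crossings involve different strands or different link points. A reduced closed edge path in a graph is nontrivial in \(\pi_1\), exactly as for the octagon of \cref{ex:two-tripods-shuffle}.

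\emph{Non-triviality in \(\sfH\).} This follows from the previous step, since \(\gamma_\sfH\) maps to a nontrivial element of \(\PB_n(X,\base)\). Alternatively, apply the same argument directly inside \(\sfH\).

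\emph{Main obstacle.} The main difficulty is the bookkeeping in the second step. We must check that the motions between crossings stay inside a single vertex space. We must also check that the edge-path really is reduced, in particular that no two consecutive edges coincide up to reversal. I would settle this by recording the crossing sequence carefully: strand 2 leftward, strand 1 leftward? Read directly from the moves, move 3 sends strand 2 across leftward and move 4 sends strand 1 across leftward, then moves 5 and 6 bring both back rightward. The crossings of \(x\) are therefore:

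\begin{itemize}
\item strand 2 rightward in move 1, but only if \(x\) lies left of \(x^0_2\);
\item strand 1 rightward in move 2;
\item strand 2 leftward in move 3;
\item strand 1 leftward in move 4;
\item strand 2 rightward in move 5.
\end{itemize}

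To avoid this extra crossing of strand 2, I would place \(x\) between \(x^0_1\) and \(x^0_2\). Then strand 2 crosses only in moves 3 and 5, and strand 1 only in moves 2 and 4. This gives the crossing word \(t_1,t_2^{-1},t_1^{-1},t_2\), up to orientation. That is a cyclically reduced commutator in the connected case, and a reduced length-8 cycle in the bridge case.
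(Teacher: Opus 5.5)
Your proof is correct and follows the paper's argument: kernel membership comes from the contractibility of \(\sfH\), and non-triviality in \(X\) comes from the collapse \(\PB_n(X,\base)\to\pi_1(\sfG_n((X,x),\base))\) at a point \(x\in\mathring{\sfe}\), which yields a commutator of bigon letters when \(X_x\) is connected and a reduced octagonal edge loop when \(\sfe\) is a bridge; non-triviality in \(\sfH\) then follows. The paper takes \(x\) between \(x^0_2\) and \(\sfv'\) and obtains \(t_2t_1t_2^{-1}t_1^{-1}\), a cyclic conjugate of your word; note that your first crossing bullet has its condition reversed, and in fact each strand crosses a generic \(x\) on the bar exactly twice wherever \(x\) is placed, so there is no extra crossing to avoid.
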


\begin{proof}
The last assertion is immediate: \(\gamma_\sfH\) is supported in the
contractible tree \(\sfH\), so all its strand classes vanish.

For the non-triviality, fix a generic point \(x\in\mathring{\sfe}\)
between \(x^0_2\) and \(\sfv'\) along \(\alpha\), apply
\cref{prop:valency-decomp} to
\(X\) at \(x\), and let
\(c_\ast\colon\PB_n(X,\base)\to
\pi_1\bigl(\sfG_n((X,x),\base)\bigr)\) be the collapse of
\cref{prop:pi1G-embedding}. Throughout \(\gamma_\sfH\), the strands
\(3,\dots,n\) rest on \(\sfe_4\), on the \(\sfv'\)-side of \(x\), while
the strands \(2\) and \(1\) cross \(x\) exactly four times, once
in each of the moves \(1\)--\(4\) of \cref{fig:H-shuffle}, while
the moves \(5\) and \(6\) take place entirely on the
\(\sfv\)-side of \(x\): in move~\(1\) the second strand crosses
\(x\) towards \(\sfv'\); in move~\(2\) the first strand crosses
\(x\) towards \(\sfv'\); in move~\(3\) the second strand crosses
\(x\) back towards \(\sfv\); and in move~\(4\) the first strand
crosses \(x\) back towards \(\sfv\)
-- the two tripods of \(\sfH\) are exactly what allows each of
the two strands to step aside for the other on either side of
\(x\). If \(X_x\) is connected, the distribution graph
\(\sfG_n((X,x),\base)\) consists, as in
\cref{cor:free-edge-splitting}, of \(n\) bigons sharing the
single \(F_n\)-type vertex; writing \(t_k\) for the loop of the
\(k\)-th bigon -- the trajectory of a braid whose \(k\)-th strand
crosses \(x\) and returns -- the crossing sequence above gives
\[
c_\ast(\gamma_\sfH)=t_2\,t_1\,t_2^{-1}\,t_1^{-1}\neq1
\]
in the free group \(\pi_1(\sfG_n)\) of rank \(n\geq2\). If
\(X_x\) is disconnected, the trajectory of \(\gamma_\sfH\) is
instead an octagonal edge loop, alternating between the four
\(F_n\)-type vertices given by the four distributions of the
strands \(1,2\) over the two sides of \(x\) -- all other strands
staying on the \(\sfv'\)-side -- and the four \(F_{n-1}\)-type
vertices realising the transfers between them; its eight vertices
are pairwise distinct, so the trajectory is an embedded cycle of
\(\sfG_n\), and again \(c_\ast(\gamma_\sfH)\neq1\). In both cases the
image of \(\gamma_\sfH\) in \(\PB_n(X,\base)\) is non-trivial,
and therefore so is \(\gamma_\sfH\in\PB_n(\sfH,\base)\) itself.
\end{proof}

\begin{corollary}\label{cor:free-edge-both-quotient}
Suppose \(x\) is an interior point of a free edge \(\sfe\) of \(X\)
with \(X_x\) disconnected, and let \(X^1,X^2\) be the two
\(x\)-components of \(X\). Then the collapse map of
\cref{prop:pi1G-embedding} induces an isomorphism
\[
\PB_n(X,\base)\big/
\co{\PB_n(X^1),\,\PB_n(X^2)}^{\PB_n(X,\base)}
\xrightarrow{\cong}
\pi_1\bigl(\sfG_n((X,x),\base)\bigr):
\]
killing both sides leaves exactly the free group of loops of the
distribution graph.
\end{corollary}

\begin{proof}
Since \(x\) is interior to an edge it has valency
\(\val_X(x)=2\), with \(\lk_X(x)=L^1\sqcup L^2\) a pair of points.
Each \(x\)-component contains \(x\) and approaches it along at
least one of \(L^1,L^2\), so the number \(m\) of \(x\)-components
satisfies \(m\le2\); as \(X_x\) is disconnected, \(m=2\). Label
the components so that \(L^1\subset X^1\) and \(L^2\subset X^2\);
then \(X^1_x\) and \(X^2_x\) are the two connected components of
\(X_x\), the two pieces meet only at \(x\), and \(x\) is a
univalent vertex of each \(X^i\) -- the tip of a free half-edge of
\(\sfe\). Fix, for \(i=1,2\), a path \(\gamma^i\) in \(F_n(X)\) from
\(\base\) to a point \(\bfx^i\in F_n(X^i_x)\), and write
\(\iota^i_\ast\colon\PB_n(X^i,\bfx^i)\to\PB_n(X,\base)\) for the
map induced by the inclusion \(X^i\hookrightarrow X\) followed by
the change of basepoint along \(\gamma^i\).

Apply \cref{prop:valency-decomp} to \(X\) at \(x\) and examine the
resulting graph-of-spaces \(\cG_n((X,x),\base)\), with underlying distribution
graph \(\sfG=\sfG_n((X,x),\base)\). Because \(X_x=X^1_x\coprod
X^2_x\) is a disjoint union, every component of \(F_n(X_x)\) --
and likewise of each \(F_{n-1}^i(X_x)\) -- splits as a product
\[
F_{n_1}\bigl(X^1_x\bigr)\times F_{n_2}\bigl(X^2_x\bigr),
\qquad n_1+n_2=n\text{ or }n-1,
\]
indexed by how the strands are distributed between the two
components; the corresponding vertex group is the direct product
\[
\PB_{n_1}\bigl(X^1_x\bigr)\times\PB_{n_2}\bigl(X^2_x\bigr).
\]
Moreover, since each link point \(L^j\) is a single point, the
attaching map carried by an edge space is the \emph{inclusion}
\(\PB_{n-1}(X_x)\hookrightarrow\PB_n(X_x)\) that inserts a strand
at the fixed point \(L^j\) adjacent to \(x\); geometrically it
slides one strand across \(x\) along the free edge \(\sfe\),
transferring it between the two components \(X^1_x\) and
\(X^2_x\).

By van Kampen, \(\PB_n(X,\base)\) is generated by the vertex
groups of \(\cG_n((X,x),\base)\), each conjugated to the basepoint
along its path in a fixed spanning tree \(\sfT\subseteq\sfG\),
together with one stable letter \(t_\sff\) for each edge
\(\sff\notin E(\sfT)\), subject to the edge relations. The collapse map
\(c\) of \cref{prop:pi1G-embedding} crushes every vertex space to
a point and sends \(t_\sff\) to the corresponding generator of
\(\pi_1(\sfG)\); killing the vertex groups renders all edge
relations trivial, so
\[
\ker c_\ast =
\co{\,\text{tree-based vertex groups}\,}^{\PB_n(X,\base)} .
\]
Furthermore \(\im\iota^i_\ast\subseteq\ker c_\ast\): as \(x\) is a
univalent vertex of \(X^i\), the link \(\lk_{X^i}(x)\) is a single
point, so in the subgraph \(\sfG^i\) underlying
\(\cG^i=\cG_n((X^i,x),\bfx^i)\)
(\cref{prop:disconnected-graph-of-spaces}(i)) every
\(F^i_{n-1}\)-type vertex is a leaf, whence \(\sfG^i\) is a tree
and \(c_\ast\) vanishes on \(\pi_1(\cG^i)=\PB_n(X^i,\bfx^i)\).
Consequently \(c_\ast\) descends to a surjection
\[
Q\coloneqq\PB_n(X,\base)\big/N
\twoheadrightarrow\pi_1(\sfG),
\qquad
N\coloneqq\co{\PB_n(X^1),\,\PB_n(X^2)}^{\PB_n(X,\base)},
\]
split by the section \(s\) of \cref{prop:pi1G-embedding}. It
remains to show injectivity, i.e.\ that every tree-based vertex
group is contained in \(N\).

Choose the spanning tree \(\sfT\) with the following two
properties. First, for every \(F_n\)-type vertex \(v\) whose
configurations carry \(n_2\geq1\) strands on the \(X^2\)-side,
select one incident long edge -- through an \(F^i_{n-1}\)-type
vertex parking one chosen \(X^2\)-side strand at \(x\) -- that
transfers this strand to the \(X^1\)-side, and put both of its
half-edges into \(\sfT\); since each such transfer strictly
decreases \(n_2\), and each vertex selects only one, these
\emph{slide edges} form a forest. Second, include spanning trees
of \(\sfG^1\) and \(\sfG^2\); as the slide edges only enter these
subgraphs at their endpoints of type \((n,0)\), the union remains
a forest, and we complete it to a spanning tree \(\sfT\). We also
take the paths \(\gamma^1,\gamma^2\) (and the section \(s\)) to be
based along \(\sfT\).

Since \(\sfT\) is spanning, every \(F^i_{n-1}\)-type vertex has an
incident tree edge, whose relation identifies its tree-based group
with a subgroup of a tree-based \(F_n\)-type vertex group; so it
suffices to treat the latter. A vertex-group element factors as
\[
(g_1,g_2)=(g_1,1)\cdot(1,g_2),\qquad
g_1\in\PB_{n_1}\bigl(X^1_x\bigr),
g_2\in\PB_{n_2}\bigl(X^2_x\bigr),
\]
and we treat the two factors separately, by induction on \(n_2\)
(resp.\ \(n_1\)). For \((g_1,1)\) with \(n_2\geq1\): the parked
strand of the slide edge at \(v\) is stationary in \((g_1,1)\), so
\((g_1,1)\) lies in the image of the edge group of the slide edge;
as both halves of the slide edge belong to \(\sfT\), the two edge
relations carry the tree-based \((g_1,1)\) to the tree-based
element \((g_1',1)\) of the vertex group at the other end -- the
same braid with the transferred strand stationary on the
\(X^1\)-side -- with \(n_2\) decreased by one. After \(n_2\) steps
we reach a vertex of type \((n,0)\), i.e.\ a vertex group of
\(\cG^1\); since \(\sfT\) restricts to a spanning tree of
\(\sfG^1\) and \(\gamma^1\) runs along \(\sfT\), the tree-based
image lies in \(\im\iota^1_\ast\subseteq N\). For \((1,g_2)\) with
\(n_1\geq1\): choose any long edge at \(v\) parking an
\(X^1\)-side strand at \(x\) and transferring it to the
\(X^2\)-side. Its two halves need not lie in \(\sfT\), but the
corresponding edge relations then hold up to conjugation by the
stable letters of the missing halves; hence the tree-based
\((1,g_2)\) is a conjugate, by a word in the stable letters, of
the tree-based \((1,g_2')\) at the other end, with \(n_1\)
decreased by one. After \(n_1\) steps, the tree-based \((1,g_2)\)
is a conjugate of an element of \(\im\iota^2_\ast\) by a word in
the stable letters; as \(N\) is normal, \((1,g_2)\in N\). Hence
every tree-based vertex group lies in \(N\), completing the proof.
(For later use we record what the induction showed: every
tree-based vertex-group element is a conjugate of an element of
\(\im\iota^1_\ast\cup\im\iota^2_\ast\) by a word in the stable
letters, i.e.\ by an element of \(\im s\).)
\end{proof}

\begin{corollary}\label{cor:shuffle-survives-quotient}
In the situation of \cref{cor:free-edge-both-quotient}, suppose
in addition that \(X\) contains an embedded tree \(\sfH\), as in
the construction preceding \cref{lem:H-shuffle}, whose path
\(\alpha\) contains the free edge \(\sfe\). Then the
image of the shuffle braid \(\gamma_\sfH\) under the isomorphism
of \cref{cor:free-edge-both-quotient} is the class of the
embedded octagonal loop appearing in the proof of
\cref{lem:H-shuffle}; in particular
\[
\gamma_\sfH\notin
\co{\PB_n(X^1),\,\PB_n(X^2)}^{\PB_n(X,\base)}:
\]
the shuffle braid survives the quotient by both sides, for every
\(n\geq2\).
\end{corollary}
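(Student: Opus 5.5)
The plan is to compute the image of \(\gamma_\sfH\) under the collapse map \(c_\ast\colon\PB_n(X,\base)\to\pi_1(\sfG)\), where \(\sfG=\sfG_n((X,x),\base)\) and \(x\in\mathring{\sfe}\) is chosen as in the proof of \cref{lem:H-shuffle}, between \(x^0_2\) and \(\sfv'\). By \cref{cor:free-edge-both-quotient}, \(c_\ast\) descends to the isomorphism \(\PB_n(X,\base)/N\cong\pi_1(\sfG)\), where \(N=\co{\PB_n(X^1),\PB_n(X^2)}\). Hence the image of \(\gamma_\sfH\) under this isomorphism is \(c_\ast(\gamma_\sfH)\). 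It then suffices to identify \(c_\ast(\gamma_\sfH)\) with the class of the octagonal edge loop and to show that this class is non-trivial. Non-triviality gives \(\gamma_\sfH\notin N\).

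\textbf{Step 1: the trajectory of \(\gamma_\sfH\).} Read off the trajectory of \(\gamma_\sfH\) in the graph of spaces \(\cG\simeq F_n(X)\), exactly as in the disconnected case of the proof of \cref{lem:H-shuffle}. Only moves \(1\)--\(4\) cross \(x\), and each crossing is a passage through an \(F^i_{n-1}\)-type vertex along its two incident edges. Between crossings, the configuration stays inside one \(F_n\)-type vertex space, whose component is determined by which of the strands \(1,2\) lie on the \(X^1\)-side; all other strands stay on \(\sfe_4\), on the \(X^2\)-side. Under the collapse \(\cG\to\sfG\), the portions of the path inside vertex spaces map to points. The loop is therefore homotopic to the edge path visiting the \(F_n\)-vertices in the order
\[
(\{1,2\},\varnothing)\to(\{1\},\{2\})\to(\varnothing,\{1,2\})\to(\{2\},\{1\})\to(\{1,2\},\varnothing),
\]
recording the \(X^1\)-side and \(X^2\)-side labels of strands \(1,2\). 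Consecutive \(F_n\)-vertices are joined through the \(F^2_{n-1}\)-, \(F^1_{n-1}\)-, \(F^2_{n-1}\)- and \(F^1_{n-1}\)-type vertices in turn.

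\textbf{Step 2: the loop is embedded.} The four \(F_n\)-vertices are pairwise distinct components, since \(X_x\) is disconnected and the components are indexed by the distribution of labels. The four \(F_{n-1}\)-vertices are also distinct, because they differ either in the parked index \(i\) or in the side distribution of the remaining strands. So the loop is an embedded cycle in the graph \(\sfG\). An embedded cycle is never null-homotopic in a graph, so its class is a non-trivial element of the free group \(\pi_1(\sfG)\).

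\textbf{Main obstacle: basepoint bookkeeping.} The isomorphism of \cref{cor:free-edge-both-quotient} is defined using a spanning tree \(\sfT\) and paths \(\gamma^i\) that run along \(\sfT\). I must check that \(c_\ast\) is independent of these choices up to the free-homotopy class of the loop. Since \(\base\) lies in the vertex space \((\{1,2\},\varnothing)\), the based loop maps to a based edge loop at that vertex. Conjugation would not affect non-triviality in any case. Finally, the statement holds for every \(n\ge2\), because the parked strands \(3,\dots,n\) never cross \(x\) and so only ever add a fixed label set to every vertex along the loop.
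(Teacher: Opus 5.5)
Your proposal is correct and follows the paper's proof. You take the interior point \(x\) from \cref{lem:H-shuffle}, identify the quotient map with the collapse \(c_\ast\) via \cref{cor:free-edge-both-quotient}, and reuse that lemma's trajectory computation to show that \(c_\ast(\gamma_\sfH)\) is the octagonal edge loop with eight distinct vertices, hence essential in \(\pi_1(\sfG)\); your explicit vertex sequence and the remark that basepoint choices only conjugate the class are accurate elaborations of what the paper leaves implicit.
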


\begin{proof}
Take the interior point of \(\sfe\) used in the proof of
\cref{lem:H-shuffle} as the point \(x\). By
\cref{cor:free-edge-both-quotient} the quotient map is
identified with the collapse \(c_\ast\), and the proof of
\cref{lem:H-shuffle} computes \(c_\ast(\gamma_\sfH)\) to be the
class of an embedded -- hence essential -- cycle of
\(\sfG_n((X,x),\base)\).
\end{proof}

\begin{corollary}\label{cor:free-edge-disconnected}
Suppose \(x\) is an interior point of a free edge \(e\) of \(X\) and that
\(X_x\) is disconnected. Then \(X\) has exactly two \(x\)-components
\(X^1\) and \(X^2\), meeting only at \(x\) (so \(X=X^1\cup X^2\) with
\(X^1\cap X^2=\{x\}\)), and the inclusions \(X^1,X^2\hookrightarrow X\),
together with the section
\(s\colon\pi_1\bigl(\sfG_n((X,x),\base)\bigr)\to\PB_n(X,\base)\) of
\cref{prop:pi1G-embedding}, induce a surjection
\[
\Psi\colon\PB_n(X^1)\,\ast\,\PB_n(X^2)\,\ast\,
\pi_1\bigl(\sfG_n((X,x),\base)\bigr)
\twoheadrightarrow\PB_n(X,\base)
\]
which is injective on each free factor: \(\PB_n(X^1)\),
\(\PB_n(X^2)\) and the free group
\(\pi_1\bigl(\sfG_n((X,x),\base)\bigr)\) all embed into
\(\PB_n(X,\base)\). The third factor is essential: in general
\(\PB_n(X,\base)\) is \emph{not} generated by the two sides alone
(\cref{ex:two-tripods-shuffle} for \(n=2\), and
\cref{cor:shuffle-survives-quotient} for every \(n\geq2\)).
\end{corollary}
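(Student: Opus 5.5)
The plan is to assemble the three parts from results already proved: the structure theorem \cref{cor:free-edge-both-quotient}, the embedding statements \cref{lem:x-component-embedding} and \cref{prop:pi1G-embedding}, and the factorisation recorded at the end of the proof of \cref{cor:free-edge-both-quotient}.

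\emph{Step 1: the two components.} Since \(x\) is interior to the free edge \(\sfe\), \(\lk_X(x)=L^1\sqcup L^2\) consists of two points. Each \(x\)-component approaches \(x\) along at least one link component, so there are at most two. Disconnectedness of \(X_x\) forces exactly two, \(X^1\supseteq L^1\) and \(X^2\supseteq L^2\). Their intersection is \(\{x\}\), since \(X_x=X^1_x\sqcup X^2_x\). This was already observed in the proof of \cref{cor:free-edge-both-quotient}.

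\emph{Step 2: definition of \(\Psi\) and surjectivity.} We use the spanning tree \(\sfT\) of \(\sfG=\sfG_n((X,x),\base)\) chosen in the proof of \cref{cor:free-edge-both-quotient}. The inclusion-plus-basepoint-change maps \(\iota^1_\ast,\iota^2_\ast\) are based along \(\gamma^i\subseteq\sfT\). The third factor maps by the section \(s\) given by stable letters. By the universal property of free products these three maps define \(\Psi\).

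For surjectivity, van Kampen generates \(\PB_n(X,\base)\) by the tree-based vertex groups together with the stable letters \(t_\sff=s(\cdot)\). The parenthetical remark closing the proof of \cref{cor:free-edge-both-quotient} shows that every tree-based vertex-group element is a conjugate of an element of \(\im\iota^1_\ast\cup\im\iota^2_\ast\) by an element of \(\im s\). Each such generator therefore lies in \(\im\Psi\). The stable letters lie in \(\im s\) outright. Hence \(\Psi\) is onto.

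\emph{Step 3: injectivity on each factor.} The factors \(\PB_n(X^1)\) and \(\PB_n(X^2)\) embed by \cref{lem:x-component-embedding}: the link components are points, so every attaching map is \(\pi_1\)-injective and connected subgraphs-of-spaces are \(\pi_1\)-embedded. Basepoint change along \(\gamma^i\) is an isomorphism, so injectivity is preserved. The factor \(\pi_1(\sfG)\) embeds because \(s\) is a section of the retraction \(c_\ast\) of \cref{prop:pi1G-embedding}.

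\emph{Step 4: essentiality of the third factor.} The final sentence is a pointer to \cref{ex:two-tripods-shuffle} and \cref{cor:shuffle-survives-quotient}. There the shuffle braid \(\gamma_\sfH\) has nontrivial image in \(\PB_n(X,\base)/\co{\PB_n(X^1),\PB_n(X^2)}\). It therefore does not lie in the subgroup generated by the two sides.

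\emph{Main obstacle.} The only delicate point is compatibility of basepoints: the tree-based vertex groups, the paths \(\gamma^i\), and the section \(s\) must all be taken along the same spanning tree \(\sfT\), so that the conjugation statement from the earlier induction applies verbatim. I would fix these choices once, exactly as in that proof, before defining \(\Psi\).
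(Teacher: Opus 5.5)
Your proposal is correct and follows the paper's proof essentially verbatim. You assemble $\Psi$ by the universal property from the tree-based inclusions $\iota^i_\ast$ and the section $s$, get surjectivity from van Kampen generation together with the conjugation statement recorded at the end of the proof of \cref{cor:free-edge-both-quotient}, embed the factors via \cref{lem:x-component-embedding} and the split injectivity of $s$ from \cref{prop:pi1G-embedding}, and fix $\sfT$, the paths $\gamma^i$ and $s$ once and for all, exactly as the paper's ``retain the notation'' convention does.
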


\begin{proof}
Retain the notation of the proof of
\cref{cor:free-edge-both-quotient}; in particular, for \(i=1,2\),
the path \(\gamma^i\) runs from
\(\base\) to a point \(\bfx^i\in F_n(X^i_x)\). 
The
inclusion \(X^i\hookrightarrow X\) induces
\(\iota^i_\ast\colon\PB_n(X^i,\bfx^i)\to\PB_n(X,\base)\) followed by the change of basepoints along \(\gamma^i\).
By
the universal property of the free product these assemble, together
with the section \(s\) of \cref{prop:pi1G-embedding}, into a
single homomorphism
\[
\Psi\colon\PB_n(X^1,\bfx^1)\ast\PB_n(X^2,\bfx^2)\ast
\pi_1\bigl(\sfG_n((X,x),\base)\bigr)\longrightarrow
\PB_n(X,\base) .
\]

It remains to show that \(\Psi\) is surjective. By van Kampen (as
recalled in the proof of \cref{cor:free-edge-both-quotient}),
\(\PB_n(X,\base)\) is generated by the tree-based vertex groups of
\(\cG_n((X,x),\base)\) together with the stable letters \(t_\sff\),
\(\sff\notin E(\sfT)\); and every \(t_\sff\) lies in \(\im s\), since \(s\)
sends the loop of \(\sfG\) determined by \(\sff\) (closed up with
\(\sfT\)-paths) to \(t_\sff\). By the inductive argument in the proof
of \cref{cor:free-edge-both-quotient}, every tree-based
vertex-group element is a conjugate of an element of
\(\im\iota^1_\ast\cup\im\iota^2_\ast\) by a word in the stable
letters, i.e.\ by an element of \(\im s\). Hence every generator of
\(\PB_n(X,\base)\) lies in the subgroup
\(\langle\im\iota^1_\ast,\im\iota^2_\ast,\im s\rangle\), and
\(\Psi\) is surjective.

Finally, each free factor embeds. Since \(X^1\) and \(X^2\) are
precisely the two \(x\)-components of \(X\), and the link
components of \(x\) are single points,
\cref{lem:x-component-embedding}, applied to each of them
in turn (with basepoint \(\bfx^i\in F_n(X^i_x)\)), shows that the
inclusion-induced map
\(\PB_n(X^i,\bfx^i)\to\PB_n(X,\bfx^i)\) is injective; composing with
the change-of-basepoint isomorphism
\(\PB_n(X,\bfx^i)\cong\PB_n(X,\base)\) along \(\gamma^i\) shows that
each \(\iota^i_\ast\) is injective. The third factor embeds by
\cref{prop:pi1G-embedding}: the section \(s\) is split injective.
Thus \(\Psi\) restricts to an embedding on each of the three free
factors. The necessity of the third factor is witnessed by
\cref{ex:two-tripods-shuffle}.
\end{proof}

Under a mild non-degeneracy hypothesis the third factor can be
computed exactly.

\begin{corollary}\label{cor:free-edge-loop-rank}
In the situation of \cref{cor:free-edge-disconnected}, suppose
moreover that neither \(X^1\) nor \(X^2\) is homeomorphic to an
interval. Then the distribution graph \(\sfG_n((X,x),\base)\) is
connected, with \(2^n+n\,2^{n-1}\) vertices and \(2n\cdot2^{n-1}\)
edges, and the third free factor of
\cref{cor:free-edge-disconnected} -- equivalently, by
\cref{cor:free-edge-both-quotient}, the quotient of
\(\PB_n(X,\base)\) by the normal closure of both sides -- is free
of rank
\[
1-\chi\bigl(\sfG_n((X,x),\base)\bigr)=1+2^{n-1}(n-2).
\]
\end{corollary}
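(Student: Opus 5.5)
The plan is to count the vertices and edges of the distribution graph directly from \cref{prop:valency-decomp}, and then read off the rank from the Euler characteristic. Connectedness comes from \cref{lem:rho-surj} applied to each side.

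\emph{Step 1: the configuration spaces of each side are connected.} Since \(X_x=X^1_x\sqcup X^2_x\), each component of \(F_m(X_x)\) is a product \(C_1\times C_2\), where \(C_1\) is a component of \(F_{S_1}(X^1_x)\) and \(C_2\) a component of \(F_{S_2}(X^2_x)\), with \(S_1\sqcup S_2\) the set of strand labels. I claim each \(F_s(X^\ell_x)\) is connected for every \(s\ge0\). The space \(X^\ell\) is connected, not an interval, and has the univalent vertex \(x\) at the tip of a free half-edge. So \(X^\ell\) is not \(S^1\) either, since \(S^1\) has no univalent point. If \(X^\ell\) were a graph with all valencies \(\le2\), it would then be an interval, which is excluded. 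Hence \(X^\ell\) contains a point of valency \(\ge3\) or a thick component, and in either case \(\sfS_3\) embeds in \(X^\ell_x\) (using cone directions in the thick case). By \cref{lem:rho-surj}, every \(F_s(X^\ell_x)\) is connected. The same argument applies to \(F_{n-1}^i(X_x)\).

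\emph{Step 2: count vertices and edges.} By Step 1, components of \(F_n(X_x)\) correspond exactly to subsets \(S_1\subseteq\{1,\dots,n\}\), giving \(2^n\) vertices of \(F_n\)-type. Components of \(F^i_{n-1}(X_x)\) correspond to subsets of the remaining \(n-1\) labels, giving \(n\,2^{n-1}\) vertices of \(F_{n-1}\)-type. Here \(k=2\), so by \cref{prop:valency-decomp} each \(F_{n-1}\)-vertex has exactly two edges. One edge inserts strand \(i\) on the \(L^1\) side and the other on the \(L^2\) side. This gives \(2n\,2^{n-1}\) edges, and since the link points are single points there are no further splittings.

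\emph{Step 3: connectedness and rank.} For connectedness, any \(F_n\)-vertex \(S_1\) is joined through the vertex \((i,S_1\setminus\{i\})\) to \(S_1\setminus\{i\}\). By induction on \(|S_1|\), every vertex reaches \(\varnothing\), and every \(F_{n-1}\)-vertex is adjacent to an \(F_n\)-vertex. So \(\sfG\) is the full graph \(\sfG_n((X,x),\base)\) and is connected. Then
\[
\chi=2^n+n2^{n-1}-n2^n=2^n-n2^{n-1},
\]
and the rank is \(1-\chi=1+2^{n-1}(n-2)\). The identification of this free group with the third factor and with the quotient by both sides is then exactly \cref{cor:free-edge-disconnected} and \cref{cor:free-edge-both-quotient}.

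The main obstacle is Step 1: showing that the non-interval hypothesis really forces a tripod in \(X^\ell_x\), so that all the relevant configuration spaces are connected. This requires the small case analysis of valency-\(\le2\) graphs versus complexes with thick parts or branch points.
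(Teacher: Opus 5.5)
Your proposal is correct and follows essentially the same route as the paper. In both, an embedded tripod in each punctured side makes every \(F_m(X^\ell_x)\) connected, so the components are indexed by the \(2^n\) (resp.\ \(n\,2^{n-1}\)) distributions of labels; the count of \cref{prop:valency-decomp} with \(k=2\) then gives the vertices and edges, transfers across \(x\) give connectedness, and the rank is \(1-\chi\). The only difference is that you get connectivity of the side configuration spaces from \cref{lem:rho-surj} (as the paper itself does for \(X_x\) in \cref{rem:cor28-hypotheses}), whereas here the paper appeals directly to the strand-sliding argument; these amount to the same fact.
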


\begin{proof}
Each \(X^i\) contains \(x\) as a univalent vertex -- the tip of the
free half-edge of \(e\) -- so \(X^i\not\cong S^1\); being connected
and homeomorphic to neither an interval nor \(S^1\), it contains a
tripod \(\sfS_3\), which persists in \(X^i_x\). Hence
\(F_m(X^i_x)\) is connected for every \(m\geq0\): any two
configurations with the same labels are joined by sliding the
strands one at a time, using the tripod to let strands pass one
another (cf.~\cite{Abrams2000}). Consequently the connected
components of \(F_n(X_x)\) are exactly the \(2^n\) distributions of
the \(n\) labels over the two sides, and likewise each
\(F^i_{n-1}(X_x)\) has \(2^{n-1}\) components. The vertex and edge
count of \cref{prop:valency-decomp}, with \(k=2\) link points,
gives
\[
\#V = 2^n+n\,2^{n-1},
\qquad
\#E = 2n\cdot 2^{n-1},
\]
whence \(\chi=2^n-n\,2^{n-1}=2^{n-1}(2-n)\). The graph is
connected, since any distribution is reached from any other by
transferring strands across \(x\) one at a time; in particular
\(\sfG_n((X,x),\base)=\sfG_n(X,x)\). A connected graph has free
fundamental group of rank \(1-\chi\), which is the stated value.
\end{proof}

\subsection{Local graph embeddings}
\label{ssec:local-graph-embeddings}

Beyond the free-edge situation, we expect the picture at a general
point \(x\) to be governed by the star spanned by the
\(x\)-components. Suppose \(X\) has \(m\geq 2\) \(x\)-components
\(X^1,\dots,X^m\). Every \(x\)-component contains at least one
connected component of \(\lk_X(x)\), so we may pick link points
\(p^i\in\lk_X(x)\cap X^i\) for \(i=1,\dots,m\), and let
\(a^i\subseteq X^i\) be a short embedded arc from \(x\) through
\(p^i\), extending slightly past \(p^i\). As the \(p^i\) lie in
distinct \(x\)-components, these arcs pairwise meet only at \(x\),
and their union
\[
\sfS_m\coloneqq a^1\cup\cdots\cup a^m\subseteq X
\]
is an embedded star with central vertex \(x\) and one leaf running
into each \(x\)-component; the induced link map
\(\lk_{\sfS_m}(x)\to\lk_X(x)\) is injective, sending the \(i\)-th
leaf direction to \(p^i\).

\begin{proposition}\label{prop:star-embedding}
Let \(x\in X\) have \(m\geq 2\) \(x\)-components, and let
\(\sfS_m\subseteq X\) be the embedded star constructed above. For
any basepoint \(\bfy^0\in F_n(\sfS_m)\), identified with \(\base\)
along a path in \(F_n(X)\), the inclusion
\(\sfS_m\hookrightarrow X\) induces an embedding
\[
\PB_n(\sfS_m,\bfy^0)\hookrightarrow\PB_n(X,\base),
\]
and in fact an embedding into the quotient by all of the sides:
\[
\PB_n(\sfS_m,\bfy^0)\hookrightarrow
\PB_n(X,\base)\big/
\co{\PB_n(X^1),\dots,\PB_n(X^m)}^{\PB_n(X,\base)},
\]
where each \(\PB_n(X^i,\bfx^i)\) is transported into the basepoint
along any path (the normal closure is independent of these
choices).
\end{proposition}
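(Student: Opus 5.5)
The plan is to run the argument of \cref{cor:other-x-component-embeds} again, this time with all \(m\) sides collapsed at once and with the star playing the role of the comparison space. Apply \cref{prop:valency-decomp} at \(x\) to both \(X\) and \(\sfS_m\). This gives graphs-of-spaces \(\cG=\cG_n((X,x),\base)\) and \(\cG_\sfS=\cG_n((\sfS_m,x),\bfy^0)\), with distribution graphs \(\sfG\) and \(\sfG_\sfS\). The inclusion \(\sfS_m\hookrightarrow X\) sends \(a^i\setminus\{x\}\) into \(X^i_x\), and its link map sends the \(i\)-th leaf direction to \(p^i\). It therefore induces a map of graphs-of-spaces \(\cG_\sfS\to\cG\), and hence a graph map \(\sfG_\sfS\to\sfG\). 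Every component of a configuration space of the disjoint union of the open arcs \(a^i\setminus\{x\}\) is contractible, being a product of ordered interval configurations. So \(\cG_\sfS\) has contractible vertex and edge spaces, and \(\PB_n(\sfS_m,\bfy^0)\cong\pi_1(\sfG_\sfS)\) is free. It thus suffices to show that this free group survives in \(\PB_n(X,\base)/\co{\PB_n(X^1),\dots,\PB_n(X^m)}\).

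For the target, I will use that each side \(X^i\) has its own connected subgraph-of-spaces \(\cG^i=\cG_n((X^i,x),\bfx^i)\subseteq\cG\) by \cref{prop:disconnected-graph-of-spaces}(i). These are disjoint for distinct \(i\), since their vertex spaces put all strands on distinct sides. I will generalise parts (ii) and (iii) of that proposition to the simultaneous collapse of \(\cG^1,\dots,\cG^m\), to get
\[
\PB_n(X,\base)\big/\co{\PB_n(X^1),\dots,\PB_n(X^m)}\cong\pi_1\bigl(\cG/(\cG^1\sqcup\dots\sqcup\cG^m)\bigr).
\]
Collapsing the remaining vertex and edge spaces as in \cref{prop:pi1G-embedding} then gives a surjection onto \(\pi_1(\bar\sfG)\). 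Here \(\bar\sfG\) is obtained from \(\sfG\) by crushing each subgraph \(\sfG^i\) to its own point. The composite \(\pi_1(\sfG_\sfS)\to\pi_1(\bar\sfG)\) is then induced by a graph map, and only the injectivity of this composite remains to be proved.

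The comparison of graphs is the main obstacle. The vertices of \(\sfG_\sfS\) record, for each leaf \(a^i\), both which labels lie on \(a^i\) and their linear order. The vertices of \(\sfG\) record only which component of \(F_{n_i}(X^i_x)\) each group of labels occupies, and the orders are often forgotten, for instance when \(X^i_x\) contains a tripod. The graph map can therefore identify vertices and fold edges. To deal with this, I would first collapse in \(\sfG_\sfS\) the subgraphs \(\sfG_\sfS^i\) coming from the sides \(a^i\). These are trees, because \(\PB_n(a^i)=1\) and \(x\) is univalent in \(a^i\), so collapsing them does not change \(\pi_1\). I would then check that the residual map is an immersion, in the sense of Stallings folding. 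At an \(F_n\)-type vertex with at least two occupied sides, the edges of \(\sfG_\sfS\) are labelled by pairs \((i,j)\), meaning strand \(i\) exits towards link point \(p^j\). These pairs remain distinct in \(\sfG\), because the \(p^j\) lie in distinct link components. They are also determined by the mixed distribution, because only the strand nearest \(x\) on each leaf can exit.

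If the immersion check fails because orders on a single \(X^i\) are genuinely forgotten, I will fall back on the forgetting-map strategy of \cref{cor:other-x-component-embeds}. That means building a map of graphs-of-spaces \(\Psi\colon\cG\to\cS\) onto a combinatorial distribution complex for the star that records only labels by side. I would show that \(\Psi\) kills each \(\cG^i\) and restricts isomorphically on the image of \(\cG_\sfS\) modulo its side trees. Injectivity of \(\pi_1(\Psi\circ\text{incl})\) then follows from the Bass--Serre normal form. Either route gives both claimed embeddings, since the first embedding follows from the second. Finally, independence of the normal closure from the transport paths holds because changing a path changes each \(\PB_n(X^i)\) only by conjugation.
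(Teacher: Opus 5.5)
Your main route is the paper's own argument. You identify \(\PB_n(\sfS_m)\) with \(\pi_1(\sfG_\sfS)\), collapse the side forests of \(\sfG_\sfS\) (the matching \(M\) of Step~5) and crush each \(\sfG^i\) in \(\sfG\), then show the induced map \(\sfG_\sfS/M\to\bar\sfG\) is an immersion; deducing the first embedding from the second is fine. The immersion check you feared might fail does go through, because forgotten orders only identify vertices, whereas local injectivity only needs the edges at each vertex to stay distinct. Those edges are separated by their strand index and link component at every vertex, including the \(F_{n-1}\)-type ones and the collapsed ones (whose star is the \(m-1\) edges inserting one strand into the other arms), so the fallback via \(\Psi\) is unnecessary.
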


\begin{proof}
Write \(\cG_\sfS=\cG_n((\sfS_m,x),\bfy^0)\) and
\(\sfG_\sfS=\sfG_n((\sfS_m,x),\bfy^0)\) for the graph-of-spaces
decomposition of \(F_n(\sfS_m,\bfy^0)\) at \(x\) provided by
\cref{prop:valency-decomp}, and \(\cG=\cG_n((X,x),\bfy^0)\),
\(\sfG=\sfG_n((X,x),\bfy^0)\) for that of \(F_n(X,\bfy^0)\). We
prove that the inclusion induces an embedding
\(\PB_n(\sfS_m,\bfy^0)\hookrightarrow\PB_n(X,\bfy^0)\); the change
of basepoint along the chosen path then yields the statement.

\smallskip
\noindent\emph{Step 1: \(\PB_n(\sfS_m,\bfy^0)\cong\pi_1(\sfG_\sfS)\).}
The punctured star \((\sfS_m)_x\) is a disjoint union of \(m\)
half-open arcs, so every connected component of
\(F_n((\sfS_m)_x)\) and of the \(F^i_{n-1}((\sfS_m)_x)\) is a
product of configuration spaces of arcs with linearly ordered
strands, hence contractible; the link of \(x\) in \(\sfS_m\)
consists of the \(m\) leaf directions, so the edge spaces are
contractible as well. The collapse map
\(c_\sfS\colon\cG_\sfS\to\sfG_\sfS\), sending each vertex space to its
vertex and each edge cylinder onto its edge (as in the proof of
\cref{prop:pi1G-embedding}), is therefore a homotopy equivalence,
and \(\PB_n(\sfS_m,\bfy^0)=\pi_1(\cG_\sfS)\cong\pi_1(\sfG_\sfS)\).

\smallskip
\noindent\emph{Step 2: the induced map of distribution graphs.}
Both decompositions are produced by cutting along the same
neighbourhoods \(V_x\subseteq U_x\) of \(x\) (Steps~1--2 of the
proof of \cref{prop:valency-decomp}), so the inclusion
\(F_n(\sfS_m)\hookrightarrow F_n(X)\) respects the pieces and
induces a map of graphs-of-spaces \(\cG_\sfS\to\cG\) over a graph map
\(h\colon\sfG_\sfS\to\sfG\): a vertex -- a component of
\(F_n((\sfS_m)_x)\) or of some \(F^i_{n-1}((\sfS_m)_x)\) -- is sent
to the component of \(F_n(X_x)\), resp.\ \(F^i_{n-1}(X_x)\),
containing it, and the edge of \(\sfG_\sfS\) indexed by
\((i,C_\sfS,a)\) -- where \(C_\sfS\in\pi_0(F^i_{n-1}((\sfS_m)_x))\) and
\(a\in\{1,\dots,m\}\) is a leaf direction -- is sent to the edge of
\(\sfG\) indexed by \((i,C_X,L^{j(a)})\), where \(C_X\supseteq C_\sfS\)
and \(L^{j(a)}\subseteq\lk_X(x)\) is the link component containing
the leaf point \(p^a\). In particular the square
\[
\begin{tikzcd}
\cG_\sfS \arrow[r] \arrow[d,"c_\sfS"'] & \cG \arrow[d,"c"] \\
\sfG_\sfS \arrow[r,"h"] & \sfG
\end{tikzcd}
\]
commutes up to homotopy, where \(c\colon\cG\to\sfG\) is the
collapse map of the \(X\)-side decomposition.

\smallskip
\noindent\emph{Step 3: \(h\) is an immersion.}
We claim that \(h\) is locally injective, i.e.\ injective on the
set of edges incident to each vertex of \(\sfG_\sfS\). At a vertex of
type \(F^i_{n-1}\), say \((i,C_\sfS)\), the incident edges are
\((i,C_\sfS,a)\) for \(a=1,\dots,m\); their images
\((i,C_X,L^{j(a)})\) are pairwise distinct because the leaves of
\(\sfS_m\) run into pairwise distinct \(x\)-components of \(X\),
and distinct \(x\)-components contain disjoint collections of link
components, so \(L^{j(1)},\dots,L^{j(m)}\) are pairwise distinct.
At a vertex \(\sfv\) of type \(F_n\), an edge \((i,C_\sfS,a)\) is
incident to \(\sfv\) only if the strand \(i\) is the innermost strand
on the arm \(a\) in the configurations of \(\sfv\), and then \(C_\sfS\)
is determined by \(\sfv\) and \(i\) (delete the strand \(i\)); hence
the incident edges are indexed by the non-empty arms \(a\) of \(\sfv\)
together with their innermost strands \(i_a\), and for distinct
arms both coordinates \(i\) and \(L^{j(a)}\) of the images differ.
In either case \(h\) is injective on incident edges.

\smallskip
\noindent\emph{Step 4: conclusion.}
A locally injective map of graphs sends reduced edge paths to
reduced edge paths -- a backtrack in the image at some vertex would
exhibit two distinct incident edges with the same image -- and a
reduced loop in a graph is essential; hence
\(h_\ast\colon\pi_1(\sfG_\sfS)\to\pi_1(\sfG)\) is injective
(cf.\ Stallings~\cite{Stallings1983}). By the square of Step~2,
\[
c_\ast\circ\iota_\ast
=h_\ast\circ(c_\sfS)_\ast\colon
\pi_1(\cG_\sfS)\longrightarrow\pi_1(\sfG),
\]
where
\(\iota_\ast\colon\PB_n(\sfS_m,\bfy^0)=\pi_1(\cG_\sfS)\to
\pi_1(\cG)=\PB_n(X,\bfy^0)\) is induced by the inclusion. The
right-hand side is injective by Steps~1 and~3, hence so is
\(\iota_\ast\).

\smallskip
\noindent\emph{Step 5: descending to the quotient by all sides.}
Write
\(N\coloneqq\co{\PB_n(X^1),\dots,\PB_n(X^m)}^{\PB_n(X,\base)}\),
with each \(\PB_n(X^i,\bfx^i)\) transported into the basepoint
along some path; \(N\) is independent of these choices. For each
\(i\), let \(\sfG^i\subseteq\sfG\) be the subgraph underlying
\(\cG^i=\cG_n((X^i,x),\bfx^i)\)
(\cref{prop:disconnected-graph-of-spaces}(i)), and let
\(\bar\sfG\) be the graph obtained from \(\sfG\) by collapsing each
\(\sfG^i\) to a point, so that
\(\pi_1(\sfG)\to\pi_1(\bar\sfG)\) has kernel
\(\co{\pi_1(\sfG^1),\dots,\pi_1(\sfG^m)}\). Since the collapse map
\(c\) carries \(\cG^i\) into \(\sfG^i\), the composite
\[
\bar\varphi\colon\PB_n(X,\base)
\xrightarrow{c_\ast}\pi_1(\sfG)
\longrightarrow\pi_1(\bar\sfG)
\]
kills \(N\), hence descends to
\(\PB_n(X,\base)/N\). It therefore suffices to prove that
\(\bar\varphi\circ\iota_\ast\) is injective.

Let \(M\) be the set of edges of \(\sfG_\sfS\) whose \(h\)-image lies
inside some \(\sfG^i\). The vertices of \(\sfG_\sfS\) mapping into
\(\bigcup_i\sfG^i\) are those all of whose strands lie on a single
arm \(a^i\); such an \(F_n\)-type vertex, with the strands in the
order \(\tau\) along \(a^i\), is incident to exactly one edge of
\(\sfG_\sfS\) -- the one inserting the innermost strand \(\tau_1\)
from the direction of \(a^i\) -- and such an \(F^j_{n-1}\)-type
vertex carries exactly one edge towards \(a^i\) as well. Hence
\(M\) is a matching, and collapsing its edges does not change the
fundamental group: \(\pi_1(\sfG_\sfS)\cong\pi_1(\sfG_\sfS/M)\), and
\(h\) induces a graph map \(\bar h\colon\sfG_\sfS/M\to\bar\sfG\).

The map \(\bar h\) is again an immersion. At a vertex of
\(\sfG_\sfS/M\) that is not a collapsed edge, the star and the images
of its edges are as in Step~3; none of these edges is collapsed in
\(\bar\sfG\) (each has an endpoint carrying strands on at least
two arms, so its image is not internal to any \(\sfG^i\)), and
distinct edges of \(\sfG\) not internal to the \(\sfG^i\) remain
distinct in \(\bar\sfG\). At a collapsed vertex, arising from a
matching edge with \(F^j_{n-1}\)-type end \((j,\tau')\) on the arm
\(a^i\), the star consists of the \(m-1\) edges inserting the
strand \(j\) into the arms \(a^{i'}\) with \(i'\neq i\); their
images \((j,C_X,L^{j(i')})\) are pairwise distinct because the
link components \(L^{j(i')}\) are. Hence \(\bar h\) is locally
injective, so it carries reduced loops to reduced loops and
\(\bar h_\ast\colon\pi_1(\sfG_\sfS/M)\to\pi_1(\bar\sfG)\) is
injective. Combining with Step~1 and the collapse square of
Step~2,
\(\bar\varphi\circ\iota_\ast
=\bar h_\ast\circ(\pi_1(\sfG_\sfS)\cong\pi_1(\sfG_\sfS/M))
\circ(c_\sfS)_\ast\) is injective, and therefore the composite of
\(\iota_\ast\) with the projection
\(\PB_n(X,\base)\twoheadrightarrow\PB_n(X,\base)/N\) is injective
as well, proving the second embedding.

Note that the argument uses neither the \(\pi_1\)-injectivity of
the attaching maps of \(\cG\) nor any hypothesis on the link
components of \(x\): on the \(X\)-side, only the existence of the
collapse map \(c\) is needed.
\end{proof}

\begin{remark}\label{rem:star-link-components}
The two assertions of \cref{prop:star-embedding} use different
parts of the hypothesis. The first embedding,
\(\PB_n(\sfS_m,\bfy^0)\hookrightarrow\PB_n(X,\base)\), is
established by Steps~1--4, which use only the pairwise
distinctness of the link components entered by the leaves; it
therefore holds verbatim for \emph{any} embedded star at \(x\)
with at least two leaves entering pairwise distinct connected
components of \(\lk_X(x)\), regardless of how these link
components are distributed among the \(x\)-components. The
second embedding -- into the quotient by all sides -- is
established by Step~5, which makes essential use of one leaf in
each \(x\)-component.
\end{remark}

When \(m=2\) and \(\val_X(x)\geq3\), the star of
\cref{prop:star-embedding} does not see the extra link directions
on a single side; the appropriate probe is a \emph{lollipop}.
Suppose \(X\) has exactly two \(x\)-components \(X^1,X^2\) and
\(\val_X(x)\geq3\), so that one of them, say \(X^1\), contains two
distinct connected components \(L^1_1\neq L^1_2\) of \(\lk_X(x)\).
Pick link points \(p^1_1\in L^1_1\), \(p^1_2\in L^1_2\) and
\(p^2\in\lk_{X^2}(x)\). Since \(X^1_x\) is connected, the two short
arcs from \(x\) through \(p^1_1\) and \(p^1_2\) may be joined by an
embedded arc in \(X^1_x\), producing an embedded cycle
\(\mathsf{C}\subseteq X^1\) through \(x\); together with a short
pendant arc \(\mathsf{e}\subseteq X^2\) through \(p^2\), this
yields an embedded lollipop
\(\Lollipop=\mathsf{C}\cup\mathsf{e}\subseteq X\)
(\cref{fig:lollipop}) with trivalent vertex \(x\).

\begin{proposition}\label{prop:lollipop-embedding}
With \(\Lollipop\subseteq X\) as above and any basepoint
\(\bfy^0\in F_n(\Lollipop)\), identified with \(\base\) along a
path in \(F_n(X)\), the inclusion \(\Lollipop\hookrightarrow X\)
induces an embedding
\[
\PB_n(\Lollipop,\bfy^0)\hookrightarrow\PB_n(X,\base),
\]
and in fact embeddings into the quotients
\[
\PB_n(X,\base)\big/\co{\PB_n(X^2,\bfx^2)}^{\PB_n(X,\base)}
\quad\text{and}\quad
\PB_n(X,\base)\big/
\co{\PB_n(X^1_x),\,\PB_n(X^2_x)}^{\PB_n(X,\base)},
\]
by the pendant side and by both \emph{punctured} sides
respectively, all subgroups being transported into the basepoint
along arbitrary paths.
\end{proposition}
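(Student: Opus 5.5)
We mimic the proof of \cref{prop:star-embedding}: decompose both \(F_n(\Lollipop)\) and \(F_n(X)\) at the same trivalent point \(x\), and compare the two distribution graphs through an immersion.

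\emph{Step 1: the lollipop side.} The punctured lollipop \(\Lollipop_x\) is the disjoint union of an open arc \(\mathsf{C}\setminus\{x\}\) and a half-open arc \(\mathsf{e}\setminus\{x\}\). Hence every component of \(F_n(\Lollipop_x)\) and of each \(F^i_{n-1}(\Lollipop_x)\) is a product of configuration spaces of arcs with linearly ordered strands, and is contractible. The three link points of \(x\) in \(\Lollipop\) are points, so the edge spaces are contractible too. Therefore the collapse \(c_\Lollipop\colon\cG_n((\Lollipop,x),\bfy^0)\to\sfG_\Lollipop\) is a homotopy equivalence, and \(\PB_n(\Lollipop,\bfy^0)\cong\pi_1(\sfG_\Lollipop)\).

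\emph{Step 2: an immersion of distribution graphs.} As in Step~2 of \cref{prop:star-embedding}, cutting along the same neighbourhoods \(V_x\subseteq U_x\) gives a map of graphs-of-spaces over a graph map \(h\colon\sfG_\Lollipop\to\sfG=\sfG_n((X,x),\bfy^0)\). This commutes with the collapse maps up to homotopy, so \(c_\ast\circ\iota_\ast=h_\ast\circ(c_\Lollipop)_\ast\).

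We claim \(h\) is locally injective.
\begin{itemize}
\item At an \(F^i_{n-1}\)-type vertex, the three incident edges correspond to the three leaf directions \(p^1_1,p^1_2,p^2\). These lie in the three pairwise distinct link components \(L^1_1,L^1_2\) and a component of \(\lk_{X^2}(x)\), so their images are distinct.
\item At an \(F_n\)-type vertex, the incident edges are indexed by the non-empty ends of arcs at \(x\) together with their innermost strands. There are at most three such ends: the two ends of \(\mathsf{C}\setminus\{x\}\) and the end of \(\mathsf{e}\). These ends again enter distinct link components, so the images are distinct.
\end{itemize}
This is exactly the observation of \cref{rem:star-link-components}. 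Reduced loops therefore map to reduced loops, so \(h_\ast\) is injective, and hence \(\iota_\ast\) is injective.

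\emph{The quotient by both punctured sides.} This quotient comes for free. Each \(\PB_n(X^i_x)\) is, up to conjugation, the vertex group of an \(F_n\)-type vertex of \(\cG\), namely the component of \(F_n(X_x)\) with all strands on side \(i\). The collapse \(c_\ast\) kills the normal closure of all vertex groups (\cref{prop:pi1G-embedding}). So \(c_\ast\) factors through the quotient by \(\co{\PB_n(X^1_x),\PB_n(X^2_x)}\), and the injectivity of \(c_\ast\circ\iota_\ast\) from Step~2 gives the embedding.

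\emph{The quotient by the pendant side (main obstacle).} Here \(X^2\) contains \(x\), so \(\PB_n(X^2,\bfx^2)\) is not a vertex group, and we must collapse a subgraph, as in Step~5 of \cref{prop:star-embedding}.
\begin{itemize}
\item Let \(\sfG^2\subseteq\sfG\) underlie \(\cG_n((X^2,x),\bfx^2)\) (\cref{prop:disconnected-graph-of-spaces}(i)), and let \(\bar\sfG=\sfG/\sfG^2\). The composite \(\PB_n(X)\to\pi_1(\sfG)\to\pi_1(\bar\sfG)\) kills the normal closure of \(\PB_n(X^2)\).
\item Let \(M\subseteq\sfG_\Lollipop\) be the set of edges whose image lies in \(\sfG^2\); these are exactly the edges with both endpoints carrying all strands on \(\mathsf{e}\) (or at \(x\)). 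An \(F_n\)-type vertex with all strands on \(\mathsf{e}\) has a single incident edge, inserting its innermost strand. An \(F^i_{n-1}\)-type vertex with all other strands on \(\mathsf{e}\) has exactly one edge in the \(p^2\)-direction. Hence \(M\) is a matching, and collapsing it does not change \(\pi_1\).
\item We must show that the induced \(\bar h\colon\sfG_\Lollipop/M\to\bar\sfG\) is still an immersion.
  \begin{itemize}
  \item At an uncollapsed vertex, the argument of Step~2 applies. Each incident edge has an endpoint with a strand off \(\mathsf{e}\), so it is not internal to \(\sfG^2\) and stays distinct in \(\bar\sfG\).
  \item At a collapsed vertex, the star consists of the two edges inserting the parked strand into the two ends of \(\mathsf{C}\setminus\{x\}\). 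Their images enter \(L^1_1\neq L^1_2\), so they are distinct.
  \end{itemize}
\end{itemize}
The delicate point is confirming that no two star edges become identified in \(\bar\sfG\) through \(\sfG^2\). Edges of \(\sfG\) not lying in \(\sfG^2\) survive the collapse as distinct edges, so this holds. Reduced loops again stay reduced, which gives the first quotient embedding.
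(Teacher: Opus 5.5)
Your proposal is correct and follows the paper's proof essentially step for step. The shared steps are: the contractibility of the lollipop-side vertex and edge spaces; the same local-injectivity check for \(h\) at both vertex types, where your indexing of the \(F_n\)-type edges by the non-empty ends of the arcs automatically handles the case the paper singles out, a lone strand on \(\mathsf{C}\setminus\{x\}\) that is extreme at both ends; the same observation that \(c_\ast\) kills both punctured sides; and the same Step~5 matching-collapse argument for the pendant side. One harmless imprecision: \(M\) consists of the all-strands-on-\(\mathsf{e}\) edges \emph{whose image lies in the basepoint subgraph} \(\sfG^2\), which need not be all such edges when \(F_n(X^2)\) is disconnected, but this does not affect the matching argument.
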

\begin{remark}
No such statement holds for the quotient
by the full cycle side \(X^1\): braids rotating all strands
around \(\mathsf{C}\subseteq X^1\) are supported in \(X^1\) and
die in that quotient. The point of the punctured sides is
precisely that braids supported in \(X^i_x\) never pass through
\(x\).
\end{remark}

\begin{proof}
Steps~1--4 of the proof of \cref{prop:star-embedding} apply with
the following adjustments. The punctured lollipop
\(\Lollipop_x=(\mathsf{C}\setminus\{x\})\sqcup
(\mathsf{e}\setminus\{x\})\) is a disjoint union of two arcs, so
every vertex space and every edge space of
\(\cG_\Lollipop\coloneqq\cG_n((\Lollipop,x),\bfy^0)\) is
contractible and
\(\PB_n(\Lollipop,\bfy^0)\cong\pi_1(\sfG_\Lollipop)\), where
\(\sfG_\Lollipop\) is the distribution graph. The link of \(x\) in
\(\Lollipop\) consists of three directions: the two ends
\(d^1_1,d^1_2\) of \(\mathsf{C}\), entering \(\lk_X(x)\) at
\(p^1_1,p^1_2\), and the end \(d^2\) of \(\mathsf{e}\), entering
at \(p^2\). As in Step~2, the two decompositions are cut along the
same neighbourhoods of \(x\), so the inclusion induces a map of
graphs-of-spaces over a graph map
\(h\colon\sfG_\Lollipop\to\sfG=\sfG_n((X,x),\bfy^0)\), commuting
with the collapse maps up to homotopy.

It remains to check that \(h\) is an immersion (Step~3). At an
\(F^i_{n-1}\)-type vertex the three incident edges, one for each
direction, have images \((i,C_X,L)\) with
\(L\in\{L^1_1,L^1_2,L^2\}\), where \(L^2\subseteq\lk_{X^2}(x)\) is
the link component containing \(p^2\); these three link components
are pairwise distinct -- \(L^1_1\neq L^1_2\) by the choice of
\(p^1_1,p^1_2\), and \(L^2\) lies in the other \(x\)-component --
so the images are pairwise distinct. At an \(F_n\)-type vertex the
incident edges are indexed by the innermost strand of
\(\mathsf{e}\setminus\{x\}\) with the direction \(d^2\), and the
two extreme strands of the arc \(\mathsf{C}\setminus\{x\}\) with
the directions \(d^1_1\) and \(d^1_2\). Two distinct such edges
either involve distinct strands, or -- when
\(\mathsf{C}\setminus\{x\}\) carries a single strand, extreme on
both sides -- the same strand with the two distinct directions
\(d^1_1,d^1_2\); in either case the images \((i,C_X,L)\) differ in
the strand index or in the link component. Hence \(h\) is locally
injective, and Step~4 concludes: \(h_\ast\) is injective, and so
is \(\iota_\ast\colon\PB_n(\Lollipop,\bfy^0)\to\PB_n(X,\base)\).

\smallskip
For the quotient statement we follow Step~5 of the proof of
\cref{prop:star-embedding}, with only the side \(X^2\) collapsed.
Let \(\sfG^2\subseteq\sfG\) be the subgraph underlying
\(\cG^2=\cG_n((X^2,x),\bfx^2)\), let \(\bar\sfG=\sfG/\sfG^2\), and
let \(M\) be the set of edges of \(\sfG_\Lollipop\) whose image
lies inside \(\sfG^2\). A vertex of \(\sfG_\Lollipop\) mapping
into \(\sfG^2\) has all its strands on the arc
\(\mathsf{e}\setminus\{x\}\); such an \(F_n\)-type vertex is a
leaf of \(\sfG_\Lollipop\) (its only incident edge inserts the
innermost strand of \(\mathsf{e}\)), and such an
\(F^j_{n-1}\)-type vertex carries exactly one edge in the
direction \(d^2\). Hence \(M\) is a matching and
\(\pi_1(\sfG_\Lollipop)\cong\pi_1(\sfG_\Lollipop/M)\). The
induced map \(\bar h\colon\sfG_\Lollipop/M\to\bar\sfG\) is again
an immersion: at a collapsed vertex the star consists of the two
edges of the \(F^j_{n-1}\)-type end in the directions
\(d^1_1,d^1_2\), whose images differ in the link components
\(L^1_1\neq L^1_2\); at the remaining vertices the star is as
before, and no edge outside \(M\) is collapsed in \(\bar\sfG\)
(each has an endpoint carrying a strand on the
\(\mathsf{C}\)-side). As in Step~5 of \cref{prop:star-embedding},
reduced loops survive, and the composite of \(\iota_\ast\) with
the projection onto
\(\PB_n(X,\base)/\co{\PB_n(X^2,\bfx^2)}^{\PB_n(X,\base)}\) is
injective.

\smallskip
The quotient by the punctured sides is simpler still. A braid
supported in \(X^i_x\) keeps all strands away from \(x\)
throughout, so after the retraction of Step~1 of the proof of
\cref{prop:valency-decomp} its loop is contained in a single
vertex space of \(\cG_n((X,x),\base)\); hence \(c_\ast\) kills the
image of \(\PB_n(X^i_x)\) (transported to the basepoint), and
\(\co{\PB_n(X^1_x),\,\PB_n(X^2_x)}^{\PB_n(X,\base)}
\subseteq\ker c_\ast\). Since \(c_\ast\circ\iota_\ast\) is
injective by Steps~1--4, so is the composite of \(\iota_\ast\)
with the quotient by any normal subgroup contained in
\(\ker c_\ast\); in particular the second displayed embedding
holds.
\end{proof}

For two strands, a trivalent point can be thickened to a disc
without changing the pure braid group.

Let \(x\in X\) be a point admitting a regular neighbourhood
\(N\cong\sfS_3\), a tripod with centre \(x\) and arms
\(\sfe^1,\sfe^2,\sfe^3\). Let \(I\subseteq N\) denote the interval through
\(x\) formed by the halves of \(\sfe^1\) and \(\sfe^2\) adjacent to
\(x\), and let
\[
X_D\coloneqq X\cup_I D
\]
be obtained by gluing a \(2\)-disc \(D\) to \(X\) along an arc of
\(\partial D\) identified with \(I\); up to homeomorphism,
\(X_D\) is the complex obtained from \(X\) by replacing \(x\) with
a small disc whose boundary carries the three arms. 

\begin{lemma}\label{lem:tripod-to-disc}
For
\(n\leq2\), the inclusion \(X\hookrightarrow X_D\) induces an
isomorphism
\[
\PB_n(X)\xrightarrow{\cong}\PB_n(X_D).
\]
\end{lemma}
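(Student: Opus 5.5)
For \(n=1\) there is nothing to do: \(X_D\) deformation retracts onto \(X\) by collapsing \(D\) onto the arc \(I\subseteq\partial D\), so \(\PB_1(X)=\pi_1(X)\to\pi_1(X_D)=\PB_1(X_D)\) is an isomorphism. The whole content is the case \(n=2\), which I would treat by a van Kampen decomposition of \(F_2(X_D)\) according to whether a strand lies in the open ``new'' part \(D'\coloneqq D\setminus I\) of the disc.

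\emph{Step 1: the decomposition.} Thicken \(D'\) slightly to an open set \(W\subseteq X_D\) (a collar of \(D'\) reaching a little into \(N\)). Set
\[
A=\{\bfx\in F_2(X_D): x_1,x_2\notin\bar D'\},\qquad B_i=\{\bfx\in F_2(X_D): x_i\in W\}\quad(i=1,2).
\]
Since two points cannot both be near \(D'\) without meeting, we may shrink \(W\) so that the relevant configurations are pushed apart; this is the same radial pushing as in Step~1 of the proof of \cref{prop:valency-decomp}.
\begin{enumerate}
\item The region \(A\) retracts onto \(F_2(X)\).
\item The region \(B_i\) is homotopy equivalent to \(W\times(X_D\setminus\{p\})\) for a point \(p\in D'\). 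Since \(W\) is contractible, \(B_i\simeq X_D\setminus\{p\}\simeq X\cup_I S^1\): the space \(X\) with a circle \(\partial D\) glued along \(I\), giving \(\pi_1\cong\pi_1(X)\ast\Z\). The extra generator \(z_i\) is the loop of the other strand encircling strand \(i\) inside \(D\).
\item The intersection \(A\cap B_i\) is homotopy equivalent to \(X\setminus\{x\}\) minus nothing essential: strand \(i\) sits on \(I\) near \(x\), and the other strand avoids it.
\end{enumerate}
I would set this up in the language of \cref{prop:valency-decomp}, with the point \(x\) and link components \(L^1,L^2,L^3\), so that the gluing maps are the insertion maps \(\phi_{ij}\).

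\emph{Step 2: van Kampen.} By van Kampen, \(\PB_2(X_D)\) is generated by \(\PB_2(X)=\pi_1(A)\) together with the two new letters \(z_1,z_2\). The \(\pi_1(X)\)-part of \(\pi_1(B_i)\) is already in the image of \(\pi_1(A\cap B_i)\), via strand \(i\) parked at \(x\) and the other strand moving in \(X\). The relations coming from \(A\cap B_i\) merely identify those two descriptions.

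\emph{Step 3 (the main obstacle): the loop \(z_i\) is redundant.} The key point is to show that \(z_i\) equals an explicit element of \(\PB_2(X)\), namely the square \(\sigma^2\) of the hexagonal exchange on the tripod \(N\), appropriately based. Geometrically, the circle \(\partial D\) is homotopic, in the complement of a point \(p\) of \(D'\) pushed to \(x\), to the cycle formed by going out along \(\sfe^1\), around \(\sfe^3\), and back along \(\sfe^2\). This is exactly the path of the second strand in the tripod full twist when strand \(i\) sits at the centre. I would verify this by an explicit homotopy in \(F_2(N\cup D)\), where \(N\cup D\) is a disc with a pendant arc and \(\PB_2(N\cup D)\cong\Z\cong\PB_2(N)\) (\cref{ex:disc} together with the tripod computation of \cref{ex:two-tripods-shuffle}).

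\emph{Step 4: injectivity.} Once \(z_i\) is identified with \(\sigma^2\), substituting it into the presentation shows that the only new relations are ones already holding in \(\PB_2(X)\); they are induced by the commutation of \(\sigma^2\) with braids where the other strand moves in \(X\setminus N\). It follows that \(\PB_2(X)\to\PB_2(X_D)\) is surjective and injective. For injectivity a cleaner alternative is to construct an inverse: a map \(F_2(X_D)\to F_2(X)\) defined by collapsing \(D\) onto the tripod. This works only for two points, since two distinct points in \(D\) can be sent to distinct points of \(N\) continuously up to homotopy by ordering them along the boundary arcs and using \(\sfe^3\) as a parking spot. This is precisely where \(n\le2\) is used; for \(n=3\) the disc's \(\PB_3\) is not realised by a tripod. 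I expect making this collapse map continuous near coincidences, or equivalently checking the relation in Step~3 with correct basings, to be the hardest part.
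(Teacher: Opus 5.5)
\textbf{There is a genuine gap.} The decomposition in Step~1 is set up incorrectly, and Step~4, which is the real content of the lemma, is asserted rather than proved.

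\textbf{Step 1 does not hold up.}
\begin{itemize}
\item The closure of \(D'=D\setminus I\) is all of \(D\), so \(A=F_2(X\setminus I)\). This does not retract onto \(F_2(X)\): for \(X=\sfS_3\), the space \(X\setminus I\) is three disjoint arcs, so every component of \(A\) is contractible, while \(\PB_2(\sfS_3)\cong\Z\).
\item The assertion that ``two points cannot both be near \(D'\) without meeting'' is false. \(W\) is two-dimensional, so \(B_1\cap B_2\supseteq F_2(W)\), and this carries exactly the disc braid the lemma is about. Your van Kampen argument never accounts for it.
\item The radial pushing of \cref{prop:valency-decomp} only clears a neighbourhood of a \emph{point} \(p\). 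The complementary piece is then \(F_2(X_D\setminus\{p\})\), not \(F_2(X)\).
\item The equivalence \(B_i\simeq X_D\setminus\{p\}\) needs the fibre \(X_D\setminus\{x_i\}\) to have constant homotopy type over \(W\). This fails as soon as \(W\) reaches \(I\) or the arms: removing a point of \(I\) leaves \(X_D\simeq X\), and removing a point of \(\sfe^3\) cuts the arm. But if you keep \(W\) inside the open disc, configurations with a strand on \(I\) are covered by none of your sets.
\item In Step~2, configurations with strand \(i\) at \(x\in D\) do not lie in \(A\). Moreover, with strand \(i\) parked at the trivalent point, the other strand cannot run through \(x\) in \(X\) at all.
\end{itemize}

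\textbf{Step 4 is the missing argument.} Injectivity means showing that attaching \(D\) imposes no new relations on \(\PB_2(X)\); that is the whole lemma. Saying that ``substituting shows the only new relations already hold'' is not an argument. The collapse map \(F_2(X_D)\to F_2(X)\) is not constructed, and building one compatibly with strands entering and leaving \(D\) along the arms requires exactly the gluing argument you are trying to avoid.

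\textbf{What the paper does instead.} It makes the comparison piece by piece, and never decomposes at the disc itself.
\begin{itemize}
\item It inducts on the number of non-pendant free edges of \(X\).
\item It cuts at an interior point \(y\) of such an edge, away from \(N\). There \cref{prop:valency-decomp} applies cleanly, since the link of \(y\) is two points, and the puncture turns the edge into two pendant half-edges.
\item The vertex and edge groups \(\PB_m(X_y)\) and \(\PB_m((X_D)_y)\), for \(m=1,2\), are then matched by the induction hypothesis, so the two graph-of-groups structures are identified.
\item This reduces everything to the base case \(X=\sfS_3\). There both groups are \(\Z\), and the squared hexagonal exchange maps to the generator. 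This is the local fact your Step~3 correctly isolates.
\end{itemize}
Alternatively, you could decompose \(F_2(X)\) and \(F_2(X_D)\) according to how many strands lie in \(N\), respectively in \(D\cup N\) (a disc with three pendant arms), and compare the corresponding pieces. Either way, some matched gluing of this kind is indispensable.
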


\begin{proof}
For \(n=1\) the claim is immediate: \(D\) collapses onto \(I\), so
the inclusion is a homotopy equivalence. Let \(n=2\); we induct on
the number of non-pendant \emph{free} edges of \(X\).

If \(X\) has no non-pendant free edge, then in particular the
maximal free edges containing the three arms \(\sfe^i\) are
pendant, ending in leaves; as \(X\) is connected and every exit
from \(x\) runs through these dead-ending arms, \(X=\sfS_3\), and
\(X_D\) is a disc
with three pendant edges attached along its boundary. Both groups
are then infinite cyclic (\cref{ex:S3,ex:disc}), and the inclusion
carries the hexagonal exchange to a generator, so
\(\Z\cong\PB_2(X)\xrightarrow{\cong}\PB_2(X_D)\cong\Z\).

Suppose now that \(X\) has \(k+1\geq1\) non-pendant free edges.
Choose a non-pendant free edge \(\sfe\) and an interior point
\(y\in\mathring{\sfe}\) away from \(N\), and compare the
decompositions of \(F_n(X)\) and \(F_n(X_D)\) at \(y\)
(\cref{prop:valency-decomp}). Suppose first that \(X_y\) is
connected; then so is \((X_D)_y\), and the inclusion induces, for
\(m=1,2\), commutative squares
\[
\begin{tikzcd}
\PB_m(X_y) \ar[r] \ar[d] & \PB_m(X) \ar[d] \\
\PB_m\bigl((X_D)_y\bigr) \ar[r] & \PB_m(X_D)
\end{tikzcd}
\]
of vertex groups. Puncturing \(\sfe\) turns it into two pendant
half-edges, so \(X_y\) has at most \(k\) non-pendant free edges,
while
the regular neighbourhood of \(x\) is unchanged; the induction
hypothesis gives \(\PB_2(X_y)\cong\PB_2((X_D)_y)\), and
\(\PB_1(X_y)\cong\PB_1((X_D)_y)\) holds always. The pendant-edge
stabilisations occurring in the two decompositions are compatible
with the inclusion \(X\hookrightarrow X_D\), so the inclusion
identifies the two graph-of-groups structures, and
\(\PB_2(X)\cong\PB_2(X_D)\).

If instead \(X_y=X^1_y\sqcup X^2_y\) is disconnected, then so is
\((X_D)_y=(X_D^1)_y\sqcup(X_D^2)_y\), where we label the pieces so
that \(x\in X^1\) and the disc \(D\) lies on the side of
\(X_D^1\). The vertex groups of the two decompositions are the
groups \(\PB_m(X^1_y)\) and \(\PB_m(X^2_y)\) for \(m=1,2\),
together with the mixed products
\(\PB_1(X^1_y)\times\PB_1(X^2_y)\), and correspondingly for
\(X_D\); here \(\PB_m(X^2_y)\cong\PB_m((X_D^2)_y)\) on the nose,
and \(\PB_m(X^1_y)\cong\PB_m((X_D^1)_y)\) by the induction
hypothesis. Hence the inclusion identifies the graph-of-groups
structures of \(\PB_n(X)\) and \(\PB_n(X_D)\), and induces an
isomorphism as claimed.
\end{proof}

For arbitrarily many strands, a star with at least three legs
still carries the entire braid group of a disc:

\begin{lemma}\label{lem:star-to-disc}
Let \(\sfS_m\subseteq D^2\) be an embedded star with
\(m\geq3\) legs. Then the inclusion induces a surjection
\[
\PB_n(\sfS_m,\bfy^0)\;\twoheadrightarrow\;\PB_n(D^2,\bfy^0)
\]
for every \(n\geq1\) and every base configuration
\(\bfy^0\in F_n(\sfS_m)\).
\end{lemma}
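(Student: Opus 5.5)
We prove that the image of \(\PB_n(\sfS_m,\bfy^0)\to\PB_n(D^2,\bfy^0)\) contains a generating set of the Artin pure braid group, namely the standard generators \(A_{ij}\), \(1\le i<j\le n\), each strand \(j\) going once around strand \(i\).

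First we reduce to a convenient basepoint and to \(m=3\). Changing the base configuration within \(F_n(\sfS_m)\) changes both groups by compatible isomorphisms. Since \(F_n(\sfS_m)\) is connected (\cref{lem:rho-surj}, as \(\sfS_3\subseteq\sfS_m\)), any two bases are joined by a path in \(F_n(\sfS_m)\), and surjectivity is independent of the base. A subtripod \(\sfS_3\subseteq\sfS_m\) factors the inclusion \(\sfS_3\hookrightarrow D^2\) through \(\sfS_m\), so it suffices to treat \(m=3\). We place all \(n\) points on one leg, say \(\sfe^1\), in linear order \(1,\dots,n\) from the centre outward. The embedding of a tripod in the disc is unique up to isotopy, so we may assume \(\sfS_3\) is the standard tripod in \(D^2\) with the three legs at angles \(0,2\pi/3,4\pi/3\).

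The main step is to realise each \(A_{ij}\), up to conjugation by elements already in the image, by an explicit tripod braid. Consider first two adjacent strands \(i,i+1\), with strands \(1,\dots,i-1\) moved temporarily to \(\sfe^3\) and strands \(i+2,\dots,n\) left far out on \(\sfe^1\). The hexagonal exchange of \(i\) and \(i+1\) through the centre, using \(\sfe^2\) as a siding, maps into \(D^2\) to a half twist \(\sigma_i\) of the two strands. Performing it twice gives the full twist \(\sigma_i^2=A_{i,i+1}\), and the conjugation by moving strands to \(\sfe^3\) and back is itself a tripod motion. This is exactly the two-strand statement that the square of the hexagonal exchange generates \(\PB_2(D^2)\cong\Z\), already used in \cref{lem:tripod-to-disc}.

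For a general \(A_{ij}\) we use the identity \(A_{ij}=(\sigma_{j-1}\cdots\sigma_{i+1})\sigma_i^2(\sigma_{j-1}\cdots\sigma_{i+1})^{-1}\). The hexagonal exchanges induce half twists \(\sigma_k\) in the full braid group, so the full braid \(\beta\) conjugating \(\sigma_i^2\) to \(A_{ij}\) lies in the image of \(\BG_n(\sfS_3)\to\BG_n(D^2)\). Because of this, the image \(P\) of \(\PB_n(\sfS_3)\) in \(\PB_n(D^2)\) is normalised by the image of \(\BG_n(\sfS_3)\), since \(\PB_n\) is normal in \(\BG_n\) on both sides. We then check that the images of the hexagonal exchanges, one for each adjacent pair, contain \(\sigma_1,\dots,\sigma_{n-1}\), so the image of \(\BG_n(\sfS_3)\) is all of \(\BG_n(D^2)\). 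The exact sequence \(1\to\PB_n\to\BG_n\to\mathbb{S}_n\to1\), which holds for both spaces (\cref{lem:rho-surj}), then gives surjectivity on pure braid groups by the five lemma, or by a direct diagram chase.

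The hardest part is the identification of the image of a hexagonal exchange with \(\sigma_i^{\pm1}\) in \(\BG_n(D^2)\), with the other strands parked compatibly. Here we must be careful that parking strands on \(\sfe^3\) and returning them contributes no extra twisting. We handle this by noting that every such motion occurs in a regular neighbourhood of the tripod, which is a disc. The motion of any strands confined to a single leg is therefore isotopically trivial, and only the exchange itself contributes a half twist, whose sign is fixed by the cyclic order of the legs.
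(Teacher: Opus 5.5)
Your proposal is correct and is essentially the paper's argument: you realise each \(\sigma_i\) in the image of \(\BG_n(\sfS_m)\to\BG_n(D^2)\) by a hexagonal exchange conjugated by a parking motion, then pull surjectivity back to the pure braid groups by a diagram chase, using that the inclusion commutes with the permutation homomorphisms and is the identity on \(\mathbb{S}_n\) (this chase, not the five lemma, is what does the work). The \(A_{ij}\) detour is unnecessary, and the exchange needs the inner portions of both \(\sfe^2\) and \(\sfe^3\) (so park far out); the precise reason the parking adds no twisting is that the parking isotopy is supported near a track disjoint from the segment of \(\sfe^1\) joining strands \(i\) and \(i+1\), so it fixes the half-twist about that segment, not that motions confined to a single leg are trivial.
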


\begin{proof}
Since \(\sfS_3\subseteq\sfS_m\), the configuration space
\(F_n(\sfS_m)\) is connected (\cref{lem:rho-surj}), so the
choice of base configuration is immaterial; place
\(\bfy^0=(y_1,\dots,y_n)\) on the first leg \(\ell_1\), in this
order from the centre outwards. With respect to such a linearly
ordered configuration, \(\BG_n(D^2,[\bfy^0])\) is generated by
the half-twists \(\sigma_1,\dots,\sigma_{n-1}\), where
\(\sigma_i\) exchanges \(y_i\) and \(y_{i+1}\) along the
subsegment of \(\ell_1\) joining them.

We first show that each \(\sigma_i\) lies in the image of
\(\BG_n(\sfS_m,[\bfy^0])\). Let \(g_i\) be the following braid
of the star: slide \(y_1,\dots,y_{i-1}\), in order, through the
centre and far out onto the second leg \(\ell_2\); exchange
\(y_i\) and \(y_{i+1}\) through the inner portions of
\(\ell_2\) and \(\ell_3\); then return \(y_1,\dots,y_{i-1}\) to
their original slots. As an element of the fundamental group,
\(g_i\) is the exchange loop conjugated by the parking path,
so its image in \(\BG_n(D^2,[\bfy^0])\) is the half-twist
about the image of the segment \([y_i,y_{i+1}]\subseteq\ell_1\)
under the parking isotopy of the disc. That isotopy is
supported near the parking track -- the portion of \(\ell_1\)
between the centre and \(y_{i-1}\), the centre, and \(\ell_2\)
-- which is disjoint from the segment \([y_i,y_{i+1}]\); hence
the segment is fixed and the image of \(g_i\) is exactly
\(\sigma_i^{\pm1}\). Consequently
\(\BG_n(\sfS_m,[\bfy^0])\to\BG_n(D^2,[\bfy^0])\) is
surjective.

Finally, let \(\beta\in\PB_n(D^2,\bfy^0)\) and choose
\(g\in\BG_n(\sfS_m,[\bfy^0])\) mapping to \(\beta\). The
underlying permutation of a braid agrees with that of its
image, so \(g\) is pure, and
\(\PB_n(\sfS_m,\bfy^0)\to\PB_n(D^2,\bfy^0)\) is surjective.
\end{proof}

Finally, we record a variant in which the quotient is taken by a
subcomplex that avoids one of the star's directions.

Let \(L^1,L^2,L^3\) be pairwise distinct connected components of
\(\lk_X(x)\), let \(\sfS_3\subseteq X\) be an embedded star at
\(x\) with leaves \(a^1,a^2,a^3\) through points \(p^j\in L^j\),
and let \(Z\subseteq X\) be a connected subcomplex with
\(Z\cap A_{L^1}=\varnothing\), where
\(A_{L^1}\subseteq U_x\setminus\{x\}\) denotes the cone direction
over \(L^1\); fix a basepoint \(\bfz^0\in F_n(Z)\) identified with
\(\base\) along a path in \(F_n(X)\). 

Suppose that
\begin{enumerate}[label=(\alph*),leftmargin=2em]
\item the \(x\)-components containing \(L^2\) and \(L^3\) are
      distinct -- the \(x\)-component of \(L^1\) may coincide with
      either -- or
\item \(Z\) meets the \(x\)-component \(X^1\) containing \(L^1\)
      only in \(\{x\}\); in this case no condition is imposed on
      the \(x\)-components of \(L^2,L^3\), and
      \(Z\cap X^1=\{x\}\) already implies
      \(Z\cap A_{L^1}=\varnothing\).
\end{enumerate}

\begin{proposition}\label{prop:star-avoiding-embedding}
For any basepoint
\(\bfy^0\in F_n(\sfS_3)\) identified with \(\base\) along a path in
\(F_n(X)\), the inclusion induces an embedding
\[
\PB_n(\sfS_3,\bfy^0)\hookrightarrow
\PB_n(X,\base)\big/
\co{\,\im\bigl(\PB_n(Z,\bfz^0)\to\PB_n(X,\base)\bigr)}^{\PB_n(X,\base)}.
\]
\end{proposition}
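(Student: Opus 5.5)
Following the pattern of \cref{prop:star-embedding,prop:lollipop-embedding}, I would work with the decomposition \(\cG=\cG_n((X,x),\base)\) over \(\sfG=\sfG_n((X,x),\base)\) of \cref{prop:valency-decomp} and its collapse \(c\colon\cG\to\sfG\). Steps~1--4 of the proof of \cref{prop:star-embedding}, in the form of \cref{rem:star-link-components}, apply verbatim because \(L^1,L^2,L^3\) are pairwise distinct. They give an immersion \(h\colon\sfG_\sfS\to\sfG\) of the distribution graph of the star, together with \(\PB_n(\sfS_3,\bfy^0)\cong\pi_1(\sfG_\sfS)\) and \(c_\ast\circ\iota_\ast=h_\ast\circ(c_\sfS)_\ast\) injective. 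The whole task is then to find a quotient graph \(\bar\sfG\) of \(\sfG\) with two properties: the composite \(\bar\varphi\colon\PB_n(X,\base)\to\pi_1(\sfG)\to\pi_1(\bar\sfG)\) kills \(N\coloneqq\co{\im\PB_n(Z)}\), and the induced map from (a quotient of) \(\sfG_\sfS\) to \(\bar\sfG\) is still an immersion.

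To construct \(\bar\sfG\), observe that \(Z\) avoids the cone direction \(A_{L^1}\). After the retraction of Step~1 of \cref{prop:valency-decomp}, any loop of configurations in \(Z\) therefore passes only through vertex spaces and through edge spaces \(F^i_{n-1}(X_x)\times L^j\) with \(j\neq1\). So \(c\) maps \(F_n(Z,\bfz^0)\) into the subgraph \(\sfG_Z\subseteq\sfG\) spanned by the edges \((i,C,j)\) with \(j\ne1\) that are reached by \(Z\)-configurations; more precisely, I would take the connected subgraph traced out by \(c(F_n(Z))\). Set \(\bar\sfG\coloneqq\sfG/\!\sim\), collapsing each connected component of the image of \(c\) restricted to \(Z\), or more economically collapsing \(\sfG_Z\) itself. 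Then \(\bar\varphi\) kills \(\im\PB_n(Z)\), and since its kernel is normal, it kills \(N\).

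Next, as in Step~5 of \cref{prop:star-embedding}, let \(M\) be the set of edges of \(\sfG_\sfS\) whose \(h\)-image lies in the collapsed subgraph. I would show that \(M\) is a forest, ideally a matching, so that \(\pi_1(\sfG_\sfS)\cong\pi_1(\sfG_\sfS/M)\). I would then check that \(\bar h\colon\sfG_\sfS/M\to\bar\sfG\) is locally injective. The key point is that every edge of \(\sfG_\sfS\) in the direction \(a^1\) maps to an edge labelled \(L^1\). Such an edge is never inside \(\sfG_Z\), so it survives in \(\bar\sfG\) and stays distinguishable.

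The main obstacle is this immersion check at the collapsed vertices, and it is where hypotheses (a) and (b) enter. Under (a), \(L^2\) and \(L^3\) lie in distinct \(x\)-components. Transfers of a strand between the \(a^2\) and \(a^3\) arms through \(x\) then change the component of \(F_n(X_x)\), and I expect this to prevent the \(a^2\)- and \(a^3\)-edges at an \(F^i_{n-1}\)-vertex from both being absorbed into the same collapsed piece. The star at a collapsed vertex then still contains an \(L^1\)-edge together with at most one other surviving edge, with distinct images. Under (b), \(Z\cap X^1=\{x\}\), so configurations of \(Z\) never carry strands in \(X^1_x\). Hence every vertex of \(\sfG\) whose component has a strand on the \(X^1\)-side lies outside \(\sfG_Z\), and the vertices of \(\sfG_\sfS\) with strands on \(a^1\) map injectively.

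In both cases the argument concludes as in Step~5: \(\bar h\) carries reduced loops to reduced loops, so \(\bar\varphi\circ\iota_\ast\) is injective. The embedding into \(\PB_n(X,\base)/N\) follows. If \(M\) fails to be a forest under (a), the fallback is to collapse only the part of \(\sfG_Z\) avoiding the \(a^2\)/\(a^3\) transfer edges and to check directly that \(\bar\varphi\) still kills \(\im\PB_n(Z)\).
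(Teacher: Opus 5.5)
Your architecture matches the paper's. You collapse \(\sfG_Z\), which has no edge over \(L^1\), collapse the part \(M\) of \(\sfG_\sfS\) mapping into it, and check that \(\bar h\colon\sfG_\sfS/M\to\sfG/\sfG_Z\) is an immersion. The gap is in the two verifications that actually carry the proof, and the one you sketch rests on a wrong picture of \(M\).

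\textbf{Why \(M\) is a forest, and how big it is.} You give no reason why \(M\) is a forest. The reason is that \(M\) contains no \(a^1\)-edges. Along \(M\) the \(a^1\)-strands are therefore frozen, and for each frozen \(a^1\)-datum the \(a^2\)/\(a^3\)-edges span the distribution graph of a configuration space of the arc \(a^2\cup\{x\}\cup a^3\), whose components are trees.

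These trees are typically large, not matchings. Take \(X=\sfS_3\) and \(Z=a^2\cup a^3\), where hypothesis (a) holds. The component of \(M\) through the base state contains every cut and every transfer for the base linear order along the arc. In particular it contains both the \(a^2\)- and the \(a^3\)-edge at each of its \(F^j_{n-1}\)-vertices, contrary to your expectation. The star of the collapsed vertex then consists of \(n\) edges over \(L^1\), not one \(L^1\)-edge plus at most one other.

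\textbf{The real gap: local injectivity at a collapsed vertex.} Noting that \(a^1\)-edges survive the collapse does not address the actual danger, which is several \(L^1\)-edges with the same strand index at one collapsed vertex \([\sfT]\). Suppose the \(a^1\)-arm is non-empty along \(\sfT\). Then every \(F_n\)-type vertex of \(\sfT\) has an \(a^1\)-edge moving the same innermost \(a^1\)-strand \(i\) to \(x\), one for each position of the \(a^2\)/\(a^3\) cut occurring in \(\sfT\). These edges share strand index and link component, so only their component \(C_X\in\pi_0(F^i_{n-1}(X_x))\) can separate them.

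The missing idea is the invariant argument:
\begin{enumerate}
\item Along \(\sfT\), the \(a^1\)-data and the linear order of the arc strands are constant.
\item Hence an \(F^j_{n-1}\)-vertex of \(\sfT\) is determined by \(j\).
\item An \(a^2\)- or \(a^3\)-edge at an \(F_n\)-vertex is determined by its strand and direction, since that strand is innermost and so fixes the cut.
\item For the \(a^1\)-edges at \(F_n\)-vertices, hypothesis (a) is exactly what lets \(C_X\) recover the cut. Since \(L^2\) and \(L^3\) lie in distinct \(x\)-components, \(C_X\) records which arc strands lie on the \(L^2\)-side, up to the constant \(a^1\)-contribution if \(L^1\) shares that component.
\item Under (b), no vertex of \(M\) carries a strand on \(a^1\), so such edges never occur and the order invariant alone suffices.
\end{enumerate}
Your remark for (b), that vertices with strands on \(a^1\) map injectively, concerns vertices outside \(M\) and is not what has to be checked.

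\textbf{The fallback.} Leaving the \(L^2\)/\(L^3\)-edges of \(\sfG_Z\) uncollapsed would break the other half of the argument. A shuffle braid inside \(Z\) that crosses \(x\) between the \(L^2\)- and \(L^3\)-sides, as in \cref{lem:H-shuffle}, traces an embedded cycle of such edges and would not be killed.
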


\begin{proof}
Steps~1--4 of the proof of \cref{prop:star-embedding} apply to
\(\sfS_3\) verbatim -- the three leaves lie in pairwise distinct
link components -- and give
\(\PB_n(\sfS_3,\bfy^0)\cong\pi_1(\sfG_\sfS)\) for
\(\sfG_\sfS=\sfG_n((\sfS_3,x),\bfy^0)\), together with the immersion
\(h\colon\sfG_\sfS\to\sfG=\sfG_n((X,x),\base)\) commuting with the
collapse maps up to homotopy.

\smallskip
\noindent\emph{The collapsed subgraph.}
Let \(\sfG_Z\subseteq\sfG\) be the image of the distribution graph
of the decomposition of \(F_n(Z,\bfz^0)\) at \(x\) under the map
induced by \(Z\hookrightarrow X\) (a component of \(F_n(Z_x)\) is
carried into the component of \(F_n(X_x)\) containing it, and a
crossing of \(x\) inside \(Z\) into the edge of \(\sfG\) over the
link component of \(\lk_X(x)\) containing the corresponding
direction; when \(x\notin Z\), the decomposition is trivial and
\(\sfG_Z\) is a single vertex). It is a connected subgraph, and
the \(c_\ast\)-image of every element of
\(\im\bigl(\PB_n(Z,\bfz^0)\to\PB_n(X,\base)\bigr)\) is represented
by a loop inside \(\sfG_Z\). Hence the composite
\[
\PB_n(X,\base)\xrightarrow{c_\ast}\pi_1(\sfG)
\longrightarrow\pi_1(\sfG/\sfG_Z)
\]
kills the normal closure of that image and descends to the
quotient of the statement. Crucially, since
\(Z\cap A_{L^1}=\varnothing\), a strand of a \(Z\)-configuration
can never cross \(x\) in the direction of \(L^1\), so \(\sfG_Z\)
contains \emph{no edge over \(L^1\)}.

\smallskip
\noindent\emph{The source blob is a forest.}
Let \(M_Z\subseteq\sfG_\sfS\) consist of all vertices and edges whose
\(h\)-image lies in \(\sfG_Z\). By the above, \(M_Z\) contains no
edge in the direction of the leaf \(a^1\). Within the subgraph of
\(\sfG_\sfS\) spanned by the edges of the other two directions, the
strands on the arm \(a^1\) are frozen -- they can neither leave
\(a^1\) nor be joined by another strand, as either move uses an
\(a^1\)-direction edge -- while the remaining strands move along
the arc \(a^2\cup\{x\}\cup a^3\). For each frozen configuration
this subgraph is a copy of the distribution graph of the
decomposition of a configuration space of an arc, whose components
are trees (all vertex and edge spaces are contractible, and the
components of the configuration space of an arc are contractible).
Hence \(M_Z\) is a forest, and collapsing its components leaves
\(\pi_1(\sfG_\sfS)\cong\pi_1(\sfG_\sfS/M_Z)\).

\smallskip
\noindent\emph{The induced map is an immersion.}
Consider \(\bar h\colon\sfG_\sfS/M_Z\to\sfG/\sfG_Z\). An edge of
\(\sfG_\sfS\) outside \(M_Z\) has its image outside \(\sfG_Z\), and
distinct edges of \(\sfG\) not internal to \(\sfG_Z\) stay
distinct after the collapse; at the vertices of \(\sfG_\sfS/M_Z\)
that are not collapsed, local injectivity is the statement of
Step~3. Assume first hypothesis (a); let \(\sfT\) be a component of
\(M_Z\) and let
\(\sfe\neq \sfe'\) be edges at the collapsed vertex \([\sfT]\) with
\(\bar h(\sfe)=\bar h(\sfe')\). Equal images force the same strand index
\(i\), the same leaf direction (the three link components being
pairwise distinct), and the same ambient component data. Along
\(\sfT\), the population of the arm \(a^1\) and the order of its
strands are constant, and the linear order of the remaining
strands along the arc \(a^2\cup\{x\}\cup a^3\) is invariant; and
since \(a^2\) and \(a^3\) run into distinct \(x\)-components, the
common ambient data of \(\sfe\) and \(\sfe'\) also determines the
distribution of the moving strands between the two arms -- if
\(a^1\) shares its \(x\)-component with one of them, the count of
moving strands on that side differs from the ambient count only by
the population of \(a^1\), which is constant along \(\sfT\). These invariants determine
the endpoint states of \(\sfe\) and \(\sfe'\) inside \(\sfT\) completely
(an \(\sfS_3\)-state is recovered from its arm distribution, the
orders along the arms, and the frozen \(a^1\)-data; for a mixed
pair note that the \(F_n\)-type end of an edge is determined by
its \(F^i_{n-1}\)-type end together with its direction), whence
\(\sfe=\sfe'\). Thus \(\bar h\) is locally injective, reduced loops
survive, \(\bar h_\ast\) is injective, and the composite of
\(\iota_\ast\) with the projection to the quotient of the
statement is injective.

\smallskip
\noindent\emph{Case (b).}
Suppose now \(Z\cap X^1=\{x\}\). Then a configuration of \(Z\)
carries no strand on the \(X^1\)-side, so no vertex of \(\sfG_Z\)
involves a strand in \(X^1_x\); consequently \(M_Z\) contains no
vertex with a strand on the arm \(a^1\), and \(M_Z\) lies in the
subgraph of \(\sfG_\sfS\) spanned by the states with all strands on
the arc \(a^2\cup\{x\}\cup a^3\) -- a forest as before, with no
\(a^1\)-direction edges. For the local injectivity of \(\bar h\)
at a collapsed component \(\sfT\), the distribution argument of case
(a) is replaced by a sharper use of the order invariant. For an
\(F^i_{n-1}\)-type vertex of \(\sfT\), the strand \(i\) occupies the
slot of \(x\) in the linear order along the arc, so the spectators
below \(i\) are exactly those on \(a^2\) and those above \(i\) are
on \(a^3\): the state is determined by \(i\) and the linear order,
which is constant along \(\sfT\). For the \(F_n\)-type end of an
edge \((i,C_\sfS,d)\) -- with \(C_\sfS\in\pi_0(F^i_{n-1}((\sfS_3)_x))\)
and \(a^d\) the arm direction, as in the proof of
\cref{prop:star-embedding} -- the strand \(i\) is innermost on
the arm \(a^d\), so no spectator lies between \(i\) and \(x\), and the
spectators' distribution is again read off from their position
relative to \(i\) in the linear order. Hence two distinct edges at
\([\sfT]\) with the same strand index and direction would share their
endpoint states, and \(\bar h\) is locally injective; the proof
concludes as in case (a).
\end{proof}

\subsection{Attaching a pendant.}
We close the section by recording how the pure braid group changes
when a \emph{pendant} is attached to \(X\); this extends
An--Park~\cite[Prop.~3.1]{AnPark2017}.

Let \(x\) be a vertex of \(X\) whose link \(L=\lk_X(x)\) is
\emph{connected}, i.e.\ \(\val_X(x)=1\). Attaching a
new edge \(\sfe\) -- a \(1\)-simplex -- to \(X\) at \(x\), with one
endpoint identified with \(x\) and the other a new leaf vertex,
produces a complex \(X'\supseteq X\) in which \(\sfe\) is a pendant
free edge, \(\val_{X'}(x)=2\), and
\(\lk_{X'}(x)=L\sqcup\{\ell_\sfe\}\). We write
\(j\colon X\hookrightarrow X'\) for the inclusion. 

\begin{lemma}\label{lem:pendant-edge}
With \(X'\) as above -- in particular \(L\) connected -- the
inclusion induces a surjection
\[
j_\ast\colon\PB_n(X,\base)\twoheadrightarrow
\PB_n(X',\base),
\]
whose kernel is normally generated by the pure braids in which
\(n-1\) strands are stationary and the remaining strand traverses
a loop through \(x\) along the link \(L\).
\end{lemma}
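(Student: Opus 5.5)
Apply \cref{prop:valency-decomp} to \(X'\) at the point \(x\), where \(\lk_{X'}(x)=L\sqcup\{\ell_\sfe\}\) has \(k=2\) components. Put the basepoint \(\base\) in \(X\setminus\{x\}\), so that \(\base\in F_n(X'_x)\). Since \(X'_x=X_x\sqcup\mathring{\sfe}'\), with \(\mathring{\sfe}'\) the half-open pendant arc, the vertex spaces split by how the strands are distributed. A component of \(F_n(X'_x)\) is \(F_{S}(X_x)\times(\text{ordered strands on the arc})\), and the second factor is contractible. Edges over the link point \(\ell_\sfe\) transfer the innermost arc strand to \(x\); edges over \(L\) carry an \(L\)-factor. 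The plan is to compare this with the decomposition of \(F_n(X)\) at \(x\), which has the single link component \(L\): it is exactly the sub-graph-of-spaces of configurations with no strand on the arc. Call the resulting graphs of spaces \(\cG'\) and \(\cG\), with \(\cG\subseteq\cG'\).

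\emph{Surjectivity.} The key observation is that \(\cG'\) deformation-retracts, up to homotopy, onto the part generated by \(\cG\) together with a tree of edges. Consider a vertex with some strands on the arc. The \(\ell_\sfe\)-edge at the innermost arc strand \(i\) has \(F^i_{n-1}\)-end whose group is the full vertex group of \(F_{S}(X_x)\); this is the reason the \(\ell_\sfe\) link component is a single point. So that edge carries the vertex group isomorphically onto the \(F^i_{n-1}\)-vertex. Choose, as in the slide-edge construction of the proof of \cref{cor:free-edge-both-quotient}, for every vertex carrying arc strands, one such \(\ell_\sfe\)-edge followed by an \(L\)-edge that pushes the strand into \(X_x\). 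These decrease the arc population, so they form a forest \(\sfT\), which I extend to a maximal tree. Along \(\sfT\) each vertex group is identified with a subgroup of a vertex group of \(\cG\). The remaining work is to show that each off-tree stable letter is realised by a braid supported in \(X\). An \(\ell_\sfe\)-edge closes a loop in which a strand steps onto the arc and back; since the arc is a dead end and \(L\) is connected, this loop can be homotoped into \(X\) by letting the strand enter a small cone neighbourhood of \(x\) along \(L\) instead of along \(\sfe\). This uses the connectedness of \(L\) exactly as An--Park~\cite[Prop.~3.1]{AnPark2017} does. Consequently every generator of \(\pi_1(\cG')\) lies in \(j_\ast\PB_n(X)\).

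\emph{Kernel.} Here the main obstacle is proving that nothing beyond the stated elements dies. I would construct an inverse-type presentation. Collapsing \(\sfT\), \(\PB_n(X')\) is \(\PB_n(X)\) modulo the relations forced by the new tree edges. Each new tree edge adds exactly one identification: it equates two ways of moving strand \(i\) from the \(F^i_{n-1}\)-vertex into \(X_x\) along \(L\), namely through two different \(L\)-edges of \(\cG\). After \(\ell_\sfe\) is inserted, these two routes become homotopic through the arc, since the strand can wait on \(\sfe\) while the others move. The discrepancy between the two routes is precisely an element in which the other \(n-1\) strands are fixed and strand \(i\) runs out of \(x\) along \(L\) and back along \(L\), which is a loop through \(x\) along the link. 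Conversely, each such element visibly dies in \(X'\): with \(n-1\) strands stationary, the moving strand's passage through \(x\) can be rerouted via a parking detour on \(\sfe\), which contracts the loop. To make this rigorous I would compare the two graph-of-groups presentations (Bass--Serre) edge by edge, checking that the relations added by the \(\ell_\sfe\)-edges are normally generated by these one-strand loops. The bookkeeping of the edge groups \(\pi_1(C\times L)\), whose attaching maps need not be injective, is the delicate part. I would handle it with van Kampen for the pushout \(F_n(X')=F_n(X)\cup(\text{arc-occupied part})\) rather than relying on Bass--Serre normal forms.
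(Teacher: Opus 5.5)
Your set-up matches the paper's: decompose \(F_n(X')\) at \(x\), and view the decomposition of \(F_n(X)\) as the sub-graph-of-spaces with no strand on the arc. However, you misidentify where the relations come from, and the kernel half is left as a plan.

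The missing fact is that the distribution graph \(\sfG'\) of \(X'\) at \(x\) is a forest. Every \(F^i_{n-1}\)-type vertex \((i,C)\) has exactly two incident edges: one over \(\ell_\sfe\), and a \emph{single} one over \(L\), since \(C\times L\) is connected. Moreover, \((i,C)\) is the \(\ell_\sfe\)-neighbour of exactly one \(F_n\)-type vertex, namely the one in which strand \(i\) is innermost on the arc. So the pairs of edges you select (an \(\ell_\sfe\)-edge followed by an \(L\)-edge) already use every edge of \(\sfG'\). Your forest \(\sfT\) is therefore all of \(\sfG'\); this is the paper's Euler-characteristic count in another guise. There are no off-tree stable letters. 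The rerouting argument you give for them, which as stated is only a heuristic, is unnecessary, and surjectivity follows from the vertex groups alone.

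For the kernel, the mechanism you propose cannot be right. A tree edge never equates ``two routes'', and an \(F^i_{n-1}\)-type vertex does not have two different \(L\)-edges. Since \(\sfG'\) is a forest, \(\PB_n(X')\) is an iterated van Kampen amalgam over trees, and its relations come only from the edge groups:
\begin{itemize}
\item the \(\ell_\sfe\)-edges, whose link factor is a point, give isomorphisms;
\item each \(L\)-edge space \(C\times L\) is glued to its \(F_{n-1}\)-type end by \(\proj_1\), which is \(\pi_1\)-surjective with kernel \(\pi_1(L)\), and the pushout of \(A\leftarrow\pi_1(C)\times\pi_1(L)\twoheadrightarrow\pi_1(C)\) is \(A\big/\co{\phi_\ast(1\times\pi_1(L))}\).
\end{itemize}
Running this outward from the root vertices exhibits \(\PB_n(X)\) as \(\PB_n(X_x)\) modulo these \(\pi_1(L)\)-images at the \(L\)-edges of \(\sfG\). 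It exhibits \(\PB_n(X')\) as the further quotient by the \(\pi_1(L)\)-images at the \(L\)-edges issuing from arc-occupied vertices, transported to the root: one strand runs around \(L\), the former arc strands sit stationary in \(A_L\), and the rest sit stationary outside.

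That computation is what proves the substantive inclusion \(\ker j_\ast\subseteq\co{\text{one-strand }L\text{-loops}}\). Your proposal only argues the easy reverse inclusion and defers this one, while relying on a mechanism that does not occur.
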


\begin{proof}
Apply \cref{prop:valency-decomp} to \(X'\) at \(x\) (of valency
\(2\), with \(\lk_{X'}(x)=L\sqcup\{\ell_\sfe\}\)): it
exhibits \(F_n(X')\) as the graph-of-spaces \(\cG_n(X',x)\), whose
vertex spaces are the components of \(F_n(X'_x)\) and of the
\(F^i_{n-1}(X'_x)\), and whose edge spaces are the components of
\(F^i_{n-1}(X'_x)\times L\) or \(F^i_{n-1}(X'_x)\times \ell_\sfe\).

As \(\sfe\) is a pendant at \(x\), removing \(x\) disconnects it:
\[
X'_x = X_x\sqcup\sfe_x.
\]
Hence \(\PB_m(\sfe_x)\cong\PB_m(\sfe)=1\) for every \(m\) (the configuration space of an
interval is a disjoint union of contractible cells), and every
component of \(F_n(X'_x)\) -- and of each \(F^i_{n-1}(X'_x)\) --
is a product
\[
F_m(X_x)\times F_{n-m}(\sfe_x)\simeq F_m(X_x)
\qquad(0\le m\le n),
\]
indexed by the number \(m\) of strands lying in \(X_x\). Thus the
vertex groups are the \(\PB_m(X_x)\), and the edge spaces carrying
non-trivial attaching data are exactly those over the link points
of \(L\),
\[
F^i_{m-1}(X_x)\times L \longrightarrow F_m(X_x),
\]
inserting the \(i\)-th strand at \(x\) along \(L\).

\emph{Surjectivity.} The locus where no strand lies in \(\sfe\) (the
\(m=n\) part) is precisely \(\cG_n(X,x)\), the graph-of-spaces of
\cref{prop:valency-decomp} for \(X\) at \(x\); the inclusion
\(j\) realises it as a sub-graph-of-spaces of \(\cG_n(X',x)\),
and \(j_\ast\) is the induced map on \(\pi_1\). Recall that
\(\PB_n(X')=\pi_1(\cG_n(X',x))\) is generated by its vertex groups
and by the stable letters of a spanning tree of the distribution
graph \(\sfG_n(X',x)\).

At the level of distribution graphs, \(\sfG_n(X',x)\) is a
\emph{forest}: this is where the connectedness of \(L\) enters.
Indeed, every \(F^i_{n-1}\)-type vertex has exactly two incident
edges (one over \(L\), one over \(\ell_\sfe\)), and writing
\(c_m=\bigl|\pi_0(F_m(X_x))\bigr|\), the vertex and edge count of
\cref{prop:valency-decomp} gives
\[
\chi\bigl(\sfG_n(X',x)\bigr)
=\sum_{m=0}^{n}\frac{n!}{m!}\,c_m
-n\sum_{m=0}^{n-1}\frac{(n-1)!}{m!}\,c_m
=c_n .
\]
On the other hand, unparking moves -- sliding the strands parked
in \(\sfe\) back into \(X_x\) one at a time -- join every vertex to
the \(m=n\) locus, and since the door region over the connected
link \(L\) lies in a single component of \(X_x\), such an
excursion returns a configuration to the component of
\(F_n(X_x)\) it started from; hence \(\sfG_n(X',x)\) has exactly
\(c_n\) components, so its first Betti number vanishes. In
particular there are no stable letters at all, and \(\PB_n(X')\)
is generated by the vertex groups alone. For the vertex
groups, each \(\PB_m(X_x)\) lies in \(\im j_\ast\) as well: sliding
the \(n-m\) strands parked in \(e\) back across \(x\) into \(X_x\)
carries the braid into the \(m=n\) vertex space \(F_n(X_x)\),
realising it through \(\PB_n(X)\). As the vertex groups and stable
letters together generate \(\PB_n(X')\), we conclude
\(\im j_\ast=\PB_n(X')\).

\emph{The kernel.} The pendant edges extending \(\sfG_n(X,x)\)
to \(\sfG_n(X',x)\) carry edge spaces \(F^i_{m-1}(X_x)\times L\)
\((1\le m\le n)\), attached at their free end to the vertex space
\(F^i_{m-1}(X_x)\) by the projection onto the first factor; on
fundamental groups this is
\[
\PB_{m-1}(X_x)\times\pi_1(L)\twoheadrightarrow\PB_{m-1}(X_x),
\]
with kernel \(\pi_1(L)\) -- the braids in which the \(m-1\) strands
of \(F^i_{m-1}(X_x)\) stay fixed, away from the region \(A_L\) of
\(U_x\setminus\{x\}\) lying over \(L\), while a single strand
traverses the link loop \(L\). Since this attaching map is
surjective, attaching the pendant edge amounts, by van Kampen, to
the pushout that absorbs the leaf vertex group into the adjacent
\(\PB_m(X_x)=\pi_1(F_m(X_x))\) and kills the image of its kernel
\(\pi_1(L)\) under the insertion
\(F^i_{m-1}(X_x)\times L\to F_m(X_x)\). Thus, inside
\(\PB_m(X_x)\), it trivialises the braids in which one strand runs
along \(L\) while the other \(m-1\) sit stationary outside
\(A_L\).

Carrying out these collapses at all the pendant edges is precisely
the passage from \(\PB_n(X)\) to \(\PB_n(X')\). Transcribed inside
\(\PB_n(X)\) -- placing the remaining \(n-m\) strands stationary in
\(A_L\), near \(x\) -- the trivialised elements are the pure braids
in which one strand traverses \(L\), \(n-m\) strands sit stationary
in \(A_L\), and the other \(m-1\) sit stationary outside \(A_L\).
These normally generate \(\ker j_\ast\).
\end{proof}

\begin{remark}\label{rem:pendant-valency}
The connectedness of \(L\) cannot be dropped. Take \(X\) to be a
tripod and \(x\) an interior point of one of its legs, so that
\(\val_X(x)=2\), and let \(X'\) be the result of attaching a
pendant edge at \(x\) -- a tree with two trivalent vertices,
homeomorphic to the one of \cref{ex:two-tripods-shuffle}. Then
\(\PB_2(X)\cong\Z\), while \(\PB_2(X')\) is free of rank three, so
\(j_\ast\) cannot be surjective. The failure is precisely the
appearance of new loops in the distribution graph: when
\(\val_X(x)\geq2\), a strand parked in \(e\) lets the remaining
strands cross \(x\) between different link directions in the two
possible orders, creating shuffle cycles as in
\cref{ex:two-tripods-shuffle}; a corrected statement would require
a further free factor \(\pi_1\bigl(\sfG_n((X',x),\base)\bigr)\),
as in \cref{cor:free-edge-disconnected}.
\end{remark}

We record the promised precise form of
\cref{rem:branched-surface-pendant}: attaching pendant edges to
a complex covered by \cref{prop:branched-surface} -- or to a
\(2\)-manifold, provided some pendant edge lands in its interior
-- leaves the strand map an isomorphism. The model case is a
single thick component together with its pendant edges.

\begin{corollary}\label{cor:thick-with-pendants}
Let \(X'\) be obtained from a connected simple complex \(X\),
all of whose vertices have valency \(1\), by attaching finitely
many pendant free edges at points of \(X\). Suppose that either
\(X\) is not a \(2\)-manifold, or \(X\) is a compact
\(2\)-manifold and some pendant edge is attached at an interior
point of \(X\). Then the inclusion
\(F_nX'\hookrightarrow X'^{\,n}\) induces an isomorphism
\[
\PB_n(X',\base)\xrightarrow{\ \cong\ }
\prod_{i=1}^n\pi_1(X',x^0_i).
\]
\end{corollary}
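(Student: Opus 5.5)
We add the pendant edges one at a time and show that the strand map stays an isomorphism at each step, by comparing the surjection \(j_\ast\) of \cref{lem:pendant-edge} with the strand maps of \(X\) and \(X'\).

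\textbf{Base case.} If \(X\) is not a \(2\)-manifold, \cref{prop:branched-surface} shows that \(\iota_\ast\colon\PB_n(X)\to\prod_i\pi_1(X)\) is an isomorphism. If \(X\) is a compact \(2\)-manifold, we first attach the pendant edge \(\sfe_0\) that lands at an interior point \(p\). Here we use the thickening argument of \cref{prop:branched-surface}, in the form already used in \cref{ex:disc-interior,prop:reduce-2-cell}. Near \(p\) the complex \(X\cup\sfe_0\) embeds in a \(3\)-manifold. Capping the small disc around \(p\) contains the branch point \(p\) in its interior, so it is a braid equivalence. The same holds for all further \(2\)- and higher-cell attachments realising a regular neighbourhood \(N\) of \(X\cup\sfe_0\) in a PL manifold of dimension \(\geq3\). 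Birman's theorem \cite[Thm.~1]{Birman1969} then gives the isomorphism for \(X\cup\sfe_0\). The remaining pendants are then attached to a complex that is no longer a \(2\)-manifold.

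\textbf{Inductive step.} Suppose \(Y\) satisfies \(\PB_n(Y)\cong\prod\pi_1(Y)\) via \(\iota_\ast\), and \(Y'=Y\cup\sfe\) with \(\sfe\) pendant at \(x\). Since \(\sfe\) collapses onto \(x\), the inclusion induces \(\pi_1(Y)\cong\pi_1(Y')\), giving the commutative square
\[
\iota'_\ast\circ j_\ast=\Bigl(\prod_i\pi_1(Y)\xrightarrow{\cong}\prod_i\pi_1(Y')\Bigr)\circ\iota_\ast .
\]
The right-hand side is an isomorphism. Hence \(j_\ast\) is injective, and together with \cref{lem:pendant-edge} it is bijective, so \(\iota'_\ast\) is an isomorphism.

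\textbf{Main obstacle.} \cref{lem:pendant-edge} needs \(\val_Y(x)=1\), that is, a connected link at \(x\). Attachment points on \(X\) have connected links because all vertices of \(X\) have valency \(1\). The trouble is when two pendant edges are attached at the same point, or when the valency changes after earlier attachments: then the link is no longer connected, and \cref{rem:pendant-valency} shows surjectivity can fail. The first remedy to try is to perturb each attachment point to a distinct point of \(X\) near the original one. If the attachment points must be kept fixed, the fallback is to thicken everything at once: embed all of \(X'\) in a regular neighbourhood \(N(X')\) that is a manifold of dimension \(\geq3\), absorbing the pendant edges as in \cref{ex:disc}. Then apply \cref{prop:reduce-higher-cells,prop:reduce-2-cell} cell by cell, checking that every attaching disc meets the branch set in its interior, and finish with Birman's theorem. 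Verifying that branch-set condition at the attachment points is the step I expect to require the most care.
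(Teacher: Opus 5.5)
Your inductive step is the paper's argument for the non-manifold case in different packaging. The paper uses the kernel description in \cref{lem:pendant-edge}, whose generators lie in \(\ker\iota_\ast=1\); you use only the surjectivity of \(j_\ast\) together with the commuting square. Both work.

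You diverge in the \(2\)-manifold case. The paper attaches the interior pendant \(\sfe_0\) first. It then notes that the normal generators of \(\ker j_\ast\) from \cref{lem:pendant-edge} (one strand circling the link circle of \(p\) around parked strands) normally generate \(\co{\im\PB_n(D^2)}=\ker\iota_\ast\), by Goldberg's theorem and, for \(S^2,\RP^2\), \cref{rem:s2-rp2}. Hence \(\PB_n(X\cup\sfe_0)\cong\PB_n(X)/\ker\iota_\ast\cong\prod\pi_1(X)\).

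Your thickening route has a genuine gap at ``the same holds for all further \(2\)- and higher-cell attachments''. Nothing in the paper supplies this. \cref{prop:branched-surface} requires every vertex to have valency \(1\), which fails at \(p\) in \(X\cup\sfe_0\). \cref{prop:reduce-2-cell} has to be verified cap by cap, and it fails for a cap whose spanning disc misses the current branch set (as for \(D^2\hookrightarrow S^2\)). So the order of attachments matters, and you have not controlled it.

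The repair is short and uses only the paper's tools: cap once. Let \(D\subseteq\mathring X\) be a small disc around \(p\), and let \(D'\) be a \(2\)-cell attached along \(\partial D\).
\begin{itemize}
\item By \cref{prop:reduce-2-cell}, with \(p\in\mathring D\cap\operatorname{br}(X\cup\sfe_0)\), the inclusion \(X\cup\sfe_0\hookrightarrow X\cup D'\cup\sfe_0\) is a braid equivalence. It is compatible with the strand maps, and \(\pi_1\) is unchanged because \(\partial D\) bounds \(D\).
\item \(X\cup D'\) is simple, every vertex has valency \(1\) (points of \(\partial D\) have theta-graph links), and it is not a \(2\)-manifold. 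So \cref{prop:branched-surface} gives the isomorphism for \(X\cup D'\).
\item \(p\) still has connected link \(S^1\) in \(X\cup D'\), so your inductive step attaches \(\sfe_0\). Transporting back along the braid equivalence gives the claim for \(X\cup\sfe_0\).
\end{itemize}
Compared with the paper, this avoids Goldberg's theorem, the explicit kernel generators, and the separate \(S^2/\RP^2\) input of \cref{rem:s2-rp2}, and it treats all compact surfaces uniformly. The price is reliance on the An--Park thickening behind \cref{prop:branched-surface}.

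Your ``main obstacle'' is misplaced. With pairwise distinct attachment points it never arises: a pendant at one point leaves the link at every other point unchanged, and that is all the paper's ``every attachment point has connected link'' uses.

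With two pendants at the same point, no remedy can work, because the conclusion is false. Such a point has link \(L\sqcup\{\mathrm{pt}\}\sqcup\{\mathrm{pt}\}\) and carries a tripod through three distinct link components. \cref{prop:star-embedding}, in the generality of \cref{rem:star-link-components}, then embeds \(\PB_n(\sfS_3)\neq1\) into \(\ker\iota_\ast\); this is exactly item (i) in the proof of \cref{thm:gt-classification}.

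So the statement must be read with one pendant per attachment point. Perturbing the attachment points changes \(X'\), and the all-at-once thickening fallback must fail in the coincident case.
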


\begin{proof}
Attach the pendant edges one at a time; every attachment point
has connected link in the complex it is attached to, so at each
step \cref{lem:pendant-edge} presents the pure braid group of
the enlarged complex as the quotient of the previous one by the
normal closure of the braids whose single moving strand
traverses the link loop at the attachment point. The strand
classes of these braids contract through the star of the
attachment point, so the braids lie in \(\ker\iota_\ast\).

Suppose first that \(X\) is not a \(2\)-manifold. Then
\(\iota_\ast\) is an isomorphism for \(X\) by
\cref{prop:branched-surface}, so at each step the kernel
generators are trivial, the quotient map is an isomorphism, and
\(\iota_\ast\) remains an isomorphism throughout.

Suppose instead that \(X\) is a compact \(2\)-manifold, and
attach first a pendant edge lying at an interior point \(p\) of
\(X\). The kernel generators of \cref{lem:pendant-edge} -- a
single strand traversing the link circle of \(p\), encircling
the strands parked beside \(p\) -- lie in \(\ker\iota_\ast\),
and those encircling exactly one parked strand are conjugates of
the standard generators of the disc braid group; together they
normally generate
\(\co{\im\PB_n(D^2,\base)}=\ker\iota_\ast\)
(\cref{thm:goldberg}; for \(S^2\) and \(\RP^2\) see
\cref{rem:s2-rp2}). Hence
\[
\PB_n(X\cup_p \sfe,\base)\cong
\PB_n(X,\base)\big/\ker\iota_\ast\cong
\prod_{i=1}^n\pi_1(X,x^0_i),
\]
by the surjectivity of \(\iota_\ast\), and \(\iota_\ast\) is an
isomorphism for \(X\cup_p \sfe\). The remaining pendant edges are
then absorbed exactly as in the previous case.
\end{proof}

\section{Isolating the ingredients relevant to \cref{q:main}}
\label{sec:isolate}

We first recall the two homomorphisms whose interplay shapes the
entire discussion. The strand map
\[
\iota_\ast\colon \PB_n(X,\base)\longrightarrow
\prod_{i=1}^n \pi_1(X, x_i^0)
\]
is induced by the inclusion \(F_n X\hookrightarrow X^n\) followed
by the \(i\)-th coordinate projection.
For a connected subspace \(X_0\subseteq X\) with \(\base\in F_n(X_0)\), the
inclusion \(X_0\hookrightarrow X\) induces
\[
j_\ast\colon \PB_n(X_0,\base)\longrightarrow\PB_n(X,\base),
\]
and the composition \(\iota_\ast\circ j_\ast\) records the strand
classes of pure braids that originate in \(X_0\); in particular,
\(\im j_\ast\subseteq\ker\iota_\ast\) whenever \(\pi_1(X_0)\to\pi_1(X)\)
is the zero map.

\subsection{Strengths of the Goldberg condition}
\label{ssec:goldberg-strengths}

We begin by formalising the terminology informally introduced in
\cref{ssec:kernel-question}.

\begin{definition}[(Weakly) Goldbergness]\label{def:goldberg-strengths}
Let \(X\) be a finite connected simplicial complex of dimension at
least \(1\). We say:
\begin{itemize}[leftmargin=2em]
    \item \(X\) is \emph{weakly Goldberg} if there exists a
          contractible subspace \(X_0\subseteq X\) -- a finite
          simplicial subcomplex, after subdivision
          (\cref{conv:subcomplex}) -- with \(\base\in F_n(X_0)\) such
          that \(\ker \iota_\ast=\co{\im j_\ast}\);
    \item \(X\) is \emph{Goldberg} if there exists such
          an \(X_0\) for which, additionally,
          \(j_\ast\colon\PB_n(X_0)\to\PB_n(X)\) is an embedding.
\end{itemize}
In particular, \(X\) is \emph{Goldberg-trivial}, or \emph{trivially
Goldberg}, if \(\ker\iota_\ast\) is trivial.
\end{definition}

\begin{lemma}\label{lem:gt-implies-goldberg}
If \(X\) is Goldberg-trivial, then \(X\) is Goldberg.
\end{lemma}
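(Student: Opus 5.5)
The plan is to take as witness the smallest possible contractible subcomplex: a single arc (after subdivision, cf.\ \cref{conv:subcomplex}) containing the basepoint. Since \(\ker\iota_\ast=1\) by hypothesis, the Goldberg condition reduces to exhibiting a contractible \(X_0\subseteq X\) with \(\base\in F_n(X_0)\) such that \(\im j_\ast=1\) and \(j_\ast\) is injective. Both hold automatically once \(\PB_n(X_0,\base)=1\).

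First, choose an embedded arc \(\mathsf{I}\subseteq X\) containing all the points \(x_1^0,\dots,x_n^0\). Such an arc need not exist for an arbitrary \(\base\); for instance, the points may sit in different branches of a tripod. So I would first move the basepoint. Pick any embedded arc \(\mathsf{I}\subseteq X\), possible since \(\dim X\ge1\), together with a configuration \(\bfy^0\in F_n(\mathsf{I})\) lying in the same component of \(F_n(X)\) as \(\base\). This is possible either because \(F_n(X)\) is connected, when \(\sfS_3\) embeds in \(X\) (\cref{lem:rho-surj}), or because \(X\) is itself an interval or a circle, where every configuration already lies on a suitable arc. Conjugation along a path from \(\base\) to \(\bfy^0\) preserves both \(\ker\iota_\ast\) and the normal closure of \(\im j_\ast\). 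Alternatively, I would interpret the definition as allowing the basepoint to be chosen inside \(X_0\), as the paper does throughout.

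Second, with \(X_0=\mathsf{I}\), which is contractible, note that \(\PB_n(\mathsf{I},\bfy^0)=1\) by \cref{subsubsection:simply-connected}, since each component of \(F_n(\mathsf{I})\) is contractible. Then \(j_\ast\) is trivially injective and \(\co{\im j_\ast}=1=\ker\iota_\ast\). So \(X\) is Goldberg, and hence also weakly Goldberg.

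The only delicate point is the basepoint bookkeeping in the first step: we must make sure the definition permits transporting \(\base\) into \(X_0\), or else verify that some arc passes through all the \(x_i^0\). I expect the transport argument to suffice, since every notion involved is invariant under change of basepoint within a path component.
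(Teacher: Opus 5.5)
This is the paper's argument: an arc \(X_0\) has \(\PB_n(X_0)=1\), so \(j_\ast\) is injective and \(\co{\im j_\ast}=1=\ker\iota_\ast\).

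Your basepoint caution is well founded. The paper simply asserts that after subdivision some arc contains every \(x_i^0\), which fails, e.g., for three basepoints on the three pendant edges of the Goldberg-trivial complex formed by a disc with one interior and two boundary pendant edges. Transporting \(\base\) within its component of \(F_n(X)\) is the same freedom the paper takes elsewhere. Only under a strictly fixed-basepoint reading of \cref{def:goldberg-strengths} would you need more, namely a contractible witness through \(\base\) with trivial braid group (e.g.\ a thickened tree through the interior joint or branch locus together with the pendant segments, via \cref{cor:thick-with-pendants}).
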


\begin{proof}
Since \(X\) is connected of dimension at least \(1\), after a
sufficient subdivision there is an embedded arc
\(\alpha\subseteq X\) containing all basepoints
\(x_1^0,\dots,x_n^0\); as for the interval
(\cref{subsubsection:simply-connected}),
\(\PB_n(\alpha,\base) = 1\). Taking \(X_0 = \alpha\), the induced map
\(j_\ast\colon\PB_n(\alpha) = 1\to\PB_n(X)\) is trivially injective
and has trivial image; the Goldberg-trivial hypothesis gives
\(\ker\iota_\ast = 1 = \im j_\ast\). Hence \(\alpha\) witnesses that
\(X\) is Goldberg.
\end{proof}

The gap between weakly Goldberg and Goldberg is genuine, and
the sharpest witnesses to this are the two exceptional closed
surfaces. For \(X\cong S^2\) or \(\RP^2\) the kernel of the
strand map is described exactly as in Goldberg's theorem, yet --
with a single exception at \(n=2\) -- it is not realised by an
\emph{embedded} copy of the Artin pure braid group.

\begin{theorem}[The exceptional surfaces \(S^2\) and \(\RP^2\)]
\label{thm:s2-rp2-exceptional}
Fix \(n\geq 2\), and let \(X\) be homeomorphic to \(S^2\) or
\(\RP^2\). Then \(X\) is weakly Goldberg. Moreover, \(X\) is
Goldberg if and only if \(X\cong S^2\) and \(n=2\), in which
case \(\PB_2(S^2,\base)\) is trivial and \(X\) is even
Goldberg-trivial.
\end{theorem}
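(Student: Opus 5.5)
The plan splits into the weak Goldberg statement, the special case \(X\cong S^2\) with \(n=2\), and the failure of Goldbergness in all other cases.

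\emph{Weak Goldbergness.} Take \(X_0=D\subseteq X\) to be an embedded closed disc containing \(\base\). By \cref{rem:s2-rp2}, the classical kernel description holds for both exceptional surfaces: \(\ker\iota_\ast=\co{\im j_\ast}\) with \(j_\ast\colon\PB_n(D)\to\PB_n(X)\). Since \(D\) is contractible, it is a weak witness. This settles the first sentence for every \(n\geq2\).

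\emph{The case \(X\cong S^2\), \(n=2\).} Here \(F_2(S^2)\) deformation retracts onto the antipodal configurations, i.e.\ onto \(S^2\). Hence \(\PB_2(S^2)=1\), so \(\ker\iota_\ast=1\). Therefore \(X\) is Goldberg-trivial, and it is Goldberg by \cref{lem:gt-implies-goldberg}.

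\emph{Non-Goldbergness otherwise.} Let \(X_0\subseteq X\) be any contractible subcomplex with \(\co{\im j_\ast}=\ker\iota_\ast\); we must show that \(j_\ast\) is not injective. First I reduce to a planar model. A contractible subcomplex of a surface has a closed regular neighbourhood \(N\) that is a disc, so \(j_\ast\) factors as
\[
\PB_n(X_0)\xrightarrow{a}\PB_n(N)\cong\PB_n(D^2)\xrightarrow{b}\PB_n(X).
\]
Moreover, \(F_n(X_0)\) is a finite-dimensional aspherical space, being the configuration space of a contractible planar complex, so \(\PB_n(X_0)\) is torsion-free. This already handles every case in which \(\ker\iota_\ast\) is a non-trivial torsion group. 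For example, for \(\RP^2\) with \(n=2\) we have \(\ker\iota_\ast=\ker(Q_8\to(\Z/2)^2)=\{\pm1\}\). If \(j_\ast\) were injective, its image would be a torsion-free subgroup of \(Q_8\), hence trivial, and then its normal closure would be trivial, a contradiction.

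For the remaining cases, namely \(S^2\) with \(n\geq3\) and \(\RP^2\) with \(n\geq3\), the plan is to exhibit a non-trivial element of \(\ker j_\ast\) of the form \(\Delta^4\), where \(\Delta^2\) is the full twist of \(n\) strands in \(N\). The relevant facts are that \(\Delta^2\) has infinite order in \(\PB_n(D^2)\) but order \(2\) in \(\PB_n(S^2)\) and in \(\PB_n(\RP^2)\). So it suffices to show that some power of \(\Delta^2\), or a conjugate of it, lies in \(\im a\). To do this I will use the hypothesis \(\co{\im j_\ast}=\ker\iota_\ast\), in the form \(\co{\im a}\cdot\ker b=\PB_n(D^2)\) in the \(S^2\) case. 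I will argue that \(X_0\) must contain enough room at a point for all \(n\) strands to wind around one another. Concretely, if \(X_0\) were a graph, or had no disc region meeting all the strands, then \(\im a\) would lie in a subgroup of \(\PB_n(D^2)\) whose image on abelianisation misses the classes \(\sum_j A_{ij}\). In that case the product \(\co{\im a}\cdot\ker b\) could not be all of \(\PB_n(D^2)\), which I will check by comparing with \(H_1(\PB_n(S^2))\), the quotient of \(\Z^{\binom n2}\) by the relations \(\sum_{j}A_{ij}=0\).

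Once \(X_0\) is known to contain an embedded disc \(D'\) carrying all \(n\) strands, the full twist of \(D'\) lies in \(\im a\), and its square is a non-trivial element of \(\ker j_\ast\). This is what I expect to be the main obstacle: showing that a witness is forced to contain a disc through which all strands can be twisted. The first thing I will try is the graph-of-spaces collapse of \cref{prop:valency-decomp} at a point of \(X_0\), used to detect which abelianised generators \(\im a\) can reach.
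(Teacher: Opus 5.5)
Your treatment of weak Goldbergness and of the case \(X\cong S^2\), \(n=2\) is fine. In the remaining cases your target is also the right one. An element \(g\in\PB_n(X_0)\) with \(a(g)\) a conjugate of a non-zero power of the full twist \(\Delta^2\) has infinite order, while \(j_\ast(g^2)=1\) because \(b(\Delta^2)\) has order two. The gap is in how you propose to produce \(g\).

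\textbf{The disc claim is false.} You intend to show that a witness must contain an embedded disc carrying all \(n\) strands, forced by an abelianisation obstruction against the classes \(\sum_jA_{ij}\). This cannot be proved, because it is not true. Let \(X_0\) be any tree with a vertex of valency \(m\geq3\) (already a tripod).
\begin{itemize}
\item By \cref{lem:star-to-disc}, \(\PB_n(\sfS_m)\to\PB_n(N)\) is surjective, so \(a\) is surjective and \(\im j_\ast=\im b\).
\item In the sphere case this is all of \(\PB_n(S^2)=\ker\iota_\ast\).
\item In the \(\RP^2\) case its normal closure is \(\ker\iota_\ast\) by \cref{rem:s2-rp2}.
\end{itemize}
So such a tree is a genuine weak witness that contains no disc. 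Moreover \(\im a\) meets every class of \(H_1(\PB_n(D^2))\), so no abelianisation argument can rule it out. The step you identify as the main obstacle cannot be carried out, and the kernel equality is the wrong tool at that point.

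\textbf{What is missing is a case split on the shape of \(X_0\).} The kernel equality is needed only in the degenerate case.
\begin{itemize}
\item If \(X_0\) contains a \(2\)-simplex, transport the base configuration inside \(X_0\) (whose configuration space is connected). The full twist of that simplex then gives \(g\), exactly as you describe.
\item If \(X_0\) is a tree with a vertex of valency \(m\geq3\), take the full twist of a disc neighbourhood of that vertex. Lift it to \(\PB_n(\sfS_m)\) through the surjection of \cref{lem:star-to-disc}, and push it into \(\PB_n(X_0)\). Its image under \(a\) is a conjugate of \(\Delta^2\), so it serves as \(g\).
\item If \(X_0\) is an arc, then \(\PB_n(X_0)=1\). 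The witness equality then forces \(\ker\iota_\ast=1\), contradicting \(b(\Delta^2)\neq1\).
\end{itemize}
This is essentially the paper's argument. It covers \(\RP^2\) with \(n=2\) and \(S^2\) with \(n=3\) uniformly, so your torsion detour is unnecessary.

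If you keep the torsion detour anyway, note that your claim that configuration spaces of contractible planar complexes are aspherical is unproved. What you actually need follows at once from \(j_\ast=b\circ a\): if \(j_\ast\) is injective then so is \(a\), and \(\PB_n(D^2)\) is torsion-free.
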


\begin{proof}
After a sufficient subdivision, fix an embedded closed disc
\(D^2\subseteq X\) containing all of the basepoints
\(x_1^0,\dots,x_n^0\); this is possible since the \(n\) basepoints
are finitely many points of the surface \(X\). The disc inclusion
induces
\[
j_\ast^{D}\colon \PB_n(D^2,\base)\longrightarrow\PB_n(X,\base),
\]
where \(\PB_n(D^2,\base)\) is the classical Artin pure braid group.

Although Goldberg's exact sequence (\cref{thm:goldberg}) is
stated for closed surfaces other than \(S^2\) and \(\RP^2\), its
description of the kernel remains valid for these two surfaces as
well: one still has \(\co{\im j_\ast^{D}}=\ker\iota_\ast\), and
\(\iota_\ast\) is still surjective (\cref{rem:s2-rp2}). The
disc \(D^2\) is contractible, so \(X_0=D^2\) is a contractible
subspace with \(\base\in F_n(D^2)\) realising
\(\co{\im j_\ast}=\ker\iota_\ast\); thus \(X\) is weakly Goldberg in the
sense of \cref{def:goldberg-strengths}.

The pure braid group \(\PB_2(S^2,\base)\) is trivial
\cite{VanBuskirk1966}, so \(\ker\iota_\ast=1\): \(X\) is
Goldberg-trivial, hence Goldberg by \cref{lem:gt-implies-goldberg}.
(In particular the disc witness above has
\(\im j_\ast^{D}=1=\ker\iota_\ast\), the full twist dying in
\(\PB_2(S^2,\base)\).) In all remaining cases -- \(X\cong S^2\)
with \(n\geq3\), or \(X\cong\RP^2\) with \(n\geq2\) -- we show
that \(X\) is not Goldberg.

Let \(\Delta^2\in\PB_n(D^2,\base)\) be the full twist, the
positive generator of the (infinite cyclic) centre of the Artin
pure braid group, and put
\(\delta:=j_\ast^{D}(\Delta^2)\in\PB_n(X,\base)\). While
\(\Delta^2\) has infinite order, its image \(\delta\) has
\emph{finite} order: the non-trivial torsion of \(\PB_n(X)\)
occurring precisely for \(X\cong S^2,\RP^2\) is exactly the
degeneration recorded in \cref{rem:s2-rp2}
\cite{VanBuskirk1966}. Moreover \(\delta\neq 1\) in the cases at hand: for
\(X\cong S^2\) and \(n\geq3\) the element \(\delta\) generates
the order-two centre of \(\PB_n(S^2,\base)\), and for
\(X\cong\RP^2\) and \(n\geq2\) it is likewise a non-trivial
element of finite order. Finally, since \(D^2\) is simply
connected, every strand of a pure braid in \(\PB_n(D^2)\) is a
null-homotopic loop in \(X\), so
\(j_\ast^{D}\bigl(\PB_n(D^2)\bigr)\subseteq\ker\iota_\ast\); in
particular \(\delta\in\ker\iota_\ast\).

Suppose, for a contradiction, that \(X\) is Goldberg in one of
the remaining cases, witnessed by a contractible subcomplex
\(X_0\subseteq X\): thus \(\co{\im j_\ast}=\ker\iota_\ast\) and
\(j_\ast\colon\PB_n(X_0,\base)\to\PB_n(X,\base)\) is injective.
Since \(X_0\) is contractible while the closed surface \(X\) is
not, \(X_0\) is a \emph{proper} subcomplex. (It is precisely to
exclude the degenerate witness \(X_0=X\) that contractibility,
rather than mere connectedness, is required of a Goldberg
witness; see \cref{rem:simpler-X0}.) Choose a point
\(p\notin X_0\) together with a small open disc
\(\mathring N(p)\) around it disjoint from \(X_0\); then
\(X_0\subseteq\Sigma_p\coloneqq X\setminus\mathring N(p)\), a
compact surface with boundary -- a disc if \(X\cong S^2\), a
M\"obius band if \(X\cong\RP^2\) -- so that
\(\Sigma_p\not\cong S^2,\RP^2\). Throughout, base
configurations are transported along paths; this changes the
maps below only by conjugations and affects no injectivity or
order statement. We distinguish three cases according to the
shape of the witness.

\emph{Case 1: \(\dim X_0=2\).} Then \(X_0\) contains a closed
\(2\)-simplex, an embedded disc \(D_0\). The inclusion
\(D_0\subseteq\Sigma_p\) induces an injective map
\(\PB_n(D_0)\to\PB_n(\Sigma_p)\)
(\cref{thm:goldberg,rem:goldberg-compact}), and it factors
through \(\PB_n(X_0)\); hence \(\PB_n(D_0)\to\PB_n(X_0)\) is
injective, and composing with \(j_\ast\), so is the
inclusion-induced map \(\PB_n(D_0)\to\PB_n(X,\base)\). But this
map carries the full twist of \(D_0\), an element of infinite
order, to a conjugate of \(\delta\) -- any two embedded discs
in \(X\) being ambient isotopic -- which has finite order: a
contradiction.

\emph{Case 2: \(\dim X_0=1\), and some vertex \(\sfv\) of
\(X_0\) has valency \(m\geq3\).} Being contractible, \(X_0\) is
a tree, and the star of \(\sfv\) in \(X_0\) is an embedded
\(\sfS_m\). Applying \cref{prop:star-embedding} to the complex
\(X_0\) at the point \(\sfv\), which has \(m\geq2\)
\(\sfv\)-components, yields the injectivity of
\(\PB_n(\sfS_m)\to\PB_n(X_0)\); composing with \(j_\ast\), the
inclusion-induced map \(\PB_n(\sfS_m)\to\PB_n(X,\base)\) is
injective. Now choose a disc neighbourhood
\(N(\sfv)\cong D^2\) of \(\sfv\) in the surface \(X\)
containing \(\sfS_m\); the inclusions
\(\sfS_m\subseteq X_0\subseteq X\) and
\(\sfS_m\subseteq N(\sfv)\subseteq X\) induce the same map on
braid groups, so the composite
\(\PB_n(\sfS_m)\to\PB_n(N(\sfv))\to\PB_n(X,\base)\) is
injective. By \cref{lem:star-to-disc} the first map is
surjective; pick \(\tilde\delta\in\PB_n(\sfS_m)\) mapping to
the full twist of the disc \(N(\sfv)\). Then \(\tilde\delta\)
has infinite order, since its image in \(\PB_n(N(\sfv))\)
does, while its image in \(\PB_n(X,\base)\) is a conjugate of
\(\delta\), of finite order: a contradiction.

\emph{Case 3: \(\dim X_0=1\), and every vertex of \(X_0\)
has valency at most \(2\).} Then the tree \(X_0\) is an arc and
\(\PB_n(X_0,\base)=1\)
(\cref{subsubsection:simply-connected}). The witness property
then reads \(\ker\iota_\ast=\co{\,1\,}=1\), contradicting
\(\delta\neq1\) in the cases at hand.

In all three cases we have reached a contradiction; therefore
\(X\) is not Goldberg.
\end{proof}

\begin{remark}\label{rem:s2-rp2-exceptional}
\cref{thm:s2-rp2-exceptional} isolates the precise way in
which \(S^2\) and \(\RP^2\) escape Goldberg's exact sequence. The
kernel description \(\co{\im j_\ast}=\ker\iota_\ast\) (exactness at
\(\PB_n(X)\)) and the surjectivity of \(\iota_\ast\) both persist;
what fails, outside the exceptional case \(X\cong S^2\) with
\(n=2\), is exactness at the left. The obstruction is the
torsion of \(\ker\iota_\ast\): as the proof shows, a witness
that is \(2\)-dimensional, or branches at a vertex, carries a
braid of infinite order whose image in \(\PB_n(X)\) is
conjugate to the full twist \(\delta\), of finite order, while
a witness without branching has trivial braid group altogether.
Note also that the braid group of the punctured surface
\(X\setminus\{p\}\) -- through which the inclusion of any
witness avoiding \(p\) factors -- is torsion-free: its
configuration space is aspherical of finite dimension by the
Fadell--Neuwirth fibration \cite{FadellNeuwirth1962}.
\end{remark}

\subsection{The surjectivity dichotomy}
\label{ssec:surj-dichotomy}

The surjectivity of \(\iota_\ast\) is tangential
to the kernel question but worth settling here in full, because
its failure is rare and because the failing case is Goldberg-trivial
(\cref{prop:s1-case}). The key local input is the
lollipop graph.

\begin{lemma}[Lollipop graph]\label{lem:lollipop}
Let \(\Lollipop = \mathsf{C}\cup \mathsf{e}\) be the lollipop graph
(\cref{fig:lollipop}). Then for every \(n\geq 1\) the strand
map
\[
\iota_\ast \colon \PB_n(\Lollipop)\longrightarrow
\prod_{i=1}^n\pi_1(\Lollipop,x_i^0) \cong \Z^n
\]
is surjective.
\end{lemma}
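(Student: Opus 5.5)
The plan is to exhibit explicit pure braids whose strand images generate \(\Z^n\): for each \(k\in\{1,\dots,n\}\), a braid \(\beta_k\) in which the \(k\)-th strand winds once around \(\mathsf{C}\) while every other strand stays put (possibly after a preliminary rearrangement that is undone at the end). Then \(\iota_\ast(\beta_k)=\pm e_k\), the \(k\)-th standard basis vector, and the images \(\iota_\ast(\beta_1),\dots,\iota_\ast(\beta_n)\) generate \(\Z^n\).

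Since basepoints may be transported along paths in \(F_n(\Lollipop)\), and such transport only conjugates \(\iota_\ast\) by an isomorphism of the target that preserves surjectivity, I will first use \cref{lem:rho-surj} to fix a convenient configuration. For \(n\geq 3\) or \(n=1\), the relevant case is case (3): \(\sfS_3\) embeds in \(\Lollipop\), with centre the trivalent vertex \(\sfv\), two short arcs of \(\mathsf{C}\), and the stick \(\mathsf{e}\). So \(F_n(\Lollipop)\) is connected, and the choice of basepoint within a component does not matter anyway. I take \(\base\) to be \(n\) points on the stick \(\mathsf{e}\), ordered from \(\sfv\) outward as \(x_1^0,\dots,x_n^0\). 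The identification \(\pi_1(\Lollipop,x_i^0)\cong\Z\) comes from the cycle \(\mathsf{C}\), conjugated along the stick.

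For each \(k\), the braid \(\beta_k\) is built in three stages.
\begin{enumerate}
\item Move strands \(1,\dots,k-1\), which are closer to \(\sfv\) than strand \(k\), onto \(\mathsf{C}\). Strand \(1\) goes a little way into one side of \(\mathsf{C}\), strand \(2\) next to it, and so on, all clustered in a small arc on one side of \(\sfv\). Strand \(k\) is now innermost on the stick.
\item Run strand \(k\) off the stick through \(\sfv\), once around \(\mathsf{C}\), and back onto the stick. This is impossible as described, because the parked strands occupy \(\mathsf{C}\) and block the loop. Instead, strand \(k\) traverses \(\mathsf{C}\) in the direction that ends at the cluster. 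When it reaches the cluster, the cluster strands are pushed one by one, in order, out onto the stick behind... but the stick is occupied by strand \(k\)'s successors, so this does not work either.
\item The cleaner version is to have the whole cluster of strands \(1,\dots,k-1\) rotate around \(\mathsf{C}\) together with strand \(k\).
\end{enumerate}

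This makes every strand \(1,\dots,k\) wind once. The resulting braid \(\gamma_k\), in which strands \(1,\dots,k\) all go around \(\mathsf{C}\) once in lockstep and then return to the stick in their original order, is a pure braid with \(\iota_\ast(\gamma_k)=e_1+\dots+e_k\). The order is preserved because strands \(1,\dots,k\) enter \(\mathsf{C}\) as a block, rotate as in \cref{prop:s1-case}, and exit as a block in the same order. The elements \(e_1+\dots+e_k\) for \(k=1,\dots,n\) form a unimodular triangular basis of \(\Z^n\), so \(\iota_\ast\) is surjective.

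The main obstacle is checking that \(\gamma_k\) really is a well-defined loop in \(F_n(\Lollipop)\) that returns strands in the original order. Concretely, strands \(1,\dots,k\) move, in order, into a small arc of \(\mathsf{C}\) starting at \(\sfv\) in a chosen direction. They then rotate rigidly once around the circle \(\mathsf{C}\), so the block passes \(\sfv\) while the stick strands \(k+1,\dots,n\) wait farther out on \(\mathsf{e}\) and never meet \(\sfv\). Finally they retract onto the stick in reverse order, strand \(k\) first. Which end of the block exits first must be chosen so that strand \(1\) ends nearest \(\sfv\). I expect to fix this by choosing the entry direction so that after a full rotation the block sits on the opposite side of \(\sfv\), so that strand \(k\) is adjacent to \(\sfv\) and exits first.

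The case \(n=2\) with \(F_2(\Lollipop)\) is still covered by \(\sfS_3\subseteq\Lollipop\), and \(n=1\) is trivial since \(\PB_1=\pi_1\). Winding numbers are read off by projecting each strand to \(\Lollipop\) and retracting \(\mathsf{e}\) onto \(\sfv\).
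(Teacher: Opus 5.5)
Your final argument (the block rotation \(\gamma_k\)) is correct, but it takes a slightly different route from the paper's, and the write-up needs cleaning.

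\textbf{Comparison with the paper.} The paper realises each standard generator \(e_i\) directly. All strands except \(i\) are parked on the stick \(\mathsf{e}\), and strand \(i\) alone runs once around \(\mathsf{C}\). This tacitly needs a conjugating path from the base configuration to one where strand \(i\) can reach \(\mathsf{C}\) unobstructed; such a path exists because \(F_n(\Lollipop)\) is connected (\cref{lem:rho-surj}(3)). You instead keep the natural order on the stick and set \(\gamma_k=P_kR_kP_k^{-1}\), where:
\begin{itemize}
\item \(P_k\) feeds strands \(1,\dots,k\) in single file through \(\sfv\) into a short arc of \(\mathsf{C}\);
\item \(R_k\) rotates this block rigidly once around \(\mathsf{C}\), while strands \(k+1,\dots,n\) wait on the stick away from \(\sfv\).
\end{itemize}
Then \(\iota_\ast(\gamma_k)=e_1+\dots+e_k\), and the unimodular triangular change of basis gives surjectivity. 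Your version never reorders strands and uses only the circle rotation of \cref{prop:s1-case}, so the loops are explicit at a single basepoint. The paper's version gets the standard generators directly, at the cost of that unstated conjugation.

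\textbf{Clean-ups.}
\begin{itemize}
\item Delete stages~1 and~2. They are an abandoned attempt and not part of the proof.
\item Your worry about which end of the block exits first is unfounded as phrased. After a full rigid rotation every strand is back at its own position; the block does not end ``on the opposite side of \(\sfv\)''. Strand \(k\) is still adjacent to \(\sfv\) on the entry side, and the exit is simply \(P_k^{-1}\), so no choice of entry direction is needed.
\item \cref{lem:rho-surj}(3) already covers every \(n\ge1\) for \(\Lollipop\), so the separate case remarks about \(n=1,2\) can go.
\end{itemize}
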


\begin{proof}
\(\Lollipop\) deformation retracts onto \(\mathsf{C}\), so
\(\pi_1(\Lollipop)\cong\Z\) and the target of \(\iota_\ast\) is \(\Z^n\).
To realise the standard \(i\)-th generator \((0,\dots,0,1,0,\dots,0)\)
by a pure braid, park the \(n-1\) strands
\(\{1,\dots,n\}\setminus\{i\}\) at distinct points along the edge
\(\mathsf{e}\) (subdivide \(\mathsf{e}\) if needed), and let strand
\(i\) traverse \(\mathsf{C}\) once and return. The other strands stay
fixed, so the resulting pure braid maps to
\((0,\dots,0,1,0,\dots,0)\) under \(\iota_\ast\). Since these
elements generate \(\Z^n\), \(\iota_\ast\) is surjective.
\end{proof}

Combining \cref{lem:lollipop} with the lollipop embedding
result of \cref{lem:lollipop-embed}, we obtain the general
dichotomy:

\begin{theorem}[Surjectivity of \(\iota_\ast\)]\label{prop:iota-surj}
Let \(X\) be a finite connected simplicial complex of dimension at
least \(1\), and fix \(n\ge2\). Then the
strand map \(\iota_\ast\) is surjective if and only if \(X\not\cong S^1\).
\end{theorem}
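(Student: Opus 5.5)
The plan is to prove the two directions separately, treating \(X\not\cong S^1\) as the main case.

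\emph{Failure for the circle.} If \(X\cong S^1\), \cref{prop:s1-case} identifies \(\iota_\ast\) with the diagonal embedding \(\Z\hookrightarrow\Z^n\). For \(n\geq2\) this is not surjective, since, for example, \((1,0,\dots,0)\) is not in the image. So surjectivity forces \(X\not\cong S^1\).

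\emph{Simply connected case.} Assume \(X\not\cong S^1\). If \(X\) is simply connected, the target \(\prod_i\pi_1(X,x_i^0)\) is trivial and there is nothing to prove. So assume further that \(X\) is not simply connected.

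\emph{Choosing generating loops.} Since \(\prod_i\pi_1(X,x_i^0)\) is generated by the elements that are trivial in all but one coordinate, it suffices to realise, for each \(i\) and each \(g\in\pi_1(X,x_i^0)\) from a generating set, a pure braid whose \(i\)-th strand represents \(g\) and whose other strands are null-homotopic. A finite connected simplicial complex has \(\pi_1\) generated by classes of embedded edge-cycles of the \(1\)-skeleton, namely the fundamental cycles of a spanning tree. Each such class is transported to \(x_i^0\) along a path. Concretely, for an embedded cycle \(\mathsf{c}\) and a path \(\beta\) from \(x_i^0\) to a point of \(\mathsf{c}\), the element is \(\beta\cdot\mathsf{c}\cdot\beta^{-1}\).

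\emph{Realising one generator.} By the ``moreover'' clause of \cref{lem:lollipop-embed}, there is an embedding \(\phi\colon\Lollipop\hookrightarrow X\) with \(\phi(\mathsf{C})=\mathsf{c}\). Since \(n\geq2\) and \(X\not\cong S^1\), \cref{lem:rho-surj} should give that \(F_n(X)\) is connected whenever \(\sfS_3\) embeds, and \(\Lollipop\) contains an \(\sfS_3\). Hence \(\PB_n(X)\) is independent of the basepoint up to conjugation by a path \(\gamma\) in \(F_n(X)\) from \(\base\) to a configuration \(\bfy^0\in F_n(\phi(\Lollipop))\).

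By \cref{lem:lollipop}, there is a braid \(b\in\PB_n(\Lollipop,\bfy^0)\) in which strand \(i\) traverses \(\mathsf{c}\) once while the others are parked on the stick. The conjugate \(\gamma b\gamma^{-1}\) is a pure braid at \(\base\). Its \(j\)-th strand is \(\gamma_j\cdot b_j\cdot\gamma_j^{-1}\), where \(\gamma_j\) is the \(j\)-th coordinate path of \(\gamma\). For \(j\neq i\) this is null-homotopic, and for \(j=i\) it is \(\gamma_i\mathsf{c}\gamma_i^{-1}\).

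\emph{Main obstacle: matching the basepoint path.} The path \(\gamma_i\) need not agree with \(\beta\) up to homotopy. The remedy is to choose \(\gamma\) with prescribed \(i\)-th coordinate \(\beta\), so that \(\gamma_i=\beta\) for the correct transport. To arrange this, I would first use connectivity of \(F_n(X)\) to move the other strands aside, then move strand \(i\) along \(\beta\). The difficulty is that \(\beta\) may pass through the positions of other strands. I would handle this by perturbing, subdividing, and letting the other strands dodge, at the cost of changing their coordinate paths only.

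\emph{Removing the extra factor.} Even after arranging \(\gamma_i=\beta\), the element obtained is really \(\gamma_i\mathsf{c}\gamma_i^{-1}\) up to a conjugation introduced by the dodging. Any discrepancy \(\gamma_i\simeq\beta\cdot h\) changes the element only by conjugation by \(h\in\pi_1(X,x_i^0)\). Since the generating set is stable under conjugation, this is harmless: it suffices to realise a full conjugation-closed generating set, namely all \(g\cdot[\mathsf{c}]\cdot g^{-1}\).

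\emph{Conclusion.} The image of \(\iota_\ast\) is therefore a subgroup containing, in each coordinate, the normal closure of a generating set. This normal closure is the whole coordinate group \(\pi_1(X,x_i^0)\). Hence \(\iota_\ast\) is surjective.
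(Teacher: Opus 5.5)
Your overall route is the paper's: fundamental-cycle generators, a lollipop through each cycle from \cref{lem:lollipop-embed}, and \cref{lem:lollipop} pushed into \(X\). Your use of an honest generating set is more careful than the paper's ``embedded curve representing \(c\)''.

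\textbf{The gap.} It is the transport step you flag yourself, and neither remedy you offer closes it.

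Forcing \(\gamma_i\simeq\beta\) by letting the other strands dodge is the entire content of that step, and it is only announced, not carried out.

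The fallback in ``Removing the extra factor'' is false. A transport path \(\gamma\) yields exactly one conjugate \(h_{\mathsf{c}}g_{\mathsf{c}}h_{\mathsf{c}}^{-1}\) of each generator \(g_{\mathsf{c}}=\beta\mathsf{c}\beta^{-1}\). The element \(h_{\mathsf{c}}=[\gamma_i\beta^{-1}]\) is dictated by \(\gamma\), not chosen by you. Conjugates of a generating set by uncontrolled elements need not generate: in the free group on \(a,b\), the subgroup \(\langle bab^{-1},aba^{-1}\rangle\) does not contain \(a\).

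Your concluding ``normal closure'' claim would need \(K_i=\{g:(1,\dots,g,\dots,1)\in\im\iota_\ast\}\) to be normal in \(\pi_1(X,x_i^0)\). It is only normalised by the \(i\)-th projection of \(\im\iota_\ast\). Knowing that this projection is onto already requires exactly the control of the \(i\)-th strand that is missing.

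\textbf{A repair.} Never move strand \(i\); put \(\beta\) into the stick instead.
\begin{enumerate}
\item Subdivide so that \(x_i^0\) is a vertex, and take a spanning tree. A fundamental generator is \(\beta\mathsf{c}\beta^{-1}\) with \(\beta\) a tree arc meeting \(\mathsf{c}\) only at its far end. So \(\Lambda=\beta\cup\mathsf{c}\) is an embedded lollipop whose stick ends at \(x_i^0\). If \(\beta\) is constant, use the lollipop of \cref{lem:lollipop-embed} instead.
\item \(F_k(\Lambda)\) is connected, and surjectivity of a strand map is invariant under change of base configuration (the naturality square has isomorphisms as vertical arrows). Hence \cref{lem:lollipop} holds at every base configuration of \(\Lambda\), in particular one with strand \(i\) at \(x_i^0\).
\item Hold strand \(i\) fixed at \(x_i^0\) and move the other strands. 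Those lying in a component of \(X\setminus\{x_i^0\}\) that meets \(\Lambda\) can be brought into \(\Lambda\setminus\{x_i^0\}\) one at a time: along a path to \(\Lambda\) inside that component, move the configuration point lying last on the path up to \(\Lambda\), and repeat. The remaining strands stay put, off \(\Lambda\).
\item Apply the lollipop lemma to the strands now in \(\Lambda\), and conjugate back along this transport. Its \(i\)-th coordinate is constant, so you get exactly \((1,\dots,[\beta\mathsf{c}\beta^{-1}],\dots,1)\), with no conjugating factor.
\end{enumerate}

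The paper's proof avoids the issue by arranging \(j_{\mathsf{c}}(\bfy^0)=\base\) directly, with \(x_i^0\) on the cycle and the other base points on the stick. When that cannot literally be done, an argument like the one above is what is needed.
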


\begin{proof}
As seen in \cref{prop:s1-case}, \(\iota_\ast\) is surjective only if \(X\not\cong S^1\).

Conversely, since the assertion is obvious if \(X\) is simply-connected, we assume that \(X\) is neither simply connected nor homeomorphic to \(S^1\). 
If \(X\cong\RP^2\), then \(\iota_\ast\) is surjective by
\cref{rem:s2-rp2}; in the remaining cases we work under the
standing assumption of \cref{rem:standing}.

We show that for every \(i\in\{1,\dots,n\}\)
and every element \(c\in\pi_1(X,x_i^0)\), the standard \(i\)-th
generator
\[
\mathbf{c}_i = (1,\dots,1, c, 1,\dots,1) \in
\prod_{j=1}^n\pi_1(X,x_j^0) \qquad (c\text{ in the \(i\)-th slot})
\]
lies in the image of \(\iota_\ast^X\). Since these elements generate
\(\prod_{j=1}^n\pi_1(X,x_j^0)\), this proves surjectivity.

Choose an embedded simple closed curve \(\mathsf{c}\subseteq X\)
representing \(c\) (after a sufficient subdivision). By
\cref{lem:lollipop-embed} there is a simplicial embedding
\(j_\mathsf{c}\colon \Lollipop\hookrightarrow X\) whose cycle is
mapped onto \(\mathsf{c}\). Pick a basepoint
\(\bfy^0\in F_n(\Lollipop)\) with
\(j_\mathsf{c}(\bfy^0) = \base\) (this is possible by an
isotopy of \(\base\) along \(\mathsf{c}\cup\gamma\subseteq X\),
arranged so that the \(i\)-th basepoint lies on the cycle and the
remaining basepoints lie on the stick of \(\Lollipop\)).

The embedding \(j_\mathsf{c}\) induces commuting maps
\[
\begin{tikzcd}[column sep=large]
\PB_n(\Lollipop,\bfy^0) \arrow[r, "\iota_\ast^{\Lollipop}"]
   \arrow[d, "(j_\mathsf{c})_\ast"']
&\displaystyle \prod_{i=1}^n\pi_1(\Lollipop,y_i^0)
   \arrow[d, "\prod_i (j_\mathsf{c})_\ast"]
\\
\PB_n(X,\base) \arrow[r, "\iota_\ast^X"]
&\displaystyle \prod_{i=1}^n\pi_1(X,x_i^0).
\end{tikzcd}
\]
By \cref{lem:lollipop} the top row is surjective; in
particular there exists
\(\beta\in\PB_n(\Lollipop,\bfy^0)\) with
\(\iota_\ast^{\Lollipop}(\beta) = (1,\dots,1,\gamma_i,1,\dots,1)\),
where \(\gamma_i\) is the generator of
\(\pi_1(\Lollipop,y_i^0)\cong\Z\) that traverses the cycle of
\(\Lollipop\) once. Pushing this element down via the right vertical
arrow sends \(\gamma_i\) to \(c\in\pi_1(X,x_i^0)\) (since
\(j_\mathsf{c}\) maps the cycle of \(\Lollipop\) onto \(\mathsf{c}\)
and \(\mathsf{c}\) represents \(c\)), and leaves the other slots
trivial. By commutativity of the square,
\(\iota_\ast^X\bigl((j_\mathsf{c})_\ast(\beta)\bigr) =\mathbf{c}_i\), as required.
\end{proof}

\begin{corollary}\label{cor:not-gt-implies-surj}
If \(X\) is not Goldberg-trivial, then \(\iota_\ast\) is surjective.
\end{corollary}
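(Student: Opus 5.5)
The plan is to argue by contraposition, using the surjectivity dichotomy of \cref{prop:iota-surj} together with the degenerate cases already settled in \cref{ssec:dichotomy}. Suppose \(\iota_\ast\) is not surjective. Recall that \(n\geq2\) is assumed (for \(n=1\) the map \(\iota_\ast\) is the identity of \(\pi_1(X,x_1^0)\), hence surjective, so nothing needs to be shown there). Then \cref{prop:iota-surj} forces \(X\cong S^1\). By \cref{prop:s1-case}, \(\iota_\ast\) for \(S^1\) is the injective diagonal embedding \(\Z\hookrightarrow\Z^n\), so \(\ker\iota_\ast=1\), and \(X\) is Goldberg-trivial. Contrapositively, if \(X\) is not Goldberg-trivial then \(X\not\cong S^1\), and \cref{prop:iota-surj} gives surjectivity.

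A few edge cases need checking so that \cref{prop:iota-surj} applies as stated: \(X\) a finite connected simplicial complex of dimension at least \(1\). This is the standing setting for the whole paper, so it holds. The proof of \cref{prop:iota-surj} treats \(\RP^2\) and the simply connected case separately, but its statement covers all such \(X\), so no separate case analysis for \(S^2\), \(\RP^2\) or the interval is needed. In each of those cases \(\iota_\ast\) is surjective anyway: for \(\RP^2\) by \cref{rem:s2-rp2}, and for simply connected \(X\) because the target is trivial.

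There is no real obstacle; the corollary is a direct repackaging of \cref{prop:iota-surj} and \cref{prop:s1-case}. The only point requiring care is to state the contrapositive cleanly: non-surjectivity occurs only for \(X\cong S^1\), and \(S^1\) lies in the Goldberg-trivial class.
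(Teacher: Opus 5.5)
Your proof is correct and follows the paper's own argument: \(S^1\) is Goldberg-trivial by \cref{prop:s1-case}, so a non-Goldberg-trivial \(X\) satisfies \(X\not\cong S^1\), and \cref{prop:iota-surj} then gives surjectivity. Your separate remark on \(n=1\) is a small addition that the paper leaves to the standing assumption \(n\geq 2\); it costs nothing, since \(\iota_\ast\) is the identity in that case.
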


\begin{proof}
By \cref{prop:s1-case}, \(S^1\) is Goldberg-trivial. So
if \(X\) is not Goldberg-trivial, then \(X\not\cong S^1\), and
\(\iota_\ast\) is surjective by \cref{prop:iota-surj}.
\end{proof}

The surjectivity dichotomy characterizes the strand-map
containment \(\im j_\ast\subseteq\ker\iota_\ast\) underlying weak
Goldbergness:

\begin{lemma}\label{lem:weak-goldberg-pi1-zero}
Let \(X\) satisfy \cref{rem:standing}, and let \(X_0\subseteq X\) be
a connected subspace with \(\base\in F_n(X_0)\). Writing
\(j_\ast\colon\PB_n(X_0,\base)\to\PB_n(X,\base)\) for the
inclusion-induced map, one has
\[
\im j_\ast\subseteq\ker\iota_\ast^X
\]
if and only if the inclusion-induced map
\[
(j_{X_0})_\ast\colon\pi_1(X_0, x_i^0)\longrightarrow
\pi_1(X, x_i^0)
\]
is the zero map for some -- equivalently, every -- \(i\); that is,
if and only if every loop in \(X_0\) is null-homotopic in \(X\).
\end{lemma}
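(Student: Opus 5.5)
The plan is to compare the two composites around the square of inclusions \(F_n(X_0)\hookrightarrow F_n(X)\hookrightarrow X^n\) and \(F_n(X_0)\hookrightarrow X_0^n\hookrightarrow X^n\). These give
\[
\iota_\ast^X\circ j_\ast=\Bigl(\prod_{i=1}^n (j_{X_0})_\ast\Bigr)\circ\iota_\ast^{X_0}
\colon\PB_n(X_0,\base)\longrightarrow\prod_{i=1}^n\pi_1(X,x_i^0),
\]
where \(\iota_\ast^{X_0}\) is the strand map of \(X_0\). The condition \(\im j_\ast\subseteq\ker\iota_\ast^X\) is therefore equivalent to \(\prod_i(j_{X_0})_\ast\) vanishing on \(\im\iota_\ast^{X_0}\).

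Before the two implications, I would settle the ``some versus every'' clause. Since \(X_0\) is connected, any two basepoints \(x_i^0,x_k^0\in X_0\) are joined by a path \(\eta\subseteq X_0\). Conjugation along \(\eta\) identifies \(\pi_1(X_0,x_i^0)\cong\pi_1(X_0,x_k^0)\) and \(\pi_1(X,x_i^0)\cong\pi_1(X,x_k^0\)) compatibly with \((j_{X_0})_\ast\). Hence the map is zero at one index if and only if it is zero at every index, and both are equivalent to every loop of \(X_0\) being null-homotopic in \(X\).

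(\(\Leftarrow\)) This direction is immediate from the displayed identity: if each \((j_{X_0})_\ast\) is trivial, then so is \(\iota_\ast^X\circ j_\ast\).

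(\(\Rightarrow\)) Here the point is to realise an arbitrary loop of \(X_0\) as a strand class of a braid in \(X_0\). This requires surjectivity of the strand map for \(X_0\), or at least of its \(i\)-th component. There are three cases.
\begin{enumerate}[label=(\alph*),leftmargin=2em]
\item If \(X_0\) is simply connected, there is nothing to prove.
\item If \(X_0\) is not simply connected and \(X_0\not\cong S^1\), then \cref{prop:iota-surj} applied to \(X_0\) (with \(n\geq2\)) makes \(\iota_\ast^{X_0}\) surjective. For \(c\in\pi_1(X_0,x_i^0)\), pick \(\beta\) with \(\iota_\ast^{X_0}(\beta)=(1,\dots,c,\dots,1)\). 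Then \(\iota_\ast^X(j_\ast\beta)\) has \(i\)-th coordinate \((j_{X_0})_\ast(c)\), and the hypothesis forces this to be trivial.
\item If \(X_0\cong S^1\), I would argue by hand using \cref{prop:s1-case}: the image of \(\iota_\ast^{X_0}\) is the diagonal, and the generator maps to \((c,\dots,c)\) with \(c\) the class of the circle. The hypothesis then kills \((j_{X_0})_\ast(c)\) in each coordinate, and \(c\) generates \(\pi_1(X_0)\).
\end{enumerate}

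I expect the main obstacle to be the bookkeeping in case (b). \cref{prop:iota-surj} is stated for complexes rather than arbitrary subspaces, but by \cref{conv:subcomplex} \(X_0\) is a finite connected subcomplex of dimension at least \(1\) after subdivision. One also has to check the basepoint transport when applying \cref{prop:iota-surj} to \(X_0\), and the degenerate case where \(X_0\) is a point, which is excluded since \(n\geq2\) requires \(\base\in F_n(X_0)\).
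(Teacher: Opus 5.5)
Your proposal is correct and follows essentially the same route as the paper: the naturality square \(\iota_\ast^X\circ j_\ast=\bigl(\prod_i(j_{X_0})_\ast\bigr)\circ\iota_\ast^{X_0}\), the immediate backward direction, and the forward direction obtained by applying \cref{prop:iota-surj} to \(X_0\), with the case \(X_0\cong S^1\) handled separately through the diagonal rotation braid. Your explicit treatment of the simply connected case and of the some-versus-every clause via basepoint transport inside the connected \(X_0\) only spells out what the paper leaves implicit.
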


\begin{proof}
Naturality of the strand map with respect to
\(X_0\hookrightarrow X\) provides the commutative square
\[
\begin{tikzcd}[column sep=huge]
\PB_n(X_0,\base)
    \arrow[r, "\iota_\ast^{X_0}"]
    \arrow[d, "j_\ast"']
& \displaystyle\prod_{i=1}^{n}\pi_1(X_0,x_i^0)
    \arrow[d, "i_\ast^n"] \\
\PB_n(X,\base)
    \arrow[r, "\iota_\ast^X"']
& \displaystyle\prod_{i=1}^{n}\pi_1(X,x_i^0),
\end{tikzcd}
\]
in which \(i_\ast^n\) is the coordinatewise product of the
inclusion-induced maps \((j_{X_0})_\ast\).

\noindent\((\Leftarrow)\) If \((j_{X_0})_\ast\) is the zero map for
every \(i\), then \(i_\ast^n=1\), so commutativity gives
\(\iota_\ast^X\circ j_\ast = i_\ast^n\circ\iota_\ast^{X_0}=1\); that
is, \(\im j_\ast\subseteq\ker\iota_\ast^X\).

\noindent\((\Rightarrow)\) Suppose
\(\im j_\ast\subseteq\ker\iota_\ast^X\), i.e.\
\(\iota_\ast^X\circ j_\ast = 1\). Commutativity of the square then
forces \(i_\ast^n\) to vanish on the image of \(\iota_\ast^{X_0}\).
Apply \cref{prop:iota-surj} to \(X_0\):
\begin{itemize}[leftmargin=2em]
    \item If \(X_0\not\cong S^1\), then \(\iota_\ast^{X_0}\) is surjective
          onto \(\prod_{i=1}^{n}\pi_1(X_0,x_i^0)\), so \(i_\ast^n\) vanishes on
          the whole product. In particular, the \(i\)-th coordinate
          \((j_{X_0})_\ast\colon\pi_1(X_0,x_i^0)\to\pi_1(X,x_i^0)\) is the
          zero map for every \(i\).
    \item If \(X_0\cong S^1\), write \(c_i\in\pi_1(X_0,x_i^0)\cong\Z\) for
          the generator (any choice of compatible orientations). The
          ``global rotation'' pure braid that drags every strand once
          around \(X_0\) has strand-map image
          \(\iota_\ast^{X_0}(\tau) = (c_1,c_2,\dots,c_n)\), so this
          element lies in \(\im\iota_\ast^{X_0}\). The previous paragraph
          gives \(i_\ast^n(c_1,\dots,c_n)=1\), i.e.\
          \((j_{X_0})_\ast(c_i) = 1\) in \(\pi_1(X,x_i^0)\) for every \(i\).
          Since \(c_i\) generates \(\pi_1(X_0,x_i^0)\), the inclusion-induced
          map vanishes.
\end{itemize}
In either case \((j_{X_0})_\ast\) is the zero map; as \(X_0\) is
connected, this holds for one \(i\) if and only if it holds for
every \(i\).
\end{proof}

The kernel equality defining a witness is upward closed among
simply connected subspaces.

\begin{proposition}[Upward closure of witnesses]
\label{prop:witness-upward}
Suppose \(X_0\subseteq X\) satisfies
\(\co{\im j_\ast^{X_0}}=\ker\iota_\ast^X\) with \(\base\in F_n(X_0)\) --
for instance, a witness to the weak Goldbergness of \(X\). If
\(X_0\subseteq X_0'\subseteq X\) and \(X_0'\) is simply connected,
then \(X_0'\) satisfies the same equality,
\[
\co{\im j_\ast^{X_0'}}=\ker\iota_\ast^X,
\]
and so witnesses the weak Goldbergness of \(X\) as well.
\end{proposition}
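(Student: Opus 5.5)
The argument is a two-sided inclusion, and both sides reduce to facts already recorded. Write \(j_\ast^{X_0}\colon\PB_n(X_0,\base)\to\PB_n(X,\base)\) and \(j_\ast^{X_0'}\colon\PB_n(X_0',\base)\to\PB_n(X,\base)\). Since \(X_0\subseteq X_0'\), the map \(j_\ast^{X_0}\) factors as \(X_0\hookrightarrow X_0'\hookrightarrow X\); the basepoint \(\base\) lies in \(F_n(X_0)\subseteq F_n(X_0')\), so no basepoint transport is needed. Hence \(\im j_\ast^{X_0}\subseteq\im j_\ast^{X_0'}\), and therefore
\[
\ker\iota_\ast^X=\co{\im j_\ast^{X_0}}\subseteq\co{\im j_\ast^{X_0'}}.
\]
This is the lower inclusion, and it uses nothing beyond functoriality of \(\PB_n\) under inclusions.

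For the upper inclusion \(\co{\im j_\ast^{X_0'}}\subseteq\ker\iota_\ast^X\), first use normality of \(\ker\iota_\ast^X\) (as in \cref{rem:Q1.3-split}). It then suffices to show \(\im j_\ast^{X_0'}\subseteq\ker\iota_\ast^X\). Take the naturality square of the strand map for \(X_0'\hookrightarrow X\), exactly as in the proof of \cref{lem:weak-goldberg-pi1-zero}:
\[
\iota_\ast^X\circ j_\ast^{X_0'}=i_\ast^n\circ\iota_\ast^{X_0'}.
\]
Because \(X_0'\) is simply connected, every factor \(\pi_1(X_0',x_i^0)\) is trivial. So \(\iota_\ast^{X_0'}\) is trivial, and hence so is the composite. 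This is just the \((\Leftarrow)\) direction of \cref{lem:weak-goldberg-pi1-zero}, and it needs neither the standing assumption nor \cref{prop:iota-surj}. I would cite the square directly to avoid dragging in that lemma's hypotheses.

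Combining the two inclusions gives \(\co{\im j_\ast^{X_0'}}=\ker\iota_\ast^X\). One point needs care: the final clause says \(X_0'\) ``witnesses weak Goldbergness'', but \cref{def:goldberg-strengths} asks for a contractible witness, whereas the hypothesis is only simple connectivity. I would state the conclusion as the kernel equality itself. I would then note that \(X_0'\) is a genuine witness whenever it is moreover contractible, as happens for trees or subcomplexes of a contractible region. This is consistent with \cref{rem:Q1.3-split}, where the kernel condition is phrased for \(\pi_1\)-trivial subspaces.

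The only thing I expect to need attention is this bookkeeping about contractible versus simply connected witnesses, together with the convention (\cref{conv:subcomplex}) that \(X_0'\) be a subcomplex after subdivision. The group theory itself is immediate.
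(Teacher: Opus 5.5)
Your proof is correct and matches the paper's: the lower inclusion comes from factoring \(X_0\hookrightarrow X\) through \(X_0'\), and the upper one from the vanishing of \(\pi_1(X_0')\) via the strand-map naturality square. The paper invokes that square through \cref{lem:weak-goldberg-pi1-zero}, whereas you use it directly; since the \((\Leftarrow)\) direction needs no standing assumption, this difference is only cosmetic. Your caveat is also fair: \cref{def:goldberg-strengths} demands a contractible witness, so the closing clause should be read as the kernel equality, and that is how the paper uses the result, since in each application \(X_0'\) is contractible.
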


\begin{proof}
The inclusion \(X_0\hookrightarrow X\) factors through \(X_0'\), so
\(\im j_\ast^{X_0}\subseteq\im j_\ast^{X_0'}\); hence
\[
\ker\iota_\ast^X=\co{\im j_\ast^{X_0}}\subseteq\co{\im j_\ast^{X_0'}}.
\]
For the reverse inclusion, note that \(\base\in F_n(X_0)\subseteq
F_n(X_0')\) and that \(X_0'\) is connected; since \(X_0'\) is simply
connected, the inclusion-induced map
\((j_{X_0'})_\ast\colon\pi_1(X_0')\to\pi_1(X)\) is the zero map, so
\cref{lem:weak-goldberg-pi1-zero} gives
\(\im j_\ast^{X_0'}\subseteq\ker\iota_\ast^X\). The two inclusions
yield \(\co{\im j_\ast^{X_0'}}=\ker\iota_\ast^X\).
\end{proof}

\section{Lower bounds: necessary conditions on \(X_0\)}
\label{sec:lower-bound}

We now record some necessary conditions that any weakly Goldberg
witness \(X_0\subseteq X\) must satisfy. These give a lower bound
on which subspaces of \(X\) can possibly serve as a witness for
Goldbergness, and will constrain the candidate choices of \(X_0\)
in later sections.

Fixing a vertex \(x\) of valency at least \(2\), we address two
questions about how the witness must meet \(x\):
\begin{enumerate}[label=\textup{(\roman*)},leftmargin=2.4em]
  \item \emph{When is \(x\) itself forced into the witness,
        \(x\in X_0\)?}
  \item \emph{Once \(x\in X_0\), from which directions of the link
        \(\lk_X(x)\) must \(X_0\) approach \(x\)?}
\end{enumerate}
Both answers depend on the number \(m\) of \(x\)-components and on
the valency \(\val_X(x)\), and the subsections below treat the
several regimes in turn. Assembled over the vertices of an embedded
cycle, they yield the obstruction of
\cref{thm:F-cycle-obstruction}: a non-simply-connected free part
prevents weak Goldbergness.

\begin{assumption}
Throughout the rest of this section, we assume the standing
assumption (\cref{rem:standing}), suppose that \(X\) is
weakly Goldberg with witness \(X_0\subseteq X\)
(\cref{def:goldberg-strengths}), and fix a point
\(x\in X\) of valency \(k\coloneqq\val_X(x)\geq 2\).
Let \(m\) denote the number of \(x\)-components \(X^1,\dots,X^m\) of \(X\).
\end{assumption}

For a connected component \(L\subseteq\lk_X(x)\), write \(A_L\) for
the component of the punctured open star \(U_x\setminus\{x\}\) that
approaches \(x\) along \(L\) (\cref{sec:config-decomposition}). In
these terms question~(ii) above asks for which \(L\) one has
\(X_0\cap A_L\neq\varnothing\) -- equivalently, when the inclusion
\(\lk_{X_0}(x)\hookrightarrow\lk_X(x)\) is surjective on \(\pi_0\).

Most of the propositions below conclude by
isolating a non-trivial subgroup of \(\PB_n(\sfS_3)\) or
\(\PB_n(X^i)\) that embeds in
\(\PB_n(X)/\co{\PB_n(X^+)}^{\PB_n(X)}\) for some
\(X^+\supseteq X_0\) and then forcing it to be trivial by contractibility
or by the strand-map naturality. The following lemma records this
common step.

\begin{lemma}[Kernel--quotient reduction]\label{lem:kernel-quotient-reduction}
Let \(X^+\subseteq X\) be a connected
subspace with \(X_0\subseteq X^+\), let \(j^+\colon X^+\to X\) be the canonical inclusion, and set
\[
Q_{X^+}\coloneqq
\PB_n(X,\base)\big/\co{\im j^+_*}^{\PB_n(X,\base)}.
\]
Let \(Y\subseteq X\) be a connected subspace with basepoint
\(\bfy^0\in F_n(Y)\) identified with \(\base\) along a path in
\(F_n(X)\), and let \(H\subseteq\PB_n(Y,\bfy^0)\) be a subgroup.
Then the natural composition
\[
H\hookrightarrow\PB_n(Y,\bfy^0)
\xrightarrow{j_\ast^Y}\PB_n(X,\base)
\twoheadrightarrow Q_{X^+}
\]
factors as
\[
H\xrightarrow{\iota_\ast^Y|_H}
\prod_{\ell=1}^{n}\pi_1(Y,y_\ell^0)
\xrightarrow{i_\ast^n}
\prod_{\ell=1}^{n}\pi_1(X,x_\ell^0)
\twoheadrightarrow Q_{X^+}.
\]
Consequently, if \(H\to Q_{X^+}\) is injective then so is
\(\iota_\ast^Y|_H\colon H\to\prod_\ell\pi_1(Y,y_\ell^0)\). In particular,
if \(Y\) is contractible, then \(H=1\).
\end{lemma}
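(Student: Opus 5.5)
The plan is to show that the composite $H\to Q_{X^+}$ kills $\ker\iota_\ast^X$, and therefore passes through the strand map. The key observation is that $\ker\iota_\ast^X=\co{\im j_\ast}$, because $X_0$ is a weak Goldberg witness (the standing assumption of this section). Since $X_0\subseteq X^+$, the map $j_\ast$ factors through $j^+_\ast$. Hence $\im j_\ast\subseteq\im j^+_\ast$, and so
\[
\ker\iota_\ast^X=\co{\im j_\ast}\subseteq\co{\im j^+_\ast}.
\]
It follows that the projection $\PB_n(X,\base)\twoheadrightarrow Q_{X^+}$ factors through $\PB_n(X,\base)/\ker\iota_\ast^X\cong\im\iota_\ast^X$. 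Under the standing assumption $X\not\cong S^1$, so \cref{prop:iota-surj} makes $\iota_\ast^X$ surjective and this image is all of $\prod_\ell\pi_1(X,x_\ell^0)$. This gives the surjection $\prod_\ell\pi_1(X,x_\ell^0)\twoheadrightarrow Q_{X^+}$ in the statement. Surjectivity is not strictly needed for the factorisation, since defining the map on $\im\iota^X_\ast$ suffices, but it matches the displayed arrow.

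Next I use naturality of the strand map under $Y\hookrightarrow X$, the same commutative square as in the proof of \cref{lem:weak-goldberg-pi1-zero}, which gives $\iota_\ast^X\circ j_\ast^Y=i_\ast^n\circ\iota_\ast^Y$. Composing, the map $H\to\PB_n(X,\base)\to Q_{X^+}$ equals $H\to\prod\pi_1(Y)\to\prod\pi_1(X)\to Q_{X^+}$, which is the claimed factorisation. The one point needing care is the change of basepoint from $\bfy^0$ to $\base$ along a path $\gamma$ in $F_n(X)$. The coordinates of $\gamma$ give paths $x_\ell^0\to y_\ell^0$ in $X$, and conjugating by these is compatible with $\iota_\ast$ on both sides. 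The kernel $\ker\iota^X_\ast$ and the normal closure are normal subgroups, so they are unaffected by conjugation. I expect this bookkeeping to be the only real obstacle, and it is routine.

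For the consequences: if $H\to Q_{X^+}$ is injective, then the first factor $\iota_\ast^Y|_H$ of the factorisation must be injective. If $Y$ is contractible, then $\prod_\ell\pi_1(Y,y_\ell^0)=1$, so $\iota^Y_\ast|_H$ is trivial, and injectivity forces $H=1$.
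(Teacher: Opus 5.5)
Your proof is correct and matches the paper's argument step for step. The weak-witness equality gives $\ker\iota^X_\ast=\co{\im j_\ast}\subseteq\co{\im j^+_\ast}$, so the projection to $Q_{X^+}$ factors through $\iota^X_\ast$, which is surjective by \cref{prop:iota-surj}; naturality of the strand map under $Y\hookrightarrow X$ then yields the factorisation and both consequences, and your explicit treatment of the basepoint change is a harmless detail the paper leaves implicit.
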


\begin{proof}
The weakly Goldberg equality gives
\(\ker\iota_\ast^X = \langle\im j_\ast\rangle^{\PB_n(X,\base)}\). The inclusion \(X_0\subseteq X^+\)
yields \(\ker\iota_\ast^X=\langle \im j_\ast\rangle^{\PB_n(X,\base)} \subseteq \co{\im j^+_*}^{\PB_n(X,\base)}\),
so the projection \(\PB_n(X,\base)\twoheadrightarrow Q_{X^+}\) factors
through \(\PB_n(X,\base)/\ker\iota_\ast^X\). The latter is isomorphic to
\(\prod_\ell\pi_1(X,x_\ell^0)\) via \(\iota_\ast^X\), which is surjective
by \cref{prop:iota-surj} since \(X\not\cong S^1\).

The naturality square of the strand map under
\(j^Y\colon Y\hookrightarrow X\),
combined with the above factorisation of
\(\PB_n(X,\base)\twoheadrightarrow Q_{X^+}\) through \(\iota_\ast^X\) 
yields the displayed factorisation of \(H\to Q_{X^+}\).
\[
\begin{tikzcd}[column sep=huge]
H\ar[r,hookrightarrow] & \PB_n(Y,\bfy^0)
   \arrow[r, "\iota_\ast^Y"]
   \arrow[d, "j_\ast^Y"']
& \prod_{\ell=1}^{n}\pi_1(Y,y_\ell^0)
   \arrow[d, "i_\ast^n"] \\
&\PB_n(X,\base) \arrow[r, "\iota_\ast^X"', two heads]
& \prod_{\ell=1}^{n}\pi_1(X,x_\ell^0) \ar[r,two heads] & Q_{X^+},
\end{tikzcd}
\]

If \(H\to Q_{X^+}\) is injective, the first arrow \(\iota_\ast^Y|_H\) in
the factorisation must be injective. If moreover \(Y\) is contractible,
then \(\prod_\ell\pi_1(Y,y_\ell^0)=1\) and injectivity of
\(\iota_\ast^Y|_H\) forces \(H=1\).
\end{proof}

\subsection{Vertices with only one \(x\)-component}
Here \(m=1\), i.e.\ \(X_x\) is connected; we settle both questions
at such a vertex.

\begin{proposition}\label{prop:X0-contains-free-points}
If \(m=1\), i.e., \(X_x\) is connected, then \(x\in X_0\).
\end{proposition}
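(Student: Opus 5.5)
The plan is to argue by contradiction, using \cref{cor:valency-quotient}. Suppose \(x\notin X_0\). Since \(X_0\) is a finite subcomplex, after subdivision it is disjoint from a small open star of \(x\), so \(X_0\subseteq X_x\) and \(\base\in F_n(X_x)\). The inclusion \(X_0\hookrightarrow X\) then factors through \(X_x\). Hence
\[
\ker\iota_\ast=\co{\im j_\ast}\subseteq N\coloneqq\co{\PB_n(X_x,\base)}^{\PB_n(X,\base)}.
\]
Since \(X\not\cong S^1\) and \(X_x\) is connected, \cref{rem:cor28-hypotheses} shows that \(F_n(X_x)\) and \(F_{n-1}(X_x)\) are connected. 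So \cref{cor:valency-quotient} applies and gives
\[
\PB_n(X,\base)/N\cong\pi_1(\sfG_n(X,x))\cong\mathbb{F}_{n(k-1)},
\]
with the identification realised by the collapse map \(c_\ast\) of \cref{prop:pi1G-embedding}. It therefore suffices to exhibit an element of \(\ker\iota_\ast\) whose image under \(c_\ast\) is non-trivial.

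To build such an element, I use the explicit shape of \(\sfG_n(X,x)\). It has one \(F_n\)-type vertex and, for each strand \(i\), a vertex \(F^i_{n-1}\) joined to it by \(k\ge2\) edges, one per link component \(L^j\). Take the spanning tree through the \(L^1\)-edges. For each \(i\), let \(t_i\) be the stable letter of the \(L^2\)-edge. The \(t_i\) are distinct free generators of \(\pi_1(\sfG_n)\), and \(n\geq2\) gives at least two of them. Let \(\beta_i\coloneqq s(t_i)\in\PB_n(X,\base)\) be the section of \cref{prop:pi1G-embedding}. Geometrically, \(\beta_i\) keeps all strands except \(i\) stationary in \(X_x\), while strand \(i\) enters \(x\) along \(L^1\), leaves along \(L^2\), and returns to its basepoint inside \(X_x\). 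Hence \(\iota_\ast(\beta_i)\) is non-trivial only in the \(i\)-th coordinate. Consequently \(\iota_\ast(\beta_1)\) and \(\iota_\ast(\beta_2)\) commute in \(\prod_\ell\pi_1(X,x_\ell^0)\), so the commutator \(\gamma\coloneqq[\beta_1,\beta_2]\) lies in \(\ker\iota_\ast\). But \(c_\ast(\gamma)=[t_1,t_2]\neq1\) in the free group, so \(\gamma\notin N\supseteq\ker\iota_\ast\). This is a contradiction, and hence \(x\in X_0\).

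As an alternative packaging, one could apply \cref{lem:kernel-quotient-reduction} with \(X^+=X_x\) (or its homotopy-equivalent closed shrinking). The direct commutator argument above seems cleaner.

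The step I expect to need the most care is the identification of \(\iota_\ast(\beta_i)\): I need to check that strand \(i\)'s loop really is the only non-trivial coordinate. This follows because the other strands are stationary, so only the \(i\)-th coordinate of \(\iota_\ast(\beta_i)\) can be non-trivial. I also need that \(t_1,t_2\) are genuinely distinct basis elements of \(\pi_1(\sfG_n)\). This follows from the Euler-characteristic count in the proof of \cref{cor:valency-quotient}, since each off-tree edge contributes its own free generator. A minor technical point is ensuring that the basepoint and \(X_0\) avoid the open star \(U_x\), so that \(\im j_\ast\) lands in the vertex group \(\PB_n(X_x)\). This is handled by the subdivision convention \cref{conv:subcomplex} together with the retraction in Step~1 of the proof of \cref{prop:valency-decomp}.
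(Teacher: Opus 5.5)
Your proof is correct, and it takes a more direct route than the paper's. Both arguments start from \(\ker\iota_\ast=\co{\im j_\ast}\subseteq N\) and from \(\PB_n(X,\base)/N\cong\mathbb{F}_{n(k-1)}\) (\cref{cor:valency-quotient}).

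The paper then works purely with quotients. It obtains a surjection \(\prod_i\pi_1(X,x_i^0)\twoheadrightarrow\mathbb{F}_{n(k-1)}\) and factors it coordinatewise through \(\pi_1(X)/\co{\pi_1(X_x)}\cong\mathbb{F}_{k-1}\). That factorisation needs van Kampen, plus lifting classes of \(\pi_1(X_x)\) to braids via \cref{prop:iota-surj} applied to \(X\setminus\mathrm{st}(x)\). It then rules out a surjection \(\prod_i\mathbb{F}_{k-1}\twoheadrightarrow\mathbb{F}_{n(k-1)}\) using centralisers in free groups and a generator count.

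You instead exhibit a concrete element of \(\ker\iota_\ast\setminus N\): the commutator of lifts of two distinct stable letters whose strand images lie in different coordinates. This is the same mechanism as the shuffle braid of \cref{lem:H-shuffle}, whose collapse image is \(t_2t_1t_2^{-1}t_1^{-1}\). Your route avoids the \(\pi_1\)-level factorisation and the case analysis, and uses only \(n\geq2\) and \(k\geq2\). The paper's route never has to realise stable letters by explicit braids.

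One step needs more care than you give it. You do not need \(\beta_i=s(t_i)\), and ``all other strands stationary at \(\base\)'' may be unrealisable as stated. Other base points can sit on one-dimensional parts of \(X_x\) that block every route of strand \(i\) to \(x\) or back. Only \(c_\ast(\beta_i)=t_i^{\pm1}\) and the support of \(\iota_\ast(\beta_i)\) matter, so you can build \(\beta_i\) as follows.
\begin{itemize}
\item Using connectivity of \(F_n(X_x)\), choose a path \(\gamma\) in \(F_n(X_x)\) from \(\base\) to a configuration in which strand \(i\) lies in the \(L^1\)-cone near \(x\). The other strands should lie outside \(U_x\) and off a fixed embedded path \(P\subseteq X_x\) running from the \(L^2\)-cone back to the \(L^1\)-cone. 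This is possible because \(X_x\) is not an interval.
\item Let \(\mu_i\) move only strand \(i\): through \(x\) into the \(L^2\)-cone, then back along \(P\).
\item Set \(\beta_i=\gamma\mu_i\gamma^{-1}\).
\end{itemize}
Each strand \(\ell\neq i\) then traces \(\gamma_\ell\gamma_\ell^{-1}\), so \(\iota_\ast(\beta_i)\) is supported in the \(i\)-th coordinate. Since \(\gamma\) stays in the \(F_n(X_x)\) vertex space, \(c_\ast(\beta_i)=t_i^{\pm1}\), and \([t_1^{\pm1},t_2^{\pm1}]\neq1\) in the free group. With this adjustment your argument is complete.
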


\begin{proof}
Suppose, for contradiction, that \(x\notin X_0\). Then
\(X_0\subseteq X_x\), so the inclusion \(j\colon X_0\hookrightarrow X\)
factors as \(j=j^x\circ j^0\) through the two embeddings
\(j^0\colon X_0\hookrightarrow X_x\) and
\(j^x\colon X_x\hookrightarrow X\). Hence
\(\im j_\ast\subseteq\im j^x_\ast\) inside \(\PB_n(X,\base)\), the
normal closures satisfy
\(\co{\im j_\ast}\subseteq\co{\im j^x_\ast}\), and the identity of
\(\PB_n(X,\base)\) descends to a surjection
\[
\PB_n(X,\base)\big/\co{\im j_\ast}
\twoheadrightarrow
\PB_n(X,\base)\big/\co{\im j^x_\ast}
=
\PB_n(X,\base)\big/\co{\PB_n(X_x,\base)} .
\]
We identify the two sides. On the left, the weakly Goldberg
hypothesis gives \(\co{\im j_\ast}=\ker\iota_\ast\), while
\(\iota_\ast\) is surjective by \cref{prop:iota-surj}; hence
the left-hand side is \(\prod_{i=1}^n\pi_1(X,x_i^0)\). On the right,
\(X_x\) is connected, so by \cref{rem:cor28-hypotheses} the
connectivity hypotheses of \cref{cor:valency-quotient}
hold and the right-hand side is the free group
\(\mathbb{F}_{n(k-1)}\). Altogether we obtain a surjection
\[
\Phi\colon\prod_{i=1}^n\pi_1(X,x_i^0)
\twoheadrightarrow\mathbb{F}_{n(k-1)} .
\]

Next we factor \(\Phi\) coordinatewise. Since \(X_0\) is contractible,
the \(\pi_1\)-level analogue of the surjection above reads, in each
coordinate,
\[
\pi_1(X,x_i^0)
\twoheadrightarrow
\pi_1(X,x_i^0)\big/
\co{\im\bigl(\pi_1(X_x,x_i^0)\to\pi_1(X,x_i^0)\bigr)}
\cong\mathbb{F}_{k-1},
\]
the isomorphism being van Kampen's theorem for
\(X=(X\setminus\mathrm{st}(x))\cup\mathrm{Cone}(\lk_X(x))\), glued
along the \(k\) components of the link; write
\(p\colon\pi_1(X,x_i^0)\twoheadrightarrow\mathbb{F}_{k-1}\) for this
quotient map. The restriction of \(\Phi\) to the \(i\)-th factor
factors through \(p\): it kills the image of \(\pi_1(X_x,x_i^0)\),
since a class \(\bar h\) coming from \(h\in\pi_1(X_x,x_i^0)\) admits a
lift \(\beta_h\in\PB_n(X_x,\base)\) with
\(\iota_\ast(\beta_h)=(1,\dots,\bar h,\dots,1)\) -- apply
\cref{prop:iota-surj} to the finite subcomplex
\(X\setminus\mathrm{st}(x)\simeq X_x\), not homeomorphic to \(S^1\) as
it contains \(\sfS_3\) by \cref{rem:cor28-hypotheses} --
and \(\beta_h\) dies in
\(\PB_n(X,\base)\big/\co{\PB_n(X_x,\base)}\); kernels being normal,
the entire normal closure dies. Since the images of the \(n\)
factors commute elementwise, the universal property of the direct
product assembles these factorisations into a homomorphism
\[
\Theta\colon\prod_{i=1}^n\mathbb{F}_{k-1}
\longrightarrow\mathbb{F}_{n(k-1)}
\]
with \(\Phi=\Theta\circ\prod_ip\); in particular \(\Theta\) is
surjective.

No such surjection exists. The images \(A_1,\dots,A_n\) of the \(n\)
direct factors under \(\Theta\) commute with one another
elementwise. If at least two of the \(A_i\) are non-trivial then,
since the centraliser of a non-trivial element of a free group is
infinite cyclic, all the non-trivial \(A_i\) lie in a common
infinite cyclic subgroup, so
\(\im\Theta=\langle A_1,\dots,A_n\rangle\) is cyclic and cannot be
the free group \(\mathbb{F}_{n(k-1)}\) of rank \(n(k-1)\ge2\). If at
most one \(A_i\) is non-trivial, then \(\im\Theta=A_i\) is generated
by \(k-1\) elements, whereas \(\mathbb{F}_{n(k-1)}\) cannot be
generated by fewer than \(n(k-1)>k-1\) elements. In either case
\(\Theta\) fails to be surjective -- a contradiction. Hence
\(x\in X_0\).
\end{proof}

When \(m=1\) the witness moreover enters \(x\) from every link
direction.

\begin{proposition}\label{prop:X0-meets-link-m1}
If \(m=1\), then the inclusion
\(\lk_{X_0}(x)\hookrightarrow\lk_X(x)\) is surjective on \(\pi_0\);
equivalently, \(X_0\cap A_L\neq\varnothing\) for every connected
component \(L\subseteq\lk_X(x)\).
\end{proposition}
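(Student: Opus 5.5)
The plan is to argue by contradiction, along the lines of \cref{prop:X0-contains-free-points}, with the whole punctured space \(X_x\) replaced by the complement of a single link direction. Suppose some component \(L^1\subseteq\lk_X(x)\) has \(X_0\cap A_{L^1}=\varnothing\). By \cref{prop:X0-contains-free-points} we know \(x\in X_0\). Set \(X^+\coloneqq X\setminus A_{L^1}\), that is, \(X\) with the open cone direction over \(L^1\) deleted. This is a connected subcomplex, after subdivision, and it contains \(X_0\). We then study the quotient \(Q_{X^+}\) of \cref{lem:kernel-quotient-reduction} in two ways, from the braid side and from the strand side, and show the two descriptions are incompatible.

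\emph{Braid side.} Use the decomposition \(\cG=\cG_n((X,x),\base)\) of \cref{prop:valency-decomp} together with the collapse \(c_\ast\) of \cref{prop:pi1G-embedding}. Let \(\sfG_{X^+}\subseteq\sfG\) be the image of the distribution graph of \(X^+\) at \(x\), as in the proof of \cref{prop:star-avoiding-embedding}. Since \(m=1\), the punctured space \(X^+_x\) deformation retracts to something homotopy equivalent to \(X_x\), which is connected and contains \(\sfS_3\). Hence \(F_n(X_x)\) and every \(F^i_{n-1}(X_x)\) are connected (\cref{rem:cor28-hypotheses}), and they are hit by \(X^+\)-configurations. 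It follows that \(\sfG_{X^+}\) contains every vertex of \(\sfG\) and every edge except the \(n\) edges \((i,L^1)\) lying over \(L^1\), because no \(X^+\)-strand crosses \(x\) through \(A_{L^1}\). The quotient \(\sfG/\sfG_{X^+}\) is therefore a rose with \(n\) petals. The composite
\[
\PB_n(X,\base)\xrightarrow{c_\ast}\pi_1(\sfG)\to\pi_1(\sfG/\sfG_{X^+})\cong\mathbb{F}_n
\]
kills \(\co{\im j^+_\ast}\) and is surjective, since \(c_\ast\) admits the section of \cref{prop:pi1G-embedding}. This yields a surjection \(Q_{X^+}\twoheadrightarrow\mathbb{F}_n\) with \(n\ge2\).

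\emph{Strand side.} As in \cref{lem:kernel-quotient-reduction}, the weak Goldberg equality together with \cref{prop:iota-surj} shows that \(Q_{X^+}\) is a quotient of \(\prod_i\pi_1(X,x_i^0)\). So we obtain a surjection \(\Phi\colon\prod_i\pi_1(X,x_i^0)\twoheadrightarrow\mathbb{F}_n\). We then factor \(\Phi\) coordinatewise. Take \(h\in\pi_1(X^+,x_i^0)\). Applying \cref{prop:iota-surj} to \(X^+\), which is not \(S^1\) because it contains \(\sfS_3\), gives a braid \(\beta_h\in\PB_n(X^+)\) whose strand image is \(h\) in slot \(i\) and trivial elsewhere. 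Since \(\beta_h\) dies in \(Q_{X^+}\), the \(i\)-th factor map factors through \(\pi_1(X)/\co{\im\pi_1(X^+)}\).

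This last group is computed by van Kampen. Write \(X=X^+\cup\bar A\), where \(\bar A\) is the closed cone piece over \(L^1\). The intersection \(X^+\cap\bar A\) has two components: the apex \(x\) and the outer copy of \(L^1\), and \(\bar A\) is simply connected. Hence the quotient is infinite cyclic. So each factor image \(A_i\subseteq\mathbb{F}_n\) is cyclic, and the \(A_i\) commute pairwise.

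To finish, note that centralisers in a free group are cyclic, so \(\langle A_1,\dots,A_n\rangle\) is cyclic. It therefore cannot equal \(\mathbb{F}_n\) with \(n\ge2\), which is the desired contradiction.

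The main obstacle is the careful verification that \(\sfG_{X^+}\) is exactly \(\sfG\) minus the \(L^1\)-edges. This requires two checks: that the \(X^+\)-decomposition maps onto every \(F_n\)- and \(F^i_{n-1}\)-type vertex, which is where \(m=1\) and the connectivity from \cref{rem:cor28-hypotheses} are used, and that every edge over \(L^j\) with \(j\neq1\) is realised by an \(X^+\)-crossing. If this is delicate, a fallback is to collapse only the subgraph actually realised by \(X^+\), and observe that its complement still contains the \(n\) independent petals over \(L^1\). That suffices for a surjection onto \(\mathbb{F}_n\) after a further quotient by the remaining edges.
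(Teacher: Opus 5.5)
Your proof is correct, but it follows a different route from the paper's. The paper reduces to \cref{prop:X0-contains-free-points} through a geometric observation. A free edge at a vertex \(u\) with \(Y_u\) connected lies on a cycle, so its interior points are non-separating and are forced into every weak witness. If \(L\) is a point, the cone over \(L\) is such a free edge, so \(X_0\) meets \(A_L\). If \(\dim L\geq1\), the paper first resolves \(x\) along \(L\) (An--Park), checks that \(X_0\) is untouched and remains a witness for the resolved complex, and applies the same observation to the new free edge. You instead rerun the algebraic mechanism of the proof of \cref{prop:X0-contains-free-points}, with \(X^+=X\setminus A_{L^1}\) in place of \(X_x\). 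On the braid side, collapsing everything in the distribution graph except the \(n\) edges over \(L^1\) gives \(Q_{X^+}\twoheadrightarrow\mathbb{F}_n\). On the strand side, van Kampen for \(X=X^+\cup\overline{A_{L^1}}\), whose intersection is \(\{x\}\sqcup L^1\), shows that each coordinate contributes only \(\pi_1(X)/\co{\im\pi_1(X^+)}\cong\Z\); this step needs \(X^+\) connected, which is where \(m=1\) and \(k\geq2\) enter.

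What your route buys: it is uniform in \(\dim L\), and it needs neither the resolution with the transport of the witness across it nor the fact \(x\in X_0\). Its endgame is also simpler than you make it: \(\Phi\) factors through the abelian group \(\Z^n\), which cannot surject onto the non-abelian \(\mathbb{F}_n\), so the centraliser argument is unnecessary. The paper's route instead yields a reusable geometric principle (non-separating free edges are forced into the witness), at the cost of the An--Park machinery.

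Two small points. First, your ``fallback'' is actually the cleanest formulation. All you need is that no loop of \(F_n(X^+)\) crosses \(x\) through \(A_{L^1}\), and that the subgraph spanned by all vertices and all non-\(L^1\) edges is connected; it is, since \(k\geq2\). Second, the claim that \(X^+\) contains \(\sfS_3\) can fail for a coarse triangulation. A \(4\)-cycle with a chord at \(x\), with \(L^1\) the chord direction, gives \(X^+\cong S^1\), so \cref{prop:iota-surj} would not apply. You should first subdivide until \(X\setminus U_x\) contains a tripod of \(X_x\). This is harmless, since shrinking \(A_{L^1}\) preserves the hypothesis \(X_0\cap A_{L^1}=\varnothing\).
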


\begin{proof}
By \cref{prop:X0-contains-free-points} we have \(x\in X_0\).
Suppose, for contradiction, that \(X_0\cap A_L=\varnothing\)
for some connected component \(L\subseteq\lk_X(x)\).

We use the following observation. Let \(\sfe\) be a free edge of a
finite complex \(Y\) incident to a vertex \(u\) of valency at least
\(2\), and suppose the punctured space \(Y_u=Y\setminus\{u\}\) is
connected. Then \(\sfe\) lies on a cycle of \(Y\): its far endpoint
rejoins \(u\) through \(Y_u\), for otherwise that endpoint would be
a leaf and \(\sfe\setminus\{u\}\) a connected component of \(Y_u\) on
its own. Consequently no interior point \(y\) of \(\sfe\) separates
\(Y\), so each such \(y\) has a single \(y\)-component; if \(Y\) is
weakly Goldberg with witness \(Y_0\),
\cref{prop:X0-contains-free-points} -- applied with \(y\) in place
of \(x\) -- then places \(y\) in \(Y_0\), and letting \(y\) vary we
conclude that the interior of \(\sfe\) should be contained in \(Y_0\).

\emph{Case 1: \(L\) is \(0\)-dimensional.} The cone over \(L\)
is then a free edge \(\sfe\) of \(X\) incident to \(x\) and running
into \(A_L\). Since \(X_x\) is connected (\(m=1\)) and
\(\val_X(x)\geq 2\), the observation gives \(\sfe\subseteq X_0\); but
then \(X_0\) meets \(A_L\), a contradiction.

\emph{Case 2: \(\dim L\geq 1\).} Resolve \(x\) along \(L\)
(\cref{ssec:simple-an-park}): this separates the cone over \(L\)
from \(x\), producing a complex \(X'\) with a new apex \(\sfv'\) and a
new free edge \(\sff\) joining \(x\) to \(\sfv'\), and \(X\) is recovered
by collapsing \(\sff\). The resolution is a braid equivalence
\(X\equiv_B X'\) (An--Park~\cite[Prop.~3.11]{AnPark2017}), so the
collapse \(q_\sff\colon X'\to X'/\sff=X\) and two inclusions \(X_0\to X'\) and \(X_0\to X\) induce a commutative square of
pure braid groups and strand maps whose vertical maps
are isomorphisms. 
\[
\begin{tikzcd}
\PB_n(X_0) \ar[r,"j_*"] \ar[d,equal] & \PB_n(X) \ar[d,"(q_\sff)^*"]\\ %
\PB_n(X_0) \ar[r,"j'_*"] & \PB_n(X') %
\end{tikzcd}
\]
As \(X_0\) misses \(A_L\), it is untouched by the resolution and includes into both \(X\) and
\(X'\) compatibly with \(q_\sff\); transporting the witness equality
\(\co{\im j_\ast}=\ker\iota_\ast\) across the square shows that
\(X_0\) is a weakly Goldberg witness for \(X'\) as well.

Now \(\sff\) is a free edge of \(X'\) at \(x\). Since \(q_\sff\)
restricts to a homeomorphism
\(X'\setminus \sff\xrightarrow\cong X\setminus\{x\}\) and the
latter is connected (\(m=1\)), the punctured space
\(X'_x=X'\setminus\{x\}\) is connected as well. The observation,
applied to \(\sff\) in \(X'\), gives \(\sff\subseteq X_0\); but \(\sff\) runs
into the \(L\)-direction, which \(X_0\) misses -- a contradiction.

In either case \(X_0\cap A_L=\varnothing\) is untenable, so
\(X_0\) meets \(A_L\) for every component \(L\subseteq\lk_X(x)\).
\end{proof}

\subsection{Vertices of valency at least three}

We turn to vertices of valency \(\val_X(x)\geq 3\). The case
\(m=1\) has already been settled, at every valency, by
\cref{prop:X0-contains-free-points,prop:X0-meets-link-m1}, so here
we may assume \(m\geq 2\) and treat the two regimes \(m\geq 3\) and
\(m=2\); in either, a tripod or lollipop embeds at \(x\) and drives
the argument.

\begin{proposition}\label{prop:X0-contains-valency-3}
If \(m\geq 3\), then \(x\in X_0\).
\end{proposition}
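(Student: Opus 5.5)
We argue by contradiction: suppose \(x\notin X_0\). Since \(X_0\) is connected and misses \(x\), it lies in a single punctured \(x\)-component, say \(X_0\subseteq X^1_x\subseteq X^1\). We take \(X^+\coloneqq X^1\) in the kernel--quotient reduction (\cref{lem:kernel-quotient-reduction}). The goal is to exhibit a non-trivial subgroup \(H\) of the braid group of a contractible \(Y\subseteq X\) whose image in \(Q_{X^1}=\PB_n(X,\base)/\co{\PB_n(X^1)}\) is injective. Once we have this, \cref{lem:kernel-quotient-reduction} forces \(H=1\), which is a contradiction.

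For \(Y\) we take the embedded star \(\sfS_m\subseteq X\) of \cref{ssec:local-graph-embeddings}. It has centre \(x\) and one short leaf \(a^i\) running into each \(x\)-component \(X^i\). This star is a tree, hence contractible, and \(m\geq3\) legs are available. By \cref{prop:star-embedding}, the inclusion induces an embedding
\[
\PB_n(\sfS_m,\bfy^0)\hookrightarrow\PB_n(X,\base)\big/\co{\PB_n(X^1),\dots,\PB_n(X^m)}.
\]
The quotient on the right is a further quotient of \(Q_{X^1}\), so the composite \(\PB_n(\sfS_m)\to Q_{X^1}\) is already injective. We therefore take \(H=\PB_n(\sfS_m,\bfy^0)\). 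It remains to check \(H\neq1\). Because \(m\geq3\) and \(n\geq2\), the star contains a tripod, and \(\PB_2\) of a tripod is \(\Z\), generated by the square of the hexagonal exchange. With the other \(n-2\) strands parked far out on a leaf, this element is non-trivial in \(\PB_n(\sfS_m)\); this follows for instance from Step~1 of \cref{prop:star-embedding}, since the exchange traverses an embedded cycle of the distribution graph \(\sfG_n((\sfS_m,x),\bfy^0)\). Alternatively, the rank formula of \cref{ex:one-essential} shows that \(\BG_n(\Gamma_{0,m})\) is a non-trivial free group with \(\PB_n\) of finite index, hence non-trivial.

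Putting these together, \cref{lem:kernel-quotient-reduction} applies with \(X^+=X^1\supseteq X_0\), \(Y=\sfS_m\) contractible, and \(H=\PB_n(\sfS_m)\) mapping injectively to \(Q_{X^+}\). It gives \(H=1\), contradicting the non-triviality just established. Hence \(x\in X_0\).

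The main point needing care is the first step. We must justify that \(X_0\subseteq X^1\) for a single \(x\)-component, and that the normal closure of \(\im j^+_\ast\) for \(X^+=X^1\) (transported along a path) is contained in the normal closure of all sides appearing in \cref{prop:star-embedding}. The first holds because \(X_0\) is connected and the punctured components \(X^i_x\) are open and disjoint in \(X_x\). The second is immediate, since changing the basepoint path only conjugates and the normal closure is independent of these choices, as remarked in \cref{prop:star-embedding}. Notably, \(m\geq3\) is used only to guarantee a tripod, and hence \(H\neq1\). For \(m=2\) the star is an arc with trivial braid group, which is why that regime needs the lollipop instead.
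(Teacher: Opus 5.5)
Your proof is correct and is essentially the paper's own argument: you confine \(X_0\) to a single \(x\)-component \(X^+\), use the second embedding of \cref{prop:star-embedding} (into the quotient by all sides, which is a further quotient of \(Q_{X^+}\)), and then let \cref{lem:kernel-quotient-reduction} with the contractible star force \(\PB_n(\sfS_m)=1\), contradicting its non-triviality for \(m\geq3\), \(n\geq2\). The only difference is how \(\PB_n(\sfS_m)\neq1\) is certified. The paper forgets the parked strands down to the tripod exchange, whereas you use an embedded cycle of the distribution graph or the rank formula of \cref{ex:one-essential}; either justification works.
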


\begin{proof}
Suppose, for contradiction, that \(x\notin X_0\). Then
\(X_0\subseteq X_x=\coprod_{i=1}^m X^i_x\), where \(\{X^i\}\) denotes
the (finite) collection of all \(x\)-components of \(X\). Since
\(X_0\) is connected and contains \(\base\), it lies in a single
component of \(X_x\), say \(X^+\), so that %
\(X_0\subseteq X^+\) and \(\base\in F_n(X^+_x)\).

Let \(\sfS_m\subseteq X\) be the embedded star of
\cref{prop:star-embedding}, with one leaf running into each of the
\(m\) \(x\)-components through link points
\(p^i\in\lk_X(x)\cap X^i\), and fix a basepoint
\(\bfy^0\in F_n(\sfS_m)\) identified with \(\base\) along a path in
\(F_n(X)\). By \cref{prop:star-embedding},
\[
\PB_n(\sfS_m,\bfy^0)\hookrightarrow
\PB_n(X,\base)\big/
\co{\PB_n(X^1),\dots,\PB_n(X^m)}^{\PB_n(X,\base)}.
\]
Since \(X^+\) is one of the \(x\)-components, the target is a
further quotient of
\[Q_{X^+}\coloneqq
\PB_n(X,\base)/\co{\PB_n(X^+,\base)}^{\PB_n(X,\base)},\]
and the composite
\(\PB_n(\sfS_m,\bfy^0)\to Q_{X^+}\to
\PB_n(X,\base)/\co{\PB_n(X^1),\dots,\PB_n(X^m)}\) being injective
forces
\[
\PB_n(\sfS_m,\bfy^0)\hookrightarrow Q_{X^+}.
\]

The existence of three distinct \(x\)-components at \(x\) forces
\(X\not\cong S^1\) (any \(x\)-component of \(S^1\) is again
\(S^1\)). Apply \cref{lem:kernel-quotient-reduction} with
\(Y=\sfS_m\) (a tree, hence contractible) and
\(H=\PB_n(\sfS_m,\bfy^0)\): the embedding into \(Q_{X^+}\) just
established implies \(\PB_n(\sfS_m)=1\). But
\(\PB_n(\sfS_m)\) is non-trivial for \(n\geq2\) and \(m\geq3\): the
pure braid in which two strands perform the hexagonal exchange of
\cref{ex:S3} on three of the arms, while the remaining strands
stay parked near the tips of their arms, is non-trivial, as one
sees by forgetting the parked strands. Contradiction.
\end{proof}

\begin{proposition}\label{prop:X0-contains-m2-val3}
Suppose \(m=2\) and \(\val_X(x)\geq 3\). Then \(x\in X_0\).
\end{proposition}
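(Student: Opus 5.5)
The plan is to argue by contradiction, as in \cref{prop:X0-contains-valency-3}, with the embedded lollipop of \cref{prop:lollipop-embedding} in place of the star. Suppose \(x\notin X_0\). Since \(X_0\) is connected and \(X_x=X^1_x\sqcup X^2_x\), the witness lies in a single punctured side, \(X_0\subseteq X^+\coloneqq X^i_x\) for some \(i\in\{1,2\}\), and \(\base\in F_n(X^i_x)\). To respect \cref{conv:subcomplex}, I would replace \(X^i_x\) by the subcomplex \(X^i\setminus U_x\), after shrinking the open star so that \(X_0\) misses \(U_x\). Its configuration spaces are homotopy equivalent to those of \(X^i_x\) by the radial retraction of Step~1 of \cref{prop:valency-decomp}, so the normal closures agree. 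Label the sides as in the construction before \cref{prop:lollipop-embedding}: \(X^1\) carries the two link components \(L^1_1\neq L^1_2\), and \(X^2\) carries the pendant arc. This gives the embedded lollipop \(\Lollipop\subseteq X\) with trivalent vertex \(x\).

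By \cref{prop:lollipop-embedding}, \(\PB_n(\Lollipop,\bfy^0)\) embeds in \(\PB_n(X,\base)/\co{\PB_n(X^1_x),\PB_n(X^2_x)}\). This target is a further quotient of \(Q_{X^+}=\PB_n(X,\base)/\co{\im j^+_\ast}\), where \(X^+=X^i_x\) is whichever punctured side contains \(X_0\). Hence \(\PB_n(\Lollipop)\to Q_{X^+}\) is injective in either case, and the same argument treats both values of \(i\) uniformly. Since \(m=2\), \(X\not\cong S^1\), so \cref{lem:kernel-quotient-reduction} applies with \(Y=\Lollipop\) and any subgroup \(H\subseteq\PB_n(\Lollipop)\). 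It tells us that \(\iota^{\Lollipop}_\ast|_H\colon H\to\prod_\ell\pi_1(\Lollipop)\cong\Z^n\) must be injective.

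Now take \(H=\ker\iota^{\Lollipop}_\ast\). Then \(\iota^{\Lollipop}_\ast|_H\) is trivial, so injectivity forces \(H=1\). This is impossible. By \cref{ex:one-essential}, \(\PB_n(\Lollipop)\) is free of rank \((n-1)n!+1\geq3\) for \(n\geq2\), hence non-abelian. Its image under \(\iota_\ast\) lies in the abelian group \(\Z^n\), so any non-trivial commutator of \(\PB_n(\Lollipop)\) lies in the kernel. Alternatively, \(H\) contains the hexagonal-exchange braids on the tripod at \(x\), which are non-trivial by \cref{lem:H-shuffle}-type reasoning, but the rank argument suffices. This contradiction shows \(x\in X_0\).

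The main point needing care is the application of \cref{lem:kernel-quotient-reduction} with \(X^+\) a punctured side. One must check that \(X_0\subseteq X^+\) and that \(X^+\) is connected; both follow from \(X_0\) connected and from \(X^i_x\) being a component of \(X_x\). One must also check that the basepoint transports are compatible, which alters maps only by conjugation and so affects no injectivity statement.
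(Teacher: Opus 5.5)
Your proof is correct and follows the paper's skeleton: contradiction, \(X_0\) confined to one punctured side \(X^+_x\), the punctured-sides clause of \cref{prop:lollipop-embedding} giving \(\PB_n(\Lollipop)\hookrightarrow Q_{X^+_x}\), then \cref{lem:kernel-quotient-reduction}. Where you differ is the final step.

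The paper uses only the contractible-probe clause of that lemma. It therefore passes to the tripod \(\sfS_3\subseteq\Lollipop\) centred at \(x\). This requires \cref{rem:star-link-components} to see that \(\PB_n(\sfS_3)\to\PB_n(\Lollipop)\) is injective, together with the non-triviality of \(\PB_n(\sfS_3)\).

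You instead use the general clause with \(Y=\Lollipop\) itself. Injectivity into \(Q_{X^+_x}\) would force \(\iota^{\Lollipop}_\ast\) to embed \(\PB_n(\Lollipop)\) in the abelian group \(\Z^n\), which is impossible since \(\PB_n(\Lollipop)\) is non-abelian. Taking \(H=\ker\iota^{\Lollipop}_\ast\) is one way to phrase this. The paper's tripod subgroup is in fact a subgroup of your \(H\), since tripod braids have trivial strand classes.

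Your route is shorter and needs no star-in-lollipop embedding. Its only extra input is non-abelianness, which you take from the rank formula of \cref{ex:one-essential}. It also follows more elementarily: \(\PB_n(\Lollipop)\) is free, because all vertex and edge spaces of its decomposition at \(x\) are contractible, and it surjects onto \(\Z^n\) with \(n\geq 2\) by \cref{lem:lollipop}. The paper's route, in exchange, exhibits an explicit non-trivial kernel element supported in a contractible tripod, keeping the argument parallel to \cref{prop:X0-contains-valency-3,prop:X0-meets-link-components}.

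Your replacement of \(X^i_x\) by the subcomplex \(X^i\setminus U_x\) is harmless but unnecessary, since the lemma only asks for a connected subspace.
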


\begin{proof}
Suppose, for contradiction, that \(x\notin X_0\). Then
\(X_0\subseteq X_x = X^1_x\coprod X^2_x\), and since \(X_0\) is
connected and contains \(\base\) it lies in a single
\(x\)-component, say \(X^+\) as before; since \(x\notin X_0\), in
fact \(X_0\subseteq X^+_x\).

Since \(\val_X(x)\geq 3\), one of \(\val_{X^1}(x)\) and
\(\val_{X^2}(x)\), say \(\val_{X^1}(x)\), is at least \(2\). We
pick three points in \(\lk_X(x)\), with \(p^1_1,p^1_2\) chosen in two
distinct connected components of \(\lk_{X^1}(x)\) -- possible since
\(\val_{X^1}(x)\geq2\), and so that the cycle of \(\Lollipop\) can be
closed up inside that \(x\)-component -- and \(p^2\) in \(\lk_{X^2}(x)\). The lollipop graph
\(\Lollipop=\mathsf{C}\cup\mathsf{\sfe}\)
(\cref{fig:lollipop}) then admits an embedding
\(\Lollipop\hookrightarrow X\) with the trivalent vertex of
\(\Lollipop\) mapped to \(x\), the two ends of \(\mathsf{C}\) attached
along \(p^1_1,p^1_2\), and the parking edge \(\mathsf{\sfe}\) along \(p^2\);
the induced link map \(\lk_{\Lollipop}(x)\to\lk_X(x)\) is then an
injection. The canonical inclusion
\(\sfS_3\hookrightarrow\Lollipop\), sending the central
vertex of \(\sfS_3\) to the trivalent vertex of \(\Lollipop\)
and extending each leaf a short distance into the adjacent edge
of \(\Lollipop\), supplies a composition
\[
\sfS_3 \hookrightarrow \Lollipop \hookrightarrow X.
\]

Let us fix a basepoint \(\bfy^0\in F_n(\sfS_3)\subset F_n(\Lollipop)\).
Because \(\Lollipop_x\) is a disjoint
union of two open arcs, every vertex space and every edge space
of \(\cG_n(\Lollipop,x)\) is contractible, hence
\[
\PB_n(\Lollipop)
=\pi_1\bigl(\cG_n(\Lollipop,x)\bigr)
=\pi_1\bigl(\sfG_n(\Lollipop,x)\bigr)
\]
is a free group, of rank \((n-1)\,n!+1\) (the example following the
lollipop graph in
\cref{sec:config-decomposition}). Likewise
\(\PB_n(\sfS_3,\bfy^0)=\pi_1(\sfG_n(\sfS_3,x))\) is
a free group; the vertex/edge counts of the graph-of-spaces of
the tripod (via \cref{prop:valency-decomp}) give a rank of
\(1 + n!\,(n+1)(n-2)/2\).

The kernel--quotient reduction
(\cref{lem:kernel-quotient-reduction}) requires a
\emph{contractible} probe, which \(\Lollipop\) is not; we
therefore pass to the tripod via the homomorphism
\[
\phi\colon
\PB_n(\sfS_3,\bfy^0)\longrightarrow
\PB_n(\Lollipop,\bfy^0)
\]
induced by the canonical inclusion
\(\sfS_3\hookrightarrow\Lollipop\). In fact \(\phi\) is
\emph{injective}: \(\sfS_3\) is an embedded star at \(x\) inside
\(\Lollipop\) whose three leaves enter the three -- pairwise
distinct -- link components of \(\lk_\Lollipop(x)\), so
\cref{rem:star-link-components} applies with \(\Lollipop\) in the
role of the ambient complex. We may therefore take
\[
H \coloneqq \PB_n(\sfS_3,\bfy^0)
\]
itself; for \(n=2\), for instance, \(H\cong\Z\) is generated by
the hexagonal exchange, whose \(\phi\)-image drags one strand
partway around \(\mathsf{C}\).

\noindent\emph{Claim.}\ The composition
\[
H=\PB_n(\sfS_3,\bfy^0)\xrightarrow{\phi}
\PB_n(\Lollipop,\bfy^0)\longrightarrow
\PB_n(X,\base)\twoheadrightarrow
\PB_n(X,\base)\big/\co{\PB_n(X^+_x)}^{\PB_n(X,\base)}
=: Q_{X^+_x}
\]
is injective.

\smallskip
Indeed, by the punctured-sides clause of
\cref{prop:lollipop-embedding}, \(\PB_n(\Lollipop,\bfy^0)\) embeds
into the quotient
\(\PB_n(X,\base)/\co{\PB_n(X^1_x),\,\PB_n(X^2_x)}^{\PB_n(X,\base)}\);
as \(X^+_x\in\{X^1_x,X^2_x\}\), the latter is a further quotient
of \(Q_{X^+_x}\), and the injectivity of the composite forces
\(\PB_n(\Lollipop,\bfy^0)\hookrightarrow Q_{X^+_x}\). The
composition of the claim equals \(\phi\) followed by this
embedding; as \(\phi\) is injective, the composition is
injective, proving the claim.

\smallskip
With the claim in hand, \(\val_X(x)\geq 3\) forces \(X\not\cong S^1\),
so \cref{lem:kernel-quotient-reduction} applies with the connected
subspace \(X^+_x\supseteq X_0\) in the role of \(X^+\),
\(Y=\sfS_3\) (contractible), and the given \(H\); it gives
\(H=1\).
However, \(H=\PB_n(\sfS_3,\bfy^0)\) is non-trivial for
\(n\geq 2\), and
this contradicts
\(H=1\).
\end{proof}

The next proposition strengthens
\cref{prop:X0-contains-valency-3,prop:X0-contains-m2-val3} in the common regime
\(\val_X(x)\geq 3\) and \(m\geq 2\): not only does \(X_0\) contain \(x\), it
also enters \(x\) from every link direction.

\begin{proposition}\label{prop:X0-meets-link-components}
Suppose \(\val_X(x)\geq 3\) and \(m\geq 2\). Then 
the inclusion \(\lk_{X_0}(x)\hookrightarrow\lk_X(x)\) is surjective on
\(\pi_0\).
\end{proposition}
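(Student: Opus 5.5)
By \cref{prop:X0-contains-valency-3,prop:X0-contains-m2-val3} we already know \(x\in X_0\). Suppose, for contradiction, that \(X_0\cap A_{L^1}=\varnothing\) for some link component \(L^1\subseteq\lk_X(x)\). The plan is to apply \cref{prop:star-avoiding-embedding} with \(Z\coloneqq X_0\), which is connected, contains \(\base\), and misses the cone direction \(A_{L^1}\). This produces an embedding
\[
\PB_n(\sfS_3,\bfy^0)\hookrightarrow
\PB_n(X,\base)\big/\co{\im j_\ast}^{\PB_n(X,\base)} = Q_{X_0}.
\]
We then conclude exactly as in \cref{prop:X0-contains-valency-3}. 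Invoke \cref{lem:kernel-quotient-reduction} with \(X^+=X_0\) and the contractible probe \(Y=\sfS_3\); here \(X\not\cong S^1\) because \(\val_X(x)\geq3\). This forces \(\PB_n(\sfS_3)=1\), contradicting the non-triviality of the hexagonal exchange for \(n\geq2\).

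The work lies in choosing the tripod so that one of the hypotheses (a) or (b) preceding \cref{prop:star-avoiding-embedding} holds. Since \(\val_X(x)\geq3\), pick two further link components \(L^2,L^3\) distinct from \(L^1\), with leaves through \(p^j\in L^j\). We split according to how the \(x\)-components are distributed.

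\emph{Case A.} Some choice of \(L^2,L^3\) lies in distinct \(x\)-components. Then (a) holds directly. This case is always available when \(m\geq3\): take \(L^2,L^3\) in two \(x\)-components, at least one of them different from the \(x\)-component of \(L^1\).

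\emph{Case B.} Here \(m=2\) and every admissible pair \(L^2,L^3\) lies in a single \(x\)-component. Let \(X^1\) be the \(x\)-component containing \(L^1\). If \(X^1\) contains another link component besides \(L^1\), take \(L^2\) there and \(L^3\) in the other \(x\)-component, and we are back in Case A. Otherwise \(L^1\) is the only link component of \(X^1\), and \(L^2,L^3\) lie in \(X^2\). We then aim for (b), namely \(Z\cap X^1=\{x\}\). Because \(X_0\) is connected, contains \(x\), and avoids \(A_{L^1}\), which is the only way into \(X^1_x\) near \(x\), the set \(X_0\cap X^1_x\) is a union of components of \(X_0\setminus\{x\}\) that do not approach \(x\). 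Since \(X_0\) is connected, any such component would be disjoint from the rest of \(X_0\); it is therefore empty, provided the base configuration is chosen in \(X^2_x\). Moving the basepoint by a path in \(F_n(X_0)\) lets us assume this, after checking that \(\base\) can be chosen there.

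I expect Case B to be the main obstacle. The issue is ensuring that the basepoint and the transport paths used in \cref{prop:star-avoiding-embedding} are compatible with \(X_0\subseteq X^2\cup\{x\}\). If \(\base\) were forced into \(X^1\), then \(X_0\) would have to reach it through \(x\), hence through \(A_{L^1}\), which is impossible. So it remains to confirm that the witness condition is insensitive to moving \(\base\) inside \(F_n(X_0)\), which is a routine basepoint-change conjugation.
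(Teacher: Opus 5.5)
Your proposal is correct and matches the paper's proof step for step. Both split on whether two link components other than \(L^1\) lie in distinct \(x\)-components, use hypothesis (a) of \cref{prop:star-avoiding-embedding} in that case and hypothesis (b) in the residual case \(m=2\), \(\val_{X^1}(x)=1\), and then finish with \cref{lem:kernel-quotient-reduction}. The basepoint concern you flag as the main obstacle in Case B is not real: connectedness of \(X_0\) alone forces every component of \(X_0\setminus\{x\}\) to accumulate at \(x\), so one lying in \(X^1_x\) would meet \(A_{L^1}\); hence \(X_0\cap X^1=\{x\}\) outright, and \(\base\in F_n(X_0)\) lies on the \(X^2\) side without any transport.
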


\begin{proof}
Under the present hypotheses,
\cref{prop:X0-contains-valency-3,prop:X0-contains-m2-val3} already give \(x\in X_0\), so
\(\lk_{X_0}(x)\) is non-empty. We must show \(X_0\cap A_L\neq\varnothing\)
for every component \(L\subseteq\lk_X(x)\).

Suppose, for contradiction, that there exists a link component
\(L^1\subseteq\lk_X(x)\) with \(X_0\cap A_{L^1}=\emptyset\). Write
\(X^1\) for the \(x\)-component containing \(L^1\) and fix
\(p^1\in L^1\). 

If there exist two link components
\(L^2,L^3\subseteq\lk_X(x)\setminus L^1\) whose \(x\)-components
are distinct, then
pick link points \(p^2\in L^2\)
and \(p^3\in L^3\), and form the canonical star embedding
\(\sfS_3\hookrightarrow X\) with central vertex \(x\) and three
short leaves entering \(\lk_X(x)\) at \(p^1,p^2,p^3\); the induced
link map is the injection
\(\{p^1,p^2,p^3\}\hookrightarrow\lk_X(x)\). The witness \(X_0\) is
a connected subcomplex with \(X_0\cap A_{L^1}=\varnothing\) by the
contradiction hypothesis, and the leaves through \(p^2\) and
\(p^3\) run into distinct \(x\)-components, so
\cref{prop:star-avoiding-embedding}, applied with \(Z=X_0\),
yields an embedding
\[
H\coloneqq\PB_n(\sfS_3,\bfy^0)\hookrightarrow
\PB_n(X,\base)\big/\co{\PB_n(X_0,\base)}^{\PB_n(X,\base)}
=: Q_{X_0}.
\]
Since \(\val_X(x)\geq 3\) forces \(X\not\cong S^1\),
\cref{lem:kernel-quotient-reduction} applies with \(X^+=X_0\),
\(Y=\sfS_3\) (contractible), and this \(H\); it yields
\(\PB_n(\sfS_3)=1\), contradicting the non-triviality of the
tripod braid group for \(n\geq 2\).

It remains to treat the case in which every link component of
\(\lk_X(x)\) other than \(L^1\) lies in a single
\(x\)-component. That component must be distinct from \(X^1\)
(otherwise \(m=1\)); in particular no link component other than
\(L^1\) lies in \(X^1\), so \(m=2\), \(\val_{X^1}(x)=1\), and
\(\val_{X^2}(x)\geq 2\) for the other component \(X^2\). (The
labelling is thus forced: were \(\val_{X^1}(x)\geq2\), a second
link component in \(X^1\) together with one in \(X^2\) would
lie in distinct \(x\)-components, and the previous case would
apply.) We claim
that then \(X_0\cap X^1=\{x\}\): a component of
\(X_0\setminus\{x\}\) lying in \(X^1_x\) would have \(x\) in its
closure (as \(X_0\) is connected), and since \(L^1\) is the only
link component of \(x\) in \(X^1\), it would meet the cone
direction \(A_{L^1}\), contradicting
\(X_0\cap A_{L^1}=\varnothing\). Pick two distinct link components
\(L^2,L^3\subseteq\lk_{X^2}(x)\) -- possible since
\(\val_{X^2}(x)\geq2\) -- form the star \(\sfS_3\) through
\(p^1,p^2,p^3\) as before, and apply case (b) of
\cref{prop:star-avoiding-embedding} with \(Z=X_0\): it yields an
embedding \(\PB_n(\sfS_3,\bfy^0)\hookrightarrow Q_{X_0}\), and the
kernel--quotient reduction concludes exactly as above. This
completes the proof.
\end{proof}

\subsection{Bivalent vertices with exactly two \(x\)-components}

The remaining regime is precisely that of
\cref{cor:other-x-component-embeds}: \(x\) is an interior point of a
free edge of \(X\) with \(X_x\) disconnected, so that
\(\val_X(x)=2\), the link \(\lk_X(x)\) is a pair of points, and \(X\)
has exactly two \(x\)-components \(X^1,X^2\) meeting only at \(x\) (in
particular \(m=2\)). A mild non-degeneracy hypothesis on \(X^1,X^2\)
then settles both questions at once.

\begin{proposition}\label{prop:X0-contains-nontrivial-junction}
Suppose \(x\) is an interior point of a free edge of \(X\) with
\(X_x\) disconnected -- the hypothesis of
\cref{cor:free-edge-both-quotient}, so that \(\val_X(x)=2\) and \(X\)
has exactly two \(x\)-components \(X^1,X^2\) (whence \(m=2\)) -- and
that neither \(X^1\) nor \(X^2\) is homeomorphic to an interval.
Then
\(X_0\cap X^1_x\neq\varnothing\neq X_0\cap X^2_x\). In particular, \(x\in X_0\) and the inclusion \(\lk_{X_0}(x)\hookrightarrow \lk_X(x)\) is surjective on \(\pi_0\).
\end{proposition}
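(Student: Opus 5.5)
Suppose, for contradiction, that \(X_0\cap X^2_x=\varnothing\); the case \(X_0\cap X^1_x=\varnothing\) is symmetric. Since \(X_0\) is connected and contains \(\base\), this forces \(X_0\subseteq X^1\), and \(X^1\) is a connected subcomplex. So we set \(X^+\coloneqq X^1\) and aim for a contradiction with \cref{lem:kernel-quotient-reduction}. We will exhibit a contractible subspace \(Y\subseteq X\) and a non-trivial subgroup \(H\subseteq\PB_n(Y)\) that injects into
\[
Q_{X^1}=\PB_n(X,\base)\big/\co{\PB_n(X^1)}^{\PB_n(X,\base)}.
\]
The lemma then forces \(H=1\). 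Its hypothesis \(X\not\cong S^1\) holds because \(x\) is a separating point.

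\textbf{First probe: a subgroup of \(X^2\).} \Cref{cor:other-x-component-embeds} gives an embedding \(\PB_n(X^2)\hookrightarrow Q_{X^1}\). Since \(X^2\) is connected and is neither an interval nor \(S^1\) (it has the univalent vertex \(x\)), it contains a tripod \(\sfS_3\subseteq X^2_x\). The natural choice is \(Y=\sfS_3\) and \(H=\im\bigl(\PB_n(\sfS_3)\to\PB_n(X^2)\bigr)\). The difficulty is that this \(H\) might be trivial, since \(\PB_n(\sfS_3)\to\PB_n(X^2)\) need not be injective; for instance, the tripod may sit in a thick part of \(X^2\), where it maps into a disc braid group, and for \(n=2\) in a branched region it could even die. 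So this probe alone is not reliable.

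\textbf{Second probe: the shuffle braid.} This is the step I expect to carry the argument. Because neither \(X^1\) nor \(X^2\) is an interval, each endpoint side of the free edge \(\sfe\) either has valency at least \(3\) or lies in a thick component. We therefore extend a path \(\alpha\supseteq\sfe\) to endpoints \(\sfv\in X^1\), \(\sfv'\in X^2\) meeting the hypothesis of the construction preceding \cref{lem:H-shuffle}. This yields an embedded tree \(\sfH\) whose middle bar contains \(\sfe\) and the point \(x\).

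\Cref{cor:shuffle-survives-quotient} gives
\[
\gamma_\sfH\notin\co{\PB_n(X^1),\PB_n(X^2)}^{\PB_n(X,\base)}.
\]
The quotient by both sides is a further quotient of \(Q_{X^1}\), so \(\gamma_\sfH\) is non-trivial in \(Q_{X^1}\). Now take \(Y=\sfH\), which is contractible, and \(H=\langle\gamma_\sfH\rangle\). Under the isomorphism of \cref{cor:free-edge-both-quotient}, the class of \(\gamma_\sfH\) in the free group \(\pi_1(\sfG_n)\) is an embedded cycle, hence of infinite order. Consequently \(H\cong\Z\) maps injectively into the quotient by both sides, and therefore injectively into \(Q_{X^1}\). \Cref{lem:kernel-quotient-reduction} then forces \(H=1\), which is a contradiction. (Equivalently and more directly: \(\gamma_\sfH\in\ker\iota_\ast=\co{\im j_\ast}\subseteq\co{\PB_n(X^1)}\), which contradicts the survival statement.) This gives \(X_0\cap X^2_x\neq\varnothing\), and symmetrically \(X_0\cap X^1_x\neq\varnothing\).

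\textbf{Consequences.} Since \(X_0\) is connected and meets both components \(X^1_x\) and \(X^2_x\) of \(X_x\), it must contain \(x\). Moreover, the two link points of \(x\) are the only ways into each side, so \(X_0\) meets both \(A_{L^1}\) and \(A_{L^2}\); that is, \(\lk_{X_0}(x)\to\lk_X(x)\) is surjective on \(\pi_0\).

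The main obstacle is the construction of \(\sfH\). One must verify that non-interval sides guarantee suitable endpoints \(\sfv,\sfv'\); for example, by walking along the maximal free arc through \(x\) until the first point of valency \(\geq3\) or of a thick component, which must exist on each side because otherwise that side would be an interval or would contain a cycle contradicting the separating structure. One must also choose the basepoint transport compatibly with the octagonal loop.
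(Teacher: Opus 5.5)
Your proof is correct and is essentially the paper's. The paper builds \(\sfH\) exactly as you sketch: it follows the maximal chain of free edges from \(x\) into each side, whose terminal point cannot be univalent lest that side be an interval. It then uses precisely your parenthetical argument, \(\gamma_\sfH\in\ker\iota_\ast=\co{\im j_\ast}\subseteq\co{\PB_n(X^1),\PB_n(X^2)}\), which contradicts \cref{cor:shuffle-survives-quotient}; the detour through \cref{lem:kernel-quotient-reduction} with \(H=\langle\gamma_\sfH\rangle\) is valid but only repackages this same containment.
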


\begin{proof}
Since \(X^1_x\) and \(X^2_x\) are the two connected components
of \(X_x\) and \(X_0\) is connected, once \(X_0\) meets both of
them it must contain the separating vertex \(x\) and enter it
from both link directions; thus it suffices to prove that
\(X_0\cap X^i_x\neq\varnothing\) for \(i=1,2\).

We first build an embedded copy of \(\sfH\) around \(x\). For
\(i=1,2\), follow the maximal chain of free edges issuing from
\(x\) into \(X^i\), and let \(\sfv_i\) be its terminal point. If
\(\sfv_i\) had valency \(1\), the chain would meet the rest of
\(X^i\) only at \(x\); as \(X^i_x\) is connected, this would
force \(X^i\) to be the chain itself, an interval -- excluded by
hypothesis. Hence, by maximality of the chain, each \(\sfv_i\)
either has valency at least \(3\) in \(X\) or is a joint, lying
in a thick component. The union
\(\alpha\subseteq X\) of the two chains is therefore an embedded
path through \(x\), containing the free edge at \(x\), whose
endpoints satisfy the hypothesis of the construction preceding
\cref{lem:H-shuffle}; let \(\sfH\subseteq X\) and
\(\gamma_\sfH\) be the resulting tree and shuffle braid, with
the base configuration transported to \(\base\) -- as we may,
kernels and normal closures being insensitive to this choice.

Suppose now, for contradiction, that
\(X_0\cap X^2_x=\varnothing\). Then \(X_0\subseteq X^1\), so
\[
\ker\iota_\ast=\co{\im j_\ast}
\subseteq\co{\PB_n(X^1,\base)}^{\PB_n(X,\base)}
\subseteq N\coloneqq
\co{\PB_n(X^1),\,\PB_n(X^2)}^{\PB_n(X,\base)} .
\]
But the image of \(\gamma_\sfH\) in \(\PB_n(X,\base)\) lies in
\(\ker\iota_\ast\) by \cref{lem:H-shuffle}, while
\(\gamma_\sfH\notin N\) by
\cref{cor:shuffle-survives-quotient} -- a contradiction.
Therefore \(X_0\cap X^2_x\neq\varnothing\), and symmetrically
\(X_0\cap X^1_x\neq\varnothing\), as desired.
\end{proof}

The next proposition treats the remaining regime, that of
\emph{joints}: points of valency \(2\) whose link has a
component of positive dimension. There the witness is forced not
only through the point, but also into the thick part.

\begin{proposition}[Joints]\label{prop:X0-meets-link-joint}
Suppose \(\val_X(x)=2\) and
\(\lk_X(x)=L\sqcup\{p\}\) with \(\dim L\geq1\) -- that is,
\(x\) is a joint -- and suppose that no \(x\)-component of
\(X\) is homeomorphic to an interval. Then \(x\in X_0\) and the
inclusion \(\lk_{X_0}(x)\hookrightarrow\lk_X(x)\) is surjective
on \(\pi_0\).
\end{proposition}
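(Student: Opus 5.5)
We split according to the number \(m\in\{1,2\}\) of \(x\)-components; there are at most two, since \(\val_X(x)=2\). If \(m=1\), both conclusions are already given by \cref{prop:X0-contains-free-points,prop:X0-meets-link-m1}. So the substance is the case \(m=2\). Then \(x\) is a cut point, with \(x\)-components \(X^1\supseteq L\) and \(X^2\ni p\). The cone over \(p\) is a free edge \(\sfe\) of \(X^2\) ending at \(x\), and \(x\) has connected link \(L\) in \(X^1\).

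The plan is to reduce to the bivalent free-edge case, \cref{prop:X0-contains-nontrivial-junction}. We do this by resolving \(x\) along \(L\), as in Case~2 of the proof of \cref{prop:X0-meets-link-m1}. This produces \(X'\) with a new apex \(\sfv'\) carrying the cone over \(L\), and a new free edge \(\sff=[x,\sfv']\), so that \(X=X'/\sff\). An interior point \(y\in\mathring\sff\) is then a bivalent point of a free edge of \(X'\), and \(X'_y\) is disconnected. Its two \(y\)-components are homeomorphic to \(X^1\) with a whisker attached at the apex, and to \(X^2\) with a whisker attached at \(x\). Neither is an interval, because \(X^1\) contains a thick piece and \(X^2\) is not an interval by hypothesis. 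Hence \cref{prop:X0-contains-nontrivial-junction} applies in \(X'\): any weak Goldberg witness for \(X'\) meets both \(y\)-sides.

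The remaining task is to transport the witness \(X_0\) of \(X\) to a witness \(X_0'\) of \(X'\), in a way that remembers which sides \(X_0\) meets. We set \(X_0'=q_\sff^{-1}(X_0)\) if \(x\in X_0\) and \(X_0\) meets \(A_L\). Otherwise \(X_0\) avoids a neighbourhood of \(x\) on at least one side, and we take \(X_0'\) to be the homeomorphic copy of \(X_0\) that is untouched by the resolution. In each case \(X_0'\) is contractible: \(q_\sff^{-1}(X_0)\) collapses onto \(X_0\) along the edge \(\sff\). By the braid equivalence \(X\equiv_B X'\) (An--Park) we get the commutative square used in \cref{prop:X0-meets-link-m1}. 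Combined with naturality of the strand maps, and with \(\pi_1(X')\cong\pi_1(X)\), this square carries the equality \(\co{\im j_\ast}=\ker\iota_\ast\) to \(X'\).

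Now suppose, for contradiction, that \(X_0\) misses \(A_L\) or misses \(A_p\). If it misses \(A_L\), then \(X_0'\) lies in the \(X^2\)-side of \(y\) and meets no point of \(\sff\) beyond \(x\). This contradicts \cref{prop:X0-contains-nontrivial-junction}, which forces \(X_0'\) to meet the \(X^1\)-side of \(y\). If it misses \(A_p\), then \(X_0'\) avoids the \(X^2\)-side, again a contradiction. So \(X_0\) meets both \(A_L\) and \(A_p\). Since \(X_0\) is connected and these two sets lie in different components of \(X_x\), it follows that \(x\in X_0\).

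The main obstacle is the transport step. It requires checking that the braid-equivalence isomorphism \(\PB_n(X')\cong\PB_n(X)\) is compatible with the inclusions of \(X_0\) and \(X_0'\), including in the case where \(X_0\) passes through \(x\) along only one side, and with the strand maps. I would handle this by realising the zigzag of thickenings of \(\operatorname{Cone}(L)\) inside a neighbourhood disjoint from \(X_0\) whenever \(X_0\) avoids \(A_L\). When \(X_0\) avoids only \(A_p\), the resolution is instead performed on the \(p\)-side: one separates the free edge \(\sfe\), whose cone point already has a single link point, so no thickening is needed and \(X_0\) is untouched.
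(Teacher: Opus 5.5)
Your argument is correct in substance. Once the resolution is unwound, it is the paper's argument.

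Since \(\lk_X(x)=L\sqcup\{p\}\) with \(L\) connected, \(x\) is already a simple vertex, so ``resolving along \(L\)'' only lengthens the free edge \(\sfe_p\) at \(x\) by \(\sff\). Concretely:
\begin{itemize}
\item \(X'\cong X\), via a homeomorphism compressing \(\sfe_p\cup\sff\) onto \(\sfe_p\);
\item your point \(y\in\mathring\sff\) is just a point of \(\sfe_p\) close to \(x\);
\item your ``untouched copy'' of \(X_0\) corresponds to \(X_0\) with its tip slid back along \(\sfe_p\) past that point.
\end{itemize}
That slide is an isotopy of embeddings \(X_0\hookrightarrow X\), which changes \(\im j_\ast\) only by a conjugation. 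So the transport step you flag as the main obstacle needs no An--Park thickening. It also does not need a neighbourhood of \(\operatorname{Cone}(L)\) disjoint from \(X_0\), which in any case does not exist when \(x\in X_0\).

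The paper dispenses with the resolution altogether, in two steps.
\begin{itemize}
\item It applies \cref{prop:X0-contains-free-points,prop:X0-meets-link-m1} or \cref{prop:X0-contains-nontrivial-junction} at every interior point of \(\sfe_p\). This gives \(\sfe_p\subseteq X_0\), hence \(x\in X_0\) approached along the \(p\)-germ, without splitting on \(m\).
\item If \(X_0\) missed \(A_L\), it retracts the terminal segment \([y,x]\) of \(\sfe_p\) past \(y\). This yields a witness omitting \(y\), contradicting the first step.
\end{itemize}
Your version gains only a formal parallel with Case~2 of the proof of \cref{prop:X0-meets-link-m1}, at the cost of an unnecessary transport argument.

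One small repair: organise the \(m=2\) contradiction by which \(x\)-component contains \(X_0\), not by which germ it misses. As written, ``misses \(A_L\) implies \(X_0'\) lies on the \(X^2\)-side of \(y\)'' fails when \(x\notin X_0\) and \(X_0\subseteq X^1_x\). Symmetrically, ``misses \(A_p\) implies \(X_0'\) avoids the \(X^2\)-side'' fails when \(X_0\subseteq X^2_x\) away from \(x\). In each such case the other branch applies, so your conclusion stands. The clean statement is that connectedness forces \(X_0\subseteq X^2\) or \(X_0\subseteq X^1\) whenever \(X_0\) misses one of \(A_L,A_p\). You then apply \cref{prop:X0-contains-nontrivial-junction} at a point of \(\sfe_p\) near \(x\), after sliding \(X_0\) off that point in the first case.
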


\begin{proof}
Write \(\sfe_p\) for the edge of \(X\) at \(x\) in the direction of
\(p\) -- a free edge, the link component \(\{p\}\) being a
point -- and \(A_L\) for the component of the punctured open
star at \(x\) on the side of \(L\). Let \(y\) be any interior
point of \(\sfe_p\). If \(X_y\) is connected, then
\cref{prop:X0-contains-free-points,prop:X0-meets-link-m1} place
\(y\) in \(X_0\) together with both germs of \(\sfe_p\); if
\(X_y\) is disconnected, the same follows from
\cref{prop:X0-contains-nontrivial-junction}, whose hypotheses
hold at \(y\): the \(y\)-component on the far side of \(\sfe_p\) is
homeomorphic to the \(x\)-component containing it, hence not an
interval, while the \(y\)-component containing \(x\) contains
the cone over \(L\), of dimension at least \(2\), hence an embedded tripod.
Letting \(y\) vary, the whole edge \(\sfe_p\) lies in \(X_0\); in
particular \(x\in X_0\), approached along the \(p\)-germ.

It remains to show that \(X_0\) approaches \(x\) along \(L\).
Suppose not, and fix an interior point \(y\in\mathring \sfe_p\).
The portion of \(X_0\) beyond \(y\) is then exactly the
terminal segment \([y,x]\) of \(\sfe_p\), with nothing of \(X_0\)
attached to it except at \(y\): the only directions out of
\(x\) are back along \(\sfe_p\) and into \(A_L\), which \(X_0\)
avoids, and the interior points of \(\sfe_p\) have valency \(2\).
Retract this terminal segment back past \(y\)
(\cref{fig:interval-retraction}) -- an isotopy of
embeddings \(X_0\hookrightarrow X\) compressing the tip along
\(\sfe_p\) and fixing the rest. An isotopy of embeddings changes
the image of \(\PB_n(X_0)\) in \(\PB_n(X,\base)\) only by a
conjugation, so the resulting subcomplex \(X_0'\) is again a
weak Goldberg witness; but \(y\notin X_0'\), contradicting the
propositions quoted in the previous paragraph, which force
\(y\in X_0'\). Hence \(X_0\) meets \(A_L\) and, entering \(x\)
from both link components, satisfies the stated
\(\pi_0\)-surjectivity.
\end{proof}

\begin{figure}[ht]
\centering
\begin{tikzpicture}[line width=0.8pt, scale=1.1]
  \fill[gray!30,opacity=0.5]
    (0,0) -- (1.5,0.75) .. controls (1.78,0.28) and (1.78,-0.28) .. (1.5,-0.75) -- cycle;
  \draw (0,0) -- (1.5,0.75) .. controls (1.78,0.28) and (1.78,-0.28) .. (1.5,-0.75) -- (0,0);
  \node at (1.05,0) {\small\(\operatorname{Cone}(L)\)};
  \draw (0,0) -- (-2.55,0);
  \node[above=1pt] at (-0.6,0.03) {\(\sfe_p\)};
  \fill[gray!60] (-2.9,0) ellipse (0.42 and 0.28);
  \draw (-2.9,0) ellipse (0.42 and 0.28);
  \node[below=2pt] at (-2.9,-0.28) {\(X_0\)};
  \draw[line width=2.2pt, gray!70] (-2.55,0) -- (0,0);
  \draw[line width=2.2pt, gray!95!black] (-1.3,0) -- (0,0);
  \draw[->, red!75!black, thick] (-0.1,-0.25) -- node[midway,below,black] {\text{isotopy}} (-1.15,-0.25);
  \filldraw (-1.3,0) circle (1.6pt);
  \node[above=2pt] at (-1.3,-0.04) {\(y\)};
  \filldraw[red] (0,0) circle (1.8pt);
  \node[above=2pt] at (0,-0.05) {\(x\)};
\end{tikzpicture}
\caption{The contradiction in the proof of
\cref{prop:X0-meets-link-joint}: if the witness avoided the
side of \(L\), its portion beyond \(x'\) would be exactly the
terminal segment \([x',x]\) of \(\sfe_p\) (dark), which an
isotopy of embeddings compresses back past \(x'\) (arrow) --
contradicting the propositions forcing \(x'\in X_0\).}
\label{fig:interval-retraction}
\end{figure}

\subsection{(Weak) Goldbergness under resolution}
\label{ssec:goldberg-resolution}

The conditions above are stated for a fixed complex; we close the
discussion of necessary conditions by recording how the (weak)
Goldberg property of a subcomplex behaves under a single An--Park
resolution. This is the mechanism by which the analysis is
transported between a complex and its simple model.

Let \(x\) be a vertex of \(X\) and let \(X'\) be the complex obtained
by resolving \(X\) at \(x\) (\cref{ssec:simple-an-park}), so that
\(X'\) carries a new free edge \(\sfe\) whose endpoints we denote by the
original vertex \(x\) and the new apex \(x'\). The collapse
\[
q_\sfe\colon X'\longrightarrow X'/\sfe = X
\]
recovers \(X\), re-identifying \(x'\) with \(x\). For a subcomplex
\(X_0\subseteq X\) write
\(X_0'\coloneqq q_\sfe^{-1}(X_0)\subseteq X'\), so that
\(q_\sfe(X_0')=X_0'/\sfe=X_0\). The resolution is a braid equivalence
\(X\equiv_B X'\) (An--Park~\cite[Prop.~3.11]{AnPark2017}), so the
collapse induces an isomorphism
\[
q_\sfe^\ast\colon\PB_n(X)\xrightarrow{\cong}\PB_n(X');
\]
moreover \(q_\sfe\) collapses the contractible edge \(\sfe\), hence is a
homotopy equivalence, and the strand maps of \(X\) and \(X'\)
correspond under \(q_\sfe^\ast\) and the induced isomorphism on
\(\prod_i\pi_1\). The same collapse restricts to
\(q_\sfe\colon X_0'\to X_0\) and induces a homomorphism
\[
q_\sfe^\ast\colon\PB_n(X_0)\longrightarrow\PB_n(X_0').
\]
On the subcomplex, however, this map need not be surjective
(\cite[Prop.~3.1]{AnPark2017}): resolving the single vertex of
\(X_0\) at \(x\) need not itself be a braid equivalence.

\begin{proposition}[(Weak) Goldberg witnesses under resolution]
\label{prop:goldberg-resolution}
With the notation above, suppose that
\(q_\sfe^\ast\colon\PB_n(X_0)\to\PB_n(X_0')\) is surjective. Then:
\begin{enumerate}[label=\textup{(\arabic*)},leftmargin=2.4em]
  \item \(X_0\) is a weakly Goldberg witness for \(X\) if and only if
        \(X_0'\) is a weakly Goldberg witness for \(X'\);
  \item if \(X_0\) is a Goldberg witness for \(X\), then \(X_0'\) is a
        Goldberg witness for \(X'\);
  \item if \(X_0'\) is a Goldberg witness for \(X'\),
        \(\val_{X_0'}(x)\ge 2\), and \(\val_{X_0'}(x')=2\), then \(X_0\) is a Goldberg
        witness for \(X\).
\end{enumerate}
\end{proposition}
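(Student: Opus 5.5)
The plan is to work with the commutative square
\[
\begin{tikzcd}
\PB_n(X_0) \ar[r,"j_*"] \ar[d,"q_\sfe^\ast"'] & \PB_n(X) \ar[d,"q_\sfe^\ast","\cong"'] \ar[r,"\iota_\ast^X"] & \prod_i\pi_1(X,x_i^0)\ar[d,"\cong"]\\
\PB_n(X_0') \ar[r,"j'_*"] & \PB_n(X') \ar[r,"\iota_\ast^{X'}"] & \prod_i\pi_1(X',x_i^0)
\end{tikzcd}
\]
whose right-hand vertical maps are isomorphisms (braid equivalence and homotopy equivalence). Commutativity of the left square holds because \(q_\sfe\) restricts to \(X_0'\to X_0\) and the isomorphism \(\PB_n(X)\cong\PB_n(X')\) is induced compatibly by the same collapse (after choosing basepoints away from \(\sfe\)). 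The isomorphism of the middle column carries \(\ker\iota_\ast^X\) onto \(\ker\iota_\ast^{X'}\) and normal closures onto normal closures. Contractibility is also preserved: \(X_0'\) is obtained from \(X_0\) by inserting the contractible edge \(\sfe\) at \(x\), and \(q_\sfe\colon X_0'\to X_0\) is a homotopy equivalence. If \(x\notin X_0\), the map \(q_\sfe\) is a homeomorphism, so that case is trivial.

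For (1), surjectivity of \(q_\sfe^\ast\) on \(\PB_n(X_0)\) gives \(\im j'_*=q_\sfe^\ast(\im j_*)\). Hence \(\co{\im j'_*}=q_\sfe^\ast\co{\im j_*}\). The witness equality therefore holds for \(X_0\) if and only if it holds for \(X_0'\).

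For (2), suppose \(j_*\) is injective. Then \(q_\sfe^\ast\circ j_*=j'_*\circ q_\sfe^\ast\) is injective, so \(q_\sfe^\ast\) on \(\PB_n(X_0)\) is injective. Combined with the hypothesis, it is an isomorphism. Consequently \(j'_*=(q_\sfe^\ast j_*)(q_\sfe^\ast)^{-1}\) is injective, and (1) supplies the kernel equality.

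For (3), the kernel equality again comes from (1). For injectivity of \(j_*\) it suffices to show that \(q_\sfe^\ast\colon\PB_n(X_0)\to\PB_n(X_0')\) is injective, because then \(j_*=(q_\sfe^\ast)^{-1}j'_*q_\sfe^\ast\) is injective. This is the main obstacle, and it is where the valency hypotheses enter. The condition \(\val_{X_0'}(x')=2\) says \(x'\) is a joint of \(X_0'\): its link there is \(\{\text{pt}\}\sqcup L_0\), with \(L_0=\lk_{X_0}(x)\cap L\) connected and nonempty. The condition \(\val_{X_0'}(x)\ge2\) says that \(X_0\) has link at \(x\) with components on the non-\(L\) side. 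Hence \(X_0'\) is precisely the An--Park resolution of \(X_0\) at \(x\) along the connected piece \(L_0\). In that situation \cite[Prop.~3.11]{AnPark2017} says the collapse is a braid equivalence, and therefore \(q_\sfe^\ast\) is an isomorphism on \(\PB_n(X_0)\).

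If the cited result requires \(\dim L_0\ge1\), I would handle the degenerate case separately. When \(L_0\) is a point, \(x\) has valency \(\geq 3\) and \(X_0'\) differs from \(X_0\) only by subdividing an arm. I would argue directly with the graph-of-spaces decomposition (\cref{prop:valency-decomp}) at \(x\) and at \(x'\), comparing distribution graphs in the same way as in \cref{lem:pendant-edge}. I expect that verifying this hypothesis matching, namely that \(X_0'\) is genuinely a resolution of \(X_0\), is the delicate point of the proof.
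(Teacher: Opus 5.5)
Your proposal is correct and follows the paper's proof essentially step for step. It uses the same commutative square with \(\phi=q_\sfe^\ast\colon\PB_n(X_0)\to\PB_n(X_0')\) and \(\Phi=q_\sfe^\ast\colon\PB_n(X)\to\PB_n(X')\). It transfers normal closures and \(\ker\iota_\ast\) through \(\Phi\) for~(1). For~(2), injectivity of \(\Phi\circ j_\ast=j'_\ast\circ\phi\) forces \(\phi\) to be injective, hence bijective. For~(3), under the valency hypotheses collapsing \(\sfe\) is a braid equivalence on the witness (a homeomorphism when the relevant link piece is a point, an An--Park resolution of \(X_0\) at \(x\) otherwise), so \(\phi\) is an isomorphism.

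Your identification of \(X_0'\) with the resolution of \(X_0\) along \(L_0=\lk_{X_0}(x)\cap L\) actually covers the stated hypothesis \(\val_{X_0'}(x)\geq2\). The paper's written argument only treats \(\val_{X_0'}(x)=2\).

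Your aside that \(x\) has valency \(\geq3\) when \(L_0\) is a point is inaccurate; a wedge of two discs gives valency \(2\). It plays no role, since in that case \(X_0'\cong X_0\) simply by lengthening an arm.
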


\begin{proof}
Write \(\phi:=q_\sfe^\ast\colon\PB_n(X_0)\to\PB_n(X_0')\) (surjective by
hypothesis), \(\Phi:=q_\sfe^\ast\colon\PB_n(X)\to\PB_n(X')\) (an
isomorphism), and \(j_\ast\colon\PB_n(X_0)\to\PB_n(X)\),
\(j'_\ast\colon\PB_n(X_0')\to\PB_n(X')\) for the inclusion-induced
maps. Compatibility of the collapse with the inclusions
\(X_0\hookrightarrow X\) and \(X_0'\hookrightarrow X'\) gives a
commutative square
\[
\begin{tikzcd}
\PB_n(X_0)\ar[r,"j_\ast"]\ar[d,"\phi"',two heads]
  & \PB_n(X)\ar[d,"\Phi"',"\cong"]\\
\PB_n(X_0')\ar[r,"j_\ast'"'] & \PB_n(X').
\end{tikzcd}
\tag{$\ast$}
\]
Since \(\phi\) is surjective and \(\Phi\) is an isomorphism,
\((\ast)\) carries normal closures to normal closures:
\begin{equation}
\Phi\bigl(\co{\im j_\ast}\bigr)
=\co{\Phi(\im j_\ast)}
=\co{j'_\ast(\im\phi)}
=\co{\im j'_\ast}.
\label{eq:Phi}
\end{equation}
Moreover \(q_\sfe\) collapses the contractible edge \(\sfe\), hence is a
homotopy equivalence; thus \(X_0\) is contractible if and only if
\(X_0'\) is, the basepoint condition transfers along \(q_\sfe\), and the
strand maps correspond under \(\Phi\) and the induced isomorphism on
\(\prod_i\pi_1\), so that \(\Phi(\ker\iota_\ast)=\ker\iota'_\ast\).

Combining \eqref{eq:Phi} with
\(\Phi(\ker\iota_\ast)=\ker\iota'_\ast\),
\[
\co{\im j_\ast}=\ker\iota_\ast
\Longleftrightarrow
\co{\im j'_\ast}=\Phi\bigl(\co{\im j_\ast}\bigr)
=\Phi(\ker\iota_\ast)=\ker\iota'_\ast .
\]
As contractibility and the basepoint condition transfer along
\(q_\sfe\), \(X_0\) is a weakly Goldberg witness for \(X\) if and only if
\(X_0'\) is a weakly Goldberg witness for \(X'\).

Suppose \(X_0\) is a Goldberg witness, so that \(j_\ast\)
is injective. From \(j'_\ast\circ\phi=\Phi\circ j_\ast\) and the
injectivity of \(\Phi\), the composite \(\Phi\circ j_\ast\) is
injective, hence so is \(\phi\); being also surjective, \(\phi\) is an
isomorphism. Then \(j'_\ast=\Phi\circ j_\ast\circ\phi^{-1}\) is
injective. Together with~(1) this shows \(X_0'\) is a Goldberg witness
for \(X'\).

Suppose \(\val_{X_0'}(x)=\val_{X_0'}(x')=2\). Then both
endpoints of \(\sfe\) are bivalent in \(X_0'\), so collapsing \(\sfe\) is a
braid equivalence -- a homeomorphism when the relevant link is a
single point, and otherwise a genuine An--Park resolution of \(X_0\)
at \(x\) (\cite[Prop.~3.11]{AnPark2017}) -- and \(\phi\) is an
isomorphism. Hence \(j_\ast=\Phi^{-1}\circ j'_\ast\circ\phi\) is
injective whenever \(j'_\ast\) is, and by~(1) the kernel equality
transfers. Therefore, if \(X_0'\) is a Goldberg witness for \(X'\),
then \(X_0\) is a Goldberg witness for \(X\).
\end{proof}

The proposition feeds into the following stability statement: weak
Goldbergness, and Goldbergness, are inherited by a resolution.

\begin{theorem}\label{thm:resolution-preserves-goldberg}
If \(X\) is weakly Goldberg \textup{(}resp.\ Goldberg\textup{)}, then
\(X'\) is weakly Goldberg \textup{(}resp.\ Goldberg\textup{)}.
\end{theorem}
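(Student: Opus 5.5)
The plan is to take a (weak) Goldberg witness \(X_0\subseteq X\) and apply \cref{prop:goldberg-resolution} to \(X_0'\coloneqq q_\sfe^{-1}(X_0)\). Parts (1) and (2) of that proposition give the conclusion directly, provided its single hypothesis holds: \(q_\sfe^\ast\colon\PB_n(X_0)\to\PB_n(X_0')\) must be surjective. Two preliminary observations help. First, \(X_0'\) is contractible whenever \(X_0\) is, because \(q_\sfe\) collapses only the edge \(\sfe\). Second, by \cref{prop:witness-upward} we may first enlarge \(X_0\) to any simply connected \(X_0^+\supseteq X_0\), and the enlarged complex is still a weak witness. For Goldbergness the enlargement must also keep \(j_\ast\) injective.

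\emph{Case split at \(x\).} If \(x\notin X_0\), then \(q_\sfe\) is a homeomorphism near \(X_0\), and \(X_0'\cong X_0\) with \(\phi\) an isomorphism. Suppose instead \(x\in X_0\), and write \(L\) for the link component of \(x\) along which the resolution takes place. The preimage \(X_0'\) contains \(\sfe\) exactly when \(x\in X_0\). At the vertex \(x\) of \(X_0'\), the resolution separates \(\lk_{X_0}(x)\cap L\) from the remaining link components of \(x\) in \(X_0\). If \(\lk_{X_0}(x)\cap L=\varnothing\), or if it is the only link component of \(x\) in \(X_0\), then \(X_0'\) is obtained from \(X_0\) by attaching a pendant edge at a point of connected link, or by a homeomorphism. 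In that situation surjectivity follows from \cref{lem:pendant-edge}, which identifies \(X_0'\) with \(X_0\cup\sfe\) up to homeomorphism. In the remaining situation, \(x\) is non-simple in \(X_0\) and the resolution of \(X_0\) at \(x\) along \(\lk_{X_0}(x)\cap L\) is a genuine An--Park step. If that intersection is connected, this step is a braid equivalence by An--Park~\cite[Prop.~3.11]{AnPark2017}, so \(\phi\) is an isomorphism.

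\emph{Main obstacle.} The difficulty is the case where \(\lk_{X_0}(x)\cap L\) has several components, for example when \(X_0\) is a tree entering the cone over \(L\) along several arcs. Separating these arcs from \(x\) as a group need not be a braid equivalence. It can even create new shuffle loops, as in \cref{rem:pendant-valency}, so \(\phi\) may fail to be surjective.

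To handle this, I would first modify the witness inside \(\operatorname{Cone}(L)\). Add to \(X_0\) arcs inside the open star of \(x\) over \(L\) that join the various germs of \(X_0\) in \(L\) through a single interior vertex of the cone. The aim is to make \(\lk_{X_0}(x)\cap L\) connected while keeping \(X_0\) contractible. Because \(L\) is connected and \(\dim L\ge1\), such connecting arcs exist. After this modification the previous paragraph applies. In the weak case, \cref{prop:witness-upward} guarantees the modified complex is still a witness; I would need to check that contractibility, not just simple connectivity, is preserved, trimming by a local collapse if necessary.

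In the Goldberg case, I would make the modification by an isotopy rather than an enlargement. The idea is to slide the several germs of \(X_0\) in \(L\) together along \(L\) into a single germ, using that the cone over the connected \(L\) has room. An isotopy changes \(\im j_\ast\) only by conjugation and preserves injectivity, as in the proof of \cref{prop:X0-meets-link-joint}. The step I would scrutinise most is whether this isotopy can be realised by an embedding in the non-manifold case.
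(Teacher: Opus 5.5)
You have the paper's framework (push the witness to \(X_0'\) and invoke \cref{prop:goldberg-resolution}). However, two of your cases fail, and both failures matter for the Goldberg strength.

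\textbf{First gap: the case \(\lk_{X_0}(x)\cap L=\varnothing\).} Here \(q_\sfe^{-1}(X_0)\) is \(X_0\) with \(\sfe\) hung at \(x\). The relevant link is all of \(\lk_{X_0}(x)\), which need not be connected (e.g.\ if \(X_0\) has two germs at \(x\) outside \(L\)). Then \cref{lem:pendant-edge} does not apply, and \cref{rem:pendant-valency} shows that surjectivity genuinely fails for such an attachment. The missing idea is to not use the preimage here. Since \(X_0\) misses \(A_L\), it already embeds in \(X'\) as a subcomplex through \(x\) avoiding \(\sfe\). Taking that copy as \(X_0'\), both vertical maps of \((\ast)\) are isomorphisms. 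This is how the paper treats \(\val_X(x)=2\). When \(\val_X(x)\geq3\), the paper instead uses \cref{prop:X0-meets-link-m1,prop:X0-meets-link-components} to show that \(X_0\) meets every link direction, so this case does not arise.

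\textbf{Second gap: your repair of the ``main obstacle''.}
\begin{itemize}
\item Arcs through an interior vertex of the cone leave \(\lk_{X_0}(x)\) unchanged, since the link only sees the germ of \(X_0\) at \(x\). They also close loops through \(x\), so the enlargement is not simply connected and \cref{prop:witness-upward} does not apply.
\item In the Goldberg case, sliding distinct germs together identifies points of \(X_0\), so it is not an isotopy of embeddings.
\item Nor can an isotopy supported near \(x\) move the branch point of \(X_0\) off \(x\) once \(X_0\) has at least two germs in \(L\) and at least two outside. The arms on the far side would all have to pass through the single point \(x\) where the cones meet.
\end{itemize}
That is exactly the configuration you feared, so the Goldberg half is unproved there.

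The paper modifies nothing. Whenever \(x\) and \(x'\) both have valency at least \(2\) in \(q_\sfe^{-1}(X_0)\), it takes surjectivity of \(q_\sfe^\ast\) from An--Park~\cite[Prop.~3.1]{AnPark2017}. Then \cref{prop:goldberg-resolution}(2) upgrades this to an isomorphism. Your analogy with \cref{rem:pendant-valency} is misleading: there the shuffles use a leg that did not exist before. Here, every germ of \(X_0'\) at \(x\) or \(x'\) other than \(\sfe\) is already a germ of \(X_0\) at \(x\).

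\textbf{The weak strength needs no surjectivity in this direction.} With \(X_0'=q_\sfe^{-1}(X_0)\) contractible and \((\ast)\) commuting, \(\ker\iota'_\ast=\Phi(\co{\im j_\ast})=\co{j'_\ast(\im\phi)}\subseteq\co{\im j'_\ast}\subseteq\ker\iota'_\ast\). So in the weak strength both problems disappear.
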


\begin{proof}
Let \(X_0\subseteq X\) be a weakly Goldberg (resp.\ Goldberg) witness;
we produce a corresponding witness in \(X'\).

\emph{Case \(x\notin X_0\).} Put \(X_0':=q_\sfe^{-1}(X_0)\). As \(X_0\)
avoids the resolved vertex, \(q_\sfe\) restricts to a homeomorphism
\(X_0'\xrightarrow{\cong}X_0\), so
\(q_\sfe^\ast\colon\PB_n(X_0)\to\PB_n(X_0')\) is an isomorphism, in
particular surjective; parts (1) and (2) of
\cref{prop:goldberg-resolution} make \(X_0'\) a weakly Goldberg
(resp.\ Goldberg) witness for \(X'\).

\emph{Case \(x\in X_0\) and \(\val_X(x)\geq 3\).} Put
\(X_0':=q_\sfe^{-1}(X_0)\). Since \(x\in X_0\), the inclusion
\(\lk_{X_0}(x)\hookrightarrow\lk_X(x)\) is surjective on \(\pi_0\) --
by \cref{prop:X0-meets-link-m1} when \(m=1\), and by
\cref{prop:X0-meets-link-components} when \(m\geq 2\) -- so \(X_0\)
meets \(x\) from every link direction. Consequently both endpoints
\(x\) and \(x'\) of \(\sfe\) have valency at least \(2\) in \(X_0'\), and
\(q_\sfe^\ast\) is surjective by An--Park~\cite[Prop.~3.1]{AnPark2017};
parts (1) and (2) of \cref{prop:goldberg-resolution} again apply.

\emph{Case \(x\in X_0\) and \(\val_X(x)=2\).} Put
\(X_0':=q_\sfe^{-1}(X_0)\). If \(\val_{X_0'}(x)\geq 2\) and
\(\val_{X_0'}(x')\geq 2\), then \(q_\sfe^\ast\) is surjective by
\cite[Prop.~3.1]{AnPark2017} and we conclude as before. Otherwise
\(\val_{X_0'}(x)=1\) or \(\val_{X_0'}(x')=1\), and then \(X_0\) embeds
into \(X'\) as a subcomplex. Taking \(X_0'\) to be this embedded copy
of \(X_0\), both vertical maps in the square \((\ast)\) of
\cref{prop:goldberg-resolution} -- the identity
\(\PB_n(X_0)\to\PB_n(X_0')\) and \(\Phi\colon\PB_n(X)\to\PB_n(X')\) --
are isomorphisms, so the same argument makes \(X_0'\) a weakly
Goldberg (resp.\ Goldberg) witness for \(X'\).

In every case \(X'\) admits a weakly Goldberg (resp.\ Goldberg)
witness; hence \(X'\) is weakly Goldberg (resp.\ Goldberg).
\end{proof}

The converse holds unconditionally for weak Goldbergness, and for
Goldbergness under a mild valency hypothesis on the witness.

\begin{theorem}\label{thm:resolution-reflects-goldberg}
If \(X'\) is weakly Goldberg, then \(X\) is weakly Goldberg. If
moreover \(X'\) is Goldberg and admits a Goldberg witness \(X_0'\) with
\(\val_{X_0'}(x)\ge 2\) and \(\val_{X_0'}(x')=2\), then \(X\) is Goldberg.
\end{theorem}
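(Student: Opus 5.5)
The plan is to push a witness \(X_0'\subseteq X'\) forward along the collapse \(q_\sfe\colon X'\to X\) and apply \cref{prop:goldberg-resolution}. For the first assertion, let \(X_0'\) be a weakly Goldberg witness for \(X'\). By \cref{prop:witness-upward} we may enlarge \(X_0'\) within the simply connected subcomplexes of \(X'\). So we may assume \(X_0'\) is saturated with respect to \(\sfe\): either \(X_0'\cap\sfe=\varnothing\), or \(\sfe\subseteq X_0'\). In the second case, adding the free edge \(\sfe\) (or the rest of it) to a contractible subcomplex that meets it keeps the result contractible, as long as we do not close a cycle. If \(X_0'\) meets both \(x\) and \(x'\) without containing \(\sfe\), we do not add \(\sfe\); instead we use that \(\sfe\) is a free edge whose interior points are governed by \cref{prop:X0-contains-free-points} and \cref{prop:X0-contains-nontrivial-junction}, which already force \(\sfe\subseteq X_0'\) whenever the relevant \(y\)-components are non-degenerate. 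Once \(\sfe\subseteq X_0'\), set \(X_0\coloneqq q_\sfe(X_0')\). Then \(X_0'=q_\sfe^{-1}(X_0)\), and \(X_0\) is contractible because \(q_\sfe\) collapses a contractible edge.

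The key step is to show that \(q_\sfe^\ast\colon\PB_n(X_0)\to\PB_n(X_0')\) is surjective, so that \cref{prop:goldberg-resolution}(1) applies. I would split into cases on the valencies of \(x,x'\) in \(X_0'\). If both are at least \(2\), surjectivity is An--Park~\cite[Prop.~3.1]{AnPark2017}, exactly as in the proof of \cref{thm:resolution-preserves-goldberg}. If one of them, say \(x'\), has valency \(1\) in \(X_0'\), then \(\sfe\) is a pendant edge of \(X_0'\). In that case I would replace \(X_0'\) by \(X_0'\setminus\mathring\sfe\) together with a short germ, or simply retract the pendant tip by an isotopy as in the proof of \cref{prop:X0-meets-link-joint}. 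Such an isotopy changes the image of \(\PB_n\) only by conjugation, so it preserves the witness property. After the retraction \(X_0'\) misses \(x'\), hence embeds homeomorphically into \(X\) under \(q_\sfe\). The trivial case \(X_0'\cap\sfe=\varnothing\) is handled the same way. In these embedded cases the square \((\ast)\) has the identity as its left vertical map, and the argument of \cref{prop:goldberg-resolution} goes through verbatim. The main obstacle I expect is the bookkeeping when \(X_0'\) meets \(\sfe\) only at one endpoint with valency \(1\) at the other: there one must check that the isotopy or the enlargement stays inside the class of contractible subcomplexes and keeps \(\base\in F_n(X_0')\). I would handle this by first moving \(\base\) off \(\sfe\), using path-transport invariance of normal closures.

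For the second assertion, the hypotheses \(\val_{X_0'}(x)\ge2\) and \(\val_{X_0'}(x')=2\) give \(\sfe\subseteq X_0'\), so \(X_0'=q_\sfe^{-1}(q_\sfe(X_0'))\). The surjectivity of \(q_\sfe^\ast\) again follows from \cite[Prop.~3.1]{AnPark2017}. Then \cref{prop:goldberg-resolution}(3) shows that \(X_0=q_\sfe(X_0')\) is a Goldberg witness for \(X\). In that part, the case \(\val_{X_0'}(x)>2\) needs the observation that \(\phi\) is still injective. I would get this by noting that collapsing \(\sfe\) when \(x'\) is bivalent in \(X_0'\) is an An--Park resolution of \(X_0\) at \(x\), hence a braid equivalence, and therefore \(\phi\) is an isomorphism.
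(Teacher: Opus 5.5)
Your main case is exactly the paper's Case~2 and closing paragraph: \(\sfe\subseteq X_0'\) with both endpoints of valency at least \(2\) in \(X_0'\), surjectivity of \(q_\sfe^\ast\) from \cite[Prop.~3.1]{AnPark2017}, then \cref{prop:goldberg-resolution}(1), and \cref{prop:goldberg-resolution}(3) for the Goldberg statement. The gap is in your degenerate case, where one endpoint, say \(x'\), has valency \(1\) in \(X_0'\).

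Retracting the pendant tip does not make \(q_\sfe\) injective on the witness. The retracted witness is \(Y\cup[x,a]\), with \(Y=\overline{X_0'\setminus\sfe}\) and \(a\in\mathring\sfe\), and \(q_\sfe\) collapses \([x,a]\) to the point \(x\). An isotopy of embeddings cannot remove \([x,a]\). Moreover, when \(\val_{X'}(x)\geq3\), no weak witness may omit the germ of \(\sfe\) at \(x\) (\cref{prop:X0-meets-link-m1,prop:X0-meets-link-components}). So the left vertical map of \((\ast)\) is not the identity. It is the pendant-attachment map \(\PB_n(Y)\to\PB_n(Y\cup[x,a])\). By \cref{lem:pendant-edge} this map is surjective when \(\val_Y(x)=1\), but \cref{rem:pendant-valency} shows it fails to be surjective in general when \(\val_Y(x)\geq2\). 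Without surjectivity the square only gives \(\Phi(\co{\im j_\ast})\subseteq\co{\im j'_\ast}\), which is the wrong inclusion for transferring the witness property from \(X'\) to \(X\). The same failure occurs with \(x\) and \(x'\) exchanged. The real obstacle is therefore this non-injectivity, not the contractibility and basepoint bookkeeping you anticipated.

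There are two easy repairs.

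\begin{itemize}
\item \emph{The case never occurs.} The apex \(x'\) is a joint with \(\lk_{X'}(x')=L\sqcup\{\text{direction of }\sfe\}\), and neither \(x'\)-component of \(X'\) is an interval. One contains \(\operatorname{Cone}(L)\); the other contains \(x\), which after the resolution still has a positive-dimensional link component or at least three link points. So \cref{prop:X0-meets-link-joint} forces \(\val_{X_0'}(x')\geq2\) for every weak witness. At \(x\) the same follows from \cref{prop:X0-meets-link-m1,prop:X0-meets-link-components,prop:X0-meets-link-joint}. In fact your retraction is precisely the move that the proof of \cref{prop:X0-meets-link-joint} shows to be contradictory.
\item \emph{Enlarge instead of shrinking.} Adjoin a small cone neighbourhood of the valency-\(1\) endpoint that meets \(X_0'\) only in an initial segment of \(\sfe\). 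The union is contractible, hence still a weak witness by \cref{prop:witness-upward}, and now both endpoints have valency at least \(2\). This is what the paper's ``pushing slightly beyond both endpoints'' achieves.
\end{itemize}

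Two smaller points. First, your saturation step is counterproductive when \(X_0'\cap\sfe=\{x\}\): there \(q_\sfe\) is already injective on \(X_0'\) (the paper's Case~1), and adding \(\sfe\) is what creates the valency-\(1\) endpoint. Second, in the second assertion, \(\sfe\subseteq X_0'\) does not follow from the valency hypotheses alone, since valency \(2\) at \(x'\) could come from two components inside \(L\). It does follow from your first-paragraph argument, because both endpoints lie in \(X_0'\).
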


\begin{proof}
Let \(X_0'\subseteq X'\) be a weakly Goldberg (resp.\ Goldberg)
witness, and distinguish cases according to how \(X_0'\) meets the
free edge \(\sfe\).

\emph{Case 1: \(X_0'\) avoids the interior of \(\sfe\).} By
\cref{prop:X0-contains-free-points}, applied to \(X'\), any interior
point \(y\in \sfe\) for which \(X'_y\) is connected must lie in the witness
\(X_0'\); hence \(X_0'\) is forced to meet \(\mathring{\sfe}\) unless
\(X'_y\) is disconnected. As \(X_0'\) avoids \(\mathring{\sfe}\), every
interior point \(y\in \sfe\) therefore has \(X'_y\) disconnected, so \(\sfe\)
is a bridge; being connected and missing \(\mathring{\sfe}\), \(X_0'\)
lies to a single side of \(\sfe\), whence \(X_0'\cap \sfe\) is one of
\(\varnothing\), \(\{x\}\), \(\{x'\}\). Collapsing \(\sfe\) then identifies
no two points of \(X_0'\), and \(q_\sfe\) restricts to a homeomorphism
\(X_0'\xrightarrow{\cong}X_0:=q_\sfe(X_0')\subseteq X\); in particular
\(q_\sfe^\ast\colon\PB_n(X_0)\to\PB_n(X_0')\) is an isomorphism. Both
vertical maps in the square \((\ast)\) of
\cref{prop:goldberg-resolution} are then isomorphisms, so its argument
transfers the witness in either strength: \(X_0\) is a weakly Goldberg
(resp.\ Goldberg) witness for \(X\).

\emph{Case 2: \(X_0'\) meets the interior of \(\sfe\).} Since \(X_0'\)
carries the basepoint configuration it is not a single point, so it
meets \(\sfe\) in a nondegenerate interval. Enlarging this interval by an
isotopy of \(X'\) supported near \(\sfe\) -- pushing it slightly beyond
both endpoints \(x,x'\) of \(\sfe\) -- we may assume without loss of
generality that \(\sfe\subseteq X_0'\) with \(\val_{X_0'}(x)\geq 2\) and
\(\val_{X_0'}(x')\geq 2\); an isotopy is a homeomorphism and so
preserves the witness property. Then \(X_0'=q_\sfe^{-1}(X_0)\) for
\(X_0:=X_0'/\sfe\), and An--Park~\cite[Prop.~3.1]{AnPark2017} makes
\(q_\sfe^\ast\colon\PB_n(X_0)\to\PB_n(X_0')\) surjective; by
\cref{prop:goldberg-resolution}(1), \(X_0\) is a weakly Goldberg
witness for \(X\).

In either case \(X\) is weakly Goldberg, proving the first assertion.

For the second, suppose \(X_0'\) is a Goldberg witness with
\(\val_{X_0'}(x)\ge 2\) and \(\val_{X_0'}(x')=2\). Arguing as in Case~2 we may take
\(\sfe\subseteq X_0'\) and \(X_0=X_0'/\sfe\). With both endpoints of \(\sfe\)
bivalent in \(X_0'\), collapsing \(\sfe\) is a braid equivalence -- a
homeomorphism when the relevant link is a single point, and otherwise
a genuine An--Park resolution (\cite[Prop.~3.11]{AnPark2017}) -- so
\(q_\sfe^\ast\) is in fact an isomorphism. By
\cref{prop:goldberg-resolution}(3), \(X_0\) is a Goldberg witness for
\(X\); hence \(X\) is Goldberg.
\end{proof}

\begin{remark}\label{rem:resolution-goldberg-equivalence}
The valency hypothesis in the second assertion of
\cref{thm:resolution-reflects-goldberg} will turn out to be
harmless: whenever \(X'\) is Goldberg, one can always choose a
Goldberg witness \(X_0'\) satisfying \(\val_{X_0'}(x)\geq2\) and
\(\val_{X_0'}(x')=2\). Granting this,
\cref{thm:resolution-preserves-goldberg} and
\cref{thm:resolution-reflects-goldberg} combine into an
equivalence: \(X\) is Goldberg if and only if its resolution
\(X'\) is Goldberg. Thus, like weak Goldbergness, Goldbergness
itself is detected on the simple model. This is carried out below
-- \cref{lem:witness-apex-valency} controls
\(\val_{X_0'}(x')\), and the discussion following it culminates in
\cref{thm:resolution-goldberg-equivalence}.
\end{remark}

We record how a Goldberg witness can meet the link of a point of
valency at most two. The hypotheses below describe the local
picture at any point \(x\) of a thick component \(\bar\Sigma\) of
a simple complex \(\bar X\) with \(\val_{\bar X}(x)\leq2\) -- if
\(\val_{\bar X}(x)=2\), then \(x\) is a joint, and simplicity
forces the second of the two forms displayed
(\cref{def:simple-complex}) -- and also the local picture at the
apex \(x'\) of a resolution, with \(L=L'\).

\begin{lemma}\label{lem:witness-apex-valency}
Let \(\bar X\) be Goldberg, with Goldberg witness \(\bar X_0\),
and let \(x\in\bar X_0\) be a point of \(\bar X\) with
\(\val_{\bar X}(x)\leq2\) whose link is, apart from at most one
isolated point, connected and one-dimensional:
\[
\lk_{\bar X}(x)=L
\qquad\text{or}\qquad
\lk_{\bar X}(x)=L\sqcup\{y\},
\]
with \(L\) connected and one-dimensional and, in the second
form, \(y\) the direction of a free edge \(\sfe\) of \(\bar X\) at
\(x\), with \(\sfe\subseteq\bar X_0\). If
\(\val_{\bar X_0}(x)\geq2\), then
\[
\val_{\bar X_0}(x)=2,
\]
with a single exceptional case: \(n=2\),
\(\val_{\bar X_0}(x)=3\), and \(L\) an interval or -- in the
first form only -- a circle.
\end{lemma}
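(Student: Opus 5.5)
We will argue by contradiction: assume \(\val_{\bar X_0}(x)\geq3\), and show that the injectivity of \(j_\ast\colon\PB_n(\bar X_0)\to\PB_n(\bar X)\) then fails, except in the listed case. The local picture is the whole point. Near \(x\) the ambient complex is the cone over \(L\) (possibly with one free edge \(\sfe\) in direction \(y\)). The witness \(\bar X_0\), being a contractible subcomplex, meets this cone in a graph-like germ, so \(\lk_{\bar X_0}(x)\) consists of at least three components. At most one of them is \(y\) and at least two lie in \(L\). Hence \(\bar X_0\) contains an embedded tripod \(\sfS_3\) (or a larger star \(\sfS_m\)) at \(x\), at least two of whose legs run into the cone over the connected one-dimensional \(L\).

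First we establish that the star embeds in the witness. By \cref{prop:star-embedding} (in the form of \cref{rem:star-link-components}), applied inside \(\bar X_0\) at \(x\), the leaves enter pairwise distinct link components of \(\lk_{\bar X_0}(x)\). Therefore \(\PB_n(\sfS_m)\hookrightarrow\PB_n(\bar X_0)\), and composing with the injective \(j_\ast\) gives \(\PB_n(\sfS_m)\hookrightarrow\PB_n(\bar X)\).

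Next we exhibit a relation in \(\bar X\) that does not hold in the star. Take two legs \(a,b\) entering the cone over \(L\). Since \(L\) is connected, their link points are joined by an arc in \(L\), and the cone over that arc is an embedded disc \(D\subseteq\bar X\) containing \(a\cup b\) as a boundary arc through \(x\). A third leg \(c\) leaves \(D\) either along \(\sfe\) or into another part of the cone. In the first case the picture is \(D\) with a pendant edge at a boundary point, or at an interior point if \(L\) is a circle and we use the full cone. We then compare \(\PB_n(\sfS_3)\to\PB_n(N)\), where \(N\) is a regular neighbourhood of \(\sfS_3\) in \(\bar X\): either a disc with three boundary arms or a thicker \(2\)-complex. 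For \(n\geq3\) the star group is free of large rank (\cref{ex:one-essential}), while its image in \(\PB_n(N)\subseteq\PB_n(\bar X)\) satisfies non-free relations. For example, \(\PB_n(D^2)\) has commuting elements, such as the full twist together with disjoint generators, via \cref{lem:star-to-disc}. These pull back to non-commuting elements of the free group \(\PB_n(\sfS_m)\), which contradicts injectivity. When \(c\) also lies in the cone and \(L\) is not an interval or circle (say \(L\) contains a point of valency \(\geq3\)), the neighbourhood is a branched surface. \Cref{prop:branched-surface} or \cref{cor:thick-with-pendants} then shows that \(\PB_n(N)\to\prod\pi_1\) kills all braids, which makes the image of the nontrivial \(\PB_n(\sfS_3)\) trivial in \(\bar X\). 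That is again a contradiction, and it holds for all \(n\geq2\).

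For \(n=2\) and \(\val_{\bar X_0}(x)=3\) the tripod analysis does not give a contradiction. By \cref{lem:tripod-to-disc}, \(\PB_2(\sfS_3)\cong\PB_2(D)\cong\Z\) injects when the star sits on a disc, that is, when \(L\) is an interval or, in the first form, a circle. This is exactly the stated exception. For \(\val_{\bar X_0}(x)\geq4\) with \(n=2\), the star \(\sfS_4\) has \(\PB_2\) free of rank \(>1\), and it maps into a surface-like neighbourhood whose two-strand group near \(x\) is abelian or smaller. This again forces a nontrivial kernel.

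\textbf{Main obstacle.} The hardest step is making the comparison to \(\PB_n(N)\) rigorous: we must ensure that the relation found locally in \(N\) genuinely holds in \(\PB_n(\bar X)\), and that the elements involved are distinct in \(\PB_n(\sfS_m)\). The second half is handled by the Bass–Serre/immersion description of \(\PB_n(\sfS_m)\) as \(\pi_1\) of its distribution graph. We also need a careful case split on the shape of \(L\), namely interval, circle, or branched, to pin down precisely the \(n=2\) exception.
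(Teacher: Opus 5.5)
Your proposal follows essentially the paper's proof. A star with one leg per component of \(\lk_{\bar X_0}(x)\) (via \cref{rem:star-link-components}) injects into \(\PB_n(\bar X)\), hence into \(\PB_n(N(x))\). The branched and interior-edge cases die because \(\PB_n(N(x))=1\). In the disc cases, \(n\geq3\) is excluded by surjectivity (\cref{lem:star-to-disc}) played against the non-freeness of \(\PB_n(D^2)\), and \(n=2\), \(m\geq4\) by a rank count.

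Two points of tidying. Organise the cases by the shape of \(N(x)\) rather than by where the third leg goes; the second form with \(L\) a circle is then dispatched for all \(n\) by \cref{ex:disc-interior}. Also, only functoriality is used, not injectivity of \(\PB_n(N)\to\PB_n(\bar X)\), so the main obstacle you flag is not really an obstacle.
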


\begin{proof}
Write \(N(x)\) and \(N_0(x)\) for regular neighbourhoods of
\(x\) in \(\bar X\) and in \(\bar X_0\) respectively, chosen with
\(N_0(x)\subseteq N(x)\), and set
\(m\coloneqq\val_{\bar X_0}(x)\geq2\). In the second form
\(\sfe\subseteq\bar X_0\) gives \(y\in\lk_{\bar X_0}(x)\), and we
decompose
\[
\lk_{\bar X_0}(x)=L^{1}\sqcup\cdots\sqcup
L^{m-1}\sqcup\{y\},
\qquad L^{j}\subseteq L;
\]
in the first form we decompose
\(\lk_{\bar X_0}(x)=L^{1}\sqcup\cdots\sqcup L^{m}\) with
\(L^{j}\subseteq L\). Suppose, for contradiction, that
\(m\geq3\).

First apply \cref{prop:star-embedding}, in the generality of
\cref{rem:star-link-components}, to \(\bar X_0\) at \(x\): there
is an embedded star \(\sfS_m\subseteq\bar X_0\), with one leaf
through each of the \(m\) link components of
\(\lk_{\bar X_0}(x)\), such that
\(\PB_n(\sfS_m)\to\PB_n(\bar X_0)\) is injective; we may take
\(\sfS_m\subseteq N_0(x)\). Since \(\bar X_0\) is a
\emph{Goldberg} witness, \(\PB_n(\bar X_0)\to\PB_n(\bar X)\) is
injective as well, and these inclusions factor as
\[
\begin{tikzcd}
& & \bar X_0 \ar[rd,hookrightarrow]\\
\sfS_m\ar[r,hookrightarrow] & N_0(x) \ar[ru,hookrightarrow]\ar[rd,hookrightarrow] & & \bar X\\
& & N(x)\ar[ru,hookrightarrow]
\end{tikzcd}
\]
The composite \(\PB_n(\sfS_m)\to\PB_n(\bar X)\) is therefore
injective, and since it factors through \(\PB_n(N(x))\), the map
\(\PB_n(\sfS_m)\to\PB_n(N(x))\) must be injective.

We first dispose of the cases in which \(\PB_n(N(x))\) is
trivial. In the second form, \(N(x)\) is the cone over \(L\) with
a half-edge of \(\sfe\) attached at the cone point \(x\): if \(L\)
is a circle, \(N(x)\) is a disc with an edge attached at an
interior point, and if \(L\) is neither a circle nor an interval,
\(N(x)\) contains a configuration of the same kind; in both cases
\(\PB_n(N(x))=1\) by \cref{ex:disc-interior}. In the first form,
\(N(x)\) is the cone over \(L\) alone; if \(L\) is neither an
interval nor a circle, then \(N(x)\) is a contractible simple
complex all of whose vertices have valency one -- the link of the
cone point is \(L\), and the link of a vertex of \(L\) is the
cone over its link in \(L\) -- and \(N(x)\) is not a
\(2\)-manifold, \(L\) having a vertex of degree at least three;
hence \(\PB_n(N(x))\cong\prod_{i}\pi_1(N(x))=1\) by
\cref{prop:branched-surface}. In all these cases injectivity
forces \(\PB_n(\sfS_m)=1\), whence \(m=2\) -- the star braid
group is non-trivial for \(m\geq3\) and \(n\geq2\) --
contradicting \(m\geq3\).

In the remaining cases \(N(x)\) is a disc, and
\(\PB_n(N(x))\cong\PB_n(D^2)\): if \(L\) is an interval, \(x\)
is a boundary point of the disc and, in the second form, the
edge \(\sfe\) is attached there (\cref{ex:disc}); if \(L\) is a
circle -- possible in the first form only, the other cases
having been disposed of above -- then \(x\) is an interior point
of the honest disc \(N(x)\). For \(m\geq3\)
the map \(\PB_n(\sfS_m)\to\PB_n(D^2)\) has non-trivial kernel
unless \(m=3\) and \(n=2\). Indeed, for \(n=2\) and \(m\geq4\),
the group \(\PB_2(\sfS_m)\) is free of rank at least two -- by
the rank formula of \cref{ex:one-essential} with \(\ell=0\),
\(\BG_2(\sfS_m)\) is free of rank \((m-1)(m-2)/2\geq3\), and
Nielsen--Schreier gives \(\PB_2(\sfS_m)\) rank
\((m-1)(m-2)-1\geq5\) -- and free groups of rank \(\geq2\) do not
embed into \(\PB_2(D^2)\cong\Z\). For \(n\geq3\), the map is
\emph{surjective}: the Artin generators \(\sigma_i\) of
\(\BG_n(D^2)\) are realised by star braids -- line the strands
up along one arm and exchange the two innermost ones through two
of the other arms -- so \(\BG_n(\sfS_m)\to\BG_n(D^2)\) is onto,
and comparing the kernels of the projections to the symmetric
group, so is \(\PB_n(\sfS_m)\to\PB_n(D^2)\). Were the latter
also injective, it would be an isomorphism, making
\(\PB_n(D^2)\) a free group; but for \(n\geq3\) it contains
\(\PB_3(D^2)\cong\mathbb{F}_2\times\Z\), hence a free abelian
subgroup of rank two, which a free group does not contain.
Injectivity therefore forces \(m=3\) and \(n=2\); and this case
arises only when \(N(x)\) is a disc, that is, when \(L\) is an
interval or -- in the first form only -- a circle. This
completes the proof.
\end{proof}

Arranging by an isotopy that \(\sfe\subseteq X_0'\), as in Case~2
of the proof of \cref{thm:resolution-reflects-goldberg}, and
applying the lemma with \(\bar X=X'\) and \(x=x'\) the apex of
the resolution -- the second form, with \(L=L'\) and \(y\) the
direction of \(\sfe\) -- we find that \(\val_{X_0'}(x')=2\), except
possibly when \(n=2\), \(\val_{X_0'}(x')=3\) and \(L'\) is an
interval.

We can now remove the exceptional case. Suppose \(n=2\), \(m=3\)
and \(L'\) is an interval, and write
\(\lk_{X_0'}(x')=L'^{\,1}\sqcup L'^{\,2}\sqcup\{y\}\) as before.
If \(L'^{\,1}\) or \(L'^{\,2}\) is an interval rather than a
point, resolve \(X_0'\) at \(x'\) along that component; since the
cone over the interval \(L'\) is two-dimensional, the resolution
can be realised \emph{inside} a regular neighbourhood \(N'\) of
\(x'\) in \(X'\), producing a subcomplex
\(X_0''\subseteq X'\) together with a braid equivalence
\(\PB_n(X_0')\cong\PB_n(X_0'')\)
(\cite[Prop.~3.11]{AnPark2017}) compatible with the inclusions
into \(X'\); hence \(X_0''\) is again a Goldberg witness. Without
loss of generality we may therefore assume that the regular
neighbourhood \(N_0'\) of \(x'\) in \(X_0'\) is
precisely a tripod \(\sfS_3\), i.e.\ that \(L'^{\,1}\) and
\(L'^{\,2}\) are points.

We now enlarge the witness inside \(X'\); throughout, \(X'\)
itself is left untouched and only \(X_0'\) is modified. Let
\(I\subseteq X_0'\) be the interval through \(x'\) formed by the
halves, adjacent to \(x'\), of the two arms of \(N_0'\)
corresponding to \(L'^{\,1}\) and \(L'^{\,2}\). Since the cone
over the interval \(L'\) is two-dimensional, we may choose a
\(2\)-disc \(D\) inside \(N'\subseteq X'\), part of whose boundary
runs along \(I\), and set
\[
X_0''\coloneqq X_0'\cup D\subseteq X'.
\]
The complex \(X_0''\) is precisely the construction of
\cref{lem:tripod-to-disc} applied to \(X_0'\) at the tripod point
\(x'\), so the inclusion \(X_0'\hookrightarrow X_0''\) induces an
isomorphism \(\PB_2(X_0')\cong\PB_2(X_0'')\); in particular the
two witness maps have the same image in \(\PB_2(X')\), and
\(X_0''\) is again a Goldberg witness. Moreover \(\lk_{X_0''}(x')\) now
consists of a single interval -- the link of \(x'\) in the
half-disc attached along \(I\), joining the directions of
\(L'^{\,1}\) and \(L'^{\,2}\) -- together with \(\{y\}\); hence
\(\val_{X_0''}(x')=2\). In other words, in the exceptional case as
well, a Goldberg witness with \(\val(x')=2\) can always be
arranged.

Combining everything, we obtain the equivalence promised in
\cref{rem:resolution-goldberg-equivalence}.

\begin{theorem}\label{thm:resolution-goldberg-equivalence}
\(X\) is weakly Goldberg (resp.\ Goldberg) if and only if its
resolution \(X'\) is.
\end{theorem}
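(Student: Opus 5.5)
The plan is to split the statement into four implications and assemble them from the two resolution theorems and the apex-valency discussion that precedes the statement. The two ``only if'' directions are exactly \cref{thm:resolution-preserves-goldberg}: a weakly Goldberg (resp.\ Goldberg) witness \(X_0\subseteq X\) is converted into one for \(X'\). The ``if'' direction for weak Goldbergness is the first assertion of \cref{thm:resolution-reflects-goldberg}, which holds unconditionally. So the only remaining content is the ``if'' direction for Goldbergness, where \cref{thm:resolution-reflects-goldberg} asks for a Goldberg witness \(X_0'\) of \(X'\) with \(\val_{X_0'}(x)\geq2\) and \(\val_{X_0'}(x')=2\).

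To produce such a witness, I start from an arbitrary Goldberg witness \(X_0'\) of \(X'\) and first dispose of the case in which it avoids the interior of \(\sfe\). Case~1 of the proof of \cref{thm:resolution-reflects-goldberg} already transfers any witness, in either strength, with no valency hypothesis, since then \(q_\sfe\) restricts to a homeomorphism on \(X_0'\). Otherwise \(X_0'\) meets \(\mathring\sfe\), and I isotope it, as in Case~2 there, so that \(\sfe\subseteq X_0'\) and both endpoints have valency at least \(2\) in \(X_0'\). This gives \(\val_{X_0'}(x)\geq2\) and \(\val_{X_0'}(x')\geq2\). An isotopy of embeddings only conjugates the image of \(j_\ast\), so the witness remains Goldberg.

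Next I apply \cref{lem:witness-apex-valency} with \(\bar X=X'\) and the point \(x'\). The link of \(x'\) in \(X'\) has the second form \(L'\sqcup\{y\}\), with \(y\) the direction of \(\sfe\subseteq X_0'\). The lemma yields \(\val_{X_0'}(x')=2\), except when \(n=2\), \(\val_{X_0'}(x')=3\) and \(L'\) is an interval. In that exceptional case I follow the modification carried out just before the statement. First, if needed, I resolve \(X_0'\) inside a regular neighbourhood of \(x'\), via \cite[Prop.~3.11]{AnPark2017}, to make its neighbourhood at \(x'\) a tripod. Then I glue a half-disc \(D\subseteq N'\) along the interval \(I\), which by \cref{lem:tripod-to-disc} preserves \(\PB_2\) together with its image in \(\PB_2(X')\). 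The result is a Goldberg witness with \(\val(x')=2\).

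I must also check that the valency at \(x\) stays at least \(2\) under these modifications, since they are supported near \(x'\). Then the second assertion of \cref{thm:resolution-reflects-goldberg} applies and \(X\) is Goldberg.

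The main obstacle is the exceptional \(n=2\) case. I need the gluing of \(D\) and the preliminary in-neighbourhood resolution to preserve both injectivity of \(j_\ast\) and the equality \(\co{\im j_\ast}=\ker\iota_\ast\). This works because the inclusion \(X_0'\hookrightarrow X_0''\) induces an isomorphism on \(\PB_2\) that commutes with the maps into \(\PB_2(X')\), so the image of \(j_\ast\) is literally unchanged. Contractibility must also survive, and it does: \(D\) collapses onto \(I\), and the resolution is a homotopy equivalence.
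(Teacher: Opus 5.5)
Your proposal is correct and follows the paper's proof essentially step for step. The forward implications and the weak converse come from \cref{thm:resolution-preserves-goldberg,thm:resolution-reflects-goldberg}. For the Goldberg converse, the paper likewise isotopes the witness so that \(\sfe\subseteq X_0'\), invokes \cref{lem:witness-apex-valency} at \(x'\), and removes the exceptional \(n=2\) case by the tripod-to-disc modification before applying the second assertion of \cref{thm:resolution-reflects-goldberg}. You go slightly further than the paper in three places it leaves implicit: you treat a witness avoiding \(\mathring\sfe\) separately via Case~1, and you check that contractibility and \(\val_{X_0'}(x)\geq2\) survive the modification.
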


\begin{proof}
For weak Goldbergness, and for the forward implication in either
strength, this is
\cref{thm:resolution-preserves-goldberg,thm:resolution-reflects-goldberg}.
Suppose then that \(X'\) is Goldberg, with Goldberg witness
\(X_0'\). As in Case~2 of the proof of
\cref{thm:resolution-reflects-goldberg}, we may isotope \(X_0'\)
so that \(\sfe\subseteq X_0'\) with \(\val_{X_0'}(x)\geq2\) and
\(\val_{X_0'}(x')=m\geq2\). By \cref{lem:witness-apex-valency},
either \(m=2\), or \(n=2\), \(m=3\) and \(L'\) is an interval; in
the latter case the discussion above replaces \(X_0'\) by a
Goldberg witness with \(\val(x')=2\), leaving the situation at
\(x\) unchanged. In either case the hypothesis of
\cref{thm:resolution-reflects-goldberg} is met, and \(X\) is
Goldberg.
\end{proof}

\subsection{Splitting, descent, and interval pieces}
\label{ssec:witness-splitting-descent}

In this final subsection we leave the fixed point \(x\) of the
standing assumption and record global structural results on
(weak) Goldberg witnesses of the simple model \(\bar X\): a
splitting principle at separating free-edge points, a descent
theorem along non-separating ones, and, combining the two, the
theorem that a weak witness meets the thick components in no
interval pieces. Here \(\bar X\) denotes the simple model of \(X\)
(\cref{prop:simple-model-unique}), and for each thick component
\(\bar\Sigma\) of \(\bar X\) we let \(N(\bar\Sigma)\) denote its
regular neighbourhood in \(\bar X\): the union of \(\bar\Sigma\)
with one pendant edge attached at each of the finitely many
points at which \(\bar\Sigma\) is joined to the remainder of
\(\bar X\). After a subdivision we may assume, without loss of
generality, that the neighbourhoods cover \(\bar X\) and overlap
only in dimension zero:
\[
\bigcup_{\bar\Sigma} N(\bar\Sigma)=\bar X,
\qquad
\dim\bigl(N(\bar\Sigma)\cap N(\bar\Sigma')\bigr)=0
\quad\text{for distinct thick components }\bar\Sigma\neq\bar\Sigma'.
\]
Given a subcomplex \(\bar X_0\subseteq\bar X\) and a thick
component \(\bar\Sigma\), we write
\(\bar\Sigma_0^1,\dots,\bar\Sigma_0^m\) -- or \(\bar\Sigma_0^j\)
with a running index -- for the connected components of
\(\bar X_0\cap N(\bar\Sigma)\).

We begin with the splitting principle: a Goldberg witness may be
cut at a separating point of a free edge of \(\bar X\), and the
two pieces are then Goldberg witnesses of the two sides.

\subsubsection{Splitting} Let \(\bar X_0\) be a Goldberg witness of
\(\bar X\) -- or a weak Goldberg witness, for the weak
statements below -- and let \(x\) be an interior point of a free
edge \(\sfe\) of \(\bar X\) such that \(\bar X_x\) is disconnected,
with \(x\)-components \(\bar X^1\) and \(\bar X^2\), neither of
which is homeomorphic to an interval. Suppose moreover that
\(x\) is an interior point of a free edge of \(\bar X_0\) as
well; that is, a neighbourhood of \(x\) in \(\sfe\) is contained in
\(\bar X_0\). The link \(\lk_{\bar X}(x)=L^1\sqcup L^2\) then
consists of the two directions of \(\sfe\) at \(x\), and
\(\bar X=\bar X^1\cup\bar X^2\) with
\(\bar X^1\cap\bar X^2=\{x\}\), labelled so that
\(L^i\subseteq\bar X^i\), as in
\cref{cor:free-edge-disconnected}. As Goldbergness and its
witnesses are insensitive to moving the base configuration along
a path in \(F_n(\bar X_0)\) -- conjugate all maps involved by
the change-of-basepoint isomorphisms -- we are free to place
\(\base\) inside \(F_n\bigl(\bar \sfe^{\,1}\bigr)\), where
\(\bar \sfe\coloneqq \sfe\cap\bar X_0\) and
\(\bar \sfe^{\,i}\coloneqq\bar \sfe\cap\bar X^i\) denote the two
half-edges of \(\bar \sfe\) at \(x\).

In this setting, \(\bar X_0\) has exactly two \(x\)-components
\(\bar X_0^i=\bar X_0\cap\bar X^i\) (\(i=1,2\)). Indeed, the
hypothesis gives \(\lk_{\bar X_0}(x)=L^1\sqcup L^2\), so
\(\bar X_0\) has at most two \(x\)-components; the two
directions cannot lie in a single component, since a path in
\(\bar X_0\setminus\{x\}\) joining them would join the two
components of \(\bar X_x\); and
\(\bar X_0^i\subseteq\bar X^i\), because
\(\bar X_0^i\setminus\{x\}\) is connected, avoids \(x\), and
meets \(L^i\). Writing
\(\bar X_0=\bar X_0^1\cup\bar X_0^2\) with
\(\bar X_0^1\cap\bar X_0^2=\{x\}\), van Kampen and
Mayer--Vietoris show that both pieces are simply connected with
vanishing reduced homology, hence contractible.

Moreover, \emph{neither piece is homeomorphic to an interval}.
An interval piece would be a terminal arc of \(\bar X_0\)
issuing from \(x\) into one side of \(\bar X\); retracting it
back across \(x\) into the opposite half-edge of \(\bar \sfe\) --
an isotopy of embeddings \(\bar X_0\hookrightarrow\bar X\),
which changes the image of \(\PB_n(\bar X_0)\) only by a
conjugation and hence returns a weak Goldberg witness --
produces a witness missing one of the two \(x\)-components of
\(\bar X\) entirely, contradicting
\cref{prop:X0-contains-nontrivial-junction}. Being connected,
contractible and no intervals, both pieces contain embedded
tripods; in particular every
\(F_m\bigl((\bar X^i_0)_x\bigr)\) is connected, and the
distribution graph of
\(\cG_0\coloneqq\cG_n((\bar X_0,x),\base)\) is canonically
identified with the distribution graph
\(\sfG=\sfG_n((\bar X,x),\base)\).

\begin{lemma}[Splitting a witness at a separating free edge]
\label{lem:witness-splitting}
In the setting above, for each \(i\) the inclusion
\(j^i\colon\bar X_0^i\hookrightarrow\bar X^i\) exhibits
\(\bar X_0^i\) as a Goldberg witness of \(\bar X^i\); in
particular both \(x\)-components of \(\bar X\) are Goldberg. The
same statements hold with ``weak Goldberg witness'' and ``weakly
Goldberg'' in place of ``Goldberg witness'' and ``Goldberg''.
\end{lemma}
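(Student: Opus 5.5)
The plan is to check the three clauses of \cref{def:goldberg-strengths} for the pair \(\bar X_0^i\subseteq\bar X^i\) one at a time, and to keep the hard clause until last. We treat \(i=2\); the case \(i=1\) is symmetric after moving the base configuration to the half-edge \(\bar\sfe^{\,2}\).

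Contractibility of \(\bar X_0^i\) and the basepoint condition were settled in the discussion preceding the statement. The inclusion \(\co{\im j^i_\ast}\subseteq\ker\iota^{\bar X^i}_\ast\) then follows from \cref{lem:weak-goldberg-pi1-zero}.

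Injectivity in the Goldberg case is formal. The point \(x\) is interior to a free edge of \(\bar X_0\), so the link components of \(x\) in \(\bar X_0\) are single points. Hence \cref{lem:x-component-embedding}, applied to \(\bar X_0\) at \(x\), makes \(\PB_n(\bar X_0^i)\to\PB_n(\bar X_0)\) injective. Composing with the injective witness map \(\PB_n(\bar X_0)\to\PB_n(\bar X)\) gives an injective map. This composite factors through \(j^i_\ast\colon\PB_n(\bar X_0^i)\to\PB_n(\bar X^i)\), so \(j^i_\ast\) is injective.

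The substantive step is \(\ker\iota^{\bar X^2}_\ast\subseteq\co{\im j^2_\ast}^{\PB_n(\bar X^2)}\). Let \(\beta\in\ker\iota^{\bar X^2}_\ast\). Its image in \(\PB_n(\bar X)\) lies in \(\ker\iota^{\bar X}_\ast\), since \(\pi_1(\bar X)=\pi_1(\bar X^1)\ast\pi_1(\bar X^2)\) and \(\pi_1(\bar X^2)\) injects into it. By the witness property this image lies in \(\co{\im j_\ast}\). The task is to transport this membership back to \(\PB_n(\bar X^2)\).

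For this I would build a homomorphism
\[
\rho\colon\PB_n(\bar X,\base)\longrightarrow\PB_n(\bar X^2)\big/\co{\im j^2_\ast}
\]
that restricts to the quotient map on \(\PB_n(\bar X^2)\) and kills \(\im j_\ast\). Given such a \(\rho\), \(\beta\) lies in \(\ker\rho\cap\PB_n(\bar X^2)=\co{\im j^2_\ast}\), which is what we want.

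To construct \(\rho\), I would work in the graph-of-groups presentation of \(\PB_n(\bar X)\) at \(x\) coming from \cref{cor:free-edge-disconnected} and \cref{cor:free-edge-both-quotient}. Use the forgetting map \(\Psi\colon\cG\to\cS\) of \cref{cor:other-x-component-embeds}, which discards the \(\bar X^1\)-side strands. It kills \(\PB_n(\bar X^1)\) and is injective on \(\PB_n(\bar X^2)\). Run the same construction on \(\bar X_0\) to get a telescope \(\cS_0\) for \((\bar X_0^2)_x\), mapping to \(\cS\) compatibly with \(\Psi\). Here the canonical identification of the distribution graphs of \(\cG_0\) and \(\cG\) noted before the statement is what makes the two constructions match.

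Then kill in \(\pi_1(\cS)\) the normal closure of \(\pi_1(\cS_0)\). One must check that the result identifies, on the top vertex \(F_n\bigl((\bar X^2)_x\bigr)\), with \(\PB_n(\bar X^2)/\co{\im j^2_\ast}\). The intended mechanism is that the vertex groups of the lower levels \(F_S\) of the telescope inject, via the split-injective stabilisations \(\sigma_i\), into the top vertex group. Moreover, the stabilisation loops lie in \(\pi_1(\cS_0)\), because the leaf arc \(\varepsilon\) is contained in \(\bar X_0\). Since \(\bar X_0\) is a union of its two sides, \(\im j_\ast\) is generated by the images of \(\PB_n(\bar X_0^1)\), \(\PB_n(\bar X_0^2)\) and the distribution-graph loops of \(\bar X_0\) (\cref{cor:free-edge-disconnected} applied to \(\bar X_0\)). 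Each of these dies under \(\rho\).

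The weak statement uses the same argument with the injectivity paragraph omitted. I expect the main obstacle to be precisely the construction of \(\rho\), that is, showing that the telescope quotient is no larger than \(\PB_n(\bar X^2)/\co{\im j^2_\ast}\). If the telescope route fails, I would fall back on a direct normal-form argument in the graph of groups of \cref{cor:free-edge-both-quotient}, collapsing the \(\bar X^1\)-side vertex groups one level at a time along the slide-edge spanning tree used there.
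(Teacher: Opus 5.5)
Your handling of contractibility, of the containment \(\co{\im j^i_\ast}\subseteq\ker\iota^{\bar X^i}_\ast\), and of the injectivity of \(j^i_\ast\) matches the paper (which treats side \(1\)). So does your reduction of the reverse containment to a homomorphism \(\rho\colon\PB_n(\bar X,\base)\to Q^2\coloneqq\PB_n(\bar X^2)/\co{\im j^2_\ast}\) that is the projection on \(\PB_n(\bar X^2)\) and kills \(\im j_\ast\).

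The gap is that \(\rho\) is never constructed. Via \(\Psi\) you reduce to a homomorphism \(\theta\colon\pi_1(\cS)\to Q^2\) that kills \(\pi_1(\cS_0)\) and is the projection on the top vertex group. The mechanism you offer does not produce \(\theta\). The death of the cube loops of \(\cS\), with or without injectivity of the \(\sigma_i\), only shows that the top vertex group \emph{surjects} onto \(\pi_1(\cS)/\co{\pi_1(\cS_0)}\). It gives no control of the kernel. Killing a cube loop imposes on the top vertex group the relation \(\Sigma(h)=\Sigma'(h)\) for all \(h\in\PB_S(\bar X^2_x)\), where \(\Sigma,\Sigma'\) re-insert the strands outside \(S\) on \(\varepsilon\) in two different orders. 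Unless you prove \(\Sigma(h)^{-1}\Sigma'(h)\in\co{\im j^2_\ast}\), your argument yields only \(\ker\iota^{\bar X^2}_\ast\subseteq\co{\im j^2_\ast\cup D}\), where \(D\) is the set of these discrepancies. Since \(D\subseteq\ker\iota^{\bar X^2}_\ast\) already, this is circular at exactly the point you flag.

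The missing ingredient is the paper's comparison principle. Fix a pocket \(W^2\subseteq\bar X^2_0\): a finite tree containing \(\bar\sfe^{\,2}\) and a trivalent vertex, which exists because \(\bar X^2_0\) is not an interval. Choose all reference configurations, connecting paths and parking slots inside \(\bar X^2_0\). Then show that two ways of completing a braid by parking the absent strands near \(x\) differ by braids supported in \(W^2\cup\bar\sfe^{\,2}\subseteq\bar X^2_0\), and hence agree in \(Q^2\).

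Once you have this, the telescope is superfluous. You can define \(\rho\) directly on the graph-of-groups presentation of \(\PB_n(\bar X)\) at \(x\):
\begin{itemize}
\item send the stable letters to \(1\);
\item kill the \(\bar X^1\)-side factor of each vertex group \(\PB_S(\bar X^1_x)\times\PB_{S^c}(\bar X^2_x)\);
\item park the missing strands on a terminal segment of \(\bar\sfe^{\,2}\);
\item check the edge relations modulo comparison braids.
\end{itemize}
That is the paper's proof.

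Be aware that the comparison is immediate only when the two parkings place the strands in the same order along \(\bar\sfe^{\,2}\). In that case they differ by a slide inside the terminal segment, whose support is disjoint from that of the braid being completed. When the orders differ, the rearrangement must pass through the pocket, which the braid may also occupy. That case is precisely your cube-loop relation, and it needs an explicit argument in either formulation.
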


\begin{proof}
\emph{Step 1: injectivity of \(j^i_\ast\).} (This step is needed
for the Goldberg case only.) The components of
\(\lk_{\bar X_0}(x)\) are single points, so
\cref{lem:x-component-embedding}, applied to \((\bar X_0,x)\),
embeds \(\PB_n(\bar X_0^i)\) into \(\PB_n(\bar X_0)\); following
with the embedding \(j_\ast\colon\PB_n(\bar X_0)\to\PB_n(\bar X)\)
of the Goldberg witness and using the commutativity of the square
of inclusions, the composite
\(\PB_n(\bar X_0^i)\to\PB_n(\bar X^i)\to\PB_n(\bar X)\) is
injective; hence so is its first factor \(j^i_\ast\).

\emph{Step 2: \(\co{\im j^i_\ast}\subseteq\ker\iota^{\bar X^i}_\ast\).}
Every strand class of a braid in \(\bar X_0^i\) lies in the image
of \(\pi_1(\bar X_0^i)=1\), so
\(\im j^i_\ast\subseteq\ker\iota^{\bar X^i}_\ast\); the kernel
being normal, its normal closure is contained there as well.

\emph{Step 3: \(\ker\iota^{\bar X^i}_\ast\subseteq\co{\im j^i_\ast}\).}
By symmetry we may take \(i=1\). Abbreviate
\[
Q^1\coloneqq\PB_n(\bar X^1)\big/\co{\im j^1_\ast},
\qquad
Q\coloneqq\PB_n(\bar X)\big/\co{\im j_\ast}.
\]
We first claim that it suffices to produce a homomorphism
\(\Phi\colon\PB_n(\bar X,\base)\to Q^1\) such that
\begin{enumerate}[label=(\alph*), leftmargin=2em]
    \item the composite of the embedding
          \(\PB_n(\bar X^1)\hookrightarrow\PB_n(\bar X)\) of
          \cref{cor:free-edge-disconnected} with \(\Phi\) is the
          canonical projection
          \(\PB_n(\bar X^1)\twoheadrightarrow Q^1\), and
    \item \(\Phi(\im j_\ast)=1\).
\end{enumerate}
Indeed, granting \(\Phi\), property~(b) lets \(\Phi\) descend to
\(\bar\Phi\colon Q\to Q^1\). Let
\(g\in\ker\iota^{\bar X^1}_\ast\subseteq\PB_n(\bar X^1)\) and let
\(\bar g\in\PB_n(\bar X)\) be its image. The strand classes of
\(\bar g\) are the images of those of \(g\) under
\(\pi_1(\bar X^1)\to\pi_1(\bar X)\), hence trivial, so
\(\bar g\in\ker\iota^{\bar X}_\ast=\co{\im j_\ast}\) -- the
witness property of \(\bar X_0\) -- and \([\bar g]=1\) in \(Q\).
By~(a), \(1=\bar\Phi([\bar g])=[g]\) in \(Q^1\), that is,
\(g\in\co{\im j^1_\ast}\), as required. (Note that (a) and~(b)
together also exhibit \(Q^1\) as a retract of \(Q\).)

We construct \(\Phi\) from the graph-of-groups decomposition
of \(\PB_n(\bar X)\) at \(x\), provided by
\cref{prop:valency-decomp,lem:x-component-embedding}.

\emph{Choices.} As each \(\bar X_0^i\) contains an embedded tripod, we may fix, for \(i=1,2\), an embedded
finite tree \(W^i\subseteq\bar X_0^i\) -- a \emph{pocket} --
containing the half-edge \(\bar \sfe^{\,i}\) together with a vertex
of valency three; every configuration space \(F_m(W^i)\) is then
connected, so arbitrary rearrangements of strands may be
performed inside \(W^i\). We place \(\base\) on
\(\bar \sfe^{\,1}\setminus\{x\}\). As noted in the setting,
the decompositions
\(\cG=\cG_n((\bar X,x),\base)\) and
\(\cG_0=\cG_n((\bar X_0,x),\base)\) share the underlying
distribution graph \(\sfG\), the latter realised inside the
former as a subgraph-of-spaces over the identity of \(\sfG\). A
vertex \(\sfv\) of \(\sfG\) records which strands lie on the
\(\bar X^1\)-side -- a subset \(S\subseteq\{1,\dots,n\}\) --
and which on the \(\bar X^2\)-side -- the set \(S^c\), omitting,
for \(F_{n-1}\)-type vertices, the deleted strand -- and carries
the vertex group
\(G_{\sfv}=\PB_S(\bar X^1_x)\times\PB_{S^c}(\bar X^2_x)\), where
\(\PB_S\) denotes the pure braid group on the strands labelled
by \(S\). Fix a spanning tree \(\sfT\subseteq\sfG\), and choose
all reference configurations on \(\bar \sfe\) (side-\(i\) strands
on \(\bar \sfe^{\,i}\)) and all connecting paths, tree-edge
traversals and stable letters \(t_{\sff}\) (\(\sff\notin\sfT\))
as braids lying in \(\bar X_0\): transfers across \(x\) slide a
strand along \(\bar \sfe\), and any required reordering is
performed inside the pockets. With these choices,
\(\PB_n(\bar X,\base)\) is generated by the attached vertex
groups \(G_{\sfv}\) and the stable letters \(t_{\sff}\), subject
to the edge relations; and the same skeleton presents
\(\PB_n(\bar X_0,\base)\) with vertex groups
\(H_{\sfv}=\PB_S\bigl((\bar X^1_0)_x\bigr)\times
\PB_{S^c}\bigl((\bar X^2_0)_x\bigr)\leq G_{\sfv}\), so that
\[
\im j_\ast=\bigl\langle\,\text{the attached }H_{\sfv}
\ (\sfv\in\sfG),t_{\sff}\ (\sff\notin\sfT)\,\bigr\rangle
\leq\PB_n(\bar X,\base).
\]

\emph{Vertex maps.} Fix a terminal segment
\(\sfT\subseteq\bar \sfe^{\,1}\) at \(x\), chosen short enough that
\(\base\) and all reference positions lie outside \(\sfT\), and
parking slots \(q_1,\dots,q_n\in\mathring \sfT\). The compression
of \(\sfT\) into \(\bar \sfe^{\,1}\setminus \sfT\) is injective at each
time, so it induces a homotopy equivalence
\(F_S(\bar X^1_x)\simeq F_S(\bar X^1\setminus \sfT)\). For a vertex
\(\sfv\), define
\(\sigma_{\sfv}\colon\PB_S(\bar X^1_x)\to\PB_n(\bar X^1,\base)\)
by compressing a representative into \(\bar X^1\setminus \sfT\),
adjoining each missing strand as a constant strand at a
designated slot in \(\sfT\), and attaching the result to \(\base\)
along a chosen path in \(F_n(W^1)\); set
\(\Phi_{\sfv}(a,b)\coloneqq[\sigma_{\sfv}(a)]\in Q^1\), and
\(\Phi(t_{\sff})\coloneqq1\). Any two attaching paths differ by
a loop in \(F_n(W^1)\), i.e.\ by an element of
\(\PB_n(W^1,\base)\subseteq\im j^1_\ast\), whose class in
\(Q^1\) is trivial; hence \(\Phi_{\sfv}\) is well defined, and
it kills the \(\bar X^2\)-side factor. Moreover, if \(a\) is
supported in \((\bar X^1_0)_x\), then \(\sigma_{\sfv}(a)\) is
supported in \(\bar X_0^1\): the compression moves points only
inside \(\bar \sfe^{\,1}\), the slots lie on \(\bar \sfe^{\,1}\), and
the attaching path runs in \(W^1\); so
\(\Phi_{\sfv}(H_{\sfv})=1\). Granting that \(\Phi\) preserves
the edge relations (below), properties~(a) and~(b) follow: at
the vertex with \(S=\{1,\dots,n\}\) the map \(\sigma_{\sfv}\) is
the identification \(\PB_n(\bar X^1_x)\cong\PB_n(\bar X^1)\)
induced by inclusion, giving~(a), and \(\Phi\) kills the
displayed generators of \(\im j_\ast\), giving~(b).

\emph{Edge relations.} An edge of \(\sfG\) records a strand
\(k\) held at the link point \(L^j\) adjacent to \(x\); it joins
the \(F_{n-1}\)-type vertex \(\sfu\) deleting \(k\) to the
\(F_n\)-type vertex \(\sfv\) carrying \(k\) on side \(j\), and
the requirement is
\(\Phi_{\sfu}(\varphi_{\sfu}(h))
=\Phi_{\sfv}(\varphi_{\sfv}(h))\) for all \(h\) in the edge
group (for \(\sff\notin\sfT\) the relation is conjugated by
\(t_{\sff}\), whose \(\Phi\)-image is trivial, so the
requirement is the same). For \(j=2\) the side-1 data of the two
sides coincide and both maps park the same set of strands; for
\(j=1\) the two images differ by the constant position of \(k\)
-- its slot at \(\sfu\) versus, at \(\sfv\), the compression
image of \(L^1\) -- and possibly by the designation of slots.
In either case the two parked completions differ by a
rearrangement of constant strands along \(\sfT\), realisable by a
braid \(c\) supported in \(W^1\cup \sfT\subseteq\bar X_0^1\): the
moving strands travel through the pocket, while all remaining
strands rest on \(\bar \sfe^{\,1}\), inside \(\bar X_0^1\). Hence
\(c\in\im j^1_\ast\), and \([c]=1\) in \(Q^1\).

This observation disposes of all coherence questions at once.
The two sides of the requirement are classes of braids of the
form \(c_1\,\sigma(h)\,c_2\), where \(c_1,c_2\) are comparison
braids as above -- they are composed with \(\sigma(h)\) in time,
before and after it, so no disjointness of supports is needed --
and since \([c_1]=[c_2]=1\) in \(Q^1\), both sides equal
\([\sigma(h)]\). By the same principle, \(\Phi_{\sfv}\) does not
depend on the choices made in its construction -- of the
compression, the slot designation, or the attaching path -- any
two of which alter \(\sigma_{\sfv}(a)\) by pre- and
post-composition with braids supported in
\(W^1\cup \sfT\cup\bar \sfe^{\,1}\subseteq\bar X_0^1\), invisible in
\(Q^1\). Thus \(\Phi\) is a well-defined homomorphism
satisfying~(a) and~(b).

By the reduction, \(\Phi\) descends to
\(\bar\Phi\colon Q\to Q^1\) and exhibits \(Q^1\) as a retract of
\(Q\); in particular the natural map \(Q^1\to Q\) is injective.
Since \(\co{\im j^1_\ast}\subseteq\ker\iota^{\bar X^1}_\ast\)
(Step~2) and \(\iota^{\bar X^1}_\ast\) is surjective
(\cref{prop:iota-surj}; \(\bar X^1\not\cong S^1\)), there is a
canonical surjection
\(Q^1\twoheadrightarrow\prod_{\ell}\pi_1(\bar X^1)\); by the
naturality of the strand maps it fits into a commutative
triangle with the injection \(Q^1\to Q\) -- where
\(Q\cong\prod_{\ell}\pi_1(\bar X)\) by the witness property of
\(\bar X_0\) and \cref{prop:iota-surj} -- and the coordinatewise
embedding
\(\prod_{\ell}\pi_1(\bar X^1)\hookrightarrow
\prod_{\ell}\pi_1(\bar X)\) induced by
\(\pi_1(\bar X)\cong\pi_1(\bar X^1)\ast\pi_1(\bar X^2)\).
Injectivity of \(Q^1\to Q\) therefore forces
\(Q^1\cong\prod_{\ell}\pi_1(\bar X^1)\), that is,
\(\co{\im j^1_\ast}=\ker\iota^{\bar X^1}_\ast\). This completes
Step~3, and with it the proof.
\end{proof}

\subsubsection{Decent}
Next comes the analogue of \cref{lem:witness-splitting} at a
point where the cut does \emph{not} disconnect the ambient
complex: a weak Goldberg witness descends to the cut complex, at
the cost of one auxiliary arc through a thick component.

\emph{Setting.} Let \(\bar X_0\) be a weak Goldberg witness of
\(\bar X\) and let \(x\) be an interior point of a thin edge
\(\sfe\) of \(\bar X\) such that \(\bar X_x\) is \emph{connected};
note that \(\bar X_x\not\cong\) an interval, since otherwise
\(\bar X\cong S^1\), excluded by \cref{rem:standing}. By
\cref{prop:X0-contains-free-points,prop:X0-meets-link-m1},
applied at the interior points of \(\sfe\), the whole edge \(\sfe\)
lies in \(\bar X_0\). As before, \((\bar X_0)_x\) has exactly
two components \(\bar X_0^1,\bar X_0^2\), one for each germ of
\(\sfe\) at \(x\): a single component would contain a path joining
the two germs off \(x\), closing up to a loop crossing \(x\)
exactly once, while the mod-\(2\) crossing number at \(x\) is a
homomorphism \(H_1(\bar X_0;\Z/2)=0\to\Z/2\). Both pieces are
contractible, by the van Kampen and Mayer--Vietoris argument of
the disconnected case, and neither is homeomorphic to an
interval: an interval piece would be a terminal arc of
\(\bar X_0\) issuing from \(x\), and retracting it across
\(x\) into the opposite germ -- an isotopy of embeddings, which
changes \(\im j_\ast\) only by a conjugation -- would produce a
weak Goldberg witness whose link at \(x\) misses one of the two
germs of \(\sfe\), contradicting \cref{prop:X0-meets-link-m1}. In
particular each \(\bar X_0^i\) contains an embedded tripod.

\emph{An arc through a thick component.} We claim there are a
thick component \(\bar\Sigma\) of \(\bar X\) and an embedded arc
\(\sff\subseteq\bar\Sigma\) with
\(\sff\cap\bar X_0=\partial \sff\), one endpoint on
\(\bar X_0^1\) and the other on \(\bar X_0^2\). Choose an
embedded path \(P\) in \(\bar X_x\) joining the two germs of
\(\sfe\). Every thin edge \(\sfe'\) of \(\bar X\) traversed by \(P\)
lies in \(\bar X_0\): if \(\bar X_{y'}\) is connected for
\(y'\in\mathring \sfe'\), this is
\cref{prop:X0-contains-free-points}; if \(\bar X_{y'}\) is
disconnected, the two tips of \(P\) lie in different
\(y'\)-sides -- an embedded path crosses \(y'\) once -- and a
side which is an interval is a hanging arc ending at the
corresponding tip, contained in \(\bar X_0\) by the
connectivity of the non-interval piece \(\bar X_0^i\) issuing
from that germ, while if neither side is an interval,
\cref{prop:X0-contains-nontrivial-junction} applies at \(y'\).
Colour each thin edge of \(P\) by the piece \(\bar X_0^1\) or
\(\bar X_0^2\) containing it. At a common endpoint \(\sfv\) of two
consecutive thin edges of \(P\), the vertex \(\sfv\) lies in
\(\bar X_0\) together with the germs of both edges, so the
colours agree. Since \(P\) begins with colour \(1\) and ends
with colour \(2\), some colour change occurs along a traversal
of a thick component \(\bar\Sigma\): the path \(P\) enters
\(\bar\Sigma\) at a joint \(\sfw\in\bar X_0^1\) and leaves it at a
joint \(\sfw'\in\bar X_0^2\). Finally, choose an embedded arc
\(\beta\subseteq\bar\Sigma\) from \(\sfw\) to \(\sfw'\), transverse to
the subcomplex \(\bar X_0\cap\bar\Sigma\); taking \(b\) to be
the first point of \(\beta\) lying in \(\bar X_0^2\) and \(a\)
the last point of \(\beta\) before \(b\) lying in
\(\bar X_0^1\), the sub-arc \(\sff\coloneqq\beta|_{[a,b]}\) is as
claimed, after a subdivision making it a subcomplex.
Perturbing \(\beta\), we arrange furthermore that
\(\bar X_0\cup \sff\) \emph{branches} at both endpoints of \(\sff\).
Generically, \(a\) is an interior point of an edge or of a face
of \(\bar X_0\cap\bar\Sigma\), so that besides the direction of
\(\sff\) the witness offers at \(a\) either the two directions of
that edge, or a patch of dimension at least \(2\); in the
remaining case --
\(a\) a free endpoint of an arc of \(\bar X_0\cap\bar\Sigma\),
against which \(\sff\) would look like the prolongation of a
pendant edge -- we reattach \(\sff\) at an interior point of that
arc instead. The same arrangement is made at \(b\).

Set
\[
(\bar X_x)_0\coloneqq\bar X_0^1\cup \sff\cup\bar X_0^2
=(\bar X_0\cup \sff)_x ,
\]
a connected, contractible subcomplex of \(\bar X_x\).

The proof of the descent theorem below runs through the
following presentation. Write \(B\coloneqq\PB_n(\bar X_x,\base)\), with \(\base\) a
configuration on \(\bar X_0^1\) near \(x\), and
\[
Q_x\coloneqq B\big/\co{\im\PB_n((\bar X_x)_0)}^{B}.
\]
Since \(\bar X_x\) contains an embedded tripod, the
configuration spaces \(F_m(\bar X_x)\) are connected, and the
decomposition of \(F_n(\bar X)\) at \(x\)
(\cref{prop:valency-decomp}, as in
\cref{cor:free-edge-splitting}) presents \(\PB_n(\bar X,\base)\)
with a single attached vertex group \(B\), one stable letter
\(t_k\) for each strand \(k\), and the relations
\[
t_k\,\alpha^1_k(h)\,t_k^{-1}=\alpha^2_k(h),
\qquad h\in E_k ,
\]
where \(E_k\cong\PB^{(k)}_{n-1}(\bar X_x)\) is the braid group
of the strands other than \(k\), and
\(\alpha^i_k=\mathrm{st}_{p^i_k}\colon E_k\to B\) adjoins the
\(k\)-th strand as a constant strand at a parking position
\(p^i_k\) on the germ-\(i\) side of \(x\), the parking
positions being chosen on \(\sfe\cap\bar X_0^i\).

\begin{lemma}[Parking transport]\label{lem:parking-transport}
In the setting above,
\([\alpha^1_k(h)]=[\alpha^2_k(h)]\) in \(Q_x\) for every strand
\(k\) and every \(h\in E_k\).
\end{lemma}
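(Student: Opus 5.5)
The plan is to show that the two braids \(\alpha^1_k(h)\) and \(\alpha^2_k(h)\) are conjugate in \(B\) by a braid lying in \(\im\PB_n((\bar X_x)_0)\). Such a conjugator has trivial class in \(Q_x\), so the two classes agree. The conjugating braid should carry the parked \(k\)-th strand from \(p^1_k\) on the germ-\(1\) side to \(p^2_k\) on the germ-\(2\) side. Since crossing \(x\) is forbidden in \(\bar X_x\), the strand must instead travel through \((\bar X_x)_0\): along \(\sfe\cap\bar X_0^1\) into \(\bar X_0^1\), across the arc \(\sff\), and through \(\bar X_0^2\) to \(p^2_k\). This is exactly the route that the auxiliary arc \(\sff\) provides.

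First, reduce to a convenient representative of \(h\). Pick a small compact sub-region \(R\subseteq\sfe\cap(\bar X_0^1\cup\bar X_0^2)\) near \(x\) containing both parking positions. Up to homotopy in \(F_{n-1}\) of the strands other than \(k\), the representative of \(h\) avoids a neighbourhood of the transport path \(\tau\) from \(p^1_k\) to \(p^2_k\). This is not literally possible, since \(h\) is arbitrary and may wander through \(\tau\).

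So instead use a parking-conjugation trick. Write \(\alpha^2_k(h)=c\,\alpha^1_k(h)\,c^{-1}\), where \(c\in B\) is the braid in which strand \(k\) moves along \(\tau\) while the other strands are first pushed off \(\tau\) and then returned. The other strands rest at the base positions of \(\base\) on \(\bar X_0^1\) near \(x\). With that placement, \(\tau\) can be chosen disjoint from them except where it runs along \(\sfe\cap\bar X_0^1\), and there the strands can be rearranged using a tripod in \(\bar X_0^1\). Concretely, define \(c\) as a loop in \(F_n(\bar X_x)\) based at \(\alpha^1\)'s configuration. Along \(\tau\), the other \(n-1\) strands step aside inside pockets (embedded tripods) in \(\bar X_0^1\) and \(\bar X_0^2\), as in the proof of \cref{lem:witness-splitting}. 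Every strand then stays in \((\bar X_x)_0\), so \([c]=1\) in \(Q_x\).

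The identity \(\alpha^2_k(h)=c\,\alpha^1_k(h)\,c^{-1}\) itself needs care, because \(\alpha^i_k\) are stabilisations and \(h\) moves the other strands arbitrarily through \(\bar X_x\). I would handle this by factoring through a common stabilisation. Let \(p\) be a point near \(x\) on germ \(1\), outside the support of a compressed representative of \(h\); compressing as in \cref{lem:witness-splitting} pushes everything off a terminal segment. Then \(\alpha^1_k(h)\) and \(\alpha^2_k(h)\) both equal, up to conjugation by comparison braids supported in \((\bar X_x)_0\), the braid \(\sigma(h)\) that parks \(k\) at the respective point and runs \(h\) on the others. The two parked completions differ only by the constant position of strand \(k\). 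The difference between them is therefore a braid in which strand \(k\) travels along \(\tau\) while the others rest where \(\sigma(h)\) starts and ends, namely on \(\bar X_0^1\) near \(x\). That braid can be taken supported in \((\bar X_x)_0\), using the pockets and the branching of \(\bar X_0\cup\sff\) at the endpoints of \(\sff\) so that strand \(k\) can pass the resting strands. This is the same "composed in time" coherence principle used in \cref{lem:witness-splitting}, so no disjointness of supports is required.

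The main obstacle is guaranteeing that the transport of strand \(k\) past the other, temporarily resting strands can be done entirely inside \((\bar X_x)_0\). Near \(a\) and \(b\), the endpoints of \(\sff\), the transport must not get stuck behind another strand sitting on \(\sff\) or on an arc of the witness. This is exactly why \(\sff\) was arranged to branch at both endpoints, and why each \(\bar X_0^i\) contains a tripod. I would verify it by first clearing all non-\(k\) strands into a pocket of \(\bar X_0^1\) away from \(\sff\), then sliding \(k\) across, and finally restoring the others.
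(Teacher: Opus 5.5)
There is a genuine gap, and it sits exactly where you write that the identity ``needs care''. Take \(c\) to be a fixed transport of strand \(k\) along \(\tau\), with the other strands resting. The relation \(\alpha^2_k(h)=c\,\alpha^1_k(h)\,c^{-1}\) would come from the square \((s,t)\mapsto(\tau(s),h(t))\). That square lies in the configuration space only if the track of \(\tau\) misses the tracks of all strands of \(h\).

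Compression clears only a terminal segment at the cut tip. The route \(\tau\) must still run through \(\bar X_0^1\), across \(\sff\) and through \(\bar X_0^2\), and an arbitrary \(h\in E_k\) may use exactly those edges. Where this happens on a free edge of \(\bar X_x\), the strand of \(h\) and the transported strand cannot pass each other. The discrepancy \(\alpha^2_k(h)^{-1}\,c\,\alpha^1_k(h)\,c^{-1}\) is then in general a non-trivial braid of shuffle type (cf.\ \cref{lem:H-shuffle}).

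The ``composed in time'' principle of \cref{lem:witness-splitting} does not rescue this. That principle disposes of comparison braids that rearrange strands in a region the compressed \(h\) avoids. Here, composing \(c\) before and after \(h\) simply produces \(c\,\alpha^1_k(h)\,c^{-1}\), and its equality with \(\alpha^2_k(h)\) is precisely what has to be proved. So your step ``the two parked completions differ only by the constant position of strand \(k\), hence the difference is a braid in which \(k\) travels along \(\tau\) while the others rest'' is a non sequitur. The obstacle you single out, routing \(k\) past resting strands inside \((\bar X_x)_0\), is the easy part: one just rests the other strands off the path.

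What is missing is an argument that the discrepancy lies in \(\co{\im\PB_n((\bar X_x)_0)}\). The paper supplies it as follows.
First, the set of \(h\) for which \(\sfw(h)=\alpha^2_k(h)^{-1}\tau_k\,\alpha^1_k(h)\,\tau_k^{-1}\) lies in that normal closure is a subgroup of \(E_k\).
Second, reordering the transit and \(h\) in time writes \(\sfw(h)\) as a product of conjugates of local event braids, one for each meeting of a strand of \(h\) with the transit track.
Third, events over thick parts are removed by general position.
Fourth, an event on a free edge is resolved by an \(\sfH\)-shuffle at the nearest branch point of the path. Such a point is either a vertex of valency at least three, whose spare germs lie in \(\bar X_0\) by the containment results of \cref{sec:lower-bound}, or an endpoint of \(\sff\), where the branching of \(\bar X_0\cup\sff\) was arranged for exactly this purpose.

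Each event braid is then supported in \((\bar X_x)_0\) up to unrestricted conjugation, so it dies in \(Q_x\). Without some version of this event analysis, your proposal proves the lemma only for those \(h\) whose strands avoid the transport route.
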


\begin{proof}
Fix an embedded path \(\gamma\subseteq(\bar X_x)_0\) from
\(p^1_k\) to \(p^2_k\): through \(\bar X_0^1\) to the endpoint
\(a\) of \(\sff\), across \(\sff\), and through \(\bar X_0^2\). Let
\(\tau_k\in B\) be the braid carrying the \(k\)-th strand along
\(\gamma\), all other strands resting at positions in
\((\bar X_x)_0\) off \(\gamma\); then
\(\tau_k\in\im\PB_n((\bar X_x)_0)\) and \([\tau_k]=1\) in
\(Q_x\). Conjugating, it suffices to show that the discrepancy
\[
\sfw(h)\coloneqq
\alpha^2_k(h)^{-1}\,\tau_k\,\alpha^1_k(h)\,\tau_k^{-1}
\]
lies in \(\co{\im\PB_n((\bar X_x)_0)}\) for every \(h\); and
since \(h\mapsto[\tau_k\alpha^1_k(h)\tau_k^{-1}]\) and
\(h\mapsto[\alpha^2_k(h)]\) are homomorphisms, the set of
\(h\) for which this holds is a subgroup of \(E_k\).

The braid \(\sfw(h)\) measures the interaction of the transit of
the \(k\)-th strand along \(\gamma\) with the activity of
\(h\): reordering the two in time, \(\sfw(h)\) decomposes as a
product of conjugates of local obstruction braids, one for each
event in which a strand of \(h\) meets the transit track in
space and time. No event occurs over the thick part
of \(\bar X_x\): there the tracks of the two strands involved
are curves in a region of space-time of dimension at least
\(2+1\),
which a generic perturbation makes disjoint. All events
therefore occur over free edges of \(\bar X_x\), where the
ambient space is one-dimensional and strands travel in single
file.

Call a point of \((\bar X_x)_0\) a \emph{branch point} if the
witness contains the germ of a tripod centred there -- three
distinct directions, a patch of dimension at least \(2\)
supplying
arbitrarily many -- and call the closures of the components of
the complement of the branch points in \(\gamma\)
\emph{streets}: maximal segments along which strands move in
single file. Along \(\gamma\) the branch points are the
vertices of \(\bar X\) of valency at least \(3\), where two
spare germs lie in \(\bar X_0\) by
\cref{prop:X0-contains-valency-3,prop:X0-contains-m2-val3,prop:X0-meets-link-components},
and the endpoints \(a,b\) of \(\sff\), by the arrangement made in
the setting. The joints traversed by \(\gamma\) are interior
points of streets: the witness continues through them into the
thick part (\cref{prop:X0-meets-link-joint}), and no branching
is required there.

Within a street the single-file order of the strands cannot
change, so the exchange forced by an event is carried out at a
branch point: the two strands involved retreat along the
street to the nearest branch point of the corridor, swap there
using two spare witness directions as the legs of
\cref{lem:H-shuffle}, and resume. The resulting model braid is
supported in the union of the street and the two legs, which
lies in \((\bar X_x)_0\); the discrepancy between the actual
tracks and the model is absorbed into the conjugating braids,
which are unrestricted. Hence every event braid lies in
\(\co{\im\PB_n((\bar X_x)_0)}\), so \([\sfw(h)]=1\) in \(Q_x\), as
required.
\end{proof}

\begin{theorem}[Descent of weak witnesses along a connected cut]
\label{thm:witness-descent}
In the setting above, \((\bar X_x)_0\) is a weak Goldberg
witness of \(\bar X_x\).
\end{theorem}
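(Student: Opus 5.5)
We need to show \(\co{\im\PB_n((\bar X_x)_0)}^{B}=\ker\iota^{\bar X_x}_\ast\), with \(B=\PB_n(\bar X_x,\base)\), while \((\bar X_x)_0\) is already known to be contractible and to contain \(\base\). The inclusion \(\subseteq\) is immediate from \cref{lem:weak-goldberg-pi1-zero}, since \((\bar X_x)_0\) is simply connected. For the reverse inclusion we follow the template of Step~3 of \cref{lem:witness-splitting}. We build a homomorphism \(\Phi\colon\PB_n(\bar X,\base)\to Q_x\) such that two things hold: its restriction to the vertex group \(B\) is the canonical projection \(B\twoheadrightarrow Q_x\), and it kills \(\im j_\ast\), where \(j\) is the inclusion of \(\bar X_0\). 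Given such a \(\Phi\), take \(g\in\ker\iota^{\bar X_x}_\ast\). Its image in \(\PB_n(\bar X)\) has trivial strand classes, so by the witness property of \(\bar X_0\) it lies in \(\co{\im j_\ast}\). Applying \(\Phi\) then gives \([g]=1\) in \(Q_x\).

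To define \(\Phi\) we use the presentation of \(\PB_n(\bar X,\base)\) recalled before \cref{lem:parking-transport}. This presentation has vertex group \(B\), stable letters \(t_1,\dots,t_n\), and relations \(t_k\alpha^1_k(h)t_k^{-1}=\alpha^2_k(h)\). Set \(\Phi|_B\) to be the projection and \(\Phi(t_k)=1\). The relations are then respected exactly because \([\alpha^1_k(h)]=[\alpha^2_k(h)]\) in \(Q_x\), which is \cref{lem:parking-transport}. So \(\Phi\) is a well-defined homomorphism, and the first required property holds by construction.

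It remains to check that \(\Phi(\im j_\ast)=1\). As in the splitting lemma, present \(\PB_n(\bar X_0,\base)\) over the same skeleton at \(x\). Since \(\sfe\subseteq\bar X_0\) and \((\bar X_0)_x\) contains tripods, the configuration spaces of \((\bar X_0)_x\) are connected. Hence the decomposition of \(F_n(\bar X_0)\) at \(x\) has one vertex group, \(\PB_n((\bar X_0)_x)\), and stable letters \(t^0_k\). We choose the parking positions \(p^i_k\) on \(\sfe\cap\bar X_0^i\) so that the \(t^0_k\) map to the \(t_k\). Then \(\im j_\ast\) is generated by the image of \(\PB_n((\bar X_0)_x)\) in \(B\) together with the \(t_k\). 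The \(t_k\) go to \(1\) by definition of \(\Phi\). The first piece lies in \(\im\PB_n((\bar X_x)_0)\), because \((\bar X_0)_x\subseteq(\bar X_x)_0\), and therefore also dies in \(Q_x\). One compatibility must be verified: the reference configurations and the connecting paths for the two presentations should be chosen inside \(\bar X_0\), via pockets in \(\bar X_0^1\), so that the identifications agree up to braids supported in \((\bar X_x)_0\), which are invisible in \(Q_x\).

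The main obstacle is \cref{lem:parking-transport}, which is taken as given here. Within this argument, the delicate point is the basepoint and choice bookkeeping: the stable letters of \(\bar X_0\) and of \(\bar X\) have to be matched by a single consistent choice of parking positions and conjugating paths lying in \(\bar X_0\). Otherwise \(\Phi(\im j_\ast)\) would only be shown trivial up to conjugation by these paths, not on the nose.
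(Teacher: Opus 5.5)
Your architecture is the paper's. You use the same homomorphism (the paper's \(\Psi\)): the projection on \(B\), trivial on the \(t_k\), and well defined by \cref{lem:parking-transport}. You require the same two properties and close with the same argument.

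The gap is in your verification that \(\Phi(\im j_\ast)=1\). You claim that the configuration spaces of \((\bar X_0)_x\) are connected. From this you conclude that \(F_n(\bar X_0)\) decomposes at \(x\) with a single vertex group and bigon letters \(t^0_k\) mapping to the \(t_k\). But in this setting \((\bar X_0)_x=\bar X_0^1\sqcup\bar X_0^2\) is \emph{disconnected}. This is forced by the contractibility of \(\bar X_0\), via the mod-\(2\) crossing argument in the setting. So the tripods in the two pieces do not connect \(F_m((\bar X_0)_x)\): it has \(2^m\) components, one for each distribution of the strands over the two sides. The distribution graph of \(\bar X_0\) at \(x\) is therefore the one of \cref{cor:free-edge-loop-rank}, which only immerses into the \(n\)-bigon graph of \(\bar X\). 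This is exactly where the analogy with \cref{lem:witness-splitting} breaks: there the ambient cut and the witness cut share a distribution graph, and here they do not.

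In particular, your statement that \(\im j_\ast\) is generated by the image of \(\PB_n((\bar X_0)_x)\) together with the \(t_k\) is false. The \(k\)-th strand of \(t_k\) crosses \(x\) exactly once, so \(\iota_\ast(t_k)\neq1\), whereas \(\im j_\ast\subseteq\ker\iota_\ast\). No braid of \(\bar X_0\) can cross \(x\) and return to its side without recrossing.

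What you actually need is only the containment \(\im j_\ast\subseteq\bigl\langle\im\PB_n((\bar X_x)_0),\,t_1,\dots,t_n\bigr\rangle\). For this, place the parking positions on \(\sfe\cap\bar X_0^i\). Choose the connecting paths of the presentation in \(F_n((\bar X_x)_0)\), which is connected because \((\bar X_x)_0\) is connected and contains tripods.

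The paper obtains this containment from \cref{cor:free-edge-disconnected} applied to \(\bar X_0\) at \(x\). There \(\PB_n(\bar X_0)\) is generated by two kinds of elements. First, braids supported in a single piece \(\bar X_0^i\), which land in \(\im\PB_n((\bar X_x)_0)\subseteq B\). Second, section letters (shuffles such as \(\gamma_\sfH\)), each of which is a word in the \(t_k^{\pm1}\) and braids supported in \((\bar X_0)_x\subseteq(\bar X_x)_0\). Equivalently, you can cut any loop of \(F_n(\bar X_0)\) at its passages through \(x\).

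With this replacement your proof goes through. Your worry about conjugating paths is then moot: an element that is trivial in \(Q_x\) up to conjugation is trivial.
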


\begin{proof}
Since \((\bar X_x)_0\) is contractible, all strand classes of
its braids vanish, so
\(\co{\im\PB_n((\bar X_x)_0)}\subseteq\ker\iota^{\bar X_x}_\ast\);
it remains to prove the reverse inclusion.

Define \(\Psi\colon\PB_n(\bar X,\base)\to Q_x\) on the
presentation above by
\[
\Psi|_{B}=\text{the canonical projection},
\qquad
\Psi(t_k)\coloneqq1 .
\]
The relations are respected precisely when
\([\alpha^1_k(h)]=[\alpha^2_k(h)]\) in \(Q_x\) for every \(k\)
and every \(h\in E_k\); this is precisely
\cref{lem:parking-transport}, so \(\Psi\) is a
well-defined homomorphism, and we claim:
\begin{enumerate}[label=(\alph*), leftmargin=2em]
  \item \(\Psi\) restricted to the attached copy of \(B\) is
        the canonical projection \(B\twoheadrightarrow Q_x\);
  \item \(\Psi(\im j_\ast)=1\), where
        \(j\colon\bar X_0\hookrightarrow\bar X\).
\end{enumerate}
Property~(a) holds by construction. For~(b), decompose
\(\PB_n(\bar X_0)\) at \(x\): as \((\bar X_0)_x\) is
disconnected with pieces \(\bar X_0^1,\bar X_0^2\),
\(\PB_n(\bar X_0)\) is generated by braids supported in a
single piece together with section letters realised by braids
crossing \(x\) along \(\sfe\), with all reference configurations
and rearrangements chosen inside \(\bar X_0\)
(\cref{cor:free-edge-disconnected}). Under \(j_\ast\), a braid
supported in \(\bar X_0^i\) lands in the attached copy of
\(B\), inside \(\im\PB_n((\bar X_x)_0)\), and \(\Psi\) kills
it; a section letter is a product of the stable letters
\(t_k\) and braids supported in
\((\bar X_0)_x\subseteq(\bar X_x)_0\), and \(\Psi\) kills it as
well. This proves~(b).

Now let \(g\in\ker\iota^{\bar X_x}_\ast\subseteq B\), and let
\(\bar g\in\PB_n(\bar X,\base)\) be its image. The strand
classes of \(\bar g\) are the images of those of \(g\) under
\(\pi_1(\bar X_x)\to\pi_1(\bar X)\cong\pi_1(\bar X_x)\ast\Z\),
which is injective; hence
\(\bar g\in\ker\iota^{\bar X}_\ast=\co{\im j_\ast}\), the
witness property of \(\bar X_0\). By~(b),
\(\Psi(\bar g)=1\); by~(a), \(\Psi(\bar g)=[g]\) in \(Q_x\).
Hence \(g\in\co{\im\PB_n((\bar X_x)_0)}\), as required. Since
moreover \(\iota^{\bar X_x}_\ast\) is surjective
(\cref{prop:iota-surj}; \(\bar X_x\not\cong S^1\), having free
tips), the quotient \(Q_x\) is identified with
\(\prod_\ell\pi_1(\bar X_x)\), and \((\bar X_x)_0\) is a weak
Goldberg witness of \(\bar X_x\).
\end{proof}

We shall need, here and again in
\cref{sec:simple-model-necessity}, the following elementary
computation with free and direct products.

\begin{lemma}[Coordinatewise free products]
\label{lem:coordinatewise-kernel}
Let \(G\) and \(H\) be groups and \(n\geq1\).
\begin{enumerate}[label=\textup{(\roman*)},leftmargin=2.4em]
\item The kernel of the canonical surjection
      \(\bigl(\prod_{\ell=1}^nG\bigr)\ast
      \bigl(\prod_{\ell=1}^nH\bigr)
      \twoheadrightarrow\prod_{\ell=1}^n(G\ast H)\) is normally
      generated by the commutators \([g_i,h_j]\) with
      \(i\neq j\), where \(g_i\) and \(h_j\) run through the
      \(i\)-th coordinate factor \(G\) and the \(j\)-th
      coordinate factor \(H\), respectively.
\item The kernel of the canonical surjection
      \(\bigl(\prod_{\ell=1}^nG\bigr)\ast F_n
      \twoheadrightarrow\prod_{\ell=1}^n(G\ast\Z)\), sending
      the \(j\)-th letter \(\beta_j\) of \(F_n\) to the
      generator of the \(\Z\)-factor of the \(j\)-th
      coordinate, is normally generated by the commutators
      \([g_i,\beta_j]\) and \([\beta_i,\beta_j]\) with
      \(i\neq j\).
\end{enumerate}
\end{lemma}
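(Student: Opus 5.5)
The plan is to identify both kernels through presentations, with (ii) handled as a variant of (i). Write \(P_G=\prod_{\ell=1}^nG\), \(P_H=\prod_{\ell=1}^nH\), and let \(\pi\colon P_G\ast P_H\to\prod_\ell(G\ast H)\) be the canonical map. Let \(K\) be the normal closure of the commutators \([g_i,h_j]\) with \(i\neq j\). The inclusion \(K\subseteq\ker\pi\) is immediate: \(g_i\) and \(h_j\) map into distinct coordinates of a direct product, so their images commute. Surjectivity of \(\pi\) is also clear, since each coordinate \(G\ast H\) is generated by the images of its own \(G\)- and \(H\)-factors. The real content is the reverse inclusion \(\ker\pi\subseteq K\).

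For that inclusion I will show that \(\pi\) induces an isomorphism \((P_G\ast P_H)/K\cong\prod_\ell(G\ast H)\) by constructing an inverse. The quotient \(\Gamma=(P_G\ast P_H)/K\) is generated by the subgroups \(\Gamma_\ell=\langle G_\ell,H_\ell\rangle\), where \(G_\ell\) and \(H_\ell\) are the \(\ell\)-th coordinate factors. I claim these subgroups commute pairwise. For \(i\neq j\), \(G_i\) commutes with \(G_j\) inside \(P_G\), and \(H_i\) commutes with \(H_j\) inside \(P_H\). The mixed pairs \(G_i,H_j\) commute by the defining relations of \(K\). Since commuting generating sets generate commuting subgroups, \(\Gamma_i\) and \(\Gamma_j\) commute.

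Next, each \(\Gamma_\ell\) receives a homomorphism \(G\ast H\to\Gamma_\ell\) from the universal property of the free product. Because the \(\Gamma_\ell\) commute pairwise, the universal property of the direct product assembles these into \(\psi\colon\prod_\ell(G\ast H)\to\Gamma\). The two composites \(\psi\circ\bar\pi\) and \(\bar\pi\circ\psi\) are the identity on generators, so \(\bar\pi\) is an isomorphism and \(\ker\pi=K\).

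Part (ii) follows the same argument with \(F_n=\langle\beta_1,\dots,\beta_n\rangle\) in place of \(P_H\). The one difference is that the \(\beta_j\) do not commute with one another in \(F_n\), so the relations \([\beta_i,\beta_j]\) for \(i\neq j\) must be imposed explicitly. With \(\Gamma_\ell=\langle G_\ell,\beta_\ell\rangle\), pairwise commutation for \(i\neq j\) comes from the direct-product structure of \(P_G\) together with the listed relations \([g_i,\beta_j]\) and \([\beta_i,\beta_j]\). Then \(G\ast\Z\to\Gamma_\ell\) sends the generator of \(\Z\) to \(\beta_\ell\), and the rest of the argument is unchanged.

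The main obstacle, though mild, is the step checking that \(\psi\) is well defined. It requires commutation of entire subgroups \(\Gamma_i\) and \(\Gamma_j\), not just of their generators; I will deduce this from the fact that the centralizer of a set is a subgroup. The rest is routine.
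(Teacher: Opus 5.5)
Your proof is correct and follows essentially the same route as the paper. Both pass to the quotient by the normal closure, note that the subgroups generated by the \(\ell\)-th coordinate factors commute pairwise, assemble the induced maps from \(G\ast H\) (resp.\ \(G\ast\Z\)) into a homomorphism out of the direct product, and compare it with the canonical map. The only cosmetic difference is that the paper checks one composite and uses the surjectivity of the assembled map, whereas you check both composites on generators.
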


\begin{proof}
In either case the displayed commutators map to commutators of
elements supported in distinct coordinates, hence lie in the
kernel. Conversely, let \(Q\) denote the quotient of the source
by their normal closure, and let \(C_j\leq Q\) be the subgroup
generated by the \(j\)-th coordinate factors -- by \(G_j\) and
\(H_j\) in case (i), by \(G_j\) and \(\beta_j\) in case (ii).
Any generator of \(C_j\) commutes in \(Q\) with any generator
of \(C_k\) for \(j\neq k\): coordinates of one and the same
direct product commute already in the source, and the remaining
pairs commute by the imposed relations. The assignments
\(g\mapsto g_j\), \(h\mapsto h_j\) -- respectively
\(z\mapsto\beta_j\) -- define a homomorphism
\(G\ast H\to C_j\), respectively \(G\ast\Z\to C_j\); since the
\(C_j\) commute pairwise and generate \(Q\), these assemble
into a surjection \(\prod_\ell(G\ast H)\twoheadrightarrow Q\),
respectively \(\prod_\ell(G\ast\Z)\twoheadrightarrow Q\). Its
composite with the canonical surjection of the statement is the
identity on each factor, hence the identity; so both
surjections are isomorphisms, and the kernel is as claimed.
\end{proof}

The final ingredient allows us to modify the \emph{ambient}
complex without disturbing a witness: a thick component may be
given a detour.

Let \(\bar\Sigma\) be a thick component of \(\bar X\) which is
a \(2\)-manifold, and let
\(\bar X'\coloneqq\bar X\cup(I\times I)\) be obtained by
attaching a band along two disjoint arcs
\(\partial I\times I\) contained in the thick part of
\(\bar\Sigma\) and disjoint from a subcomplex
\(\bar X_0\subseteq\bar X\).
Inside the band choose two discs
\(D_1,D_2\), each attached to \(\bar\Sigma\) along an arc of
its boundary -- one at each foot of the band -- and meeting
each other in a single interior point \(p\); write
\(Y\coloneqq\bar X\cup D_1\cup D_2\), and let \(Y_0\) denote
the complex in which the two discs are made disjoint, so that
\(Y_0\) is the cut of \(Y\) at \(p\); see \cref{fig:detour-band}.

\begin{figure}[ht]
\[
\begin{tikzcd}[row sep=2pc, column sep=2pc]
&\begin{tikzpicture}[baseline=-.5ex]
\node[above] at (0,1) {\(\bar X\)};
\draw[thick,fill=gray!30] (0,0) ellipse (2 and 1);
\node at (-1.5,0) {\(\bar\Sigma\)};
\fill[gray] ({2*cos(-120)},{sin(-120)}) arc (-120:-60:2 and 1) arc (0:180:1 and 0.5) -- cycle;
\node[above] at (0,-1) {\(\bar X_0\)};
\end{tikzpicture}
\ar[ld,hookrightarrow] \ar[d,hookrightarrow] \ar[rd,hookrightarrow] 
\\
\begin{tikzpicture}[baseline=-.5ex]
\draw[thick,fill=gray!30] (0,0) ellipse (2 and 1);
\node at (-1.5,0) {\(\bar\Sigma\)};
\fill[gray] ({2*cos(-120)},{sin(-120)}) arc (-120:-60:2 and 1) arc (0:180:1 and 0.5) -- cycle;
\node[above] at (0,-1) {\(\bar X_0\)};
\draw[thick,fill=gray!30, opacity=0.8] (-1,0.5) rectangle ++(-0.5,1) (1,0.5) rectangle ++(0.5,1);
\node at (-1.25,1.2) {\(D_1\)};
\node at (1.25,1.2) {\(D_2\)};
\node[below] at (0,-1) {\(Y_0\)};
\end{tikzpicture}
\ar[r,twoheadrightarrow]
&\begin{tikzpicture}[baseline=-.5ex]
\draw[thick,fill=gray!30] (0,0) ellipse (2 and 1);
\node at (-1.5,0) {\(\bar\Sigma\)};
\fill[gray] ({2*cos(-120)},{sin(-120)}) arc (-120:-60:2 and 1) arc (0:180:1 and 0.5) -- cycle;
\node[above] at (0,-1) {\(\bar X_0\)};
\draw[thick,fill=gray!30, opacity=0.8] (0,0.5) ++(120:1.5) arc (120:180:1.5) -- ++(0.5,0) arc (180:0:1) -- ++(0.5,0) arc (0:60:1.5) -- (0,1.5) -- cycle;
\draw[fill=red] (0,1.5) circle (2pt) node[above] {\(p\)};
\node at (-1,1.2) {\(D_1\)};
\node at (1,1.2) {\(D_2\)};
\node[below] at (0,-1) {\(Y\)};
\end{tikzpicture}
\ar[r,hookrightarrow]
&\begin{tikzpicture}[baseline=-.5ex]
\draw[thick,fill=gray!30] (0,0) ellipse (2 and 1);
\node at (-1.5,0) {\(\bar\Sigma\)};
\fill[gray] ({2*cos(-120)},{sin(-120)}) arc (-120:-60:2 and 1) arc (0:180:1 and 0.5) -- cycle;
\node[above] at (0,-1) {\(\bar X_0\)};
\draw[thick,fill=gray!30, opacity=0.8] (-1.5,0.5) -- ++(0.5,0) arc (180:0:1) -- ++(0.5,0) arc (0:180:1.5);
\node at (0,1.55) {\(I\times I\)};
\node[below] at (0,-1) {\(\bar X'\)};
\end{tikzpicture}
\end{tikzcd}
\]
\caption{The complexes of the detour construction. 
}
\label{fig:detour-band}
\end{figure}

\begin{lemma}[Detour bands]\label{lem:detour}
If \(\bar X_0\) is a weak Goldberg
witness of \(\bar X\), then \(\bar X_0\) is a weak Goldberg
witness of \(\bar X'\).
\end{lemma}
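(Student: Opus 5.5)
The plan is to pass from \(\bar X\) to \(\bar X'\) through the intermediate complexes \(Y_0\) and \(Y\) of \cref{fig:detour-band}, and to show at each stage that \(\bar X_0\) remains a witness. Throughout, \(\bar X_0\) is contractible and disjoint from the band, so \(\co{\im j_\ast}\subseteq\ker\iota_\ast\) holds in every ambient complex by \cref{lem:weak-goldberg-pi1-zero}. Only the reverse inclusion \(\ker\iota_\ast\subseteq\co{\im j_\ast}\) needs work.

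\emph{Step 1: \(\bar X\) to \(Y_0\).} Each disc \(D_i\) is glued to the surface \(\bar\Sigma\) along a boundary arc lying in the thick part. Up to homeomorphism this only enlarges \(\bar\Sigma\) by a collar. Concretely, there is an embedding \(Y_0\hookrightarrow\bar X\) absorbing each \(D_i\) into a collar of its foot arc, and this embedding is inverse up to isotopy to the inclusion, exactly as in \cref{ex:disc}. It fixes \(\bar X_0\), so \(\PB_n(\bar X)\cong\PB_n(Y_0)\) compatibly with \(j_\ast\) and with the strand maps. Hence \(\bar X_0\) is a witness for \(Y_0\).

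\emph{Step 2: \(Y_0\) to \(Y\).} The complex \(Y\) is obtained from \(Y_0\) by identifying two interior points, one in \(D_1\) and one in \(D_2\), to the point \(p\). I would apply \cref{prop:valency-decomp} to \(Y\) at \(p\). Here \(\lk_Y(p)\) is two circles and \(Y_p\) is connected, so by \cref{cor:valency-quotient} and the presentation of \cref{cor:free-edge-splitting}, \(\PB_n(Y)\) is generated by \(B=\PB_n(Y_p)\) together with stable letters \(t_1,\dots,t_n\). Each \(t_k\) carries strand \(k\) through \(p\) from the \(D_1\) side to the \(D_2\) side. The relations identify the two parkings \(\alpha^1_k(h)\) and \(\alpha^2_k(h)\), where the edge groups are now \(E_k\times\pi_1(S^1)\).

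Since \(Y_p\) is \(Y_0\) with two points removed from disc interiors, the quotient \(B/\co{\im\PB_n(\bar X_0)}\) can be computed from Step 1 together with \cref{thm:goldberg} applied to the punctured discs. On \(\pi_1\) one has \(\pi_1(Y)\cong\pi_1(\bar X)\ast\Z\), with the new generator a loop through \(p\). I would then compute \(\ker\iota^Y_\ast\) by comparing the presentation with \(\prod_\ell(\pi_1(\bar X)\ast\Z)\). \cref{lem:coordinatewise-kernel}(ii) says the extra kernel is normally generated by commutators \([g_i,\beta_j]\) and \([\beta_i,\beta_j]\) with \(i\ne j\), where \(\beta_j\) is the image of \(t_j\). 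Each such commutator should be realisable as a conjugate of a braid in which strand \(j\) loops through the band while strand \(i\) does its loop. This can be rearranged as a product of conjugates of disc braids in \(\bar\Sigma\) and of \(H\)-shuffles, both of which lie in \(\co{\im j_\ast}\) because \(\bar X_0\) already captures everything supported in \(\bar X\).

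\emph{Step 3: \(Y\) to \(\bar X'\).} The band \(\bar X'\) is \(Y\) with a \(2\)-cell filling the complementary region of \(D_1\cup D_2\) inside \(I\times I\). I would argue that this attachment is a braid equivalence. Either the region is a disc whose closure contains the branch point \(p\) in its boundary, in which case I would use a thickening argument in the spirit of \cref{prop:reduce-2-cell}. Or I would verify it directly by observing that the band deformation retracts, through embeddings, into a neighbourhood of \(D_1\cup D_2\) near \(p\). If that fails, I would instead redo Step 2 with \(\bar X'\) cut along a transverse arc of the band and use the same presentation, with a free-edge-like cut replaced by an arc cut.

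The main obstacle is Step 2: showing that the commutators \([g_i,\beta_j]\) lie in \(\co{\im j_\ast}\). The strategy is modelled on \cref{lem:parking-transport}. Strand \(i\)'s loop \(g_i\), supported in \(\bar X\), can be homotoped to avoid the band. Strand \(j\)'s transit through the band meets the other strands only inside the two-dimensional region \(\bar\Sigma\cup D_1\cup D_2\). There the events can be perturbed away, except for disc-braid discrepancies near the feet. Those discrepancies are conjugates of elements of \(\PB_n(\bar\Sigma)\)-disc braids, hence in \(\ker\iota^{\bar X}_\ast=\co{\im j_\ast}\).
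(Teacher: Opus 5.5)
Your Steps~2--3 fail because the intermediate claim they rely on is false: \(\bar X_0\) is \emph{not} a weak Goldberg witness of \(Y\).

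At \(p\) one has \(\val_Y(p)=2\) and \(Y_p\) connected. So \cref{prop:X0-contains-free-points} forces \(p\) into every weak witness of \(Y\), whereas \(\bar X_0\) misses the band entirely. Concretely, braids of \(\bar X_0\) never pass through \(p\), so \(\co{\im j_\ast}\) lies in the kernel of the collapse \(c_\ast\colon\PB_n(Y)\to\pi_1(\sfG_n(Y,p))\cong\mathbb{F}_n\) of \cref{prop:pi1G-embedding}. For each strand \(i\), let \(\beta_i\) be a braid in which only strand \(i\) moves, passing once through \(p\) and returning through \(\bar\Sigma\). Then \(\iota^Y_\ast(\beta_i)\) is concentrated in the \(i\)-th coordinate, so for \(i\neq j\) we get \([\beta_i,\beta_j]\in\ker\iota^Y_\ast\). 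On the other hand \(c_\ast(\beta_i)=t_i\) is the loop of the \(i\)-th bigon, so \(c_\ast([\beta_i,\beta_j])=[t_i,t_j]\neq1\).

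In \(Y\) the point \(p\) is a single-file bottleneck. The \(\sfH\)-shuffles you would need to reorder strands \(i\) and \(j\) across \(p\) are not supported in \(\bar X\), so your sentence ``\(\bar X_0\) already captures everything supported in \(\bar X\)'' does not reach them. For the same reason Step~3 cannot work: \(\PB_n(Y)\to\PB_n(\bar X')\) kills \([\beta_i,\beta_j]\). It is therefore only a surjection, not a braid equivalence, and no argument can transport a witness property for \(Y\) that does not exist.

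The missing idea is that the commutators of \cref{lem:coordinatewise-kernel}(ii) must be killed in \(\bar X'\), where the band provides two-dimensional room. Use \(Y_0\) and \(Y\) only to build a surjection \(\Theta\colon\PB_n(\bar X)\ast\mathbb{F}_n\twoheadrightarrow\PB_n(\bar X')\); surjectivity at each of the three stages is all you need.

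Incidentally, your collar argument in Step~1 gives an isomorphism only when the feet lie on \(\partial\bar\Sigma\). If the feet lie in the interior of \(\bar\Sigma\), the discs are fins and you get only a surjection, which still suffices because \(\pi_1\) is unchanged.

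With \(Q'\coloneqq\PB_n(\bar X')/\co{\im j'_\ast}\), the witness property in \(\bar X\) turns \(\Theta\) into surjections \(\bigl(\prod_\ell\pi_1(\bar X)\bigr)\ast\mathbb{F}_n\twoheadrightarrow Q'\twoheadrightarrow\prod_\ell\bigl(\pi_1(\bar X)\ast\Z\bigr)\). The kernel of the composite is normally generated by the \([\alpha_i,\beta_j]\) and \([\beta_i,\beta_j]\) with \(i\neq j\). In \(\bar X'\) you can realise \(\beta_i\) and \(\beta_j\) by strands crossing the band along disjoint lanes and returning along disjoint tracks in \(\bar\Sigma\), and \(\alpha_i\) by a loop avoiding all of these. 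These lifts have disjoint supports and so commute on the nose. No event analysis in the style of \cref{lem:parking-transport} is needed, and the two surjections through \(Q'\) become mutually inverse isomorphisms.
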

\begin{proof}
Note that \(\bar X'\) is again simple, with thick component
\(\bar\Sigma'=\bar\Sigma\cup(I\times I)\), and that
\(\pi_1(\bar X')\cong\pi_1(\bar X)\ast\Z\) by van~Kampen, the
band being a \(1\)-handle.

First, the inclusion induces a surjection
\(\PB_n(\bar X)\twoheadrightarrow\PB_n(Y_0)\): each disc
collapses to its attaching arc, and the excursions of the
strands into a disc are pushed off it one at a time, the
room of \(\bar\Sigma\) around the attaching arc absorbing the
rearrangements, as in the proof of \cref{lem:pendant-edge}.
Second, cutting \(Y\) at \(p\) (\cref{prop:valency-decomp};
the link of \(p\) in \(Y\) has the two components
\(\lk_{D_1}(p)\) and \(\lk_{D_2}(p)\), and the cut \(Y_0\) is
connected) presents \(\PB_n(Y)\) as generated by the attached
copy of \(\PB_n(Y_0)\) together with one stable letter for
each strand, as in \cref{cor:free-edge-splitting}: this gives
\(\PB_n(Y_0)\ast F_n\twoheadrightarrow\PB_n(Y)\). Third, the
inclusion induces a surjection
\(\PB_n(Y)\twoheadrightarrow\PB_n(\bar X')\): the band is
simply connected, so every arc a strand travels through the
band is homotopic, relative to its ends, to one through the
bridge \(D_1\cup D_2\); the strands wait in the discs and pass
the point \(p\) in single file, and whatever twisting a braid
performs inside the band is carried into the discs. Composing
the three maps gives \(\Theta\colon\PB_n(\bar X)\ast F_n\twoheadrightarrow
\PB_n(\bar X')\).

Since \(\bar X_0\subseteq\bar X\), the map
\(\Theta\) carries \(\im j_\ast\ast1\) into \(\im j'_\ast\),
where \(j'\colon\bar X_0\hookrightarrow\bar X'\). The witness
property of \(\bar X_0\) in \(\bar X\), together with
\cref{prop:iota-surj}, identifies
\(\PB_n(\bar X)/\co{\im j_\ast}\) with
\(\prod_\ell\pi_1(\bar X)\), so \(\Theta\) descends to a
surjection
\[
\bar\Theta\colon
\Bigl(\prod_\ell\pi_1(\bar X)\Bigr)\ast F_n
\twoheadrightarrow
Q'\coloneqq\PB_n(\bar X')\big/\co{\im j'_\ast},
\]
and, \(\bar X_0\) being contractible, \(Q'\) surjects
further onto \(\prod_\ell\pi_1(\bar X')\), which equals
\(\prod_\ell\bigl(\pi_1(\bar X)\ast\Z\bigr)\). By
\cref{lem:coordinatewise-kernel}(ii), the kernel of the
composite is normally generated by the commutators
\([\alpha_i,\beta_j]\) and \([\beta_i,\beta_j]\) with
\(i\neq j\), where \(\alpha_i\) runs through the \(i\)-th
coordinate factor \(\pi_1(\bar X)\) and \(\beta_j\) denotes
the \(j\)-th letter of \(F_n\).

They vanish already in
\(\PB_n(\bar X')\), for suitable lifts. Base the braids at a
configuration in the thick part of \(\bar\Sigma\) near the
band. Realise \(\beta_j\) as the braid in which the \(j\)-th
strand crosses the band along its own lane and returns to its
base position through the room of \(\bar\Sigma\), the lanes
and return tracks of distinct strands being pairwise disjoint
-- the band and the room have dimension at least \(2\).
Realise \(\alpha_i\) by a loop of the \(i\)-th strand whose
intersection with the room avoids, after a general-position
perturbation, all lanes, tracks and base positions. Distinct
lifts then have disjoint supports and commute on the nose, so
the images in \(Q'\) of the displayed commutators are
trivial. Hence \(\bar\Theta\) factors through
\(\prod_\ell\pi_1(\bar X')\); the resulting surjections
\(\prod_\ell\pi_1(\bar X')\twoheadrightarrow Q'
\twoheadrightarrow\prod_\ell\pi_1(\bar X')\) compose to the
identity, by the compatibility of the strand maps, so both
are isomorphisms. Thus
\(\co{\im j'_\ast}=\ker\iota^{\bar X'}_\ast\), and
\(\bar X_0\) -- contractible as before -- is a weak Goldberg
witness of \(\bar X'\).
\end{proof}

\begin{theorem}[Weak witnesses have no interval pieces]
\label{thm:no-interval-pieces}
Suppose \(\bar X\) is not Goldberg-trivial, and let
\(\bar X_0\) be a weak Goldberg witness of \(\bar X\). Then no
intersection component \(\bar\Sigma_0^j\) is homeomorphic to an
interval.
\end{theorem}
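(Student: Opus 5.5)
We argue by contradiction. Suppose some component \(\bar\Sigma_0^j\) of \(\bar X_0\cap N(\bar\Sigma)\) is an arc \(\alpha\). The plan is to reduce to a situation where one of the forcing results of \cref{sec:lower-bound} is violated, using the splitting and descent calculus of \cref{ssec:witness-splitting-descent} to isolate the thick component carrying \(\alpha\).

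\emph{Locating the arc.} First we determine where \(\alpha\) sits. Its endpoints lie either on pendant edges of \(N(\bar\Sigma)\), that is, on thin edges at joints of \(\bar\Sigma\), or at free ends of \(\bar X_0\).

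A free end of \(\bar X_0\) lying in the thick part is removable. Retracting that terminal arc is an isotopy of embeddings, which by the argument already used in \cref{prop:X0-meets-link-joint} preserves the witness property. Iterating such retractions, we may assume both endpoints of \(\alpha\) lie on pendant edges at joints \(\sfw,\sfw'\). By \cref{prop:X0-meets-link-joint} the witness enters the thick part at each joint, so \(\alpha\) is precisely a path in \(\bar\Sigma\) from \(\sfw\) to \(\sfw'\), with no further branching of \(\bar X_0\) inside \(\bar\Sigma\).

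\emph{Reducing to a single thick component.} Next we cut \(\bar X\) at interior points of every thin edge outside \(N(\bar\Sigma)\).

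At a separating cut we apply \cref{lem:witness-splitting}, which passes the witness to the side containing \(\bar\Sigma\). At a non-separating cut we apply \cref{thm:witness-descent}, which replaces the witness by \((\bar X_x)_0\). The auxiliary arc \(\sff\) added in descent may land in \(\bar\Sigma\) itself. To keep \(\alpha\) an interval piece, we instead route \(\sff\) through another thick component when possible. When this is impossible, we first add a detour band by \cref{lem:detour} to another component, or to a portion of \(\bar\Sigma\) far from \(\alpha\), which creates room for \(\sff\).

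After finitely many steps we reach a complex \(Z\) consisting of \(N(\bar\Sigma)\), possibly enlarged by bands, with pendant edges at the joints. In \(Z\), \(\alpha\cup(\text{pendant edges})\) is a weak witness, and it is an arc.

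\emph{Contradiction.} An arc witness has trivial braid group. So \(\ker\iota_\ast^Z=1\), meaning \(Z\) is Goldberg-trivial.

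We now show \(Z\) is not Goldberg-trivial, inheriting this from the hypothesis on \(\bar X\). If \(\bar\Sigma\) is a surface with only boundary attachments, Goldberg's theorem (\cref{thm:goldberg}) gives \(\ker\iota_\ast\supseteq\PB_n(D^2)\neq1\). Otherwise, \(Z\) has at least two joints, since \(\alpha\) joins \(\sfw\neq\sfw'\). In that case we choose a point \(y\) on a pendant edge of \(Z\), or on the arc between two joints, and build an embedded \(\sfH\) whose middle bar contains a free edge. The shuffle braid \(\gamma_\sfH\) of \cref{lem:H-shuffle} is then a nontrivial element of \(\ker\iota_\ast^Z\).

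The remaining degenerate case is a single joint with \(\alpha\) a loop-free arc ending at a free tip. That case was excluded by the retraction step, or else it leads back to the Goldberg-trivial configuration of \(\bar X\) that the hypothesis rules out.

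\emph{Main obstacle.} The hardest part is the descent step. We must ensure the auxiliary arc \(\sff\) never merges with \(\alpha\) and destroys its interval shape, and that the band modifications neither change the Goldberg-triviality status nor break the correspondence of pieces. We would handle this by always attaching bands away from \(\alpha\). We would also track, as in the proof of \cref{thm:witness-descent}, that the pieces \(\bar X_0^i\) are unaffected away from \(\sff\).
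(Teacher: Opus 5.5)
\paragraph{Gap in the final contradiction.}
The contradiction at the end of your argument does not hold. After you cut away everything outside \(N(\bar\Sigma)\), the complex \(Z\) has a single thick component, no point of valency at least \(3\), and only pendant free edges. By \cref{thm:gt-classification}, such a \(Z\) is Goldberg-trivial whenever \(\bar\Sigma\) is not a \(2\)-manifold or carries an interior attachment. For such \(Z\) every arc is a weak witness, so reaching an arc witness in \(Z\) contradicts nothing.

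Your appeal to \cref{lem:H-shuffle} does not help. The middle bar of \(\sfH\) must be a path containing a whole free edge and ending at points of valency \(\geq3\) or in a thick component. Every free edge of \(Z\) ends at a univalent tip, which would then be an endpoint of the bar. The arc between two joints runs through the thick part, not along a free edge.

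So non-Goldberg-triviality of \(\bar X\) is not inherited by \(N(\bar\Sigma)\). For example, a branched surface joined by two thin edges to two tripods is not Goldberg-trivial, while its \(N(\bar\Sigma)\) is. The branching that makes \(\ker\iota_\ast\neq1\) is exactly what your cutting discards.

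\paragraph{The missing idea and two further problems.}
The contradiction has to come from the forcing results of \cref{sec:lower-bound}, in a complex that still contains a branch point of the witness beyond the arc. The paper cuts only once, at the thin edge leaving \(\bar\Sigma_0^j\) at one joint. It does this by \cref{lem:witness-splitting} or \cref{thm:witness-descent}, using a detour band only in the residual case where every transition arc would touch \(\bar\Sigma_0^j\). Then \(\bar\Sigma_0^j\) lies on a terminal path from the new free tip to a branch point \(b_0\). Retracting that path is an isotopy producing another weak witness. That witness misses either an essential thin edge on which the path approaches \(b_0\), or the joint at which the path last enters a thick component. Both are forced into every weak witness by \cref{prop:X0-contains-free-points,prop:X0-contains-nontrivial-junction,prop:X0-meets-link-joint}.

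Your ``locating'' step lacks this same argument. Retracting a free end of \(\bar X_0\) inside \(\bar\Sigma\) does not move an endpoint of \(\alpha\) onto a joint. It shortens \(\alpha\) and may pull it out of \(\bar\Sigma\) altogether, so that case must be refuted, not normalised away.

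Your global reduction also breaks down when \(\bar X_0\cap N(\bar\Sigma)\) has several components. The witness surviving in \(Z\) is connected and contains all of them. But \(\alpha\) meets the rest of the witness only through its two thin edges, which you cut. So some descent arc must land on \(\alpha\), and no arc witness survives.
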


\begin{proof}
Suppose, for contradiction, that some \(\bar\Sigma_0^j\) is an
interval. Call a point of \(\bar X_0\) a \emph{branch point} if
the witness offers at least three directions there, or a link
component of dimension at least \(1\); a \emph{terminal path}
is a maximal embedded path in \(\bar X_0\) starting at a free
end of \(\bar X_0\) and containing no branch point in its
interior. Since \(\bar X\) is not Goldberg-trivial, the witness
\(\bar X_0\) is not homeomorphic to a point or an interval --
the components of the configuration spaces of an interval are
contractible, so such a witness would give
\(\ker\iota_\ast=\co{\im j_\ast}=1\) -- and consequently every
terminal path ends at a branch point of \(\bar X_0\).

\emph{Step 1: \(\bar\Sigma_0^j\) lies on a terminal path.}
Suppose first that \(\bar\Sigma_0^j\) is contained in a
terminal path \(P_0\), with free end \(t_0\) and endpoint a
branch point \(b_0\). Retracting \(P_0\setminus\{b_0\}\) into
\(b_0\) -- an isotopy of embeddings
\(\bar X_0\hookrightarrow\bar X\), which changes
\(\im j_\ast\) only by a conjugation -- produces a weak
Goldberg witness \(\bar X_0'\) missing the interior of
\(P_0\). We derive a contradiction by locating on
\(P_0\setminus\{b_0\}\) a point that every weak Goldberg
witness must contain, distinguishing two cases according to the
final approach of \(P_0\) to \(b_0\).

If \(P_0\) approaches \(b_0\) along a thin edge \(\sfe^*\) of
\(\bar X\), then \(\sfe^*\) is essential: the side of \(b_0\)
contains the embedded tripod formed at \(b_0\) by the germ of
\(\sfe^*\) together with two further directions of \(\bar X_0\),
while the opposite side contains \(\bar\Sigma\), through which
\(P_0\) has passed. Hence the interior points of \(\sfe^*\) lie in
every weak Goldberg witness
(\cref{prop:X0-contains-free-points,prop:X0-contains-nontrivial-junction}),
yet those between the last vertex of \(P_0\) and \(b_0\) are
missing from \(\bar X_0'\) -- a contradiction.

If instead \(P_0\) approaches \(b_0\) inside the thick part of
some component \(\bar\Sigma''\), there are two possibilities.
Either \(P_0\) has remained in \(N(\bar\Sigma)\) ever since
\(\bar\Sigma_0^j\) -- but then \(b_0\) belongs to the same
intersection component \(\bar\Sigma_0^j\), which therefore
branches at \(b_0\), contradicting its being an interval. Or
\(P_0\) last enters \(N(\bar\Sigma'')\) at a joint \(\sfw''\)
interior to \(P_0\); the two sides of \(\sfw''\) in \(\bar X\)
contain \(\bar\Sigma''\) and \(\bar\Sigma\) respectively,
neither an interval, so \cref{prop:X0-meets-link-joint} places
\(\sfw''\) in every weak Goldberg witness, yet
\(\sfw''\notin\bar X_0'\) -- a contradiction.

\emph{Step 2: both prolongations branch.} Otherwise, prolonged
through the witness beyond either endpoint, \(\bar\Sigma_0^j\)
reaches a branch point before terminating. In particular
\(\bar\Sigma_0^j\) traverses \(N(\bar\Sigma)\): it contains two
distinct joints \(\sfw,\sfw'\) of \(\bar\Sigma\) and leaves
\(N(\bar\Sigma)\) along thin edges \(\sfe\) (beyond \(\sfw\)) and
\(\sfe'\) (beyond \(\sfw'\)), the first branch points beyond the two
ends lying outside \(N(\bar\Sigma)\) -- a branch point inside
would make \(\bar\Sigma_0^j\) branch. Fix
\(x\in\mathring \sfe\). The edge \(\sfe\) is essential -- one side
contains \(\bar\Sigma\), the other the tripod at the branch
point beyond \(\sfe\) -- so \(\sfe\subseteq\bar X_0\), and the point
\(x\) satisfies the setting of \cref{lem:witness-splitting} or
of \cref{thm:witness-descent}, according as \(\bar X_x\) is
disconnected or connected.

If \(\bar X_x\) is disconnected, then by
\cref{lem:witness-splitting} -- in its weak form -- the
\(x\)-component \(\bar X_0^1\) of \(\bar X_0\) containing
\(\bar\Sigma_0^j\) is a weak Goldberg witness of the side
\(\bar X^1\). Its intersection component at \(\bar\Sigma\) is
still the interval \(\bar\Sigma_0^j\), and the prolongation
through \(\sfe\) now terminates at the free tip created by the
cut, while the branch point beyond \(\sfe'\) survives in
\(\bar X_0^1\); so \(\bar\Sigma_0^j\) lies on a terminal path
of \(\bar X_0^1\), and Step~1, applied to the pair
\((\bar X^1,\bar X_0^1)\), yields a contradiction.

If \(\bar X_x\) is connected, \cref{thm:witness-descent}
provides the weak Goldberg witness
\((\bar X_x)_0=(\bar X_0)_x\cup \sff\) of \(\bar X_x\). Suppose
the transition arc \(\sff\) can be chosen with neither endpoint on
\(\bar\Sigma_0^j\) -- as is the case, for instance, whenever
some bridging thick component differs from \(\bar\Sigma\).
Then \(\bar\Sigma_0^j\) persists as an intersection component
of \((\bar X_x)_0\), lying on the terminal path issuing from
the free tip created by the cut, and Step~1, applied to the
pair \((\bar X_x,(\bar X_x)_0)\), yields a contradiction.

In the residual configuration, every available transition arc
has an endpoint on \(\bar\Sigma_0^j\): this occurs when
\(\bar\Sigma\) is the only thick component meeting both
\(x\)-components of the witness, \(\bar\Sigma_0^j\) separates
\(\bar\Sigma\), and the \(\bar\Sigma\)-material of the two
\(x\)-components lies in different components of
\(\bar\Sigma\setminus\bar\Sigma_0^j\). We then first give
\(\bar\Sigma\) a detour: attach a band to \(\bar\Sigma\) with
one foot in each of the two components of
\(\bar\Sigma\setminus\bar\Sigma_0^j\) just mentioned, in the
thick part and away from \(\bar X_0\), and write \(\bar X'\)
for the result. By \cref{lem:detour}, \(\bar X_0\) is a weak
Goldberg witness of \(\bar X'\), and \(\bar\Sigma_0^j\) is
still an interval intersection component there. The edge \(\sfe\)
remains essential, \(\bar X'_x\) is connected, and in the
setting of \cref{thm:witness-descent} for
\((\bar X',\bar X_0)\) at \(x\) a transition arc can now be
routed through the band, joining the \(\bar\Sigma\)-material
of the two \(x\)-components while avoiding
\(\bar\Sigma_0^j\). The previous paragraph, applied to
\((\bar X',\bar X_0)\), now yields a contradiction. This
exhausts all cases and completes the proof.
\end{proof}

\section{Goldberg-trivial simple complexes}
\label{sec:goldberg-trivial}

Before analysing witnesses any further, we pause to determine
exactly which simple complexes are Goldberg-trivial
(\cref{def:goldberg-strengths}): those with
\(\ker\iota_\ast=1\), which are Goldberg for trivial reasons
(\cref{lem:gt-implies-goldberg}) and which will later play the role of
degenerate local pieces. Throughout this section we keep the
standing assumption (\cref{rem:standing}). The guiding principle
is \cref{rem:birman-high-dim}: manifolds of dimension at least
three are Goldberg-trivial, and the classification below
(\cref{thm:gt-classification}) says that a complex is
Goldberg-trivial precisely when it is secretly three-dimensional.
The basic tool is the shuffle braid \(\gamma\) of
\cref{lem:H-shuffle}.

\begin{lemma}\label{lem:val2-free-edge}
The following hold:
\begin{enumerate}[label=\textup{(\alph*)}, leftmargin=2em]
    \item If some free edge of \(\bar X\) has both of its
          endpoints of valency at least \(2\), then \(\bar X\) is
          not Goldberg-trivial.
    \item If \(\bar X\) has at least two thick components, or
          if some scaffold
          \(\Phi_{\bar X}\in\frG_{\bar X}\) of \(\bar X\) is not
          a tree, then \(\bar X\) contains such a free edge.
\end{enumerate}
\end{lemma}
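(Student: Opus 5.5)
For (a) we plan to build an embedded copy of \(\sfH\) around the free edge and apply \cref{lem:H-shuffle}. Let \(\sfe\) be a free edge of \(\bar X\) with endpoints \(\sfv,\sfv'\), both of valency at least \(2\). Take \(\alpha=\sfe\) as the middle bar. The construction preceding \cref{lem:H-shuffle} then needs each endpoint either to have valency at least \(3\) or to lie in a thick component.

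So consider an endpoint \(\sfv\) of valency exactly \(2\). Since \(\bar X\) is simple (\cref{def:simple-complex}), its link is either two points, or a connected positive-dimensional piece together with one point. In the second case \(\sfv\) is a joint lying in a thick component, and we can choose two arcs inside the cone over the positive-dimensional link component.

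In the first case \(\sfv\) is an interior point of a longer free arc. We then extend \(\alpha\) through \(\sfv\) along the maximal chain of free edges, exactly as in the proof of \cref{prop:X0-contains-nontrivial-junction}. The chain stops at a point of valency at least \(3\), or at a joint, or at a leaf. It cannot stop at a leaf, because the next free edge along the chain again has both endpoints of valency at least \(2\) in the chain's interior. More precisely, iterating the extension from both ends, either a terminal point of valency \(\geq3\) or a joint is reached, or the whole chain closes into a circle component, i.e.\ \(\bar X\cong S^1\), which is excluded by \cref{rem:standing}. The one remaining subtlety is an extended chain ending at a leaf; we rule this out because the extension only passes through points of valency exactly \(2\) with point link, so every point it passes has valency \(2\).

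With \(\sfH\subseteq\bar X\) built and its middle bar containing the free edge \(\sfe\), \cref{lem:H-shuffle} gives a non-trivial element \(\gamma_\sfH\in\ker\iota_\ast\). Hence \(\bar X\) is not Goldberg-trivial.

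For (b), first suppose there are two thick components \(\Sigma\neq\Sigma'\). Choose a shortest path in \(\bar X\) from \(\Sigma\) to \(\Sigma'\). Since the thick components are separated by the free part, this path leaves \(\Sigma\) at a joint along a free edge, and that edge's endpoint in \(\Sigma\) is a joint, hence of valency \(2\). The path runs through free edges until it reaches another thick component. Any free edge on the path whose far endpoint is a vertex of valency at least \(2\) does the job. The first edge works, because its far endpoint is either another joint or an interior vertex of the path, and an interior vertex of the path has valency at least \(2\) since the path continues through it.

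Next suppose some scaffold \(\Phi_{\bar X}\) contains a cycle. The trees \(\Phi_\Sigma\) are trees, since \(q_\Sigma\) is a homeomorphism for simple \(\bar X\), and there is at most one thick component by the previous case. So the cycle must contain thin edges. A thin edge lying on a cycle has both endpoints on the cycle, and each of those endpoints then has at least two directions in \(\bar X\). If such an endpoint is a joint, it has valency \(2\). Otherwise the two cycle directions are thin, so they lie in distinct link components, and the valency is again at least \(2\).

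The main obstacle is the chain-extension argument in (a): showing that the extended path reaches points of the right type without degenerating. We plan to handle it by the same maximal-free-chain device used in \cref{prop:X0-contains-nontrivial-junction}.
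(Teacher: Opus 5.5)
Your plan matches the paper's: (a) via the embedded \(\sfH\) and \cref{lem:H-shuffle}, and (b) via a path between two thick components or a scaffold cycle through a thin edge. The gap is your ``first case'' of (a). Your reason for excluding a leaf at the end of the extended chain does not work. That every point the chain \emph{passes through} is bivalent says nothing about the point where it \emph{stops*}, and that point can be univalent. In fact, under the reading you are using (a free edge as a single free \(1\)-simplex), part (a) is false. Subdivide the pendant edge of \(D^2\cup_p\mathsf{e}\) (\cref{ex:disc-interior}) as \(p,a,c\), with \(c\) the leaf. The simplex \([p,a]\) lies in no \(2\)-cell and both its endpoints have valency \(2\). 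Yet \(\PB_n=1\), so the complex is Goldberg-trivial (it is also covered by \cref{thm:gt-classification}).

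The missing idea is the maximality built into \cref{def:free-part}: a free edge is a \emph{maximal} free \(1\)-cell. Suppose an endpoint \(\sfv\) of a free edge had valency \(2\) with \(0\)-dimensional link. Then the edge would continue through \(\sfv\), contradicting maximality, unless the arc closes up into a component homeomorphic to \(S^1\), which \cref{rem:standing} excludes. So, by simplicity, an endpoint of valency exactly \(2\) is a joint. Your first case is therefore vacuous, and your joint case together with the valency-\(\geq3\) case finishes (a); this is exactly the paper's argument. For (b), you should correspondingly check that what you exhibit is a whole maximal free edge. An embedded path or cycle that enters a maximal free edge must traverse it end to end: its interior points are bivalent with point links, so the path can neither end nor turn there. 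Your endpoint checks then apply verbatim: a joint has valency \(2\), and at a non-joint the two path directions are distinct isolated link points. With these corrections the proposal is complete.
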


\begin{proof}
(a) The discussion preceding \cref{lem:H-shuffle} applies with
\(\alpha=\sfe\): an endpoint of the maximal free edge \(\sfe\) of
valency \(2\) is a joint -- otherwise \(\sfe\) would extend past it
-- and so lies in a thick component, while any other endpoint
has valency at least \(3\). We thus obtain the embedded tree
\(\sfH\subseteq\bar X\) together with the braid \(\gamma\); by
\cref{lem:H-shuffle}, \(\gamma\) is a non-trivial element of
\(\PB_n(\bar X,\base)\) lying in \(\ker\iota_\ast\). Hence
\(\ker\iota_\ast\neq1\).

(b) Suppose first that \(\bar X\) has at least two thick
components. As \(\bar X\) is connected, there is an embedded
path joining points of two distinct thick components. Away from
the thick components the path runs in the free part, and it
traverses each maximal free edge it visits from one endpoint to
the other; let \(\sfe\) be the first maximal free edge so
traversed. An embedded path cannot continue past a univalent
vertex, and this one does not terminate in the interior of the
free part; hence neither endpoint of \(\sfe\) is univalent, that
is, both endpoints of \(\sfe\) have valency at least \(2\).

Suppose instead that some scaffold \(\Phi_{\bar X}\) is not a
tree; the choices entering \(\Phi_{\bar X}\) do not matter here,
since contracting the trees \(\Phi_\Sigma\) shows that the first
Betti number of \(\Phi_{\bar X}\) is independent of them. Then
\(\Phi_{\bar X}\) contains an embedded cycle \(C\), and since
each \(\Phi_\Sigma\) is a tree, \(C\) traverses at least one
thin edge. Let \(P\subseteq C\) be a maximal subpath running in
the free part -- its endpoints are then joints, lying on thick
components, possibly one and the same -- or \(P\coloneqq C\) if
\(C\) avoids the thick components altogether. As before, \(P\)
traverses some maximal free edge of \(\bar X\) completely, and
neither endpoint of that edge is univalent: an interior vertex
of \(P\) carries two directions of \(P\), and the endpoints of
\(P\), if any, are joints. Either way \(\bar X\) contains a free
edge with both endpoints of valency at least \(2\).
\end{proof}

We can now state the classification. Recall that a \emph{pendant}
free edge is one with a univalent endpoint.

\begin{theorem}[Classification of Goldberg-trivial
complexes]\label{thm:gt-classification}
Let \(X\) be a complex as in \cref{rem:standing}. Then
\(X\) is
Goldberg-trivial if and only if \(X\) is simple and all of the
following hold:
\begin{enumerate}[label=\textup{(\roman*)}, leftmargin=2em]
    \item no point of \(X\) has valency at least \(3\);
    \item \(X\) has exactly one thick component \(\Sigma\), and
          every free edge of \(X\) is a pendant edge;
    \item either \(\Sigma\) is not a \(2\)-manifold, or
          \(\Sigma\) is a \(2\)-manifold and at least one
          pendant edge of \(X\) is attached at an interior
          point of \(\Sigma\).
\end{enumerate}
\end{theorem}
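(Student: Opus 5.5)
The two directions are handled separately. The necessity direction uses the shuffle braid of \cref{lem:H-shuffle}, the disc braids, and the resolution machinery. The sufficiency direction is a thickening argument via \cref{cor:thick-with-pendants}.

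\emph{Sufficiency.} Assume \(X\) is simple and satisfies (i)--(iii). By (i) every point has valency at most \(2\). By (ii) the free edges are pendant edges attached to the single thick component \(\Sigma\). A free edge cannot have both endpoints in \(\Sigma\), since it is pendant, and it cannot join another free edge, since a vertex joining two free edges and \(\Sigma\) would have valency \(3\). So \(X=\Sigma\cup\) (pendant edges attached at distinct points of \(\Sigma\)). Every vertex of \(\Sigma\) has valency \(1\) in \(\Sigma\): a non-joint point of \(\Sigma\) has connected link, and by simplicity and (i) a joint has link \(L\sqcup\{\mathrm{pt}\}\) with \(L\subseteq\Sigma\) connected. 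Hence \cref{cor:thick-with-pendants} applies, with hypothesis (iii) being exactly its dichotomy, and gives \(\PB_n(X)\cong\prod\pi_1(X)\). Therefore \(\ker\iota_\ast=1\).

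\emph{Necessity.} Assume \(\ker\iota_\ast=1\).

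First, \(X\) is simple. If \(X\) had a non-simple vertex, I would resolve it. The pure braid groups and strand maps are unchanged by \cref{thm:resolution-goldberg-equivalence} and the braid equivalence, so \(\bar X\) is also Goldberg-trivial. The new free edge \(\sfe\) of \(\bar X\) has both endpoints of valency \(\ge2\): the apex \(x'\) is a joint, and the old vertex \(x\) retains at least one positive-dimensional link component besides \(\sfe\). Then \cref{lem:val2-free-edge}(a) gives a contradiction. So \(X\) is simple.

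Next, \cref{lem:val2-free-edge} gives that every free edge has a univalent endpoint and that there is at most one thick component. There is at least one thick component: otherwise \(X\) is a graph whose free edges are all pendant, so a star with \(\ge3\) legs, and the shuffle of \cref{ex:S3} on a tripod lies in \(\ker\iota_\ast\) and is non-trivial. This proves (ii).

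For (i), a point of valency \(\ge3\) in a simple complex would either be a vertex of the free part with three free edges, or lie on the free part. Pendant-only free edges and a single thick component force such a point to be the endpoint, off \(\Sigma\), of a free edge whose other end is in \(\Sigma\). That edge then has both endpoints of valency \(\ge2\), contradicting \cref{lem:val2-free-edge}(a).

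For (iii), suppose \(\Sigma\) is a \(2\)-manifold with all pendant edges attached at boundary points. By \cref{ex:disc}, absorbing the edges into collars, \(\PB_n(X)\cong\PB_n(\Sigma)\). If \(\Sigma\not\cong S^2,\RP^2\), then \cref{thm:goldberg,rem:goldberg-compact} give \(\ker\iota_\ast\supseteq\PB_n(D^2)\neq1\). If \(\Sigma\) is closed, there are no boundary points, hence no pendant edges, so \(X\cong S^2\) or \(\RP^2\), which is excluded by \cref{rem:standing}.

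\emph{Main obstacle.} The step I expect to be delicate is simplicity. One must check that after resolution the relevant edge genuinely has both endpoints of valency at least \(2\), and that Goldberg-triviality transfers, that is, the strand maps correspond under \(q_\sfe^\ast\). The latter is recorded before \cref{prop:goldberg-resolution}.
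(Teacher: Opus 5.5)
Your proof is correct, but it orders the necessity direction differently from the paper and proves (i) by a different mechanism.

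The paper proves (i) first, for arbitrary \(X\) and before any use of simplicity. It embeds \(\PB_n(\sfS_3)\neq1\) into \(\PB_n(X)\) along a star through three distinct link components (\cref{prop:star-embedding} with \cref{rem:star-link-components}); the image lies in \(\ker\iota_\ast\) because the star is contractible. This pins any non-simple vertex down to valency \(2\) with two positive-dimensional link components. Simplicity, (ii) and (iii) then follow essentially as in your write-up, and the no-thick-component case is dismissed by (i), since a graph with no point of valency \(\geq3\) is \(\mathsf I\) or \(S^1\).

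You instead prove simplicity first, then (ii), handling the no-thick-component case by the non-triviality of star braid groups. Only then do you derive (i) from simplicity and (ii) via \cref{lem:val2-free-edge}(a). Your route needs nothing beyond the shuffle braid and the star computation. The paper's (i) holds without any reduction to the simple model.

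Two justifications in your simplicity step should be repaired, though the conclusion stands:
\begin{itemize}
\item The resolved vertex \(x\) need not retain a positive-dimensional link component. The link \(L\sqcup\{p_1\}\sqcup\{p_2\}\) is non-simple with only one such component, and after separating \(L\) the vertex \(x\) has \(0\)-dimensional link and valency \(3\). This case arises precisely because you have not yet proved (i). What you actually need is \(\val(x)\geq2\) after resolution, and that holds: the link of a non-simple vertex is disconnected, and resolution does not change the number of link components at \(x\). That suffices for \cref{lem:val2-free-edge}(a).
\item Goldberg-triviality transfers because \(q_\sfe^\ast\) is an isomorphism compatible with the strand maps, not by \cref{thm:resolution-goldberg-equivalence}, which concerns weak and strong Goldbergness only.
\end{itemize}

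Your treatment of \(\RP^2\) in (iii) is cleaner than the paper's appeal to disc braids in \(\PB_n(\RP^2)\). A closed surface carries no boundary attachments, so \(X=\Sigma\) and this case is excluded outright by \cref{rem:standing}.
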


A typical example is shown in \cref{fig:gt-example}.

\begin{remark}
The exclusion of the sphere (\cref{rem:standing}) matters
here: for \(n=2\) one has
\(\PB_2(S^2)=1\), so \(S^2\) is Goldberg-trivial, while the
criterion of \cref{thm:gt-classification} fails for it; for
\(n\geq3\) the group \(\PB_n(S^2)\) is non-trivial while
\(\pi_1(S^2)=1\), so \(S^2\) is not Goldberg-trivial. Compare
\cref{rem:s2-rp2}.
\end{remark}

\begin{remark}
Simpleness cannot be dropped from the criterion. The
\emph{pinched sphere} -- a sphere with two distinct points
identified -- satisfies (i)--(iii): it has no free edges, a
single thick component, and the pinch point is not a
\(2\)-manifold point. Yet it is not Goldberg-trivial: its simple
model is a sphere with a free edge joining two distinct points,
to which \cref{lem:val2-free-edge}(a) applies.
\end{remark}

\begin{figure}[ht]
\centering
\begin{tikzpicture}[line width=0.8pt]
  \fill[gray!30,opacity=0.5] (0,0) ellipse (1.2 and 0.5);
  \draw (0,0) ellipse (1.2 and 0.5);
  \node at (-0.7,-0.22) {\(\Sigma\)};
  \draw[blue] (0,0) -- (0,1);
  \filldraw (0,1) circle (1pt);
  \draw[blue] (1.2,0) -- (2,0);
  \filldraw (2,0) circle (1pt);
  \filldraw[red] (0,0) circle (1pt);
  \filldraw[red] (1.2,0) circle (1pt);
\end{tikzpicture}
\caption{A typical Goldberg-trivial complex
(\cref{thm:gt-classification})
}
\label{fig:gt-example}
\end{figure}

\begin{proof}[Proof of \cref{thm:gt-classification}]
Assume that \(X\) is simple and that
(i)--(iii) hold, and write \(\bar X=X\) and
\(\bar\Sigma=\Sigma\). By (i) and (ii), \(\bar X\) is the thick
component \(\bar\Sigma\) with finitely many pendant edges
attached at joints. In either case of (iii), \cref{cor:thick-with-pendants}
applies, with \(X=\bar\Sigma\) and \(X'=\bar X\), and shows that
the inclusion \(F_n\bar X\hookrightarrow\bar X^{\,n}\) induces
an isomorphism
\(\iota_\ast\colon\PB_n(\bar X,\base)\xrightarrow{\cong}
\prod_{i}\pi_1(\bar X,x^0_i)\); in particular
\(\ker\iota_\ast=1\), and \(\bar X\) is Goldberg-trivial.

Conversely, suppose \(X\) is Goldberg-trivial.
First, no point \(x\) of \(X\) has \(\val_{X}(x)\geq3\) -- this
is (i), and it requires no simplicity: otherwise
\cref{prop:star-embedding}, in the generality of
\cref{rem:star-link-components}, would embed \(\PB_n(\sfS_3)\)
into \(\PB_n(X,\base)\) along a star through three distinct
link components of \(x\); the image consists of braids supported
in the contractible star, hence lies in \(\ker\iota_\ast\), and
\(\PB_n(\sfS_3)\neq1\) for \(n\geq2\)
(\cref{ex:one-essential}) -- contradicting Goldberg-triviality.

Next, \(X\) is simple. By the above and
\cref{def:simple-complex}, a non-simple vertex \(\sfv\) would have
\(\val_X(\sfv)=2\) with both link components of positive dimension.
Suppose such a vertex exists, and pass to the simple model
\(\bar X\) of \(X\) (\cref{ssec:simple-an-park}); the resolution
braid equivalences commute with the strand maps -- each collapse
is a homotopy equivalence -- so \(\bar X\) is Goldberg-trivial
along with \(X\). Resolving \(\sfv\) introduces a free edge, and
the maximal free edge of \(\bar X\) containing it has both
endpoints of valency at least \(2\): each endpoint is either the
apex of a resolution -- a joint -- or a vertex retaining at
least two link directions. By \cref{lem:val2-free-edge}(a),
\(\ker\iota_\ast^{\bar X}\neq1\) -- a contradiction. Hence
\(X\) is simple, and we write \(\bar X=X\) from now on.

By \cref{lem:val2-free-edge}(a), every free edge of \(\bar X\)
is a pendant edge, and by \cref{lem:val2-free-edge}(b),
\(\bar X\) has at most one thick component. If \(\bar X\) had no thick
component at all, it would be a graph without vertices of
valency at least \(3\), hence an interval or a circle, both
excluded by \cref{rem:standing}. So (ii) holds.

Finally, suppose (iii) failed: \(\bar\Sigma\) is a
\(2\)-manifold and every pendant edge of \(\bar X\) is attached
along \(\partial\bar\Sigma\). Absorbing the pendant edges into
the surface, as in \cref{ex:disc}, the inclusion
\(\bar\Sigma\hookrightarrow\bar X\) induces an isomorphism
\(\PB_n(\bar\Sigma)\cong\PB_n(\bar X)\) compatible with the
strand maps, so we may assume that \(\bar X=\bar\Sigma\) is a
compact surface. If \(\bar\Sigma\not\cong S^2,\RP^2\), then
\cref{thm:goldberg} (in the compact form of
\cref{rem:goldberg-compact}) identifies
\(\ker\iota_\ast=\co{\im j_\ast}\) with \(j_\ast\) injective on
\(\PB_n(D^2)\neq1\); in particular \(\ker\iota_\ast\neq1\). If
\(\bar\Sigma\cong\RP^2\), the kernel is again non-trivial: the
disc braids have non-trivial image in \(\PB_n(\RP^2)\)
\cite{VanBuskirk1966}. And \(\bar\Sigma\cong S^2\) would force
\(\bar X=S^2\) -- a sphere has no boundary to carry pendant
edges -- which is excluded by \cref{rem:standing}. In every case
\(\ker\iota_\ast\neq1\), a contradiction. So (iii) holds,
completing the proof.
\end{proof}

\section{Characterisation of weak Goldbergness}
\label{sec:simple-model-necessity}

By
\cref{prop:goldberg-resolution,thm:resolution-preserves-goldberg,thm:resolution-reflects-goldberg},
and in the Goldberg case by
\cref{thm:resolution-goldberg-equivalence}, both weak Goldbergness
and Goldbergness are detected on the simple model: a complex
\(X\) is weakly Goldberg (resp.\ Goldberg) if and only if its
simple model \(\bar X\) is. This section establishes the
following characterisation of weak Goldbergness.

\begin{theorem}[Characterisation of weak Goldbergness]
\label{thm:weak-goldberg-char}
A complex \(X\) as in \cref{rem:standing} is weakly Goldberg if
and only if its free part \(\mathsf{F}_{X}\) is a forest --
equivalently, if and only if every connected component of
\(\mathsf{F}_{X}\) is simply connected.
\end{theorem}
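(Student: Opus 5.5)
First reduce to the simple model. By \cref{thm:resolution-goldberg-equivalence}, $X$ is weakly Goldberg if and only if $\bar X$ is. By \cref{lem:max-tree-resolution}, resolution only adjoins free edges and contracting them is a bijection on scaffolds. So $\mathsf F_{\bar X}$ is a forest if and only if $\mathsf F_X$ is: each resolution edge $\sfe$ has an endpoint, the new apex, which is a leaf of the free part, so contracting it preserves the first Betti number of the free part. The equivalence of ``forest'' with ``every component simply connected'' is immediate for graphs. It remains to prove the theorem for simple $X=\bar X$.

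\emph{Necessity.} Suppose $\mathsf F_{\bar X}$ contains an embedded cycle $C$ and $\bar X_0$ is a weak witness. I first place every point of $C$ in $\bar X_0$, approached from all directions of $C$.
\begin{itemize}
\item At interior points of edges of $C$: if $\bar X_y$ is connected, use \cref{prop:X0-contains-free-points,prop:X0-meets-link-m1}. No point of $C$ separates, since it lies on a cycle, so the separating case does not arise.
\item At vertices of $C$ of valency $\ge3$: use \cref{prop:X0-meets-link-m1,prop:X0-meets-link-components}.
\item At joints on $C$: use \cref{prop:X0-meets-link-joint}.
\end{itemize}
Hence $C\subseteq\bar X_0$. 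But $C$ is a cycle of free edges, hence essential in $\pi_1(\bar X)$: collapsing thick components and using that free edges lie in no $2$-cell gives a retraction to a graph in which $C$ survives. This contradicts contractibility of $\bar X_0$, or more directly \cref{lem:weak-goldberg-pi1-zero}. This is the obstruction announced as \cref{thm:F-cycle-obstruction}.

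\emph{Sufficiency.} Assume $\mathsf F_{\bar X}$ is a forest. Choose a weakly admissible tree $\sfT$ (nonempty, as noted after \cref{def:scaffold}), and let $\bar X_0$ be a small regular thickening of $\sfT$ inside $\bar X$: free edges stay $1$-dimensional, and each thick edge is thickened to a strip in its thick component. It remains to prove $\ker\iota_\ast\subseteq\co{\im j_\ast}$. I would induct on the number of thin edges using the cut operations.
\begin{itemize}
\item At a separating thin edge, use the graph-of-spaces presentation of \cref{cor:free-edge-disconnected}. Its generators are the two sides and the section letters, which can be realised by shuffle braids inside $\bar X_0$. Combine this with \cref{cor:free-edge-both-quotient} to reduce the claim to the two sides, each of which carries the restricted tree.
\item At a non-separating thin edge, use the presentation of \cref{thm:witness-descent}, reversed. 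Glue $\bar X_x$ back along $\sfe$. The stable letters $t_k$ are strands crossing $\sfe$; their commutation relations are handled as in \cref{lem:detour}, via \cref{lem:coordinatewise-kernel}(ii), to identify $\PB_n(\bar X)/\co{\im j_\ast}$ with $\prod_\ell(\pi_1(\bar X_x)\ast\Z)$.
\end{itemize}
The base case is a single thick component with pendant edges. There, either \cref{cor:thick-with-pendants} gives $\ker\iota_\ast=1$, or it is a surface, where Goldberg's theorem (\cref{thm:goldberg,rem:goldberg-compact}) applies with the disc $\bar X_0\cap\Sigma$.

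The main obstacle is the base case for a thick component that is neither a surface nor covered by \cref{prop:branched-surface} once several joints are present. There I would need a Goldberg-type statement for $N(\bar\Sigma)$ with witness a thickened tree $\Phi_{\bar\Sigma}$. I would attempt this by thickening $N(\bar\Sigma)$ to a $3$-manifold with boundary as in \cref{prop:branched-surface}, with the pendant edges attached along the boundary. I would then check that braids near the $2$-dimensional attachment loci are generated by disc braids at the joints, which lie in $\bar X_0$.
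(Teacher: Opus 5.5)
Your reduction to $\bar X$ and your necessity argument are sound, and they are essentially the paper's (\cref{cor:cycle-in-X0,thm:F-cycle-obstruction}). Note that in a simple complex a joint has a single free germ, so no joint lies on $C$ and your third bullet is vacuous.

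The genuine gap is in the sufficiency induction, at a \emph{non-separating} thin edge $\sfe\ni x$.

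\textbf{The cut witness leaves $\bar X_0$.} Because $\bar X_0$ is contractible and contains $\sfe$, the cut $(\bar X_0)_x$ is disconnected. It therefore cannot be the witness to which you apply the inductive hypothesis on $\bar X_x$. You must join its two pieces by an omitted thick edge $\sff$ of the scaffold, as in the paper's Step~3 (\cref{prop:witness-betti-step}). The resulting $(\bar X_x)_0=(\bar X_0)_x\cup\sff$ is \emph{not} contained in $\bar X_0$. Running \cref{thm:witness-descent} ``in reverse'' therefore requires $\im\PB_n((\bar X_x)_0)\subseteq\co{\im j_\ast}^{\PB_n(\bar X)}$, and your sketch never addresses this. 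The paper obtains it by cutting $(\bar X_x)_0$ at a point of $\mathring\sff$ (\cref{lem:pendant-edge,cor:free-edge-disconnected}). It then conjugates the resulting shuffle letters, which are supported near $\sff$ inside $\bar\Sigma$, into $N(\sfw)\subseteq\bar X_0$.

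\textbf{The commutators are not handled as in \cref{lem:detour}.} There, the lifts cross a $2$-dimensional band in disjoint lanes and commute on the nose. Here both strands must cross $x$ in single file along the $1$-dimensional edge $\sfe$. Consequently $[t_i,t_j]$ is non-trivial in $\PB_n(\bar X)$: its image under the collapse of \cref{prop:pi1G-embedding} is a non-trivial commutator in a free group. It dies in the quotient only for a specific reason. With return corridors routed through the thick component along $\sff$, it becomes the shuffle of \cref{lem:H-shuffle}, supported in a copy $\sfH_\alpha\subseteq\bar X_0$; here $\alpha$ is the cycle minus $\mathring\sff$ and the legs lie in $N(\sfw),N(\sfw')$. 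The mixed commutators $[\alpha_i,\beta_j]$ need a separate argument. One reduces them to an element of $\ker\iota^{\bar X_x}_\ast$ and feeds that back into the inductive hypothesis, together with the redundancy of the letters through $\sff$.

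\textbf{Smaller points.}
\begin{itemize}
\item In the separating case, \cref{cor:free-edge-both-quotient} only presents $\PB_n(\bar X)/\co{\im j_\ast}$ as a quotient of $\prod_\ell\pi_1(\bar X^1)\ast\prod_\ell\pi_1(\bar X^2)$. You still have to kill the cross-side commutators of \cref{lem:coordinatewise-kernel}(i). The paper does this by passing to a mixed configuration, where the two braids commute on the nose and the transfers across $x$ lie in $\im j_\ast$.
\item The number of thin edges does not decrease under cutting, since the cut edge becomes two pendant half-edges. Induct instead on $b_1(\Phi_{\bar X})$, and, when the scaffold is a tree, on the number of thick components plus vertices of valency at least $3$. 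Include trees (\cref{lem:witness-step0}) in the base case.
\item Your ``main obstacle'' is not one. \cref{cor:thick-with-pendants} already covers a non-manifold thick component with any finite number of pendant edges and gives $\ker\iota_\ast=1$. So any contractible subcomplex through the base configuration is a witness there (\cref{lem:witness-one-thick}), and no $3$-manifold thickening is needed.
\end{itemize}
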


The proof occupies the remainder of the section, in two halves.
That weak Goldbergness forces \(\mathsf{F}_{X}\) to be a forest
is the cycle obstruction of \cref{ssec:cycle-obstruction}
(\cref{cor:cycle-in-X0,thm:F-cycle-obstruction}). Conversely, when
\(\mathsf{F}_{X}\) is a forest, we construct in
\cref{ssec:witness-construction} an explicit contractible
candidate witness \(\bar X_0\subseteq\bar X\); the verification
that it is indeed a weak Goldberg witness is carried out in
\cref{ssec:candidate-is-witness}.

Throughout this section we keep the standing assumption
(\cref{rem:standing}); in
\cref{ssec:goldberg-witness-structure} we suppose moreover --
unless stronger hypotheses are stated -- that the simple complex
\(\bar X\) is weakly Goldberg, with weak Goldberg witness
\(\bar X_0\subseteq\bar X\).

We recall the bookkeeping already set up at the end of
\cref{ssec:free-parts}. The scaffolds
\(\Phi_X\in\mathfrak{G}_X\) and the weakly admissible maximal trees
\(\mathfrak{T}_X\) were introduced in \cref{def:scaffold};
\(\mathfrak{T}_X\) is non-empty precisely when \(\mathsf{F}_X\)
is a forest. A single resolution exchanges weakly admissible trees,
\(\mathfrak{T}_{X'}\cong\mathfrak{T}_X\)
(\cref{lem:max-tree-resolution}), and iterating to the simple
model identifies \(\mathfrak{T}_X\cong\mathfrak{T}_{\bar X}\);
for \(\sfT_X\in\mathfrak{T}_X\) we write
\(\sfT_{\bar X}\in\mathfrak{T}_{\bar X}\) for its pull-back
(\cref{def:tree-pullback-pushforward}). Since each resolution
attaches a whisker to the free part, \(\mathsf{F}_X\) is a
forest if and only if \(\mathsf{F}_{\bar X}\) is, so the
condition of \cref{thm:weak-goldberg-char} may be checked on
either side. The candidate witness of
\cref{ssec:witness-construction} will accordingly be defined in
the simple model, containing \(\sfT_{\bar X}\), and transported to
a subspace \(X_0\supseteq \sfT_X\) of \(X\) along the chain of
collapses (\cref{lem:witness-resolution}).

\subsection{The cycle obstruction}
\label{ssec:cycle-obstruction}

The three propositions on \(x\in X_0\)
(\crefrange{prop:X0-contains-free-points}{prop:X0-contains-m2-val3})
combine into a single geometric obstruction: every embedded cycle in
the free part of \(X\) is forced into the witness \(X_0\).

\begin{proposition}\label{cor:cycle-in-X0}
Let \(X\) be weakly Goldberg, with weak Goldberg witness
\(X_0\subseteq X\), and let \(C\subseteq\mathsf{F}_X\) be an
embedded simple closed curve. Then \(C\subseteq X_0\).
\end{proposition}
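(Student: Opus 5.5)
The plan is to prove pointwise that every point \(x\in C\) lies in \(X_0\); closedness of \(X_0\) then gives \(C\subseteq X_0\). The key observation is that a point on an embedded cycle never separates \(X\) along that cycle, so it has a single \(x\)-component. Once that is in hand, \cref{prop:X0-contains-free-points} places \(x\) in the witness.

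First fix \(x\in C\). Since \(C\) is an embedded simple closed curve, \(C\setminus\{x\}\) is a connected open arc. This arc approaches \(x\) along the two directions of \(C\) at \(x\). Every other point of \(X_x\) is joined to \(C\setminus\{x\}\) or to a neighbourhood of \(x\) inside its \(x\)-component. More precisely, each \(x\)-component contains a link component of \(x\), and we must show that all \(x\)-components coincide. The argument is that the two link directions of \(C\) lie in one \(x\)-component, namely the one containing the arc \(C\setminus\{x\}\). If there were a further \(x\)-component \(X^j\), the point \(x\) would still be a genuine cut point with at least two components. So the claim is really that \(m=1\) fails only when \(x\) separates, and in general \(m\) need not be \(1\): other link components of \(x\) may lie in different \(x\)-components.

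Because of this, I split according to valency and \(m\), exactly following the regimes of \cref{sec:lower-bound}. If \(m=1\), \cref{prop:X0-contains-free-points} gives \(x\in X_0\). If \(m\geq3\), \cref{prop:X0-contains-valency-3} gives it. If \(m=2\), the two directions of \(C\) lie in one \(x\)-component \(X^1\), which therefore has \(\val_{X^1}(x)\geq2\), while \(X^2\) contributes at least one more direction. Hence \(\val_X(x)\geq3\), and \cref{prop:X0-contains-m2-val3} applies. Note that the case \(m=2\) with \(\val_X(x)=2\) cannot occur here, since both directions of \(C\) lie in the same \(x\)-component. This is precisely where the cycle hypothesis is used: it rules out the bivalent separating regime, where the witness could otherwise avoid \(x\).

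The remaining subtlety, and the step I expect to need care, is that the cited propositions are stated for a point \(x\) of valency \(\geq2\) in the ambient complex. Valency at least \(2\) holds automatically, because \(C\) gives two distinct link components at \(x\): \(C\subseteq\mathsf{F}_X\) consists of free edges, so the directions of \(C\) at \(x\) are isolated link points. This holds even at joints, which lie in \(\mathsf{F}_X\) as endpoints of free edges. Interior points of edges of \(C\) are handled identically, after subdividing so that \(x\) is a vertex. The standing assumption \(X\not\cong S^1\) is already in force, so \cref{prop:iota-surj} is available inside those propositions. Since \(x\in C\) was arbitrary, \(C\subseteq X_0\).
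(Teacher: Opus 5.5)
Your final argument is correct and is essentially the paper's proof: the paper splits first on \(\val_X(x)\) (valency \(2\) forces \(m=1\), since \(C\setminus\{x\}\) joins the only two link points) and then on \(m\), whereas you split on \(m\) and note that \(m=2\) forces \(\val_X(x)\geq3\). This is the same case analysis, invoking the same three propositions. You should delete the opening claim that a point of \(C\) always has a single \(x\)-component; as you observe yourself a few lines later, it is false in general, and the proof does not use it.
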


\begin{proof}
We show that every \(x\in C\) lies in \(X_0\) by applying one of
\cref{prop:X0-contains-free-points,prop:X0-contains-valency-3,prop:X0-contains-m2-val3}.

Fix \(x\in C\). Because \(C\) is an embedded simple closed curve,
exactly two of the link components of \(x\) in \(X\) are realised by the
two cycle directions of \(C\) at \(x\); call the corresponding link
points \(p_1,p_2\in\lk_{X}(x)\). In particular \(\val_{X}(x)\geq 2\), so \(x\) satisfies the
hypotheses of the standing setup of \cref{sec:lower-bound}, and
the number \(m\geq 1\) of \(x\)-components of \(X\) is defined.

\smallskip
\noindent\emph{Case~1: \(\val_{X}(x)=2\).}\ Then \(\lk_{X}(x)=\{p_1,p_2\}\),
and both link points lie on \(C\). The complement \(C\setminus\{x\}\) is
a connected arc in \(X\setminus\{x\}\) joining \(p_1\) to \(p_2\), so \(p_1\)
and \(p_2\) lie in the same connected component of \(X\setminus\{x\}\).
Since every connected component of \(X\setminus\{x\}\) touches the link
of \(x\), this forces \(X_x\) to be connected, i.e.\ \(m=1\).
\cref{prop:X0-contains-free-points} gives \(x\in X_0\).

\smallskip
\noindent\emph{Case~2: \(\val_{X}(x)\geq 3\).}\ We split on \(m\):
\begin{itemize}[leftmargin=2em]
  \item If \(m=1\), \cref{prop:X0-contains-free-points}
        gives \(x\in X_0\).
  \item If \(m\geq 3\), \cref{prop:X0-contains-valency-3}
        gives \(x\in X_0\).
  \item If \(m=2\), then since \(\val_{X}(x)\geq 3\),
        \cref{prop:X0-contains-m2-val3} gives \(x\in X_0\).
\end{itemize}

In every case \(x\in X_0\). Since \(x\in C\) was arbitrary, \(C\subseteq X_0\).
\end{proof}

\begin{theorem}\label{thm:F-cycle-obstruction}
Let \(X\) be a finite connected simplicial complex of dimension at
least \(1\) satisfying the standing assumption
(\cref{rem:standing}), and suppose that the free part
\(\mathsf{F}_X\) has a connected component which is not simply
connected. Then \(X\) is not weakly Goldberg.
\end{theorem}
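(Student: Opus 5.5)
We argue by contradiction: suppose \(X\) is weakly Goldberg with witness \(X_0\). Since the free part has a component that is not simply connected, that component, being a finite graph, contains an embedded simple closed curve \(C\subseteq\mathsf{F}_X\). By \cref{cor:cycle-in-X0}, \(C\subseteq X_0\). The witness is required to be contractible (\cref{def:goldberg-strengths}), so \(C\) is null-homotopic in \(X_0\), and hence in \(X\). The plan is to contradict this by showing that \(C\) is essential in \(X\), and even in \(H_1(X)\).

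The key step is to show that an embedded cycle consisting only of free edges represents a non-trivial class in \(H_1(X;\Z/2)\). We do this with a cellular chain argument. After subdivision, \(C\) is a simplicial \(1\)-cycle, and every \(1\)-simplex of \(C\) lies in a free edge. Such a simplex is therefore contained in the closure of no \(2\)-simplex (\cref{def:free-part}). Consequently no simplicial \(1\)-boundary \(\partial c\), with \(c\) a \(2\)-chain, has non-zero coefficient on any edge of \(C\). So the mod-\(2\) cycle \([C]\) cannot equal a boundary, since \(C\) is non-zero and supported on those edges. Hence \([C]\neq0\) in \(H_1(X;\Z/2)\), and in particular \(C\) is not null-homotopic in \(X\).

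There is a caveat. If the component of \(\mathsf{F}_X\) is not simply connected only because of joints, note that joints are isolated vertices added to \(\mathsf{F}_X\) and cannot create cycles. Any cycle therefore genuinely runs through free edges, and the argument above applies. Since \(X_0\) is a subcomplex (\cref{conv:subcomplex}), the inclusion \(C\subseteq X_0\subseteq X\) gives \([C]=0\) in \(H_1(X_0;\Z/2)\), and hence \([C]=0\) in \(H_1(X;\Z/2)\), a contradiction.

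An alternative route runs through \cref{lem:weak-goldberg-pi1-zero}. Because \(\im j_\ast\subseteq\ker\iota_\ast\), the map \(\pi_1(X_0)\to\pi_1(X)\) must be zero, and \(C\subseteq X_0\) would then force \(C\) to be trivial in \(\pi_1(X)\). This leads to the same contradiction, so contractibility is not even needed, only the kernel inclusion.

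The main obstacle is already absorbed in \cref{cor:cycle-in-X0}: forcing every point of \(C\) into the witness requires the case analysis over valency and the number of \(x\)-components. The only remaining care point is the homological non-triviality of a free cycle. That is elementary, but it must be checked under the subdivision convention, since free edges stay free after subdivision.
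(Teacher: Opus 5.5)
Your argument is correct, but it takes a different route from the paper at two points.

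The paper's proof runs as follows.
\begin{itemize}
\item It first transfers weak Goldbergness to the simple model \(\bar X\) using the resolution theorems.
\item It applies \cref{cor:cycle-in-X0} there.
\item It gets the contradiction from \(\pi_1\)-injectivity: the non-simply-connected component \(\mathsf{F}_0\) of \(\mathsf{F}_{\bar X}\) lies in the scaffold \(\Phi_{\bar X}\), which is asserted to be \(\pi_1\)-injective in \(\bar X\), so \([C]\neq1\) in \(\pi_1(\bar X)\).
\end{itemize}

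You instead stay in \(X\) itself. This is legitimate: \cref{cor:cycle-in-X0} and the propositions behind it are stated for an arbitrary \(X\) under the standing assumption, so the detour through \(\bar X\) is not actually needed. You then detect \(C\) homologically: its \(1\)-simplices are faces of no \(2\)-simplex, so no mod-\(2\) boundary involves them, and \([C]\neq0\) in \(H_1(X;\Z/2)\).

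Your route is more elementary and self-contained. It uses neither the resolution machinery nor the \(\pi_1\)-injectivity of the scaffold, which the paper states without proof. That injectivity does hold in the simple model: there the thick components are pairwise disjoint and each meets \(\Phi_{\bar X}\) in a tree, so van Kampen makes \(\pi_1(\Phi_{\bar X})\) a free factor of \(\pi_1(\bar X)\).

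The paper's route yields more structure, since all of \(\pi_1(\mathsf{F}_0)\) injects into \(\pi_1(\bar X)\). For the obstruction itself, though, a single non-zero mod-\(2\) class suffices.

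One small caveat: your side remark that contractibility is not needed applies only to the final step. The results feeding \cref{cor:cycle-in-X0} are proved for witnesses as the paper defines them, i.e.\ contractible ones.
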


\begin{proof}
Suppose, for contradiction, that \(X\) is weakly Goldberg. By
\cref{prop:goldberg-resolution,thm:resolution-preserves-goldberg,thm:resolution-reflects-goldberg}
the simple model \(\bar X\) is then weakly Goldberg as well; fix a
witness \(\bar X_0\subseteq\bar X\) (\cref{def:goldberg-strengths}),
which is contractible and hence simply connected.

Passing to the simple model preserves the free part up to homotopy: the
collapse \(\bar X\to X\) carries \(\mathsf{F}_{\bar X}\) onto
\(\mathsf{F}_X\) and is a homotopy equivalence, so \(\mathsf{F}_X\) and
\(\mathsf{F}_{\bar X}\) are homotopy equivalent component by component
(\cref{lem:max-tree-resolution}). The non-simply-connected component of
\(\mathsf{F}_X\) therefore corresponds to a non-simply-connected
component \(\mathsf{F}_0\) of \(\mathsf{F}_{\bar X}\). Being a finite
graph, \(\mathsf{F}_0\) has \(\pi_1\) free of positive rank, so it
contains an embedded simple closed curve \(C\subseteq\mathsf{F}_{\bar X}\).

By \cref{cor:cycle-in-X0}, \(C\subseteq\bar X_0\); choosing a path in
\(\bar X_0\) from \(x_1^0\) to a basepoint on \(C\) makes \([C]\) an
element of \(\pi_1(\bar X_0)\). As \(\bar X_0\) is simply connected,
\([C]=1\) there, and a fortiori
\begin{equation}\label{eq:C-trivial-in-X}
[C] = 1 \qquad\text{in } \pi_1(\bar X,x_1^0).
\end{equation}
On the other hand, \(\mathsf{F}_{\bar X}\) is a subcomplex of the scaffold \(\Phi_{\bar X}\) (\cref{def:scaffold}); the inclusion of a
subgraph is \(\pi_1\)-injective on each connected component, and
\(\Phi_{\bar X}\hookrightarrow\bar X\) is \(\pi_1\)-injective, so the
composite \(\mathsf{F}_0\hookrightarrow\bar X\) is \(\pi_1\)-injective.
Since \([C]\) is a non-trivial element of the free group
\(\pi_1(\mathsf{F}_0)\), this forces \([C]\neq 1\) in \(\pi_1(\bar X)\),
contradicting \eqref{eq:C-trivial-in-X}.
\end{proof}

\subsection{Constructing the witness \(X_0\)}
\label{ssec:witness-construction}

Throughout this subsection we assume that every connected
component of the free part \(\mathsf{F}_X\) is simply connected --
equivalently, that \(\mathsf{F}_X\) is a forest. By
\cref{thm:F-cycle-obstruction} this is a \emph{necessary}
condition for weak Goldbergness; under it we construct an
explicit contractible candidate witness \(X_0\subseteq X\),
assembled from a scaffold \(\Phi_X\in\mathfrak{G}_X\) (a
subcomplex of \(X\) containing \(\mathsf{F}_X\),
\cref{def:scaffold}).

By the standing hypothesis \(\mathsf{F}_X\) is a forest, so the
set of weakly admissible trees \(\mathfrak{T}_X\)
(\cref{def:scaffold}) is non-empty; we fix a
maximal tree \(\sfT_X\in\mathfrak{T}_X\) of a scaffold
\(\Phi_X\in\mathfrak{G}_X\). Recall that \(\sfT_X\) then contains all
thin edges, so the edges of \(\Phi_X\) omitted from \(\sfT_X\) are
precisely thick edges. We regard \(\sfT_X\subseteq\Phi_X\) as a
subcomplex of \(X\) (\cref{def:scaffold}).

\begin{definition}[The candidate witness \(X_0\)]
\label{def:candidate-witness}
Given \(X\) and a maximal tree \(\sfT_X\in\mathfrak{T}_X\), let
\(\bar X\) denote the unique simple complex obtained from \(X\) by
iteratively resolving non-simple vertices
(\cref{prop:simple-model-unique}), and let
\(\sfT_{\bar X}\in\mathfrak{T}_{\bar X}\) be the pull-back of \(\sfT_X\)
to \(\bar X\)
(\cref{def:tree-pullback-pushforward}). Define a
subspace \(\bar X_0\subseteq\bar X\) by
\[
\bar X_0 \coloneqq \sfT_{\bar X}\cup
\bigcup_{\sfv}\,N_{\bar X}(\sfv),
\]
the union of the embedded maximal tree
\(\sfT_{\bar X}\subseteq\bar X\) with the closed regular
neighbourhoods, taken in \(\bar X\), of all the vertices \(\sfv\)
of the scaffold \(\Phi_{\bar X}\). The
\emph{candidate witness} for the weak Goldbergness of \(X\) is
the \emph{push-forward} of \(\bar X_0\) along the chain of
collapses
\(q_{\sfe_1}\circ\cdots\circ q_{\sfe_c}\colon\bar X\twoheadrightarrow X\):
\[
X_0 \coloneqq \bigl(q_{\sfe_1}\circ\cdots\circ q_{\sfe_c}\bigr)(\bar X_0)
\subseteq X.
\]
After a sufficient subdivision of \(X\) one has
\(\base\in F_n(X_0)\). \cref{fig:candidate-witness} illustrates
the construction.
\end{definition}

\begin{figure}[ht]
\centering
\begin{tikzpicture}[line width=0.8pt, scale=1.15]
  \fill[gray!30,opacity=0.5] (0,0) ellipse (1.3 and 0.55);
  \draw (0,0) ellipse (1.3 and 0.55);
  \node at (0,0.32) {\(\bar\Sigma\)};
  \begin{scope}
    \clip (0,0) ellipse (1.3 and 0.55);
    \fill[orange!45] (-1.3,0) circle (0.4);
    \fill[orange!45] (1.3,0) circle (0.4);
  \end{scope}
  \draw[line width=4pt, orange!45] (-1.3,0) -- (-1.62,0.19);
  \draw[line width=4pt, orange!45] (1.3,0) -- (1.62,0.19);
  \draw[line width=4pt, orange!45] (-2.0,0.42) -- (-2.3,0.6);
  \draw[line width=4pt, orange!45] (2.0,0.42) -- (2.3,0.6);
  \draw[line width=4pt, orange!45] (-1.3,0) .. controls (-1.0,-0.06) .. (-0.92,-0.08);
  \draw[line width=4pt, orange!45] (1.3,0) .. controls (1.0,-0.06) .. (0.92,-0.08);
  \draw[blue] (-1.3,0) -- (-2.3,0.6);
  \draw[blue] (1.3,0) -- (2.3,0.6);
  \draw[green!55!black] (-1.3,0) .. controls (0,-0.25) .. (1.3,0);
  \filldraw[red] (-1.3,0) circle (1.8pt);
  \node[below left=-2pt] at (-1.32,-0.02) {\(\sfw_1\)};
  \filldraw[red] (1.3,0) circle (1.8pt);
  \node[below right=-2pt] at (1.32,-0.02) {\(\sfw_2\)};
  \filldraw (-2.3,0.6) circle (1.5pt);
  \filldraw (2.3,0.6) circle (1.5pt);
\end{tikzpicture}
\caption{The candidate witness of
\cref{def:candidate-witness} for a simple complex \(\bar X\):
a disc \(\bar\Sigma\) with boundary joints \(\sfw_1,\sfw_2\)
and two free edges. The weakly admissible tree \(\sfT_{\bar X}\)
consists of the two thin edges (blue) and one thick edge
(green); \(\bar X_0\) is the union of \(\sfT_{\bar X}\) with the
closed regular neighbourhoods of the vertices of the scaffold
(orange).}
\label{fig:candidate-witness}
\end{figure}

\begin{remark}\label{rem:vSigma-redundant}
In the degenerate case where \(\sfT_{\bar X}\) has no edges --
equivalently, \(\Phi_{\bar X}=\{[\bar X]\}\) is the single vertex,
so \(\mathsf{F}_{\bar X}=\emptyset\) and \(\bar X\) itself is the
unique thick component -- the embedded tree is the single
representative vertex \(\sfv_{\bar X}\), and the definition gives
\(\bar X_0=N_{\bar X}(\sfv_{\bar X})\), the closed star of
\(\sfv_{\bar X}\).
\end{remark}

The candidate witness is contractible: each vertex
neighbourhood \(N_{\bar X}(\sfv)\) is a cone with apex \(\sfv\),
meeting the tree \(\sfT_{\bar X}\) -- and, taken small enough,
nothing else of \(\bar X_0\) -- in a contractible neighbourhood
of \(\sfv\) inside the tree; collapsing the cones one at a time
therefore deformation retracts \(\bar X_0\) onto the embedded
tree \(\sfT_{\bar X}\), which is contractible. The chain of collapses
\(q_{\sfe_1}\circ\cdots\circ q_{\sfe_c}\) is a homotopy equivalence
(each \(q_{\sfe_i}\) contracts the contractible free edge \(\sfe_i\)), so
\(X_0\) inherits the contractibility of \(\bar X_0\). Thus \(X_0\) is
a legitimate witness in the sense of
\cref{def:goldberg-strengths}: a contractible subspace
of \(X\) carrying the basepoint.

\begin{remark}\label{rem:X0-meets-free-component}
The intersection of \(\bar X_0\) with a thick component
\(\bar\Sigma\) of \(\bar X\) has the following description. The
embedded tree meets \(\bar\Sigma\) in the thick-edge arcs from
joints \(\sfw\) to \(\sfv_{\bar\Sigma}\), one for each thick edge
\(\sfe_{(\sfw,L)}\) at \([\bar\Sigma]\) lying in \(\sfT_{\bar X}\),
together with the joints of \(\bar X\) lying in \(\bar\Sigma\)
(these belong to \(\mathsf{F}_{\bar X}\subseteq \sfT_{\bar X}\));
and \(\bar X_0\) adds the cone neighbourhoods
\(N_{\bar X}(\sfw)\cap\bar\Sigma\) of these joints together with
the neighbourhood \(N_{\bar X}(\sfv_{\bar\Sigma})\) of the
representative vertex. Accordingly \(\bar\Sigma\cap\bar X_0\)
consists of a distinguished component containing
\(\sfv_{\bar\Sigma}\) -- the neighbourhood
\(N_{\bar X}(\sfv_{\bar\Sigma})\), the in-tree thick-edge arcs
meeting at \(\sfv_{\bar\Sigma}\), and the cone neighbourhoods of
the joints at which these arcs end -- and, for each joint
\(\sfw\in\bar\Sigma\) none of whose thick edges towards
\([\bar\Sigma]\) lies in \(\sfT_{\bar X}\), the cone neighbourhood
of \(\sfw\) in \(\bar\Sigma\). At least one thick edge at
\([\bar\Sigma]\) lies in \(\sfT_{\bar X}\), since \(\sfT_{\bar X}\) is
a spanning tree of \(\Phi_{\bar X}\) and the vertex
\([\bar\Sigma]\) is incident in \(\Phi_{\bar X}\) only to thick
edges. Pushing forward along the chain of collapses gives the
analogous description of \(X_0\) in \(X\).
\end{remark}

The candidate witness behaves cleanly under a single resolution
step: \(X_0'\) is precisely an An--Park resolution of \(X_0\) at the
vertex \(\sfv\).

\begin{lemma}\label{lem:witness-resolution}
With the setup of \cref{lem:max-tree-resolution}, let
\(\sfT_{X'}:=q_\sfe^{-1}(\sfT_X)\in\mathfrak{T}_{X'}\) be the pull-back of
\(\sfT_X\in\mathfrak{T}_X\)
(\cref{def:tree-pullback-pushforward}), and let
\(X_0,X_0'\) be the corresponding candidate witnesses of
\cref{def:candidate-witness}. Then
\(\sfe\subseteq \sfT_{X'}\subseteq X_0'\), with
\[
X_0' = q_\sfe^{-1}(X_0),
\qquad
X_0 = q_\sfe(X_0') = X_0'/\sfe,
\]
and \(X_0'\) is the An--Park resolution
(\cref{ssec:simple-an-park}) of \(X_0\) at the vertex \(\sfv\):
the link component \(L\cap X_0\) at \(\sfv\) is separated from \(\sfv\) and
reattached at the new apex \(\sfv'\) via the new free edge \(\sfe\), and
\(q_\sfe\) restricts to a homeomorphism
\(X_0'/\sfe\xrightarrow{\cong}X_0\).
\end{lemma}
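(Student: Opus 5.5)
The plan is to compare the two definitions of the candidate witness directly. The first step is to observe that the simple model of \(X'\) coincides with that of \(X\). Resolving all non-simple vertices of \(X'\) is one of the orders of resolution of \(X\), namely the one that begins at \(\sfv\) along \(L\). By \cref{prop:simple-model-unique} we may therefore write \(\bar X\) for both, with the chain of collapses for \(X\) factoring as \(q_\sfe\circ\bar q'\), where \(\bar q'\colon\bar X\to X'\) is the chain for \(X'\).

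Next, \cref{def:tree-pullback-pushforward} and \cref{lem:max-tree-resolution} show that the pull-back of \(\sfT_{X'}=q_\sfe^{-1}(\sfT_X)\) to \(\bar X\) is the pull-back of \(\sfT_X\). Hence both candidate witnesses come from the same \(\bar X_0\subseteq\bar X\), and we get
\[
X_0'=\bar q'(\bar X_0),\qquad X_0=q_\sfe(\bar q'(\bar X_0))=q_\sfe(X_0').
\]
That \(\sfe\subseteq\sfT_{X'}\) is the last clause of \cref{lem:max-tree-resolution}, since \(\sfe\) is a thin edge. Then \(\sfT_{X'}\subseteq X_0'\) holds because \(\sfT_{\bar X}\subseteq\bar X_0\).

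It remains to show \(X_0'=q_\sfe^{-1}(X_0)\). The inclusion \(\subseteq\) is immediate. For the reverse, note that \(q_\sfe\) is injective off \(\sfe\), so only preimages of \(\sfv\) need attention. These all lie in \(\sfe\subseteq X_0'\).

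The last assertion is the step needing care: that \(X_0'\) is the An--Park resolution of \(X_0\) at \(\sfv\). I would check it locally near \(\sfv\). Since \(\sfv\) and \(\sfv'\) are both scaffold vertices of \(X'\), the regular neighbourhoods \(N(\sfv)\) and \(N(\sfv')\) lie in \(X_0'\) by construction, after pushing forward from \(\bar X\). The cone neighbourhood of \(\sfv'\) in \(X_0'\) is the full cone over \(L\). Its image under \(q_\sfe\) is the cone over \(L\) at \(\sfv\), which lies in \(X_0\), so \(L\cap\lk_{X_0}(\sfv)=L\) is a whole link component of \(X_0\) at \(\sfv\). Separating it from \(\sfv\) and reattaching it at a new apex joined by a new free edge reproduces exactly \(N_{X_0'}(\sfv)\cup\sfe\cup N_{X_0'}(\sfv')\). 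Away from \(\sfe\), the two complexes agree via the homeomorphism \(X'\setminus\sfe\cong X\setminus\{\sfv\}\).

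The main obstacle I expect is bookkeeping: making sure that the regular neighbourhoods taken in \(\bar X\) push forward to neighbourhoods that are compatible on both sides. I would handle this by choosing all neighbourhoods small and inside the region where the collapses are homeomorphisms, except at the collapsed edges themselves.
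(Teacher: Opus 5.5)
Your proposal is correct and follows the paper's proof essentially step for step: both identify \(X_0\) and \(X_0'\) as push-forwards of the same \(\bar X_0\) along the factorisation \(\bar X\twoheadrightarrow X'\xrightarrow{q_\sfe}X\), and both use that \(q_\sfe\) is injective off \(\sfe\subseteq X_0'\) to obtain \(X_0'=q_\sfe^{-1}(X_0)\) and the homeomorphism \(X_0'/\sfe\cong X_0\). Your local check at \(\sfv\) and \(\sfv'\) shows that \(X_0\) contains the whole cone over \(L\), so that \(L\) is a full link component of \(X_0\) at \(\sfv\); this spells out the ``geometrically'' sentence with which the paper concludes.
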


\begin{proof}
The new free edge \(\sfe\) lies in \(\mathsf{F}_{X'}\), hence in
\(\sfT_{X'}\) by admissibility, hence in \(X_0'\) by
\cref{def:candidate-witness}. By
\cref{def:candidate-witness}, both \(X_0\) and \(X_0'\)
arise as push-forwards of the same subspace
\(\bar X_0\subseteq\bar X\): the pull-back of \(\sfT_X\) to \(\bar X\)
agrees with the iterated pull-back of \(\sfT_{X'}=q_\sfe^{-1}(\sfT_X)\) to
\(\bar X\), so the simple-model data
\((\bar X,\sfT_{\bar X},\bar X_0)\) entering the definitions of
\(X_0\) and \(X_0'\) coincide. The chain of collapses
\(\bar X\twoheadrightarrow X\) factors as the chain
\(\bar X\twoheadrightarrow X'\) followed by \(q_\sfe\), so
\(X_0=q_\sfe(X_0')\). Since \(q_\sfe\) is injective off
\(\sfe^\circ\subseteq X_0'\), we have \(X_0'=q_\sfe^{-1}(X_0)\) and \(q_\sfe\)
restricts to a homeomorphism
\(X_0'/\sfe\xrightarrow{\cong}X_0\). Geometrically,
\(X_0'\setminus \sfe^\circ\) maps homeomorphically onto
\(X_0\setminus\{\sfv\}\), while the entire closed edge \(\sfe\subseteq X_0'\) -- together with whatever portion of
\(\operatorname{Cone}(L)\) is attached at \(\sfv'\) in \(X_0'\) -- covers
the single point \(\sfv\in X_0\). This is precisely the An--Park
resolution of \(X_0\) at \(\sfv\) (\cref{ssec:simple-an-park}).
\end{proof}

\subsection{The candidate witness is a weak Goldberg witness}
\label{ssec:candidate-is-witness}

We now show that the candidate witness of
\cref{def:candidate-witness} deserves its name.

\begin{proposition}\label{thm:candidate-witness-weak}
Suppose that the free part \(\mathsf{F}_{\bar X}\) of the simple
complex \(\bar X\) is a forest, and let
\(\bar X_0=\sfT_{\bar X}\cup\bigcup_{\sfv} N_{\bar X}(\sfv)\) be the
candidate witness of \cref{def:candidate-witness}. Then
\(\bar X_0\) is a weak Goldberg witness of \(\bar X\):
\[
\co{\im j_\ast}^{\PB_n(\bar X,\base)}=\ker\iota_\ast .
\]
\end{proposition}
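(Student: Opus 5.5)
The inclusion \(\co{\im j_\ast}\subseteq\ker\iota_\ast\) is immediate from \cref{lem:weak-goldberg-pi1-zero}, since \(\bar X_0\) is contractible. The substance is the reverse inclusion. I would establish it by showing that the quotient \(Q\coloneqq\PB_n(\bar X,\base)/\co{\im j_\ast}\) maps isomorphically onto \(\prod_\ell\pi_1(\bar X)\). By \cref{prop:iota-surj}, \(\iota_\ast\) induces a surjection \(Q\twoheadrightarrow\prod_\ell\pi_1(\bar X)\), so it suffices to build a surjection \(\prod_\ell\pi_1(\bar X)\twoheadrightarrow Q\) that is compatible with the strand maps.

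I would argue by induction on the number of thin edges of \(\bar X\), cutting at an interior point \(x\) of a thin edge \(\sfe\subseteq\sfT_{\bar X}\). There are two cases.

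\begin{itemize}
\item \emph{\(\bar X_x\) disconnected.} \Cref{cor:free-edge-disconnected} presents \(\PB_n(\bar X)\) as generated by \(\PB_n(\bar X^1)\), \(\PB_n(\bar X^2)\) and the loop group \(\pi_1(\sfG)\). The candidate witness restricts on each side to the candidate witness of that side, up to a terminal arc. The shuffle generators of \(\pi_1(\sfG)\) can be realised inside \(\bar X_0\), because \(\bar X_0\) contains \(\sfe\) together with branching neighbourhoods at both ends, exactly as in the choices made in the proof of \cref{lem:witness-splitting}. So \(Q\) is generated by the images of \(Q^1\) and \(Q^2\). By induction \(Q^i\cong\prod_\ell\pi_1(\bar X^i)\). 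The coordinate commutators \([g_i,h_j]\) for \(i\neq j\) vanish, because a strand running a loop in \(\bar X^1\) and another running a loop in \(\bar X^2\) have disjoint supports after parking the other strands in the pockets. \Cref{lem:coordinatewise-kernel}(i) then identifies \(Q\) with \(\prod_\ell(\pi_1\bar X^1\ast\pi_1\bar X^2)=\prod_\ell\pi_1(\bar X)\).
\item \emph{\(\bar X_x\) connected.} Here \(\sfe\) cannot lie in \(\sfT_{\bar X}\) as a bridge of the free part alone. Instead I would cut a thin edge whose removal leaves a cycle of the scaffold closed through a thick edge omitted from \(\sfT_{\bar X}\). \Cref{cor:free-edge-splitting} gives \(\PB_n(\bar X_x)\ast F_n\twoheadrightarrow\PB_n(\bar X)\). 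Induction, applied to \(\bar X_x\) with its candidate witness \((\bar X_0)_x\) plus an arc in the thick component (as in \cref{thm:witness-descent}), gives \(\PB_n(\bar X_x)/\co{\cdot}\cong\prod_\ell\pi_1(\bar X_x)\). The commutators \([\alpha_i,\beta_j]\) and \([\beta_i,\beta_j]\) die because distinct strands can cross \(x\) along disjoint lanes, returning through the thick component. \Cref{lem:coordinatewise-kernel}(ii) then gives \(Q\cong\prod_\ell(\pi_1\bar X_x\ast\Z)\).
\end{itemize}

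The base case is when \(\bar X\) has no thin edges, so \(\bar X=\bar\Sigma\) is a single thick component and \(\bar X_0\) is the closed star of one vertex. I would treat it as follows.
\begin{itemize}
\item If \(\bar\Sigma\) is a surface other than \(S^2\) or \(\RP^2\), the closed star is a disc, and Goldberg's \cref{thm:goldberg} (in the form of \cref{rem:goldberg-compact}) applies.
\item If \(\bar\Sigma\) is \(S^2\) or \(\RP^2\), there is no thin edge, and \cref{rem:s2-rp2} applies.
\item If \(\bar\Sigma\) is not a manifold, \(\ker\iota_\ast=1\) by \cref{prop:branched-surface}.
\end{itemize}
More generally, I would treat a thick component carrying only pendant thin edges directly via \cref{cor:thick-with-pendants}. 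If some pendant edge lands in the interior, the kernel is trivial. Otherwise the edges absorb into the boundary as in \cref{ex:disc}, and Goldberg's theorem applies again, with the disc neighbourhood of a joint as witness.

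The main obstacle is the inductive step when a thick component has several joints. The candidate witness then meets that component in several disjoint cone neighbourhoods, linked only through the scaffold tree. I expect the real work to be showing that a disc braid localised at one joint neighbourhood is conjugate, modulo \(\co{\im j_\ast}\), to one localised at another. I would first try a parking-transport argument in the style of \cref{lem:parking-transport}: move strands along the tree \(\sfT_{\bar X}\), which lies in \(\bar X_0\), and resolve their interactions at branch points supplied by the vertex neighbourhoods.
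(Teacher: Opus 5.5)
Your architecture matches the paper's. You cut at an interior point of a thin edge. In the separating case you use \cref{cor:free-edge-disconnected} with \cref{lem:coordinatewise-kernel}(i). In the non-separating case you use \cref{cor:free-edge-splitting}, an inductive witness of \(\bar X_x\) obtained by adjoining an omitted thick edge \(\sff\), and \cref{lem:coordinatewise-kernel}(ii). You close with Goldberg's theorem or \cref{cor:thick-with-pendants}, promoted by \cref{prop:witness-upward}. Your separating case is essentially the paper's Step~2.

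\textbf{The connected case fails as written.} Your reason for the vanishing of \([\beta_i,\beta_j]\) is false. Since \(\sfe\) is a free edge, both strands must cross \(x\) in single file along \(\sfe\); there are no disjoint lanes through \(x\). Indeed, the collapse homomorphism of \cref{prop:pi1G-embedding} sends this commutator to the commutator of two distinct bigon letters, which is non-trivial in the free group \(\pi_1(\sfG)\). So \([\beta_i,\beta_j]\neq1\) in \(\PB_n(\bar X)\), and it can only die in \(Q\).

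The missing idea is to route the two return paths disjointly through \(\bar\Sigma\) alongside \(\sff\), so that they cancel. This leaves \([\beta_i,\beta_j]=A_iA_jA_i^{-1}A_j^{-1}\), the shuffle of \cref{lem:H-shuffle} along the path \(\alpha\) from \(\sfw\) through \(\sfe\) to \(\sfw'\), with legs in \(N_{\bar X}(\sfw)\) and \(N_{\bar X}(\sfw')\). That copy of \(\sfH\) lies in \(\bar X_0\), so the commutator lies in \(\im j_\ast\).

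Likewise, \([\alpha_i,\beta_j]\) is not settled by disjoint supports, since the loop of the \(i\)-th strand and the corridor of the \(j\)-th may be forced through the same thin edges. The paper instead writes \(\beta_j=CK\) and notes that \(K\alpha_iK^{-1}\alpha_i^{-1}\in\ker\iota^{\bar X_x}_\ast\), so the inductive hypothesis applies.

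\textbf{The inductive witness is not inside \(\bar X_0\).} The witness \((\bar X_x)_0=\bar X^1_0\cup\bar X^2_0\cup\sff\) is not contained in \(\bar X_0\), because \(\sff\) is an omitted edge. So the inductive hypothesis for \(\bar X_x\) does not by itself give a surjection \(\bigl(\prod_\ell\pi_1(\bar X_x)\bigr)\ast\mathbb{F}_n\twoheadrightarrow Q\). The same problem affects the reduction of \([\alpha_i,\beta_j]\) just described.

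You must show that braids of \((\bar X_x)_0\) running along \(\sff\) die in \(Q\). The paper cuts \((\bar X_x)_0\) at a point of \(\mathring\sff\) and combines \cref{lem:pendant-edge} with \cref{cor:free-edge-disconnected}. This shows that, modulo \(\PB_n(\bar X^1_0)\) and \(\PB_n(\bar X^2_0)\), these braids are generated by a free group \(\Lambda\), realised in a copy of \(\sfH\) with middle edge \(\sff\) inside \(\bar\Sigma\). Sliding along \(\sff\), by an ambient isotopy supported in \(\bar\Sigma\), conjugates each generator into \(N_{\bar X}(\sfw)\subseteq\bar X^1_0\). This is what your ``main obstacle'' amounts to, and this isotopy settles it; no parking-transport argument is needed.

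\textbf{The induction measure does not decrease.} A cut replaces \(\sfe\) by two pendant half-edges, so the number of thin edges does not go down, and your edge-free base case is never reached. The paper inducts on \(b_1(\Phi_{\bar X})\), which drops by one in the connected case. When \(\Phi_{\bar X}\) is a tree, it inducts instead on the number of thick components plus the number of vertices of valency \(\geq3\). Its base cases are trees, and a single thick component carrying only pendant edges.
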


\begin{theorem}\label{cor:candidate-witness-descends}
Let \(X\) be as in \cref{rem:standing} with \(\mathsf{F}_X\) a
forest. Then the push-forward \(X_0\subseteq X\) of
\cref{def:candidate-witness} is a weak Goldberg witness of
\(X\). In particular \(X\) is weakly Goldberg, which completes
the proof of \cref{thm:weak-goldberg-char}.
\end{theorem}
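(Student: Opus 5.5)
The plan is to deduce the statement from \cref{thm:candidate-witness-weak} by transporting the witness along the chain of collapses \(\bar X\twoheadrightarrow X\). Since each resolution only adjoins a whisker to the free part, \(\mathsf{F}_{\bar X}\) is a forest because \(\mathsf{F}_X\) is. Hence \cref{thm:candidate-witness-weak} applies, and \(\bar X_0=\sfT_{\bar X}\cup\bigcup_\sfv N_{\bar X}(\sfv)\) is a weak Goldberg witness of \(\bar X\).

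We then induct down the chain \(\bar X=X^{(c)}\to X^{(c-1)}\to\cdots\to X^{(0)}=X\), where each step \(X^{(i)}\to X^{(i-1)}\) is a single collapse \(q_{\sfe_i}\). Let \(X_0^{(i)}\) be the push-forward of \(\bar X_0\) to \(X^{(i)}\); by construction this is the candidate witness of \(X^{(i)}\) attached to the push-forward tree. We apply \cref{lem:witness-resolution} to each step. It gives \(\sfe_i\subseteq X_0^{(i)}\) and \(X_0^{(i)}=q_{\sfe_i}^{-1}(X_0^{(i-1)})\). It also shows that \(X_0^{(i)}\) is the An--Park resolution of \(X_0^{(i-1)}\) at the resolved vertex. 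Consequently \(q_{\sfe_i}^\ast\colon\PB_n(X_0^{(i-1)})\to\PB_n(X_0^{(i)})\) is an isomorphism, by An--Park~\cite[Prop.~3.11]{AnPark2017}, and in particular it is surjective. \cref{prop:goldberg-resolution}(1) then shows that \(X_0^{(i-1)}\) is a weak Goldberg witness of \(X^{(i-1)}\) if and only if \(X_0^{(i)}\) is one of \(X^{(i)}\). Contractibility and the basepoint condition pass along \(q_{\sfe_i}\), as already noted after \cref{def:candidate-witness}. Descending from \(i=c\) to \(i=1\) shows that \(X_0\) is a weak Goldberg witness of \(X\).

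The only delicate point is that \(q_{\sfe_i}^\ast\) is surjective on the witness. Surjectivity requires the resolution of \(X_0^{(i-1)}\) at the vertex to be a genuine An--Park resolution, or at least to satisfy the valency hypothesis of \cite[Prop.~3.1]{AnPark2017}: both endpoints of \(\sfe_i\) must have valency at least \(2\) in \(X_0^{(i)}\). We check this from the construction. At the vertex, \(X_0^{(i)}\) contains the neighbourhood \(N(\sfv)\) of each scaffold vertex, so it enters every link direction there. At the apex \(\sfv'\), the witness contains \(\sfe_i\) together with the cone neighbourhood over \(L\). So both valencies are at least \(2\), and the case analysis of \cref{thm:resolution-preserves-goldberg} is unnecessary.

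With this, the ``if'' direction of \cref{thm:weak-goldberg-char} follows. The ``only if'' direction is \cref{thm:F-cycle-obstruction}. The equivalence with simple connectivity of the components holds because a graph is a forest exactly when each of its components is simply connected.
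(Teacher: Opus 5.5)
Your proposal is correct and follows essentially the same route as the paper. You establish the witness property on the simple model via \cref{thm:candidate-witness-weak}, then transport it down the chain of collapses using \cref{lem:witness-resolution} and \cref{prop:goldberg-resolution}(1). Your explicit check that both endpoints of each collapsed edge have valency at least $2$ in the candidate witness, so that the An--Park surjectivity applies, spells out a point the paper passes over by simply calling each step a braid equivalence.
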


\begin{proof}
Since each resolution attaches a whisker to the free part,
\(\mathsf{F}_{\bar X}\) is a forest along with \(\mathsf{F}_X\),
so the candidate witnesses are defined on both sides of every
collapse in the chain \(\bar X\twoheadrightarrow X\), and by
\cref{lem:witness-resolution} they correspond step by step: the
upper one is the An--Park resolution of the lower one at the
resolved vertex. Since, at each step, the upper candidate witness is the
An--Park resolution of the lower one
(\cref{lem:witness-resolution}), the induced map
\(q_\sfe^\ast\colon\PB_n(X_0)\to\PB_n(X_0')\) is an isomorphism --
a braid equivalence, as in \cref{ssec:simple-an-park} -- and in
particular surjective; \cref{prop:goldberg-resolution}(1)
therefore transports the weak-witness property across each step
of the chain, in both directions. It thus suffices to prove that
the candidate witness at the top of the chain, built from a
maximal tree \(\sfT_{\bar X}\in\mathfrak{T}_{\bar X}\), is a weak
Goldberg witness of \(\bar X\). This is precisely
\cref{thm:candidate-witness-weak}.
\end{proof}

The proof of \cref{thm:candidate-witness-weak} proceeds by
induction on the first Betti number \(b_1(\Phi_{\bar X})\) of
the scaffold -- a number independent of the choices entering
\(\Phi_{\bar X}\), as in the proof of
\cref{lem:val2-free-edge} -- and occupies the remainder of this
subsection. The proof proceeds in Steps 0--3. Step 0 disposes of
the degenerate case in which \(\bar X\) is itself a tree
(\cref{lem:witness-step0}). For \(b_1(\Phi_{\bar X})=0\) -- so
that the maximal tree is the whole scaffold,
\(\sfT_{\bar X}=\Phi_{\bar X}\) -- Step 1 treats the case of a
single thick component (\cref{lem:witness-one-thick}), and
Step 2 the general tree case, by induction on the number of
thick components (\cref{prop:witness-tree-case}); Step 3 is the
inductive step in \(b_1(\Phi_{\bar X})\)
(\cref{prop:witness-betti-step}).

\begin{lemma}[Step 0: \(\bar X\) a tree]
\label{lem:witness-step0}
Suppose that \(\bar X\) is itself a tree. Then
\(\bar X=\Phi_{\bar X}=\sfT_{\bar X}=\bar X_0\), and \(\bar X_0\)
is a Goldberg witness of \(\bar X\); in particular it is a weak
Goldberg witness.
\end{lemma}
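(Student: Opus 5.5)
The plan is first to identify all the objects, and then to read off the witness property from contractibility alone.

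If \(\bar X\) is a tree, it is a \(1\)-dimensional complex with no \(2\)-cells. Every maximal \(1\)-cell is therefore a free edge (\cref{def:free-part}), so \(\mathsf{F}_{\bar X}=\bar X\). There are no thick components, since \(\bar X\setminus\mathsf{F}_{\bar X}=\varnothing\). A scaffold is then just the free part, \(\Phi_{\bar X}=\mathsf{F}_{\bar X}=\bar X\) (\cref{def:scaffold}). This is already a tree, so its unique maximal tree containing \(\mathsf{F}_{\bar X}\) is itself, giving \(\sfT_{\bar X}=\bar X\). Finally, \(\bar X_0=\sfT_{\bar X}\cup\bigcup_{\sfv}N_{\bar X}(\sfv)\) lies between \(\sfT_{\bar X}=\bar X\) and \(\bar X\), so \(\bar X_0=\bar X\). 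This establishes the chain of equalities in the statement. The only subtlety is the degenerate convention for \(\Phi_\Sigma\) when there are no thick components, and that convention simply does not apply here.

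Next, \(\bar X_0=\bar X\) is a finite tree, hence contractible, and it contains the base configuration. This makes it an admissible candidate in the sense of \cref{def:goldberg-strengths}. The target of the strand map is trivial because \(\pi_1(\bar X,x_i^0)=1\) for every \(i\), so \(\ker\iota_\ast=\PB_n(\bar X,\base)\) (\cref{subsubsection:simply-connected}). On the other side, \(j_\ast\) is the identity of \(\PB_n(\bar X,\base)\). Hence \(\co{\im j_\ast}=\PB_n(\bar X,\base)=\ker\iota_\ast\), and \(j_\ast\) is injective. Therefore \(\bar X_0\) is a Goldberg witness, and a fortiori a weak one.

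There is no real obstacle. The only point to check is the bookkeeping in the first paragraph, namely that the definitions of free part, scaffold and candidate witness all collapse to \(\bar X\) in the purely \(1\)-dimensional case. The argument is exactly the one already given in the proof of \cref{thm:graph-intro} for trees.
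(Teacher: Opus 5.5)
Your proposal is correct and matches the paper's proof essentially step for step. With no thick components and every edge free, \(\Phi_{\bar X}=\mathsf{F}_{\bar X}=\sfT_{\bar X}=\bar X_0=\bar X\); then \(\ker\iota_\ast=\PB_n(\bar X,\base)=\im j_\ast\) with \(j_\ast\) the identity, which gives both the kernel equality and injectivity.
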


\begin{proof}
A tree has no thick components and every edge is free, so
\(\Phi_{\bar X}=\mathsf{F}_{\bar X}=\bar X\), the unique maximal
tree is \(\sfT_{\bar X}=\bar X\), and
\(\bar X_0=\sfT_{\bar X}\cup\bigcup_\sfv N_{\bar X}(\sfv)=\bar X\). As
\(\bar X\) is simply connected, every strand class of every pure
braid vanishes, so \(\ker\iota_\ast=\PB_n(\bar X,\base)\); on
the other hand \(j_\ast\) is the identity. Hence
\(\co{\im j_\ast}=\PB_n(\bar X,\base)=\ker\iota_\ast\), and
\(j_\ast\) is injective.
\end{proof}

\begin{lemma}[Step 1: a single thick component]
\label{lem:witness-one-thick}
Suppose that \(\Phi_{\bar X}\) is a tree, that \(\bar X\) has
exactly one thick component \(\bar\Sigma\), and that every free
edge of \(\bar X\) is a pendant edge -- so that \(\bar X\) is
\(\bar\Sigma\) with finitely many pendant edges attached at
joints. Then, taking \(\sfT_{\bar X}=\Phi_{\bar X}\), the candidate
witness \(\bar X_0\) is a weak Goldberg witness of \(\bar X\).
\end{lemma}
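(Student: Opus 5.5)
The plan is a case split along condition (iii) of \cref{thm:gt-classification}, applied to the single thick component \(\bar\Sigma\). In every case one inclusion is free: \(\bar X_0\) is contractible (as noted after \cref{def:candidate-witness}), so every loop in \(\bar X_0\) is null-homotopic in \(\bar X\). \cref{lem:weak-goldberg-pi1-zero} then gives \(\im j_\ast\subseteq\ker\iota_\ast\), and since \(\ker\iota_\ast\) is normal, \(\co{\im j_\ast}\subseteq\ker\iota_\ast\). Only the reverse inclusion needs work. We may also move \(\base\) along paths in \(F_n(\bar X_0)\) at will, since this conjugates everything and leaves normal closures unchanged.

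\emph{Case A: \(\bar\Sigma\) is not a \(2\)-manifold, or it is a \(2\)-manifold with some pendant edge attached at an interior point.} The hypotheses of \cref{cor:thick-with-pendants} hold with \(X=\bar\Sigma\) and \(X'=\bar X\), because all vertices of the simple complex \(\bar\Sigma\) have valency \(1\) inside \(\bar\Sigma\). That corollary makes \(\iota_\ast\) an isomorphism, so \(\ker\iota_\ast=1\subseteq\co{\im j_\ast}\), and equality follows.

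\emph{Case B: \(\bar\Sigma\) is a compact \(2\)-manifold and every pendant edge is attached at a boundary point.} If \(\bar\Sigma\cong S^2\), then \(\bar\Sigma\) has no boundary, so there are no pendant edges and \(\bar X\cong S^2\); this is excluded by \cref{rem:standing}. If \(\bar\Sigma\cong\RP^2\), the same argument excludes it. So \(\bar\Sigma\not\cong S^2,\RP^2\).
\begin{enumerate}
\item Absorb the pendant edges into collars of \(\partial\bar\Sigma\), as in \cref{ex:disc}. The inclusion \(\bar\Sigma\hookrightarrow\bar X\) then induces an isomorphism \(\PB_n(\bar\Sigma)\cong\PB_n(\bar X)\). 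This isomorphism is compatible with the strand maps, since the inclusion is a homotopy equivalence and hence an isomorphism on \(\pi_1\). Consequently it carries \(\ker\iota^{\bar\Sigma}_\ast\) onto \(\ker\iota^{\bar X}_\ast\).
\item The candidate witness contains a closed disc \(D\subseteq\bar\Sigma\): for instance the regular neighbourhood \(N_{\bar X}(\sfv_{\bar\Sigma})\) of the scaffold vertex in \(\bar\Sigma\), or the half-disc \(N_{\bar X}(\sfw)\cap\bar\Sigma\) at a boundary joint. After subdivision we may place \(\base\) in \(F_n(D)\).
\item \cref{thm:goldberg}, in the compact form of \cref{rem:goldberg-compact}, gives \(\ker\iota^{\bar\Sigma}_\ast=\co{\im\PB_n(D)}\). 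Transporting this along the isomorphism of the first item, \(\ker\iota^{\bar X}_\ast\) is the normal closure of the image of \(\PB_n(D)\).
\item That image factors through \(\PB_n(\bar X_0)\). Therefore \(\ker\iota_\ast\subseteq\co{\im j_\ast}\), which is the missing inclusion.
\end{enumerate}

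The only delicate point is the compatibility in the first item of Case B. We must check that the absorbing isomorphism of \cref{ex:disc} is induced by the honest inclusion \(\bar\Sigma\hookrightarrow\bar X\). This is what lets the image of \(\PB_n(D)\) for the disc \(D\subseteq\bar X_0\) be compared on the two sides. It holds because \cref{ex:disc} shows the inclusion itself induces the isomorphism, with the collar embedding as its inverse up to isotopy.
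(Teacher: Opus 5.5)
Your proposal is correct and follows the paper's proof. It uses the same case split: \cref{cor:thick-with-pendants} when \(\ker\iota_\ast=1\), and otherwise absorption of the pendant edges as in \cref{ex:disc} together with Goldberg's theorem for a disc lying inside \(\bar X_0\). Your hand-verified inclusion \(\co{\im j_\ast}\subseteq\ker\iota_\ast\) is exactly the content of \cref{prop:witness-upward}, which the paper invokes instead. Your choice of the half-disc \(N_{\bar X}(\sfw)\cap\bar\Sigma\) at a joint is slightly more careful than the paper. The paper's representative vertex \(\sfv_{\bar\Sigma}\) is defined only when \(\bar\Sigma\) carries no joint.
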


\begin{proof}
Note first that \(\bar X_0\) contains the neighbourhood
\(N_{\bar X}(\sfv_{\bar\Sigma})\) of the representative vertex
\(\sfv_{\bar\Sigma}\in\bar\Sigma\)
(\cref{def:candidate-witness}), and recall that the witness
property is insensitive to moving the base configuration.

Suppose first that \(\bar\Sigma\) is not a \(2\)-manifold, or
that it is a \(2\)-manifold and some pendant edge is attached at
an interior point of \(\bar\Sigma\). By
\cref{cor:thick-with-pendants}, the strand map \(\iota_\ast\) is
then an isomorphism, so \(\ker\iota_\ast=1\). Let \(I\) be an
embedded interval inside
\(N_{\bar X}(\sfv_{\bar\Sigma})\subseteq\bar X_0\); then
\(\co{\im j^{I}_\ast}\subseteq\ker\iota_\ast=1\), with equality,
so \(I\) is a weak Goldberg witness of \(\bar X\). As
\(\bar X_0\) is a contractible subspace containing \(I\),
\cref{prop:witness-upward} promotes the witness property from
\(I\) to \(\bar X_0\).

Suppose instead that \(\bar\Sigma\) is a \(2\)-manifold and
every pendant edge is attached along \(\partial\bar\Sigma\).
Absorbing the pendant edges into the surface, as in
\cref{ex:disc}, the inclusion
\(\bar\Sigma\hookrightarrow\bar X\) induces an isomorphism
\(\PB_n(\bar\Sigma)\cong\PB_n(\bar X)\) compatible with the
strand maps. By Goldberg's theorem (\cref{thm:goldberg}, in the
compact form of \cref{rem:goldberg-compact}; for
\(\bar\Sigma\cong S^2,\RP^2\) the kernel description persists by
\cref{rem:s2-rp2}), an embedded disc \(D\subseteq\bar\Sigma\)
is a weak Goldberg witness of \(\bar\Sigma\), hence of
\(\bar X\): \(\ker\iota_\ast=\co{\im j^{D}_\ast}\) -- for any
embedded disc, two such being ambient isotopic. Choosing the
disc inside \(N_{\bar X}(\sfv_{\bar\Sigma})\subseteq\bar X_0\),
\cref{prop:witness-upward} again promotes the witness property
to \(\bar X_0\).
\end{proof}

\begin{proposition}[Step 2: \(\Phi_{\bar X}\) a tree]
\label{prop:witness-tree-case}
Suppose that \(\Phi_{\bar X}\) is a tree, and take
\(\sfT_{\bar X}=\Phi_{\bar X}\). Then the candidate witness
\(\bar X_0\) is a weak Goldberg witness of \(\bar X\).
\end{proposition}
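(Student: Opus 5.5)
We argue by induction on the number of thick components of \(\bar X\) and, within a fixed number of thick components, on the number of non-pendant free edges. The base cases are already covered. If \(\bar X\) has no thick component, it is a graph whose scaffold \(\Phi_{\bar X}=\bar X\) is a tree, and \cref{lem:witness-step0} applies. If \(\bar X\) has exactly one thick component and every free edge is pendant, \cref{lem:witness-one-thick} applies. Otherwise \cref{lem:val2-free-edge}(b) does not directly hand us a cycle, since \(\Phi_{\bar X}\) is a tree. Still, there is a free edge \(\sfe\) that is not pendant: either there are two thick components, joined through the free part, or a thin edge ends at a vertex of valency at least \(3\). Because \(\Phi_{\bar X}=\sfT_{\bar X}\) is a tree, every interior point \(x\in\mathring\sfe\) separates \(\Phi_{\bar X}\), and hence separates \(\bar X\). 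This follows since \(\pi_0(\bar X_x)=\pi_0\bigl((\Phi_{\bar X})_x\bigr)\): each thick component is connected and touched by \(\Phi_\Sigma\).

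The main step is to cut at such an \(x\), chosen on a non-pendant free edge \(\sfe\) both of whose sides are non-intervals. We can always choose \(\sfe\) this way. Take \(\sfe\) innermost in the tree \(\Phi_{\bar X}\) among non-pendant thin edges; if one side were an interval, \(\sfe\) would be pendant up to subdivision. Let \(\bar X^1,\bar X^2\) be the two \(x\)-components. Each is again simple. The scaffold of \(\bar X^i\) is the corresponding half of \(\Phi_{\bar X}\), with the half-edge of \(\sfe\) becoming pendant, so it is still a tree. The candidate witness \(\bar X_0\) intersects \(\bar X^i\) in exactly the candidate witness \(\bar X_0^i\) of \(\bar X^i\), built from the half-tree, and \(\bar X_0=\bar X_0^1\cup\bar X_0^2\) with \(\bar X_0^1\cap\bar X_0^2=\{x\}\). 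Each side has strictly fewer thick components, or the same number of thick components and fewer non-pendant free edges. So by induction \(\bar X_0^i\) is a weak Goldberg witness of \(\bar X^i\). Neither piece is an interval, since the pieces contain regular neighbourhoods of scaffold vertices of valency \(\geq2\) or thick cones.

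The remaining task is to glue, which is the converse of \cref{lem:witness-splitting}. We use \cref{cor:free-edge-disconnected}: \(\PB_n(\bar X)\) is generated by \(\PB_n(\bar X^1)\), \(\PB_n(\bar X^2)\) and the loop group \(s(\pi_1(\sfG))\) of the distribution graph at \(x\). Set \(N=\co{\im j_\ast}\). We always have \(N\subseteq\ker\iota_\ast\) because \(\bar X_0\) is contractible. For the other inclusion, we compute the quotient \(Q=\PB_n(\bar X)/N\).

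First, \(N\) contains \(\co{\im j^i_\ast}=\ker\iota^{\bar X^i}_\ast\) by induction. Second, \(N\) contains the whole section \(s(\pi_1(\sfG))\). This is because the stable letters and tree paths can be chosen inside \(\bar X_0\): choose the pockets \(W^i\subseteq\bar X_0^i\) as in the proof of \cref{lem:witness-splitting}, so that transfers across \(x\) slide along \(\sfe\subseteq\bar X_0\) and reorderings happen in \(\bar X_0\). Hence \(Q\) is a quotient of \(\prod_\ell\pi_1(\bar X^1)\ast\prod_\ell\pi_1(\bar X^2)\).

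Applying \cref{lem:coordinatewise-kernel}(i) with \(G=\pi_1(\bar X^1)\) and \(H=\pi_1(\bar X^2)\), it suffices to show that the commutators \([g_i,h_j]\) with \(i\neq j\) die in \(Q\). To see this, realise \(g_i\) by strand \(i\) looping in \(\bar X^1\) with the others parked in \(\bar X_0\), and \(h_j\) by strand \(j\) looping in \(\bar X^2\). Conjugating by a braid in \(\bar X_0\) that moves strand \(j\) to the \(\bar X^2\)-pocket, the two lifts have disjoint supports and commute. The conjugation is trivial in \(Q\), so the commutator is trivial there. We then get surjections \(\prod_\ell\pi_1(\bar X)\twoheadrightarrow Q\twoheadrightarrow\prod_\ell\pi_1(\bar X)\) whose composite is the identity, using \cref{prop:iota-surj} and \(\pi_1(\bar X)=\pi_1(\bar X^1)\ast\pi_1(\bar X^2)\). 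So \(N=\ker\iota_\ast\).

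I expect the main obstacle to be the precise claim that \(s(\pi_1(\sfG))\subseteq N\), together with the identification of \(Q\) as generated by the two sides' quotients. This needs care with basepoint transport and the tree-based vertex groups of \cref{cor:free-edge-both-quotient}. My approach there is to redo its inductive argument with all choices made inside \(\bar X_0\).
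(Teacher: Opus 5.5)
Your proposal is correct and follows the paper's proof essentially step for step. You cut at an interior point of a thin edge, which separates \(\bar X\) because \(\Phi_{\bar X}\) is a tree, identify the two halves of \(\bar X_0\) with the candidate witnesses of the two sides, and apply induction. You then glue via \cref{cor:free-edge-disconnected}, with the section realised inside \(\bar X_0\), and \cref{lem:coordinatewise-kernel}(i), killing the cross-commutators by disjoint supports after transfers inside \(\bar X_0\). The only difference is the induction measure: the paper uses the number of thick components plus the number of vertices of valency at least \(3\), cutting on the scaffold path between two such sites. Your lexicographic count of thick components and non-pendant free edges works equally well, provided free edges are read as maximal free chains, as your phrase ``pendant up to subdivision'' indicates.
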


\begin{proof}
We argue by induction on the complexity
\[
c(\bar X)\coloneqq
\#\{\text{thick components of }\bar X\}
+\#\{\text{vertices of }\bar X\text{ of valency}\geq3\} .
\]
By simplicity, a vertex of valency at least \(3\) has
\(0\)-dimensional link, so it is a branch vertex of
\(\mathsf{F}_{\bar X}\), and in particular a vertex of
\(\Phi_{\bar X}\).

\emph{The cases \(c(\bar X)\leq1\).} If \(\bar X\) has no thick
component, then \(\mathsf{F}_{\bar X}=\bar X\) is connected and,
by the standing hypothesis, a forest; so \(\bar X\) is a tree,
and \cref{lem:witness-step0} applies. If \(c(\bar X)=1\) with
one thick component \(\bar\Sigma\), then \(\bar X\) has no
vertex of valency at least \(3\), and every free edge of
\(\bar X\) is a pendant edge: the endpoints of a maximal free
edge are univalent vertices, vertices of valency at least \(3\),
or joints, and a free edge joining two joints of
\(\bar\Sigma\) would close up, through the tree
\(\Phi_{\bar\Sigma}\), to a cycle in \(\Phi_{\bar X}\). Thus
\cref{lem:witness-one-thick} applies.

\emph{The inductive step.} Suppose \(c(\bar X)\geq2\). Call a
vertex of \(\Phi_{\bar X}\) a \emph{site} if it is a
representative vertex \(\sfv_{\bar\Sigma}\) or a vertex of valency
at least \(3\), so that \(\bar X\) carries
\(c(\bar X)\geq2\) sites. The path in the tree \(\Phi_{\bar X}\)
joining two distinct sites traverses at least one thin edge; fix
such a free edge \(\sfe\) of \(\bar X\) and a point
\(x\in\mathring \sfe\). Since \(\Phi_{\bar X}\) is a tree, \(x\)
separates \(\bar X\) -- a path between the two germs of \(\sfe\)
avoiding \(x\) would close up, through the trees
\(\Phi_{\bar\Sigma}\), to a cycle in \(\Phi_{\bar X}\) -- and we
write \(\bar X^1,\bar X^2\) for the two \(x\)-components, each
containing at least one site, so that
\(c(\bar X^i)<c(\bar X)\) for \(i=1,2\).

Likewise
\(x\in\mathring \sfe\subseteq \sfT_{\bar X}\subseteq\bar X_0\)
separates \(\bar X_0\) into the two \(x\)-components
\(\bar X^i_0=\bar X_0\cap\bar X^i\), and \(\bar X^i_0\) is
precisely the candidate witness of \(\bar X^i\) constructed from
the scaffold
\(\Phi_{\bar X^i}=\Phi_{\bar X}\cap\bar X^i=\sfT_{\bar X^i}\): the
point \(x\) becomes a univalent vertex of \(\bar X^i\), whose
star is absorbed into the half-edge of \(\sfe\). Each \(\bar X^i\)
is simple, satisfies the standing assumption -- it contains the
univalent vertex \(x\), so \(\bar X^i\not\cong S^1\) -- and has
\(\Phi_{\bar X^i}\) a tree; the induction hypothesis therefore
makes \(\bar X^i_0\) a weak Goldberg witness of \(\bar X^i\):
\[
\co{\im\PB_n(\bar X^i_0)}^{\PB_n(\bar X^i)}
=\ker\iota^{\bar X^i}_\ast .
\]

Each \(\bar X^i\) contains a site, and each \(\bar X^i_0\)
contains the corresponding neighbourhood
\(N_{\bar X}(\sfv_{\bar\Sigma})\) or the star of a branch vertex of
\(\mathsf{F}_{\bar X}\); so none of
\(\bar X^1,\bar X^2,\bar X^1_0,\bar X^2_0\) is an interval,
their punctured configuration spaces are all connected, and the
underlying distribution graphs of the decompositions of
\(F_n(\bar X)\) and of \(F_n(\bar X_0)\) at \(x\) are
canonically identified, as in the proof of
\cref{cor:free-edge-loop-rank}:
\(\sfG\coloneqq\sfG_n((\bar X,x),\base)\cong
\sfG_n((\bar X_0,x),\base)\). Applying
\cref{cor:free-edge-disconnected} to \(\bar X\) and to
\(\bar X_0\) at \(x\), with the section
\(s\colon\pi_1(\sfG)\to\PB_n(\bar X_0,\base)\) chosen inside
\(\bar X_0\) once and for all, we obtain surjections
\[
\PB_n(\bar X^1)\ast\PB_n(\bar X^2)\ast\pi_1(\sfG)
\twoheadrightarrow\PB_n(\bar X,\base),
\qquad
\PB_n(\bar X^1_0)\ast\PB_n(\bar X^2_0)\ast\pi_1(\sfG)
\twoheadrightarrow\PB_n(\bar X_0,\base),
\]
and the inclusions \(\bar X^i_0\hookrightarrow\bar X^i\) and
\(\bar X_0\hookrightarrow\bar X\) induce maps of the
corresponding factors making the square commute.

Write \(Q\coloneqq\PB_n(\bar X,\base)\big/\co{\im j_\ast}\). In
\(Q\), the \(\pi_1(\sfG)\)-factor dies, its section lying in
\(\bar X_0\), and each \(\co{\im\PB_n(\bar X^i_0)}\) dies as
well; hence, by the induction hypothesis and the surjectivity of
the strand maps (\cref{prop:iota-surj}), the first surjection
descends to
\[
\prod_{\ell=1}^n\pi_1(\bar X^1)\ast
\prod_{\ell=1}^n\pi_1(\bar X^2)
\twoheadrightarrow Q.
\]
On the other hand, \(\iota_\ast\) descends to a surjection
\(Q\twoheadrightarrow\prod_{\ell=1}^n\pi_1(\bar X,x^0_\ell)\)
(\cref{prop:iota-surj} again), and by van~Kampen
\(\pi_1(\bar X)=\pi_1(\bar X^1)\ast\pi_1(\bar X^2)\). The
composite of the two surjections is the canonical map
\[
\prod_{\ell}\pi_1(\bar X^1)\ast\prod_{\ell}\pi_1(\bar X^2)
\longrightarrow
\prod_{\ell}\bigl(\pi_1(\bar X^1)\ast\pi_1(\bar X^2)\bigr),
\]
whose kernel is normally generated by the commutators of an
element supported in the \(i\)-th coordinate of the first factor
and an element supported in the \(j\)-th coordinate of the
second, with \(i\neq j\)
(\cref{lem:coordinatewise-kernel}(i)). Consequently
\(Q\cong\prod_\ell\pi_1(\bar X,x^0_\ell)\) -- whence
\(\co{\im j_\ast}=\ker\iota_\ast\), as desired -- once the
following is proved: \emph{for \(i\neq j\), the image in \(Q\)
of a braid whose \(i\)-th strand traverses a loop in
\(\bar X^1\), the other strands resting, commutes with the image
of a braid whose \(j\)-th strand traverses a loop in
\(\bar X^2\).}

To prove the commutation, fix \(i\neq j\) and choose a partition
\(\{1,\dots,n\}=S^1\sqcup S^2\) with \(i\in S^1\) and
\(j\in S^2\). Arranging, as we may, the base configuration and
all connecting paths inside the tree
\(\sfT_{\bar X}\subseteq\bar X_0\), consider the component
\[
F_{S^1}\bigl(\bar X^1_x,\bar\bfx^1\bigr)\times
F_{S^2}\bigl(\bar X^2_x,\bar\bfx^2\bigr)
\]
of \(F_n(\bar X_x)\) in which the strands of \(S^\ell\) lie on
the \(\bar X^\ell\)-side, at rest positions
\(\bar\bfx^\ell\) on \(\sfT_{\bar X}\). A braid whose \(i\)-th
strand traverses a loop of \(\bar X^1\) while all other strands
rest in this component, and one whose \(j\)-th strand traverses
a loop of \(\bar X^2\), lie in the two factors of the direct
product
\(\PB_{S^1}(\bar X^1_x)\times\PB_{S^2}(\bar X^2_x)\) -- here
\(\PB_S\) denotes the pure braid group on the strands labelled
by \(S\) -- attached to the base configuration along a common
path; they therefore commute in \(\PB_n(\bar X,\base)\) on the
nose. On the other hand, these representatives differ from the
ones appearing in the surjection above, where all strands rest
on a single side, only by transfers of resting strands across
\(x\) along the tree \(\sfT_{\bar X}\subseteq\bar X_0\); such
transfer braids lie in \(\im j_\ast\), so the two
representatives have the same image in \(Q\). The required
commutation therefore holds in \(Q\), and the proof is
complete.
\end{proof}

\begin{proposition}[Step 3: the inductive step in
\(b_1(\Phi_{\bar X})\)]
\label{prop:witness-betti-step}
Let \(b\geq1\), and suppose that the candidate witness is a weak
Goldberg witness for every simple complex as in
\cref{rem:standing} whose scaffolds have first Betti number
less than \(b\). Then the same holds for every simple complex
whose scaffolds have first Betti number \(b\).
\end{proposition}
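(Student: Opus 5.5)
Since \(b_1(\Phi_{\bar X})=b\geq1\) and \(\sfT_{\bar X}\) is a maximal tree containing all thin edges, some thick edge \(\sfe'\) of \(\Phi_{\bar X}\) is omitted from \(\sfT_{\bar X}\). Closing \(\sfe'\) up through \(\sfT_{\bar X}\) gives a cycle of the scaffold, and this cycle must traverse at least one thin edge \(\sfe\), because each \(\Phi_{\bar\Sigma}\) is a tree. I would cut at a point \(x\in\mathring\sfe\). Then \(x\) lies on a cycle of \(\Phi_{\bar X}\), hence on a loop of \(\bar X\), so \(\bar X_x\) is connected. This is the setting of \cref{thm:witness-descent}, but read in reverse: I want to \emph{ascend} from the cut complex to \(\bar X\), not descend.

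The cut complex \(\bar X_x\) (closed up by adding the two free tips) is again simple. It satisfies \cref{rem:standing}, since it has free tips, and its free part is a forest. Its scaffold is \(\Phi_{\bar X}\) with \(\sfe\) split into two pendant half-edges, so \(b_1(\Phi_{\bar X_x})=b-1\). To keep the new weakly admissible tree containing the free part, I would take \(\sfT_{\bar X_x}\) to be \((\sfT_{\bar X})_x\cup\sfe'\): adding \(\sfe'\) reconnects the two halves of the tree. The induction hypothesis then makes the corresponding candidate witness \((\bar X_x)_0\supseteq(\bar X_0)_x\cup\sfe'\) a weak Goldberg witness of \(\bar X_x\). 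Replacing \(\bar X_0\) by any simply connected enlargement is harmless by \cref{prop:witness-upward}, so I may freely arrange regular-neighbourhood choices to match on both sides.

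Next I use the presentation from the descent setting. \(\PB_n(\bar X,\base)\) is generated by the attached copy of \(B=\PB_n(\bar X_x,\base)\) and stable letters \(t_1,\dots,t_n\), where \(t_k\) carries strand \(k\) across \(x\) along \(\sfe\). Realise each \(t_k\) inside \(\bar X_0\): go across \(\sfe\), then return along the tree \(\sfT_{\bar X}\), using the pocket rearrangements available at branch points and joints. With this choice the \(t_k\) lie in \(\im j_\ast\) up to braids supported in \(B\).

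Set \(Q=\PB_n(\bar X)/\co{\im j_\ast}\). The natural map \(B\to Q\) kills \(\co{\im\PB_n((\bar X_x)_0)}=\ker\iota^{\bar X_x}_\ast\). The one point needing care is that the arc \(\sfe'\) is not in \(\bar X_0\). However, \(\sfe'\cup\sfT_{\bar X}\) contains a loop through \(\sfe\), so braids supported near \(\sfe'\) can be rewritten, modulo \(t_k\) and \(\im j_\ast\), as braids in \(\bar X_0\). This is a parking-transport argument as in \cref{lem:parking-transport}.

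This yields a surjection \(\bigl(\prod_\ell\pi_1(\bar X_x)\bigr)\ast F_n\twoheadrightarrow Q\). Composing with \(Q\twoheadrightarrow\prod_\ell\pi_1(\bar X)=\prod_\ell(\pi_1(\bar X_x)\ast\Z)\) and applying \cref{lem:coordinatewise-kernel}(ii), it remains to show that \([g_i,t_j]\) and \([t_i,t_j]\) for \(i\neq j\) die in \(Q\).

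This commutator step is the main obstacle. Here I would copy the proof of \cref{lem:detour}: choose lifts with disjoint supports. Strand \(j\) crosses \(\sfe\) on its own excursion while the others wait in thick parts or in pockets of \(\bar X_0\). Any discrepancy from reordering strands in single file along thin streets would be absorbed into conjugating braids lying in \(\im j_\ast\), using the \(\sfH\)-type exchanges at branch points as in \cref{lem:parking-transport}.

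Once the commutators die, the two surjections compose to the identity on \(\prod_\ell\pi_1(\bar X)\). Hence \(Q\cong\prod_\ell\pi_1(\bar X)\), that is, \(\co{\im j_\ast}=\ker\iota_\ast\).
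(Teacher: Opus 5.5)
Your outline matches the paper's: cut at a point \(x\) of a thin edge \(\sfe\) on the fundamental cycle of an omitted thick edge \(\sfe'\) (the paper's \(\sff\), with endpoints \(\sfw,\sfw'\) in a thick component \(\bar\Sigma\)); apply the induction hypothesis to \(\bar X_x\) with the tree \((\sfT_{\bar X})_x\cup\sfe'\); present \(\PB_n(\bar X)\) by \(B\) and the letters \(t_k\); and conclude with \cref{lem:coordinatewise-kernel}(ii). However, the two steps that carry the content are not established.

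The first is that \(B\to Q\) kills \(\im\PB_n((\bar X_x)_0)\). This needs generators for \(\PB_n((\bar X_x)_0)\). The paper cuts \((\bar X_x)_0\) at a point \(y\in\mathring{\sfe}'\) and combines \cref{lem:pendant-edge} with \cref{cor:free-edge-disconnected}. This gives a surjection \(\PB_n(\bar X^1_0)\ast\PB_n(\bar X^2_0)\ast\Lambda\twoheadrightarrow\PB_n((\bar X_x)_0)\), where \((\bar X_0)_x=\bar X^1_0\sqcup\bar X^2_0\) and \(\Lambda\) is realised by shuffles on an \(\sfH\subseteq\bar\Sigma\) with middle bar \(\sfe'\).

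Your mechanism for the braids that use \(\sfe'\) does not work. Routing them around the cycle through \(\sfe\) introduces the \(t_k\) as factors, and no \(t_k\) dies in \(Q\). Indeed, the \(k\)-th strand class of \(t_k\) maps to a generator of the free factor \(\Z\) of \(\pi_1(\bar X)\cong\pi_1(\bar X_x)\ast\Z\), whereas \(\iota_\ast\) kills \(\co{\im j_\ast}\). For the same reason, your remark that the \(t_k\) lie in \(\im j_\ast\) up to elements of \(B\) is false, since \(\iota_\ast(B)\) lies in the image of \(\prod_\ell\pi_1(\bar X_x)\). Nor can the return path of \(t_k\) run along \(\sfT_{\bar X}\), which \(x\) disconnects.

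The missing idea is local and uses that \(\sfe'\) is \emph{thick}. The generators of \(\Lambda\) are supported in a two-dimensional neighbourhood, inside \(\bar\Sigma\), of \(\sfe'\cup N_{\bar X}(\sfw)\cup N_{\bar X}(\sfw')\). Sliding along \(\sfe'\) within \(\bar\Sigma\) carries this neighbourhood into \(N_{\bar X}(\sfw)\subseteq\bar X^1_0\), so each generator is conjugate into \(\im j_\ast\). \cref{lem:parking-transport} plays no role here.

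The commutator step also fails as proposed. In \cref{lem:detour} disjoint lifts exist because each band letter crosses a two-dimensional band on its own lane. Here every \(t_j\) must cross the thin edge \(\sfe\) in single file, so no two of them have disjoint supports. Likewise, a loop \(g_i\) of \(\bar X_x\) may be forced through the thin edges that the corridor of \(t_j\) uses. Saying that discrepancies are ``absorbed by \(\sfH\)-exchanges'' does not identify any of these commutators with an element of \(\co{\im j_\ast}\).

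The paper handles the two kinds separately.
\begin{enumerate}
\item Write \(t_j=CK\), where \(C\) carries strand \(j\) across \(x\) and \(K\) returns it along the corridor. After homotoping \(g_i\) off \(C\), the commutator of \(g_i\) and \(t_j\) is a conjugate of \(Kg_iK^{-1}g_i^{-1}\). This braid avoids \(x\) and has trivial strand classes, so it lies in \(\ker\iota^{\bar X_x}_\ast\) and dies in \(Q\) by the induction hypothesis and the \(\Lambda\)-step. No general position is needed.
\item This also makes the class of \([t_i,t_j]\) in \(Q\) independent of the corridors. One may therefore take \(t_k=A_kD_k\), where \(A_k\) carries strand \(k\) along the path of \(\sfT_{\bar X}\) from \(\sfw\) to \(\sfw'\) (which contains \(\sfe\)), and the returns \(D_i,D_j\) follow disjoint tracks through \(\bar\Sigma\) along \(\sfe'\). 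The \(D\)'s then cancel, and \([t_i,t_j]=[A_i,A_j]\) is precisely the shuffle braid of \cref{lem:H-shuffle}. It lives on an \(\sfH\subseteq\bar X_0\) whose middle bar is that tree path and whose legs lie in \(N_{\bar X}(\sfw)\) and \(N_{\bar X}(\sfw')\), hence it lies in \(\im j_\ast\).
\end{enumerate}
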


\begin{proof}
If \(b_1(\Phi_{\bar X})=0\), this is
\cref{prop:witness-tree-case}. So suppose
\(b_1(\Phi_{\bar X})\geq1\), and fix an embedded cycle in
\(\Phi_{\bar X}\); as in the proof of
\cref{lem:val2-free-edge}, the cycle traverses a thin edge
\(\sfe\). Fix \(x\in\mathring \sfe\). The remainder of the cycle joins
the two germs of \(\sfe\), so the cut complex \(\bar X_x\) is
\emph{connected}; on the other hand
\(x\in \sfe\subseteq \sfT_{\bar X}\) separates both the maximal tree
and the witness,
\[
\sfT_{\bar X}\setminus\{x\}=\sfT^1_{\bar X}\sqcup \sfT^2_{\bar X},
\qquad
(\bar X_0)_x=\bar X^1_0\sqcup\bar X^2_0 ,
\]
where \(\bar X^i_0\) is the union of \(\sfT^i_{\bar X}\) with the
neighbourhoods \(N_{\bar X}(\sfv)\) of its vertices -- a complex of
exactly the shape of \cref{def:candidate-witness}.

Since \(\bar X_x\) is connected, so is
\(\Phi_{\bar X}\setminus\mathring \sfe\); this graph is the union
of \(\sfT^1_{\bar X}\sqcup \sfT^2_{\bar X}\) with the edges of
\(\Phi_{\bar X}\) omitted from \(\sfT_{\bar X}\) -- all of them
thick -- so some omitted thick edge \(\sff\), lying inside a thick
component \(\bar\Sigma\), joins a vertex \(\sfw\in \sfT^1_{\bar X}\)
to a vertex \(\sfw'\in \sfT^2_{\bar X}\); in particular
\(\bar\Sigma\) meets both sides. Then
\(\sfT_{\bar X_x}\coloneqq \sfT^1_{\bar X}\cup \sfT^2_{\bar X}\cup \sff\) is
a weakly admissible maximal tree of \(\Phi_{\bar X_x}\), and the
associated candidate witness of \(\bar X_x\) is
\[
(\bar X_x)_0=\bar X^1_0\cup\bar X^2_0\cup \sff
=\bigl(\bar X_0\cup \sff\bigr)_x ,
\]
the stars of \(\sfw\), of \(\sfw'\) and of the two new univalent
vertices being already accounted for. Since
\(b_1(\Phi_{\bar X_x})=b_1(\Phi_{\bar X})-1\), the induction
hypothesis applies:
\[
\co{\im\PB_n((\bar X_x)_0)}^{\PB_n(\bar X_x)}
=\ker\iota^{\bar X_x}_\ast .
\]

Two embedded copies of the letter \(\sfH\) enter the argument.
First, connectivity of \(\bar X_x\) forces both endpoints
\(\sfv,\sfv'\) of \(\sfe\) to have valency at least \(2\), so the setup
preceding \cref{lem:H-shuffle} applies with \(\alpha=\sfe\) and
legs chosen inside \(\bar X_0\) -- the other edges of
\(\Phi_{\bar X}\) at \(\sfv,\sfv'\), and the cones over the
one-dimensional link components, lie in \(\bar X_0\); write
\(\sfH_\sfe\subseteq\bar X_0\) for the image. Second, at the
endpoints \(\sfw,\sfw'\) of \(\sff\) the witness contains the
neighbourhoods \(N_{\bar X}(\sfw)\) and \(N_{\bar X}(\sfw')\), whose
intersections with \(\bar\Sigma\) provide two directions at each
endpoint; the resulting embedding \(\sfH_\sff\) has middle edge
\(\sff\) and is contained both in \((\bar X_x)_0\) and, entirely,
in \(\bar\Sigma\).

Cut \((\bar X_x)_0\) at a point \(y\in\mathring \sff\) -- a free
edge of the one-dimensional part of the witness, though a thick
edge of \(\bar X\) -- so that
\[
(\bar X_x)_0\setminus\{y\}
=\bigl(\bar X^1_0\cup \sff_1\bigr)\sqcup
\bigl(\bar X^2_0\cup \sff_2\bigr),
\]
where \(\sff_1,\sff_2\) are the two components of
\(\sff\setminus\{y\}\). Note that
\(\bar X^1_0\cap \sff_1=[\sfw,z]\) and
\(\bar X^2_0\cap \sff_2=[\sfw',z']\), where \(z\in\lk_{\bar X}(\sfw)\)
and \(z'\in\lk_{\bar X}(\sfw')\) denote the link points of \(\sff\),
the neighbourhoods \(N_{\bar X}(\sfw)\) and \(N_{\bar X}(\sfw')\)
containing the corresponding initial segments of \(\sff\). Thus
\(\bar X^i_0\cup \sff_i\) is obtained from \(\bar X^i_0\) by
attaching an interval at the single point \(z\)
(resp.\ \(z'\)), whose link in \(\bar X^i_0\) is
\emph{connected}: through the triangles of the cone
\(N_{\bar X}(\sfw)\), the point \(z\) is joined to the direction of
\(\sfw\) and to its neighbours in \(\lk_{\bar X}(\sfw)\), so
\(\val_{\bar X^1_0}(z)=1\). Hence \cref{lem:pendant-edge}
applies verbatim and gives surjections
\(\PB_n(\bar X^i_0)\twoheadrightarrow
\PB_n(\bar X^i_0\cup \sff_i)\). Combining these with
\cref{cor:free-edge-disconnected}, applied to \((\bar X_x)_0\)
at \(y\), we obtain a surjection
\[
\PB_n(\bar X^1_0)\ast\PB_n(\bar X^2_0)\ast\Lambda
\twoheadrightarrow\PB_n\bigl((\bar X_x)_0\bigr),
\]
where \(\Lambda\coloneqq\pi_1(\sfG')\) is the free group on the
loops of the distribution graph of the cut at \(y\), with section
realisable inside \(\sfH_\sff\subseteq\bar\Sigma\); in particular
every generator of \(\Lambda\) is realised by a braid supported
in \(\bar\Sigma\).

Combining this surjection with the induction hypothesis and the
surjectivity of the strand map (\cref{prop:iota-surj}) yields
\[
\PB_n(\bar X_x)\big/
\co{\im\PB_n(\bar X^1_0),\,\im\PB_n(\bar X^2_0),\,\im\Lambda}
\cong\prod_{\ell=1}^n\pi_1(\bar X_x).
\]
Now \(\im\Lambda\) is redundant: each generator is supported in
a regular neighbourhood, inside \(\bar\Sigma\), of
\(\sff\cup N_{\bar X}(\sfw)\cup N_{\bar X}(\sfw')\), and sliding along
the thick edge \(\sff\) -- an ambient isotopy of \(\bar X_x\)
supported in \(\bar\Sigma\) -- carries this neighbourhood into
\(N_{\bar X}(\sfw)\subseteq\bar X^1_0\); hence every generator of
\(\im\Lambda\) is conjugate to a braid supported in
\(\bar X^1_0\), and
\[
\PB_n(\bar X_x)\big/
\co{\im\PB_n(\bar X^1_0),\,\im\PB_n(\bar X^2_0)}
\cong\prod_{\ell=1}^n\pi_1(\bar X_x).
\]

On the other side, applying \cref{cor:free-edge-disconnected} to
\(\bar X_0\) at \(x\), with the section realised inside
\(\sfH_\sfe\subseteq\bar X_0\), shows that
\[
\co{\im\PB_n(\bar X_0)}^{\PB_n(\bar X,\base)}
=\co{\im\PB_n(\bar X^1_0),\,\im\PB_n(\bar X^2_0),\,
\im\pi_1(\sfG)},
\qquad
\sfG\coloneqq\sfG_n((\bar X_0,x),\base).
\]
Applying \cref{cor:free-edge-splitting} to \(\bar X\) at the
point \(x\) -- the cut \(\bar X_x\) being connected -- gives a
surjection
\(\PB_n(\bar X_x)\ast\mathbb{F}_n\twoheadrightarrow
\PB_n(\bar X,\base)\), with \(\mathbb{F}_n\) free on the bigon
letters \(t_1,\dots,t_n\) at \(x\). Writing
\(Q\coloneqq\PB_n(\bar X,\base)\big/\co{\im\PB_n(\bar X_0)}\)
and passing to quotients, the two displays combine into
surjections
\[
\Bigl(\,\prod_{\ell=1}^n\pi_1(\bar X_x)\Bigr)\ast\mathbb{F}_n
\twoheadrightarrow Q
\twoheadrightarrow\prod_{\ell=1}^n\pi_1(\bar X,x^0_\ell),
\]
the second map induced by \(\iota_\ast\)
(\cref{prop:iota-surj}). By van~Kampen,
\(\pi_1(\bar X)\cong\pi_1(\bar X_x)\ast\mathbb{Z}\), with the
free factor \(\mathbb{Z}\) generated by the new cycle through
\(x\), and the composite surjection above is the canonical map
\[
\Bigl(\,\prod_{\ell}\pi_1(\bar X_x)\Bigr)\ast\mathbb{F}_n
\longrightarrow
\prod_{\ell}\bigl(\pi_1(\bar X_x)\ast\mathbb{Z}\bigr),
\]
whose kernel is normally generated by the commutators
\([\alpha_i,\beta_j]\) and \([\beta_i,\beta_j]\), \(i\neq j\)
(\cref{lem:coordinatewise-kernel}(ii)), where
\(\alpha_i\) denotes a braid whose \(i\)-th
strand traverses a loop of \(\bar X_x\), the other strands
resting, and \(\beta_j\) a braid whose \(j\)-th strand traverses
the new cycle through \(x\), the other strands resting.
Consequently \(Q\cong\prod_\ell\pi_1(\bar X,x^0_\ell)\) --
whence \(\co{\im j_\ast}=\ker\iota_\ast\), completing the
induction -- once the following is proved: \emph{for
\(i\neq j\), the images in \(Q\) of \(\alpha_i\) and
\(\beta_j\), and likewise of \(\beta_i\) and \(\beta_j\),
commute.}

We first dispose of the commutators \([\alpha_i,\beta_j]\),
\(i\neq j\). Realise the bigon letter \(t_k\) by a braid in
which the \(k\)-th strand crosses \(x\) along \(\sfe\) and returns
to its rest position through a corridor in \(\bar X_x\), the
other strands resting, and let \(\beta_k\) be the image of
\(t_k\) in \(Q\); changing the corridor multiplies \(t_k\) by a
braid of type \(\alpha_k\). The braid \(\alpha_i\) never crosses
\(x\); after
homotoping its representative -- the loop of \(\bar X_x\)
traversed by the \(i\)-th strand, together with the rest
positions -- off a neighbourhood \(U\) of \(x\) and off the
segment of \(\sfe\) travelled by the \(j\)-th strand, as we may
since \(\bar X_x\) deformation retracts off the two half-edge
stubs at \(x\), factor \(\beta_j=C\cdot K\): here \(C\) carries
the \(j\)-th strand from its rest position across \(x\), and
\(K\) returns it to its rest position along the corridor, away
from \(U\). The braids \(C\) and \(\alpha_i\) have disjoint
supports, hence commute, and
\[
[\alpha_i,\beta_j]
=\beta_j\,\alpha_i\,\beta_j^{-1}\alpha_i^{-1}
=C\,\bigl(K\alpha_iK^{-1}\alpha_i^{-1}\bigr)\,C^{-1} .
\]
The braid \(g\coloneqq K\alpha_iK^{-1}\alpha_i^{-1}\) never
meets \(x\), so it is an element of \(\PB_n(\bar X_x)\),
regarded as braids of \(\bar X\) avoiding \(x\) and based at a
configuration in \(F_n(\bar X_x)\) -- the kernels and normal
closures below being insensitive to this change of base. Each
strand of \(g\) traverses a path followed by its reverse -- the
\(j\)-th strand the corridor, the \(i\)-th strand the loop of
\(\alpha_i\) -- so all its strand classes vanish:
\(g\in\ker\iota^{\bar X_x}_\ast\). The induction hypothesis
gives
\(g\in\co{\im\PB_n((\bar X_x)_0)}^{\PB_n(\bar X_x)}\), and by
the surjection above this normal closure is generated by
\(\im\PB_n(\bar X^1_0)\), \(\im\PB_n(\bar X^2_0)\) and
\(\im\Lambda\). The first two lie in \(\im\PB_n(\bar X_0)\),
since \(\bar X^i_0\subseteq\bar X_0\), and \(\im\Lambda\) lies
in \(\co{\im\PB_n(\bar X^1_0)}\) by the sliding argument already
given. Hence
\(g\in\co{\im\PB_n(\bar X_0)}^{\PB_n(\bar X,\base)}\), and
\([\alpha_i,\beta_j]=C\,g\,C^{-1}=1\) in \(Q\). Note also that
\([\alpha_i,\alpha_j]=1\) in \(Q\) for \(i\neq j\), the
\(\bar X_x\)-part of \(Q\) being the direct product
\(\prod_\ell\pi_1(\bar X_x)\); together with the commutations
just proved, this makes the class in \(Q\) of the commutator
\([\beta_i,\beta_j]\) independent of the corridors chosen.

Finally, consider \([\beta_i,\beta_j]\) for \(i\neq j\); by the
independence just noted, we may realise the pair by adapted
corridors, as follows. Let \(\alpha\) be the embedded path from
\(\sfw\) to \(\sfw'\) obtained from the cycle by deleting
\(\mathring \sff\); it contains the free edge \(\sfe\), and the setup
preceding \cref{lem:H-shuffle} applies to \(\alpha\), with all
four legs chosen inside the neighbourhoods \(N_{\bar X}(\sfw)\) and
\(N_{\bar X}(\sfw')\): write
\(\sfH_\alpha=\sfe_1\cup \sfe_2\cup\alpha\cup \sfe_3\cup \sfe_4\), with
\(\sfe_1,\sfe_2\) at \(\sfw\) and \(\sfe_3,\sfe_4\) at \(\sfw'\), and note that
\(\sfH_\alpha\subseteq\bar X_0\), since
\(\alpha\subseteq \sfT_{\bar X}\) and the legs lie in the vertex
neighbourhoods. Place the rest positions of the strands \(i\)
and \(j\) on \(\sfe_1\) and \(\sfe_2\) respectively, the remaining
strands resting away from the region. Choose embedded paths
\(\delta_i\), from the tip of \(\sfe_1\) to the tip of \(\sfe_3\), and
\(\delta_j\), from the tip of \(\sfe_2\) to the tip of \(\sfe_4\),
inside \(\bar\Sigma\), disjoint from one another, from
\(\alpha\), from the legs and from all rest positions --
possible, since \(\bar\Sigma\) offers two-dimensional room along
\(\sff\). Realise \(t_i=A_i\cdot D_i\), where \(A_i\) carries the
\(i\)-th strand from \(\sfe_1\) across \(\alpha\) onto \(\sfe_3\) and
\(D_i\) returns it to \(\sfe_1\) through \(\delta_i\); likewise
\(t_j=A_j\cdot D_j\) through \(\sfe_2\), \(\alpha\), \(\sfe_4\) and
\(\delta_j\). The factor \(D_i\) has support disjoint from those
of \(A_j\) and \(D_j\), and \(D_j\) from that of \(A_i\), so in
the commutator the \(D\)-factors cancel:
\[
[t_i,t_j]
=A_iD_i\,A_jD_j\,D_i^{-1}A_i^{-1}\,D_j^{-1}A_j^{-1}
=A_i\,A_j\,A_i^{-1}A_j^{-1},
\]
a braid supported in \(\sfH_\alpha\): it is precisely the
shuffle of \cref{lem:H-shuffle}, performed along \(\alpha\) by
the strands \(i\) and \(j\). Since
\(\sfH_\alpha\subseteq\bar X_0\), this braid lies in
\(\im j_\ast\), and therefore \([\beta_i,\beta_j]=1\) in \(Q\).
This completes the proof.
\end{proof}

\begin{proof}[Proof of \cref{thm:candidate-witness-weak}]
Induct on \(b_1(\Phi_{\bar X})\):
\cref{prop:witness-tree-case} is the base case, and
\cref{prop:witness-betti-step} is the inductive step.
\end{proof}

\section{Characterisation of Goldbergness}
\label{sec:goldberg-classification}

Having characterised \emph{weak} Goldbergness in
\cref{sec:simple-model-necessity}, we now turn to the finer
question of which complexes are Goldberg, collecting the
structural constraints that a (strong) Goldberg witness must
satisfy.

Throughout this section we impose a standing assumption on
joints: whenever \(\bar X_0\) is a Goldberg witness of \(\bar X\)
and \(\sfw\) is a joint of \(\bar X\) contained in \(\bar X_0\), we
assume that
\[
\val_{\bar X_0}(\sfw)=2 .
\]
This entails no loss of generality: the discussion preceding
\cref{thm:resolution-goldberg-equivalence} --
\cref{lem:witness-apex-valency}, now stated for any such point,
together with the modifications in its wake, applied at the joint
\(\sfw\) -- allows one to replace the witness by a Goldberg witness
satisfying this valency condition, without affecting the image of
its pure braid group in \(\PB_n(\bar X)\).

\subsection{Structure of Goldberg witnesses}
\label{ssec:goldberg-witness-structure}

Recall from \cref{ssec:witness-splitting-descent} the covering of
\(\bar X\) by the regular neighbourhoods \(N(\bar\Sigma)\) of its
thick components, and the intersection components
\(\bar\Sigma_0^1,\dots,\bar\Sigma_0^m\) of a subcomplex
\(\bar X_0\subseteq\bar X\). The following necessary condition
constrains a Goldberg witness locally, inside each
\(N(\bar\Sigma)\).

\begin{theorem}\label{thm:witness-local-injectivity}
If \(\bar X_0\) is a Goldberg witness for \(\bar X\), then for every
thick component \(\bar\Sigma\) and every connected component
\(\bar\Sigma_0^i\) of \(\bar X_0\cap N(\bar\Sigma)\), the
inclusion-induced map
\[
\PB_n\bigl(\bar\Sigma_0^i\bigr)\longrightarrow\PB_n\bigl(N(\bar\Sigma)\bigr)
\]
is injective. In particular, if \(N(\bar\Sigma)\) is Goldberg-trivial,
then \(\PB_n(\bar\Sigma_0^i)\) is trivial for every \(i\).
\end{theorem}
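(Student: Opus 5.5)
The plan is to factor the known injection through \(N(\bar\Sigma)\). The inclusion \(\bar\Sigma_0^i\hookrightarrow\bar X\) factors in two ways:
\[
\bar\Sigma_0^i\hookrightarrow\bar X_0\hookrightarrow\bar X
\qquad\text{and}\qquad
\bar\Sigma_0^i\hookrightarrow N(\bar\Sigma)\hookrightarrow\bar X,
\]
and these induce the same map on pure braid groups, after transporting base configurations along paths inside \(\bar\Sigma_0^i\). If the composite
\[
\PB_n(\bar\Sigma_0^i)\to\PB_n(\bar X_0)\to\PB_n(\bar X)
\]
is injective, then so is its factor \(\PB_n(\bar\Sigma_0^i)\to\PB_n(N(\bar\Sigma))\). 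This is the same argument as in \cref{lem:witness-apex-valency}. The second map is injective because \(\bar X_0\) is a Goldberg witness. The whole proof therefore reduces to showing that \(\PB_n(\bar\Sigma_0^i)\to\PB_n(\bar X_0)\) is injective.

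To prove this I would cut \(\bar X_0\) apart at points of thin edges. By the covering convention, \(N(\bar\Sigma)\) meets the rest of \(\bar X\) only at finitely many points, each an interior point of a pendant (thin) edge attached at a joint. The component \(\bar\Sigma_0^i\) therefore leaves \(N(\bar\Sigma)\) only through such points \(y\), and near each \(y\) the witness \(\bar X_0\) is an open arc of that free edge. Such a point \(y\) is an interior point of a free edge of \(\bar X_0\), so its link in \(\bar X_0\) consists of two points. \Cref{lem:x-component-embedding} then embeds \(\PB_n\) of the \(y\)-component of \(\bar X_0\) containing \(\bar\Sigma_0^i\) into \(\PB_n(\bar X_0)\). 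The \(y\)-components here are genuine \(y\)-components, because \(\bar X_0\) is contractible, so every such cut point separates it (this is the mod-2 argument of the descent setting).

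Iterating over the finitely many exit points, each time passing to the component containing \(\bar\Sigma_0^i\), gives a chain of \(\pi_1\)-injective inclusions, and this chain ends at \(\bar\Sigma_0^i\) together with short terminal stubs of the cut edges. Those stubs are pendant arcs attached at points of valency \(1\), so collapsing them does not change \(\PB_n\): we are in the absorbed case of \cref{ex:disc}, or the stub can be pushed back, and in either case the inclusion is an isomorphism. Composing the chain yields the required injection \(\PB_n(\bar\Sigma_0^i)\hookrightarrow\PB_n(\bar X_0)\).

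The \emph{In particular} clause then follows directly. If \(N(\bar\Sigma)\) is Goldberg-trivial, then \(\ker\iota_\ast=1\) for \(N(\bar\Sigma)\). The image of \(\PB_n(\bar\Sigma_0^i)\) lies in this kernel, since \(\bar\Sigma_0^i\) is contractible, being a connected piece of the contractible \(\bar X_0\) cut at separating points. By the injectivity just proved, \(\PB_n(\bar\Sigma_0^i)=1\).

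I expect the main obstacle to be the cutting step. It requires \(\bar\Sigma_0^i\) to meet the boundary of \(N(\bar\Sigma)\) only along thin edges, so that each cut point has point-like link components in \(\bar X_0\). This must hold even when \(\bar X_0\) enters \(N(\bar\Sigma)\) several times or along degenerate pieces such as interval pieces. I would settle it using the subdivision convention that the \(N(\bar\Sigma)\) overlap only in dimension zero, placed in the interiors of the attached pendant edges, and, if needed, \cref{thm:no-interval-pieces} to exclude degenerate pieces.
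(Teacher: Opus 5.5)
Your proposal is correct and follows the paper's proof. It uses the same commutative square of inclusions, obtains injectivity of \(\PB_n(\bar\Sigma_0^i)\to\PB_n(\bar X_0)\) by iterating \cref{lem:x-component-embedding} at the finitely many points where \(N(\bar\Sigma)\) meets the rest of \(\bar X\), and handles the ``in particular'' clause with the same simple-connectivity argument. You are right that injectivity of \(\PB_n(N(\bar\Sigma))\to\PB_n(\bar X)\), which the paper also proves, is not needed, and your concerns about stubs and interval pieces do not arise: each cut component is the closure of a component and so contains its cut point, hence the final component is exactly \(\bar\Sigma_0^i\) whatever its shape.
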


\begin{proof}
The inclusions \(\bar\Sigma_0^i\subseteq N(\bar\Sigma)\subseteq\bar X\)
and \(\bar\Sigma_0^i\subseteq\bar X_0\subseteq\bar X\) induce a
commutative square of pure braid groups
\[
\begin{tikzcd}
\PB_n(\bar\Sigma_0^i)\ar[r]\ar[d] & \PB_n(N(\bar\Sigma))\ar[d]\\
\PB_n(\bar X_0)\ar[r,"j_\ast"'] & \PB_n(\bar X).
\end{tikzcd}
\]
Both vertical maps are injective by an iterated application of
\cref{lem:x-component-embedding}. The neighbourhood
\(N(\bar\Sigma)\) meets the closure of its complement in the
finitely many points \(\sfv_1,\dots,\sfv_r\) at which its pendant edges
are attached; these are interior points of free edges, so each
link component involved is a single point -- in particular simply
connected -- and the same holds for the links in the subcomplex
\(\bar X_0\) at those \(\sfv_k\) that lie in \(\bar X_0\). Cutting at
the points one at a time and applying
\cref{lem:x-component-embedding} at \(\sfv_k\) to the
\(\sfv_k\)-component containing \(N(\bar\Sigma)\) of the complex
obtained at the previous step -- whose links at the remaining
points are unchanged -- yields the injectivity of
\(\PB_n(N(\bar\Sigma))\to\PB_n(\bar X)\). Running the same
iteration inside \(\bar X_0\), cutting at the points
\(\sfv_k\in\bar X_0\), isolates exactly the components
\(\bar\Sigma_0^i\) of \(\bar X_0\cap N(\bar\Sigma)\), and yields
the injectivity of
\(\PB_n(\bar\Sigma_0^i)\to\PB_n(\bar X_0)\).

As \(\bar X_0\) is a Goldberg witness, the bottom map
\(j_\ast\colon\PB_n(\bar X_0)\to\PB_n(\bar X)\) is injective. Hence the
composite \(\PB_n(\bar\Sigma_0^i)\to\PB_n(\bar X_0)\to\PB_n(\bar X)\)
-- the left vertical followed by \(j_\ast\) -- is injective. By
commutativity it coincides with the composite
\(\PB_n(\bar\Sigma_0^i)\to\PB_n(N(\bar\Sigma))\to\PB_n(\bar X)\), the
top map followed by the right vertical, which is therefore injective as
well. In particular its first factor
\(\PB_n(\bar\Sigma_0^i)\to\PB_n(N(\bar\Sigma))\) is injective.

Finally, suppose \(N(\bar\Sigma)\) is Goldberg-trivial, so that
\(\ker\iota_\ast^{N(\bar\Sigma)}=1\). The witness \(\bar X_0\) is
contractible (\cref{def:goldberg-strengths}) and is assembled from
its pieces by gluings along finitely many points, so by van~Kampen
\(\pi_1(\bar\Sigma_0^i)\) is a free factor of
\(\pi_1(\bar X_0)=1\); hence \(\pi_1(\bar\Sigma_0^i)=1\). Consequently the strand map
\(\iota_\ast^{\bar\Sigma_0^i}\colon\PB_n(\bar\Sigma_0^i)\to
\prod_{\ell}\pi_1(\bar\Sigma_0^i)\) is the zero map, and by naturality
of the strand map under \(\bar\Sigma_0^i\hookrightarrow N(\bar\Sigma)\)
the image of \(\PB_n(\bar\Sigma_0^i)\) in \(\PB_n(N(\bar\Sigma))\) lies
in \(\ker\iota_\ast^{N(\bar\Sigma)}=1\). This inclusion-induced map is
also injective by the above, so \(\PB_n(\bar\Sigma_0^i)\) has trivial
image and trivial kernel; hence \(\PB_n(\bar\Sigma_0^i)=1\).
\end{proof}

\subsection{A local-to-global criterion}
\label{ssec:local-to-global}

\cref{thm:witness-local-injectivity} shows that if \(\bar X_0\)
is a Goldberg witness, then each intersection component
\(\bar\Sigma_0^i\) injects, at the level of pure braid groups,
into the neighbourhood \(N(\bar\Sigma)\) of its thick component.
In this subsection we prove the converse: for a \emph{weak}
Goldberg witness this local condition already forces the global
injectivity of \(j_\ast\). The weak witness property enters only
through the containment results of \cref{sec:lower-bound}; the
heart of the matter is a purely local-to-global injectivity
statement, proved by unfolding distribution graphs into finite
covers.

Call a thin edge \(\sfe\) of \(\bar X\) \emph{essential} if, for an
interior point \(x\in\mathring \sfe\), either \(\bar X_x\) is
connected, or neither \(x\)-component of \(\bar X\) is
homeomorphic to an interval; this does not depend on the choice
of \(x\). Pendant thin edges are never essential. By
\cref{prop:X0-contains-free-points,prop:X0-meets-link-m1,prop:X0-contains-nontrivial-junction},
a weak Goldberg witness \(\bar X_0\) contains every essential
thin edge of \(\bar X\), together with both germs at each of its
interior points; and by
\cref{prop:X0-contains-valency-3,prop:X0-contains-m2-val3,prop:X0-meets-link-components},
it contains every point of \(\bar X\) of valency at least \(3\),
together with a germ of every link direction there.

\begin{lemma}[Stabilisation]\label{lem:stabilisation}
Let \(A\subseteq B\) be complexes, let
\(S\subseteq\{1,\dots,n\}\), and fix pairwise distinct parking
positions in \(A\), away from the base positions, for the
strands outside \(S\). If \(\PB_n(A)\to\PB_n(B)\) is injective,
then so is \(\PB_S(A)\to\PB_S(B)\).
\end{lemma}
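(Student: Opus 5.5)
The plan is to realise \(\PB_S(A)\) as a retract-like piece of \(\PB_n(A)\) compatibly with the same construction in \(B\), via a commutative square of stabilisation and forgetting maps. Write \(\sigma^A\colon\PB_S(A)\to\PB_n(A)\) for the stabilisation that adjoins the strands outside \(S\) as constant strands at the fixed parking positions. Write \(\sigma^B\) for the same construction in \(B\). Because the parking positions lie in \(A\subseteq B\), the square formed by \(\sigma^A,\sigma^B\) and the two inclusion-induced maps commutes on the nose.

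The first step is to check that \(\sigma^A\) and \(\sigma^B\) are well-defined homomorphisms. The subtlety is that a braid on the strands of \(S\) may pass through the parking positions. To handle this, I will first choose a representative avoiding them. For this I shrink to the subspace \(A^\circ=A\setminus\{\text{parking positions}\}\) and use that \(\PB_S(A^\circ)\to\PB_S(A)\) is surjective after a general-position perturbation. This holds when \(A\) has dimension at least \(2\) near the parking points, or when the parking points lie on edges and the strands can be pushed past them. In the latter, one-dimensional case this surjectivity is false, so I would instead take the parking positions on free tips and use the leaf stabilisation maps \(\sigma_i\) from the proof of \cref{cor:other-x-component-embeds}. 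There \(\sigma_i\) pushes strands off a terminal segment before inserting, and \(q_i\circ\sigma_i\simeq\mathrm{id}\).

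The second step is the key one. Forgetting the strands outside \(S\) gives \(q^B\colon\PB_n(B)\to\PB_S(B)\), with \(q^B\circ\sigma^B=\mathrm{id}\) up to the choice of parking. So \(\sigma^B\) is split injective, exactly as in \cref{rem:forgetful_map}. Now take \(g\in\ker(\PB_S(A)\to\PB_S(B))\). Then \(\sigma^B\) of its image is trivial, so by commutativity the image of \(\sigma^A(g)\) in \(\PB_n(B)\) is trivial. The hypothesis then gives \(\sigma^A(g)=1\). Finally, \(g=q^A\sigma^A(g)=1\), since \(\sigma^A\) is also split by forgetting. Hence the map \(\PB_S(A)\to\PB_S(B)\) is injective.

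The main obstacle is the first step: making \(\sigma^A\) and \(\sigma^B\) compatible, well-defined homomorphisms on the whole of \(\PB_S\) rather than only on braids avoiding the parking spots. I would handle this by defining both via the same compression homotopy near the parking points, in the style of the telescope in \cref{cor:other-x-component-embeds}, so that the square commutes up to homotopy. For the argument in the second step, only commutativity and the left splitting \(q\circ\sigma\simeq\mathrm{id}\) are needed, and both are preserved by that construction.
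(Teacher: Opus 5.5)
Your Step~2 is exactly the paper's argument. The paper writes \(\mathrm{st}\) for stabilisation and \(\mathrm{fgt}\) for forgetting, uses \(\mathrm{fgt}\circ\mathrm{st}=\mathrm{id}\) together with compatibility with the inclusions, and chases a kernel element as you do. The paper simply asserts that \(\mathrm{st}\) is a homomorphism on all of \(\PB_S(-)\). You are right that this needs justification, since an \(S\)-braid may pass through the parking positions. However, your Step~1 does not supply it, and the gap is genuine.

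\emph{Surjectivity is the wrong property.} Surjectivity of \(\PB_S(A^\circ)\to\PB_S(A)\) does not let stabilisation descend from \(\PB_S(A^\circ)\) to \(\PB_S(A)\). For that, stabilisation must kill the kernel of \(\PB_S(A^\circ)\to\PB_S(A)\), and in dimension \(2\) it does not. Take \(D^2\) with an interior parking point, \(n=2\) and \(S=\{1\}\). A single strand encircling the parking point is trivial in \(\PB_1(D^2)\), but it stabilises to the full twist \(\sigma_1^2\neq1\) in \(\PB_2(D^2)\).

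\emph{The free-tip route does not close the gap.} It changes the hypothesis: the parking positions are given, and \(A\) need not have free tips. Even when \(A\) has one, your square commutes only if that tip is also a terminal segment of \(B\); otherwise the compression in \(A\) has no counterpart in \(B\). Likewise, the ``compression homotopy near the parking points'' of your last paragraph exists only along terminal segments.

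\emph{The statement is false as written,} so no argument can close this gap in that generality. Take \(A=\partial D^2\subseteq B=D^2\), \(n=2\) and \(S=\{1\}\), with strand~\(2\) parked at any point of the circle.
\begin{itemize}
\item \(\PB_2(S^1)\cong\Z\) is generated by the simultaneous rotation of both points.
\item That rotation maps to a generator \(\sigma_1^{\pm2}\) of \(\PB_2(D^2)\cong\Z\), since the direction of \(x_2-x_1\) winds once. Hence \(\PB_2(A)\to\PB_2(B)\) is an isomorphism.
\item But \(\PB_1(A)=\pi_1(S^1)\to\pi_1(D^2)=1\) is not injective.
\end{itemize}

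What your Step~2, equivalently the paper's proof, actually establishes is the version with an added hypothesis: stabilisation must be a well-defined homomorphism on both \(A\) and \(B\) that commutes with the inclusion. One way to guarantee this is to place the parking positions on a terminal segment of a pendant edge of \(A\) that is also terminal in \(B\). This is the situation of the leaf stabilisations in the proof of \cref{cor:other-x-component-embeds}. The lemma should be stated in that form, and the hypothesis checked where it is applied in the local-to-global argument.
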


\begin{proof}
Write \(\mathrm{st}\colon\PB_S(-)\to\PB_n(-)\) for the
homomorphism adjoining the strands outside \(S\) as constant
strands at their parking positions, and
\(\mathrm{fgt}\colon\PB_n(-)\to\PB_S(-)\) for the homomorphism
forgetting them. Both are defined for \(A\) and for \(B\), with
the same parking positions, both commute with the
inclusion-induced maps, and
\(\mathrm{fgt}\circ\mathrm{st}=\mathrm{id}\). If
\(g\in\PB_S(A)\) dies in \(\PB_S(B)\), then
\(\mathrm{st}(g)\in\PB_n(A)\) dies in \(\PB_n(B)\), so
\(\mathrm{st}(g)=1\) by hypothesis, and
\(g=\mathrm{fgt}(\mathrm{st}(g))=1\).
\end{proof}

\begin{lemma}[Injectivity criterion for morphisms of graphs of
groups]\label{lem:gog-injective-criterion}
Let \(\sfY\subseteq\sfY'\) be a subgraph containing the base
vertex \(\sfv_0\), let \(\cH\) and \(\cG\) be graphs of groups
over \(\sfY\) and \(\sfY'\) respectively, and let
\(\varphi=(\varphi_{\sfv},\varphi_{\sfe})\) be a family of
homomorphisms \(H_{\sfv}\to G_{\sfv}\), \(H_{\sfe}\to G_{\sfe}\)
commuting with the edge inclusions \(\alpha\). Suppose that
every \(\varphi_{\sfv}\) is injective and that, for every
oriented edge \(\sfe\) of \(\sfY\) with terminal vertex
\(\sfv\),
\[
\varphi_{\sfv}^{-1}\bigl(\alpha_{\sfe}(G_{\sfe})\bigr)
=\alpha_{\sfe}(H_{\sfe}).
\]
Then the induced homomorphism
\(\varphi_\ast\colon\pi_1(\cH,\sfv_0)\to\pi_1(\cG,\sfv_0)\) is
injective.
\end{lemma}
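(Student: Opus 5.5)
The plan is to compare Bass--Serre trees. Write \(T_{\cH}\) and \(T_{\cG}\) for the Bass--Serre trees of \(\cH\) and \(\cG\). The family \(\varphi\) induces a \(\varphi_\ast\)-equivariant map of trees \(\Phi\colon T_{\cH}\to T_{\cG}\). It sends the coset vertex \(h H_{\sfv}\) to \(\varphi_\ast(h)G_{\sfv}\), and does the same on edges via the edge groups. It is well defined because \(\varphi\) commutes with the edge inclusions \(\alpha_\sfe\). The first goal is to show that \(\Phi\) is locally injective, so that it is an immersion of trees.

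Local injectivity is checked at a vertex \(H_{\sfv}\) of \(T_{\cH}\); by equivariance this suffices. The edges of \(T_{\cH}\) at this vertex are indexed by pairs \((\sfe,hH_{\sfe})\), where \(\sfe\) is an oriented edge of \(\sfY\) terminating at \(\sfv\) and \(hH_\sfe\) runs over \(H_{\sfv}/\alpha_{\sfe}(H_{\sfe})\). Their images are indexed by \((\sfe,\varphi_{\sfv}(h)G_\sfe)\). Distinct \(\sfe\) in \(\sfY\subseteq\sfY'\) remain distinct, so only one \(\sfe\) needs to be considered. For that \(\sfe\), injectivity of the induced map
\[
H_{\sfv}/\alpha_{\sfe}(H_{\sfe})\to G_{\sfv}/\alpha_{\sfe}(G_{\sfe})
\]
is exactly the hypothesis \(\varphi_{\sfv}^{-1}(\alpha_\sfe(G_\sfe))=\alpha_\sfe(H_\sfe)\). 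Indeed, if \(\varphi(h)^{-1}\varphi(h')\in\alpha_\sfe(G_\sfe)\), then \(h^{-1}h'\in\alpha_\sfe(H_\sfe)\).

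A locally injective simplicial map between trees sends reduced edge paths to reduced edge paths, and reduced paths in a tree are geodesics, so \(\Phi\) is injective. Now take \(g\in\ker\varphi_\ast\). By equivariance \(\Phi(gx)=\Phi(x)\) for every vertex \(x\), so injectivity of \(\Phi\) forces \(g\) to fix every vertex of \(T_{\cH}\). In particular \(g\) fixes the base vertex, so \(g\in H_{\sfv_0}\). Then \(\varphi_{\sfv_0}(g)=1\), and injectivity of \(\varphi_{\sfv_0}\) gives \(g=1\).

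The main obstacle is the bookkeeping: the map \(\Phi\) has to be defined consistently with the chosen maximal trees and the stable letters. I would handle this by choosing the maximal tree of \(\sfY'\) to extend a maximal tree of \(\sfY\). Failing that, I would let \(\varphi_\ast\) be defined on the path-group presentation, which avoids choosing trees at all. With either choice, the orientation conventions for the edge inclusions have to be tracked so that the coset description of edges at a vertex matches the terminal-vertex hypothesis. This step is routine but needs care.
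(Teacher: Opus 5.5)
Your argument is correct. It is the Bass--Serre-tree counterpart of the paper's proof rather than the same proof.

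The paper works directly with Serre's normal form in the path-group model of \(\pi_1(\cH,\sfv_0)\). A reduced word \(g_0\sfe_1g_1\cdots\sfe_kg_k\) over an edge loop in \(\sfY\) is sent to the word with the same edge loop and letters \(\varphi(g_i)\). Since \(\sfY\) is a subgraph, no edges are identified. So the image can fail to be reduced only through a subword \(\sfe_i\varphi(g_i)\bar\sfe_i\) with \(\varphi(g_i)\in\alpha_{\sfe_i}(G_{\sfe_i})\), and the coset hypothesis rules this out. The case \(k=0\) is handled by injectivity of \(\varphi_{\sfv_0}\).

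Your proof uses the two hypotheses at exactly the corresponding places. The coset condition becomes injectivity of \(\Phi\) on the star \(\bigsqcup_{t(\sfe)=\sfv}H_\sfv/\alpha_\sfe(H_\sfe)\), which is the tree form of "no new backtracking". Injectivity of \(\varphi_{\sfv_0}\) then disposes of the stabiliser of the base vertex.

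The paper's route is shorter because it needs no equivariant tree map, no compatible maximal trees and no orientation bookkeeping. That is precisely the overhead you flag at the end, and your fallback to the path-group model removes it.

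Your route pays for that setup but reaches a stronger, more geometric conclusion: \(\Phi\) embeds \(T_{\cH}\) \(\varphi_\ast\)-equivariantly as a subtree of \(T_{\cG}\). This fits naturally with the covering-space unfolding of \cref{lem:unfolding}, which is where the lemma is applied.
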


\begin{proof}
An element of \(\pi_1(\cH,\sfv_0)\) is represented by a word
\(g_0\sfe_1g_1\cdots \sfe_kg_k\), where \(\sfe_1\cdots \sfe_k\) is an edge
loop at \(\sfv_0\) in \(\sfY\) and each \(g_i\) lies in the
vertex group at the corresponding vertex; the word is
\emph{reduced} if either \(k=0\) and \(g_0\neq1\), or
\(k\geq1\) and no subword \(\sfe_ig_ie_{i+1}\) has
\(\sfe_{i+1}=\bar \sfe_i\) and \(g_i\in\alpha_{\sfe_i}(H_{\sfe_i})\). By the
normal-form theorem for graphs of groups \cite{Serre1980}, a
reduced word represents a non-trivial element. The morphism
\(\varphi\) sends such a word to the word with the same edge
loop -- \(\sfY\) being a subgraph of \(\sfY'\), no
identifications occur among the edges -- and with letters
\(\varphi(g_i)\). If the image word were not reduced, some
\(g_i\) flanked by \(\sfe_i\) and \(\bar \sfe_i\) would satisfy
\(\varphi(g_i)\in\alpha_{\sfe_i}(G_{\sfe_i})\), whence
\(g_i\in\alpha_{\sfe_i}(H_{\sfe_i})\) by the displayed condition,
contradicting reducedness; and if \(k=0\), then
\(\varphi(g_0)\neq1\), since \(\varphi_{\sfv_0}\) is injective.
Hence reduced words map to reduced words, and \(\varphi_\ast\)
is injective.
\end{proof}

\begin{lemma}[Unfolding the distribution graphs]
\label{lem:unfolding}
Let \(Z\) be a connected complex, \(x\) an interior point of a
free edge \(\sfe\) of \(Z\), and \(Y\subseteq Z\) a connected
subcomplex containing a neighbourhood of \(x\) in \(\sfe\). Write
\(\cG\) and \(\cG_0\) for the graph-of-spaces decompositions of
\(F_S(Z)\) and \(F_S(Y)\) at \(x\) (\cref{prop:valency-decomp}),
with underlying distribution graphs \(\sfG\) and \(\sfG_0\). Then:
\begin{enumerate}[label=\textup{(\arabic*)},leftmargin=2.4em]
  \item the natural map \(\sfG_0\to\sfG\) is an immersion,
        i.e.\ locally injective;
  \item it factors as an embedding
        \(\sfG_0\hookrightarrow\tilde\sfG\) followed by a finite
        covering \(p\colon\tilde\sfG\to\sfG\);
  \item writing \(p^\ast\cG\) for the pullback of \(\cG\) along
        \(p\), the total space of \(p^\ast\cG\) is a covering
        space of \(F_S(Z)\), and the inclusion-induced map
        \(\PB_S(Y)\to\PB_S(Z)\) factors as
        \[
        \PB_S(Y)=\pi_1(\cG_0)
        \xrightarrow{\varphi}
        \pi_1(p^\ast\cG)
        \hookrightarrow
        \PB_S(Z),
        \]
        where \(\varphi\) is a morphism of graphs of groups over
        the embedding \(\sfG_0\hookrightarrow\tilde\sfG\).
\end{enumerate}
\end{lemma}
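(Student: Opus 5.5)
The plan is to prove the three parts in order, working with the combinatorial description of the distribution graphs from \cref{prop:valency-decomp}. A vertex of \(\sfG\) is a connected component of \(F_S(Z_x)\) or of some \(F^i_{S\setminus\{i\}}(Z_x)\). An edge is a triple \((i,C,j)\) with \(j\in\{1,2\}\) one of the two germs of \(\sfe\) at \(x\); these are the only link components, since \(x\) is interior to a free edge. The same description holds for \(\sfG_0\) with \(Y\) in place of \(Z\). Because \(Y\) contains a neighbourhood of \(x\) in \(\sfe\), we have \(\lk_Y(x)=\lk_Z(x)\). So the decomposition of \(F_S(Y)\) is cut along the same neighbourhoods \(V_x\subseteq U_x\), and the inclusion induces a map of graphs of spaces \(\cG_0\to\cG\) over a graph map \(h\colon\sfG_0\to\sfG\), exactly as in Step~2 of the proof of \cref{prop:star-embedding}.

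For (1), I will copy Step~3 of the proof of \cref{prop:star-embedding}. First take an \(F^i_{n-1}\)-type vertex. It has exactly two incident edges, one for each germ, and \(h\) keeps the germ label, so the two images are distinct. Next take an \(F_n\)-type vertex \(\sfv\) of \(\sfG_0\). Its incident edges are indexed by pairs \((i,j)\) such that strand \(i\) is innermost on the half-edge of germ \(j\) in configurations of \(\sfv\); the \(F^i_{n-1}\)-end is then obtained by deleting \(i\). The image edge is \((i,C_Z,j)\). Hence two distinct incident edges differ either in \(i\) or in \(j\), and so do their images. This shows \(h\) is locally injective.

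For (2), I will use Stallings' completion of a finite immersion. The graph \(\sfG_0\) is finite because \(Y\) is a finite complex, so each configuration space has finitely many components. The immersion \(h\) into the finite graph \(\sfG\) extends to a finite covering \(\tilde\sfG\to\sfG\) in which \(\sfG_0\) embeds: take enough disjoint copies of the fibres and add the missing edges degree by degree, following \cite{Stallings1983}. The only point to check is that the covering has finite degree. It does, since every fibre of \(h\) is finite; take the degree to be the maximal fibre cardinality.

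For (3), pull back the vertex and edge spaces of \(\cG\) along \(p\). The total space of \(p^\ast\cG\) is then a covering of the total space of \(\cG\), which is homotopy equivalent to \(F_S(Z)\); transport this along the homotopy equivalence to obtain a covering of \(F_S(Z)\). Covering maps are \(\pi_1\)-injective, which gives the second arrow. The map \(\cG_0\to\cG\) lies over \(h=p\circ(\text{embedding})\), so it lifts canonically to \(\cG_0\to p^\ast\cG\) over the embedding \(\sfG_0\hookrightarrow\tilde\sfG\). This lift is the morphism \(\varphi\), and the factorisation commutes because \(p\circ\tilde h=h\).

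I expect the main obstacle to be the careful bookkeeping in (3). Earlier results realise \(F_S(Z)\simeq\cG\) only up to homotopy, so the covering should be built on the graph-of-spaces model and not on \(F_S(Z)\) itself. One must also check that basepoints lift consistently: choose the base vertex of \(\tilde\sfG\) to be the image of the base vertex of \(\sfG_0\).
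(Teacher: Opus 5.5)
Your proposal is correct and follows essentially the same route as the paper: local injectivity because every edge at a vertex of \(\sfG_0\) is determined by its datum (strand, germ), which the map to \(\sfG\) preserves; Stallings' completion of the finite immersion to a finite cover; and the fibre product of the total space of \(\cG\) with that cover, into which the map \(\cG_0\to\cG\) lifts over the embedding \(\sfG_0\hookrightarrow\tilde\sfG\). One point of wording in (1): for a general \(Y\) the condition ``strand \(i\) is innermost on germ \(j\)'' is not an invariant of a component of \(F_S(Y_x)\) (it is borrowed from the star case), so the fact that actually does the work is the one you also state, namely that the \(F^i_{n-1}\)-end is the component obtained by deleting strand \(i\), which by \cref{rem:forgetful_map} determines the edge from \((i,j)\).
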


See \cref{fig:unfolding} for a schematic illustration of the
completion in~(2).

\begin{figure}[ht]
\centering
\begin{tikzpicture}[line width=0.8pt, scale=1]
  \begin{scope}[xshift=-4.6cm]
    \draw[blue, line width=1.2pt] (-60:0.85) -- (0:0.85) -- (60:0.85) -- (120:0.85);
    \filldraw[blue] (-60:0.85) circle (1.5pt);
    \filldraw[blue] (0:0.85) circle (1.5pt);
    \filldraw[blue] (60:0.85) circle (1.5pt);
    \filldraw[blue] (120:0.85) circle (1.5pt);
    \node at (0,-1.25) {\(\sfG_0\)};
  \end{scope}
  \draw[->, thick] (-3.4,0) -- (-2.5,0)
    node[midway, above=1pt] {\scriptsize emb.};
  \begin{scope}
    \draw (0:0.85) \foreach \a in {60,120,180,240,300,360} { -- (\a:0.85) };
    \foreach \a in {0,60,120,180,240,300}
      \filldraw (\a:0.85) circle (1.4pt);
    \draw[blue, line width=1.2pt]
      (120:0.85) -- (60:0.85) -- (0:0.85) -- (-60:0.85);
    \filldraw[blue] (120:0.85) circle (1.5pt);
    \filldraw[blue] (60:0.85) circle (1.5pt);
    \filldraw[blue] (0:0.85) circle (1.5pt);
    \filldraw[blue] (-60:0.85) circle (1.5pt);
    \node at (0,-1.25) {\(\tilde\sfG\)};
  \end{scope}
  \draw[->, thick] (1.3,0) -- (2.2,0) node[midway, above=1pt] {\(p\)};
  \begin{scope}[xshift=3.6cm]
    \draw (90:0.75) -- (210:0.75) -- (330:0.75) -- cycle;
    \foreach \a in {90,210,330} \filldraw (\a:0.75) circle (1.4pt);
    \node at (0,-1.25) {\(\sfG\)};
  \end{scope}
\end{tikzpicture}
\caption{Unfolding an immersion (\cref{lem:unfolding}),
schematically: the graph \(\sfG_0\) immerses into \(\sfG\),
wrapping more than once around it; it embeds (blue) into a
finite cover \(\tilde\sfG\xrightarrow{p}\sfG\) completing the
immersion.}
\label{fig:unfolding}
\end{figure}

\begin{proof}
(1) The vertices of \(\sfG_0\) are the components of
\(F_S(Y_x)\) and, for each strand \(i\), the components of the
space of configurations with the \(i\)-th strand at \(x\); the
edges at a vertex of the latter kind correspond to the two
germs of \(\sfe\) at \(x\), while an edge at a vertex \(\sfu\) of
the former kind is determined by its datum \((i,\ell)\) -- the
transferring strand and the germ -- because its other end is
the component of the configurations obtained from those of
\(\sfu\) by moving the \(i\)-th strand to \(x\) from the
\(\ell\)-side. The map \(\sfG_0\to\sfG\) sends a component to
the component containing it and preserves these data; two
distinct edges at a common vertex differ in their data and
hence have distinct images. Thus \(\sfG_0\to\sfG\) is an
immersion.

(2) This is the completion of an immersion of finite graphs to
a finite covering, as in \cite{Stallings1983}: enlarge
\(\sfG_0\) by additional vertices so that all fibres over
vertices of \(\sfG\) acquire a common cardinality, then
complete, over each edge of \(\sfG\), the induced partial
matching between the fibres of its endpoints to a perfect
matching.

(3) The graph-of-spaces structure provides a projection
\(\rho\colon F_S(Z)\simeq T(\cG)\to\sfG\) of the total space
onto the distribution graph. The fibre product
\(T(\cG)\times_{\sfG}\tilde\sfG\) is a covering space of
\(T(\cG)\), being the pullback of the covering \(p\), and is
canonically the total space of \(p^\ast\cG\); hence
\(\pi_1(p^\ast\cG)\to\PB_S(Z)\) is injective. The inclusion
\(F_S(Y)\hookrightarrow F_S(Z)\) is a map of graphs of spaces
over \(\sfG_0\to\sfG\); together with the factorisation
of~(2) it lifts to a map \(T(\cG_0)\to T(p^\ast\cG)\) over the
embedding \(\sfG_0\subseteq\tilde\sfG\), whose composite with
the covering projection is the inclusion. Taking fundamental
groups gives the asserted factorisation.
\end{proof}

\begin{proposition}[Local injectivity implies global
injectivity]
\label{thm:local-to-global}
Let \(\bar X_0\) be a weak Goldberg witness for \(\bar X\) such
that, for every thick component \(\bar\Sigma\) and every
connected component \(\bar\Sigma_0^i\) of
\(\bar X_0\cap N(\bar\Sigma)\), the inclusion-induced map
\[
\PB_n\bigl(\bar\Sigma_0^i\bigr)\longrightarrow
\PB_n\bigl(N(\bar\Sigma)\bigr)
\]
is injective. Then
\(j_\ast\colon\PB_n(\bar X_0)\to\PB_n(\bar X)\) is injective;
that is, \(\bar X_0\) is a Goldberg witness.
\end{proposition}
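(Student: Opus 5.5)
We argue by induction on the number of cut points needed to separate \(\bar X\) into the neighbourhoods \(N(\bar\Sigma)\) and the remaining graph pieces. Recall that the \(N(\bar\Sigma)\) cover \(\bar X\) and overlap only in finitely many points, each interior to a thin edge. The inductive statement is the following: for every connected subcomplex \(Z\subseteq\bar X\) that is a union of some of the \(N(\bar\Sigma)\) and some thin edges, and for every strand set \(S\subseteq\{1,\dots,n\}\), the map \(\PB_S(\bar X_0\cap Z)\to\PB_S(Z)\) is injective on each component. The strand sets \(S\) are needed because vertex groups of the decompositions are braid groups on fewer strands, and \cref{lem:stabilisation} converts injectivity for \(n\) strands into injectivity for every \(S\).

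The base cases are the following.
\begin{enumerate}[label=(\alph*),leftmargin=2em]
\item \(Z=N(\bar\Sigma)\) is the hypothesis, upgraded to all \(S\) by \cref{lem:stabilisation}.
\item \(Z\) is a graph piece. Here \(\bar X_0\cap Z\) is a subtree containing every essential thin edge and every vertex of valency \(\geq3\), together with all its germs (by the containment results quoted before \cref{lem:stabilisation}). Injectivity then follows from \cref{prop:star-embedding} and \cref{lem:x-component-embedding} applied at interior points of edges, since links there are points. Pendant inessential edges may be discarded, because removing them does not change \(\PB_S\) of the relevant pieces.
\end{enumerate}

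For the inductive step, pick a cut point \(x\) in the interior of a thin edge \(\sfe\) of \(Z\), lying in \(\bar X_0\). If \(x\notin\bar X_0\), then \(\sfe\) is inessential and the witness lies on one side, so we simply pass to that side, using \cref{lem:x-component-embedding} for \(Z\). Apply \cref{lem:unfolding} with \(Y=\bar X_0\cap Z\):
\begin{enumerate}[label=(\roman*),leftmargin=2em]
\item the distribution graph \(\sfG_0\) immerses into \(\sfG\);
\item \(\sfG_0\) embeds in a finite cover \(\tilde\sfG\);
\item \(\PB_S(Y)\to\PB_S(Z)\) factors through the \(\pi_1\)-injective map \(\pi_1(p^\ast\cG)\hookrightarrow\PB_S(Z)\).
\end{enumerate}
It therefore suffices to show that \(\varphi\colon\pi_1(\cG_0)\to\pi_1(p^\ast\cG)\) is injective, and we do this with \cref{lem:gog-injective-criterion}.

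The vertex groups of \(\cG_0\) and \(p^\ast\cG\) are products \(\PB_{S_1}(Y^1_x)\times\PB_{S_2}(Y^2_x)\to\PB_{S_1}(Z^1_x)\times\PB_{S_2}(Z^2_x)\) when \(Z_x\) is disconnected, or \(\PB_{S'}(Y_x)\to\PB_{S'}(Z_x)\) when it is connected. In both cases these maps are injective by the induction hypothesis, since the punctured sides have fewer cut points. When \(Z_x\) is connected, \(Y_x\) may have two components, and the induction is applied to each one separately.

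The edge groups are copies of the \(F_{|S|-1}\)-vertex groups, attached by the identity on one end and by the parking insertion \(\phi_{ij}\) on the other. Because \(x\) is interior to a free edge, the link components are points, so every attaching map is injective (\cref{lem:x-component-embedding}).

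The remaining condition is the one we expect to be the main obstacle:
\[
\varphi_{\sfv}^{-1}\bigl(\alpha_{\sfe}(G_{\sfe})\bigr)=\alpha_{\sfe}(H_{\sfe}).
\]
That is, a braid of \(Y_x\) that becomes, in \(Z_x\), a braid with the parked strand stationary must already be such a braid in \(Y_x\). We would prove this with the forgetful retraction \(q_i\) of \cref{rem:forgetful_map}. An element \(g\) in the preimage satisfies \(g=\phi_{ij}(q_i(g))\) in \(\PB(Z_x)\). Both sides lie in the image of \(\PB(Y_x)\), because \(q_i(g)\in\PB(Y_x)\) and the parking position lies on \(\sfe\cap Y\). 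Injectivity of \(\PB(Y_x)\to\PB(Z_x)\), which is the induction hypothesis again, then gives \(g=\phi_{ij}(q_i(g))\) in \(\PB(Y_x)\), as required.

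The delicate points are the following.
\begin{itemize}[leftmargin=2em]
\item The parking position must lie on the half-edge of \(\sfe\) contained in \(Y\). This holds because \(Y\) contains a neighbourhood of \(x\) in \(\sfe\).
\item The vertex spaces of the pulled-back decomposition \(p^\ast\cG\) are copies of those of \(\cG\). Hence the relevant vertex maps are indeed the inclusion maps of the punctured pieces, and the criterion applies vertex by vertex. Concluding, \(\varphi\) is injective, hence so is \(j_\ast\).
\end{itemize}
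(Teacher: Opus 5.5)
Your overall architecture is the paper's: cut at interior points of thin edges, factor \(\PB_S(Y)\to\PB_S(Z)\) through \(\pi_1(p^\ast\cG)\) by \cref{lem:unfolding}, check \cref{lem:gog-injective-criterion} with vertex injectivity from the induction and the coset condition from the retraction \(\mathrm{st}_i\circ\mathrm{fgt}_i\), and close \(N(\bar\Sigma)\) with the hypothesis plus \cref{lem:stabilisation}. The real difference is the cut set. You cut at the overlap points of the \(N(\bar\Sigma)\)-decomposition. The paper instead cuts at one interior point of every \emph{essential} thin edge and inducts on their number, so each terminal piece contains at most one thick component or branch vertex. Your choice makes base case (a) literally the hypothesis, but it leaves graph pieces with several branch vertices, and there base case (b) fails as written.

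\textbf{Base case (b).} \cref{prop:star-embedding} covers a graph piece with a single branch vertex. Now suppose \(Z\) has two branch vertices joined by an essential edge, and cut at an interior point \(x\) of that edge. \cref{lem:x-component-embedding} then only embeds the braid groups of the two sides. It says nothing about braids of \(Y\) that carry strands across \(x\), and handling those is exactly the job of the graph-of-groups step.

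Your claim that removing pendant inessential edges leaves \(\PB_S\) unchanged is false. The legs of a tripod are inessential, and deleting them kills \(\PB_2(\sfS_3)\cong\Z\). What is true, and what the paper uses, is this: \(Y\) contains every essential edge of \(Z\) and a germ of every link direction at every branch vertex. So \(Y\) is \(Z\) with its hanging chains truncated, the inclusion is isotopic to a homeomorphism, and it induces an isomorphism. Alternatively, add the essential edges inside graph pieces to your cut set.

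\textbf{Non-separating cuts.} When \(Z_x\) is connected but \(Y_x\) is not, the \(F_S\)-type vertex map is \(\PB_{S_1}(Y^1_x)\times\PB_{S_2}(Y^2_x)\to\PB_S(Z_x)\). Injectivity of each factor separately does not imply injectivity of this product map. You need the paper's reduction step: if \((g_1,g_2)\) dies, forgetting the strands of \(S_2\) kills \(g_2\) and sends \(g_1\) to its image in \(\PB_{S_1}(Z_x)\). That image is trivial, so \(g_1=1\) by the single-component induction hypothesis, and symmetrically \(g_2=1\).

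\textbf{Bookkeeping.}
\begin{itemize}
\item A non-separating cut produces a complex that is not a subcomplex of \(\bar X\). Your inductive class therefore has to be components of iterated cuts, as in the paper, rather than subcomplexes of \(\bar X\) built from the \(N(\bar\Sigma)\) and thin edges.
\item \cref{lem:unfolding} needs \(Y\) connected. Apply it to the component of \(\bar X_0\cap Z\) through \(x\); any other component lies in one component of \(Z_x\) and passes there via \cref{lem:x-component-embedding}.
\item Slide each cut point so that \(\bar X_0\) either misses it or contains a neighbourhood of it in \(\sfe\). Otherwise neither of your two cases applies.
\end{itemize}
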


\begin{proof}
Basepoints are chosen in the subcomplexes at hand and
suppressed throughout. Fix an interior point
\(x_\sfe\in\mathring \sfe\) of each essential thin edge \(\sfe\) of
\(\bar X\). For a set \(A\) of such points, write \(\bar X_A\)
for the complex obtained by cutting \(\bar X\) at every point
of \(A\); since \(\bar X_0\) contains every essential thin
edge, the cut applies to \(\bar X_0\) as well, and for a
connected component \(Z\) of \(\bar X_A\) we write
\(Y_Z\subseteq Z\) for the union of the components of the cut
witness contained in \(Z\). We prove the following claim by
induction on the number of essential thin edges of \(\bar X\)
contained in \(Z\).

\emph{Claim. For every component \(Z\) of every iterated cut
\(\bar X_A\), every \(S\subseteq\{1,\dots,n\}\), and every
partition \(S=\bigsqcup_cS_c\) indexed by the components
\(Y_c\) of \(Y_Z\), the homomorphism
\[
\prod_c\PB_{S_c}(Y_c)\longrightarrow\PB_S(Z),
\]
given on each factor by the inclusion-induced map -- the images
commute, having disjoint supports -- is injective.}

The theorem is the case \(A=\varnothing\), \(Z=\bar X\),
\(S=\{1,\dots,n\}\).

\emph{Reduction to a single component.} Suppose \((g_c)_c\)
lies in the kernel. Fix an index \(c\) and apply the
homomorphism \(\PB_S(Z)\to\PB_{S_c}(Z)\) forgetting all strands
outside \(S_c\): the factors with index other than \(c\) map to
the identity, so the image of \(g_c\) in \(\PB_{S_c}(Z)\) is
trivial. It therefore suffices to prove the claim for a single
component: a connected \(Y\subseteq Z\) with all the strands of
\(S\) based in \(Y\).

\emph{Inductive step.} Suppose \(Z\) contains an essential thin
edge \(\sfe\) of \(\bar X\), and set \(x=x_\sfe\). Since \(\bar X_0\)
contains \(\sfe\) and \(\sfe\) is uncut in \(Z\), the edge \(\sfe\) lies
in a single component \(Y_\sfe\) of \(Y_Z\).

If \(Y\neq Y_\sfe\), then no braid of \(Y\) crosses \(x\), and
\(Y\) lies in a single connected component \(Z'\) of \(Z_x\);
the map \(\PB_S(Y)\to\PB_S(Z)\) factors as
\(\PB_S(Y)\to\PB_S(Z')\to\PB_S(Z)\). The second map is
injective by \cref{lem:x-component-embedding}, and the first by
the inductive hypothesis applied to the pair \((Z',Y)\): the
component \(Z'\) of \(\bar X_{A\cup\{x\}}\) contains fewer
essential thin edges than \(Z\).

If \(Y=Y_\sfe\), apply \cref{lem:unfolding} to \((Z,Y,x)\): the
map \(\PB_S(Y)\to\PB_S(Z)\) factors through a morphism
\(\varphi\) of graphs of groups over the embedding
\(\sfG_0\hookrightarrow\tilde\sfG\), followed by an injection.
It remains to verify the hypotheses of
\cref{lem:gog-injective-criterion} for \(\varphi\).

\emph{Vertex injectivity.} Consider a vertex \(\sfu\) of
\(\sfG_0\) of \(F_S\)-type. Its vertex group in \(\cG_0\) is
the direct product \(\prod_c\PB_{S_c}(Y'_c)\) over the
components \(Y'_c\) of \(Y_x\), with the strand distribution
recorded by \(\sfu\); the corresponding vertex group of
\(p^\ast\cG\) is the direct product
\(\prod_{Z'}\PB_{S_{Z'}}(Z')\) over the components \(Z'\) of
\(Z_x\), where \(S_{Z'}\) is the union of the \(S_c\) with
\(Y'_c\subseteq Z'\); and \(\varphi_{\sfu}\) is the direct
product, over the \(Z'\), of the maps
\[
\prod_{c\,:\,Y'_c\subseteq Z'}\PB_{S_c}(Y'_c)
\longrightarrow\PB_{S_{Z'}}(Z').
\]
Each of these is injective by the inductive hypothesis: \(Z'\)
is a component of \(\bar X_{A\cup\{x\}}\) with fewer essential
thin edges, and the map is exactly the one of the claim. A
direct product of injective homomorphisms is injective, so
\(\varphi_{\sfu}\) is injective. Vertices of the other type --
one strand deleted, resting at \(x\) -- are handled by the same
argument with \(S\) replaced by the complement of the deleted
strand.

\emph{Coset condition.} Let \(\sfu\) be a vertex of
\(F_S\)-type and \(\sfe\) an adjacent edge of \(\sfG_0\), with
datum \((i,\ell)\). In both \(\cG_0\) and \(p^\ast\cG\), the
edge group of \(\sfe\) includes into the vertex group at
\(\sfu\) as the subgroup of braids in which the \(i\)-th strand
rests at a parking position on the \(\ell\)-germ of \(\sfe\), and
this subgroup is the image of the retraction
\(r\coloneqq\mathrm{st}_i\circ\mathrm{fgt}_i\) -- forget the
\(i\)-th strand, then adjoin it back as a constant strand at
the parking position. The germ lies in \(Y\), so \(r\) is
defined on both vertex groups and commutes with
\(\varphi_{\sfu}\). Hence, if \(h\) lies in the
\(\cG_0\)-vertex group and \(\varphi_{\sfu}(h)\) lies in the
edge subgroup, then
\(\varphi_{\sfu}(h)=r(\varphi_{\sfu}(h))
=\varphi_{\sfu}(r(h))\), so \(h=r(h)\) by the injectivity of
\(\varphi_{\sfu}\), and \(h\) lies in the edge subgroup of
\(\cG_0\). At a vertex of the other type the edge inclusions
are isomorphisms onto the vertex groups, and the condition is
vacuous. \cref{lem:gog-injective-criterion} now applies, and
the inductive step is complete.

\emph{Base case.} Suppose \(Z\) contains no essential thin edge
of \(\bar X\). We claim \(Z\) contains at most one
\emph{feature} -- a thick component or a point of valency at
least \(3\). Indeed, two features of \(Z\) are joined by an
embedded path of thin edges of \(\bar X\), and for any interior
point \(x\) of any edge \(\sfe\) on this path, either \(\bar X_x\)
is connected, or each \(x\)-component of \(\bar X\) contains
one of the two features and hence an embedded tripod; in either
case \(\sfe\) is essential -- a contradiction. There are thus
three possibilities.

If \(Z\) contains no feature, then \(Z\) is homeomorphic to an
interval or a point -- a thin cycle is excluded, its edges
being essential, unless \(\bar X\cong S^1\), which is excluded
by \cref{rem:standing} -- and \(\PB_S(Y)\) is trivial, each
component of a configuration space of an interval being
contractible; there is nothing to prove.

If \(Z\) contains a single point \(\sfv\) of valency at least
\(3\) and no thick component, then \(Z\) is a tree, and
straightening its hanging chains exhibits it as a star with
centre \(\sfv\). If \(\sfv\notin Y\), then \(Y\) lies in one leg and
is an interval, and \(\PB_S(Y)=1\) as before. If \(\sfv\in Y\),
then, by the containment results quoted at the beginning of
this subsection, \(Y\) contains a germ of every leg of \(Z\) at
\(\sfv\), so \(Y\) is a star with the same set of legs, each
possibly truncated; the inclusion \(Y\hookrightarrow Z\) is
isotopic to a homeomorphism, stretching each truncated leg over
the full one, and therefore induces an isomorphism
\(\PB_S(Y)\to\PB_S(Z)\).

Finally, suppose \(Z\) contains a single thick component
\(\bar\Sigma\) and no point of valency at least \(3\). Then,
after straightening hanging chains, \(Z\) is homeomorphic to
the regular neighbourhood \(N(\bar\Sigma)\), by a
homeomorphism which is the identity on \(\bar\Sigma\) and
stretches pendant edges; under it, the component \(Y\) -- if it
meets \(\bar\Sigma\) -- is carried to an intersection component
\(\bar\Sigma_0^i\), modified only by stretching or truncating
its arms inside the pendant edges of \(N(\bar\Sigma)\), a
difference which a further homeomorphism of \(N(\bar\Sigma)\)
removes. The hypothesis of the theorem then gives the
injectivity of \(\PB_n(Y)\to\PB_n(Z)\), and
\cref{lem:stabilisation}, with parking positions in \(Y\),
gives that of \(\PB_S(Y)\to\PB_S(Z)\). If \(Y\) does not meet
\(\bar\Sigma\), it is an interval inside a pendant chain and
\(\PB_S(Y)=1\). This exhausts all cases and completes the
induction, and with it the proof.
\end{proof}

Combining \cref{thm:local-to-global} with
\cref{thm:witness-local-injectivity}, we obtain a
characterisation of Goldbergness relative to weak Goldbergness
by purely local conditions.

\begin{theorem}\label{cor:goldberg-char-local}
A complex \(\bar X\) as in \cref{rem:standing} is Goldberg if
and only if it admits a weak Goldberg witness \(\bar X_0\) such
that \(\PB_n(\bar\Sigma_0^i)\to\PB_n(N(\bar\Sigma))\) is
injective for every thick component \(\bar\Sigma\) and every
intersection component \(\bar\Sigma_0^i\).
\end{theorem}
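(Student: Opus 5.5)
The statement is an equivalence, and each direction is already essentially in hand; the work is to assemble the two results cleanly and check that the hypotheses match.

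For the forward direction, suppose \(\bar X\) is Goldberg. By \cref{def:goldberg-strengths} there is a Goldberg witness \(\bar X_0\). Such a witness is in particular a weak Goldberg witness, since the kernel equality \(\co{\im j_\ast}=\ker\iota_\ast\) is part of the definition. Moreover \cref{thm:witness-local-injectivity} applies verbatim to \(\bar X_0\): for every thick component \(\bar\Sigma\) and every intersection component \(\bar\Sigma_0^i\), the map \(\PB_n(\bar\Sigma_0^i)\to\PB_n(N(\bar\Sigma))\) is injective. So \(\bar X_0\) itself is the required weak witness satisfying the local condition.

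One point needs checking first. The standing assumption of \cref{sec:goldberg-classification} allows us to normalise the witness so that \(\val_{\bar X_0}(\sfw)=2\) at every joint \(\sfw\in\bar X_0\). Does this normalisation disturb anything? It changes neither the image of \(\PB_n(\bar X_0)\) nor the injectivity of \(j_\ast\), so it is harmless. In any case, the forward direction does not use it, because \cref{thm:witness-local-injectivity} holds for any Goldberg witness.

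For the converse, let \(\bar X_0\) be a weak Goldberg witness satisfying the local injectivity hypothesis. \cref{thm:local-to-global} then gives the injectivity of \(j_\ast\colon\PB_n(\bar X_0)\to\PB_n(\bar X)\). Since \(\bar X_0\) is contractible and already satisfies the kernel equality, it is a Goldberg witness, and \(\bar X\) is Goldberg.

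The only place where care is needed is the matching of hypotheses between the two cited results. Both must be stated for the same covering \(\{N(\bar\Sigma)\}\) and the same notion of intersection components \(\bar\Sigma_0^i\), fixed at the start of \cref{ssec:witness-splitting-descent}. Both must also use the standing assumption \cref{rem:standing}, so that \(\bar X\not\cong S^1\) and \cref{prop:iota-surj} is available. With these conventions shared, the equivalence follows immediately by combining \cref{thm:witness-local-injectivity} and \cref{thm:local-to-global}; no further argument is needed.
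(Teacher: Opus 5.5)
Your proposal is correct and follows the paper's proof exactly: the forward direction applies \cref{thm:witness-local-injectivity} to a Goldberg witness, which is in particular a weak one, and the converse is \cref{thm:local-to-global}. You are also right that the forward direction does not need the joint-valency normalisation, since the proof of \cref{thm:witness-local-injectivity} only uses \cref{lem:x-component-embedding} at the pendant attachment points of \(N(\bar\Sigma)\).
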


\begin{proof}
If \(\bar X\) is Goldberg, its Goldberg witness is a weak
Goldberg witness satisfying the condition, by
\cref{thm:witness-local-injectivity}. Conversely, such a
witness is a Goldberg witness by \cref{thm:local-to-global}.
\end{proof}

\subsection{The main characterisation: admissible trees}
\label{ssec:admissible-trees}

We now combine \cref{cor:goldberg-char-local} with the candidate
construction of \cref{sec:simple-model-necessity} into a purely
combinatorial characterisation of Goldbergness.

Recall from \cref{def:admissible-tree} the types of the
pairs \((\sfv,L)\) and the admissible trees. As
\(\bar X\) is simple, the pairs are simply the joints of
\(\bar X\) (\cref{rem:admissible-tree}): a joint on
\(\bar\Sigma\) is of type \(1\) exactly when \(\bar\Sigma\) is
a \(2\)-manifold and its link is the disjoint union of a point
and an interval -- an attachment at a boundary point of a
surface -- and of type \(2\) otherwise.

\begin{theorem}[Goldbergness via admissible trees]
\label{thm:goldberg-char-trees}
Let \(\bar X\) be as in \cref{rem:standing}. Then \(\bar X\)
is Goldberg if
and only if \(\bar X\) admits an admissible tree.
\end{theorem}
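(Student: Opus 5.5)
Both directions will go through \cref{cor:goldberg-char-local}, which reduces Goldbergness of \(\bar X\) to the existence of a weak Goldberg witness whose intersection components \(\bar\Sigma_0^i\) each inject, at the level of \(\PB_n\), into the neighbourhood \(N(\bar\Sigma)\) of their thick component. The task is thus to translate this local injectivity condition into the combinatorial admissibility condition of \cref{def:admissible-tree}, one thick component at a time.

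\emph{Sufficiency.} Given an admissible tree \(\sfT\in\frT_{\bar X}\), take the candidate witness \(\bar X_0=\sfT\cup\bigcup_\sfv N_{\bar X}(\sfv)\) of \cref{def:candidate-witness}. By \cref{thm:candidate-witness-weak} it is a weak Goldberg witness, so it remains to check local injectivity. By \cref{rem:X0-meets-free-component}, each \(\bar\Sigma_0^i\) is either a cone neighbourhood of a single joint together with its pendant stub, or the distinguished component. The distinguished component is a regular neighbourhood of a subtree of \(\Phi_{\bar\Sigma}\) joining \(\sfv_{\bar\Sigma}\) to several joints, fattened at the vertices. I would run a case analysis on \(\bar\Sigma\).

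If \(\bar\Sigma\) carries only type-1 joints, then \(\bar\Sigma\) is a surface and the joints are boundary points. Each \(\bar\Sigma_0^i\) is then a disc (possibly with boundary stubs), and Goldberg's injectivity (\cref{thm:goldberg}, \cref{rem:goldberg-compact}) together with \cref{ex:disc} gives injectivity into \(N(\bar\Sigma)\). The sphere and \(\RP^2\) cases do not arise here, because a surface with boundary joints is neither.

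If \(\bar\Sigma\) has a type-2 joint, then \(N(\bar\Sigma)\) is Goldberg-trivial by \cref{cor:thick-with-pendants}. In that case the target condition is \(\PB_n(\bar\Sigma_0^i)=1\). This is exactly where admissibility enters: every component contains a type-2 joint, so it is a fattened subtree containing a cone neighbourhood of a branch point or an interior attachment point. I then expect to show that such a piece has trivial pure braid group, again by \cref{cor:thick-with-pendants} or \cref{ex:disc-interior}, applied to the piece itself viewed as a thick component with a pendant attached at an interior or branch point.

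\emph{Necessity.} Let \(\bar X\) be Goldberg. By \cref{thm:weak-goldberg-char}, \(\mathsf{F}_{\bar X}\) is a forest, and by \cref{thm:witness-local-injectivity} any Goldberg witness \(\bar X_0\) satisfies local injectivity. The lower-bound results of \cref{sec:lower-bound} (\cref{prop:X0-meets-link-joint} and the valency results) show that \(\bar X_0\) contains every joint and enters \(\bar\Sigma\) there. By \cref{thm:no-interval-pieces}, no \(\bar\Sigma_0^i\) is an interval. From \(\bar X_0\) I would extract a tree: in each \(\bar\Sigma\), choose arcs inside each \(\bar\Sigma_0^i\) connecting the joints it contains. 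Contractibility of \(\bar X_0\) makes the union with \(\mathsf{F}_{\bar X}\) a forest, which I then complete to a weakly admissible tree using extra thick edges only between components, so that the grouping of joints is not altered.

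It remains to check admissibility on a component \(\bar\Sigma\) with a type-2 joint. Here \(N(\bar\Sigma)\) is Goldberg-trivial, so \(\PB_n(\bar\Sigma_0^i)=1\) for every \(i\). Suppose some \(\bar\Sigma_0^i\) contained only type-1 joints. It is not an interval, so it is a 2-dimensional piece of the surface near the boundary, or contains a branch point. In either case it carries a non-trivial braid: a disc braid, or a star braid via \cref{prop:star-embedding}. This contradicts \(\PB_n(\bar\Sigma_0^i)=1\) and establishes admissibility.

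\emph{Main obstacle.} I expect the hardest step to be the sufficiency claim that a fattened subtree containing a type-2 joint has trivial \(\PB_n\). That claim requires the regular neighbourhoods to be chosen so that the piece is itself covered by \cref{cor:thick-with-pendants}. I would first try to show that the piece deformation-thickens, braid-equivalently, to a disc with a pendant attached at an interior point, or to a branched surface.
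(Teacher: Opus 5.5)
Your necessity direction follows the paper's route. The sufficiency direction, however, fails as set up, for two independent reasons.

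First, the candidate witness \(\bar X_0=\sfT\cup\bigcup_\sfv N_{\bar X}(\sfv)\) of \cref{def:candidate-witness} is \emph{not} a regular neighbourhood of the subtree. Its pieces are small cones at the scaffold vertices, joined by the \emph{one-dimensional} thick-edge arcs of \(\sfT\) (\cref{rem:X0-meets-free-component}, \cref{fig:candidate-witness}). Such an arc is a free edge of the piece whose two ends have valency \(2\). So the piece splits as in \cref{cor:free-edge-disconnected} and carries non-trivial shuffle braids supported in a tree (\cref{lem:H-shuffle}). Two examples show the failure:
\begin{itemize}
\item In the complex of \cref{fig:candidate-witness} with \(n=2\), cutting at a point of the green arc gives \(\PB_2(\bar X_0)\cong\Z\ast\Z\ast\Z\), computed as in \cref{ex:two-tripods-shuffle}. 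But \(\PB_2(\bar X)\cong\PB_2(D^2)\cong\Z\).
\item For a disc with one interior and one boundary pendant edge, \(\bar X\) is contractible and Goldberg-trivial, so \(\PB_n(\bar X)=1\). Yet the candidate witness has \(\PB_n\neq1\).
\end{itemize}
In both cases the tree is admissible but \(j_\ast\) is not injective. So no argument of the form ``the piece is braid-equivalent to a disc with an interior pendant'' can work, and enlarging only the vertex neighbourhoods does not help. You must take \(\bar X_0\) to be a genuine regular neighbourhood of the whole tree, chosen to contain the candidate witness. It is still a weak witness by \cref{thm:candidate-witness-weak} combined with the upward closure \cref{prop:witness-upward}.

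Second, you claim that a piece containing a type-2 joint contains an interior attachment or a branch point. This is false on non-manifold components: there admissibility is vacuous, and a type-2 joint may sit on the boundary of a single sheet, away from the branch locus. For example, let \(\bar\Sigma\) be three half-discs glued along a common boundary arc, and let \(\bar X\) be \(\bar\Sigma\) with two pendant edges at boundary points of one sheet. Then \(\bar X\) is contractible and Goldberg-trivial (\cref{thm:gt-classification}), and every weakly admissible tree is admissible. But a regular neighbourhood of an arc in that sheet joining the two joints is a disc with boundary stubs, with \(\PB_n\cong\PB_n(D^2)\neq1\). The paper repairs this as follows:
\begin{itemize}
\item For every non-manifold \(\bar\Sigma\), attach to each component of \(\sfT\cap\bar\Sigma\) a pendant arc reaching the branch locus; this gives a larger tree \(\sfT^+\).
\item Set \(\bar X_0=N(\sfT^+)\).
\end{itemize}
Each piece then has either a non-manifold thick part, or, on a surface with a type-2 joint, an interior pendant stub (this is where admissibility is used). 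So each piece is Goldberg-trivial and, being simply connected, has trivial \(\PB_n\).

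In the necessity direction, three small repairs are needed:
\begin{itemize}
\item Joints whose thin edge dead-ends need not lie in \(\bar X_0\), since \cref{prop:X0-meets-link-joint} requires that no side is an interval. Such joints must be attached by an arc to a piece carrying a type-2 joint.
\item \cref{thm:no-interval-pieces} requires the Goldberg-trivial case to be split off first.
\item Disc braids in a piece with only type-1 joints must be detected in a disc \(D^+\supseteq\bar\Sigma_0^i\), not in the Goldberg-trivial \(N(\bar\Sigma)\), where they die.
\end{itemize}
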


\begin{proof}
Suppose first that \(\sfT\in\mathfrak{T}_{\bar X}\) is admissible.

If some thick component carries no joint at all, then, \(\bar X\)
being connected, \(\bar X\) coincides with that component. If
\(\bar X\) is then not a \(2\)-manifold, it is Goldberg-trivial
by \cref{thm:gt-classification}, hence Goldberg
(\cref{lem:gt-implies-goldberg}); if it is a \(2\)-manifold, it is a
closed surface -- other than \(S^2\) and \(\RP^2\), by
\cref{rem:standing} -- and it is Goldberg by
\cref{thm:goldberg} and \cref{rem:goldberg-compact}.
Assume henceforth that every thick component carries a joint.

Call the set of points of a thick component \(\bar\Sigma\)
having no neighbourhood homeomorphic to \(\R^2\) or to a closed
half-plane the \emph{branch locus} of \(\bar\Sigma\). It is
non-empty exactly when \(\bar\Sigma\) is not a \(2\)-manifold,
and it is at least one-dimensional at each of its points: the
link there is a connected graph other than a circle, an
interval or a point, hence contains a vertex of valency at
least \(3\), and the cone over such a vertex direction consists
of branch points.

\emph{The witness.} Augment \(\sfT\) to a tree \(\sfT^+\) by
attaching, for every thick component \(\bar\Sigma\) which is
not a \(2\)-manifold and every connected component \(\sfT_c\) of
\(\sfT\cap\bar\Sigma\), a pendant embedded arc in \(\bar\Sigma\)
from \(\sfT_c\) to the branch locus of \(\bar\Sigma\) -- omitted
if \(\sfT_c\) meets the branch locus already. Pendant arcs create
no cycles, so \(\sfT^+\) is again a tree. Set
\(\bar X_0\coloneqq N(\sfT^+)\), a regular neighbourhood of
\(\sfT^+\) in \(\bar X\), the neighbourhoods chosen large enough
that \(\bar X_0\) contains the candidate witness of
\cref{thm:candidate-witness-weak} associated with \(\sfT\). Since
\(\sfT^+\) is a tree, \(\bar X_0\) is contractible, and by
\cref{thm:candidate-witness-weak,prop:witness-upward} it is a
weak Goldberg witness of \(\bar X\).

\emph{Local injectivity.} By \cref{cor:goldberg-char-local} it
remains to show that
\(\PB_n(\bar\Sigma_0^j)\to\PB_n(N(\bar\Sigma))\) is injective
for every thick component \(\bar\Sigma\) and every intersection
component \(\bar\Sigma_0^j\) of \(\bar X_0\cap N(\bar\Sigma)\).
These components are the thickened neighbourhoods of the
connected components \(\sfT^+_c\) of \(\sfT^+\cap\bar\Sigma\),
together with the initial segments of the thin edges attached
at their joints: each is a contractible complex with a single
thick component -- the thickened tree -- no point of valency at
least \(3\), and all free edges pendant. We distinguish three
cases.

If \(\bar\Sigma\) is not a \(2\)-manifold, then \(\sfT^+_c\) meets
the branch locus, so the thick part of \(\bar\Sigma_0^j\) is
not a \(2\)-manifold, and \cref{thm:gt-classification} makes
\(\bar\Sigma_0^j\) Goldberg-trivial. As \(\bar\Sigma_0^j\) is
simply connected, the strand map has trivial target, so
\(\PB_n(\bar\Sigma_0^j)=\ker\iota_\ast=1\), and injectivity
holds trivially.

If \(\bar\Sigma\) is a \(2\)-manifold possessing a type-2
joint, then a type-2 joint of \(\bar\Sigma\) is an interior
attachment: \(\lk_{\bar X}(\sfv)=\{\mathrm{pt}\}\sqcup S^1\). By
strong admissibility the component \(\sfT^+_c=\sfT_c\) contains a
type-2 joint \(\sfv\), and the pendant stub at \(\sfv\) is attached
at an interior point of the thickened disc. By
\cref{thm:gt-classification}(iii), \(\bar\Sigma_0^j\) is again
Goldberg-trivial, and \(\PB_n(\bar\Sigma_0^j)=1\) as before.

If \(\bar\Sigma\) is a \(2\)-manifold without type-2 joints,
then all its attachments are boundary attachments, so
\(\bar\Sigma\) is a compact surface with non-empty boundary --
in particular homeomorphic to neither \(S^2\) nor \(\RP^2\) --
and \(\bar\Sigma_0^j\) is a disc with pendant edges attached
along its boundary. As in the proof of
\cref{lem:witness-one-thick}, absorbing the pendant edges
(\cref{lem:pendant-edge}, \cref{ex:disc}) and applying
Goldberg's theorem (\cref{thm:goldberg},
\cref{rem:goldberg-compact}) to a disc inside the surface
exhibits \(\bar\Sigma_0^j\) as a Goldberg witness of
\(N(\bar\Sigma)\); in particular
\(\PB_n(\bar\Sigma_0^j)\to\PB_n(N(\bar\Sigma))\) is injective.

In all cases the local condition of
\cref{cor:goldberg-char-local} holds, and \(\bar X\) is
Goldberg.

\emph{Conversely}, suppose \(\bar X\) is Goldberg, with Goldberg
witness \(\bar X_0\) -- in particular a weak Goldberg witness.
If some thick component carries no joint, then \(\bar X\)
coincides with it and any weakly admissible tree is vacuously admissible. If \(\bar X\) is Goldberg-trivial, then by
\cref{thm:gt-classification} it has a single thick component
\(\bar\Sigma\) and every free edge is pendant, so every scaffold is the tree \(\Phi_{\bar\Sigma}\cup\mathsf F_{\bar X}\);
the weakly admissible tree \(\sfT=\Phi_{\bar X}\) then satisfies
\(\sfT\cap\bar\Sigma=\Phi_{\bar\Sigma}\), which is connected and
contains every joint, and \(\sfT\) is admissible. Assume
henceforth that \(\bar X\) is not Goldberg-trivial and that
every thick component carries a joint.

\emph{Claim: if a thick component \(\bar\Sigma\) carries a
type-2 joint, then every intersection component
\(\bar\Sigma_0^j\) of \(\bar X_0\cap N(\bar\Sigma)\) containing
a joint contains a type-2 joint.} Suppose not, and let
\(\bar\Sigma_0^j\) be a component all of whose joints are of
type \(1\). Type-1 joints exist only on \(2\)-manifold
components, so \(\bar\Sigma\) is a compact surface, and its
type-2 joints are interior attachments; hence \(N(\bar\Sigma)\)
is a \(2\)-manifold with an interior pendant edge, and
\cref{thm:gt-classification} makes \(N(\bar\Sigma)\)
Goldberg-trivial. Since \(\bar X_0\) is a Goldberg witness,
\cref{thm:witness-local-injectivity} then forces
\(\PB_n(\bar\Sigma_0^j)=1\).

On the other hand, \(\bar\Sigma_0^j\) is not homeomorphic to an
interval (\cref{thm:no-interval-pieces}; \(\bar X\) is not
Goldberg-trivial) nor to a point -- a one-point component would
be all of the connected witness -- so, being connected and
contractible, it contains an embedded tripod. Its
\(\bar\Sigma\)-part is a compact connected contractible
subcomplex of the surface \(\bar\Sigma\), and a regular
neighbourhood of it in \(\bar\Sigma\) is a compact surface
collapsing onto it, hence a disc
\(D\)~\cite{RourkeSanderson}; the joints of
\(\bar\Sigma_0^j\), being boundary attachments, lie on
\(\partial D\), so that
\(\bar\Sigma_0^j\subseteq D^+\coloneqq
D\cup(\text{pendant arms})\subseteq N(\bar\Sigma)\), the arms
attached along \(\partial D\). By \cref{ex:disc},
\(\PB_n(D^+)\cong\PB_n(D^2)\) is the classical pure braid
group, non-trivial since \(n\geq2\). Moreover the
inclusion-induced map
\(\PB_n(\bar\Sigma_0^j)\to\PB_n(D^+)\) is surjective: for base
points in single file on an embedded tripod of
\(\bar\Sigma_0^j\), each adjacent half-twist \(\sigma_i\) of
\(\BG_n(D^+)\) is realised by the hexagonal exchange of two
consecutive strands on the tripod (\cref{ex:S3}), the remaining
strands resting on its legs; thus
\(\BG_n(\bar\Sigma_0^j)\to\BG_n(D^+)\) is surjective, and its
compatibility with the permutation homomorphisms -- both
surjecting onto \(\mathbb S_n\) -- makes the pure parts
surject as well. Hence \(\PB_n(\bar\Sigma_0^j)\neq1\), a
contradiction. This proves the claim.

\emph{Construction of an admissible tree.} Let \(\sfT_0\)
be the union of all thin edges of \(\bar X\) with, for every
thick component and every intersection component
\(\bar\Sigma_0^j\), an embedded tree in
\(\bar\Sigma_0^j\cap\bar\Sigma\) containing all the joints of
\(\bar\Sigma_0^j\) -- such a tree exists, the
\(\bar\Sigma\)-part of a piece being connected. Then \(\sfT_0\)
contains no embedded cycle: such a cycle would have to leave
the free part, since \(\mathsf F_{\bar X}\) is a forest
(\cref{thm:weak-goldberg-char}), so it would alternate between
piece-trees and thin-edge paths; each thin edge on it is then
essential -- both of its sides contain thick material of the
cycle -- hence contained in \(\bar X_0\), and the entire cycle
would lie in the contractible witness \(\bar X_0\), which is
absurd.

Extend \(\sfT_0\) to a weakly admissible maximal tree \(\sfT\) as follows.
A joint \(\sfw\) not lying in \(\bar X_0\) has, by
\cref{prop:X0-meets-link-joint}, an \(x\)-component of
\(\bar X\) homeomorphic to an interval; this is necessarily the
side of its thin edge, which therefore dead-ends, so \(\sfw\) lies
on no embedded cycle of any scaffold, and an arc in
\(\bar\Sigma\) from \(\sfw\) to any chosen component of \(\sfT_0\)
creates no cycle. On each thick component carrying a type-2
joint, attach every such stray joint by an arc to one of the
piece-trees -- each of which contains a type-2 joint, by the
claim. Finally complete to a maximal tree by further thick
edges, subject only to acyclicity; additional edges merely
merge components. In the resulting weakly admissible tree \(\sfT\), every
joint of a thick component carrying a type-2 joint is connected
within \(\sfT\cap\bar\Sigma\) to a type-2 joint -- through its
piece-tree if it lies in \(\bar X_0\), and through its
attaching arc otherwise. Hence \(\sfT\) is admissible,
and the proof is complete.
\end{proof}

Combining \cref{thm:goldberg-char-trees} with the resolution
machinery, the characterisation extends to arbitrary complexes.

\begin{theorem}[Goldbergness via admissible trees: the
general case]\label{thm:goldberg-char-trees-general}
Let \(X\) be as in \cref{rem:standing}. Then \(X\) is
Goldberg if and only if \(X\) admits an
admissible tree.
\end{theorem}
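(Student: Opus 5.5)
The plan is to reduce \cref{thm:goldberg-char-trees-general} to the simple case, \cref{thm:goldberg-char-trees}, by passing to the simple model \(\bar X\) of \cref{prop:simple-model-unique}. Two facts carry the reduction, one on each side of the equivalence.

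First, Goldbergness is invariant along the chain of resolutions \(\bar X\twoheadrightarrow X\). Each single resolution step satisfies \cref{thm:resolution-goldberg-equivalence}: \(X\) is Goldberg if and only if its resolution \(X'\) is. Iterating this over the finitely many steps (the complexity \(c(X)\) drops by one each time) shows that \(X\) is Goldberg if and only if \(\bar X\) is. We must check that every intermediate complex still satisfies \cref{rem:standing}. Resolution only adds free edges and separates cones, so it never produces \(\mathsf I\), \(S^1\), \(S^2\) or \(\RP^2\) from a complex that was not one of these. The one exception to watch is non-simple input homeomorphic to \(S^2\) or \(\RP^2\), which does not arise since such manifolds are simple.

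Second, admissible trees correspond along the same chain. By \cref{lem:max-tree-resolution}, push-forward and pull-back give mutually inverse bijections \(\frT_{X'}\cong\frT_X\) at each step. \cref{rem:admissible-tree} records that these bijections preserve the unwrapped thick components, the types of the pairs \((\sfv,L)\), and the forests \(\sfT_{\bar\Sigma}\). Hence a weakly admissible tree of \(X\) is admissible if and only if its pull-back to \(\bar X\) is (\cref{def:tree-pullback-pushforward}). It follows that \(X\) admits an admissible tree if and only if \(\bar X\) does.

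Combining the two facts with \cref{thm:goldberg-char-trees} applied to \(\bar X\) gives the chain
\[X\text{ Goldberg}\iff\bar X\text{ Goldberg}\iff\bar X\text{ has an admissible tree}\iff X\text{ has an admissible tree}.\]

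The main obstacle is the bookkeeping of pairs in the tree correspondence. A type-2 pair \((\sfv,L)\) at a non-simple joint \(\sfv\) of \(X\) becomes, in \(\bar X\), the apex joint \(\sfv'\) of the resolution. The condition ``connected within \(\sfT_{\bar\Sigma}\)'' must be read on the unwrapped component in both complexes, and we must verify that the new free edges \(\sfe_i\) do not create or destroy connections inside any \(\sfT_{\bar\Sigma}\). These edges are thin and lie in every weakly admissible tree by \cref{lem:max-tree-resolution}, while the admissibility condition concerns thick edges only. Using \(\sfT_{\bar\Sigma}\subseteq\Phi_{\bar\Sigma}\) rather than \(\sfT\cap\Sigma\) is what makes the condition unchanged.
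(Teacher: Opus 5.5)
Your proposal is correct and is essentially the paper's own proof: you transport Goldbergness along the resolution chain via \cref{thm:resolution-goldberg-equivalence}, transport admissible trees via \cref{lem:max-tree-resolution} and \cref{rem:admissible-tree}, and then apply \cref{thm:goldberg-char-trees} to \(\bar X\). Your check that every intermediate complex satisfies \cref{rem:standing} is, if anything, slightly more careful than the paper's, which verifies this only for \(\bar X\).
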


\begin{proof}
Note first that \(\bar X\) again satisfies the standing
assumption (\cref{rem:standing}): a resolution always produces
free edges, so if \(\bar X\) were homeomorphic to \(S^1\),
\(S^2\) or \(\RP^2\) -- none of which contains a free edge or a
non-simple vertex -- then no resolution took place and
\(\bar X=X\), contradicting the assumption on \(X\).

Along the chain of resolutions from \(X\) to its simple model
\(\bar X\) (\cref{prop:simple-model-unique}), the complexes at
the two ends are simultaneously Goldberg or not, by
\cref{thm:resolution-goldberg-equivalence} applied at each
step; and by \cref{rem:admissible-tree} the pull-back and
push-forward of \cref{def:tree-pullback-pushforward} exchange
the admissible trees of \(X\) with those of
\(\bar X\). The statement therefore reduces to
\cref{thm:goldberg-char-trees}.
\end{proof}

\begin{example}[Two discs wedged at a point]
\label{ex:wedge-discs}
Let \(X=(D_1\vee_\sfv D_2)\cup \sfe\) be the wedge of two discs
with a boundary chord of \cref{ex:no-admissible-tree}
(\cref{fig:wedge-discs-pic}). Its free part
\(\sfF_X=\sfe\cup\{\sfv\}\) is a forest, so \(X\) is weakly
Goldberg (\cref{thm:weak-goldberg-char}); but \(X\) admits no
admissible tree (\cref{ex:no-admissible-tree}), so \(X\) is
\emph{not} Goldberg by \cref{thm:goldberg-char-trees-general}.
\end{example}

\begin{example}[A weakly Goldberg complex that is not Goldberg]
\label{ex:disc-chord}
Perhaps the simplest example separating the two notions: the
disc with a chord \(X=D\cup\sfe\) of
\cref{ex:no-admissible-tree} (\cref{fig:disc-chord-pic}). The
complex is simple and \(\sfF_X=\sfe\) is a forest, so \(X\) is
weakly Goldberg (\cref{thm:weak-goldberg-char}); it admits no
admissible tree (\cref{ex:no-admissible-tree}), so \(X\) is not
Goldberg (\cref{thm:goldberg-char-trees}).
\end{example}

\end{document}